\newtheorem{thm}{Theorem}[subsection]
\newtheorem{cor}[thm]{Corollary}
\newtheorem{lem}[thm]{Lemma}
\newtheorem{prop}[thm]{Proposition}
\newtheorem{defn}[thm]{Definition}
\newtheorem{rem}[thm]{Remark}
\newtheorem{ex}[thm]{Example}
\numberwithin{equation}{subsection}
\begin{document}

\begin{frontmatter}



\author[a,b]{Sen Hu}
\ead[a]{shu@ustc.edu.cn}

\author[a]{Xuexing Lu}
\ead[a]{xxlu@mail.ustc.edu.cn, xuexinglu@gmail.com}

\author[a,b]{Yu Ye}
\ead[a,b]{yeyu@ustc.edu.cn}

\title{Combinatorics and algebra of tensor calculus}

\address[a]{Department of Mathematics,University of Science and Technology of China,PR China}
\address[b]{Wu Wen-Tsun key laboratory of Mathematics,PR China}

\author{}

\address{}

\begin{abstract}
In this paper, motivated by the theory of operads and PROPs we reveal the combinatorial nature of tensor calculus for strict tensor categories and show that there exists a monad which is described by the coarse-graining of graphs and characterizes the algebraic nature of tensor calculus.
More concretely, what we have done are listed in the following:

1. We give a combinatorial formulation of a progressive plane graph introduced by Joyal and Street and some of their properties are investigated.

2. We introduce the category $\mathbf{T.Sch}$ of tensor schemes to make the construct of a free strict tensor category a functor $F:\mathbf{T.Sch}\rightarrow\mathbf{Str.T}$ from the category $\mathbf{T.Sch}$  of tensor schemes to the category $\mathbf{Str.T}$ of strict tensor categories. We also construct a right adjunction $U:\mathbf{Str.T}\rightarrow \mathbf{T.sch}$ of $F.$

3. We analysis the associated monad of the adjunction which is named as the monad of tensor calculus and show that it is described by the coarse-graining of  graphs. A algebra of this monad is named as a tensor manifold.

4. Identity morphisms in a tensor manifold and several operations on a tensor manifold such as tensor product, composition and fusion are introduced. We show that they satisfy some natural compatible conditions. We also show that under these compatible conditions the appointment of identity morphisms and these operations can totally characterize the algebraic structure of a tensor manifold.

5. We construct a functor $\Psi:\mathbf{T.Sch}^T\rightarrow\mathbf{Str.T}$ from the category $\mathbf{T.Sch}^T$ of tensor manifolds to the category of strict tensor categories, and we show that $\Psi$ is a left inverse of the natural comparison functor $\Phi:\mathbf{Str.T}\rightarrow\mathbf{T.Sch}^T$ which means that $\Phi$ is an embedding.  We also show that the adjunction $F,U$ is not monadic, hence we can interpret a strict tensor category as a special kind of tensor manifold.

6. We prove that $\Psi$ is also a left adjunction of the comparison functor $\Phi$.

\end{abstract}

\begin{keyword}
strict tensor category\sep progressive planar graph\sep coarse-graining\sep graphical calculus\sep tensor calculus



\end{keyword}

\end{frontmatter}
\tableofcontents

\section{Introduction}

As the title shows, the theme of this paper has two parts. One is to introduce a combinatorial language formulating the mathematical theory of tensor calculus established by Joyal and Street in the famous paper \cite{[JS91]}. The other one is to excavate the monad of tensor calculus which is just hidden behind the work of Joyal and Street. The first part reveals the combinatorial nature of tensor calculus, and in the combinatorial language the monad of tensor calculus is described by the coarse-graining of planar graphs. But a very interesting fact is that the construction of coarse-graining appears naturally in a wide range of theories in physics such as the theories of topological order \cite{[LW04],[LW05],[CGW10]} and quantum order \cite{[Vi06]}, the theory of loop quantum gravity \cite{[AKRZ14]}. This fact has strongly motivated us to conjecture a big picture called a theory of tensor geometry.

\begin{center}
    \textbf{Combinatorial formulation of progressive plane graphs}
\end{center}

In \cite{[JS91]}, Joyal and Street proved a firm theoretical foundation of graphical calculus for strict tensor categories. In the first chapter of \cite{[JS91]}, they introduced  several important  notions  such as (leveled/boxed) progressive plane graph, tensor scheme, diagram in a strict tensor category, diagram in a tensor scheme, evaluation of diagrams in a strict tensor category, free strict tensor category, etc, which are essential for formulating graphical calculus as a mathematical theory.

In this paper, we will recall these notions in a combinatorial language and  prove a combinatorial theory of tensor calculus for strict tensor categories. The combinatorial definition of a graph is well-known and has been used for many years in the theory of operads and PROPs \cite{[Mar06], [LV12],[MSS02]}. The key difficult in the theory is to find a simple and coherent way to characterize the progressive planar structure on a progressive plane graph.

The notion of a (combinatorial) progressive planar graph  is introduced in the beginning of section $3.1$.  We think that it is a right combinatorial formulation of a progressive plane graph in the sense that:

$\bullet$ By definition a progressive planar structure is a linear extension of a partial order with some properties, so we can see that for any two isomorphic progressive planar graphs there is an unique isomorphism between them, that is, the groupoid of progressive planar graphs is a clique.

$\bullet$ We have proved that there is a natural (preserving tensor products and compositions) bijection between the set of progressive planar graphs and the set of progressive plane graphs(section $3.6$).  We call the natural bijection this planar geometric realization of progressive planar graphs.
$$\xymatrix{progressive\ planar\ graphs\ar[rrrrr]^{planar\ geometric\ realization}_{\cong}&&&&&progressive\ plane\ graphs}$$

A direct consequence of this fact should be that  the combinatorial theory  affirms the conjecture of P.Selinger in \cite{[Se09]} that the "recumbent" planar isotopy of progressive plane graphs can be generalized to arbitrary planar isotopy.  As a combinatorial date, a progressive planar graph can be stored in a computer, so the combinatorial theory with these advantages may provides us an effective tool to do tensor calculus by a computer.

\begin{center}
    \textbf{What is tensor geometry}
\end{center}

In fact, there is a deeper theory behind the work of Joyal and Street, namely, a theory of tensor geometry. In our opinion, this new geometry is a kind of categorical  noncommutative geometry or a nonpertubative generalization of (non-symmetric) operads and PROPs. In this new geometry, we should replace classical objects such as groups and  associative algebras by "quantum object" such as PRO (non-symmetric version of PROP \cite{[Mar06], [LV12],[MSS02]}) and \textbf{tensor manifolds} (section $6.1$). The notion of a tensor manifold is a natural generalization of both a PRO and a strict tensor category which is seen as the most general setting making the notion of a tensor and their calculus meaningful \cite{[JS91]}. We hope in the future to introduce the notions of  structured tensor manifolds which may generalize symmetric, braided, rigid categories, etc.

More precisely, the construct of a free strict tensor category is in fact a functor $$F:\mathbf{T.Sch}\rightarrow \mathbf{Str.T} $$ from the category $\mathbf{T.Sch}$ of tensor schemes to the category $\mathbf{Str.T}$ of strict tensor categories, and in sections $5.2$, $5.3$ we will show that there exists a right adjoint functor of $F$ $$U:\mathbf{Str.T}\rightarrow \mathbf{T.Sch},$$ which is essentially given by the construction of space of prime diagrams in a strict tensor category and is a resolution of strict tensor categories in some sense.

By the general theory of adjunctions \cite{[Mac71]}, this adjunction will give rise to a monad $(T,\mu,\eta)$ in the category $\mathbf{T.Sch}$ of tensor schemes and we call it the monad of tensor calculus. A tensor manifold is by definition an algebra of this monad. All tensor manifolds and their morphisms form a category denoted by $\mathbf{T.Sch}^{T}$ and the theory of tensor geometry is the study of this category or its structured version.

According to the general theory of monads \cite{[Mac71]}, there should be an adjunction naturally associated to $(T,\mu,\eta)$
$$(F^T,U^T, \varepsilon^T, \eta^T):\mathbf{T.Sch}\rightharpoonup \mathbf{T.Sch}^T,$$
and there is an unique comparison functor
$$\Phi:\mathbf{Str.T}\rightarrow \mathbf{T.Sch}^{T},$$
such that $F^T=\Phi\circ F$ and $U=U^T\circ \Phi$, that is, in the following diagram both the $F$-square and the $U$-square commute
$$\xymatrix{\mathbf{Str.T}\ar@{-->}[rr]^{\Phi}\ar@<0.8mm>[d]^{U}&&\ar@<0.8mm>[d]^{U^T}\mathbf{T.Sch}^{T}\\\mathbf{T.Sch}\ar@<0.8mm>[u]^{F}\ar@{=}[rr]&&\ar@<0.8mm>[u]^{F^T}\mathbf{T.Sch}.}$$
The functor $\Phi$ sends every strict tensor category  to its associated tensor manifold. A left inverse of $\Phi$ $$\Psi:\mathbf{T.Sch}^{T}\rightarrow\mathbf{Str.T}$$  can be constructed. We will prove that the comparison functor is not an equivalence of categories, but an embedding (section $6.3$). Thus strict tensor categories are just special kinds of tensor manifolds (which are called \textbf{critical} tensor manifolds in section $6.4$) and tensor manifolds are nontrivial generalization of strict tensor categories. Moreover, we show that $\Psi$ is left adjoint to $\Phi$ (Theorem $6.4.6$).

\begin{center}
   \textbf{ Motivations}
\end{center}
A strong belief in our work is that strict tensor categories may not be the most general setting for defining algebraic structures on  tensor systems (= tensor schemes, for definition see section 4.1). This belief mainly comes from our motivations to the field of tensor calculus.

The theory of tensor manifolds was strongly motivated by the theory of  operads and PROPs \cite{[Mar06], [LV12]}, and has some differences in the following aspects:

$\bullet$ operads, PROPs and their colored version are free on the level of objects, while tensor manifolds can be non-free on the level of objects.

$\bullet$ The monads controlling operads/PROPs and tensor manifolds are different slightly. Monads characterizing algebraic structures on a tensor systems of operadic type are described by the construction of substitution of graphs \cite{[Mar06]}. While the monad characterizing a tensor manifold is described by fine-graining of graphs (section $5.6$) and a similar construction also appears in recent work in loop quantum gravity \cite{[AKRZ14]}.

Other motivations to the theory of tensor geometry come from our several interests. We categorize them into three related classes. The first is our interest in formal structures of non-perturbative quantum field theory, aiming to answer the question that: are there some formal mathematical structures to help us to unify string theory, loop quantum gravity \cite{[Ro97], [Sm97]} and string-net condensation \cite{[LW04],[LW05]}? The idea is that the formal structure of a tensor manifold is very similar to that of sigma models in perturbative string theory in the sense that in perturbative string theory we consider the space of fields  $$world\ sheet\rightarrow background\  space$$ which are maps from world-sheets to a background space, while in theory of tensor geometry we consider the space of spin-networks  $$planar\ graph\rightarrow tensor\ manifold$$ which can be seen as maps from planar graphs ("Feynman graphs") to a tensor manifold. The action in string theory is determined by the geometry structure of and other fields on the background space while in the theory of tensor manifold we just have an algebraic structure of a monad. The two theories are different but this "anolog" motivates us very much.

The second is trying to find a cohomology theory of strict tensor categories generalizing that of operads and props \cite{[Mar94],[Mar96], [Mar06],[MV07]}. Our work shows that we can  interpret a strict tensor category as an algebra of the monad of tensor calculus, so there would be a Barr-Beck homology \cite{[BB69]} of a strict tensor category. We hope in the future we can come back to this issue.

The third is mainly concern with possible connections with various kinds of  renormalization theories, such as entanglement renormalization [Vi06] for quantum phase transition of  quantum lattice systems,  wave function renormalization \cite{[CGW10]} in the theory of topological phases and algebraic/integrable renormalization \cite{[CK00],[CK01]} in perturbative quantum field theory.

Although we can not predict the futures of all this three directions,  we emphasize that there is a common mathematical concept which should be treated as a basic one, that is, spin networks \cite{[Ma99],[Ba94]} and the different directions just interpret them in different ways, such as morphisms in tensor categories, quantum entanglements, non-local excitation of gauge fields, quanta of geometry, histories or circuits of quantum computation \cite{[BS09]}. So it is reasonable to hope that a theory of tensor manifolds can provide a helpful insight to the "big unification".

Many constructions in algebraic geometry, super geometry and  noncommutative geometry may be generalized in the setting of tensor geometry, such as functor of points, odd functor of points, etc. We think that these generalizations are deserved to investigate and some of them will be connected to Feynman transformations in theory of operads and props \cite{[KWZ12]}, where "odd" point in super algebraic geometry is replaced by a family of "odd" Feynman graphs in super tensor geometry. In our opinion, the works of D.N.Yetter \cite{[Yet03]}, A.A.Davydov \cite{[Dav97]} and  T.Maszczyk \cite{[Mas06],[Mas11]} may suggest possible connections between classical commutative/noncommutative geometry with tensor geometry. There are also many people  whose works have impacted on our ideas greatly, so we list some of their articles, such as \cite{ [Co04],[Fi02], [FI05],[FM01],[GK94], [Io00], [Io07],[K93], [K94],[Ko11], [Ko13], [Lo12], [QZ11]}.\\
\\

The paper is organized as follows.

In section $2$, we introduce the notions of a pre-graph, a graph and their geometric realizations. Pre-graphs and their geometric realizations are discussed in section $2.1$. Graphs and their geometric realizations are discussed in section $2.2$. Some operations such as tensor product, grafting, substitution and coarse-graining of graphs are introduced in section $2.3$.  In section $2.4$, we introduce some structures on graphs such as orientation, polarization and anchored structures.

Section $3$ is devoted to a combinatorial theory of progressive planar graphs (planar graphs for short). In section $3.1$, we give the definition of a progressive planar graph and its several equivalent descriptions. In section $3.2$, we show some properties of a progressive planar graph, especially we prove that there is at most one compatible planar structure on a progressive, polarized and anchored graph. In section $3.3$, we show that every planar structure on a progressive graph can induces a planar structure on its set of vertices. The tensor product of planar graphs are discussed in section $3.4$. In section $3.5$, we show that on the set of planar graphs there is a well-defined composition for two composable planar graphs which is associative. Moreover, we show every planar graph can be decomposed as a composition of several essential prime planar graphs.  In section $3.6$, we introduce the planar geometric realization of a progressive planar graph and show that it provides  a canononical (preserving tensor product and composition) isomorphism between the set $\mathsf{\Gamma}$ of isomorphic classes of our progressive planar graphs and the set $\mathsf{G}$ of isomorphic classes of Joyal and Street's progressive plane graphs. The notions of a fissus planar graph and its coarse-graining are introduced in section $3.7$.

In section $4.1$, we introduce the category of tensor schemes. The notions of a diagram in a tensor scheme and a diagram in a strict tensor category are introduced in section $4.2$ and $4.3$, respectively. In section $4.4$, we prove that there is a well-defined value of a diagram in a strict tensor category.  The notions of a compound fissus planar graph and its coarse-graining are introduced in section $4.5$.

In section $5$, we introduce two functors $F:\mathbf{T.Sch}\rightarrow \mathbf{Str.T}$, $U: \mathbf{Str.T}\rightarrow \mathbf{T.Sch}$ and prove that they form an adjunction. The functor $F$ is just the construction of a free strict tensor category generated by a tensor scheme which was introduced by Joyal and Street in \cite{[JS91]}. The functor $U$ is given essentially by the construction of prime diagrams in a strict tensor category. We also give a detailed description of the unit, counit, multiplication and comultiplication of this adjunction. The functor $F$ is introduced in section $5.1$ and the functor $U$ is introduced in section $5.2$. Section $5.3$ is devoted to prove the fact that $F$ and $U$ form an adjunction. In order to simplify the descriptions of functors $G=FU$ and $T=UF$, we introduce two functors $\mathbf{\Gamma}^{\otimes}$ and $\mathbf{\Gamma}_F$ in section $5.4$ and show that $G$ and $T$ are isomorphic to $\mathbf{\Gamma}$ and $\mathbf{\Gamma}_F$, respectively. The unit $\eta:I\rightarrow UF$ and counit $\varepsilon:FU\rightarrow I$ of tensor calculus are discussed in section $5.5$ and the multiplication $\mu:T\circ T\rightarrow T$ and comultiplication $\delta:G\rightarrow G\circ G$ are discussed in section $5.6$.

Section $6$ is devote to a theory of tensor manifolds. In section $6.1$, we give the definition of a tensor manifold. The category of tensor manifolds is introduced and we denote is as $\mathbf{T.Sch}^T.$ Some basic properties are also investigated.  Some operations on a tensor manifold is introduced in section $6.2$, such as tensor product, composition, fusions associated with a pair of linear partitions (defined in section $3.7$). Moreover, we show that in addition with a family of identity morphisms and some compatible conditions these operations determine the structure of a tensor manifold uniquely.  In section $6.3$, we construct a left inverse $\Psi$ of the comparison functor $\Phi:\mathbf{Str.T}\rightarrow \mathbf{T.Sch}^T$ and prove that $\Phi$ is not an equivalence of categories, hence the the adjunction of tensor calculus is not monadic.  In section $6.4$, we show that $\Psi$ is left adjoint to $\Phi$.

In section $7.1$, we review definitions about strict tensor categoies and  strict tensor functors. In section $7.2$, we review the definition of a progressive plane graph introduced in \cite{[JS91]} and related notions.  Section $7.3$ is devoted to a basic review of adjunctions and their associated (co)monads.

\section{Combinatorics of graphs}

\subsection{Pre-graphs}
In this section, we will introduce the notion of a pre-graph and its geometric realization. First recall that a partition of a non-empty set $X$ is a set of nonempty subsets of $X$ such that every element $x$ in $X$ is in exactly one of these subsets (page 28 in \cite{[Ha60]}) (i.e., $X$ is a disjoint union of the subsets). In this paper, we define the partition of an empty set to be itself.  The notion of a pre-graph is essentially the notion of a wheeled graph in \cite{[MMS06]}, but some terminologies are different from theirs in this paper.
\begin{defn}
A $($combinatorial$)$ \textbf{pre-graph} $\Gamma=(H,P,\sigma)$ consists of

$\bullet$ a finite  set $H$ whose elements are called \textbf{half-edges} or \textbf{flags} of $\Gamma$,

$\bullet$ a partition $P$ of $H$ whose blocks are called \textbf{vertices} of $\Gamma$,

$\bullet$ an involution $\sigma:H\rightarrow H$, whose orbits are called \textbf{edges} of $\Gamma$.
\end{defn}
The sets of half-edges, vertices and edges of $\Gamma$ are denoted by $H(\Gamma)$, $V(\Gamma)$ and $E(\Gamma)$, respectively. If $H(\Gamma)$ is an empty set, we call $\Gamma$ an \textbf{empty graph}. The partition $P$ defines a function  $\pi:H(\Gamma)\rightarrow V(\Gamma)$ which sends a half-edge to its block and also defines an equivalence relation $\overset{P}{\thicksim}$ on $H(\Gamma)$ such that $h_1\overset{P}{\thicksim} h_2$ if and only if $h_1, h_2$ belongs to a same block of $P$. We say that  a vertex $v\in V(\Gamma)$ has \textbf{valency} or \textbf{degree} $n$ if it consists of $n$ half-edges. A half-edge $h\in H(\Gamma)$ is \textbf{incident to} $v\in V(\Gamma)$ if $h\in v$, an edge $e$ is incident to $v$ if at least one of its half-edges is incident to $v$. The involution $\sigma$ defines another function from $H(\Gamma)$ to $E(\Gamma)$ which sends a half-edge to the edge which it belongs to. For any half-edge $h$,  we denote by $\overline{h}$ the edge which $h$ belongs to.

Following rules in page 8 of \cite{[GK94]}, we may associate to each pre-graph $\Gamma$ a finite CW complex
$$|\Gamma|=\frac{H(\Gamma)\times [0,\frac{1}{2}]}{\sim},$$
which is a quotient space of $H(\Gamma)\times [0,\frac{1}{2}]$ with respect to an equivalence relation $\sim$ defined as: $(h_1,l_1)\sim (h_2,l_2)$, for $h_1, h_2\in H(\Gamma)$ and $0\leq l_1,l_2\leq\frac{1}{2}$ if one of three conditions is satisfied: $(a)$ $h_1=h_2$ and $l_1=l_2$, $(b)$ $h_1\overset{P}{\thicksim}h_2$, $l_1=l_2=\frac{1}{2}$, $(c)$   $h_1=\sigma(h_2)$, $l_1=l_2=0$. That is, in topological space $H(\Gamma)\times [0,\frac{1}{2}]$, we identify $(h_1,\frac{1}{2})$, $(h_2,\frac{1}{2})$ if $h_1, h_2$ are in the same block, and identify $(h_1,0)$, $(h_2,0)$ if $h_1=\sigma(h_2)$.  The space  $|\Gamma|$ is called the \textbf{geometric realization} of $\Gamma$.  Now we give an example of a pre-graph and its geometric realization.

\begin{ex}\label{example}
Let $\Gamma$ be a pre-graph $(H,P,\sigma)$, where
\begin{align*}
&H=\{a,b,c,d,e,f,g,h,i\},\\
&P=\{\{a,b,c,d,e\},\{f,g,h,i,j\},\{k,l\}\},\\
&\sigma=(a)(b)(c)(df)(eg)(hi)(j)(kl).
\end{align*}
The set of edges is $E(\Gamma)=\{\{a\},\{b\},\{c\},\{d,f\},\{e,g\},\{h,i\},\{j\},\{k,l\}\}$ and
there are  three vertices $v_1=\{a,b,c,d,e\}$, $v_2=\{f,g,h,i,j\}$, $v_3=\{k,l\}$. Picturally, its geometric realization is as follows:

\begin{center}
\begin{tikzpicture}

\node [above] at (0,0) {$v_1$};
\draw[fill] (0,0) circle [radius=0.055];
\draw  (0,0)--(-0.7,0.61);
\node  at (-0.4,0.5) {$a$};
\draw (0,0)--(-1,0);
\node  at (-0.7,0.15) {$b$};
\draw (0,0)--(-0.5,-0.85);
\node  at (-0.35,-0.38) {$c$};

\draw plot[smooth]  coordinates {(0,0)  (1,0.3)   (2,0)};
\draw (1,0.22)--(1,0.38);
\node  at (0.4,0.41) {$d$}; \node  at (1.4,0.4) {$f$};
\draw plot[smooth]  coordinates {(0,0)  (1,-0.4)   (2,0)};
\draw (1,-0.32)--(1,-0.48);
\node  at (0.4,-0.41) {e}; \node  at (1.5,-0.41) {$g$};

\node [below ] at (2,0) {$v_2$};
\draw[fill] (2,0) circle [radius=0.055];

\draw (2,0)--(3,-0.3);
\node  at (2.4,-0.33) {$j$};

\draw plot[smooth, tension=1]  coordinates {(2,0)  (2.4,0.6) (2.25,1.1) (1.88,0.65)   (2,0)};
\draw (2.27,1.17)--(2.23,0.96);
\node [right] at (2.33,0.5) {$i$};
\node [left] at (1.95,0.7) {$h$};
\node [above] at (4.4,0.6) {$k$};
\draw[fill] (4,0) circle [radius=0.055];
\draw plot[smooth, tension=1]  coordinates {(4,0)  (4.3,-0.5) (4.7,0.1) (4.3,0.6)  (4,0)};

\node [below] at (4.4,-0.5) {$l$};
\draw (4.77,0.1)--(4.63,0.1);
\node [left] at (4,0) {$v_3$};
\end{tikzpicture}.
\end{center}
To make a vertex distinct, we put one $\bullet$ on the place of the vertex. We will usually make no distinction between a pre-graph and its geometric realization.
\end{ex}

\begin{defn}
Let $\Gamma_1=(H_1,P_1,\sigma_1)$ and $\Gamma_2=(H_2,P_2,\sigma_2)$ be two pre-graphs, an \textbf{equivalence} or \textbf{isomorphism} of them is a bijection $\phi:H_1\rightarrow H_2$ that preserves partitions and commutes  with involutions, that is,

$\bullet$ for every $h\in H_1$, then $\sigma_2(\phi(h))=\phi(\sigma_{1}(h)),$

$\bullet$ $h_1 \overset{P_1}{\thicksim} h_2$ if and only if $\phi(h)\overset{P_2}{\thicksim}\phi(h_2)$, for every $h_1,h_2\in H_1$,
where $\overset{P_1}{\thicksim}, \overset{P_2}{\thicksim}$ are equivalence relations defined by $P_1, P_2$ respectively.
\end{defn}
Obviously, any equivalence $\phi:H(\Gamma_1)\rightarrow H(\Gamma_2)$  of two pre-graphs naturally induces a homeomorphism $|\phi|:|\Gamma_1|\rightarrow |\Gamma_2|$  of their geometric realizations. It can be easily checked that, if $\phi_1:H(\Gamma_1)\rightarrow H(\Gamma_2)$ and $\phi_2:H(\Gamma_2)\rightarrow H(\Gamma_3)$ are two equivalence of pre-graphs, then their composition $\phi_{2}\circ \phi_{1}: H(\Gamma_1)\rightarrow H(\Gamma_3)$ naturally is an equivalence of pre-graph $\Gamma_1$ and pre-graph $\Gamma_2$, we denote it by $\phi_2\circ\phi_1:\Gamma_1\rightarrow \Gamma_2$. Thus all pre-graphs and their equivalences form a locally  small groupoid. The following are some common notions associated with pre-graphs.

\begin{center}
   \textbf{ Real and virtual edges}
\end{center}
An edge $e$ is called an \textbf{virtual edge} if it contains two half-edges and is contained in a vertex, that is, $e=\{h_1,h_2\}$ with $\pi(h_1)=\pi(h_2)$. A virtual edge is also called a loop or a wheel. An edge is called an \textbf{real edge} if it is not an virtual edge. The sets of virtual and real edges of graph $\Gamma$ is denoted by $E_{vir}(\Gamma)$ and $E_{re}(\Gamma)$, respectively and we have $E(\Gamma)=E_{vir}(\Gamma)\sqcup E_{re}(\Gamma)$, where $\sqcup$ denotes disjoint union of sets.  In example \ref{example}, $e_1=\{h,i\}$ and $e_2=\{k,l\}$ are virtual edges, $e_3=\{d,f\}$ and $e_4=\{j\}$ are real edges. $h,i,k,l$ are virtual half-edges and $d,f,j$ are real half-edges.

\begin{center}
\begin{tikzpicture}
\draw[fill] (0,0) circle [radius=0.055];
\draw (0,0)--(-0.7,0.61);
\draw (0,0)--(-1,0);
\draw (0,0)--(-0.5,-0.85);

\draw plot[smooth]  coordinates {(0,0)  (1,0.3)   (2,0)};
\draw (1,0.22)--(1,0.38);
\node  at (0.4,0.41) {$d$}; \node  at (1.4,0.4) {$f$};
\draw plot[smooth]  coordinates {(0,0)  (1,-0.4)   (2,0)};
\draw (1,-0.32)--(1,-0.48);

\draw[fill] (2,0) circle [radius=0.055];

\draw (2,0)--(3,-0.3);
\node  at (2.4,-0.33) {$j$};

\draw (2.27,1.17)--(2.23,0.96);

\draw[fill] (2,0) circle [radius=0.025];
\draw (2,0)--(3,-0.3);
\node [below] at (2.6,0.2)  {$e_4$};
\node [above] at (1,0.3) {$e_3$};
\node [above] at (2.25,1.1) {$e_1$};
\draw[fill] (4,0) circle [radius=0.055];

\node [below] at (4.4,-0.5) {$l$};
\draw (4.77,0.1)--(4.63,0.1);
\node [right] at (4.7,0.1) {$e_2$};

\draw plot[smooth, tension=1]  coordinates {(2,0)  (2.4,0.6) (2.25,1.1) (1.88,0.65)   (2,0)};
\draw (2.27,1.17)--(2.23,0.96);
\node [right] at (2.33,0.5) {$i$};
\node [left] at (1.95,0.7) {$h$};
\draw[fill] (4,0) circle [radius=0.055];
\draw plot[smooth, tension=1]  coordinates {(4,0)  (4.3,-0.5) (4.7,0.1) (4.3,0.6)  (4,0)};
\node [above] at (4.4,0.6) {$k$};
\node [below] at (4.4,-0.5) {$l$};
\draw (4.77,0.1)--(4.63,0.1);
\node [left] at (4,0) {$v_3$};
\end{tikzpicture}
\end{center}

\begin{center}
   \textbf{ Real and virtual vertices}
\end{center}
A vertex $v$ is called a \textbf{virtual vertex} if it contains a virtual edge, more precisely, there is a two-element $\sigma$-orbit $\{h_1,h_2\}\subset v $ with $h_1\neq h_2, h_1=\sigma(h_2)$, otherwise we call it a \textbf{real vertex}.  The sets of virtual and real vertices of graph $\Gamma$ is denoted by $V_{vir}(\Gamma)$ and $V_{re}(\Gamma)$, respectively.  Obviously, $V(\Gamma)=V_{vir}(\Gamma)\sqcup V_{re}(\Gamma)$. In example \ref{example}, $v_1$ is a real vertex and $v_2, v_3$ are  virtual vertices.
\begin{center}
\begin{tikzpicture}
\node [above] at (0,0) {$v_1$};
\draw[fill] (0,0) circle [radius=0.055];
\draw (0,0)--(-0.7,0.61);
\draw (0,0)--(-1,0);
\draw (0,0)--(-0.5,-0.85);
\draw plot[smooth]  coordinates {(0,0)  (1,0.3)   (2,0)};
\draw (1,0.22)--(1,0.38);
\draw plot[smooth]  coordinates {(0,0)  (1,-0.4)   (2,0)};
\draw (1,-0.32)--(1,-0.48);
\node [below ] at (2,0) {$v_2$};
\draw[fill] (2,0) circle [radius=0.055];
\draw (2,0)--(3,-0.3);
\draw plot[smooth, tension=1]  coordinates {(2,0)  (2.4,0.6) (2.25,1.1) (1.88,0.65)   (2,0)};
\draw (2.27,1.17)--(2.23,0.96);
\draw[fill] (4,0) circle [radius=0.055];
\draw plot[smooth, tension=1]  coordinates {(4,0)  (4.3,-0.5) (4.7,0.1) (4.3,0.6)  (4,0)};
\draw (4.77,0.1)--(4.63,0.1);
\node [left] at (4,0) {$v_3$};
\end{tikzpicture}
\end{center}
\begin{center}
   \textbf{ Real and virtual legs}
\end{center}

A  half-edge $h$ is called a \textbf{leg} if it forms an one-element orbit of $\sigma$ $($i.e. an edge with one element$)$  or belongs to a virtual edge. In the former case, we call it a \textbf{real leg} and in the later case we call it a \textbf{virtual leg}. For a half-edge $h\in H(\Gamma)$, if $h\neq \sigma(h)$ and $\pi(h)=\pi(\sigma(h))$, then $h$ a virtual leg of $\Gamma$ and $\overline{h}=(h,\sigma(h))$ is called a \textbf{virtual boundary edge} of $\Gamma$. By definition, a leg is virtual if and only if it belongs to a virtual edge. If $h= \sigma(h)$, we call $h$ a real leg and $\overline{h}=\{h\}$ a \textbf{real boundary edge}.
We denote the sets of legs, real legs and virtual legs of $\Gamma$ by  $Leg(\Gamma),Leg_{re}(\Gamma)$ and $Leg_{vir}(\Gamma)$. Obviously, we have $Leg(\Gamma)=Leg_{re}(\Gamma)\sqcup Leg_{vir}(\Gamma)$. In example \ref{example}, $Leg_{re}(\Gamma)=\{a,b,c,j\}$ and $Leg_{vir}(\Gamma)=\{h,i,k,l\}$.
\begin{center}
\begin{tikzpicture}
\draw[fill] (0,0) circle [radius=0.055];
\draw (0,0)--(-0.7,0.61);
\node  at (-0.4,0.5) {$a$};
\draw (0,0)--(-1,0);
\node  at (-0.7,0.15) {$b$};
\draw (0,0)--(-0.5,-0.85);
\node  at (-0.35,-0.38) {$c$};

\draw plot[smooth]  coordinates {(0,0)  (1,0.3)   (2,0)};
\draw (1,0.22)--(1,0.38);

\draw plot[smooth]  coordinates {(0,0)  (1,-0.4)   (2,0)};
\draw (1,-0.32)--(1,-0.48);

\draw[fill] (2,0) circle [radius=0.055];

\draw (2,0)--(3,-0.3);
\node  at (2.4,-0.33) {$j$};

\draw plot[smooth, tension=1]  coordinates {(2,0)  (2.4,0.6) (2.25,1.1) (1.88,0.65)   (2,0)};
\draw (2.27,1.17)--(2.23,0.96);

\draw[fill] (4,0) circle [radius=0.055];
\draw plot[smooth, tension=1]  coordinates {(4,0)  (4.3,-0.5) (4.7,0.1) (4.3,0.6)  (4,0)};
\node [right] at (2.33,0.5) {$i$};
\node [left] at (1.95,0.7) {$h$};
\node [above] at (4.4,0.6) {$k$};
\node [below] at (4.4,-0.5) {$l$};
\draw (4.77,0.1)--(4.63,0.1);
\end{tikzpicture}
\end{center}

\begin{center}
   \textbf{ Inner and external edges}
\end{center}

An edge is called an \textbf{inner edge} if it is  a real and two-element edge. Otherwise, we call it a \textbf{boundary edge} or an \textbf{external edge}. Virtual edges are external edges by definition. We denote the set of inner edges and the set of external  edges by $Inn(\Gamma)$ and $Ext(\Gamma)$, respectively, and we have $E(\Gamma)=Inn(\Gamma)\sqcup Ext(\Gamma)$. In example \ref{example}, $Inn(\Gamma)=\{\{d,f\},\{e,g\}\}$ and $Ext(\Gamma)=\{\{a\},\{b\},\{c\},\{h,i\},\{j\},\{k,l\}\}$. Please do not confuse a leg with its corresponding external edge, for example, $a, b,c,j$ are real legs and their corresponding external edges are $\overline{a}=\{a\},\overline{b}=\{b\},\overline{c}=\{c\},\overline{j}=\{j\}$, and $h,i,k,l$ are virtual legs and their corresponding external edges are $\overline{h}=\overline{i}=\{h,i\},\overline{k}=\overline{l}=\{k,l\}.$
\begin{center}
\begin{tikzpicture}
\draw[fill] (0,0) circle [radius=0.055];
\draw (0,0)--(-0.7,0.61);
\node  at (-0.4,0.5) {$a$};
\draw (0,0)--(-1,0);
\node  at (-0.7,0.15) {$b$};
\draw (0,0)--(-0.5,-0.85);
\node  at (-0.35,-0.38) {$c$};

\draw plot[smooth, tension=1]  coordinates {(0,0)  (1,0.3)   (2,0)};
\draw (1,0.22)--(1,0.38);
\node  at (0.4,0.41) {d}; \node  at (1.4,0.4) {$f$};
\draw plot[smooth, tension=1]  coordinates {(0,0)  (1,-0.4)   (2,0)};
\draw (1,-0.32)--(1,-0.48);
\node  at (0.4,-0.41) {e}; \node  at (1.5,-0.41) {$g$};

\draw[fill] (2,0) circle [radius=0.055];

\draw (2,0)--(3,-0.3);
\node  at (2.4,-0.33) {$j$};

\draw plot[smooth,tension=1]  coordinates {(2,0)  (2.4,0.6) (2.25,1.1) (1.88,0.65)   (2,0)};
\draw (2.27,1.17)--(2.23,0.96);

\draw[fill] (4,0) circle [radius=0.055];
\draw plot[smooth, tension=1]  coordinates {(4,0)  (4.3,-0.5) (4.7,0.1) (4.3,0.6)  (4,0)};
\node [right] at (2.33,0.5) {$i$};
\node [left] at (1.95,0.7) {$h$};
\node [above] at (4.4,0.6) {$k$};
\node [below] at (4.4,-0.5) {$l$};
\draw (4.77,0.1)--(4.63,0.1);

\end{tikzpicture}
\end{center}

\begin{center}
   \textbf{ Inner and external vertices}
\end{center}

A vertex is called an \textbf{external vertex} if there is  a leg incident to it. Otherwise we call it an \textbf{inner vertex}. By definition, we see that a vertex is external if and only if  there is an external edge incident to it.
For example, in the following  pre-graph, $v_1, v_2, v_3$ are external vertices and $v_4$ is an inner vertex.
\begin{center}
\begin{tikzpicture}
\node [above] at (0,0) {$v_1$};
\draw[fill] (0,0) circle [radius=0.055];
\draw (0,0)--(-0.7,0.61);
\draw (0,0)--(-1,0);
\draw (0,0)--(-0.5,-0.85);
\draw plot[smooth]  coordinates {(0,0)  (1,0.3)   (2,0)};
\draw (1,0.22)--(1,0.38);
\draw plot[smooth]  coordinates {(0,0)  (1,-0.4)   (2,0)};
\draw (1,-0.32)--(1,-0.48);
\node [below ] at (2,0) {$v_2$};
\draw[fill] (2,0) circle [radius=0.055];
\draw (2,0)--(3,-0.3);
\draw plot[smooth, tension=1]  coordinates {(2,0)  (2.4,0.6) (2.25,1.1) (1.88,0.65)   (2,0)};
\draw (2.27,1.17)--(2.23,0.96);
\draw[fill] (4,0) circle [radius=0.055];
\draw plot[smooth,tension=1]  coordinates {(4,0)  (4.3,-0.5) (4.7,0.1) (4.3,0.6)  (4,0)};
\draw (4.77,0.1)--(4.63,0.1);
\node [left] at (4,0) {$v_3$};
\draw[fill] (0.8,1) circle [radius=0.055];
\draw (0,0)--(0.8,1);
\draw (2,0)--(0.8,1);
\node [left] at (0.8,1) {$v_4$};
\end{tikzpicture}
\end{center}

\subsection{Graphs and their geometric realization}
In this paper, we will mainly focus on a sub-class of pre-graphs which are pre-graphs with their loops dissociated, and we call them graphs. More precisely,
\begin{defn}
A pre-graph is called a \textbf{graph}  when it has no virtual vertex with degree $\geq 3$, and
a graph is called \textbf{reduced}  when it is non-empty and has no virtual edge.
\end{defn}
Empty pre-graphs are graphs, and we can associate any non-empty pre-graph an unique reduced graph by cutting off all its virtual edges.
$$
\begin{tabular}{|l|c|}
\hline
pre-graph & $(H,P, \sigma)$\\ \hline
 graph & no virtual vertice of deg$\geq$3\\\hline
reduced graph &non-empty $\&$ no virtual edges\\\hline
\end{tabular}
$$

Notice that all virtual vertices of a graph are of degree $2$, so all virtual edges are "dissociative", that is, each virtual edge of a graph viewed as a pre-graph is a loop with its geometric realization being
\begin{center}
\begin{tikzpicture}
\draw[fill] (4,0) circle [radius=0.055];
\draw plot[smooth, tension=1]  coordinates {(4,0)  (4.3,-0.5) (4.7,0.1) (4.3,0.6)  (4,0)};

\draw (4.77,0.1)--(4.63,0.1);
\end{tikzpicture}.
\end{center}
Thus every virtual edge in a graph $\Gamma$ forms a connected component of $|\Gamma|$.
In order to compatible with Joyal and Street's geometric theory of graphs, so we define the geometric realization of a virtual edge of a graph as a "decorated segment", that is, for every virtual edge, its geometric realization will be a topological space like this:
\begin{tikzpicture}
\draw[] (0,0) circle [radius=0.055];
\draw(-0.7,0)--(-0.055,0);
\draw(0.7,0)--(0.055,0);
\end{tikzpicture}.
This means that for any graph its geometric realization will be a geometric graph with vertices decorated: real vertices will be decorated by $\bullet$ and virtual vertices will be decorated by $\circ$.

\begin{rem}
For a graph $\Gamma$ we define  its geometric realization as the quotient space of $H(\Gamma)\times [0,\frac{1}{2}]$ with respect to a new equivalence $\sim'$ obtained by modifying condition $(c)$ of $\sim$ in the definition of geometric realization of a pre-graph.

More precisely, the geometric realization of $\Gamma$ is defined as  $|\Gamma|=\frac{H(\Gamma)\times [0,\frac{1}{2}]}{\sim'}$, where $\sim'$ is defined as:
$(h_1,l_1)\sim (h_2,l_2)$, for $h_1, h_2\in H(\Gamma)$ and $0\leq l_1,l_2\leq\frac{1}{2}$ if one of three conditions is satisfied: $(a)$ $h_1=h_2$ and $l_1=l_2$, $(b)$ $h_1\overset{P}{\thicksim}h_2$, $l_1=l_2=\frac{1}{2}$, $(c')$  $h_1\not\overset{P}{\thicksim}h_2$, $h_1=\sigma(h_2)$, $l_1=l_2=0$. In order to distinguish real and virtual vertices in its geometric realization, we put a $\bullet$ in the place of a real vertex but put a $\circ$ in the place of a virtual vertex.
\end{rem}

Now we give an example of a graph and its geometric realization.
\begin{ex}\label{example2}
In example \ref{example}, if we remove the virtual edge $\{h,i\}$, we get a graph and its geometric realization would be
\begin{center}
\begin{tikzpicture}

\node [above] at (0,0) {$v_1$};
\draw[fill] (0,0) circle [radius=0.055];
\draw  (0,0)--(-0.7,0.61);
\node  at (-0.4,0.5) {$a$};
\draw (0,0)--(-1,0);
\node  at (-0.7,0.15) {$b$};
\draw (0,0)--(-0.5,-0.85);
\node  at (-0.35,-0.38) {$c$};

\draw plot[smooth]  coordinates {(0,0)  (1,0.3)   (2,0)};
\draw (1,0.22)--(1,0.38);
\node  at (0.4,0.41) {$d$}; \node  at (1.4,0.4) {$f$};
\draw plot[smooth]  coordinates {(0,0)  (1,-0.4)   (2,0)};
\draw (1,-0.32)--(1,-0.48);
\node  at (0.4,-0.41) {$e$}; \node  at (1.5,-0.41) {$g$};

\node [below ] at (2,0) {$v_2$};
\draw[fill] (2,0) circle [radius=0.055];

\draw (2,0)--(3,-0.3);
\node  at (2.4,-0.33) {$j$};

\node  at (4.2,0.5) {$k$};
\draw (4,0) circle [radius=0.055];
\draw (4,-0.055)--(4,-0.7);
\draw (4,0.055)--(4,0.7);
\node at (4.2,-0.5) {$l$};

\node  at (3.7,0) {$v_3$};
\end{tikzpicture}.
\end{center}
If we remove $\{k,l\}$ further, we get its associated reduced graph with its geometric realization being

\begin{center}
\begin{tikzpicture}
\node [above] at (0,0) {$v_1$};
\draw[fill] (0,0) circle [radius=0.055];
\draw  (0,0)--(-0.7,0.61);
\node  at (-0.4,0.5) {$a$};
\draw (0,0)--(-1,0);
\node  at (-0.7,0.15) {$b$};
\draw (0,0)--(-0.5,-0.85);
\node  at (-0.35,-0.38) {$c$};

\draw plot[smooth]  coordinates {(0,0)  (1,0.3)   (2,0)};
\draw (1,0.22)--(1,0.38);
\node  at (0.4,0.41) {$d$}; \node  at (1.4,0.4) {$f$};
\draw plot[smooth]  coordinates {(0,0)  (1,-0.4)   (2,0)};
\draw (1,-0.32)--(1,-0.48);
\node  at (0.4,-0.41) {$e$}; \node  at (1.5,-0.41) {$g$};

\node [below ] at (2,0) {$v_2$};
\draw[fill] (2,0) circle [radius=0.055];

\draw (2,0)--(3,-0.3);
\node  at (2.4,-0.33) {$j$};
\end{tikzpicture}.
\end{center}
\end{ex}

We will not distinct a graph with its geometric realization usually.
The following are some useful notions.
\begin{center}
   \textbf{ Unitary graph}
\end{center}

A graph is called an \textbf{unitary graph} if it has only two half-edges and one virtual vertex.

For example, the dates $H=\{a,b\}$, $P=\{\{a,b\}\}$ and $\sigma(a)=b, \sigma(b)=a$ defines an unitary graph, its geometrical realization is
\begin{tikzpicture}
\draw[] (0,0) circle [radius=0.055];
\draw(-0.7,0)--(-0.055,0);
\draw(0.7,0)--(0.055,0);
\node [above] at (-0.4,0) {$a$};
\node [above] at (0.4,0) {$b$};
\node [above] at (0,0) {$v$};
\end{tikzpicture}, where $v$ denotes the unique virtual vertex $\{a,b\}$.

\begin{center}
   \textbf{ Prime graph}
\end{center}
A non-unitary graph with only one vertex is called a \textbf{prime graph}. A prime graph is also called a  corolla. By definition, tt is obvious that a prime graph is a pre-graph with exactly one real vertex, and  we call it a $n$-corolla if the degree of its vertex is $n$. The following graph is an example of  prime graph with degree $5$.
\begin{center}
\begin{tikzpicture}
\draw[fill] (0,0) circle [radius=0.055];
\draw (0,0)--(-0.7,0.51);
\draw (0,0)--(-0.7,-0.31);
\draw (0,0)--(0.2,0.7);
\draw (0,0)--(-0.1,-0.8);
\draw (0,0)-- (0.7,-0.3);
\end{tikzpicture}
\end{center}

\begin{center}
   \textbf{Closed graph}
\end{center}
A graph without legs is called a \textbf{closed graph}.
An \textbf{ideal} of graph $\Gamma=(H,P,\sigma)$ is a graph $\Gamma^o=(H^o,P^o,\sigma^o)$ such that $H^o=H-Leg(\Gamma)$, $P^o=P|_{H^o}$ and $\sigma^o=\sigma|_{H^o}$. Ideals of unitary and prime graphs are empty graphs.

\subsection{Operations on/of graphs}
In this section, we will introduce some related notions and constructions related to graphs and operations which can implement on graphs. It will be easy to see  that  the combinatorial definition of a graph has some advantages to make these constructions precise and coherent.

\begin{center}
   \textbf{ Sub-graph}
\end{center}
Following [Gr06], a \textbf{subgraph} of graph $\Gamma=(H,P,\sigma)$ is a graph $\Gamma'=(H',P',\sigma')$ such that $H'\subseteq H$, $P'\subseteq P$
and for any $h\in H'$,
 \begin{equation*}\sigma'(h)=
\begin{cases}
\sigma(h),&  \text{if}\ \sigma(h)\in H',\\
h,&  \text{if}\ \sigma(h)\notin H'.\\
\end{cases}
\end{equation*}
By definition, we see that subgraphs of $\Gamma$ are one-to-one correspondence to subsets of the partition $P(\Gamma)$.
If  $H'\subsetneq H$, we call $\Gamma'$ a real subgraph of $\Gamma$.
If $\Gamma'$ is a subgraph of $\Gamma$, we usually denote this fact as $\Gamma'\subseteq \Gamma$. If $\Gamma'$ is a  real subgraph of $\Gamma$, we  denote this as $\Gamma'\subsetneq \Gamma$.

\begin{center}
   \textbf{ Path }
\end{center}

Two vertices $v$ and $v'$ are connected if there is a sequence of inner edges $(\{h_0,h'_0\},...,\{h_n,h'_n\})$ such that $\pi(h_0)=v$, $\pi(h'_i)=\pi(h_{i+1})$ for $0\leq i<n$ and $\pi(h'_n)=v'$. We call such a sequence a \textbf{path} between $v$ and $v'$. If $v=v'$, we call it a closed path or circuit. A graph is \textbf{connected} if each pair of vertices is connected.

Obviously,  we can define  connected components of a graph to be its  maximal connected subgraphs. Corollas and unitary graphs are connected.

\begin{center}
   \textbf{ Quotient graph and division}
\end{center}
First recall that for two set $X$ and $X'$ with $X'\subset X$, the complement set $X-X'$ of $X'$ is defined to be $\{x|x\in X, x\not\in X'\}$, and if $P$ is a partition of $X$, then there will be a partition of $X'$ induced from $P$ naturally, denoted by $P|_{X'}$. Now let us give the definition of a \textbf{quotient graph}.
If $\Gamma'\subseteq \Gamma$ be a subgraph, we define a new graph $\Gamma''=(H'',P'',\sigma'')$ as follows:

$\bullet$ $H''=(H-H')\sqcup Leg(\Gamma')$;

$\bullet$ $P''=P|_{H-H'}\sqcup \{Leg(\Gamma')\}$;

$\bullet$ $\sigma''=\sigma|_{H''}$, that is, $\sigma''$ is the restriction of $\sigma$ on $H''$.

We call $\Gamma''$ the quotient graph of $\Gamma$ with respect to the subgraph $\Gamma'$, and write this fact as $\Gamma''=\Gamma/\Gamma'$ or $\Gamma'\rightarrowtail\Gamma\twoheadrightarrow\Gamma''.$

On their geometric realizations, $|\Gamma''|$ can be obtained by a contraction of $|\Gamma|$ along $|(\Gamma')^o|$. For any graph $\Gamma$, $\Gamma/\Gamma$ would be a corolla if $Leg(\Gamma)\neq \varnothing$ or an empty graph if $Leg(\Gamma)= \varnothing$.

A \textbf{division} of graph $\Gamma$ is a finite set of subgraphs $D=\{\Gamma_1,...,\Gamma_n\}$ such that $H(\Gamma)=\sqcup_{i=1}^nH(\Gamma_i)$. Each $\Gamma_i$ $(1\leq i\leq n)$ is called a \textbf{cell} or a\textbf{component} of this division. We also call a division a vertex-partition. If a graph $\Gamma$ has a division $D=\{\Gamma_1,...,\Gamma_n\}$, then it is obvious that for any different $i,j\in\{1,...,n\}$ $$(\Gamma/\Gamma_i)/\Gamma_j=(\Gamma/\Gamma_j)/\Gamma_i,$$ thus the totally quotient graph $\Gamma/D=\Gamma/\{\Gamma_1,...,\Gamma_n\}$ is well-defined, and we call it the \textbf{contraction} of $\Gamma$ according to the division $D$.

\begin{center}
   \textbf{ Splitting and fission}
\end{center}
Following \cite{[Gr06]}, given two vertices $v_1$ and $v_2$ of $\Gamma$, we call the set $E(v_1,v_2)$ of all edges
incident to  $v_1$ and $v_2$ an \textbf{inner thick edge}, that is, $$E(v_1,v_2)=\{e\in E(\Gamma)|e\rightarrow v_1, e\rightarrow v_2\}.$$
If $v$ is an external vertex of $\Gamma$, we call the set $E(v)$ of all boundary edges
incident to  $v$ an \textbf{external thick edge}, that is, $$E(v)=\{\{h\}\in E(\Gamma)|h\in Leg(\Gamma),h\rightarrow v\}.$$
A \textbf{splitting} of a graph  is an assignment of a partition to each external and internal thick edges and a partition of virtual edges. A splitting is also called as a thick-edge-partition.

Let $\Gamma=(H,P,\sigma)$ be a graph with a splitting, then it is easy to see that the splitting structure can be equivalently  defined as an equivalent relation $\sim$ on $H$ such that for every $h_1, h_2\in H(\Gamma)$

$\bullet$ $h_1=\sigma(h_2)$ and $\pi(h_1)\neq\pi(h_2)$ imply that $h_1\sim h_2$;

$\bullet$ $h_1\sim h_2$ and $\pi(h_1)=\pi(h_2)$ imply that $\sigma(h_1)\sim\sigma(h_2)$, where $\pi$ is the map $\pi:H(\Gamma)\rightarrow V(\Gamma)$ defined by $P$;

$\bullet$ $h_1=\sigma(h_2)$ and $\pi(h_1)=\pi(h_2)$ implies that $h_1\nsim h_2$.

By definition, we see that on the quotient space $H/\sim$ there will be a well-defined  partition $P/\sim$ and a well-defined involution $\sigma/\sim$, thus we can get a new graph $\Gamma/\sim=(H/\sim, P/\sim, \sigma/\sim)$ and call it the \textbf{fusion} of $\Gamma$ according to the splitting.

\begin{center}
   \textbf{ Coarse-graining }
\end{center}

When a graph $\Gamma$ have both a splitting and a division, we call them \textbf{compatible} if there are both a naturally induced splitting on condensation of $\Gamma$ and a naturally induced division on fusion of $\Gamma$, and in this case we have $\Gamma/(D,\sim)=(\Gamma/D)/\sim=(\Gamma/\sim)/D$.

So when a graph $\Gamma$ is equipped with both  a dividing structure $D$ and a compatible splitting structure $\sim$, we call the pair $(D,\sim)$ a partition of $\Gamma$ and call $\Gamma/(D,\sim)$ the \textbf{coarse-graining} or \textbf{residue} of $\Gamma$ according to partition $(D,\sim)$. A procedure to represent a graph as coarse-graining of another graph with a splitting and a compatible division  is called \textbf{fine-graining}.

\begin{center}
   \textbf{ Substitution }
\end{center}

Let $\Gamma_1=(H_1,P_1,\sigma_1)$  and $\Gamma_2=(H_2,P_2,\sigma_2)$ be two graphs, $v\in V_{re}(\Gamma_2)$ be a \textbf{real} vertex of $\Gamma_2$ and $\theta:v\rightarrow Leg(\Gamma_1)\subseteq H_1$ is a bijection of sets. If $\Gamma_1=(H_1,P_1,\sigma_1)$ is reduced, we define a new graph $\Gamma=(H,P,\sigma)$ as follows:

$\bullet$ $H=H_1\sqcup (H_2-v)$;

$\bullet$ $P=P_1\sqcup (P_2-\{v\})$;

$\bullet$ for any $h\in H$,
 \begin{equation*}\sigma(h)=
\begin{cases}
\sigma_2(h),&  \text{if}\ h\in H_2-v,\sigma_2(h)\notin v,\\
\theta(h),&  \text{if}\ h\in H_2-v,  \sigma_2(h)\in v,\\
\sigma_1(h),&  \text{if}\ h\in H_1-Leg(\Gamma_1),\\
\sigma_2(\theta^{-1}(h)),&  \text{if}\ h\in Leg(\Gamma_1),\theta^{-1}(h)\not\in Leg(\Gamma_2),\\
h,&  \text{if}\ h\in Leg(\Gamma_1),\theta^{-1}(h)\in Leg(\Gamma_2).
\end{cases}
\end{equation*}

We call $\Gamma$ the \textbf{substitution} of $\Gamma_2$ by $\Gamma_1$ with respect to $(v,\theta)$, and  write this fact as $\Gamma=\Gamma_2\triangleleft_{(v, \theta)} \Gamma_1$. It is easy to see that $\Gamma_1$ is canonically isomorphic to a subgraph of  $\Gamma$, and $\Gamma_2$ is canonically isomorphic to a quotient graph of $\Gamma$. We write this fact as $\Gamma_1\rightarrowtail \Gamma_2\triangleleft_{(v, \theta)} \Gamma_1\twoheadrightarrow \Gamma_2$. Please notice that for any virtual vertex $v\in V_{vir}(\Gamma_1)$, after subustition it will become a real vertex of $\Gamma$, that is, $v\in V_{re}(\Gamma)$.

The construction of substitution is essential for the description of  monads characterizing operads or PROPs \cite{[Mar06],[LV12],[Lo12]} and is closely related with divisions of graphs.

\begin{center}
   \textbf{ Tensor product of graphs}
\end{center}
Recall that for a set $X_1$ with a partition $P_1$ and a set $X_2$ with a partition $P_2$, there is a partition on their disjoint union $X_1\sqcup X_2$ naturally, we denoted it by $P_1\sqcup P_2$.  If $\sigma_1$ and $\sigma_2$ are involutions on $X_1$ and $X_2$ respectively, then there will be an involution of $X_1\sqcup X_2$ naturally, we denote it by $\sigma_1\sqcup \sigma_2$.

The notion of tensor product can be introduced for pre-graphs. For any two pre-graphs $\Gamma_1=(H_1,P_1,\sigma_1)$ and $\Gamma_2=(H_2,P_2,\sigma_2)$, we define their \textbf{tensor product} to be a triple $(H,P,\sigma)$  with $H=H_1\sqcup H_2$, $P=P_1\sqcup P_2$ and  $\sigma=\sigma_1\sqcup \sigma_2$. It is easily to check that the triple forms a pre-graph, we denote it by $\Gamma_1\otimes\Gamma_2$. Evidently, we have $V(\Gamma_1\otimes\Gamma_2)=V(\Gamma_1)\sqcup V(\Gamma_2)$, and similar results also hold for the set of (real/virtual) edges and (real/virtual) legs, etc.
The following proposition can be directly checked.
\begin{prop}
The groupoid of pre-graphs equipped with the tensor product defined above forms a symmetric tensor category with empty pre-graph as the unit object.
\end{prop}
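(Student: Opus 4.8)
The plan is to reduce everything to the well-known fact that $(\mathbf{Set},\sqcup,\varnothing)$ is a symmetric monoidal category, and to observe that the tensor product of pre-graphs is computed componentwise by disjoint union. First I would note that a morphism in the groupoid of pre-graphs is a bijection $\phi\colon H(\Gamma_1)\to H(\Gamma_2)$ of half-edge sets that preserves the partition and commutes with the involution, and that for $\Gamma_1\otimes\Gamma_2$ all three data $H=H_1\sqcup H_2$, $P=P_1\sqcup P_2$, $\sigma=\sigma_1\sqcup\sigma_2$ are literally the disjoint-union data. Hence every canonical bijection of half-edge sets supplied by the symmetric monoidal structure of $\sqcup$ on $\mathbf{Set}$ is a candidate for a structure isomorphism of pre-graphs, and the only thing left to check is that it respects partitions and commutes with involutions.

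Next I would write down the constraint isomorphisms explicitly at the level of half-edges and verify the pre-graph conditions. The associator $a_{\Gamma_1,\Gamma_2,\Gamma_3}\colon(\Gamma_1\otimes\Gamma_2)\otimes\Gamma_3\to\Gamma_1\otimes(\Gamma_2\otimes\Gamma_3)$ is the canonical reassociation bijection $(H_1\sqcup H_2)\sqcup H_3\to H_1\sqcup(H_2\sqcup H_3)$; the left and right unitors $l_\Gamma\colon\varnothing\otimes\Gamma\to\Gamma$ and $r_\Gamma\colon\Gamma\otimes\varnothing\to\Gamma$ come from $\varnothing\sqcup H\cong H\cong H\sqcup\varnothing$ (using that the empty pre-graph has empty half-edge set, empty partition, and empty involution); and the symmetry $c_{\Gamma_1,\Gamma_2}\colon\Gamma_1\otimes\Gamma_2\to\Gamma_2\otimes\Gamma_1$ is the swap $H_1\sqcup H_2\to H_2\sqcup H_1$. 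In each case the blocks of $P_1\sqcup P_2$ (resp. $P_2\sqcup P_1$, etc.) are carried bijectively to blocks, because the reassociation and swap maps preserve the information of which summand a half-edge lies in, so the partition is preserved; and the involution $\sigma_1\sqcup\sigma_2$ acts within each summand, so it commutes with these maps. Thus all four maps are genuine pre-graph isomorphisms.

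Then I would verify bifunctoriality, naturality, and the coherence axioms. Bifunctoriality of $\otimes$ follows because $\phi_1\sqcup\phi_2$ of two pre-graph isomorphisms is again partition-preserving and involution-commuting, and disjoint union of bijections is functorial. Naturality of $a$, $l$, $r$, $c$ in each variable, as well as the pentagon, the two triangles, the two hexagons, and the relation $c_{\Gamma_2,\Gamma_1}\circ c_{\Gamma_1,\Gamma_2}=\mathrm{id}$, are all equalities of bijections between half-edge sets. Since the underlying half-edge maps are exactly those furnished by $(\mathbf{Set},\sqcup,\varnothing)$, and a morphism of pre-graphs is determined by its underlying map of half-edges (the forgetful assignment $\Gamma\mapsto H(\Gamma)$ is faithful), each coherence diagram commutes in pre-graphs precisely because it commutes in $\mathbf{Set}$.

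The work here is essentially bookkeeping rather than genuine difficulty: the one point that needs care is the faithfulness remark used to transport each coherence identity from $\mathbf{Set}$, together with the check that the empty partition and empty involution behave correctly in the unitors so that $\varnothing$ really functions as the unit object. If one prefers, one may instead fix a single model of disjoint union and obtain a \emph{strict} monoidal structure, in which case $a$, $l$, $r$ become identities and only the symmetry carries content; but keeping the canonical non-strict constraints makes the reduction to $\mathbf{Set}$ most transparent.
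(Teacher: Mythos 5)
Your proposal is correct. The paper offers no argument for this proposition at all --- it simply states that it ``can be directly checked'' --- and your write-up is a valid, well-organized execution of exactly that check: identifying the structure maps with the canonical constraints of $(\mathbf{Set},\sqcup,\varnothing)$, verifying they preserve partitions and commute with involutions, and then transporting naturality and all coherence axioms through the faithful forgetful assignment $\Gamma\mapsto H(\Gamma)$ is precisely the bookkeeping the authors left to the reader, with the faithfulness observation being the one genuinely useful organizing idea that spares you from checking the pentagon and hexagon diagrams by hand.
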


\begin{center}
   \textbf{ Elementary, essential prime and invertible graphs }
\end{center}

A graph is called \textbf{elementary} if it is tensor product of  finite number of prime graphs and unitary graphs. A graph is called \textbf{essential prime} if it is an elementary graph and contains exactly one real vertex. A graph is called \textbf{invertible}  if it is tensor product of finite number of unitary graphs. It is obvious that  an elementary graph is a corolla iff it is connected,  and  each connected component of an invertible graph is an unitary graph.

\begin{center}
   \textbf{ Merge }
\end{center}
Following \cite{[BM08]} (see also \cite{[KW13]}), we recall the notion of\textbf{ merge}.
Let $\Gamma=(H,P,\sigma)$ be a graph, $v_1,v_2\in V(\Gamma)$ be two real vertices. We define a new graph $\Gamma'=(H',P',\sigma')$ as follows:

$\bullet$ $H'=H$;

$\bullet$ $P'=(P-\{v_1,v_2\})\sqcup \{v_1\sqcup v_2\}$;

$\bullet$ $\sigma'=\sigma$.

We call $\Gamma'$ the merge of $\Gamma$ along $v_1,v_2$, and write $\Gamma'=\vee_{v_1,v_2}\Gamma$.

If $\Gamma_1$ and $\Gamma_2$ are two graphs with $v_1\in V(\Gamma_1)$ and $v_2\in V(\Gamma_2)$, we define the merge of $\Gamma_1$ and $\Gamma_2$ along $v_1,v_2$ as $\vee_{v_1,v_2}(\Gamma_1\otimes\Gamma_2)$. Intuitively,  merge is just fusion of vertices and can be viewed as a special kind of contraction.

\begin{center}
   \textbf{ Grafting of two graphs }
\end{center}

Now we define the notion of grafting of two graphs. Let $\Gamma_1=(H_1,P_1,\sigma_1)$ and $\Gamma_2=(H_2,P_2,\sigma_2)$ be two graphs. Let $h_1\in Leg(\Gamma_1)$ and $h_2\in Leg(\Gamma_2)$ be legs. We define a new graph $\Gamma=(H,P,\sigma)$ called \textbf{grafting} of $\Gamma_1$ and $\Gamma_2$ along $h_1,h_2$ as follows:

$\bullet$ if both $h_1,h_2$ are real legs, $H=H_1\sqcup H_2$, $P=P_1\sqcup P_2$ and
\begin{equation*}\sigma(h)=
\begin{cases}
\sigma_1(h),&  \text{if}\ h\in H_1- \{h_1\},\\
\sigma_2(h),&  \text{if}\ h\in H_2- \{h_2\},\\
h_2,  & \text{if}\ h=h_1,\\
h_1,  & \text{if}\ h=h_2;
\end{cases}
\end{equation*}

$\bullet$ if both $h_1,h_2$ are virtual legs, $H=(H_1-\{h_1\})\sqcup (H_2-\{h_2\})$, $P=(P_1\sqcup P_2)|_{(H_1- \{h_1,\sigma_1(h_1)\})\sqcup (H_2- \{h_2,\sigma_2(h_2)\})}\sqcup \{\{\sigma_1(h_1),\sigma_2(h_2)\}\}$ and
\begin{equation*}\sigma(h)=
\begin{cases}
\sigma_1(h),&  \text{if}\ h\in H_1- \{h_1,\sigma(h_1)\},\\
\sigma_2(h), & \text{if}\ h\in H_2- \{h_2,\sigma(h_2)\},\\
\sigma_2(h_2), &  \text{if}\ h=\sigma_1(h_1),\\
\sigma_1(h_1), &  \text{if}\ h=\sigma_2(h_2);
\end{cases}
\end{equation*}

$\bullet$ if $h_1$ is a real leg and $h_2$ is a virtual leg, $H=H_1\sqcup (H_2-\{h_2,\sigma_2(h_2)\})$, $P=(P_1\sqcup P_2)|_{H} $ and
\begin{equation*}\sigma(h)=
\begin{cases}
\sigma_1(h),&  \text{if}\ h\in H_1,\\
\sigma_2(h), & \text{if}\ h\in H_2- \{h_2,\sigma_2(h_2)\};
\end{cases}
\end{equation*}

$\bullet$ if $h_1$ is a virtual leg and $h_2$ is a real leg, $H=(H_1-\{h_1,\sigma_1(h_1)\})\sqcup H_1$, $P=(P_1\sqcup P_2)|_{H} $ and
\begin{equation*}\sigma(h)=
\begin{cases}
\sigma_1(h),&  \text{if}\ h\in H_1- \{h_1,\sigma_1(h_1)\},\\
\sigma_2(h), & \text{if}\ h\in H_2.
\end{cases}
\end{equation*}
In this situation we write $\Gamma=\Gamma_1\circ^{h_1}_{h_2}\Gamma_2$.  The class of graphs are closed under grafting and evidently we have $V_{re}(\Gamma)=V_{re}(\Gamma_1)\sqcup V_{re}(\Gamma_2)$, $E_{re}(\Gamma)=E_{re}(\Gamma_1)\sqcup E_{re}(\Gamma_2)$.

\begin{ex}
\

$\bullet$ If
$\Gamma_1=
\begin{tikzpicture}
\draw[fill] (0,0) circle [radius=0.055];
\draw(0.7,0)--(0.055,0);
\draw(-0.7,0)--(-0.055,0);
\node [above] at (-0.4,0) {$h_1$};
\node [above] at (0.4,0) {$h_2$};
\end{tikzpicture}
$,
$\Gamma_2=
\begin{tikzpicture}
\draw[fill] (0,0) circle [radius=0.055];
\draw(0.7,0)--(0.055,0);
\draw(-0.7,0)--(-0.055,0);
\node [above] at (-0.4,0) {$h_3$};
\node [above] at (0.4,0) {$h_4$};
\end{tikzpicture}
$, then $\Gamma_1\circ^{h_2}_{h_3}\Gamma_2=
\begin{tikzpicture}
\draw[fill] (0,0) circle [radius=0.055];
\draw[fill] (1.6,0) circle [radius=0.055];
\draw(0,0)--(1.6,0);
\draw(-0.7,0)--(-0.055,0);
\draw(0.8,0.1)--(0.8,-0.1);
\draw(2.3,0)--(1.6,0);
\node [above] at (-0.4,0) {$h_1$};
\node [above] at (0.4,0) {$h_2$};
\node [above] at (2.0,0) {$h_4$};
\node [above] at (1.2,0) {$h_3$};
\end{tikzpicture}$.

$\bullet$ If
$\Gamma_1=
\begin{tikzpicture}
\draw[fill] (0,0) circle [radius=0.055];
\draw(0.7,0)--(0.055,0);
\draw(-0.7,0)--(-0.055,0);
\node [above] at (-0.4,0) {$h_1$};
\node [above] at (0.4,0) {$h_2$};
\end{tikzpicture}
$,
$\Gamma_2=
\begin{tikzpicture}
\draw (0,0) circle [radius=0.055];
\draw(0.7,0)--(0.055,0);
\draw(-0.7,0)--(-0.055,0);
\node [above] at (-0.4,0) {$h_3$};
\node [above] at (0.4,0) {$h_4$};
\end{tikzpicture}$, then $\Gamma_1\circ^{h_2}_{h_3}\Gamma_2=\begin{tikzpicture}
\draw[fill] (0,0) circle [radius=0.055];
\draw(0.7,0)--(0.055,0);
\draw(-0.7,0)--(-0.055,0);
\node [above] at (-0.4,0) {$h_1$};
\node [above] at (0.4,0) {$h_2$};
\end{tikzpicture}
$.

$\bullet$ If
$\Gamma_1=
\begin{tikzpicture}
\draw(0,0) circle [radius=0.055];
\draw(0.7,0)--(0.055,0);
\draw(-0.7,0)--(-0.055,0);
\node [above] at (-0.4,0) {$h_1$};
\node [above] at (0.4,0) {$h_2$};
\end{tikzpicture}
$,
$\Gamma_2=
\begin{tikzpicture}
\draw (0,0) circle [radius=0.055];
\draw(0.7,0)--(0.055,0);
\draw(-0.7,0)--(-0.055,0);
\node [above] at (-0.4,0) {$h_3$};
\node [above] at (0.4,0) {$h_4$};
\end{tikzpicture}$, then $\Gamma_1\circ^{h_2}_{h_3}\Gamma_2=\begin{tikzpicture}
\draw (0,0) circle [radius=0.055];
\draw(0.7,0)--(0.055,0);
\draw(-0.7,0)--(-0.055,0);
\node [above] at (-0.4,0) {$h_1$};
\node [above] at (0.4,0) {$h_4$};
\end{tikzpicture}
$.

\end{ex}\ \\

If $h_1,h_2$ are two legs of graph $\Gamma=(H,P,\sigma)$, we define a new pre-graph $\Gamma'=(H',P',\sigma')$ as follows:

$\bullet$ if both $h_1,h_2$ are real legs, $H'=H$, $P'=P$ and
\begin{equation*}\sigma'(h)=
\begin{cases}
\sigma(h),&  \text{if}\ h\in H- \{h_1,h_2\},\\
h_2,  & \text{if}\ h=h_1,\\
h_1,  & \text{if}\ h=h_2;
\end{cases}
\end{equation*}

$\bullet$ if both $h_1,h_2$ are virtual legs, $H'=H-\{h_1,h_2\}$, $P'=P|_{H'}\sqcup \{\{\sigma(h_1),\sigma(h_2)\}\}$ and
\begin{equation*}\sigma'(h)=
\begin{cases}
\sigma(h),&  \text{if}\ h\in H- \{h_1,\sigma(h_1),h_2,\sigma(h_2)\},\\
\sigma(h_2), &  \text{if}\ h=\sigma(h_1),\\
\sigma(h_1), &  \text{if}\ h=\sigma(h_2);
\end{cases}
\end{equation*}

$\bullet$ if $h_1$ is a real leg and $h_2$ is a virtual leg, $H'=H-\{h_2,\sigma(h_2)\}$, $P'=P|_{H'} $ and
\begin{equation*}\sigma'(h)=
\begin{cases}
\sigma(h),&  \text{if}\ h\in H- \{h_1,h_2,\sigma(h_2)\},\\
h, & \text{if}\ h=h_1 ;
\end{cases}
\end{equation*}

$\bullet$ if $h_1$ is a virtual leg and $h_2$ is a real leg, $H'=H-\{h_1,\sigma(h_1)\}$, $P'=P|_{H'} $ and
\begin{equation*}\sigma'(h)=
\begin{cases}
\sigma(h),&  \text{if}\ h\in H- \{h_1,\sigma(h_1),h_2\},\\
h, & \text{if}\ h=h_2.
\end{cases}
\end{equation*}
We also write $\Gamma'$ as $\circ^{h_1}_{h_2}\Gamma$ and call it the \textbf{self-grafting} of $\Gamma$ along legs $h_1,h_2$. Obviously, the class of graphs is not closed under grafting.

Using the notions of self-grafting and tensor product of graphs, we can write $\Gamma_1\circ^{h_1}_{h_2}\Gamma_2$ as $\circ^{h_1}_{h_2}(\Gamma_1\otimes \Gamma_2)$. If $h_1,...,h_m$ are legs of $\Gamma_1$, $l_1,...,l_m$ are legs of $\Gamma_2$, we denote by $\Gamma_1\circ^{h_1,...,h_m}_{l_1,...,l_m}\Gamma_2$ the graph $\circ^{h_1}_{l_1}(\cdot\cdot\cdot\circ^{h_{m-1}}_{l_{m-1}}(\circ^{h_m}_{l_m}(\Gamma_1\otimes\Gamma_2))\cdot\cdot\cdot)$ obtained by successively grafting $\Gamma_1$ and $\Gamma_2$ along $\{h_i,l_i\}$s if each step of grafting is well-defined. Please notice that there are some symmetries  in above notations, that is, $\Gamma_1\circ^{h_1}_{h_2}\Gamma_2=\Gamma_2\circ^{h_2}_{h_1}\Gamma_1=\Gamma_1\circ^{h_2}_{h_1}\Gamma_2=\Gamma_2\circ^{h_1}_{h_2}\Gamma_1$ and $\circ^{h_1}_{h_2}\Gamma=\circ^{h_2}_{h_1}\Gamma$.

\subsection{Some structures on graphs}
 In this section, we will recall  definitions of an oriented graph, directed graph and progressive graph and other related notions following \cite{[JS91]}.
\begin{center}
   \textbf{ Oriented graph}
\end{center}

An \textbf{oriented graph} is a graph $\Gamma$ together with  a function $sgn:H(\Gamma)\rightarrow \{+,-\}$ (called orientation) such that for each edge $e=\{h_1,h_2\}$ with $h_1\neq h_2$, $sgn:e\rightarrow \{+,-\} $ is an isomorphism of sets.

It is easy to see that any subgraph of an oriented graph is an oriented graph with the naturally induced orientation.
We usually write an oriented graph as  $\overrightarrow{\Gamma}$ and one of its inner edges  as $\overrightarrow{e}$ to emphasize their orientation structures. The input set $In(v)$ of a vertex $v$ by defined is $\{h\in v| sgn(h)=+\}$ and output set $Out(v)$ by definition is $\{h\in v|sgn(h)=-\}$. For an oriented  edge $\overrightarrow{e}=\{h_{-},h_{+}\}$ with $sgn(h_{-})=-, sgn(h_{+})=+$, we define its source $s(\overrightarrow{e})$ and target $t(\overrightarrow{e})$ to be vertices $\pi(h_{-})$ and $\pi(h_{+})$, respectively. A leg $h$ is called an input of $\overrightarrow{\Gamma}$ if $sgn(h)=+$, and a leg $h$ is called an output of $\overrightarrow{\Gamma}$ if $sgn(h)=-$.
We denote by $In(\overrightarrow{\Gamma})$ and $Out(\overrightarrow{\Gamma})$ the sets of inputs and outputs of $\overrightarrow{\Gamma}$.

\begin{ex} Take $\Gamma$ be the graph in example \ref{example2}, we define $sgn$ as follows:

\begin{center}
\begin{tikzpicture}

\draw[fill] (0,0) circle [radius=0.055];
\draw  (0,0)--(-0.7,0.61);
\node  at (-0.4,0.5) {$+$};
\draw (0,0)--(-1,0);
\node  at (-0.7,0.15) {$+$};
\draw (0,0)--(-0.5,-0.85);
\node  at (-0.35,-0.38) {$-$};

\draw plot[smooth]  coordinates {(0,0)  (1,0.3)   (2,0)};
\draw (1,0.22)--(1,0.38);
\node  at (0.4,0.41) {$-$}; \node  at (1.4,0.4) {$+$};
\draw plot[smooth]  coordinates {(0,0)  (1,-0.4)   (2,0)};
\draw (1,-0.32)--(1,-0.48);
\node  at (0.4,-0.41) {$+$}; \node  at (1.5,-0.41) {$-$};

\draw[fill] (2,0) circle [radius=0.055];

\draw (2,0)--(3,-0.3);
\node  at (2.4,-0.33) {$-$};

\draw (2,0)--(3,-0.3);
\node  at (4.2,0.5) {$-$};
\draw (4,0) circle [radius=0.055];
\draw (4,-0.055)--(4,-0.7);
\draw (4,0.055)--(4,0.7);
\node at (4.2,-0.5) {$+$};
\end{tikzpicture},
\end{center} that is, $sgn^{-1}(+)=\{a,b,e,f,l\}$ and $sgn^{-1}(-)=\{c,d,j,k\}$.
More intuitively, we specify an orientation structure of a graph by drawing an arrow on each edges, for example  we picture the oriented graph above as

\begin{center}
\begin{tikzpicture}

\draw[fill] (0,0) circle [radius=0.055];
\draw  [postaction={decorate,decoration={markings,mark=at position 0.7 with {\arrowreversed[black]{stealth}}}}] (0,0)--(-0.7,0.61);
\draw [postaction={decorate,decoration={markings,mark=at position 0.7 with {\arrowreversed[black]{stealth}}}}] (0,0)--(-1,0);
\draw  [postaction={decorate,decoration={markings,mark=at position 0.7 with {\arrow[black]{stealth}}}}] (0,0)--(-0.5,-0.85);

\draw plot[smooth] coordinates {(0,0)  (1,-0.4)   (2,0)}[postaction={decorate, decoration={markings,mark=at position 0.511 with {\arrowreversed[black]{stealth}}}}];

\draw plot[smooth] coordinates {(0,0)  (1,0.3)   (2,0)}[postaction={decorate, decoration={markings,mark=at position .51 with {\arrow[black]{stealth}}}}];

\draw[fill] (2,0) circle [radius=0.055];

\draw  [postaction={decorate,decoration={markings,mark=at position 0.7 with {\arrow[black]{stealth}}}}] (2,0)--(3,-0.3);

\draw (4,0) circle [radius=0.055];
\draw  [postaction={decorate,decoration={markings,mark=at position 0.7 with {\arrowreversed[black]{stealth}}}}](4,-0.055)--(4,-0.7);
\draw  [postaction={decorate,decoration={markings,mark=at position 0.7 with {\arrow[black]{stealth}}}}](4,0.055)--(4,0.7);

\node [above] at (0,0) {$v_1$};
\node  at (-0.4,0.5) {a};
\node  at (-0.7,0.15) {b};
\node  at (-0.35,-0.38) {c};
\node  at (0.4,0.41) {d}; \node  at (1.4,0.4) {f};
\node  at (0.4,-0.41) {e}; \node  at (1.5,-0.41) {g};
\node [below ] at (2,0) {$v_2$};
\node  at (2.4,-0.33) {j};

\node  at (4.2,0.5) {$k$};

\node at (4.2,-0.5) {$l$};

\node  at (3.7,0) {$v_3$};

\end{tikzpicture}.
\end{center}
In this example, $In(v_1)=\{a,b,e\}$, $Out(v_1)=\{c,d\}$, $In(v_2)=\{f\}$, $Out(v_2)=\{g,j\}$, $In(v_3)=\{l\}$ and $Out(v_3)=\{k\}$. For source and target of edges, we have $v_1=s(\{d,f\})=t(\{e,g\})=t(\{a\})=t(\{b\})=s(\{c\}),v_2=s(\{e,g\})=t(\{d,f\})=$ and  $s(\{k,l\})= t(\{k,l\})=v_3$. The input set $In(\overrightarrow{\Gamma})$  is $\{a,b,l\}$ and the output set $Out(\overrightarrow{\Gamma})$ is $\{c,j,k\}$.
\end{ex}

\begin{center}
   \textbf{Directed graph and directed path }
\end{center}

A \textbf{directed graph} is an oriented graph $\overrightarrow{\Gamma}$ such that for any vertex $v\in V(\overrightarrow{\Gamma})$, both $In(v)$ and $Out(v)$ are non-empty sets. Any subgraph of a directed graph is a directed graph naturally. Any quotient graph of an oriented graph is an directed graph naturally. A vertex $v\in V(\Gamma)$ is called an input vertex if $In(\Gamma)\cap In(v)\neq \varnothing$, the set of all input vertices is denoted by $V_{in}(\Gamma)$.  Similarly we call a vertex $v\in V(\Gamma)$  an output vertex if $Out(\Gamma)\cap Out(v)\neq \varnothing$,denote the set of all output vertices by $V_{out}(\Gamma)$.

Let $\overrightarrow{\Gamma}_1=(\Gamma_1, sgn_1)$ and $\overrightarrow{\Gamma}_2=(\Gamma_2,sgn_2)$ be two directed graphs. We define their tensor product to be a directed graph $\overrightarrow{\Gamma}=(\Gamma,sgn)$ with $\Gamma=\Gamma_1\otimes \Gamma_2$ and $sgn=(sgn_1\sqcup sgn_2)$. Usually, we write $\overrightarrow{\Gamma}$ as $\overrightarrow{\Gamma}_1\otimes \overrightarrow{\Gamma}_2,$ and obviously this product is associative.

In an oriented graph $\overrightarrow{\Gamma}$, a \textbf{directed path} of length $n (\geq 1)$ is  a sequence of directed edges $e_1e_2\cdot\cdot\cdot e_n$ such that $t(e_i)=s(e_{i+1})$, $1\leq i\leq  n-1$. If both $v=s(e_1)$ and $v'=t(e_n)$ exist, we call $v$ the starting point and $v'$ the ending point of the directed path. A directed path with identical starting point and ending point is called a directed closed path or a circuit. A directed graph is called \textbf{acyclic} if it contains no directed circuit.

For any two directed edges $e_1$ and $e_2$, we use  the notation $e_1\rightarrow e_2$ to denote that there is a directed path starting  from $e_1$  and ending with $e_2$ and use the notation $e_1\nrightarrow e_2$ to denote that there is no directed path starting  from $e_1$  and ending with $e_2$. Similarly, for two directed edges $v_1$ and $v_2$, we denote the fact that there is a directed path starting with $v_1$ and ending with $v_2$ as $v_1\rightarrow v_2$. If there is no directed path from $v$ to $v'$, we denote this as $v\nrightarrow v_2$. We also introduce the notation $e\rightarrow v$ and $e\nrightarrow v$ to denote that  there is a directed path starting from $e$, ending with $v$ and there is no directed path starting from $e$, ending with $v$, respectively. For a subgraph $\overrightarrow{\Gamma'}$ of $\overrightarrow{\Gamma}$ and an edges $e\notin E(\Gamma')$, the notation $e\rightarrow \Gamma'$ denotes that there is at least one vertex $v\in V(\Gamma')$ such that $e\rightarrow v$ in $\overrightarrow{\Gamma}$, otherwise we denote as $e\nrightarrow \Gamma'$.

A subgraph $\overrightarrow{\Gamma'}$ of directed graph $\overrightarrow{\Gamma}$ is called admissible if it satisfies that
for any directed path $e_1e_2\cdot\cdot\cdot e_n$ in $\overrightarrow{\Gamma}$,
$$e_1, e_n \in E(\overrightarrow{\Gamma})\Longleftrightarrow e_i\in E(\overrightarrow{\Gamma'}), \  1\leq i\leq n.$$

The following proposition is obvious.
\begin{prop}
$\Gamma'\subset\Gamma$ is admissible if and only if $\Gamma/\Gamma'$ is a directed graph.
\end{prop}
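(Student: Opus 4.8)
The plan is to establish both implications by pushing directed paths through the contraction and localising everything at the single new vertex created by $\Gamma/\Gamma'$. First I would record the structural facts that reduce the statement to a translation of definitions. By construction $\Gamma/\Gamma'=(H'',P'',\sigma'')$ with $H''=(H-H')\sqcup Leg(\Gamma')$ and $\sigma''=\sigma|_{H''}$, so restricting $sgn$ to $H''$ makes $\Gamma/\Gamma'$ an oriented graph; the only question is whether it is \emph{directed}. Its vertices are the vertices of $\Gamma$ lying outside $\Gamma'$, whose blocks, incident half-edges and signs are copied verbatim from $\Gamma$, together with exactly one contracted vertex $w=Leg(\Gamma')$. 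Hence every vertex other than $w$ keeps the same $In$ and $Out$ it had in the directed graph $\Gamma$, and the whole content of the proposition is concentrated at $w$ and in the directed paths that run through it.

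The key lemma I would isolate is a dictionary between directed paths. A directed path of $\Gamma/\Gamma'$ is obtained from a directed path of $\Gamma$ by collapsing each maximal stretch lying inside $\Gamma'$ to a single passage through $w$; conversely an edge leaving $w$ (resp. entering $w$) is precisely a half-edge $h\in Leg(\Gamma')$ with $\sigma(h)\notin H'$ and $sgn(h)=-$ (resp. $sgn(h)=+$), i.e. an edge of $\Gamma$ running out of (resp. into) the vertex set $V(\Gamma')$. Consequently a directed circuit of $\Gamma/\Gamma'$ through $w$ corresponds exactly to a directed path of $\Gamma$ that starts in $\Gamma'$, leaves it through a vertex outside $V(\Gamma')$, and returns to $\Gamma'$ — which is precisely a directed path $e_1\cdots e_n$ with $e_1,e_n\in E(\Gamma')$ but some $e_k\notin E(\Gamma')$, the failure clause of admissibility.

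With this dictionary both directions are short. For the forward implication I would assume $\Gamma'$ admissible: then no directed path begins and ends in $\Gamma'$ while leaving it, so by the dictionary the contraction creates no passage that exits $w$ and returns; and, $\Gamma$ being directed, $w$ inherits genuine incoming and outgoing half-edges from the boundary of $\Gamma'$, so $\Gamma/\Gamma'$ is a directed graph. For the converse I would argue by contraposition: a witnessing path $e_1\cdots e_n$ with $e_1,e_n\in E(\Gamma')$ and $e_k\notin E(\Gamma')$ contains a first exit edge (source in $V(\Gamma')$, target outside) and a last re-entry edge (source outside, target in $V(\Gamma')$); under the contraction these become an edge out of $w$ and an edge into $w$ joined by the intervening directed path outside $\Gamma'$, so $\Gamma/\Gamma'$ acquires a directed circuit through $w$ and fails to be a (genuine) directed graph.

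The step I expect to be the main obstacle is making the dictionary between directed paths rigorous rather than pictorial: one must check carefully how two consecutive edges of a directed path re-glue across the contraction through $\sigma''=\sigma|_{H''}$, how the signs of the cut half-edges determine $In(w)$ and $Out(w)$, and that no spurious identifications or loops appear at $w$. I would also treat separately the degenerate cases in which $\Gamma'$ is empty (so $\Gamma/\Gamma'=\Gamma$ and admissibility is vacuous), in which $\Gamma'=\Gamma$ (so $\Gamma/\Gamma'$ is a corolla or the empty graph), and in which the witnessing path lies wholly inside or wholly outside $\Gamma'$; and I would make explicit that the first-and-last-edge formulation of admissibility is read as ``the path starts and ends inside $\Gamma'$,'' which is exactly the condition matching the creation of a through-$w$ circuit.
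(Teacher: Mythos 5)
You should first note that the paper itself offers no argument here: it introduces admissibility and then states ``The following proposition is obvious,'' so your attempt has to stand entirely on its own. It does not, because its central move contradicts the paper's definitions. In Section $2.4$ a \emph{directed graph} is an oriented graph in which $In(v)$ and $Out(v)$ are non-empty for every vertex; acyclicity is a separate condition, and ``progressive $=$ directed $+$ acyclic.'' A directed graph is therefore allowed to contain directed circuits, so your contrapositive step --- ``$\Gamma/\Gamma'$ acquires a directed circuit through $w$ and fails to be a (genuine) directed graph'' --- proves nothing about directedness in the paper's sense. Under the paper's definition, the only thing to characterize is whether $In(w)\neq\varnothing\neq Out(w)$ at the contracted vertex $w=Leg(\Gamma')$ (all other vertices inherit their $In$/$Out$ from $\Gamma$), and your proof addresses this only in the single unsupported clause ``$\Gamma$ being directed, $w$ inherits genuine incoming and outgoing half-edges from the boundary of $\Gamma'$.'' That clause itself needs an argument and is false for general directed $\Gamma$: if $\Gamma'$ is a directed $2$-cycle all of whose vertices send their outputs back into $\Gamma'$ (say $\Gamma'$ is a circuit $u\to v\to u$ inside a larger directed $\Gamma$ with one extra edge $p\to u$), then $Leg(\Gamma')$ contains no negatively signed half-edge and $Out(w)=\varnothing$, even though the stated admissibility condition holds.

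There is a second, independent gap in the ``dictionary'' lemma, and it is exactly where the edge-based formulation of admissibility diverges from what your circuit argument actually characterizes. A directed circuit through $w$ lifts to a directed path of $\Gamma$ from a vertex of $\Gamma'$ to a vertex of $\Gamma'$, but the first and last edges of that lift are \emph{cut} edges (one half-edge in $H'$, one outside), hence are not elements of $E(\Gamma')$; to manufacture a path with $e_1,e_n\in E(\Gamma')$ you must prepend an edge of $\Gamma'$ ending at the initial vertex and append one issuing from the final vertex, and such edges need not exist. Concretely, let $\Gamma$ be the progressive chain $p\to a\to x\to b\to q$ with an input leg at $p$ and an output leg at $q$, and let $\Gamma'$ be the subgraph on $\{a,b\}$: then no edge of $\Gamma$ lies entirely in $\Gamma'$, so the condition ``$e_1,e_n\in E(\Gamma')\Leftrightarrow$ all $e_i\in E(\Gamma')$'' holds vacuously, yet $\Gamma/\Gamma'$ contains a directed circuit through $w$ via $x$. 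So the equivalence your dictionary asserts is false as stated, and indeed the proposition read literally with the paper's definitions is itself problematic (which is perhaps why no proof is given). To make your idea work one must reformulate admissibility as vertex-convexity --- every directed path between vertices of $\Gamma'$ stays in $\Gamma'$ --- and read the conclusion as ``$\Gamma/\Gamma'$ is progressive'' for progressive $\Gamma$; with those corrections your circuit dictionary is the right mechanism, but as written it does not prove the stated statement.
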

\begin{center}
   \textbf{ Polarized, anchored and progressive graph}
\end{center}

A \textbf{polarized} graph is an oriented graph together with a choice of linear
order on $In(v)$ and $Out(v)$ for each vertex $v$. A \textbf{anchored} graph $\overrightarrow{\Gamma}$ is a directed graph together with a choice of linear
orders on $In(\Gamma)$ and $Out(\Gamma)$. A \textbf{progressive} graph is a directed and acyclic graph. In a progressive graph $\overrightarrow{\Gamma}$, we can define a partial order $<$ on the set of edges $E(\Gamma)$ as follows: for any two edges $e_1, e_2\in E(\Gamma)$,
$$e_1< e_2\Longleftrightarrow e_1\rightarrow e_2.$$
So roughly speaking, a progressive graph is an oriented graphs with a  "global flow" induced by the orientation.

\begin{center}
   \textbf{Composition of anchored graphs}
\end{center}

Let $\overrightarrow{\Gamma}_1=(\Gamma_1, sgn_1)$ and $\overrightarrow{\Gamma}_2=(\Gamma_2,sgn_2)$ be two anchored graphs with $Out(\overrightarrow{\Gamma}_1)=\{o_1,...,o_n\}$ and $In(\overrightarrow{\Gamma}_1)=\{i_1,...,i_n\}$. The composition of $\overrightarrow{\Gamma}_1$ and $\overrightarrow{\Gamma}_2$ is a anchored  graph $\overrightarrow{\Gamma}=(\Gamma,sgn)$ with $\Gamma=\Gamma_1\circ^{o_1,...,o_n}_{i_1,...,i_n}\Gamma_2$ and $sgn=(sgn_1\sqcup sgn_2)|_{H(\Gamma)}$.

It can be easily checked that the composition is well-defined, and $In(\overrightarrow{\Gamma})=In(\overrightarrow{\Gamma}_1)$, $Out(\overrightarrow{\Gamma})=Out(\overrightarrow{\Gamma}_2).$ In this situation, we also write $\overrightarrow{\Gamma}=\overrightarrow{\Gamma}_2\circ\overrightarrow{\Gamma}_1.$

\begin{rem}
Unlike non-oriented cases, the symmetry of our notations disappears in oriented cases, that is, $\overrightarrow{\Gamma}_2\circ\overrightarrow{\Gamma}_1$ and $\overrightarrow{\Gamma}_1\circ\overrightarrow{\Gamma}_2$ are different notions, they may not isomorphic as oriented graphs even if both of them exist.
\end{rem}
In the graph $\overrightarrow{\Gamma}=\overrightarrow{\Gamma}_2\circ\overrightarrow{\Gamma}_1$, edges are classified into  three classes: "old" edges that coming from $E(\overrightarrow{\Gamma}_1)-\{\overline{o_1},..., \overline{o_n}\}$ and that coming from $E(\overrightarrow{\Gamma}_2)-\{\overline{i_1},..., \overline{i_n}\}$ and "new" edges  formed through grafting of legs. We denote by $E_o(\overrightarrow{\Gamma_1})$, $E_o(\overrightarrow{\Gamma_2})$ and $E_n(\overrightarrow{\Gamma})$ the sets of the three classes of edges, that is,  $E_o(\overrightarrow{\Gamma_1})=E(\overrightarrow{\Gamma}_1)-\{\overline{o_1},..., \overline{o_n}\}$, $E_o(\overrightarrow{\Gamma_2})=E(\overrightarrow{\Gamma}_2)-\{\overline{i_1},..., \overline{i_n}\}$ and $E_n(\overrightarrow{\Gamma})=\{\overline{e}_1,...,\overline{e}_n\}$ where  any  $\overline{e}_k\in E_n(\overrightarrow{\Gamma})$ is of the form  \begin{equation*}\overline{e}_k=
\begin{cases}
\{o_k,i_k\},& \text{if both }\ o_k\ \text{and}\ i_k\  \text{are real};\\
\{o_k\},& \text{if}\ o_k\  \text{is real and } i_k\  \text{is virtual};\\
\{\sigma_1(o_k),\sigma_2(i_k)\},&\text{if both }\ o_k\  \text{and}\  i_k\  \text{are virtual};\\
\{i_k\},&\text{if}\ o_k\  \text{is virtual and } i_k\  \text{is real},
\end{cases}
\end{equation*}
for some $1\leq k \leq n.$
Obviously, we have $E(\overrightarrow{\Gamma})=E_o(\overrightarrow{\Gamma}_1)\sqcup E_n(\overrightarrow{\Gamma})\sqcup E_o(\overrightarrow{\Gamma}_2).$

\begin{ex}
\

$\bullet$ If
$\overrightarrow{\Gamma}_1=
\begin{tikzpicture}
\draw[fill] (0,0) circle [radius=0.055];
\draw(0.7,0)--(0.055,0)[postaction={decorate, decoration={markings,mark=at position .5 with {\arrowreversed[black]{stealth}}}}];
\draw(-0.7,0)--(-0.055,0)[postaction={decorate, decoration={markings,mark=at position .5 with {\arrow[black]{stealth}}}}];
\node [above] at (-0.4,0) {$h_1$};
\node [above] at (0.4,0) {$h_2$};
\end{tikzpicture}
$,
$\overrightarrow{\Gamma}_2=
\begin{tikzpicture}
\draw[fill] (0,0) circle [radius=0.055];
\draw(0.7,0)--(0.055,0)[postaction={decorate, decoration={markings,mark=at position .5 with {\arrowreversed[black]{stealth}}}}];
\draw(-0.7,0)--(-0.055,0)[postaction={decorate, decoration={markings,mark=at position .5 with {\arrow[black]{stealth}}}}];
\node [above] at (-0.4,0) {$h_3$};
\node [above] at (0.4,0) {$h_4$};
\end{tikzpicture}
$, then $\overrightarrow{\Gamma}_2\circ\overrightarrow{\Gamma}_1=
\begin{tikzpicture}
\draw[fill] (0,0) circle [radius=0.055];
\draw[fill] (1.6,0) circle [radius=0.055];
\draw(0,0)--(1.6,0)[postaction={decorate, decoration={markings,mark=at position .5 with {\arrow[black]{stealth}}}}];
\draw(-0.7,0)--(-0.055,0)[postaction={decorate, decoration={markings,mark=at position .5 with {\arrow[black]{stealth}}}}];

\draw(2.3,0)--(1.6,0)[postaction={decorate, decoration={markings,mark=at position .5 with {\arrowreversed[black]{stealth}}}}];
\node [above] at (-0.4,0) {$h_1$};
\node [above] at (0.4,0) {$h_2$};
\node [above] at (2.0,0) {$h_4$};
\node [above] at (1.2,0) {$h_3$};
\end{tikzpicture}$.

$\bullet$ If
$\overrightarrow{\Gamma}_1=
\begin{tikzpicture}
\draw[fill] (0,0) circle [radius=0.055];
\draw(0.7,0)--(0.055,0)[postaction={decorate, decoration={markings,mark=at position .5 with {\arrowreversed[black]{stealth}}}}];
\draw(-0.7,0)--(-0.055,0)[postaction={decorate, decoration={markings,mark=at position .5 with {\arrow[black]{stealth}}}}];
\node [above] at (-0.4,0) {$h_1$};
\node [above] at (0.4,0) {$h_2$};
\end{tikzpicture}
$,
$\overrightarrow{\Gamma}_2=
\begin{tikzpicture}
\draw (0,0) circle [radius=0.055];
\draw(0.7,0)--(0.055,0)[postaction={decorate, decoration={markings,mark=at position .5 with {\arrowreversed[black]{stealth}}}}];
\draw(-0.7,0)--(-0.055,0)[postaction={decorate, decoration={markings,mark=at position .5 with {\arrow[black]{stealth}}}}];
\node [above] at (-0.4,0) {$h_3$};
\node [above] at (0.4,0) {$h_4$};
\end{tikzpicture}
$,
then
$\overrightarrow{\Gamma}_2\circ\overrightarrow{\Gamma}_1=
\begin{tikzpicture}
\draw [fill](0,0) circle [radius=0.055];
\draw(0.7,0)--(0.055,0)[postaction={decorate, decoration={markings,mark=at position .5 with {\arrowreversed[black]{stealth}}}}];
\draw(-0.7,0)--(-0.055,0)[postaction={decorate, decoration={markings,mark=at position .5 with {\arrow[black]{stealth}}}}];
\node [above] at (-0.4,0) {$h_1$};
\node [above] at (0.4,0) {$h_2$};
\end{tikzpicture}
$.

$\bullet$ If
$\overrightarrow{\Gamma}_1=
\begin{tikzpicture}
\draw (0,0) circle [radius=0.055];
\draw(0.7,0)--(0.055,0)[postaction={decorate, decoration={markings,mark=at position .5 with {\arrowreversed[black]{stealth}}}}];
\draw(-0.7,0)--(-0.055,0)[postaction={decorate, decoration={markings,mark=at position .5 with {\arrow[black]{stealth}}}}];
\node [above] at (-0.4,0) {$h_1$};
\node [above] at (0.4,0) {$h_2$};
\end{tikzpicture}
$,
$\overrightarrow{\Gamma}_2=
\begin{tikzpicture}
\draw [fill] (0,0) circle [radius=0.055];
\draw(0.7,0)--(0.055,0)[postaction={decorate, decoration={markings,mark=at position .5 with {\arrowreversed[black]{stealth}}}}];
\draw(-0.7,0)--(-0.055,0)[postaction={decorate, decoration={markings,mark=at position .5 with {\arrow[black]{stealth}}}}];
\node [above] at (-0.4,0) {$h_3$};
\node [above] at (0.4,0) {$h_4$};
\end{tikzpicture}
$,
then
$\overrightarrow{\Gamma}_2\circ\overrightarrow{\Gamma}_1=
\begin{tikzpicture}
\draw [fill] (0,0) circle [radius=0.055];
\draw(0.7,0)--(0.055,0)[postaction={decorate, decoration={markings,mark=at position .5 with {\arrowreversed[black]{stealth}}}}];
\draw(-0.7,0)--(-0.055,0)[postaction={decorate, decoration={markings,mark=at position .5 with {\arrow[black]{stealth}}}}];
\node [above] at (-0.4,0) {$h_3$};
\node [above] at (0.4,0) {$h_4$};
\end{tikzpicture}
$.

$\bullet$ If
$\overrightarrow{\Gamma}_1=
\begin{tikzpicture}
\draw (0,0) circle [radius=0.055];
\draw(0.7,0)--(0.055,0)[postaction={decorate, decoration={markings,mark=at position .5 with {\arrowreversed[black]{stealth}}}}];
\draw(-0.7,0)--(-0.055,0)[postaction={decorate, decoration={markings,mark=at position .5 with {\arrow[black]{stealth}}}}];
\node [above] at (-0.4,0) {$h_1$};
\node [above] at (0.4,0) {$h_2$};
\end{tikzpicture}
$,
$\overrightarrow{\Gamma}_2=
\begin{tikzpicture}
\draw (0,0) circle [radius=0.055];
\draw(0.7,0)--(0.055,0)[postaction={decorate, decoration={markings,mark=at position .5 with {\arrowreversed[black]{stealth}}}}];
\draw(-0.7,0)--(-0.055,0)[postaction={decorate, decoration={markings,mark=at position .5 with {\arrow[black]{stealth}}}}];
\node [above] at (-0.4,0) {$h_3$};
\node [above] at (0.4,0) {$h_4$};
\end{tikzpicture}
$, then $\overrightarrow{\Gamma}_2\circ\overrightarrow{\Gamma}_1=\begin{tikzpicture}
\draw (0,0) circle [radius=0.055];
\draw(0.7,0)--(0.055,0)[postaction={decorate, decoration={markings,mark=at position .5 with {\arrowreversed[black]{stealth}}}}];
\draw(-0.7,0)--(-0.055,0)[postaction={decorate, decoration={markings,mark=at position .5 with {\arrow[black]{stealth}}}}];
\node [above] at (-0.4,0) {$h_1$};
\node [above] at (0.4,0) {$h_4$};
\end{tikzpicture}
$.
\end{ex}
The following proposition explains the advantage of the way we define the composition.
\begin{prop}
Composition of anchored graphs is associative.
\end{prop}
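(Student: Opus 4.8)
The plan is to show that for composable anchored graphs $\overrightarrow{\Gamma}_1,\overrightarrow{\Gamma}_2,\overrightarrow{\Gamma}_3$ the two anchored graphs $(\overrightarrow{\Gamma}_3\circ\overrightarrow{\Gamma}_2)\circ\overrightarrow{\Gamma}_1$ and $\overrightarrow{\Gamma}_3\circ(\overrightarrow{\Gamma}_2\circ\overrightarrow{\Gamma}_1)$ coincide by proving that they carry the same underlying data $(H,P,\sigma,sgn)$ together with the same linear orders on inputs and outputs. Since the composition $\overrightarrow{\Gamma}_2\circ\overrightarrow{\Gamma}_1$ is by definition the grafting $\Gamma_1\circ^{o_1,\dots,o_n}_{i_1,\dots,i_n}\Gamma_2$ decorated with $(sgn_1\sqcup sgn_2)|_{H}$, and the multiple grafting is itself a composite of self-graftings applied to a tensor product, the first step is to rewrite everything in terms of self-graftings of the single graph $\Gamma_1\otimes\Gamma_2\otimes\Gamma_3$. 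Here I use that the tensor product is associative (the symmetric tensor category structure of the Proposition of Section 2.3), so that $(\Gamma_1\otimes\Gamma_2)\otimes\Gamma_3$ and $\Gamma_1\otimes(\Gamma_2\otimes\Gamma_3)$ are canonically identified with the single triple $(H_1\sqcup H_2\sqcup H_3,\,P_1\sqcup P_2\sqcup P_3,\,\sigma_1\sqcup\sigma_2\sqcup\sigma_3)$.

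With this reduction, both bracketings are obtained from $\Gamma_1\otimes\Gamma_2\otimes\Gamma_3$ by performing two families of self-graftings: the family $A$ of $n$ graftings pairing the ordered outputs $o_1,\dots,o_n$ of $\Gamma_1$ with the ordered inputs $i_1,\dots,i_n$ of $\Gamma_2$, and the family $B$ of $m$ graftings pairing the ordered outputs of $\Gamma_2$ with the ordered inputs of $\Gamma_3$. Indeed, using $In(\overrightarrow{\Gamma}_2\circ\overrightarrow{\Gamma}_1)=In(\overrightarrow{\Gamma}_1)$ and $Out(\overrightarrow{\Gamma}_2\circ\overrightarrow{\Gamma}_1)=Out(\overrightarrow{\Gamma}_2)$, one sees that $(\overrightarrow{\Gamma}_3\circ\overrightarrow{\Gamma}_2)\circ\overrightarrow{\Gamma}_1$ performs $B$ then $A$, while $\overrightarrow{\Gamma}_3\circ(\overrightarrow{\Gamma}_2\circ\overrightarrow{\Gamma}_1)$ performs $A$ then $B$, but the set of pairings is the same. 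The crucial observation is that $A$ and $B$ act on disjoint sets of legs: $A$ only touches $Out(\overrightarrow{\Gamma}_1)\cup In(\overrightarrow{\Gamma}_2)$ and $B$ only touches $Out(\overrightarrow{\Gamma}_2)\cup In(\overrightarrow{\Gamma}_3)$, and these are disjoint because $In(\overrightarrow{\Gamma}_2)\cap Out(\overrightarrow{\Gamma}_2)=\varnothing$ by the definition of inputs and outputs, while $H_1,H_2,H_3$ are disjoint. Thus the key lemma I would isolate is that self-graftings at disjoint pairs of legs commute, i.e. $\circ^{a}_{b}\circ^{c}_{d}\Gamma=\circ^{c}_{d}\circ^{a}_{b}\Gamma$ whenever $\{a,b\}\cap\{c,d\}=\varnothing$ and each pair remains a pair of legs after the other grafting. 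This is verified by inspecting the four cases of the self-grafting formula: each case either rewires the involution $\sigma$ on $\{a,b\}$ or deletes the relevant half-edges and repartitions, and because the pairs are disjoint, performing one rewiring or deletion leaves the half-edges of the other pair and their $\sigma$-images untouched, so the order is immaterial.

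Granting commutativity of disjoint self-graftings, both $(\overrightarrow{\Gamma}_3\circ\overrightarrow{\Gamma}_2)\circ\overrightarrow{\Gamma}_1$ and $\overrightarrow{\Gamma}_3\circ(\overrightarrow{\Gamma}_2\circ\overrightarrow{\Gamma}_1)$ equal the single graph obtained from $\Gamma_1\otimes\Gamma_2\otimes\Gamma_3$ by applying all $n+m$ self-graftings of $A\cup B$ in any order; in particular they share the same $(H,P,\sigma)$. The orientation agrees as well, since in each case $sgn$ is the restriction of $sgn_1\sqcup sgn_2\sqcup sgn_3$ to the common surviving half-edge set. For the anchored structure I iterate the identities $In(\overrightarrow{\Gamma}_2\circ\overrightarrow{\Gamma}_1)=In(\overrightarrow{\Gamma}_1)$ and $Out(\overrightarrow{\Gamma}_2\circ\overrightarrow{\Gamma}_1)=Out(\overrightarrow{\Gamma}_2)$ recorded after the definition of composition: both triple composites have input set $In(\overrightarrow{\Gamma}_1)$ with the order inherited from $\overrightarrow{\Gamma}_1$ and output set $Out(\overrightarrow{\Gamma}_3)$ with the order inherited from $\overrightarrow{\Gamma}_3$. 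Hence the two anchored graphs coincide, the well-definedness of both composites being granted by the remark preceding the statement.

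I expect the main obstacle to be the bookkeeping in the virtual-leg cases of the self-grafting formula, where half-edges are deleted and the partition $P$ is altered: one must check that deleting the half-edges attached to one grafting pair does not disturb the legs of the other pair, and that the two repartitions combine into a single partition independent of the order in which $A$ and $B$ are carried out. Once disjointness of the leg families is used to localize each rewiring and each deletion, this reduces to a finite case check over the combinations of real and virtual legs, but it is the step where genuine care, rather than purely formal manipulation, is required.
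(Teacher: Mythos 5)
Your reduction to self-graftings of $\Gamma_1\otimes\Gamma_2\otimes\Gamma_3$ and your commutation lemma are sound when every leg being grafted is real, but the disjointness claim on which the whole argument rests fails as soon as $\Gamma_2$ contains a unitary component, and this is a genuine gap rather than bookkeeping. Suppose $\Gamma_2$ has a virtual edge $\{a,b\}$ with $sgn_2(a)=+$ and $sgn_2(b)=-$, so that $a=i_k\in In(\overrightarrow{\Gamma}_2)$ and $b=\sigma_2(a)=p_l\in Out(\overrightarrow{\Gamma}_2)$, and suppose $o_k$ and $j_l$ (the half-edges paired with $a$ and $b$ in families $A$ and $B$) are real. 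The pairs $\{o_k,a\}\in A$ and $\{b,j_l\}\in B$ are disjoint as sets of half-edges, yet by the real--virtual case of the grafting formula, performing $\circ^{o_k}_{a}$ deletes \emph{both} $a$ and $\sigma_2(a)=b$ from the graph. So $A$ does touch $Out(\overrightarrow{\Gamma}_2)$, the pair $\{b,j_l\}$ is no longer a pair of legs once $A$ has been performed, and the hypothesis of your key lemma (``each pair remains a pair of legs after the other grafting'') is violated. Worse, the two bracketings do not even perform the same set of pairings, contrary to your claim: the right bracketing grafts $\{o_k,a\}$ and then $\{o_k,j_l\}$, whereas the left bracketing grafts $\{b,j_l\}$ and then $\{o_k,j_l\}$ --- the unitary component acts as a wire that passes the grafting through, and each order re-routes it differently.

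Associativity does still hold in these cases, but it needs its own verification: one must check that every chain of unitary components threading through $\Gamma_1$, $\Gamma_2$, $\Gamma_3$ collapses to the same edge (or is deleted entirely) under both orders of composition. Relatedly, the identities $In(\overrightarrow{\Gamma}_2\circ\overrightarrow{\Gamma}_1)=In(\overrightarrow{\Gamma}_1)$ and $Out(\overrightarrow{\Gamma}_2\circ\overrightarrow{\Gamma}_1)=Out(\overrightarrow{\Gamma}_2)$, which you iterate to compare the anchored structures, hold only up to a relabelling of half-edges in the virtual cases (after grafting a real $o_k$ onto the virtual $i_k$, it is $o_k$, not $b$, that occupies position $l$ in the output order of the composite), so that step also requires care. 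The paper itself offers only ``directly check case by case,'' and the real content of that check is exactly these virtual-leg configurations: your argument disposes of the all-real case cleanly, but the cases you defer to the final paragraph cannot be handled by disjointness-based localization, because disjointness is precisely what fails there.
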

\begin{proof}
Directly check case by case.
\end{proof}

\section{Progressive planar graphs}
In this section, we will establish a combinatorial theory of  Joyal-Street's (boxed/leveled) progressive plane graphs. We will introduce the notion of a (combinatorial) progressive planar graph and show some of its basic properties.   We will also give the definitions of tensor product and composition of planar graphs. At last we will introduce the notion of the planar geometric realization of a planar graph and prove that planar geometric realization provides an equivalence between our combinatorial theory and the geometric theory of Joyal-Street.

\subsection{Definition of a progressive planar graph}
Here we give a combinatorial definition of a progressive planar graph and show some of its basic properties.
\begin{defn}
A (combinatorial) \textbf{progressive planar structure} on a directed graph $\overrightarrow{\Gamma}$ is a linear order $\preceq$ on the set of edges $E(\Gamma)$ satisfying

$(P_1)$ $e_1\rightarrow e_2$ implies $e_1\prec e_2$, for every two distinct edges $e_1,e_2$;

$(P_2)$    for any distinct edges $e_1,e_2,e_3$, if $e_1\rightarrow e_2$ and $e_1\prec e_3\prec e_2$ then $e_3\rightarrow e_2$ or $e_1\rightarrow e_3$.
\end{defn}
A directed graph $\overrightarrow{\Gamma}$ with a (combinatorial) progressive  planar structure $\prec$ is called a progressive planar graph and is denoted by $(\overrightarrow{\Gamma},\prec)$ or $(\Gamma, \prec)$, $\Gamma$ for convenient. We will say a planar structure and a planar graph for short in stead of a (combinatorial) progressive planar structure and a (combinatorial) progressive planar graph in this paper.  A planar graph is called elementary (prime, essential prime, unitary, invertible ) if the underlying graph is elementary (prime, essential prime, unitary, invertible). If the underly graph of a planar graph is reduced, we call this planar graph reduced.

\begin{ex}
Here we show an example of elementary planar graph.

\begin{center}
\begin{tikzpicture}
\draw (0,0) circle [radius=0.055];
\draw (0,-0.055)-- (0,-0.5)[postaction={decorate, decoration={markings,mark=at position .75 with {\arrow[black]{stealth}}}}];
\draw (0,0.055)-- (0,0.5)[postaction={decorate, decoration={markings,mark=at position .75 with {\arrowreversed[black]{stealth}}}}];
\node [above] at (0,0.5) {$1$};

\draw (1.3,0) circle [radius=0.055];
\draw (1.3,-0.055)-- (1.3,-0.5)[postaction={decorate, decoration={markings,mark=at position .75 with {\arrow[black]{stealth}}}}];
\draw (1.3,0.055)-- (1.3,0.5)[postaction={decorate, decoration={markings,mark=at position .75 with {\arrowreversed[black]{stealth}}}}];

\node [above] at (1.3,0.5) {$2$};
\draw[fill] (3,0) circle [radius=0.055];
\draw (3,0)-- (3.5,0.5)[postaction={decorate, decoration={markings,mark=at position .75 with {\arrowreversed[black]{stealth}}}}];
\draw (3,0)-- (2.5,0.5)[postaction={decorate, decoration={markings,mark=at position .75 with {\arrowreversed[black]{stealth}}}}];
\draw (3,0)-- (3,0.5)[postaction={decorate, decoration={markings,mark=at position .75 with {\arrowreversed[black]{stealth}}}}];
\node [above] at (3.5,0.5) {$5$};
\node [above] at (3,0.5) {$4$};
\node [above] at (2.5,0.5) {$3$};

\draw (3,0)-- (2.6,-0.5)[postaction={decorate, decoration={markings,mark=at position .75 with {\arrow[black]{stealth}}}}];
\draw (3,0)-- (3.4,-0.5)[postaction={decorate, decoration={markings,mark=at position .75 with {\arrow[black]{stealth}}}}];
\node [below] at (2.6,-0.5) {$6$};
\node [below] at (3.4,-0.5) {$7$};

\draw (4.7,0) circle [radius=0.055];
\draw (4.7,-0.055)-- (4.7,-0.5)[postaction={decorate, decoration={markings,mark=at position .75 with {\arrow[black]{stealth}}}}];
\draw (4.7,0.055)-- (4.7,0.5)[postaction={decorate, decoration={markings,mark=at position .75 with {\arrowreversed[black]{stealth}}}}];

\node [above] at (4.7,0.5) {$8$};
\draw[fill] (6,0) circle [radius=0.055];
\draw (6,0)-- (6,0.5)[postaction={decorate, decoration={markings,mark=at position .75 with {\arrowreversed[black]{stealth}}}}];
\draw (6,0)-- (6.5,-0.5)[postaction={decorate, decoration={markings,mark=at position .75 with {\arrow[black]{stealth}}}}];
\draw (6,0)-- (5.5,-0.5)[postaction={decorate, decoration={markings,mark=at position .75 with {\arrow[black]{stealth}}}}];

\node [above] at (6,0.5) {$9$};
\node [below] at (6.5,-0.5) {$11$};
\node [below] at (5.5,-0.5) {$10$};

\draw (7.5,0) circle [radius=0.055];
\draw (7.5,-0.055)-- (7.5,-0.5)[postaction={decorate, decoration={markings,mark=at position .75 with {\arrow[black]{stealth}}}}];
\draw (7.5,0.055)-- (7.5,0.5)[postaction={decorate, decoration={markings,mark=at position .75 with {\arrowreversed[black]{stealth}}}}];

\node [above] at (7.5,0.5) {$12$};

\end{tikzpicture}
\end{center}
where the numbers labeling edges represent the linear order $\prec$ on the set of edges.
\end{ex}

\begin{ex}
We will show an example of general reduced planar graph.

\begin{center}
\begin{tikzpicture}[scale=0.5]

\node (v2) at (-4,3) {};
\node (v1) at (-1.5,5.5) {};
\node (v7) at (-1.5,1) {};
\node (v9) at (1.5,5.5) {};
\node (v14) at (2,1.5) {};
\node (v3) at (-3,7.5) {$2$};
\node (v4) at (-2,7.5) {$3$};
\node (v5) at (-0.5,7.5) {$4$};
\node (v6) at (-4.8,7.4) {$1$};
\node (v11) at (-4.5,-1) {$7$};
\node (v12) at (-2,-1) {$13$};
\node (v13) at (0,-1) {$14$};
\node (v15) at (2,-1) {$17$};
\node (v8) at (1,7.5) {$10$};
\node (v10) at (2.5,7.5) {$11$};
\node  at (-2.5,3.5) {$6$};
\node  at (-3,5.2) {$5$};
\node  at (-1.2,3.3) {$9$};
\node  at (0.5,3.25) {$12$};
\node  at (2.2,3.7) {$15$};
\node  at (-3,1.7) {$8$};
\draw[fill] (-4,3) circle [radius=0.11];
\draw[fill] (v1) circle [radius=0.11];
\draw[fill] (v7) circle [radius=0.11];
\draw[fill] (v9) circle [radius=0.11];
\draw[fill] (v14) circle [radius=0.11];
\draw  plot[smooth, tension=1] coordinates {(v1) (-2.5,5)  (-3.5,4) (v2)}[postaction={decorate, decoration={markings,mark=at position .5 with {\arrow[black]{stealth}}}}];
\draw  plot[smooth, tension=1] coordinates {(v1) (-2,4.5)  (-3,3.5) (v2)}[postaction={decorate, decoration={markings,mark=at position .5 with {\arrow[black]{stealth}}}}];

\draw  (v3) -- (-1.5,5.5)[postaction={decorate, decoration={markings,mark=at position .5 with {\arrow[black]{stealth}}}}];
\draw  (v4) -- (-1.5,5.5)[postaction={decorate, decoration={markings,mark=at position .5 with {\arrow[black]{stealth}}}}];
\draw  (v5) -- (-1.5,5.5)[postaction={decorate, decoration={markings,mark=at position .5 with {\arrow[black]{stealth}}}}];

\draw  (v6) -- (-4,3)[postaction={decorate, decoration={markings,mark=at position .5 with {\arrow[black]{stealth}}}}];
\draw  (v1) -- (-1.5,1)[postaction={decorate, decoration={markings,mark=at position .5 with {\arrow[black]{stealth}}}}];
\draw  (-4,3) -- (-1.5,1)[postaction={decorate, decoration={markings,mark=at position .5 with {\arrow[black]{stealth}}}}];

\draw  (v8)--(1.5,5.5)[postaction={decorate, decoration={markings,mark=at position .5 with {\arrow[black]{stealth}}}}];
\draw  (v10) -- (1.5,5.5)[postaction={decorate, decoration={markings,mark=at position .5 with {\arrow[black]{stealth}}}}];
\draw  (1.5,5.5) -- (-1.5,1)[postaction={decorate, decoration={markings,mark=at position .5 with {\arrow[black]{stealth}}}}];
\draw  (v2) -- (v11)[postaction={decorate, decoration={markings,mark=at position .5 with {\arrow[black]{stealth}}}}];
\draw  (-1.5,1) -- (v12)[postaction={decorate, decoration={markings,mark=at position .65 with {\arrow[black]{stealth}}}}];
\draw  (v13) -- (-1.5,1)[postaction={decorate, decoration={markings,mark=at position .5 with {\arrowreversed[black]{stealth}}}}];
\draw  (1.5,5.5) -- (2,1.5)[postaction={decorate, decoration={markings,mark=at position .5 with {\arrow[black]{stealth}}}}];
\draw  (2,1.5) -- (v15)[postaction={decorate, decoration={markings,mark=at position .5 with {\arrow[black]{stealth}}}}];

\end{tikzpicture}
\end{center}
\end{ex}

Evidently, for any planar graph $(\overrightarrow{\Gamma}, \prec)$, the orientation $sgn$ and planar structure $\prec$ together can define an unique linear order $\prec_H$ on the sets $H(\Gamma)$ of half-edges such that:

$\bullet$ if $h_2\neq\sigma(h_1)$, then $h_1\prec_H h_2\Longleftrightarrow \overline{h_1}\prec \overline{h_2}$;

$\bullet$  if $h_2=\sigma(h_1)$, then $h_1\prec_H h_2\Longleftrightarrow sgn(h_1)=-.$

An \textbf{equivalence} or \textbf{isomorphism} of $(\overrightarrow{\Gamma}_1, \prec_1)$ and $(\overrightarrow{\Gamma}_2, \prec_2)$ is an equivalence $\phi:\Gamma_1\rightarrow \Gamma_2$ of graphs that preserves the linear orders on sets of half-edges $H(\Gamma_1)$ and $H(\Gamma_2)$, i.e, for any $h_1,h_2\in H(\Gamma_1)$ $$h_1\prec_{H(\Gamma_1)} h_2\Longleftrightarrow \phi(h_1)\prec_{H(\Gamma_2)} \phi(h_2).$$
We say $(\overrightarrow{\Gamma}_1, \prec_1)$ is equivalent to  $(\overrightarrow{\Gamma}_2, \prec_2)$ if there is an equivalence between them, and we denote this fact as $(\overrightarrow{\Gamma}_1, \prec_1)\cong(\overrightarrow{\Gamma}_2, \prec_2)$. All planar graphs and their equivalence form a groupoid. We denote the set of isomorphic classes of planar graphs by $\mathsf{\Gamma}$. We usually do not make a distinction  between a planar graph and its isomorphism class. A planar graph $(\overrightarrow{\Gamma},\prec)$ with $|In(\overrightarrow{\Gamma})|=m$ and $|Out(\overrightarrow{\Gamma})|=n$ is called an $(m,n)$-planar graph.

\begin{ex} The linear order on half-edges of the planar graph in example 3.1.2 is shown as
\begin{center}
\begin{tikzpicture}

\draw (0,0) circle [radius=0.055];
\draw (0,-0.055)-- (0,-0.5)[postaction={decorate, decoration={markings,mark=at position .75 with {\arrow[black]{stealth}}}}];
\draw (0,0.055)-- (0,0.5)[postaction={decorate, decoration={markings,mark=at position .75 with {\arrowreversed[black]{stealth}}}}];
\draw (1.3,0) circle [radius=0.055];
\draw (1.3,-0.055)-- (1.3,-0.5)[postaction={decorate, decoration={markings,mark=at position .75 with {\arrow[black]{stealth}}}}];
\draw (1.3,0.055)-- (1.3,0.5)[postaction={decorate, decoration={markings,mark=at position .75 with {\arrowreversed[black]{stealth}}}}];
\draw[fill] (3,0) circle [radius=0.055];
\draw (3,0)-- (3.5,0.5)[postaction={decorate, decoration={markings,mark=at position .75 with {\arrowreversed[black]{stealth}}}}];
\draw (3,0)-- (2.5,0.5)[postaction={decorate, decoration={markings,mark=at position .75 with {\arrowreversed[black]{stealth}}}}];
\draw (3,0)-- (3,0.5)[postaction={decorate, decoration={markings,mark=at position .75 with {\arrowreversed[black]{stealth}}}}];
\draw (3,0)-- (2.6,-0.5)[postaction={decorate, decoration={markings,mark=at position .75 with {\arrow[black]{stealth}}}}];
\draw (3,0)-- (3.4,-0.5)[postaction={decorate, decoration={markings,mark=at position .75 with {\arrow[black]{stealth}}}}];
\draw (4.7,0) circle [radius=0.055];
\draw (4.7,-0.055)-- (4.7,-0.5)[postaction={decorate, decoration={markings,mark=at position .75 with {\arrow[black]{stealth}}}}];
\draw (4.7,0.055)-- (4.7,0.5)[postaction={decorate, decoration={markings,mark=at position .75 with {\arrowreversed[black]{stealth}}}}];
\draw[fill] (6,0) circle [radius=0.055];
\draw (6,0)-- (6,0.5)[postaction={decorate, decoration={markings,mark=at position .75 with {\arrowreversed[black]{stealth}}}}];
\draw (6,0)-- (6.5,-0.5)[postaction={decorate, decoration={markings,mark=at position .75 with {\arrow[black]{stealth}}}}];
\draw (6,0)-- (5.5,-0.5)[postaction={decorate, decoration={markings,mark=at position .75 with {\arrow[black]{stealth}}}}];
\draw (7.5,0) circle [radius=0.055];
\draw (7.5,-0.055)-- (7.5,-0.5)[postaction={decorate, decoration={markings,mark=at position .75 with {\arrow[black]{stealth}}}}];
\draw (7.5,0.055)-- (7.5,0.5)[postaction={decorate, decoration={markings,mark=at position .75 with {\arrowreversed[black]{stealth}}}}];

\node [below] at (0,-0.5) {$2$};
\node [above] at (0,0.5) {$1$};
\node [below] at (1.3,-0.5) {$4$};
\node [above] at (1.3,0.5) {$3$};
\node [above] at (3.5,0.5) {$7$};
\node [above] at (3,0.5) {$6$};
\node [above] at (2.5,0.5) {$5$};
\node [below] at (2.6,-0.5) {$8$};
\node [below] at (3.4,-0.5) {$9$};
\node [below] at (4.7,-0.5) {$11$};
\node [above] at (4.7,0.5) {$10$};
\node [above] at (6,0.5) {$12$};
\node [below] at (6.5,-0.5) {$14$};
\node [below] at (5.5,-0.5) {$13$};
\node [below] at (7.5,-0.5) {$16$};
\node [above] at (7.5,0.5) {$15$};

\end{tikzpicture}.
\end{center}

\end{ex}

\begin{ex}
 The linear order on half-edges of the planar graph in example 3.1.3 is shown as
\begin{center}
\begin{tikzpicture}[scale=0.5]

\node (v2) at (-4,3) {};
\node (v1) at (-1.5,5.5) {};
\node (v7) at (-1.5,1) {};
\node (v9) at (1.5,5.5) {};
\node (v14) at (2,1.5) {};
\node (v3) at (-3,7.5) {$2$};
\node (v4) at (-2,7.5) {$3$};
\node (v5) at (-0.5,7.5) {$4$};
\node (v6) at (-4.8,7.4) {$1$};
\node (v11) at (-4.5,-1) {$9$};
\node (v12) at (-2,-1) {$18$};
\node (v13) at (0,-1) {$19$};
\node (v15) at (2,-1) {$22$};
\node (v8) at (1,7.5) {$14$};
\node (v10) at (2.5,7.5) {$15$};
\node  at (-2,4) {$7$};
\node  at (-3,3.2) {$8$};

\node  at (-3.7,2.2) {$10$};
\node  at (-2.5,1.3) {$11$};

\node  at (-2.5,5.5) {$5$};
\node  at (-3.7,4.5) {$6$};

\node  at (-1.25,4.3) {$12$};
\node  at (-1.2,2.5) {$13$};

\node  at (0.3,4.5) {$16$};
\node  at (-0.2,2.3) {$17$};

\node  at (2,4.5) {$20$};
\node  at (2.2,2.8) {$21$};

\draw[fill] (-4,3) circle [radius=0.11];
\draw[fill] (v1) circle [radius=0.11];
\draw[fill] (v7) circle [radius=0.11];
\draw[fill] (v9) circle [radius=0.11];
\draw[fill] (v14) circle [radius=0.11];
\draw  plot[smooth, tension=1] coordinates {(v1) (-2.5,5)  (-3.5,4) (v2)}[postaction={decorate, decoration={markings,mark=at position .5 with {\arrow[black]{stealth}}}}];
\draw  plot[smooth, tension=1] coordinates {(v1) (-2,4.5)  (-3,3.5) (v2)}[postaction={decorate, decoration={markings,mark=at position .5 with {\arrow[black]{stealth}}}}];

\draw  (v3) -- (-1.5,5.5)[postaction={decorate, decoration={markings,mark=at position .5 with {\arrow[black]{stealth}}}}];
\draw  (v4) -- (-1.5,5.5)[postaction={decorate, decoration={markings,mark=at position .5 with {\arrow[black]{stealth}}}}];
\draw  (v5) -- (-1.5,5.5)[postaction={decorate, decoration={markings,mark=at position .5 with {\arrow[black]{stealth}}}}];

\draw  (v6) -- (-4,3)[postaction={decorate, decoration={markings,mark=at position .5 with {\arrow[black]{stealth}}}}];
\draw  (v1) -- (-1.5,1)[postaction={decorate, decoration={markings,mark=at position .5 with {\arrow[black]{stealth}}}}];
\draw  (-4,3) -- (-1.5,1)[postaction={decorate, decoration={markings,mark=at position .5 with {\arrow[black]{stealth}}}}];

\draw  (v8)--(1.5,5.5)[postaction={decorate, decoration={markings,mark=at position .5 with {\arrow[black]{stealth}}}}];
\draw  (v10) -- (1.5,5.5)[postaction={decorate, decoration={markings,mark=at position .5 with {\arrow[black]{stealth}}}}];
\draw  (1.5,5.5) -- (-1.5,1)[postaction={decorate, decoration={markings,mark=at position .5 with {\arrow[black]{stealth}}}}];
\draw  (v2) -- (v11)[postaction={decorate, decoration={markings,mark=at position .5 with {\arrow[black]{stealth}}}}];
\draw  (-1.5,1) -- (v12)[postaction={decorate, decoration={markings,mark=at position .65 with {\arrow[black]{stealth}}}}];
\draw  (v13) -- (-1.5,1)[postaction={decorate, decoration={markings,mark=at position .5 with {\arrowreversed[black]{stealth}}}}];
\draw  (1.5,5.5) -- (2,1.5)[postaction={decorate, decoration={markings,mark=at position .5 with {\arrow[black]{stealth}}}}];
\draw  (2,1.5) -- (v15)[postaction={decorate, decoration={markings,mark=at position .5 with {\arrow[black]{stealth}}}}];

\end{tikzpicture}.
\end{center}

\end{ex}

Here we summarize some notions we have introduced:

\begin{center}
\begin{tabular}{|l|l|}
\hline
oriented graph & graph+orientation\\ \hline
directed  graph & orientation+$In(v)$\&$Out(v)$ nonempty\\\hline
progressive graph & directed + acyclic\\\hline
polarized graph& $In(v)\& Out(v)$ totally ordered\\\hline
anchored graph& directed + $In(\Gamma)\& Out(\Gamma)$ totally ordered\\\hline
planar graph & progressive + $\prec$ \\\hline
\end{tabular}
\end{center}

For simplicity, in this paper we use the notation $e_1\prec e_2$ to denote $e_1\preceq e_2$ and  $e_1\neq e_2$. And in almost of all  cases we discuss  about a linear order, we will restrict on the cases of different elements if we do not point out clearly.

The following proposition shows that in the definition of a planar graph, the condition $(P_2)$ can be replaced by other equivalent conditions.

\begin{prop}\label{three conditions}
If $\overrightarrow{\Gamma}$ is a directed graph with a linear order $\prec$ on the set of edges $E(\Gamma)$.
Then the following three conditions are equivalent:

$(P_2)$    for any three edges $e_1,e_2,e_3$, if $e_1\rightarrow e_2$ and $e_1\prec e_3\prec e_2$ then $e_3\rightarrow e_2$ or $e_1\rightarrow e_3$;

$(P_2^{r})$ for any three edges $e_1,e_2,e_3$, if $e_1\rightarrow e_2$, $e_1\prec e_3$ and $e_1 \nrightarrow e_3$ then $e_2\prec e_3$ or $e_3\rightarrow e_2$;

$(P_2^{l})$  for any three edges $e_1,e_2,e_3$, if $e_1\rightarrow e_2$, $e_3\prec e_2$ and $e_3 \nrightarrow e_2$ then $e_3\prec e_1$ or $e_1\rightarrow e_3$.

$(\widetilde{P_2})$    for any three edges $e_1,e_2,e_3$, if $e_1e_2$ is a directed path, that is, $t(e_1)=s(e_2)$, and $e_1\prec e_3\prec e_2$ then $e_3\rightarrow e_2$ or $e_1\rightarrow e_3$;

$(\widetilde{P_2^{r}})$ for any three edges $e_1,e_2,e_3$, if $e_1e_2$ is a directed path, $e_1\prec e_3$ and $e_1 \nrightarrow e_3$ then $e_2\prec e_3$ or $e_3\rightarrow e_2$;

$(\widetilde{P_2^{l}})$  for any three edges $e_1,e_2,e_3$,  if $e_1e_2$ is a directed path, $e_3\prec e_2$ and $e_3 \nrightarrow e_2$ then $e_3\prec e_1$ or $e_1\rightarrow e_3$.
\end{prop}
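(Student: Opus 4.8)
The plan is to organize the six conditions into two groups: the ``reachability'' group $\{(P_2),(P_2^r),(P_2^l)\}$, whose hypothesis relating $e_1,e_2$ is $e_1\rightarrow e_2$, and the ``adjacency'' group $\{(\widetilde{P_2}),(\widetilde{P_2^r}),(\widetilde{P_2^l})\}$, whose hypothesis is that $e_1e_2$ is a directed path of length two. First I would prove the three conditions inside each group equivalent by purely formal manipulations, and then bridge the two groups. Throughout I would use only two structural facts: that $\prec$ is a linear order (so the negation of $a\prec b$ is $b\preceq a$), and that reachability $\rightarrow$ is transitive by concatenation of directed paths, with ``$e_1e_2$ is a directed path'' being exactly the length-two special case of $e_1\rightarrow e_2$.

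For the within-group equivalences I would argue by contraposition against the totality of $\prec$. For example, to get $(P_2)\Rightarrow(P_2^r)$, suppose the conclusion of $(P_2^r)$ fails, i.e. $e_3\prec e_2$ and $e_3\nrightarrow e_2$; then $e_1\prec e_3\prec e_2$ together with $e_1\rightarrow e_2$ forces, by $(P_2)$, either $e_3\rightarrow e_2$ (excluded) or $e_1\rightarrow e_3$ (contradicting $e_1\nrightarrow e_3$). The reverse implication and the pair $(P_2)\Leftrightarrow(P_2^l)$ are the same bookkeeping with the ``left'' and ``right'' endpoints interchanged, and the degenerate cases where two of the $e_i$ coincide are immediate (the hypotheses become vacuous or the conclusion trivially holds). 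The identical arguments, word for word, establish $(\widetilde{P_2})\Leftrightarrow(\widetilde{P_2^r})\Leftrightarrow(\widetilde{P_2^l})$, since those proofs never use $e_1,e_2$ beyond the fixed hypothesis linking them, which is merely carried along.

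The substantive part is the bridge between the two groups, and I expect this to be the main obstacle. One direction is free: because ``$e_1e_2$ is a directed path'' implies $e_1\rightarrow e_2$, condition $(P_2)$ immediately implies $(\widetilde{P_2})$. The content is the reverse, for which I would prove $(\widetilde{P_2^r})\Rightarrow(P_2^r)$ by induction on the length $m$ of a directed path $e_1=g_1,g_2,\dots,g_m=e_2$ witnessing $e_1\rightarrow e_2$ (the case $e_1=e_2$ being trivial, so we may take $m\geq 2$). The base case $m=2$ is exactly $(\widetilde{P_2^r})$. For $m\geq 3$, set $f=g_2$, so that $e_1f$ is an adjacent directed path while $f\rightarrow e_2$ along the strictly shorter tail $g_2,\dots,g_m$. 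Given $e_1\prec e_3$ and $e_1\nrightarrow e_3$, applying $(\widetilde{P_2^r})$ to $e_1f$ yields either $e_3\rightarrow f$, whence $e_3\rightarrow f\rightarrow e_2$ gives $e_3\rightarrow e_2$ and we are done, or $f\prec e_3$. In the latter case $f\nrightarrow e_3$ (otherwise $e_1\rightarrow f\rightarrow e_3$ would contradict $e_1\nrightarrow e_3$), so the induction hypothesis applied to $f\rightarrow e_2$ gives $e_2\prec e_3$ or $e_3\rightarrow e_2$, which is precisely the desired conclusion.

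Assembling the pieces closes a loop through all six conditions: $(P_2)\Rightarrow(\widetilde{P_2})\Leftrightarrow(\widetilde{P_2^r})\Rightarrow(P_2^r)\Leftrightarrow(P_2)$, once the two within-group chains are in hand. The only place genuine graph-theoretic content enters is the inductive shortening of a reachability witness into adjacent steps; everything else is formal logic driven by the linearity of $\prec$ and the transitivity of $\rightarrow$, and in particular neither $(P_1)$ nor acyclicity is needed. I would therefore expect the write-up to be short once the induction in the third step is set up carefully.
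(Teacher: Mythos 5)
Your proposal is correct and complete, but it takes a genuinely different route from the paper at the one point where real content is needed: the passage from the adjacency (tilde) conditions back to the reachability conditions. You prove $(\widetilde{P_2^{r}})\Rightarrow(P_2^{r})$ by induction on the length of a directed path witnessing $e_1\rightarrow e_2$, peeling off the first edge and applying $(\widetilde{P_2^{r}})$ once per step. The paper instead proves $(\widetilde{P_2})\Rightarrow(P_2)$ in one shot: given $e_1\rightarrow e_2$ and $e_1\prec e_3\prec e_2$, it selects on the witnessing path a consecutive (adjacent) pair $e_ie_j$ that straddles $e_3$ in the order, i.e.\ $e_1\rightarrow e_i$, $e_j\rightarrow e_2$ and $e_i\prec e_3\prec e_j$, and applies $(\widetilde{P_2})$ to that single pair. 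The two bridges are interchangeable, but yours is the more carefully justified: the paper's claim that linearity of $\prec$ yields a \emph{unique} such pair is terse, and uniqueness in fact fails in general (without $(P_1)$ the witnessing path need not be $\prec$-monotone); only existence is needed, obtained e.g.\ by taking the last edge of the path that precedes $e_3$, which is what your induction implicitly reconstructs step by step. Your global bookkeeping also differs: the paper uses $(P_2)$ as a hub, proving $(P_2)\Leftrightarrow(P_2^{r})$, $(P_2)\Leftrightarrow(P_2^{l})$, $(P_2)\Leftrightarrow(\widetilde{P_2})$ and then declaring the tilde-$r$/tilde-$l$ equivalences ``similar,'' whereas you close a single cycle $(P_2)\Rightarrow(\widetilde{P_2})\Rightarrow(\widetilde{P_2^{r}})\Rightarrow(P_2^{r})\Rightarrow(P_2)$ so that only one non-trivial bridge is needed; your within-group contrapositive arguments coincide with the paper's. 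Both proofs correctly avoid any appeal to $(P_1)$ or to acyclicity.
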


\begin{proof}
$\bullet$  $(P_2)\Longrightarrow  (P_2^{r})$. Assume  $e_1\rightarrow e_2$, $e_1\prec e_3$ and $e_1 \nrightarrow e_3$, we want to prove $e_2\prec e_3$ or $e_3\rightarrow e_2$ using condition $(P_2)$. By the linearity of $\prec$ we have  $e_2\prec e_3$ or $e_3\prec e_2$. If $e_2\prec e_3$, we complete our proof. If  $e_3\prec e_2$, by the assumption  we have  $e_1\prec e_3\prec e_2$. By condition $(P_2)$, we have $e_3\rightarrow e_2$ or $e_1\rightarrow e_3$. But we have assumed $e_1 \nrightarrow e_3$, so we must have $e_3\rightarrow e_2$.

$\bullet$ $(P^r_2)\Longrightarrow  (P_2)$. Assume $e_1\rightarrow e_2$ and $e_1\prec e_3\prec e_2$, we want to prove $e_3\rightarrow e_2$ or $e_1\rightarrow e_3$ using condition $(P_2^{r})$. If $e_1\rightarrow e_3$, we complete the proof. If $e_1 \nrightarrow e_3$, we have $e_2\prec e_3$ or $e_3\rightarrow e_2$ according to condition $(P^r_2)$. If $e_3\rightarrow e_2$, we complete the proof. If $e_2\prec e_3$, we have a contradiction with assumption $e_1\prec e_3\prec e_2$, so we must have $e_3\rightarrow e_2$.

$\bullet$ $(P_2)\Longrightarrow  (P_2^{l})$. Assume  $e_1\rightarrow e_2$, $e_3\prec e_2$ and $e_3 \nrightarrow e_2$, we want to prove that  $e_3\prec e_1$ or $e_1\rightarrow e_3$ using condition $(P_2)$. By the linearity of $\prec$ we have  $e_1\prec e_3$ or $e_3\prec e_1$. If $e_3\prec e_1$, we complete the proof. If $e_1\prec e_3$, then by  assumption we have $e_1\prec e_3\prec e_2$. By condition $(P_2)$, we have $e_3\rightarrow e_2$ or $e_1\rightarrow e_3$. But we have assumed $e_3 \nrightarrow e_2$, we must have $e_1\rightarrow e_3$, thus condition $(P_2^{l})$ is satisfied.

$\bullet$ $(P^l_2)\Longrightarrow  (P_2)$. Assume $e_1\rightarrow e_2$ and $e_1\prec e_3\prec e_2$, we want to prove $e_3\rightarrow e_2$ or $e_1\rightarrow e_3$ using condition $(P_2^{l})$. If $e_3\rightarrow e_2$, we complete the proof. If $e_3 \nrightarrow e_2$, we have $e_3\prec e_1$ or $e_1\rightarrow e_3$ according to condition $(P^l_2)$. If $e_1\rightarrow e_3$, we complete the proof. If $e_3\prec e_1$, we have a contradiction with assumption $e_1\prec e_3\prec e_2$, so we must have $e_1\rightarrow e_3$.

$\bullet$  $(P_2)\Longrightarrow  (\widetilde{P_2})$.  Obviously.

$\bullet$  $(\widetilde{P_2})\Longrightarrow  (P_2)$. If $e_1\rightarrow e_2$ and $e_1\prec e_3\prec e_2$, the linearity of $\prec$ implies that there must exists an unique direct path  $e_ie_j$ such that $e_1\rightarrow e_i$, $e_j\rightarrow e_2$ and $e_i\prec e_3\prec e_j$. Then condition $(\widetilde{P_2})$ says that $e_i\rightarrow e_3$ or $e_3\rightarrow e_j$, which imply that $e_1\rightarrow e_3$ or $e_3\rightarrow e_2$ respectively. So condition $(P_2)$ is satisfied.

The proof of the  two equivalences  $\widetilde{P_2^{l}}\Longleftrightarrow P_2^{l}$ and $\widetilde{P_2^{r}}\Longleftrightarrow P_2^{r}$ are similar.
\end{proof}

It is not too difficult to prove the following proposition:

\begin{prop}
If $\overrightarrow{\Gamma}'\subseteq\overrightarrow{\Gamma}$ is a directed subgraph of $\overrightarrow{\Gamma}$, then any planar structure $\prec$ on $\overrightarrow{\Gamma}$ naturally induces a planar structure $\prec'$ on $\overrightarrow{\Gamma}'$.
\end{prop}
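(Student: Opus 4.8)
The plan is to define $\prec'$ as the restriction of $\prec$. First I would make precise the way $E(\Gamma')$ sits inside $E(\Gamma)$. Every half-edge of $\Gamma'$ lies in $H(\Gamma)$, so sending an edge $e'\in E(\Gamma')$ to the $\sigma$-orbit of $\Gamma$ that contains its half-edges gives a map $\iota:E(\Gamma')\to E(\Gamma)$. I would check $\iota$ is injective: two distinct edges of $\Gamma'$ cannot lie inside one edge of $\Gamma$, because if $\{h,\sigma(h)\}\subseteq H(\Gamma')$ then by the subgraph definition $\sigma'(h)=\sigma(h)$ and these two half-edges already form a single edge of $\Gamma'$. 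Thus $\iota$ realizes $E(\Gamma')$ as a subset of $E(\Gamma)$, and I set $e_1'\prec' e_2'\Leftrightarrow \iota(e_1')\prec\iota(e_2')$. Being the restriction of a linear order, $\prec'$ is automatically a linear order.

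The next step records the relation between reachability in $\Gamma'$ and in $\Gamma$. A subgraph is determined by a subset $V'$ of the vertices, and each chosen vertex keeps all of its half-edges; consequently the inner edges of $\Gamma'$ are exactly the inner edges of $\Gamma$ both of whose endpoints lie in $V'$, with the same source and target. Hence every directed path of $\Gamma'$ is a directed path of $\Gamma$, so $e_1'\rightarrow e_2'$ in $\Gamma'$ implies $\iota(e_1')\rightarrow\iota(e_2')$ in $\Gamma$. Condition $(P_1)$ for $\prec'$ is then immediate: if $e_1'\rightarrow e_2'$ in $\Gamma'$ then $\iota(e_1')\rightarrow\iota(e_2')$, so $\iota(e_1')\prec\iota(e_2')$ by $(P_1)$ for $\prec$, that is $e_1'\prec' e_2'$.

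For $(P_2)$ I would use the equivalent formulation $(\widetilde{P_2})$ from Proposition \ref{three conditions}, which refers only to length-two directed paths. Given $e_1'\rightarrow e_2'$ in $\Gamma'$ and $e_1'\prec' e_3'\prec' e_2'$, fix a directed path in $\Gamma'$ from $e_1'$ to $e_2'$; by $(P_1)$ its successive edges are $\prec'$-increasing, so there is a consecutive pair $g,g'$ on it with $t(g)=s(g')=v\in V'$ and $g\preceq' e_3'\preceq' g'$. Then $\iota(g)\iota(g')$ is a length-two directed path in $\Gamma$ with $\iota(g)\prec\iota(e_3')\prec\iota(g')$, so $(\widetilde{P_2})$ for $\prec$ yields $\iota(e_3')\rightarrow\iota(g')$ or $\iota(g)\rightarrow\iota(e_3')$ in $\Gamma$. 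Since $g\rightarrow e_2'$ and $e_1'\rightarrow g$ already hold in $\Gamma'$, it suffices to \emph{promote} one of these two reachabilities back into $\Gamma'$ in order to conclude $e_3'\rightarrow e_2'$ or $e_1'\rightarrow e_3'$.

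This promotion is the step I expect to be the main obstacle, and it is where the combinatorics must be handled with care. The connecting vertex $v=t(\iota(g))=s(\iota(g'))$ lies in $V'$, so a witnessing directed path in $\Gamma$ ends (respectively begins) at a vertex of $\Gamma'$; the danger is that the interior of such a path may leave $V'$, in which case the reachability does not descend to $\Gamma'$. The heart of the proof is therefore to show the witnessing path can be kept inside $\Gamma'$, which is precisely a convexity property of the directed subgraph along directed paths — the admissibility condition, equivalent by the earlier proposition to $\Gamma/\Gamma'$ being a directed graph. I would thus either invoke this convexity to reroute the path inside $\Gamma'$, or isolate exactly this as the property of the subgraph that makes the induced order planar. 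The remaining bookkeeping is routine: $\iota$ respects the incidences used for tensor product and composition, and the boundary (leg) edges of $\Gamma'$ cause no extra trouble, since they neither continue nor terminate a nontrivial directed path.
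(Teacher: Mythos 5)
Your structural setup (the embedding $\iota$, linearity, and $(P_1)$) is fine, and you have correctly located the crux: reachability between edges of $\Gamma'$ witnessed by paths in $\Gamma$ need not descend to $\Gamma'$. But the proposal does not close that gap — it ends by offering either to invoke a convexity/admissibility property or to add it as a hypothesis — and the gap cannot be closed as stated. Being a directed subgraph does not imply admissibility (admissibility is the extra condition characterized by Proposition 2.4.2, not a consequence of the subgraph being directed), and the "promotion" step genuinely fails: the restriction of a planar structure to a directed subgraph can violate $(P_2)$. Note also that the paper offers no argument for this proposition (it is prefaced only by "It is not too difficult to prove"), so there is no proof on the paper's side to compare yours against; your attempt has in fact uncovered that the statement, read literally with $\prec'$ the restricted order, is false.

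Here is a concrete counterexample. Let $\overrightarrow{\Gamma}$ have real vertices $u,v,w$ and edges $a$ (input leg at $u$), $b\colon u\to v$, $c\colon u\to w$, $d\colon v\to w$, $e$ (output leg at $w$). The linear order $a\prec b\prec d\prec c\prec e$ satisfies $(P_1)$ and $(P_2)$ (check the eight reachable pairs directly; geometrically it is the planar order of the plane picture in which $v$, $b$, $d$ lie to the left of the edge $c$), so $(\overrightarrow{\Gamma},\prec)$ is a planar graph. Let $\overrightarrow{\Gamma}'$ be the directed subgraph spanned by $\{u,w\}$: its edges are $a$, the output-cut leg $b'$ at $u$, the inner edge $c$, the input-cut leg $d'$ at $w$, and $e$, with restricted order $a\prec' b'\prec' d'\prec' c\prec' e$. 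Then $a\rightarrow c$ in $\Gamma'$ and $a\prec' d'\prec' c$, but $d'\nrightarrow c$ in $\Gamma'$ (the only path out of $d'$ is $d'e$) and $a\nrightarrow d'$ in $\Gamma'$ ($d'$ has no source vertex, so no path of length at least two ends at it); in $\Gamma$ the condition $(P_2)$ for this triple was rescued only by $a\rightarrow d$ through the deleted vertex $v$. So $\prec'$ is not a planar structure on $\overrightarrow{\Gamma}'$. The statement does hold for the subgraphs the paper actually needs later — the vertex subgraphs $\Gamma_v$ and the complements $\Gamma-v$ of a $<_V$-maximal or $<_V$-minimal vertex in Theorem 3.5.7, where no directed path between surviving edges can pass through the deleted vertex — and that restricted setting (or your admissibility hypothesis) is exactly what makes your promotion step, and hence your whole argument, go through.
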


Evidently, for Joyal and Street's progressive plane graphs, their opposite and mirror are progressive plane graphs naturally.  These equivalence of conditions  $(P_2), (P_2^r)$ and $(P_2^l)$ in proposition $3.1.6$ can be seen as an reflection of these phenomena in some sense. Also this fact has some connections with the fact that to any strict tensor categories, we can associate other three strict tensor category as discussed in Remark $7.1.1$.

\begin{rem}
We define the opposite graph of an oriented graph $\overrightarrow{\Gamma}$ to be an oriented graph obtained by reversing the orientation of every edges of $\overrightarrow{\Gamma}$, and denote it by $\overrightarrow{\Gamma}^{op}$. If there is an planar structure $\prec$ on $\overrightarrow{\Gamma}$, then there is a natural planar structure $\prec^{op}$ on $\overrightarrow{\Gamma}^{op}$ defined as $e_1\prec^{op}e_2\Longleftrightarrow e_2\prec e_1$, and we call the planar graph $(\overrightarrow{\Gamma}^{op},\prec^{op})$ opposite planar graph of $(\overrightarrow{\Gamma},\prec)$. The operation $(-)^{op}$ is a involution on the groupoid of planar graphs.
The condition $(P^l_2)$ and $(P^r_2)$ are dual to each other in the sense that $(\overrightarrow{\Gamma},\prec)$ satisfies condition $(P^l_2)$ if and only if $(\overrightarrow{\Gamma}^{op},\prec^{op})$ satisfies condition $(P^r_2)$.
\end{rem}

In fact, proposition \ref{three conditions}  has an abstract form, that is, we have the following proposition:
\begin{prop}\label{equivalence}
Let $(S,\rightarrow)$ be a set with a partial order $\rightarrow$. Let $\prec$ be a linear order which  extends the partial order $\rightarrow$. Then the following properties are equivalent:

$(P_2)$    for any three elements $s_1,s_2,s_3$, if $s_1\rightarrow s_2$ and $s_1\prec s_3\prec s_2$ then $s_3\rightarrow s_2$ or $s_1\rightarrow s_3$;

$(P_2^{r})$ for any three elements $s_1,s_2,s_3$, if $s_1\rightarrow s_2$, $s_1\prec s_3$ and $s_1 \nrightarrow s_3$ then $s_2\prec s_3$ or $s_3\rightarrow s_2$;

$(P_2^{l})$  for any three elements $s_1,s_2,s_3$, if $s_1\rightarrow s_2$, $s_3\prec s_2$ and $s_3 \nrightarrow s_2$ then $s_3\prec s_1$ or $s_1\rightarrow s_3$.

\end{prop}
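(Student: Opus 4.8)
The plan is to establish the equivalence by routing everything through $(P_2)$: I would prove the two two-sided implications $(P_2)\Leftrightarrow(P_2^r)$ and $(P_2)\Leftrightarrow(P_2^l)$, and then transitivity of logical equivalence closes the loop. The whole argument is purely order-theoretic and is a verbatim abstraction of the first four bullets in the proof of Proposition \ref{three conditions}, with the graph-specific conditions $(\widetilde{P_2})$, $(\widetilde{P_2^r})$, $(\widetilde{P_2^l})$ simply dropped, since those relied on factoring a relation $e_1\rightarrow e_2$ through an intermediate directed path and no such structure is available in a bare poset. The only tools needed are the linearity of $\prec$, which makes any two distinct elements $\prec$-comparable, and elementary case analysis; the standing hypothesis that $\prec$ extends $\rightarrow$ guarantees the conditions are meaningful but is not itself invoked inside the individual implications.

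First I would prove $(P_2)\Rightarrow(P_2^r)$. Assume $s_1\rightarrow s_2$, $s_1\prec s_3$ and $s_1\nrightarrow s_3$. By linearity either $s_2\prec s_3$, in which case the conclusion holds immediately, or $s_3\prec s_2$; in the latter case we have $s_1\prec s_3\prec s_2$, so $(P_2)$ yields $s_3\rightarrow s_2$ or $s_1\rightarrow s_3$, and since $s_1\nrightarrow s_3$ by hypothesis we conclude $s_3\rightarrow s_2$. For the converse $(P_2^r)\Rightarrow(P_2)$, assume $s_1\rightarrow s_2$ and $s_1\prec s_3\prec s_2$. If $s_1\rightarrow s_3$ we are done; otherwise $s_1\nrightarrow s_3$, and $(P_2^r)$ gives $s_2\prec s_3$ or $s_3\rightarrow s_2$. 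The first alternative contradicts $s_3\prec s_2$, so $s_3\rightarrow s_2$, which is $(P_2)$.

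Next I would obtain $(P_2)\Leftrightarrow(P_2^l)$ by the mirror-image argument, replacing the comparison $s_1\prec s_3$ by $s_3\prec s_1$ throughout. Rather than re-running the case analysis, one can deduce this direction formally from the previous one using the opposite-graph duality noted in the remark preceding this proposition: under reversal of both $\rightarrow$ and $\prec$, condition $(P_2^l)$ for $(S,\rightarrow,\prec)$ becomes condition $(P_2^r)$ for the order-reversed structure, while $(P_2)$ is visibly self-dual. Thus $(P_2)\Leftrightarrow(P_2^l)$ follows from $(P_2)\Leftrightarrow(P_2^r)$ applied to the reversed poset, and combining the two equivalences gives the result.

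I do not anticipate any genuine obstacle: this is an elementary lemma about a linear extension of a partial order, and each step is forced by the trichotomy supplied by linearity. The only points requiring care are (i) honoring the paper's convention that these statements concern pairwise distinct elements, so that $\prec$-linearity yields a clean dichotomy $s_i\prec s_j$ or $s_j\prec s_i$, and (ii) keeping the logical bookkeeping clean so no implication is used circularly, which is precisely why I would hub the argument at $(P_2)$ instead of attempting a direct $(P_2^r)\Leftrightarrow(P_2^l)$.
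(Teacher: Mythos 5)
Your proposal is correct, and half of it coincides with the paper's argument: your two implications $(P_2)\Rightarrow(P_2^r)$ and $(P_2^r)\Rightarrow(P_2)$ are, case for case, the same trichotomy arguments the paper gives. Where you diverge is in how the left-handed condition is reached. The paper proves $(P_2^r)\Leftrightarrow(P_2^l)$ by two further direct case analyses (so its chain is $(P_2)\Leftrightarrow(P_2^r)\Leftrightarrow(P_2^l)$, with $(P_2^r)$ as the middle link), whereas you hub everything at $(P_2)$ and obtain $(P_2)\Leftrightarrow(P_2^l)$ by order reversal: under $(S,\rightarrow,\prec)\mapsto(S,\rightarrow^{\mathrm{op}},\prec^{\mathrm{op}})$ the condition $(P_2)$ is self-dual while $(P_2^l)$ for the original structure is exactly $(P_2^r)$ for the reversed one (check by the substitution $s_1\mapsto s_2$, $s_2\mapsto s_1$), so the already-proved equivalence applied to the reversed poset finishes the proof. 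I verified the duality translation and it is sound; this promotes to an actual proof step the symmetry that the paper only records informally, for graphs, in the remark on opposite planar graphs. Your route buys a shorter proof --- two hand-checked implications instead of four --- and isolates the left/right symmetry as a reusable principle; the paper's route buys uniformity and avoids any appeal to an auxiliary structure. The only point you should make explicit (it is trivial but needed to invoke the proven equivalence on the reversed structure) is that $\prec^{\mathrm{op}}$ is again a linear order extending the partial order $\rightarrow^{\mathrm{op}}$, so the standing hypothesis of the proposition is preserved under reversal.
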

\begin{proof}
$\bullet$ $(P_2)\Longrightarrow (P_2^{r}).$ If  $s_1\rightarrow s_2$, $s_1\prec s_3$ and $s_1 \nrightarrow s_3$, we want to prove that $s_2\prec s_3$ or $s_3\rightarrow s_2$ using condition $(P_2^{r})$. If $s_2\prec s_3$, we complete the proof. Otherwise, we have $s_3\prec s_2$, hence $s_1\prec s_3\prec s_2$, using the fact that $s_1\rightarrow s_2$ and condition $(P_2)$, we have $s_3\rightarrow s_2$ or $s_1\rightarrow s_3$. But we have assumed $s_1 \nrightarrow s_3$, thus we must have $s_3\rightarrow s_2$.

$\bullet$ $(P_2^{r})\Longrightarrow (P_2).$  If $s_1\rightarrow s_2$ and $s_1\prec s_3\prec s_2$, we want to prove $s_3\rightarrow s_2$ or $s_1\rightarrow s_3$ using condition $(P_2)$. If $s_1\rightarrow s_3$, we complete the proof. Otherwise, we have $s_1\nrightarrow s_3$, we have $s_2\prec s_3$ or $s_3\rightarrow s_2$ according to condition $(P_2)$. Noticing that we have assumed $s_3\prec s_2$,  we must have $s_3\rightarrow s_2$.

$\bullet$ $(P_2^{r})\Longrightarrow (P_2^l).$ If $s_1\rightarrow s_2$, $s_3\prec s_2$ and $s_3 \nrightarrow s_2$, we want to prove  $s_3\prec s_1$ or $s_1\rightarrow s_3$ using condition $(P_2^r)$. If $s_1\rightarrow s_3$, we complete the proof. Otherwise, we have $s_1\nrightarrow s_3$ and we discuss in the following two cases: if $s_3\prec s_1$, we complete the proof. Otherwise, we have $s_1\prec s_3$, notice that $s_1\nrightarrow s_3$ and $s_1\rightarrow s_2$, then by condition $(P_2^{r})$, we have $s_2\prec s_3$ or $s_3\rightarrow s_2$. But we have assumed $s_3\prec s_2$ and $s_3 \nrightarrow s_2$, we get a contradiction, thus we must have $s_3\prec s_1$.

$\bullet$ $(P_2^{l})\Longrightarrow (P_2^r).$ If $s_1\rightarrow s_2$, $s_1\prec s_3$ and $s_1 \nrightarrow s_3$, we want to prove $s_2\prec s_3$ or $s_3\rightarrow s_2$ using condition $(P_2^{l})$. If $s_3\rightarrow s_2$, we complete the proof. Otherwise, we have $s_3\nrightarrow s_2$ and we discuss int the following two cases: if $s_2\prec s_3$, we complete the proof. Otherwise, we have $s_3\prec s_2$, notice that $s_3 \nrightarrow s_2$ and $s_1\rightarrow s_2$, then by condition $(P_2^{l})$, we have  $s_3\prec s_1$ or $s_1\rightarrow s_3$. But we have assumed $s_1\prec s_3$ and $s_1 \nrightarrow s_3$, we get a contradiction, thus we must have $s_2\prec s_3$.
\end{proof}

Now it is reasonable to introduce the following notion:
\begin{defn}
A planar structure on a finite  partial set $(S,\rightarrow)$ is a linear order $\prec$ such that

$\bullet$ $\prec$ is a linear extension of the partial order  $\rightarrow$, i.e, $\prec$ is a linear order such that $s_1\rightarrow s_2$ implies $s_1\prec s_2$, for every two distinct elements $s_1. s_2$ of $S$;

$\bullet$  for any three elements $s_1,s_2,s_3$ of $S$, if $s_1\rightarrow s_2$ and $s_1\prec s_3\prec s_2$ then $s_3\rightarrow s_2$ or $s_1\rightarrow s_3$.

\end{defn}
A finite partial set with a planar structure  is called a planar set. Recall that Hasse diagram is a geometric way to represent a finite partial set, so it is reasonable to think that the Hasse diagram of a planar set should be drawn in the plane. In section 3.6, we will show that it is indeed the case.

Similarly to proposition 3.1.7, we have
\begin{prop}
Let $(S,\rightarrow,\prec)$ be a planar set, $T\subseteq S$ be a non-empty subset, then there is a natural planar structure on $T$ induced from $(S,\rightarrow,\prec)$.
\end{prop}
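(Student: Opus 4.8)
The plan is to equip $T$ with the obvious restricted data and then check that these satisfy the two clauses defining a planar structure on a finite partial set. Concretely, I would define the partial order $\rightarrow_T$ on $T$ by $t_1\rightarrow_T t_2 \Leftrightarrow t_1\rightarrow t_2$ for $t_1,t_2\in T$, and the linear order $\prec_T$ by $t_1\prec_T t_2 \Leftrightarrow t_1\prec t_2$. The first point to record is that these restrictions are again of the required type: the restriction of a linear order to a subset is a linear order, and reflexivity, antisymmetry and transitivity of $\rightarrow$ are each inherited by $\rightarrow_T$, so $(T,\rightarrow_T)$ is a finite partial set and $\prec_T$ is a linear order on it. It then remains only to verify the two axioms.

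For the extension axiom, suppose $t_1,t_2\in T$ are distinct with $t_1\rightarrow_T t_2$. By definition this means $t_1\rightarrow t_2$ in $S$, and since $\prec$ extends $\rightarrow$ we get $t_1\prec t_2$, i.e. $t_1\prec_T t_2$. Thus $\prec_T$ is a linear extension of $\rightarrow_T$.

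For condition $(P_2)$, take any three elements $t_1,t_2,t_3\in T$ with $t_1\rightarrow_T t_2$ and $t_1\prec_T t_3\prec_T t_2$. Unwinding the restrictions, these are precisely the relations $t_1\rightarrow t_2$ and $t_1\prec t_3\prec t_2$ holding in $S$, so I may apply $(P_2)$ for the planar set $(S,\rightarrow,\prec)$ to the triple $t_1,t_2,t_3$. This yields $t_3\rightarrow t_2$ or $t_1\rightarrow t_3$ in $S$. Because all three elements already lie in $T$, each of these alternatives is a relation between elements of $T$, hence reads $t_3\rightarrow_T t_2$ or $t_1\rightarrow_T t_3$. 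This is exactly $(P_2)$ for $(T,\rightarrow_T,\prec_T)$, completing the verification.

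The verification is essentially automatic, and there is no real obstacle; the one feature worth flagging is what makes the abstract statement cleaner than its graphical analogue (Proposition 3.1.7). Here $\rightarrow_T$ is literally the restriction of $\rightarrow$, and the conclusion of $(P_2)$ only ever names the same three elements occurring in its hypothesis, so the argument never needs to invoke a witness lying in $S\setminus T$; the whole inference stays inside $T$. By contrast, in the graph setting the partial order is generated by directed paths that may pass through the ambient graph, which is why that case requires more care. Finally, once $(P_2)$ is established for $T$ one could, if desired, invoke Proposition~\ref{equivalence} to conclude that $\prec_T$ automatically satisfies the equivalent reformulations $(P_2^{r})$ and $(P_2^{l})$ as well.
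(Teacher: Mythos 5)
Your proof is correct and is exactly the routine verification the paper intends: the paper states this proposition without proof (offering only "Similarly to proposition 3.1.7"), and your argument — restrict both orders, note the extension axiom is immediate, and observe that $(P_2)$ transfers because its conclusion mentions only the three elements already in $T$ — is the natural way to fill it in. Your closing remark correctly identifies why this abstract case is genuinely easier than the subgraph case, where the path witnessing $e_1\rightarrow e_3$ or $e_3\rightarrow e_2$ could a priori leave the subgraph.
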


\subsection{Some properties of a planar graph}
This section is devoted to introduce some common properties of a planar graph, and some of them will be useful to prove that composition of planar graphs are planar graphs (Theorem 3.5.5).

\begin{lem}[A]
If $e_1\rightarrow e\leftarrow e_2$ and $e_1\prec e'\prec e_2$, then $e_1\nrightarrow e'$ implies $e'\rightarrow e$.
\end{lem}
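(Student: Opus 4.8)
The plan is to reduce the statement to a single application of the defining condition $(P_2)$, after first pinning down where $e'$ sits relative to $e$ in the linear order $\prec$. The key observation is that the right half of the hypothesis $e_1\rightarrow e\leftarrow e_2$, namely $e_2\rightarrow e$, is not a spectator but exactly the datum that locates $e'$ strictly $\prec$-below $e$.

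First I would record the consequences of the hypotheses that come for free. From $e_2\rightarrow e$ and $(P_1)$ I get $e_2\prec e$; combining this with the assumed $e'\prec e_2$ and transitivity of the linear order $\prec$ yields $e'\prec e$. On the other hand the hypothesis already supplies $e_1\prec e'$. Chaining these two facts produces the crucial sandwich $e_1\prec e'\prec e$, so that $e'$ lies strictly between $e_1$ and $e$ with respect to $\prec$ (in particular $e_1,e',e$ are pairwise distinct, as required for applying $(P_2)$).

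Next I would invoke $(P_2)$ for the triple $(e_1,e,e')$. Since $e_1\rightarrow e$ and $e_1\prec e'\prec e$, condition $(P_2)$ forces $e'\rightarrow e$ or $e_1\rightarrow e'$. The second alternative is ruled out by the standing hypothesis $e_1\nrightarrow e'$, leaving $e'\rightarrow e$, which is precisely the desired conclusion.

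The only substantive step is the first one: one must notice that $e_2\rightarrow e$ is what places $e'$ strictly below $e$ and thereby makes $(P_2)$ applicable; once the sandwich $e_1\prec e'\prec e$ is in hand, no case analysis is needed. (Alternatively, one could apply the equivalent form $(P_2^{r})$ from Proposition~\ref{three conditions} to $(e_1,e,e')$, obtaining $e\prec e'$ or $e'\rightarrow e$, and then discard the first option via $e'\prec e_2\prec e$; this reaches the same conclusion through a short contradiction rather than directly.)
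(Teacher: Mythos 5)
Your proof is correct and follows essentially the same route as the paper: both first use $(P_1)$ and transitivity to get $e_2\prec e$ and hence $e'\prec e$, and both then finish with a single application of one of the equivalent planarity conditions from Proposition 3.1.6 — you use $(P_2)$ on the sandwich $e_1\prec e'\prec e$ and discard the alternative via $e_1\nrightarrow e'$, while the paper uses $(P_2^{r})$ and discards the alternative $e\prec e'$ by the order contradiction (exactly the variant you note in your final parenthetical). The two arguments differ only in which equivalent form of the axiom is invoked, so no substantive comparison is needed.
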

\begin{proof}
By condition $(P_1)$, the fact $e\leftarrow e_2$ implies $e_2\prec e$. By the linearity of $\prec$ and the fact $e'\prec e_2$ , we have $e'\prec e$. According to condition $(P^r_2)$, $e_1\rightarrow e$ and $e_1\prec e'$ imply $e\prec e'$ or $e'\rightarrow e$. If $e'\rightarrow e$, we complete the proof. If $e\prec e'$, then due to $e'\prec e_2$ we have $e\prec e_2$ which is contract to $e_2\prec e$. So $e\prec e'$ is impossible and thus in this case we must have $e'\rightarrow e$.
\end{proof}

\begin{lem}[B]
If $e_1\leftarrow e\rightarrow e_2$ and $e_1\prec e'\prec e_2$, then $e'\nrightarrow e_2$ implies $e\rightarrow e'$.
\end{lem}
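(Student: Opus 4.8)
The plan is to prove Lemma (B) as the exact mirror image of the preceding Lemma (A), simply replacing the use of condition $(P_2^{r})$ by its dual $(P_2^{l})$, both of which are available by Proposition \ref{three conditions}. Note that the hypothesis $e_1\leftarrow e\rightarrow e_2$ means $e\rightarrow e_1$ and $e\rightarrow e_2$, so $e$ sits below both $e_1$ and $e_2$ in the flow; this is the reflection of the configuration $e_1\rightarrow e\leftarrow e_2$ treated in Lemma (A), so the whole argument dualizes.

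First I would locate $e$ in the linear order. By condition $(P_1)$, the hypothesis $e\rightarrow e_1$ forces $e\prec e_1$, and since $e_1\prec e'$ is given, linearity of $\prec$ yields $e\prec e'$. This single inequality is the only place where the hypothesis $e\rightarrow e_1$ is used, and it is precisely what will exclude the unwanted alternative at the end.

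Next I would invoke $(P_2^{l})$ with the edges $e$, $e_2$, $e'$ playing the roles of the first, second and third edges in the statement of that condition. The required hypotheses hold: $e\rightarrow e_2$ is given, $e'\prec e_2$ is given, and $e'\nrightarrow e_2$ is the standing assumption. Condition $(P_2^{l})$ then delivers the dichotomy $e'\prec e$ or $e\rightarrow e'$. The case $e'\prec e$ contradicts the inequality $e\prec e'$ established in the previous step (as $\prec$ is a strict linear order), so the only surviving possibility is $e\rightarrow e'$, which is the desired conclusion.

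I do not expect any genuine obstacle: once the correct dual condition $(P_2^{l})$ is selected, the proof reduces to a two-line matching of hypotheses, entirely parallel to Lemma (A). The only point needing care is the bookkeeping of which edge plays which role in $(P_2^{l})$, together with the harmless remark that $e,e',e_1,e_2$ are pairwise distinct, which follows from the strictness of the chain $e\prec e_1\prec e'\prec e_2$. As an alternative, one could deduce Lemma (B) formally from Lemma (A) by passing to the opposite planar graph, under which both $\rightarrow$ and $\prec$ reverse and $(P_2^{r})$ turns into $(P_2^{l})$; but the direct argument above is shorter and self-contained.
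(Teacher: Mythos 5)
Your proof is correct and follows essentially the same route as the paper's: both use $(P_1)$ to get $e\prec e_1$ and hence $e\prec e'$, then apply $(P_2^{l})$ to the triple $(e, e_2, e')$ and rule out the alternative $e'\prec e$ by linearity. The only cosmetic difference is that the paper derives its contradiction against $e\prec e_1$ while you contradict $e\prec e'$ directly, which is equivalent.
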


\begin{proof}
By condition $(P_1)$, the fact $e\rightarrow e_1$ implies $e\prec e_1$. By the linearity of $\prec$ and the fact $e_1\prec e'$ , we have $e\prec e'$. According to condition $(P^l_2)$, $e\rightarrow e_2$ and $e'\prec e_2$ imply $e'\prec e$ or $e\rightarrow e'$. If $e\rightarrow e'$, we complete the proof. If $e'\prec e$, then due to $e_1\prec e'$ we have $e_1\prec e$ which is contract to $e\prec e_1$. So $e'\prec e$ is impossible and thus in this case we must have $e'\rightarrow e$.
\end{proof}

For an edge $e$ of planar graph $(\overrightarrow{\Gamma}, \prec)$, we introduce some notations:

$$i_{min}(e)=min\{i_k\in In(\overrightarrow{\Gamma})| \overline{i_k}\rightarrow e\},$$
$$i_{max}(e)=max\{i_k\in In(\overrightarrow{\Gamma})| \overline{i_k}\rightarrow e\},$$
$$o_{min}(e)=min\{o_k\in Out(\overrightarrow{\Gamma})| e\rightarrow \overline{o_k}\},$$
$$o_{max}(e)=max\{o_k\in Out(\overrightarrow{\Gamma})| e\rightarrow \overline{o_k}\}.$$

\begin{lem}[A'] For any input $i\in In(\overrightarrow{\Gamma})$ and any edge $e\in E(\overrightarrow{\Gamma})$, we have
 $$\overline{i_{min}(e)}\preceq \overline{i}\preceq \overline{i_{max}(e)}\Longleftrightarrow \overline{i}\rightarrow e.$$
\end{lem}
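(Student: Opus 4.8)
My plan is to prove the two implications of the equivalence separately, treating the right-to-left direction as essentially a definitional tautology and reserving the real work for the left-to-right direction, which I will settle by a direct appeal to Lemma~[A]. For the direction $\overline{i}\rightarrow e \Rightarrow \overline{i_{min}(e)}\preceq\overline{i}\preceq\overline{i_{max}(e)}$, I observe that since $i$ is an input and $\overline{i}\rightarrow e$, the leg $i$ lies in the set $\{i_k\in In(\overrightarrow{\Gamma})\mid \overline{i_k}\rightarrow e\}$ over which $i_{min}(e)$ and $i_{max}(e)$ are formed. By the very meaning of minimum and maximum with respect to the linear order $\prec$, this gives $\overline{i_{min}(e)}\preceq\overline{i}\preceq\overline{i_{max}(e)}$ with no further argument.

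For the converse, write $i_1=i_{min}(e)$ and $i_2=i_{max}(e)$, so that $\overline{i_1}\rightarrow e$, $\overline{i_2}\rightarrow e$, and by hypothesis $\overline{i_1}\preceq\overline{i}\preceq\overline{i_2}$. If $\overline{i}$ coincides with $\overline{i_1}$ or with $\overline{i_2}$ there is nothing to prove, as $\overline{i}\rightarrow e$ then holds by the defining property of $i_1,i_2$; so I may assume the strict inequalities $\overline{i_1}\prec\overline{i}\prec\overline{i_2}$. Now the configuration $\overline{i_1}\rightarrow e\leftarrow\overline{i_2}$ together with $\overline{i_1}\prec\overline{i}\prec\overline{i_2}$ is exactly the hypothesis pattern of Lemma~[A], whose conclusion is the desired $\overline{i}\rightarrow e$. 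To invoke it I need only check the remaining hypothesis $\overline{i_1}\nrightarrow\overline{i}$.

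The verification of $\overline{i_1}\nrightarrow\overline{i}$ is the one genuinely non-formal point, and I expect it to be the main obstacle. The key observation is that the edge $\overline{i}$ carried by an input leg can never be the terminal edge of a directed path: since $sgn(i)=+$, the edge $\overline{i}$ has no source vertex when $i$ is a real leg, while when $i$ is a virtual leg its source is a degree-two virtual vertex to which no inner edge is incident; in either case no directed path of length $\geq 1$ can end at $\overline{i}$. Hence $\overline{i_1}\rightarrow\overline{i}$ is impossible, i.e. $\overline{i_1}\nrightarrow\overline{i}$, and Lemma~[A] immediately yields $\overline{i}\rightarrow e$. I would also record the degenerate case in which no input reaches $e$, so that the defining set is empty and $i_{min}(e),i_{max}(e)$ are undefined: there the left-hand inequalities cannot be asserted and $\overline{i}\rightarrow e$ likewise fails, so the equivalence holds vacuously. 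Once the uniform treatment of real and virtual input legs is settled, Lemma~[A] does all the substantive work and the rest is bookkeeping.
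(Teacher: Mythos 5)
Your proof is correct and takes essentially the same route as the paper: the implication from $\overline{i}\rightarrow e$ to the inequalities is immediate from the definition of $i_{min}(e)$ and $i_{max}(e)$, and the converse is obtained by applying Lemma [A] with $e_1=\overline{i_{min}(e)}$, $e_2=\overline{i_{max}(e)}$, $e'=\overline{i}$, the only substantive hypothesis being $\overline{i_{min}(e)}\nrightarrow\overline{i}$, which holds because $i$ is an input. Your extra bookkeeping (the equality cases, the real-versus-virtual-leg analysis, and the vacuous case) simply spells out what the paper compresses into the single remark that this non-reachability ``is implied by the fact that $i$ is an input.''
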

\begin{proof} The $\Longleftarrow$ direction is obvious. To show that the $\Longleftarrow$ direction is a direct consequence of lemma A, we only need to  notice that $\overline{i_{min}(e)}\nrightarrow  \overline{i}$ which is implied by the fact that $i$ is an input.
\end{proof}

\begin{lem}[B'] For any output $o\in In(\overrightarrow{\Gamma})$ and any edge $e\in E(\overrightarrow{\Gamma})$, we have
 $$\overline{o_{min}(e)}\preceq \overline{o}\preceq \overline{o_{max}(e)}\Longleftrightarrow e\rightarrow \overline{o}.$$
\end{lem}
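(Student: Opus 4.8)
The plan is to mirror the proof of Lemma A', replacing the role of Lemma A by Lemma B and exchanging the roles of inputs and outputs (so that this statement is the exact dual of Lemma A'). As in Lemma A', the $\Longleftarrow$ direction is immediate: if $e\rightarrow \overline{o}$, then $o$ belongs to the set $\{o_k\in Out(\overrightarrow{\Gamma})\mid e\rightarrow \overline{o_k}\}$, and hence $\overline{o_{min}(e)}\preceq \overline{o}\preceq \overline{o_{max}(e)}$ follows directly from the definitions of $o_{min}(e)$ and $o_{max}(e)$ as, respectively, the $\prec$-minimum and $\prec$-maximum of this set.

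For the $\Longrightarrow$ direction, I would assume $\overline{o_{min}(e)}\preceq \overline{o}\preceq \overline{o_{max}(e)}$. If $\overline{o}$ coincides with $\overline{o_{min}(e)}$ or with $\overline{o_{max}(e)}$, then $e\rightarrow \overline{o}$ holds by the very definition of these two edges, so one may assume the strict inequalities $\overline{o_{min}(e)}\prec \overline{o}\prec \overline{o_{max}(e)}$. I then apply Lemma B with $e_1:=\overline{o_{min}(e)}$, $e_2:=\overline{o_{max}(e)}$ and $e':=\overline{o}$ (keeping the same $e$). The hypothesis $e_1\leftarrow e\rightarrow e_2$ holds because $e\rightarrow \overline{o_{min}(e)}$ and $e\rightarrow \overline{o_{max}(e)}$ by definition, and the hypothesis $e_1\prec e'\prec e_2$ is exactly the assumed strict inequality.

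The only genuine content is to verify the remaining hypothesis of Lemma B, namely $e'\nrightarrow e_2$, i.e. $\overline{o}\nrightarrow \overline{o_{max}(e)}$; this is where the fact that $o$ is an output is used. Since $o$ is an output leg we have $sgn(o)=-$, so $\overline{o}$ is a boundary edge from which no directed path can start, and therefore $\overline{o}\nrightarrow e'$ for every edge $e'$ distinct from $\overline{o}$; in particular $\overline{o}\nrightarrow \overline{o_{max}(e)}$. Lemma B then yields $e\rightarrow \overline{o}$, completing the argument. I do not expect any real obstacle here, the proof being a faithful dualization of Lemma A'; alternatively, one could deduce the statement directly from Lemma A' by passing to the opposite planar graph $(\overrightarrow{\Gamma}^{op},\prec^{op})$ of Remark $3.1.9$, under which inputs and outputs are interchanged and the conditions $(P_2^{r})$ and $(P_2^{l})$ are swapped.
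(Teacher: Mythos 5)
Your proof is correct and follows essentially the same route as the paper: the $\Longleftarrow$ direction from the definitions of $o_{min}(e)$ and $o_{max}(e)$, and the $\Longrightarrow$ direction by applying Lemma B with $e_1=\overline{o_{min}(e)}$, $e_2=\overline{o_{max}(e)}$, $e'=\overline{o}$, where the key observation — identical to the paper's — is that $\overline{o}\nrightarrow\overline{o_{max}(e)}$ because $o$ is an output. Your explicit treatment of the equality cases (needed since Lemma B is stated with strict inequalities) and the remark about dualizing Lemma A' via the opposite graph are minor refinements of, not departures from, the paper's argument.
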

\begin{proof} The $\Longleftarrow$ direction is obvious. To show that the $\Longleftarrow$ direction is a direct consequence of lemma B, we only need to  notice that $\overline{o}\nrightarrow \overline{o_{max}(e)}$ which is implied by the fact that $o$ is an output.
\end{proof}

\begin{lem}[A''] For any edge $e\in E(\overrightarrow{\Gamma})$, if  $\overline{i_k}\prec e$ , $1\leq k\leq n$, then we have
$$\overline{i_k}\preceq \overline{i_{max}(e)}.$$ Moreover, if $k=n$, we have $\overline{ i_{max}(e)}=\overline{i_n}.$
\end{lem}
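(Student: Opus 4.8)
The plan is to argue by contradiction, using condition $(P_2)$ together with the minimality of input edges in the flow — essentially the same mechanism that drives the proof of Lemma A. Throughout I take the inputs to be listed in the order $\prec$ arranges their edges, so that $\overline{i_1}\prec\overline{i_2}\prec\cdots\prec\overline{i_n}$; with this convention $i_{max}(e)$ is the $\prec$-largest input edge satisfying $\overline{i_{max}(e)}\rightarrow e$, and the set $\{i_j\mid\overline{i_j}\rightarrow e\}$ over which the maximum is taken is nonempty, since in a progressive graph tracing the flow backward from any edge $e$ always terminates at an input.

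For the first assertion I would suppose, toward a contradiction, that $\overline{i_{max}(e)}\prec\overline{i_k}$. Combining this with the hypothesis $\overline{i_k}\prec e$ yields the chain $\overline{i_{max}(e)}\prec\overline{i_k}\prec e$. Since $\overline{i_{max}(e)}\rightarrow e$ by definition, condition $(P_2)$ applied to the triple $(\overline{i_{max}(e)},\,\overline{i_k},\,e)$ forces either $\overline{i_k}\rightarrow e$ or $\overline{i_{max}(e)}\rightarrow\overline{i_k}$. The second alternative is impossible: because $\overline{i_k}$ is an input boundary edge, the input leg $i_k$ is a source of the global flow, so no directed path can terminate at $\overline{i_k}$, and moreover $\overline{i_{max}(e)}\neq\overline{i_k}$. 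Hence $\overline{i_k}\rightarrow e$ must hold, placing $i_k$ in $\{i_j\mid\overline{i_j}\rightarrow e\}$; by maximality this gives $\overline{i_k}\preceq\overline{i_{max}(e)}$, contradicting $\overline{i_{max}(e)}\prec\overline{i_k}$. This contradiction establishes $\overline{i_k}\preceq\overline{i_{max}(e)}$.

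For the ``moreover'' clause I specialize to $k=n$, so the hypothesis reads $\overline{i_n}\prec e$ and the first part yields $\overline{i_n}\preceq\overline{i_{max}(e)}$. On the other hand $i_{max}(e)$ is itself one of the inputs, and $\overline{i_n}$ is the $\prec$-largest input edge by the indexing convention, so $\overline{i_{max}(e)}\preceq\overline{i_n}$. The two inequalities together force $\overline{i_{max}(e)}=\overline{i_n}$, as claimed. The main (and essentially only) obstacle is the middle step, namely ruling out $\overline{i_{max}(e)}\rightarrow\overline{i_k}$: this rests on correctly recognizing that an input edge admits no incoming directed path, and once that is in hand the rest is a routine application of $(P_2)$ and the definition of $i_{max}(e)$.
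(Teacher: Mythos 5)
Your proof is correct and follows essentially the same route as the paper's: argue by contradiction from $\overline{i_{max}(e)}\prec\overline{i_k}\prec e$, apply $(P_2)$ to this triple using $\overline{i_{max}(e)}\rightarrow e$, rule out $\overline{i_{max}(e)}\rightarrow\overline{i_k}$ because $\overline{i_k}$ is an input edge, and rule out (or exploit) $\overline{i_k}\rightarrow e$ via the maximality of $i_{max}(e)$. The only differences are cosmetic — you make explicit the nonemptiness of the set defining $i_{max}(e)$ and spell out the $k=n$ case, which the paper dismisses as obvious.
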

\begin{proof}
If  $\overline{i_k}\prec e$ and assume $ \overline{i_{max}(e)}\prec \overline{i_k}$, then we have $\overline{i_{max}(e)}\prec \overline{i_k}\prec e$. Notice that $\overline{i_{max}(e)}\rightarrow e$,  so according to condition $(P_2)$, we have $\overline{i_{max}(e)}\rightarrow \overline{i_k}$ or $\overline{i_k}\rightarrow e$. But both of them are impossible. In fact, if $\overline{i_k}\rightarrow e$, then by the definition of $\overline{i_{max}(e)}$ we have $\overline{i_k}\preceq\overline{i_{max}(e)}$ which is contract to $ \overline{i_{max}(e)}\prec \overline{i_k}$, so $\overline{i_k}\rightarrow e$ is impossible. Notice that $\overline{i_k}$ is an input edge, so $\overline{i_{max}(e)}\rightarrow \overline{i_k}$ is impossible. Thus we must have $\overline{i_k}\preceq \overline{i_{max}(e)}.$ It is obviously, if $k=n$, we have $\overline{ i_{max}(e)}=\overline{i_n}.$
\end{proof}

\begin{lem}[B''] For any edge $e\in E(\overrightarrow{\Gamma})$, if $e\prec \overline{o_k}$, $1\leq k\leq n$, then we have $$\overline{o_{min}(e)}\preceq \overline{o_k}.$$ Moreover, if $k=1$, we have $\overline{ o_{min}(e)}=\overline{o_1}.$
\end{lem}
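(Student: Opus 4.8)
The plan is to prove Lemma B'' as the exact dual of the preceding Lemma A'', running the same contradiction argument but with the roles of inputs and $i_{max}$ replaced by outputs and $o_{min}$, and with the direction of the order reversed. First I would fix an edge $e$ satisfying $e \prec \overline{o_k}$ and suppose, toward a contradiction, that $\overline{o_k} \prec \overline{o_{min}(e)}$. Combining these two strict inequalities produces the chain $e \prec \overline{o_k} \prec \overline{o_{min}(e)}$, which is exactly the configuration to which the planar axiom $(P_2)$ applies.

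Next I would invoke the definition of $o_{min}(e)$, which guarantees $e \rightarrow \overline{o_{min}(e)}$. Applying condition $(P_2)$ with $e_1 = e$, $e_3 = \overline{o_k}$ and $e_2 = \overline{o_{min}(e)}$ then yields $\overline{o_k} \rightarrow \overline{o_{min}(e)}$ or $e \rightarrow \overline{o_k}$. I would rule out each alternative in turn. The relation $e \rightarrow \overline{o_k}$ would force $\overline{o_{min}(e)} \preceq \overline{o_k}$, by minimality of $\overline{o_{min}(e)}$ among the output edges reachable from $e$, contradicting the standing assumption $\overline{o_k} \prec \overline{o_{min}(e)}$. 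The relation $\overline{o_k} \rightarrow \overline{o_{min}(e)}$ is impossible because $\overline{o_k}$ is an output edge, hence terminal in the global flow, so no directed path can leave it to a distinct edge. Both cases being excluded, the contradiction forces $\overline{o_{min}(e)} \preceq \overline{o_k}$, which is the main assertion.

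For the supplementary claim, when $k = 1$ the edge $\overline{o_1}$ is the least output edge in the order $\prec$ (by the indexing convention for anchored graphs, dual to the fact that $\overline{i_n}$ is the largest input edge used in A''). Since the main assertion gives $\overline{o_{min}(e)} \preceq \overline{o_1}$ and $\overline{o_{min}(e)}$ is itself one of the output edges, I would conclude $\overline{o_{min}(e)} = \overline{o_1}$.

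I expect the only genuinely delicate point to be the justification that $\overline{o_k} \rightarrow \overline{o_{min}(e)}$ cannot hold, that is, that an output edge admits no outgoing directed path to another edge; this rests on the structural fact (dual to the one invoked for inputs in A'') that output legs are sinks of the orientation. Everything else is a mechanical transcription of the proof of A'' under the input/output duality, so no ideas beyond condition $(P_2)$ and the definition of $o_{min}(e)$ are required.
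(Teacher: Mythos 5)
Your proof is correct and follows essentially the same route as the paper: assume $\overline{o_k}\prec\overline{o_{min}(e)}$, form the chain $e\prec\overline{o_k}\prec\overline{o_{min}(e)}$, apply $(P_2)$ using $e\rightarrow\overline{o_{min}(e)}$, and kill both alternatives via the minimality of $o_{min}(e)$ and the fact that an output edge admits no outgoing directed path. The only difference is that you spell out the justifications (the sink property of output edges and the $k=1$ case) that the paper leaves as "obvious," which is harmless.
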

\begin{proof}
If $e\prec \overline{o_k}$ and assume $\overline{o_k}\prec\overline{ o_{min}(e)}$, then we have $e\prec \overline{o_k}\prec\overline{ o_{min}(e)}$. Notice that $e\rightarrow \overline{ o_{min}(e)}$, so according to condition $(P_2)$, we have $e\rightarrow \overline{o_k}$ or $\overline{o_k} \rightarrow \overline{o_{min}(e)}$.  But both of them are impossible. In fact, if $e\rightarrow \overline{o_k}$, then by the definition of $ o_{min}(e)$  we have $\overline{ o_{min}(e)} \preceq\overline{o_k}$, which is contract to $\overline{o_k}\prec\overline{ o_{min}(e)}$, so $e\rightarrow \overline{o_k}$ is impossible. Notice that $\overline{o_k}$ is an output edge, so $\overline{o_k}\rightarrow\overline{ o_{min}(e)}$ is impossible. Thus we must have $\overline{ o_{min}(e)}\preceq\overline{o_k}$. Obviously, if $k=1$, we must have $\overline{ o_{min}(e)}=\overline{o_1}.$
\end{proof}

As in Joyal and Street's geometric theory of progressive plane graphs,  a combinatorial progressive planar structure on a combinatorial graph can  also induce some "more local" structures on the combinatorial graph.
\begin{prop}
Planar graphs are progressive, polarized and anchored graphs.
\end{prop}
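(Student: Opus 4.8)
The plan is to treat the three assertions separately, noting first that a planar graph $(\overrightarrow{\Gamma},\prec)$ is by definition built on a directed graph, so \emph{directedness} is already in hand and only has to be supplemented. For the \emph{progressive} claim the remaining task is acyclicity; for the \emph{polarized} and \emph{anchored} claims the remaining task is to exhibit the required linear orders, and for this the induced linear order $\prec_H$ on $H(\Gamma)$ (defined just before the statement out of $sgn$ and $\prec$) is the natural tool.

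First I would establish acyclicity by contradiction. Suppose $\overrightarrow{\Gamma}$ contains a directed circuit $e_1e_2\cdots e_n$ with $t(e_n)=s(e_1)$. Since a directed path is built from inner edges, and every inner edge is a real two-element edge whose two half-edges lie in distinct vertices (a two-element edge contained in a single vertex is virtual, hence not inner), consecutive edges of the circuit cannot coincide; in particular the circuit cannot have length $1$, so $n\geq 2$ and $e_1\neq e_2$. The subpath $e_1e_2$ gives $e_1\rightarrow e_2$, while running around the circuit along $e_2e_3\cdots e_ne_1$ gives $e_2\rightarrow e_1$. Condition $(P_1)$ then forces both $e_1\prec e_2$ and $e_2\prec e_1$, contradicting antisymmetry of the linear order $\prec$ on the distinct edges $e_1,e_2$. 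Hence $\overrightarrow{\Gamma}$ is acyclic, and being directed it is progressive.

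For the polarized and anchored structures I would simply restrict $\prec_H$. Since $\prec_H$ is a linear order on all of $H(\Gamma)$, its restriction to any subset is again a linear order; applying this to $In(v)$ and $Out(v)$ for each vertex $v$ yields the linear orders required by a polarized graph, and applying it to $In(\overrightarrow{\Gamma})$ and $Out(\overrightarrow{\Gamma})$ yields those required by an anchored graph. The one point I would flag is that these restrictions genuinely order half-edges: two distinct half-edges both lying in $In(v)$ (or both in $In(\overrightarrow{\Gamma})$) cannot belong to a common edge, because a two-element edge with both half-edges at one vertex is virtual and, by the orientation axiom that $sgn$ restricts to an isomorphism $e\rightarrow\{+,-\}$, carries one $+$ and one $-$ half-edge, so it cannot contribute two inputs (nor two outputs).

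I expect the polarized and anchored claims to be routine once $\prec_H$ is invoked, so the only load-bearing step is the acyclicity argument, and within it the observation that a circuit has length at least $2$ and hence supplies \emph{two distinct} edges $e_1\neq e_2$ with $e_1\rightarrow e_2$ and $e_2\rightarrow e_1$; this is exactly what turns a cycle into a violation of antisymmetry of $\prec$ through $(P_1)$. Notably, none of the stronger conditions $(P_2)$, $(P_2^{r})$, $(P_2^{l})$ are needed here — they do their work later (for instance in Theorem 3.5.5) — whereas the present statement rests only on $(P_1)$ together with the linearity of $\prec$.
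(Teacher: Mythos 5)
Your proof is correct and follows essentially the same route as the paper: acyclicity comes from playing $(P_1)$ against the antisymmetry of the linear order $\prec$ on a directed circuit, and the polarized and anchored structures are obtained by comparing underlying edges under $\prec$ — your restriction of $\prec_H$ to $In(v)$, $Out(v)$, $In(\overrightarrow{\Gamma})$, $Out(\overrightarrow{\Gamma})$ coincides with the paper's direct definition $i_1<i_2 \Longleftrightarrow \overline{i_1}\prec\overline{i_2}$, since two distinct inputs (or outputs) are never $\sigma$-related. Your extra care in ruling out length-one circuits and in checking that the restricted orders are well defined are refinements the paper glosses over, but they belong to the same argument rather than a different one.
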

\begin{proof}
$\bullet$ The fact that planar graphs are progressive is a direct consequence of the fact that planar structures are  linear orders. In fact, if $e_1\cdot\cdot\cdot e_n$ is a directed circuit of planar graph $(\overrightarrow{\Gamma}, \prec)$, then by the definition of $\prec$, we have $e_1\prec e_n$ and $e_n\prec e_1$ which is contrary to the linearity of $\prec$.

$\bullet$ The fact that planar graphs are polarized can be easily proved. In fact, let $v$ be a real vertex of planar graph $(\overrightarrow{\Gamma}, \prec)$,  we define the orders on $In(v)$ and $Out(v)$ as follows: $(1)$ $i_1< i_{2}$ iff $\overline{i_{1}}\prec \overline{i_{2}}$  for $i_1,i_2\in In(v)$ and $(2)$ $o_{1}< o_{2}$ iff $\overline{o_{1}}\prec \overline{o_{2}}$ for $o_1,o_2\in Out(v)$. The linearity of $\prec$ implies the linearity of $<$.

$\bullet$ The anchor structure is given as:  $(1)$ $i_1< i_{2}$ iff $\overline{i_{1}}\prec \overline{i_{2}}$  for $i_1,i_2\in In(\Gamma)$ and $(2)$ $o_{1}< o_{2}$ iff $\overline{o_{1}}\prec \overline{o_{2}}$ for $o_1,o_2\in Out(\Gamma)$. The linearity of $\prec$ implies the linearity of $<$.

\end{proof}

Conversely, we have the following proposition:

\begin{prop}\label{uniqueness}
If $\overrightarrow{\Gamma}$ is a progressive, polarized and anchored graph, then there exists at most one planar structure $\prec$ compatible with these structures, or more precisely, such that:

$\bullet$ $e_1\rightarrow e_2$ implies $e_1\prec e_2$ for any distinct edges $e_1, e_2$;

$\bullet$ for any vertex $v$,  $(1)$ $i_1< i_{2}$ iff $\overline{i_{1}}\prec \overline{i_{2}}$  for $i_1,i_2\in In(v)$ and $(2)$ $o_{1}< o_{2}$ iff $\overline{o_{1}}\prec \overline{o_{2}}$ for $o_1,o_2\in Out(v)$;

$\bullet$ $(1)$ $i_1< i_{2}$ iff $\overline{i_{1}}\prec \overline{i_{2}}$  for $i_1,i_2\in In(\Gamma)$ and $(2)$ $o_{1}< o_{2}$ iff $\overline{o_{1}}\prec \overline{o_{2}}$ for $o_1,o_2\in Out(\Gamma)$.
\end{prop}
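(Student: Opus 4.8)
The plan is to prove uniqueness by showing that a compatible planar structure is \emph{forced}: for any two distinct edges $e_1,e_2$ the relation $e_1\prec e_2$ is completely determined by the reachability relation $\rightarrow$, the polarization, and the anchor, none of which refer to $\prec$. Since any two compatible structures $\prec,\prec'$ extend $\rightarrow$, they already agree on every $\rightarrow$-comparable pair by $(P_1)$; hence it suffices to determine the order of an arbitrary $\rightarrow$-incomparable pair $e_1,e_2$ (so $e_1\nrightarrow e_2$ and $e_2\nrightarrow e_1$) from this choice-independent data. I would organize the determination according to how $e_1$ and $e_2$ are related through the boundary, and throughout I may assume neither edge is itself a boundary input or output, since the position of a boundary edge is read off directly from the anchor by Lemmas A$''$ and B$''$.

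First I would treat the case where $e_1$ and $e_2$ have no common input-ancestor, i.e. no input $i$ with $\overline{i}\rightarrow e_1$ and $\overline{i}\rightarrow e_2$. By Lemma A$'$ the input-ancestors of each edge form an interval in the anchor order on $In(\overrightarrow{\Gamma})$, so two disjoint such intervals are linearly separated; say every ancestor of $e_1$ precedes every ancestor of $e_2$. Taking $i=i_{max}(e_1)$ and $j=i_{min}(e_2)$ we have $\overline{i}\rightarrow e_1$, $\overline{i}\prec\overline{j}$, and $\overline{j}\rightarrow e_2$, whence $(P_1)$ gives $\overline{i}\prec e_2$ while disjointness gives $\overline{i}\nrightarrow e_2$; applying $(P_2^{r})$ to $\overline{i}\rightarrow e_1$, $\overline{i}\prec e_2$, $\overline{i}\nrightarrow e_2$ and using $e_2\nrightarrow e_1$ forces $e_1\prec e_2$. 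This conclusion uses only $\rightarrow$, the anchor, and $(P_2^{r})$, so it holds identically for $\prec$ and for $\prec'$; the opposite separation gives $e_2\prec e_1$ symmetrically, so the order is pinned down either way. A dual argument with Lemma B$'$ and $(P_2^{l})$ settles the case of no common output-descendant.

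The remaining case is when $e_1,e_2$ share a common input-ancestor (and, dually, a common output-descendant), as happens for two parallel edges between the same pair of vertices. Here the boundary cannot separate them, and I would trace two directed paths from a common ancestor toward $e_1$ and $e_2$ to a vertex $v$ at which they first diverge, so that $e_1$ and $e_2$ are reached from two distinct edges of $Out(v)$; compatibility with the polarization on $Out(v)$ fixes the order of those two edges, and $(P_2)$ together with $(P_1)$ then propagates this local order down to $e_1,e_2$, again independently of the choice of structure. The main obstacle is precisely this last case: one must show that such a divergence vertex is well defined and that its local polarization order is genuinely forced even when the two paths can reconverge further downstream, so that the $(P_2)$-propagation is unambiguous. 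Once this is established, the order of every incomparable pair is determined by choice-independent data, so $\prec$ and $\prec'$ coincide and the compatible planar structure is unique.
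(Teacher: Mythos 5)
Your reduction to incomparable pairs, your case of no common input-ancestor (via Lemma $(A')$, $(P_1)$ and $(P_2^{r})$), and its dual are correct and reproduce, in substance, the paper's Case 1 and Case 2. The problem is the remaining case, which you yourself flag as ``the main obstacle'' and then leave unresolved: as written, the proposal does not prove the proposition, because for pairs with a common ancestor you only sketch a device --- trace two paths from a common ancestor to their ``first divergence vertex'' $v$ --- whose correctness is exactly what is in question. That device genuinely fails in the presence of reconvergence: at a first-divergence vertex of two particular paths, the two outgoing edges $f_1,f_2$ with $f_1\rightarrow e_1$, $f_2\rightarrow e_2$ may also satisfy $f_1\rightarrow e_2$ and $f_2\rightarrow e_1$ (e.g.\ both outputs of $v$ feed a later vertex from which both $e_1$ and $e_2$ are reachable). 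In that situation the propagation step breaks down --- the applications of $(P_2^{r})$/$(P_2^{l})$ need $f_1\nrightarrow e_2$ (resp.\ $f_2\nrightarrow e_1$) to kill one branch of the disjunction --- and worse, different choices of paths through the same vertex would ``force'' opposite orders of $e_1,e_2$, so the polarization at such a $v$ by itself determines nothing.

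The missing idea (and the paper's) is to choose the branching vertex by maximality rather than by path-tracing. Let $V(e_1,e_2)=\{v\in V(\Gamma)\,:\, v\rightarrow e_1,\ v\rightarrow e_2\}$, which is nonempty in your remaining case, order it by reachability, and take a maximal element $v_{max}$. Then no single output edge of $v_{max}$ can reach both $e_1$ and $e_2$: its target vertex would lie in $V(e_1,e_2)$ and be strictly larger than $v_{max}$, while the degenerate cases $\overline{o}=e_1$ or $\overline{o}=e_2$ contradict incomparability. Hence there are \emph{distinct} $o_1,o_2\in Out(v_{max})$ with $\overline{o_1}\rightarrow e_1$, $\overline{o_2}\rightarrow e_2$ and, crucially, $\overline{o_1}\nrightarrow e_2$, $\overline{o_2}\nrightarrow e_1$; now your propagation works verbatim: $\overline{o_1}\prec\overline{o_2}$ forces $e_1\prec e_2$ by $(P_2^{r})$, and conversely $e_1\prec e_2$ forces $\overline{o_1}\prec\overline{o_2}$ by $(P_2^{l})$, so $e_1\prec e_2\Longleftrightarrow o_1<o_2$ in the polarization order. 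Note also that no separate well-definedness argument is needed: since this equivalence holds for \emph{every} maximal $v_{max}$ and every admissible pair $o_1,o_2$, and these data exist independently of the planar structure, any two compatible structures must agree on $e_1,e_2$. With this replacement for your divergence-vertex device, your outline becomes the paper's proof.
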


\begin{proof}
To proof this composition, we only need to show that if there is a planar structure $\prec$, it will be determined by the progressive, polarization and anchor structures. In fact, for any distinct edges $e_1, e_2$, we set $$V(e_1, e_2)=\{v\in V(\Gamma)|v\rightarrow e_1, v\rightarrow e_2\},$$ then we have the following cases:

$\bullet$ Case $1$: $e_1\rightarrow e_2$ or $e_2\rightarrow e_1$. By the compatible condition between progressive structure  and $\prec$ (just $(P_1)$ condition), we have  $e_1\rightarrow e_2$ if and only if  $e_1\prec e_2$. That is, in this case, the order between $e_1, e_2$ is determined by the progressive structure.

$\bullet$ Case $2$: $V(e_1, e_2)=\varnothing$, $e_1\nrightarrow e_2$ and $e_2\nrightarrow e_1$. In this case, we must have $\overline{i_{max}(e_1)}\prec \overline{i_{min}(e_2)}$ or $\overline{i_{max}(e_2)}\prec \overline{i_{min}(e_1)}$. Otherwise, by lemma $(A')$ there will be an edge $i$ with $\overline{i}\rightarrow e_1$ and $\overline{i}\rightarrow e_2$, hence the target vertex of $i$ would be an element of $V(e_1, e_2)$, a contradiction. Thus, by property $(P^r_2)$ of $\prec$ and lemma $(A')$, we can easily prove that $e_1\prec e_2$ if and only if $\overline{i_{max}(e_1)}< \overline{i_{max}(e_2)}$ which is equivalent to $\overline{i_{max}(e_1)}\prec \overline{i_{max}(e_2)}$ by the compatible condition between $\prec$ and the anchor structure.

Now we want to prove that  $\overline{i_{max}(e_1)}< \overline{i_{max}(e_2)}$ implies $e_1\prec e_2$. Notice the facts that $\overline{i_{max}(e_1)}\rightarrow e_1$, $\overline{i_{max}(e_1)}\prec e_2$ (by $\overline{i_{max}(e_1)}\prec \overline{i_{min}(e_2)}$ and $\overline{i_{min}(e_2)}\rightarrow e_2$) and $\overline{i_{max}(e_1)}\nrightarrow e_2$ (by $\overline{i_{max}(e_1)}\prec \overline{i_{min}(e_2)}$ and lemma $(A')$), then by property $(P^r_2)$ of $\prec$, we have $e_1\prec e_2$ or $e_2\rightarrow e_1$. But  we have assumed $e_2\nrightarrow e_1$, thus we must have $e_1\prec e_2$.

Now if $e_1\prec e_2$, we want to prove $\overline{i_{max}(e_1)}< \overline{i_{max}(e_2)}$. Notice that $\overline{i_{min}(e_2)}\rightarrow e_2$, $e_1\prec e_2$ and $e_1\nrightarrow e_2$, by property $(P_2^l)$ of $\prec$, we have $e_1\prec \overline{i_{min}(e_2)}$ or $\overline{i_{min}(e_2)}\rightarrow e_1$.  If $\overline{i_{min}(e_2)}\rightarrow e_1$ and notice that $\overline{i_{min}(e_2)}\rightarrow e_2$, then the target vertex of  $\overline{i_{min}(e_2)}$ will  be an element of  $V(e_1, e_2)$, a contradiction. So we must have $e_1\prec \overline{i_{min}(e_2)}$. Notice that $\overline{i_{max}(e_1)}\prec e_1$ (by the fact $\overline{i_{max}(e_1)}\rightarrow e_1$ and fact we have proved in Case $1$ ), hence  $\overline{i_{max}(e_1)}\prec \overline{i_{max}(e_2)}$ by the linearity of $\prec$.

In summary, in this case, we prove that the order between $e_1, e_2$ is determined by the anchor structure.

$\bullet$ Case $3$: $V(e_1, e_2)\not=\varnothing$, $e_1\nrightarrow e_2$ and $e_2\nrightarrow e_1$. In this case, we first define a partial order $\widetilde{<}$ on $V(e_1, e_2)$ as $$v_1\widetilde{<}v_2\Longleftrightarrow v_1\rightarrow v_2 ,$$ for any distinct $v_1,v_2\in V(e_1, e_2), $ and the fact that $\widetilde{<}$ is indeed a partial order is consequence of the fact that $\overrightarrow{\Gamma}$ is a progressive graph. Let $v_{max}\in V(e_1, e_2)$ be a maximal element under $\widetilde{<}$, then there must be two distinct output  $o_1, o_2\in Out(v_{max})$ such that $\overline{o_1}\rightarrow e_1$ and $\overline{o_2}\rightarrow e_2$, otherwise the target of $\overline{o_1}(=\overline{o_2})$ will be an element larger than $v_{max}$. Another consequence of the maximality of $v_{max}$ is that $\overline{o_1}\nrightarrow e_2$ and $\overline{o_2}\nrightarrow e_1$.  Now we want to show that $e_1\prec e_2$ if and only if $o_1<o_2$ (or equivalently, $\overline{o_1}\prec \overline{o_2}$ by the compatible condition of polarization structure and  $\prec$). The proof is very similar to that of Case $2$.

Now we prove that $\overline{o_1}\prec \overline{o_2}$ implies $e_1\prec e_2$.  Notice that $\overline{o_2}\rightarrow e_2$, then by the fact we have proved in Case $1$ we have  $\overline{o_2}\prec e_2$, hence $\overline{o_1}\prec e_2$ (by $\overline{o_1}\prec \overline{o_2}$ and linearity of $\prec$).
So we have $\overline{o_1}\rightarrow e_1$, $\overline{o_1}\prec e_2$ and $\overline{o_1}\nrightarrow e_2$, by property $(P^r_2)$ of $\prec$ we have $e_1\prec e_2$ or $e_2\rightarrow e_1$. But $e_2\nrightarrow e_1$,  we must have $e_1\prec e_2$.

Now we prove that $e_1\prec e_2$ implies $\overline{o_1}\prec \overline{o_2}$.  Notice that $\overline{o_1}\rightarrow e_1$, then by the fact we have proved in Case $1$ we have  $\overline{o_1}\prec e_1$. By the linearity of $\prec$, we see that  to prove $\overline{o_1}\prec \overline{o_2}$ we only need to prove $e_1\prec \overline{o_2}.$ Now we have the facts that $\overline{o_2}\rightarrow e_2$, $e_1\prec e_2$ and $e_1\nrightarrow e_2$, using the property $(P_2^l)$ of $\prec$ we get $e_1\prec \overline{o_2}$ or $\overline{o_2}\rightarrow e_1$. But $\overline{o_2}\nrightarrow e_1$, we must have $e_1\prec \overline{o_2}$, hence complete the proof.

In summary, in this case, we prove that the order between $e_1, e_2$ is determined by the polarization structure.

\end{proof}

\subsection{Planar structure on vertices}
In this section, we want to show that the set of vertices of a planar graph will naturally be a planar set (definition 3.1.10).
In fact for a planar graph $(\overrightarrow{\Gamma}, \prec)$, we can define a partial order $<_V$ on the set $V_{re}(\overrightarrow{\Gamma})$ of real vertices as follows:
$$v_1<_V v_2\Longleftrightarrow v_1\rightarrow v_2.$$

It is easily to see that $<_V$ is indeed a partial order and the fact that $\overrightarrow{\Gamma}$ is a progressive graph implies that maximal and minimal elements always exist.
Now we want to prove that for planar graph $(\overrightarrow{\Gamma},\prec)$ the partial order $<_V$ defined above can be naturally extended to  a linear order $\prec_V$ on $V(\overrightarrow{\Gamma})$.  Similar to the Kontsevich's complex of graphs we can define complex of planar graphs, and this linear order can make the sign rule of differential explicit, we hope to come back to this issue in the further.

For a vertex $v$ of planar graph $(\overrightarrow{\Gamma}, \prec)$, we introduce some notations:

$$i_{min}(v)=min\  In(v),$$
$$i_{max}(v)=max\  In(v),$$
$$o_{min}(v)=min\  Out(v),$$
$$o_{max}(v)=max\  Out(v).$$

For any two  different vertices $v_1, v_2$,  we define the new order $\prec_V$ as follows:

$$v_1\prec_V v_2\Longleftrightarrow \ v_1\rightarrow v_2\ or\  \overline{o_{max}(v_1)}\prec \overline{i_{min}(v_2)}.$$

\begin{lem}
If $v_1\prec_V v_2$, then $\overline{i_{min}(v_1)}\prec\overline{o_{max}(v_2)}$.
\end{lem}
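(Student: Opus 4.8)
The plan is to unfold the definition
\[
v_1\prec_V v_2\iff v_1\rightarrow v_2\ \text{ or }\ \overline{o_{max}(v_1)}\prec \overline{i_{min}(v_2)}
\]
into its two defining alternatives and, in each, verify $\overline{i_{min}(v_1)}\prec\overline{o_{max}(v_2)}$ by reducing everything to condition $(P_1)$ (that $\rightarrow$ refines $\prec$) together with transitivity of the linear order $\prec$. The single elementary fact that powers both cases is this: for any vertex $v$ and any $i\in In(v)$, $o\in Out(v)$ one has $\overline{i}\rightarrow\overline{o}$. Indeed $t(\overline{i})=v=s(\overline{o})$, so $\overline{i}\,\overline{o}$ is a directed path of length two through $v$; and $\overline{i}\neq\overline{o}$, since otherwise this edge would be a loop at $v$, contradicting acyclicity of a planar graph (Proposition 3.2.6). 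Hence $(P_1)$ gives $\overline{i}\prec\overline{o}$, and in particular $\overline{i_{min}(v_1)}\prec\overline{o_{max}(v_1)}$ and $\overline{i_{min}(v_2)}\prec\overline{o_{max}(v_2)}$.

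I would then split on the disjunction. If $\overline{o_{max}(v_1)}\prec\overline{i_{min}(v_2)}$, I simply chain
\[
\overline{i_{min}(v_1)}\prec\overline{o_{max}(v_1)}\prec\overline{i_{min}(v_2)}\prec\overline{o_{max}(v_2)},
\]
where the outer two inequalities come from the elementary fact and the middle one is the hypothesis, and conclude by transitivity. If instead $v_1\rightarrow v_2$, I take a directed path from $v_1$ to $v_2$, prepend the edge $\overline{i_{min}(v_1)}$ (whose target is $v_1$, so it matches the starting vertex of the path) and append $\overline{o_{max}(v_2)}$ (whose source is $v_2$, matching the ending vertex). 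This produces a directed path from $\overline{i_{min}(v_1)}$ to $\overline{o_{max}(v_2)}$, so $\overline{i_{min}(v_1)}\rightarrow\overline{o_{max}(v_2)}$; these edges are distinct by acyclicity, and $(P_1)$ closes the case. Since $v_1\prec_V v_2$ forces at least one of the two alternatives, the lemma follows.

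The argument is essentially bookkeeping of reachability, so there is no deep obstacle; the points that actually need care are (i) checking that a boundary (leg) edge still has a well-defined target when it is an input and a well-defined source when it is an output, so that the path-concatenation step remains legitimate even if $\overline{i_{min}(v_1)}$ or $\overline{o_{max}(v_2)}$ is external, and (ii) invoking acyclicity at each comparison to guarantee the two edges are genuinely distinct, so that $(P_1)$ yields a strict $\prec$ rather than merely $\preceq$.
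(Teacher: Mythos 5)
Your proof is correct and follows essentially the same route as the paper's: split on the two alternatives defining $v_1\prec_V v_2$, handle the case $\overline{o_{max}(v_1)}\prec\overline{i_{min}(v_2)}$ by chaining $\overline{i_{min}(v_j)}\prec\overline{o_{max}(v_j)}$ through transitivity of the linear order, and handle the case $v_1\rightarrow v_2$ by producing a directed path from $\overline{i_{min}(v_1)}$ to $\overline{o_{max}(v_2)}$ and invoking $(P_1)$. The only difference is that you spell out the path-concatenation and the strictness/distinctness points that the paper treats as obvious, which is a harmless (indeed welcome) refinement rather than a different argument.
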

\begin{proof}
If $v_1\rightarrow v_2$, it is obvious that $\overline{i_{min}(v_1)}\rightarrow \overline{o_{max}(v_2)}$, hence that $\overline{i_{min}(v_1)}\prec\overline{o_{max}(v_2)}$. If $\overline{o_{max}(v_1)}\prec \overline{i_{min}(v_2)}$, noticing  that $\overline{i_{min}(v_1)}\prec \overline{o_{max}(v_1)}$ and  $\overline{i_{min}(v_2)}\prec \overline{o_{max}(v_2)}$, thus the fact that $\prec$ is a linear order implies that $\overline{i_{min}(v_1)}\prec\overline{o_{max}(v_2)}$.
\end{proof}

\begin{prop}
$\prec_V$ is a linear order on $V(\Gamma)$.
\end{prop}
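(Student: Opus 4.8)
The plan is to verify that $\prec_V$ is a strict total order on $V(\Gamma)$, i.e. that it is irreflexive, satisfies trichotomy (for distinct vertices exactly one of $v_1\prec_V v_2$, $v_2\prec_V v_1$ holds) and is transitive. Throughout I will lean on two elementary facts. First, for every vertex $v$ the edges $\overline{i_{min}(v)}$ and $\overline{o_{max}(v)}$ are distinct (an equality would be a directed circuit of length one, impossible in a progressive graph) and satisfy $\overline{i_{min}(v)}\prec\overline{o_{max}(v)}$, since $t(\overline{i_{min}(v)})=v=s(\overline{o_{max}(v)})$ gives $\overline{i_{min}(v)}\rightarrow\overline{o_{max}(v)}$ and then $(P_1)$ applies. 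Second, I will repeatedly use the translation between edge- and vertex-reachability: a directed edge-path $e\rightarrow e'$ with $e\neq e'$ yields the vertex-reachabilities $t(e)\rightarrow t(e')$ and $t(e)\rightarrow s(e')$ whenever the two named vertices are distinct. Applied to the extremal edges this gives, e.g., $\overline{i_{min}(u)}\rightarrow\overline{i_{min}(u')}\Rightarrow u\rightarrow u'$ (via targets) and $\overline{i_{min}(u)}\rightarrow\overline{o_{max}(u')}\Rightarrow u\rightarrow u'$ (via target and source).

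Irreflexivity is immediate since $\prec_V$ is declared only between distinct vertices. For antisymmetry, suppose both $v_1\prec_V v_2$ and $v_2\prec_V v_1$: the subcase $v_1\rightarrow v_2$ and $v_2\rightarrow v_1$ contradicts acyclicity; a mixed subcase such as $v_1\rightarrow v_2$ together with $\overline{o_{max}(v_2)}\prec\overline{i_{min}(v_1)}$ contradicts $\overline{i_{min}(v_1)}\rightarrow\overline{o_{max}(v_2)}$ (which forces $\overline{i_{min}(v_1)}\prec\overline{o_{max}(v_2)}$ by $(P_1)$); and the subcase $\overline{o_{max}(v_1)}\prec\overline{i_{min}(v_2)}$ and $\overline{o_{max}(v_2)}\prec\overline{i_{min}(v_1)}$ produces the $\prec$-cycle $\overline{o_{max}(v_1)}\prec\overline{i_{min}(v_2)}\prec\overline{o_{max}(v_2)}\prec\overline{i_{min}(v_1)}\prec\overline{o_{max}(v_1)}$. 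For comparability the only nontrivial case is $v_1\nrightarrow v_2$ and $v_2\nrightarrow v_1$; assuming neither $v_1\prec_V v_2$ nor $v_2\prec_V v_1$ and using $v_1\nrightarrow v_2$, $v_2\nrightarrow v_1$ to upgrade the negated inequalities to the strict statements $\overline{i_{min}(v_2)}\prec\overline{o_{max}(v_1)}$ and $\overline{i_{min}(v_1)}\prec\overline{o_{max}(v_2)}$ (equality would encode a connecting edge and hence a forbidden reachability), I set $a=\overline{i_{min}(v_1)}$, $b=\overline{o_{max}(v_1)}$, $c=\overline{i_{min}(v_2)}$, assume $a\prec c$ (the case $c\prec a$ being symmetric), note $a\prec c\prec b$ with $a\rightarrow b$, and invoke $(P_2)$ to get $a\rightarrow c$ or $c\rightarrow b$; by the translation principle the first gives $v_1\rightarrow v_2$ and the second gives $v_2\rightarrow v_1$, both contradictions.

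For transitivity I split into four cases according to which disjunct realizes $v_1\prec_V v_2$ and $v_2\prec_V v_3$. If both hold by reachability, $v_1\rightarrow v_2\rightarrow v_3$ gives $v_1\rightarrow v_3$. If both hold by the edge inequality, the chain $\overline{o_{max}(v_1)}\prec\overline{i_{min}(v_2)}\prec\overline{o_{max}(v_2)}\prec\overline{i_{min}(v_3)}$ (using $\overline{i_{min}(v_2)}\prec\overline{o_{max}(v_2)}$) gives $\overline{o_{max}(v_1)}\prec\overline{i_{min}(v_3)}$ by linearity of $\prec$. The two mixed cases are the substantive ones; in each I first dispose of $v_1\rightarrow v_3$ (which already yields $v_1\prec_V v_3$) and then, assuming $v_1\nrightarrow v_3$, prove $\overline{o_{max}(v_1)}\prec\overline{i_{min}(v_3)}$ by contradiction. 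For $v_1\rightarrow v_2$ with $\overline{o_{max}(v_2)}\prec\overline{i_{min}(v_3)}$, assuming $\overline{i_{min}(v_3)}\prec\overline{o_{max}(v_1)}$ and using $\overline{i_{min}(v_1)}\rightarrow\overline{o_{max}(v_2)}\prec\overline{i_{min}(v_3)}\prec\overline{o_{max}(v_1)}$ with $\overline{i_{min}(v_1)}\rightarrow\overline{o_{max}(v_1)}$, $(P_2)$ forces $\overline{i_{min}(v_3)}\rightarrow\overline{o_{max}(v_1)}$ or $\overline{i_{min}(v_1)}\rightarrow\overline{i_{min}(v_3)}$; the translation principle converts these into $v_3\rightarrow v_1$ and $v_1\rightarrow v_3$, the former forcing $v_3\rightarrow v_2$ and hence $\overline{i_{min}(v_3)}\prec\overline{o_{max}(v_2)}$ against the hypothesis, the latter contradicting $v_1\nrightarrow v_3$. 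The remaining mixed case $\overline{o_{max}(v_1)}\prec\overline{i_{min}(v_2)}$ with $v_2\rightarrow v_3$ is handled dually by applying $(P_2)$ to the output-side path $\overline{i_{min}(v_3)}\rightarrow\overline{o_{max}(v_3)}$.

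The main obstacle is precisely the comparability step and these two mixed transitivity cases: each genuinely requires planarity in the strong form $(P_2)$ (the linear-extension property $(P_1)$ alone does not suffice, as a would-be counterexample interleaving a reachability-disconnected edge with the cone of $v_1$ is exactly what $(P_2)$ rules out), and the care lies in passing correctly between reachability of the extremal edges $\overline{i_{min}}$, $\overline{o_{max}}$ and reachability of the vertices, as well as in excluding the degenerate edge-equalities (each of which secretly supplies a connecting edge and thereby closes the argument). The remaining cases—both-reachability and both-inequality—are genuinely immediate, so essentially all the content is funneled into three applications of $(P_2)$.
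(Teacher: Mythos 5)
Your proof is correct and follows essentially the same route as the paper's: the same case analyses for antisymmetry, transitivity and comparability, with the substantive steps being the identical applications of $(P_2)$ to the internal paths $\overline{i_{min}(v)}\rightarrow\overline{o_{max}(v)}$ with the third vertex's extremal edge wedged in between. Your only additions --- stating the edge-to-vertex reachability translation explicitly and ruling out the degenerate equalities such as $\overline{o_{max}(v_1)}=\overline{i_{min}(v_2)}$ (which the paper leaves implicit) --- merely tighten the same argument.
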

\begin{proof}
$\bullet$ We want to prove that  $v_1\prec_V v_2$ and  $v_2 \prec_V v_1$ contradict with each other.

Case $1$: $v_1\rightarrow v_2$ and $v_2\rightarrow v_1$. In this case, these conditions contradict with the fact that $(\overrightarrow{\Gamma},\prec)$ is progressive.

Case $2$:  $v_1\rightarrow v_2$ and $\overline{o_{max}(v_2)}\prec \overline{i_{min}(v_1)}$. $v_1\rightarrow v_2$ implies that $\overline{i_{min}(v_1)}\prec\overline{o_{max}(v_2)}$ which  contradicts with $\overline{o_{max}(v_2)}\prec \overline{i_{min}(v_1)}$.

Case $3$: $v_2\rightarrow v_1$ and $\overline{o_{max}(v_1)}\prec \overline{i_{min}(v_2)}$. Similar to Case $2$.

Case $4$: $\overline{o_{max}(v_1)}\prec \overline{i_{min}(v_2)}$ and $\overline{o_{max}(v_2)}\prec \overline{i_{min}(v_1)}$. In this case, these conditions  contradict with the fact that $\prec$ is a linear order.

$\bullet$ If $v_1\prec_V v_2$ and $v_2\prec_V v_3$, we want to prove that $v_1\prec_V v_3$. By definition of  $\prec_V$, we have that $v_1\rightarrow v_2\ or\  \overline{o_{max}(v_1)}\prec \overline{i_{min}(v_2)}$ and $v_2\rightarrow v_3\ or\  \overline{o_{max}(v_2)}\prec \overline{i_{min}(v_3)}$.

Case $1$: $v_1\rightarrow v_2$ and $v_2\rightarrow v_3$. In this case, we have $v_1\rightarrow v_3$, hence $v_1\prec_V v_3$.

Case $2$: $v_1\rightarrow v_2$ and $\overline{o_{max}(v_2)}\prec \overline{i_{min}(v_3)}$. The fact that $v_1\rightarrow v_2$ implies that $\overline{i_{min}(v_1)}\rightarrow\overline{o_{max}(v_2)} $, thus $\overline{i_{min}(v_1)}\prec\overline{o_{max}(v_2)}$ and hence $\overline{i_{min}(v_1)}\prec \overline{i_{min}(v_3)}.$ We will prove that $v_1\rightarrow v_3$ or $\overline{o_{max}(v_1)}\prec \overline{i_{min}(v_3)}$ in the following two cases.

case $2.1$:  If  $\overline{o_{max}(v_1)}\prec \overline{i_{min}(v_3)}$, we get that $v_1\prec_V v_3$, thus we complete the proof.

case $2.2$:  If  $\overline{i_{min}(v_3)}\prec \overline{o_{max}(v_1)}$, we get that $\overline{i_{min}(v_1)}\prec\overline{i_{min}(v_3)}\prec \overline{o_{max}(v_1)}.$ Noticing that $\overline{i_{min}(v_1)}\rightarrow \overline{o_{max}(v_1)}$, we have $\overline{i_{min}(v_1)}\rightarrow\overline{i_{min}(v_3)}$ or $\overline{i_{min}(v_3)}\rightarrow \overline{o_{max}(v_1)}.$ In the first case, we have $v_1\rightarrow v_3$, hence $v_1\prec_V v_3.$ In the latter case, we have $v_3\rightarrow v_1$, noticing that $v_1\rightarrow v_2$ we get $v_3\rightarrow v_2$
hence $v_3\prec_V v_2$. But we have proved that it contracts with $v_2\prec_V v_3$, hence the latter case is impossible.

Case $3$:  $\overline{o_{max}(v_1)}\prec \overline{i_{min}(v_2)}$ and $\overline{o_{max}(v_2)}\prec \overline{i_{min}(v_3)}$. In this case, noticing that $\overline{i_{min}(v_2)}\rightarrow \overline{o_{max}(v_2)}$, we have $\overline{i_{min}(v_2)}\prec\overline{o_{max}(v_2)}$, hence $\overline{o_{max}(v_1)}\prec \overline{i_{min}(v_3)}$ which implies $v_1\prec_V v_3.$

Case $4$:  $\overline{o_{max}(v_1)}\prec \overline{i_{min}(v_2)}$ and $v_2\rightarrow v_3$. This case is similar to Case $2$ but we still give a complete proof. The fact that $v_2\rightarrow v_3$ implies $\overline{i_{min}(v_2)}\rightarrow \overline{o_{max}(v_3)}$, hence $\overline{i_{min}(v_2)}\prec \overline{o_{max}(v_3)}$,thus we have $\overline{o_{max}(v_1)}\prec \overline{o_{max}(v_3)}$. We will prove that $v_1\rightarrow v_3$ or $\overline{o_{max}(v_1)}\prec \overline{i_{min}(v_3)}$ in the following two cases.

case $4.1$: If $\overline{o_{max}(v_1)}\prec\overline{i_{min}(v_3)}$, we get $v_1\prec_V v_3$, thus complete the proof.

case $4.2$: If $\overline{i_{min}(v_3)}\prec\overline{o_{max}(v_1)}$, we get that $\overline{i_{min}(v_3)}\prec\overline{o_{max}(v_1)}\prec \overline{o_{max}(v_3)}$, noticing that $\overline{i_{min}(v_3)}\rightarrow \overline{o_{max}(v_3)}$, we must have $\overline{i_{min}(v_3)}\rightarrow \overline{o_{max}(v_1)}$ or $\overline{o_{max}(v_1)}\rightarrow \overline{o_{max}(v_3)}$. In the first case, we get $v_3\rightarrow v_1$, and notice that $v_2\rightarrow v_3$ we get $v_2\rightarrow v_1$, hence $v_2\prec_V v_1$ which contradict with the fact that $v_1\prec_V v_2$. In the latter case, we have $v_1\rightarrow v_3$, hence $v_1\prec_V v_3$.

$\bullet$ We want to prove that $v_1\prec_V v_2$ or $v_2 \prec_V v_1$ for different vertices $v_1,v_2$, which is equivalent to  at least one of the following conditions:
$(1)\ v_1\rightarrow v_2,\ (2)\ \overline{o_{max}(v_1)}\prec \overline{i_{min}(v_2)},\ (3)\ v_2\rightarrow v_1,\ (4)\ \overline{o_{max}(v_2)}\prec \overline{i_{min}(v_1)}$. We discuss in the following cases:

Case $1$: if $\overline{i_{min}(v_1)}\prec \overline{i_{min}(v_2)}\prec \overline{o_{max}(v_1)}$, then  by the fact that $\overline{i_{min}(v_1)}\rightarrow \overline{o_{max}(v_1)}$ and the condition  $(\widetilde{P_2})$ of $\prec$, we have $\overline{i_{min}(v_2)}\rightarrow \overline{o_{max}(v_1)} $ or $\overline{i_{min}(v_1)}\rightarrow \overline{i_{min}(v_2)} $ which implies $v_2\rightarrow v_1$ or $v_1\rightarrow v_2$, thus we have condition $(1)$ or condition $(3)$.

Case $2$: $ \overline{i_{min}(v_2)}\prec\overline{i_{min}(v_1)}$ or $\overline{o_{max}(v_1)}\prec\overline{i_{min}(v_2)}$. If $\overline{o_{max}(v_1)}\prec\overline{i_{min}(v_2)}$, we get condition $(2)$. If $ \overline{i_{min}(v_2)}\prec\overline{i_{min}(v_1)}$, we discuss in the following cases: $\overline{o_{max}(v_2)}\prec \overline{i_{min}(v_1)}$ or $ \overline{i_{min}(v_1)}\prec\overline{o_{max}(v_2)}.$

case $2.1$: If $ \overline{i_{min}(v_2)}\prec\overline{i_{min}(v_1)}$ and $\overline{o_{max}(v_2)}\prec \overline{i_{min}(v_1)}$, we get condition $(4)$.

case $2.2$: If $ \overline{i_{min}(v_2)}\prec\overline{i_{min}(v_1)}$ and $\overline{i_{min}(v_1)}\prec \overline{o_{max}(v_2)}$, that is, $\overline{i_{min}(v_2)}\prec\overline{i_{min}(v_1)}\prec \overline{o_{max}(v_2)}$, then by  the fact that $\overline{i_{min}(v_2)}\rightarrow \overline{o_{max}(v_2)}$ and the condition  $(\widetilde{P_2})$ of $\prec$, we must have $\overline{i_{min}(v_2)}\rightarrow \overline{i_{min}(v_1)}$ or $\overline{i_{min}(v_1)}\rightarrow \overline{o_{max}(v_2)}$, which implies $v_2\rightarrow v_1$ or $v_1\rightarrow v_2$ thus we have  conditions $(1)$ or condition $(3)$.

\end{proof}

\begin{ex}
For  the planar graph in example 3.1.3, the linear order on vertices are shown as
\begin{center}
\begin{tikzpicture}[scale=0.5]

\node (v2) at (-4,3) {};
\node[left] at (v2) {$2$};
\node (v1) at (-1.5,5.5) {};
\node[right] at (v1) {$1$};
\node (v7) at (-1.5,1) {};
\node[right] at (v7) {$4$};
\node (v9) at (1.5,5.5) {};
\node[right] at (v9) {$3$};
\node (v14) at (2,1.5) {};
\node[right] at (v14) {$5$};
\node (v3) at (-3,7.5) {$2$};
\node (v4) at (-2,7.5) {$3$};
\node (v5) at (-0.5,7.5) {$4$};
\node (v6) at (-4.8,7.4) {$1$};
\node (v11) at (-4.5,-1) {$7$};
\node (v12) at (-2,-1) {$13$};
\node (v13) at (0,-1) {$14$};
\node (v15) at (2,-1) {$17$};
\node (v8) at (1,7.5) {$10$};
\node (v10) at (2.5,7.5) {$11$};
\node  at (-2.5,3.5) {$6$};
\node  at (-3,5.2) {$5$};
\node  at (-1.2,3.3) {$9$};
\node  at (0.5,3.25) {$12$};
\node  at (2.2,3.7) {$15$};
\node  at (-3,1.7) {$8$};
\draw[fill] (-4,3) circle [radius=0.11];
\draw[fill] (v1) circle [radius=0.11];
\draw[fill] (v7) circle [radius=0.11];
\draw[fill] (v9) circle [radius=0.11];
\draw[fill] (v14) circle [radius=0.11];
\draw  plot[smooth, tension=1] coordinates {(v1) (-2.5,5)  (-3.5,4) (v2)}[postaction={decorate, decoration={markings,mark=at position .5 with {\arrow[black]{stealth}}}}];
\draw  plot[smooth, tension=1] coordinates {(v1) (-2,4.5)  (-3,3.5) (v2)}[postaction={decorate, decoration={markings,mark=at position .5 with {\arrow[black]{stealth}}}}];

\draw  (v3) -- (-1.5,5.5)[postaction={decorate, decoration={markings,mark=at position .5 with {\arrow[black]{stealth}}}}];
\draw  (v4) -- (-1.5,5.5)[postaction={decorate, decoration={markings,mark=at position .5 with {\arrow[black]{stealth}}}}];
\draw  (v5) -- (-1.5,5.5)[postaction={decorate, decoration={markings,mark=at position .5 with {\arrow[black]{stealth}}}}];

\draw  (v6) -- (-4,3)[postaction={decorate, decoration={markings,mark=at position .5 with {\arrow[black]{stealth}}}}];
\draw  (v1) -- (-1.5,1)[postaction={decorate, decoration={markings,mark=at position .5 with {\arrow[black]{stealth}}}}];
\draw  (-4,3) -- (-1.5,1)[postaction={decorate, decoration={markings,mark=at position .5 with {\arrow[black]{stealth}}}}];

\draw  (v8)--(1.5,5.5)[postaction={decorate, decoration={markings,mark=at position .5 with {\arrow[black]{stealth}}}}];
\draw  (v10) -- (1.5,5.5)[postaction={decorate, decoration={markings,mark=at position .5 with {\arrow[black]{stealth}}}}];
\draw  (1.5,5.5) -- (-1.5,1)[postaction={decorate, decoration={markings,mark=at position .5 with {\arrow[black]{stealth}}}}];
\draw  (v2) -- (v11)[postaction={decorate, decoration={markings,mark=at position .5 with {\arrow[black]{stealth}}}}];
\draw  (-1.5,1) -- (v12)[postaction={decorate, decoration={markings,mark=at position .65 with {\arrow[black]{stealth}}}}];
\draw  (v13) -- (-1.5,1)[postaction={decorate, decoration={markings,mark=at position .5 with {\arrowreversed[black]{stealth}}}}];
\draw  (1.5,5.5) -- (2,1.5)[postaction={decorate, decoration={markings,mark=at position .5 with {\arrow[black]{stealth}}}}];
\draw  (2,1.5) -- (v15)[postaction={decorate, decoration={markings,mark=at position .5 with {\arrow[black]{stealth}}}}];

\end{tikzpicture},
\end{center}
where the linear order on vertices is given by their labels $1,2,3,4,5.$
\end{ex}

In the following proposition, we will show that $\prec_V$ satisfies  $(P_2)$ condition.
\begin{prop}
For any three vertices $v_1$,$v_2$,$v_3$, if $v_1\prec_V v_3\prec_V v_2$ and $v_1\rightarrow v_2$, then $v_1\rightarrow v_3$ or $v_3\rightarrow v_2.$
\end{prop}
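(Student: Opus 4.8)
The plan is to derive condition $(P_2)$ for $\prec_V$ from condition $(P_2)$ for the edge order $\prec$, by transporting the vertex comparison down to a comparison of a few distinguished edges sitting at $v_1$, $v_3$ and $v_2$.

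First I would dispose of the easy cases. The desired conclusion is precisely $v_1\rightarrow v_3$ or $v_3\rightarrow v_2$, so I may assume that \emph{both} fail and argue towards the conclusion (a contradiction with this assumption). Feeding $v_1\not\rightarrow v_3$ into $v_1\prec_V v_3$ and $v_3\not\rightarrow v_2$ into $v_3\prec_V v_2$ and unwinding the definition $v\prec_V v'\Longleftrightarrow v\rightarrow v'$ or $\overline{o_{max}(v)}\prec\overline{i_{min}(v')}$ forces the two edge inequalities
$$\overline{o_{max}(v_1)}\prec\overline{i_{min}(v_3)},\qquad \overline{o_{max}(v_3)}\prec\overline{i_{min}(v_2)}.$$

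Next I would assemble a single $\prec$-chain through the six edges $\overline{i_{min}(v_1)},\overline{o_{max}(v_1)},\overline{i_{min}(v_3)},\overline{o_{max}(v_3)},\overline{i_{min}(v_2)},\overline{o_{max}(v_2)}$. The three "local" links $\overline{i_{min}(v)}\prec\overline{o_{max}(v)}$ (for $v=v_1,v_3,v_2$) follow from $(P_1)$, since $\overline{i_{min}(v)}\rightarrow\overline{o_{max}(v)}$ via a directed path through $v$; the two "global" links are the displayed inequalities. Concatenating the chain yields in particular
$$\overline{i_{min}(v_1)}\prec\overline{o_{max}(v_3)}\prec\overline{o_{max}(v_2)}.$$
Meanwhile $v_1\rightarrow v_2$ gives the reachability $\overline{i_{min}(v_1)}\rightarrow\overline{o_{max}(v_2)}$, since $i_{min}(v_1)$ flows into $v_1$ and $o_{max}(v_2)$ flows out of $v_2$.

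Finally I apply condition $(P_2)$ for $\prec$ to the triple $\overline{i_{min}(v_1)}\rightarrow\overline{o_{max}(v_2)}$ with $\overline{o_{max}(v_3)}$ strictly between them; this returns $\overline{o_{max}(v_3)}\rightarrow\overline{o_{max}(v_2)}$ or $\overline{i_{min}(v_1)}\rightarrow\overline{o_{max}(v_3)}$. I expect the only genuine work to lie in the last step: reading each of these edge-reachabilities back as a vertex-reachability. Because $\overline{o_{max}(v_3)}$ is an output edge of $v_3$ and $\overline{o_{max}(v_2)}$ an output edge of $v_2$ (whose source is $v_2$), any directed path witnessing the first relation must start at $v_3$ and pass through $v_2$, so $v_3\rightarrow v_2$; likewise, since $\overline{i_{min}(v_1)}$ is an input edge of $v_1$ (whose target is $v_1$), the second relation forces $v_1\rightarrow v_3$. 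In either case the standing assumption is contradicted, so one of $v_1\rightarrow v_3$, $v_3\rightarrow v_2$ must hold. The main obstacle is precisely this bookkeeping between edge-level and vertex-level reachability, together with checking that the six edges in the chain are genuinely distinct (guaranteed by acyclicity of $\overrightarrow{\Gamma}$) so that every $\prec$ appearing is strict.
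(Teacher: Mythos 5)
Your proposal is correct and takes essentially the same route as the paper's proof: argue by contradiction, extract the two edge inequalities $\overline{o_{max}(v_1)}\prec\overline{i_{min}(v_3)}$ and $\overline{o_{max}(v_3)}\prec\overline{i_{min}(v_2)}$ from the definition of $\prec_V$, chain them via $(P_1)$ into the six-edge chain, use $v_1\rightarrow v_2$ to get $\overline{i_{min}(v_1)}\rightarrow\overline{o_{max}(v_2)}$, and apply $(P_2)$ for $\prec$. The only (immaterial) difference is that you sandwich $\overline{o_{max}(v_3)}$ as the middle edge where the paper uses $\overline{i_{min}(v_3)}$; the translation back to vertex reachability works identically in both cases.
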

\begin{proof} We prove this proposition by contradiction.
Suppose that $v_1\nrightarrow v_3$ and $v_3\nrightarrow v_2$,then the fact that $v_1\prec_V v_3\prec_V v_2$  implies that $\overline{o_{max}(v_1)}\prec \overline{i_{min}(v_3)}$ and $\overline{o_{max}(v_3)}\prec \overline{i_{min}(v_2)}$. It is obvious that $$\overline{i_{min}(v_1)}\rightarrow \overline{o_{max}(v_1)}\prec \overline{i_{min}(v_3)}\rightarrow \overline{o_{max}(v_3)}\prec\overline{i_{min}(v_2)}\rightarrow \overline{o_{max}(v_2)}.$$
The fact $v_1\rightarrow v_2$ implies $\overline{i_{min}(v_1)}\rightarrow \overline{o_{max}(v_2)}$. Using the property $(P_2)$ of $\prec$, we have $\overline{i_{min}(v_1)}\rightarrow \overline{i_{min}(v_3)}$ or $\overline{i_{min}(v_3)}\rightarrow \overline{o_{max}(v_2)}$ which imply $v_1\rightarrow v_3$ or $v_3\rightarrow v_2$, hence we arrive at a contradiction.

\end{proof}
Thus we have shown that the set of vertices of any planar graph is a planar set. As a corollary of proposition \ref{equivalence}, we have
\begin{cor}
The following three conditions are equivalent:

$(P_2)$ for any three vertices $v_1$, $v_2$, $v_3$, if $v_1\rightarrow v_2$ and $v_1\prec_V v_3\prec_V v_2$, then $v_1\rightarrow v_3$ or $v_3\rightarrow v_2$;

$(P_2^r)$ for any three vertices $v_1$, $v_2$, $v_3$, if $v_1\rightarrow v_2$ and $v_1\prec_V v_3$ and $v_1\nrightarrow  v_3$, then $v_2\prec_V v_3$ or $v_3\rightarrow v_2$;

$(P_2^l)$  for any three vertices $v_1$, $v_2$, $v_3$, if $v_1\rightarrow v_2$ and $v_3\prec_V v_2$ and $v_3\nrightarrow  v_2$, then $v_3\prec_V v_1$ or $v_1\rightarrow v_3$.
\end{cor}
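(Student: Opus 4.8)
The plan is to recognize this corollary as nothing more than a concrete instance of the abstract equivalence Proposition \ref{equivalence}, specialized to the set of vertices equipped with the reachability relation and the linear order $\prec_V$. Proposition \ref{equivalence} asserts the equivalence of $(P_2)$, $(P_2^r)$, $(P_2^l)$ for \emph{any} finite set carrying a partial order $\rightarrow$ together with a linear order $\prec$ extending it, and the three conditions listed in the corollary are precisely those three conditions read off verbatim with $s_i$ replaced by $v_i$ and $\prec$ replaced by $\prec_V$. Hence the entire content of the proof is to check that the data $(V(\overrightarrow{\Gamma}),\rightarrow,\prec_V)$ satisfies the hypotheses of Proposition \ref{equivalence}; once this is verified, the conclusion is immediate.

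First I would set $S=V(\overrightarrow{\Gamma})$ and take $\rightarrow$ to be the directed-path relation on vertices. This is a strict partial order: transitivity follows by concatenating directed paths, while irreflexivity (equivalently antisymmetry) is exactly the absence of directed circuits, which holds because planar graphs are progressive (Proposition 3.2.9, using that $\prec$ is a linear order). Next I take $\prec$ to be $\prec_V$, which Proposition 3.3.2 has already established to be a linear order on $V(\overrightarrow{\Gamma})$. It then remains only to confirm that $\prec_V$ is a linear \emph{extension} of $\rightarrow$, i.e. that $v_1\rightarrow v_2$ forces $v_1\prec_V v_2$ for distinct vertices. But this is immediate from the defining disjunction $v_1\prec_V v_2 \Longleftrightarrow (v_1\rightarrow v_2 \text{ or } \overline{o_{max}(v_1)}\prec\overline{i_{min}(v_2)})$, whose first clause is literally $v_1\rightarrow v_2$; so whenever $v_1\rightarrow v_2$ we already have $v_1\prec_V v_2$.

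With these verifications in hand, the hypotheses of Proposition \ref{equivalence} hold for $(V(\overrightarrow{\Gamma}),\rightarrow,\prec_V)$, and its conclusion is exactly the stated equivalence of $(P_2)$, $(P_2^r)$, $(P_2^l)$ on vertices. I do not anticipate a genuine obstacle: the substantive facts — that $\prec_V$ is a linear order and that it satisfies $(P_2)$ — were already discharged in Propositions 3.3.2 and 3.3.4, and this corollary merely repackages the abstract lemma. The single point warranting an explicit sentence is the compatibility check, namely that $\prec_V$ is a linear \emph{extension} of $\rightarrow$ rather than an arbitrary linear order, since Proposition \ref{equivalence} presupposes precisely this compatibility; as noted above, it is built into the definition of $\prec_V$ and so costs nothing.
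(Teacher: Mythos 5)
Your proposal is correct and is exactly the paper's own argument: the paper derives this corollary by directly invoking the abstract equivalence of Proposition 3.1.9 for the planar set $(V(\overrightarrow{\Gamma}),\rightarrow,\prec_V)$, whose hypotheses (that $\rightarrow$ is a partial order and $\prec_V$ a linear extension of it) are established in Section 3.3 just as you verify them. Your explicit check that $\prec_V$ extends $\rightarrow$ via the first clause of its defining disjunction is the only point the paper leaves implicit, so your write-up is if anything slightly more complete.
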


\subsection{Tensor product of planar graphs}

Now we give the notion of tensor product of two planar graphs.
\begin{defn}
Let $(\overrightarrow{\Gamma}_1,\prec_1)$ and $(\overrightarrow{\Gamma}_2,\prec_2)$ be two planar graph, we define their tensor product $(\overrightarrow{\Gamma},\prec)$ to be a directed graph $\overrightarrow{\Gamma}$ with a partial  order $\prec$ such that

$\bullet$ $\overrightarrow{\Gamma}=\overrightarrow{\Gamma}_1\otimes\overrightarrow{\Gamma}_2$ as a directed graph,

$\bullet$ for any different edges $e_1,e_2\in E(\Gamma)$, \begin{equation*} e_1\prec e_2 \Longleftrightarrow
\begin{cases}
e_1,e_2\in E(\Gamma_1)\  \text{and}\  e_1\prec_1 e_2,\  \text{or},\\
e_1,e_2\in E(\Gamma_2)\  \text{and}\  e_1\prec_2 e_2,\ \text{or},\\
e_1\in E(\Gamma_1)\  \text{and}\  e_2\in E(\Gamma_2) .
\end{cases}
\end{equation*}
\end{defn}
\begin{prop}
Tensor product of two planar  graphs is a planar graph.
\end{prop}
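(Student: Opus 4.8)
The plan is to verify directly that the pair $(\overrightarrow{\Gamma},\prec)$ produced by the definition satisfies the three requirements of being a planar graph: that $\prec$ is a linear order on $E(\Gamma)$, that it satisfies condition $(P_1)$, and that it satisfies condition $(P_2)$. The single structural fact that drives everything is that $\overrightarrow{\Gamma}=\overrightarrow{\Gamma}_1\otimes\overrightarrow{\Gamma}_2$ is a disjoint union as a directed graph, so that \emph{no directed path crosses components}: if $e\rightarrow e'$ in $\overrightarrow{\Gamma}$ then $e,e'$ lie in the same $E(\Gamma_i)$, and there the relation $\rightarrow$ agrees with the corresponding relation in $\overrightarrow{\Gamma}_i$. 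Dually, the order $\prec$ is by construction the ordinal sum of $\prec_1$ and $\prec_2$: it restricts to $\prec_i$ on each $E(\Gamma_i)$, and every edge of $\Gamma_1$ precedes every edge of $\Gamma_2$. That $\prec$ is a linear order is then immediate, since concatenating two linear orders on disjoint sets yields a linear order.

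For condition $(P_1)$, I would argue as follows. Suppose $e_1\rightarrow e_2$. By the no-crossing observation, $e_1$ and $e_2$ lie in a common component $E(\Gamma_i)$, and $e_1\rightarrow e_2$ holds already in $\overrightarrow{\Gamma}_i$. Applying $(P_1)$ for the planar graph $(\overrightarrow{\Gamma}_i,\prec_i)$ gives $e_1\prec_i e_2$, and since $\prec$ restricts to $\prec_i$ on $E(\Gamma_i)$, we conclude $e_1\prec e_2$.

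For condition $(P_2)$, take distinct $e_1,e_2,e_3$ with $e_1\rightarrow e_2$ and $e_1\prec e_3\prec e_2$; I want $e_3\rightarrow e_2$ or $e_1\rightarrow e_3$. Again $e_1,e_2$ share a component $E(\Gamma_i)$. The key step is to pin $e_3$ into that same component using the ordinal-sum shape of $\prec$. If $i=1$ and $e_3\in E(\Gamma_2)$, then $e_2\prec e_3$ (all of $\Gamma_1$ precedes all of $\Gamma_2$), contradicting $e_3\prec e_2$; so $e_3\in E(\Gamma_1)$. If $i=2$ and $e_3\in E(\Gamma_1)$, then $e_3\prec e_1$, contradicting $e_1\prec e_3$; so $e_3\in E(\Gamma_2)$. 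In either case all three edges sit in the same $E(\Gamma_i)$, where $\prec$ agrees with $\prec_i$ and $\rightarrow$ agrees with the internal path relation, so condition $(P_2)$ for $(\overrightarrow{\Gamma}_i,\prec_i)$ applies verbatim and yields the desired conclusion.

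I do not expect a genuine obstacle here; the argument is routine once the two structural observations are isolated. The only place requiring care is the reduction in $(P_2)$ forcing $e_3$ into the component of $e_1,e_2$, which is exactly where the special interval form of the ordinal-sum order is used: it is precisely because $\Gamma_1$-edges and $\Gamma_2$-edges form two $\prec$-intervals that an edge $e_3$ strictly between $e_1$ and $e_2$ cannot escape to the other component. If desired, the verification of $(P_2)$ could alternatively be phrased through the abstract Proposition~\ref{equivalence}, viewing $(E(\Gamma),\rightarrow,\prec)$ as an ordinal sum of two planar sets, but the direct case analysis above is the most transparent route.
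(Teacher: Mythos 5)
Your proof is correct and takes essentially the same approach as the paper's: both rest on the two structural facts that directed paths cannot cross components in $\overrightarrow{\Gamma}_1\otimes\overrightarrow{\Gamma}_2$ and that $\prec$ is the ordinal sum of $\prec_1$ and $\prec_2$, followed by a case analysis on which component $e_3$ lies in. The only cosmetic difference is that the paper verifies the equivalent condition $(P_2^r)$ (discarding the impossible case $e_1,e_2\in E(\Gamma_2)$, $e_3\in E(\Gamma_1)$ by contradiction), whereas you verify $(P_2)$ directly by pinning $e_3$ into the component containing $e_1$ and $e_2$.
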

\begin{proof}
We only need to show that $\prec$ is a planar structure.

$\bullet$ The linearity is obvious from the definition of $\prec$.

$\bullet$ Now we want to show condition $(P_1)$ is satisfied. If $e_1\rightarrow e_2$, then we must have $e_1,e_2\in E(\Gamma_1)$ or $e_1,e_2\in E(\Gamma_2)$ due to the fact that $\Gamma_1$ and $\Gamma_2$ are not connected in $\Gamma_1\otimes \Gamma_2$. In both case, we have $e_1\prec e_2$.

$\bullet$ Now we want to prove that $\prec$  satisfies condition $(P^r_2)$.  If $e_1\rightarrow e_2$,$e_3\nrightarrow e_2$ and $e_1\prec e_3$, we want to prove that $e_2\prec e_3$ or $e_3\rightarrow e_2$. Just as above, $e_1\rightarrow e_2$  implies $e_1,e_2\in E(\Gamma_1)$ or $e_1,e_2\in E(\Gamma_2)$. Thus we only need discuss the follow cases:

Case $1$:  $e_1,e_2,e_3\in E(\Gamma_1)$. The fact that $\prec_1$ is a planar structure implies that  $e_2\prec_1 e_3$ or $e_3\rightarrow e_2$. Hence, we have $e_2\prec e_3$ or $e_3\rightarrow e_2$ due to the definition of $\prec$.

Case $2$:  $e_1,e_2,e_3\in E(\Gamma_2)$. In this case, the proof is similar to Case $1$.

Case $3$:  $e_1,e_2\in E(\Gamma_1)$ and $e_3\in E(\Gamma_2)$. In this case, we have $e_2\prec e_3$ which is directly from the definition of $\prec$.

Case $4$: $e_1,e_2\in E(\Gamma_2)$ and $e_3\in E(\Gamma_1)$. In this case, from the definition of $\prec$, we have $e_3\prec e_1$ which is contrary to our assumption $e_1\prec e_3$. So this case can be dropped.
\end{proof}

\begin{prop}
If $\prec$ and $\widetilde{\prec}$ are two planar structures on $\overrightarrow{\Gamma}_1\otimes\overrightarrow{\Gamma}_2$ and satisfy $(1)$  $\prec|_{E(\Gamma_1)}= \widetilde{\prec}|_{E(\Gamma_1)}$, $\prec|_{E(\Gamma_2)}= \widetilde{\prec}|_{E(\Gamma_2)}$, and, $(2)$  for any $e_1\in E(\Gamma_1),e_2\in E(\Gamma_2) $, $e_1\prec e_2$ and $ e_1\widetilde{\prec} e_2$,\ then $\prec=\widetilde{\prec}.$
\end{prop}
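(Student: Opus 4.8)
The plan is to exploit the elementary fact that a linear order on a finite set is completely determined by the relation it induces on each pair of distinct elements, so it suffices to verify that $\prec$ and $\widetilde{\prec}$ agree on every pair of distinct edges $e,e'$ of $\overrightarrow{\Gamma}_1\otimes\overrightarrow{\Gamma}_2$. Since the definition of the tensor product of graphs gives $E(\Gamma_1\otimes\Gamma_2)=E(\Gamma_1)\sqcup E(\Gamma_2)$ as a disjoint union, every such pair falls into exactly one of three mutually exclusive and exhaustive cases: both edges lie in $E(\Gamma_1)$, both lie in $E(\Gamma_2)$, or the two edges are split between the two summands.

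First I would dispose of the two homogeneous cases. If $e,e'\in E(\Gamma_1)$, then hypothesis $(1)$, namely $\prec|_{E(\Gamma_1)}=\widetilde{\prec}|_{E(\Gamma_1)}$, gives directly that $e\prec e'$ if and only if $e\widetilde{\prec} e'$; the case $e,e'\in E(\Gamma_2)$ is handled identically using the second equality in $(1)$. The mixed case is where hypothesis $(2)$ enters: assuming (after possibly interchanging the names of $e$ and $e'$) that $e\in E(\Gamma_1)$ and $e'\in E(\Gamma_2)$, hypothesis $(2)$ asserts both $e\prec e'$ and $e\widetilde{\prec} e'$, so the two orders again agree on this pair. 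Having verified all three cases, I would conclude that $\prec$ and $\widetilde{\prec}$ induce the same relation on every pair, whence $\prec=\widetilde{\prec}$.

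There is no genuine obstacle here. The only points requiring a little care are purely organizational: checking that the three cases are exhaustive (immediate from $E=E(\Gamma_1)\sqcup E(\Gamma_2)$ being a disjoint union) and justifying the ``without loss of generality'' reduction in the mixed case by the symmetry of the hypotheses under interchanging the roles of $\Gamma_1$ and $\Gamma_2$. It is worth emphasizing that neither the planarity axiom $(P_1)$ nor $(P_2)$ is invoked: the statement is really a uniqueness lemma saying that two linear orders on $E(\Gamma_1)\sqcup E(\Gamma_2)$ which agree blockwise and place every $\Gamma_1$-edge before every $\Gamma_2$-edge must coincide, and this is exactly what conditions $(1)$ and $(2)$ encode.
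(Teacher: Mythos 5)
Your proof is correct and takes essentially the same approach as the paper's: both arguments reduce to checking the pairwise relation in the three cases (both edges in $E(\Gamma_1)$, both in $E(\Gamma_2)$, or one in each), settling the homogeneous cases by hypothesis $(1)$ and the mixed case by hypothesis $(2)$. Your observation that the planarity axioms $(P_1)$ and $(P_2)$ are never invoked is accurate and is implicit in the paper's proof as well.
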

\begin{proof}
From condition $(1)$, we can define a linear order $\prec_1=\prec|_{E(\Gamma_1)}= \widetilde{\prec}|_{E(\Gamma_1)}$ on $\overrightarrow{\Gamma}_1$ and a linear order $\prec_2=\prec|_{E(\Gamma_2)}= \widetilde{\prec}|_{E(\Gamma_2)}$ on $\overrightarrow{\Gamma}_2$. To prove this proposition, we need to show that for any two different edges $e_1,e_2\in  E(\overrightarrow{\Gamma}_1\otimes\overrightarrow{\Gamma}_2)$, $e_1\prec e_2 \Longleftrightarrow e_1\widetilde{\prec} e_2 .$  We will  prove this in three cases:

Case $1$: $e_1, e_2\in E(\overrightarrow{\Gamma}_1)$. In this case, we have $$e_1\prec e_2 \Longleftrightarrow e_1\prec_1 e_2 \Longleftrightarrow e_1\widetilde{\prec} e_2.$$

Case $2$: $e_1, e_2\in E(\overrightarrow{\Gamma}_2)$. In this case, we have $$e_1\prec e_2 \Longleftrightarrow e_1\prec_2 e_2 \Longleftrightarrow e_1\widetilde{\prec} e_2.$$

Case $3$: $e_1\in E(\overrightarrow{\Gamma}_1),e_2\in E(\overrightarrow{\Gamma}_2)$. In this case,  $e_1\prec e_2 \Longleftrightarrow e_1\widetilde{\prec} e_2$ is a direct consequence of condition $(2).$
\end{proof}

From the above two propositions, we see that the tensor product is a natural and  well-defined operation/bi-functor on the groupoid of planar graphs. We denote by $(\overrightarrow{\Gamma}_1\otimes \overrightarrow{\Gamma}_2,\prec_1\otimes\prec_2)=(\overrightarrow{\Gamma}_1,\prec_1)\otimes (\overrightarrow{\Gamma}_2,\prec_2)$ the tensor product of $(\overrightarrow{\Gamma}_1,\prec_1)$ and $(\overrightarrow{\Gamma}_2,\prec_2)$. The following proposition is obvious:
\begin{prop}
The tensor product $\otimes$ is associative, that is, for any three planar graphs  $(\overrightarrow{\Gamma}_1,\prec_1), (\overrightarrow{\Gamma}_2,\prec_2), (\overrightarrow{\Gamma}_3,\prec_3),$ we have $$(\overrightarrow{\Gamma}_1\otimes \overrightarrow{\Gamma}_2,\prec_1\otimes\prec_2)\otimes(\overrightarrow{\Gamma}_3,\prec_3)=(\overrightarrow{\Gamma}_1,\prec_1)\otimes(\overrightarrow{\Gamma}_2\otimes \overrightarrow{\Gamma}_3,\prec_2\otimes\prec_3).$$
\end{prop}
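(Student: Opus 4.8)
The plan is to reduce the equality of planar graphs to two separate checks: equality of the underlying directed graphs and equality of the two induced linear orders on the common edge set. For the underlying graphs, both sides are built from the threefold tensor product of directed graphs, and since the tensor product of directed graphs is the disjoint union of half-edge sets (carrying the disjoint union of partitions, involutions and sign functions), it is associative up to the canonical identification already noted in Section $2.4$. Hence $(\overrightarrow{\Gamma}_1\otimes\overrightarrow{\Gamma}_2)\otimes\overrightarrow{\Gamma}_3$ and $\overrightarrow{\Gamma}_1\otimes(\overrightarrow{\Gamma}_2\otimes\overrightarrow{\Gamma}_3)$ are the same directed graph, whose edge set is canonically $E(\Gamma_1)\sqcup E(\Gamma_2)\sqcup E(\Gamma_3)$.

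It remains to show that the two induced linear orders, say $\prec_L=(\prec_1\otimes\prec_2)\otimes\prec_3$ and $\prec_R=\prec_1\otimes(\prec_2\otimes\prec_3)$, coincide. Rather than comparing them pair by pair over all block combinations, I would invoke the uniqueness result of Proposition $3.4.3$, applied with the threefold product grouped as $(\overrightarrow{\Gamma}_1\otimes\overrightarrow{\Gamma}_2)\otimes\overrightarrow{\Gamma}_3$, i.e.\ with factors $\overrightarrow{\Gamma}_1\otimes\overrightarrow{\Gamma}_2$ and $\overrightarrow{\Gamma}_3$. Both $\prec_L$ and $\prec_R$ are planar structures on this directed graph: $\prec_L$ by construction, and $\prec_R$ because it is itself a tensor product of planar structures, hence planar by Proposition $3.4.2$.

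To apply Proposition $3.4.3$ I would verify its two hypotheses for $\prec_L$ and $\prec_R$ relative to these factors. The first hypothesis is that the two orders agree after restriction to each factor: on $E(\Gamma_3)$ both restrict to $\prec_3$ directly from the definition of the tensor product of orders, and on $E(\Gamma_1\otimes\Gamma_2)=E(\Gamma_1)\sqcup E(\Gamma_2)$ both restrict to $\prec_1\otimes\prec_2$, since each restricts to $\prec_1$ on $E(\Gamma_1)$ and to $\prec_2$ on $E(\Gamma_2)$ and each places every edge of $\Gamma_1$ before every edge of $\Gamma_2$ — for $\prec_R$ this uses that an edge of $\Gamma_1$ precedes every edge of $\Gamma_2\otimes\Gamma_3$, in particular every edge of $\Gamma_2$. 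The second hypothesis is that both orders put every edge of $\Gamma_1\otimes\Gamma_2$ before every edge of $\Gamma_3$: for $\prec_L$ this is the definition, and for $\prec_R$ an edge of $\Gamma_1$ precedes all of $E(\Gamma_2\otimes\Gamma_3)\supseteq E(\Gamma_3)$, while an edge of $\Gamma_2$ precedes an edge of $\Gamma_3$ inside $\prec_2\otimes\prec_3$. With both hypotheses checked, Proposition $3.4.3$ forces $\prec_L=\prec_R$.

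There is no genuine obstacle here; the only care needed is the bookkeeping of which block of the threefold disjoint union each edge lies in, and confirming that the resulting ``concatenation'' order — block order $1<2<3$ with the internal orders $\prec_1,\prec_2,\prec_3$ preserved — is produced symmetrically by both bracketings. Routing the argument through Proposition $3.4.3$ rather than a direct case analysis keeps it short and avoids repeating the pairwise verification for all block-combinations of a pair of edges.
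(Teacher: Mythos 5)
Your proof is correct, but it is worth noting that the paper itself offers no argument at all here: Proposition $3.4.4$ is introduced with the words ``The following proposition is obvious'' and left unproved, the implicit intent being a direct check that both bracketings produce the same ``concatenation'' order on $E(\Gamma_1)\sqcup E(\Gamma_2)\sqcup E(\Gamma_3)$ — all of $E(\Gamma_1)$ in the order $\prec_1$, then $E(\Gamma_2)$ in the order $\prec_2$, then $E(\Gamma_3)$ in the order $\prec_3$. Your route is genuinely different: instead of exhibiting this common description, you treat both $(\prec_1\otimes\prec_2)\otimes\prec_3$ and $\prec_1\otimes(\prec_2\otimes\prec_3)$ as planar structures on the same directed graph (using the associativity of the directed-graph tensor product from Section $2.4$ and the planarity result Proposition $3.4.2$) and then force their equality via the uniqueness statement Proposition $3.4.3$, whose two hypotheses you verify case by case. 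What your approach buys is systematization — it makes the associativity a formal consequence of the uniqueness lemma, exactly in the spirit of the paper's remark that Propositions $3.4.2$ and $3.4.3$ make $\otimes$ a well-defined operation — at the cost of the hypothesis-checking bookkeeping; the direct description of the common order is arguably just as short, but your version makes explicit which previously proved facts carry the argument, which the paper's bare ``obvious'' does not.
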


In summary, we have the following proposition:
\begin{prop}
The tensor product and empty graph makes the groupoid of planar graphs a tensor category.
\end{prop}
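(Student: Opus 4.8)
The plan is to verify directly that $(\otimes,\varnothing)$ endows the groupoid of planar graphs with the structure of a \emph{strict} tensor category; once strictness is established, the associativity and unit coherence conditions (pentagon and triangle) become trivial, so the real content is checking that $\otimes$ is a bifunctor, that the empty graph is a strict unit, and that the associativity is strict. The latter two pieces are already essentially in hand: strict associativity is exactly the proposition proved just above, and the earlier statement that pre-graphs with $\otimes$ form a symmetric tensor category with the empty pre-graph as unit handles the underlying non-planar bifunctoriality.

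First I would make $\otimes$ a functor on morphisms. Given isomorphisms of planar graphs $\phi_1:(\overrightarrow{\Gamma}_1,\prec_1)\to(\overrightarrow{\Gamma}_1',\prec_1')$ and $\phi_2:(\overrightarrow{\Gamma}_2,\prec_2)\to(\overrightarrow{\Gamma}_2',\prec_2')$, set $\phi_1\otimes\phi_2:=\phi_1\sqcup\phi_2$ on the disjoint union of half-edge sets. This is already an isomorphism of the underlying graphs preserving orientation, so the only new point is that it preserves the induced linear order $\prec_H$ on half-edges. By the defining description of $\prec_H$ in terms of $sgn$ and $\prec$, this reduces to showing $\phi_1\sqcup\phi_2$ preserves $\prec$ on edges. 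Here the defining case split of $\prec$ on a tensor product, namely two edges in the same factor versus one in each factor, matches exactly the behaviour of $\phi_1\sqcup\phi_2$, which keeps each half-edge in its original factor and preserves the order inside each factor. Functoriality, i.e. $\mathrm{id}\otimes\mathrm{id}=\mathrm{id}$ together with the interchange law $(\phi_1\otimes\phi_2)\circ(\psi_1\otimes\psi_2)=(\phi_1\circ\psi_1)\otimes(\phi_2\circ\psi_2)$, then follows immediately from the corresponding identities for disjoint unions of maps.

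Next I would establish strictness of the structure maps. The unit laws $\varnothing\otimes(\overrightarrow{\Gamma},\prec)=(\overrightarrow{\Gamma},\prec)=(\overrightarrow{\Gamma},\prec)\otimes\varnothing$ hold strictly because $H(\varnothing)=\varnothing$, so the disjoint unions of half-edges, partitions, involutions and orientations return the original data unchanged, and the cross-factor clause in the definition of $\prec$ is vacuous, leaving the order unchanged as well. Strict associativity $(\overrightarrow{\Gamma}_1\otimes\overrightarrow{\Gamma}_2)\otimes\overrightarrow{\Gamma}_3=\overrightarrow{\Gamma}_1\otimes(\overrightarrow{\Gamma}_2\otimes\overrightarrow{\Gamma}_3)$ is the associativity proposition proved above. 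Consequently the associator and the two unitors may all be taken to be identity isomorphisms, and their naturality is automatic. Because all three coherence isomorphisms are identities, the pentagon and triangle diagrams commute trivially, exhibiting the groupoid of planar graphs with $(\otimes,\varnothing)$ as a strict tensor category.

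The only step that requires genuine care is the verification that $\phi_1\sqcup\phi_2$ preserves the half-edge order $\prec_H$, and within that, the comparison of two half-edges lying in different factors: one must confirm that this comparison is governed solely by which factor each half-edge belongs to, so that the factor-wise order-preservation of $\phi_1$ and $\phi_2$ genuinely suffices. This is the main, though routine, obstacle; everything else is a bookkeeping transcription of the pre-graph case together with the strictness observations.
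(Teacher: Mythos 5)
Your proposal is correct and matches the paper's (implicit) approach: the paper offers no proof at all, stating the proposition merely as a summary of the preceding results (well-definedness of $\otimes$ on planar graphs, uniqueness of the product planar structure, and strict associativity), which is exactly the skeleton you build on. The details you supply — bifunctoriality of $\otimes$ via $\phi_1\sqcup\phi_2$ preserving $\prec_H$ (with the cross-factor comparison handled by the third clause in the definition of $\prec$), strict unit laws from $H(\varnothing)=\varnothing$, and the trivial coherence of identity structure maps — are precisely the routine verifications the paper leaves implicit, so your write-up is a faithful and complete elaboration.
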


\subsection{Composition of planar graphs}
In this section we will prove that on equivalence classes of planar graphs there is a well-defined composition. Let us start with  introducing the notion of composition of two planar graphs.
\begin{defn}
Let $(\overrightarrow{\Gamma}_1,\prec_1)$ and $(\overrightarrow{\Gamma}_2,\prec_2)$ be two planar graph, and let $Out(\overrightarrow{\Gamma}_1)=\{o_1,..., o_n\}$ with $ \overline{o_1}\prec_1\cdot\cdot\cdot\prec_1 \overline{o_n}$ and $In(\overrightarrow{\Gamma}_1)=\{i_1,..., i_n\}$ with $\overline{i_1}\prec_2\cdot\cdot\cdot\prec_2 \overline{i_n}$. Their composition $(\overrightarrow{\Gamma},\prec)$ is defined to be a directed graph $\overrightarrow{\Gamma}$ with a binary relation $\prec$ such that:

$\bullet$ $\overrightarrow{\Gamma}=\overrightarrow{\Gamma}_2\circ\overrightarrow{\Gamma}_1$ as an anchored graph;

$\bullet$ for any two different edges $e_1, e_2\in E(\overrightarrow{\Gamma})$, the following conditions are satisfied:

$(1)$ If $ e_1, e_2 \in E_o(\overrightarrow{\Gamma}_1)$, then $e_1\prec e_2 \Longleftrightarrow e_1\prec_1 e_2$;

$(2)$ If $ e_1, e_2 \in E_o(\overrightarrow{\Gamma}_2)$, then $e_1\prec e_2 \Longleftrightarrow e_1\prec_2 e_2$;

$(3)$ If $e_1\in E_o(\overrightarrow{\Gamma}_1)$ and $e_2 \in E_o(\overrightarrow{\Gamma}_2)$, then
\begin{equation*}
\begin{cases}
e_1\prec e_2&  \text{if}\  e_1\prec_1 \overline{o_1};\\
e_1\prec e_2&  \text{if}\ \overline{i_n}\prec_2 e_2;\\
e_1\prec e_2&  \text{if}\ \overline{o_{k}} \prec_1 e_1\prec_1 \overline{o_{k+1}}\  \text{and}\   e_2\prec_2 \overline{i_{k+1}},\ 1\leq k< n;\\
e_2\prec e_1&  \text{if}\ \overline{o_{k}} \prec_1 e_1\prec_1 \overline{o_{k+1}}\  \text{and}\  \overline{i_{k+1}}\prec_2 e_2,\ 1\leq k< n;
\end{cases}
\end{equation*}

$(4)$ If $e_1\in E_n(\overrightarrow{\Gamma})$,  then
\begin{equation*}
\begin{cases}
e_2\prec e_1&  \text{if}\  e_2\in E_o(\overrightarrow{\Gamma}_1)\  \text{and}\  e_2\prec_1 \overline{o_{k}};\\
e_1\prec e_2&  \text{if}\  e_2\in E_o(\overrightarrow{\Gamma}_1)\  \text{and}\ \overline{o_{k}}\prec_1 e_2;\\
e_2\prec e_1&  \text{if}\  e_2\in E_o(\overrightarrow{\Gamma}_2)\  \text{and}\  e_2\prec_2 \overline{i_{k}};\\
e_1\prec e_2&  \text{if}\  e_2\in E_o(\overrightarrow{\Gamma}_2)\  \text{and}\ \overline{i_{k}}\prec_2 e_2;
\end{cases}
\end{equation*}

$(5)$ If $e_1,e_2\in E_n(\overrightarrow{\Gamma})$, that is, $e_1=\overline{e}_k, e_2=\overline{e}_l$ for some $k,l\in\{1,...,n\}$, then $e_1\prec e_2\Longleftrightarrow k< l.$
\end{defn}

To illustrate the rules of the definition of $\prec$ and show that it is indeed a linear order, we give a constructive description of $\prec$. We label edges of $\overrightarrow{\Gamma}_1$, $\overrightarrow{\Gamma}_2$  by numbers, that is,  $E(\overrightarrow{\Gamma}_1)=\{e_1,....,e_s\}$, $E(\overrightarrow{\Gamma}_2)=\{e'_1,....,e'_t\}$  such that

$\bullet$ $e_i\prec_1 e_j\Longleftrightarrow i<j$, for $1\leq i,j\leq s,$

$\bullet$  $e'_p\prec_2 e'_q\Longleftrightarrow p<q$, for $1\leq p,q\leq t,$ \\
and let

$\bullet$  $\overline{o_{k}}=e_{q_k}$, for $1\leq k \leq n$, $q_k\in\{1,...,s\}$,

$\bullet$  $\overline{i_{k}}=e'_{p_k}$, for $1\leq k \leq n$, $p_k\in\{1,...,t\}$.

Thus, $E_o(\overrightarrow{\Gamma}_1)$ and  $E_o(\overrightarrow{\Gamma}_2)$ together with their linear orders can be equivalently written as sequences $$e_1\cdots e_{q_k-1}\widehat{e_{q_k}}e_{q_k+1}\cdots e_{s-1}\widehat{e_{q_n}}$$ and  $$\widehat{e'_{p_1}}e'_2\cdots e'_{p_k-1}\widehat{e'_{p_k}}e'_{p_k
+1}\cdots e'_t,$$  where $\widehat{e_{q_k}}$ and $\widehat{e'_{p_k}}$  denote to remove the edges $e_{q_k}$ and $e'_{p_k}$ in the two sequence, $1\leq k\leq n$.
Now we split them into segments
$$E_o(\overrightarrow{\Gamma}_1)=\underbrace{e_{Q_1}\sqcup\widehat{\{e_{q_1}\}}}\sqcup...\sqcup \underbrace{e_{Q_k}\sqcup\widehat{\{e_{q_k}\}} } \sqcup...\sqcup \underbrace{e_{Q_n}\sqcup\widehat{\{e_{q_n}\}}}$$ and
$$E_o(\overrightarrow{\Gamma}_2)=\underbrace{\widehat{\{e'_{p_1}\}}\sqcup e'_{P_1}}\sqcup...\sqcup \underbrace{\widehat{\{e'_{p_k}\}}\sqcup e'_{P_k}} \sqcup...\sqcup \underbrace{\widehat{\{e'_{p_n}\}}\sqcup e'_{P_n}},$$
where $e_{Q_k}=\{e_i, q_{k-1}+1\leq i \leq q_k-1\}$, $e_{P_k}=\{e'_p, p_k+1\leq p \leq p_{k+1}-1\}$, for $2\leq k \leq n-1$ and $e_{Q_1}=\{e_i, 1\leq i \leq q_1-1\}$, $e_{P_n}=\{e'_p, p_n+1\leq p \leq t\}$. We call these $e_{Q_k}$s and $e'_{P_k}$s \textbf{basic segments} of $(\overrightarrow{\Gamma}_1,\prec_1)$ and $(\overrightarrow{\Gamma}_2,\prec_2)$.

We  assume $E_n(\overrightarrow{\Gamma})=\{\overline{e}_1,...,\overline{e}_n\}$. Notice that $E(\overrightarrow{\Gamma})=E_o(\overrightarrow{\Gamma}_1) \sqcup E_n(\overrightarrow{\Gamma})\sqcup E_o(\overrightarrow{\Gamma}_2)$, so we define the linear order  $\prec=\prec_2\circ \prec_1$ on $E(\overrightarrow{\Gamma})$ to be the following "shuffled" order: $$E(\overrightarrow{\Gamma})=\underbrace{e_{Q_1}\sqcup\{\overline{e}_1\}\sqcup e'_{P_1}}\sqcup...\sqcup \underbrace{e_{Q_k}\sqcup\{\overline{e}_k\}\sqcup e'_{P_k}} \sqcup...\sqcup \underbrace{e_{Q_n}\sqcup\{\overline{e}_n\}\sqcup e'_{P_n}}.$$

It is easy to check that this linear order satisfying conditions $(1)-(5)$ in the definition above. So we get the following  proposition.
\begin{prop}
On the composition $\overrightarrow{\Gamma}=\overrightarrow{\Gamma}_2\circ\overrightarrow{\Gamma}_1$, there exists an unique linear order $\prec$ satisfying the conditions $(1)-(5)$ in the definition above.
\end{prop}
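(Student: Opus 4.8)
The plan is to treat the statement as two separate assertions---existence and uniqueness---and to exploit the explicit ``shuffled'' order constructed in the paragraph preceding the proposition. The starting observation is that the edge set of the composite splits as a disjoint union $E(\overrightarrow{\Gamma})=E_o(\overrightarrow{\Gamma}_1)\sqcup E_n(\overrightarrow{\Gamma})\sqcup E_o(\overrightarrow{\Gamma}_2)$ (as recorded in the composition of anchored graphs), so that any two distinct edges $e_1,e_2$ fall into exactly one of the five configurations governed by conditions $(1)$--$(5)$. This immediately reduces both halves of the proof to a finite case analysis indexed by these configurations.

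For existence I would simply verify that the shuffled order already written down,
\[E(\overrightarrow{\Gamma})=\bigl(e_{Q_1}\sqcup\{\overline{e}_1\}\sqcup e'_{P_1}\bigr)\sqcup\cdots\sqcup\bigl(e_{Q_n}\sqcup\{\overline{e}_n\}\sqcup e'_{P_n}\bigr),\]
is a genuine linear order satisfying $(1)$--$(5)$. That it is a linear order is essentially automatic: it is obtained by concatenating the blocks $1,\dots,n$ in increasing order, and inside block $k$ listing the chain $e_{Q_k}$ (ordered by $\prec_1$), then $\overline{e}_k$, then $e'_{P_k}$ (ordered by $\prec_2$); a finite concatenation of finite chains is again a chain, so totality and transitivity hold with no further work. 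To check $(1)$--$(5)$ I would read off block membership: two edges of $\overrightarrow{\Gamma}_1$ sit in blocks whose order agrees with $\prec_1$, because the outputs $\overline{o_1}\prec_1\cdots\prec_1\overline{o_n}$ are precisely the segment separators, which gives $(1)$; symmetrically for $(2)$; condition $(5)$ is built into the block indexing of the new edges; and $(3)$, $(4)$ record exactly the position of a $\Gamma_1$-edge, resp.\ a new edge $\overline{e}_k$, relative to the grafted outputs/inputs that delimit its block.

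For uniqueness I would suppose $\prec$ and $\prec'$ both satisfy $(1)$--$(5)$, fix distinct $e_1,e_2$, locate them in one of the five configurations, and invoke the corresponding condition. The crucial point is that the hypotheses appearing on the right-hand sides of $(1)$--$(5)$ involve only the fixed data $\prec_1,\prec_2$ and the combinatorial position of the edges (their segment index among the separators, or the label $k$ of a new edge $\overline{e}_k$), and never the order being defined; hence both $\prec$ and $\prec'$ are forced into the same choice for this pair, i.e.\ $e_1\prec e_2\Leftrightarrow e_1\prec' e_2$. Since the five cases are exhaustive, $\prec=\prec'$.

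The step I expect to be most delicate is the verification of $(3)$ and $(4)$ for the shuffled order, where the comparison of an edge of one factor with an edge of the other depends on correctly identifying its block through its position relative to the grafted legs. What must be handled with care is that the block structure on $E_o(\overrightarrow{\Gamma}_1)$ (cut out by $\overline{o_1}\prec_1\cdots\prec_1\overline{o_n}$) and the block structure on $E_o(\overrightarrow{\Gamma}_2)$ (cut out by $\overline{i_1}\prec_2\cdots\prec_2\overline{i_n}$) are aligned \emph{only because} the grafting matches $o_k$ to $i_k$ for each $k$; it is exactly this index-matching that lets the two sequences be interleaved into a single chain and makes $(3)$, $(4)$ mutually consistent. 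Here one also uses the elementary fact---provable directly from $(P_1)$ together with the assumption that every vertex has nonempty $In$ and $Out$---that the $\prec_i$-maximal edge of each factor is an output and the minimal one an input, which is what guarantees that the basic-segment decomposition exhausts $E_o(\overrightarrow{\Gamma}_i)$ and hence that the shuffle really is defined on all of $E(\overrightarrow{\Gamma})$.
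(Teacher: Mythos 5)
Your proposal is correct and follows essentially the same route as the paper: the paper's proof consists precisely of the ``shuffled'' order construction described before the proposition, with the verification of conditions $(1)$--$(5)$ declared routine and uniqueness left implicit. Your only additions are to spell out the determinacy argument for uniqueness (that the hypotheses of $(1)$--$(5)$ refer only to $\prec_1$, $\prec_2$ and the position of edges relative to the separators $\overline{o_k}$, $\overline{i_k}$) and the exhaustion of the basic segments via the fact that the $\prec_i$-extremal edges are outer edges, both of which the paper takes for granted.
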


The following propositions give characterizations of basic segments.

\begin{prop}
$$e_{Q_1}=\{e\in E(\Gamma_1)|e\rightarrow \overline{o_1}\}$$ and for $2\leq k \leq n$, $$e_{Q_k}=\{e\in E(\Gamma_1)|e\rightarrow \overline{o_k}, e\nrightarrow \overline{o_{k-1}}\}.$$
\end{prop}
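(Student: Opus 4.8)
The plan is to unwind the definition of the basic segments into a purely order-theoretic form and then match it against the reachability description using conditions $(P_1)$ and $(P_2)$ together with two elementary facts about the flow of $\overrightarrow{\Gamma}_1$. By the indexing chosen above (with $e_i\prec_1 e_j\iff i<j$ and $\overline{o_k}=e_{q_k}$), the defining splitting says precisely that $e_{Q_1}=\{e\in E(\Gamma_1)\mid e\prec_1\overline{o_1}\}$ and, for $2\le k\le n$, $e_{Q_k}=\{e\in E(\Gamma_1)\mid \overline{o_{k-1}}\prec_1 e\prec_1\overline{o_k}\}$. Since $\overline{o_1}\prec_1\cdots\prec_1\overline{o_n}$ are consecutive in the $\prec_1$-order, no output edge lies in any $e_{Q_k}$, so every element of $e_{Q_k}$ is a non-output edge, and it suffices to establish, for $e\in E(\Gamma_1)$,
$$e\prec_1\overline{o_1}\iff e\rightarrow\overline{o_1},\qquad \overline{o_{k-1}}\prec_1 e\prec_1\overline{o_k}\iff \bigl(e\rightarrow\overline{o_k}\ \text{and}\ e\nrightarrow\overline{o_{k-1}}\bigr).$$

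I would first record two facts. Fact (i): each output edge $\overline{o_k}$ is a sink, that is $\overline{o_k}\nrightarrow e'$ for every edge $e'$, because an output leg has no outgoing continuation. Fact (ii): every non-output edge $e$ satisfies $e\rightarrow\overline{o_j}$ for some $j$, obtained by following the directed flow out of the target vertex of $e$ (each vertex has nonempty output since $\overrightarrow{\Gamma}_1$ is directed), a process that must terminate at an output leg because $\overrightarrow{\Gamma}_1$ is finite and acyclic.

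Granting these, the ``$\Leftarrow$'' inclusions are immediate: $e\rightarrow\overline{o_k}$ forces $e\prec_1\overline{o_k}$ by $(P_1)$; and if in addition $e\prec_1\overline{o_{k-1}}$ held, then $(P_2)$ applied to $e\rightarrow\overline{o_k}$ with $e\prec_1\overline{o_{k-1}}\prec_1\overline{o_k}$ would give $\overline{o_{k-1}}\rightarrow\overline{o_k}$ or $e\rightarrow\overline{o_{k-1}}$, both excluded (the first by Fact (i), the second by hypothesis), so $\overline{o_{k-1}}\prec_1 e$. For the ``$\Rightarrow$'' inclusions, suppose $\overline{o_{k-1}}\prec_1 e\prec_1\overline{o_k}$; then $(P_1)$ gives $e\nrightarrow\overline{o_{k-1}}$ at once, and Fact (ii) yields some $\overline{o_j}$ with $e\rightarrow\overline{o_j}$, whence $j\ge k$ by $(P_1)$. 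If $j>k$, apply $(P_2)$ to $e\rightarrow\overline{o_j}$ with $e\prec_1\overline{o_k}\prec_1\overline{o_j}$ to obtain $\overline{o_k}\rightarrow\overline{o_j}$ or $e\rightarrow\overline{o_k}$; the former contradicts Fact (i), so $e\rightarrow\overline{o_k}$ in every case. The statement for $e_{Q_1}$ is the same argument with the lower bound removed.

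I expect the only genuine obstacle to be this last step, namely upgrading the ``some output $\overline{o_j}$'' produced by Fact (ii) to exactly $\overline{o_k}$ for an edge squeezed between two consecutive outputs; this is where $(P_2)$ and the sink property of Fact (i) do the essential work. The order-unwinding and the two structural facts should be completely routine given that $\overrightarrow{\Gamma}_1$ is a finite progressive (directed acyclic) graph.
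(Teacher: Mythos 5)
Your proof is correct, and it takes a genuinely different route from the paper's. The paper proves both equalities by appealing to its Lemmas $(B')$ and $(B'')$ --- the ``interval'' characterization $\overline{o_{min}(e)}\preceq\overline{o}\preceq\overline{o_{max}(e)}\Longleftrightarrow e\rightarrow\overline{o}$ --- together with the variant condition $(P_2^l)$ from Proposition 3.1.6: for $e_{Q_1}$ it quotes $(B'')$ to get $\overline{o_{min}(e)}=\overline{o_1}$, and for the inclusion $\{e\mid e\rightarrow\overline{o_k},\ e\nrightarrow\overline{o_{k-1}}\}\subseteq e_{Q_k}$ it runs a two-step argument through $\overline{o_{min}(e)}$ using $(B')$, $(B'')$ and $(P_2^l)$. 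You instead work directly with $(P_1)$, $(P_2)$ and two structural facts --- outputs are sinks, and every non-output edge reaches some output --- which is more elementary and self-contained; note that the paper needs your Fact (ii) anyway, silently, since $o_{min}(e)$ and $o_{max}(e)$ are only defined when the set of outputs reachable from $e$ is nonempty. Your route also buys something concrete in the other inclusion $e_{Q_k}\subseteq\{e\mid e\rightarrow\overline{o_k},\ e\nrightarrow\overline{o_{k-1}}\}$: the paper's one-line step ``by property $(P_2)$ we have $\overline{o_{k-1}}\rightarrow e$ or $e\rightarrow\overline{o_k}$'' instantiates $(P_2)$ with $e_1=\overline{o_{k-1}}$, $e_2=\overline{o_k}$, $e_3=e$, which requires the premise $\overline{o_{k-1}}\rightarrow\overline{o_k}$ --- false, since output edges are sinks; your argument supplies the needed directed premise legitimately, by taking $e\rightarrow\overline{o_j}$ from Fact (ii) and then using $(P_2)$ and the sink property to force $j=k$. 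In short, the paper leans on the machinery of Section 3.2, while your proof is leaner, uses only the two axioms of a planar structure, and in passing repairs a questionable instantiation of $(P_2)$ in the paper's own argument.
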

\begin{proof}
$\bullet$  Notice that $e\in e_{Q_1}\Longleftrightarrow e\prec_1 \overline{o_1}$. On one hand, if $e\prec_1 \overline{o_1}$, by lemma $(B'')$ we have $\overline{o_{min}(e)}=\overline{o_1}$, thus $e\rightarrow \overline{o_1}$. On the other hand, if $e\rightarrow \overline{o_1}$, by property $(P_1)$ we have $e\prec_1 \overline{o_1}$. Thus we prove that $e_{Q_1}=\{e\in E(\Gamma_1)|e\rightarrow \overline{o_1}\}$.

$\bullet$  Notice that $e\in e_{Q_k}\Longleftrightarrow \overline{o_{k-1}} \prec_1 e\prec_1 \overline{o_k}$.  On one hand, if $\overline{o_{k-1}} \prec_1 e\prec_1 \overline{o_k}$, by property $(P_2)$ we have $\overline{o_{k-1}} \rightarrow e$ or $e\rightarrow \overline{o_k}$. Because $\overline{o_{k-1}}$ is an output edge, thus $\overline{o_{k-1}} \nrightarrow e$, so we must have $e\rightarrow \overline{o_k}$. If $e\rightarrow \overline{o_{k-1}}$, then $e\prec_1 \overline{o_{k-1}}$ contradicts with $\overline{o_{k-1}} \prec_1 e$, thus $e\nrightarrow \overline{o_{k-1}}$. So $e_{Q_k}\subseteq\{e\in E(\Gamma_1)|e\rightarrow \overline{o_k}, e\nrightarrow \overline{o_{k-1}}\}$.

On the other hand, if $e\rightarrow \overline{o_k}$, then by property $(P_1)$ and lemma $(B')$ we have $e\prec \overline{o_k}$ and $\overline{o_{min}(e)}\preceq_1\overline{o_k}\preceq_1 \overline{o_{max}(e)}$. If $e\nrightarrow \overline{o_{k-1}}$, then by lemma $(B')$ we have $\overline{o_{max}(e)}\prec\overline{o_{k-1}}$ or $\overline{o_{k-1}}\prec_1\overline{o_{min}(e)}$. But $\overline{o_{max}(e)}\prec_1\overline{o_{k-1}}$ is contradicts with $\overline{o_{min}(e)}\preceq_1\overline{o_k}\preceq_1 \overline{o_{max}(e)}$, thus we must have  $\overline{o_{k-1}}\prec_1\overline{o_{min}(e)}$. Now we have $e\rightarrow \overline{o_{min}(e)}$, $\overline{o_{k-1}}\prec_1\overline{o_{min}(e)}$ and $\overline{o_{k-1}}\nrightarrow\overline{o_{min}(e)}$, by property $(P^l_2)$, we get $\overline{o_{k-1}}\prec_1 e$ or $e\rightarrow \overline{o_{k-1}}$.
Notice that $e\rightarrow \overline{o_{k-1}}$  contradicts with  $\overline{o_{k-1}}\prec_1\overline{o_{min}(e)}$ due to lemma $(B')$, so we must have  $\overline{o_{k-1}}\prec_1 e$. Thus we have proved that $\overline{o_{k-1}} \prec_1 e\prec_1 \overline{o_k}$, that is, $\{e\in E(\Gamma_1)|e\rightarrow \overline{o_k}, e\nrightarrow \overline{o_{k-1}}\}\subseteq e_{Q_k}$.

\end{proof}

\begin{prop} For  $1\leq k \leq n-1$,
$$e'_{P_k}=\{e'\in E(\Gamma_2)|\overline{i_k}\rightarrow e',\overline{i_{k+1}}\nrightarrow e'\}$$ and  $$e'_{P_n}=\{e'\in E(\Gamma_2)|\overline{i_n}\rightarrow e'\}.$$
\end{prop}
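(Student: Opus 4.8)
The plan is to establish both equalities by dualising the argument for the preceding proposition on the $e_{Q_k}$'s, trading the output-side lemmas $(B'),(B'')$ for the input-side lemmas $(A'),(A'')$. First I would record what membership in a basic segment means order-theoretically: by construction $e'_{P_k}=\{e'_p\mid p_k<p<p_{k+1}\}$ for $1\leq k\leq n-1$ and $e'_{P_n}=\{e'_p\mid p_n<p\leq t\}$, and since $e'_p\prec_2 e'_q\Leftrightarrow p<q$ with $\overline{i_k}=e'_{p_k}$, this reads
\[
e'\in e'_{P_k}\ (k<n)\Longleftrightarrow \overline{i_k}\prec_2 e'\prec_2\overline{i_{k+1}},\qquad e'\in e'_{P_n}\Longleftrightarrow \overline{i_n}\prec_2 e'.
\]
Every $e'$ appearing in some $e'_{P_k}$ is a non-input edge, so $i_{max}(e')$ is well defined: tracing sources backwards in the acyclic directed graph $\overrightarrow{\Gamma}_2$ must terminate at an input leg, and the set of inputs reaching $e'$ is therefore non-empty.

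The device that makes the proof transparent is to reformulate the target condition in terms of $i_{max}$. I would show that, for $1\leq k\leq n-1$,
\[
\overline{i_k}\rightarrow e'\ \text{and}\ \overline{i_{k+1}}\nrightarrow e'\Longleftrightarrow \overline{i_{max}(e')}=\overline{i_k},
\]
and that $\overline{i_n}\rightarrow e'\Leftrightarrow\overline{i_{max}(e')}=\overline{i_n}$. One implication is immediate from the definition of $i_{max}(e')$: if $\overline{i_{max}(e')}=\overline{i_k}$ then $\overline{i_k}\rightarrow e'$, while $\overline{i_{k+1}}\rightarrow e'$ would force $\overline{i_{k+1}}\preceq\overline{i_{max}(e')}=\overline{i_k}$. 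The reverse uses the interval characterisation of lemma $(A')$, namely that the inputs reaching $e'$ are exactly those with $\overline{i_{min}(e')}\preceq\overline{i}\preceq\overline{i_{max}(e')}$, so that $\overline{i_k}\rightarrow e'$ together with $\overline{i_{k+1}}\nrightarrow e'$ leaves $i_{max}(e')$ no room above $i_k$.

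It then remains to match $e'_{P_k}$ with $\{e'\mid \overline{i_{max}(e')}=\overline{i_k}\}$, which I would do directly from $(P_1)$ and lemma $(A'')$. For one inclusion, $\overline{i_{max}(e')}=\overline{i_k}$ gives $\overline{i_k}\prec_2 e'$ by $(P_1)$ (as $\overline{i_{max}(e')}\rightarrow e'$), and if $\overline{i_{k+1}}\prec_2 e'$ held then lemma $(A'')$ would yield $\overline{i_{k+1}}\preceq\overline{i_{max}(e')}=\overline{i_k}$, a contradiction; hence $\overline{i_k}\prec_2 e'\prec_2\overline{i_{k+1}}$. For the other inclusion, from $\overline{i_k}\prec_2 e'\prec_2\overline{i_{k+1}}$ lemma $(A'')$ gives $\overline{i_k}\preceq\overline{i_{max}(e')}$, while $\overline{i_{max}(e')}\rightarrow e'$ and $(P_1)$ give $\overline{i_{max}(e')}\prec_2 e'\prec_2\overline{i_{k+1}}$; as no input edge lies strictly between $\overline{i_k}$ and $\overline{i_{k+1}}$, these pin down $\overline{i_{max}(e')}=\overline{i_k}$. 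The case $k=n$ is cheaper and uses exactly the ``moreover'' clause of lemma $(A'')$: $\overline{i_n}\prec_2 e'$ already forces $\overline{i_{max}(e')}=\overline{i_n}$.

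The computation is routine once lemmas $(A'),(A'')$ are in hand; the only delicate point is the reformulation through $i_{max}(e')$, and in particular the boundary case $k=n$, where one must invoke the sharp half of lemma $(A'')$ rather than the mere inequality $\overline{i_n}\preceq\overline{i_{max}(e')}$. A shorter alternative would be to apply the opposite-graph involution $(-)^{op}$ of the earlier remark to the $e_{Q_k}$ proposition, since reversing all orientations swaps $In$ with $Out$ and $\prec_2$ with $\prec_2^{op}$ and carries the $e_{Q_k}$ into the corresponding input segment; the price is a careful bookkeeping of the reversal of the indexing, which I do not expect to be simpler than the direct dualisation above.
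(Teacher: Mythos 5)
Your proposal is correct, but it is organized differently from the paper's proof, so a comparison is in order. The paper starts from the same order-theoretic characterization $e'\in e'_{P_k}\Longleftrightarrow \overline{i_k}\prec_2 e'\prec_2 \overline{i_{k+1}}$ and then proves the two inclusions separately and directly: for $e'_{P_k}\subseteq\{\cdots\}$ it applies condition $(P_2)$ to the triple $\overline{i_k}\prec_2 e'\prec_2\overline{i_{k+1}}$ and discards the alternative $e'\rightarrow\overline{i_{k+1}}$ because $\overline{i_{k+1}}$ is an input edge; for the reverse inclusion it uses lemma $(A')$ to locate $\overline{i_{max}(e')}$ below $\overline{i_{k+1}}$ and then invokes condition $(P_2^r)$ on the triple $(\overline{i_{max}(e')},e',\overline{i_{k+1}})$ to conclude $e'\prec_2\overline{i_{k+1}}$; the case $k=n$ is handled exactly as you do, by the ``moreover'' clause of $(A'')$ and $(P_1)$. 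You instead route both inclusions and both cases through the single invariant $i_{max}(e')$, proving $e'\in e'_{P_k}\Longleftrightarrow \overline{i_{max}(e')}=\overline{i_k}\Longleftrightarrow(\overline{i_k}\rightarrow e'\ \text{and}\ \overline{i_{k+1}}\nrightarrow e')$, and you never touch $(P_2)$ or $(P_2^r)$ directly, replacing them by lemma $(A'')$ (used once as stated and once in contrapositive) together with the observation that the inputs of $\Gamma_2$ form a chain $\overline{i_1}\prec_2\cdots\prec_2\overline{i_n}$ with no input strictly between consecutive ones. This buys a genuinely cleaner structure: the $k<n$ and $k=n$ cases become two instances of one computation, and --- a point worth noting --- you explicitly justify that $i_{max}(e')$ is well defined for non-input edges by backward tracing in a finite directed acyclic graph, a hypothesis the paper's lemmas $(A')$, $(A'')$ use tacitly. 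It does not buy logical economy, since $(A'')$ is itself proved from $(P_2)$, so the same axioms are consumed one level down; the paper's argument is correspondingly more self-contained at the price of repeating a $(P_2)$-style case analysis. Your closing remark about deducing the statement from the $e_{Q_k}$ proposition via the opposite-graph involution $(-)^{op}$ is also sound in principle, and your caution about the index reversal ($o_k^{op}$ corresponding to $i_{n+1-k}$) is exactly the bookkeeping that makes that route no shorter.
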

\begin{proof}
$\bullet$ Notice that $e'\in e'_{P_k}\Longleftrightarrow \overline{i_k}\prec_2 e'\prec_2 \overline{i_{k+1}}$. One on hand, if $\overline{i_k}\prec_2 e'\prec_2 \overline{i_{k+1}}$, by property $(P_2)$ we have $\overline{i_k}\rightarrow e'$ or $e'\rightarrow \overline{i_{k+1}}$. But $e'\rightarrow \overline{i_{k+1}}$ is impossible, because $\overline{i_{k+1}}$ is an input edge, thus we must have $\overline{i_k}\rightarrow e'$. If $\overline{i_{k+1}}\rightarrow e'$,by property $(P_1)$ we have $\overline{i_{k+1}}\prec_2 e'$ which contradicts with $e'\prec_2 \overline{i_{k+1}}$, thus we must have $\overline{i_{k+1}}\nrightarrow e'$. So we get $e'_{P_k}\subseteq\{e'\in E(\Gamma_2)|\overline{i_k}\rightarrow e',\overline{i_{k+1}}\nrightarrow e'\}$.

On the other hand, if $\overline{i_k}\rightarrow e'$, by property $(P_1)$ and lemma $(A')$ we have $\overline{i_k}\prec_2 e'$ and $\overline{i_{min}(e')}\preceq_2 \overline{i_k}\preceq_2\overline{i_{max}(e')}$ . If $\overline{i_{k+1}}\nrightarrow e'$, by lemma $(A')$, we have $\overline{i_{k+1}}\prec_2 \overline{i_{min}(e')}$ or $ \overline{i_{max}(e')}\prec_2\overline{i_{k+1}}$. But $\overline{i_{k+1}}\prec_2 \overline{i_{min}(e')}$ contradicts with $\overline{i_{min}(e')}\preceq_2 \overline{i_k}\preceq_2\overline{i_{max}(e')}$, thus we must have $\overline{i_{max}(e')}\prec_2\overline{i_{k+1}}$. Now we have get $\overline{i_{max}(e')}\rightarrow e'$, $\overline{i_{max}(e')}\prec_2\overline{i_{k+1}}$ and $\overline{i_{max}(e')}\nrightarrow \overline{i_{k+1}}$, by property $(P^r_2)$ we get $e' \prec_2 \overline{i_{k+1}} $ or $\overline{i_{k+1}}\rightarrow e'$. Notice that $\overline{i_{k+1}}\rightarrow e'$ contradicts with $ \overline{i_{max}(e')}\prec_2\overline{i_{k+1}}$ due to lemma $(A')$, so we must have $e' \prec_2 \overline{i_{k+1}} $. Thus we have proved that $\overline{i_k}\prec_2 e' \prec_2 \overline{i_{k+1}}$ which means that $\{e'\in E(\Gamma_2)|\overline{i_k}\rightarrow e',\overline{i_{k+1}}\nrightarrow e'\}\subseteq e'_{P_k}$.

$\bullet$ Notice that $e'\in e'_{P_n}\Longleftrightarrow \overline{i_n}\prec_2 e'$. On one hand, if $\overline{i_n}\prec_2 e'$, by lemma $(A'')$ we have $\overline{i_n}=\overline{i_{max}(e')}$. Thus $\overline{i_n}\rightarrow e'$. On the other hand, if $\overline{i_n}\rightarrow e'$, by property $(P_1)$ we have $\overline{i_n}\prec_2 e'$. Thus we  prove that $e'_{P_n}=\{e'\in E(\Gamma_2)|\overline{i_n}\rightarrow e'\}.$
\end{proof}

It is easy to see that the above two propositions can be stated and proved for any planar graph without any restriction.

\begin{thm}
According to the definition of planar graph, the composition $(\overrightarrow{\Gamma},\prec)$ of $(\overrightarrow{\Gamma}_1,\prec_1)$ and $(\overrightarrow{\Gamma}_2,\prec_2)$ is a planar graph.
\end{thm}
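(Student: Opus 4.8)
The plan is to lean on the explicit ``shuffled'' description of $\prec$ just established: by the preceding proposition $\prec$ is already known to be a well-defined linear order on $E(\overrightarrow{\Gamma})$, and $E(\overrightarrow{\Gamma})$ decomposes into consecutive blocks $B_k=e_{Q_k}\sqcup\{\overline{e}_k\}\sqcup e'_{P_k}$ whose concatenation is $\prec$. So it only remains to check conditions $(P_1)$ and $(P_2)$. The first thing I would record is the reachability structure of $\overrightarrow{\Gamma}=\overrightarrow{\Gamma}_2\circ\overrightarrow{\Gamma}_1$: a directed path never passes from an edge of $E_o(\overrightarrow{\Gamma}_2)$ back to an edge of $E_o(\overrightarrow{\Gamma}_1)$, so $\to$ restricts to $\to_1$ on $E_o(\overrightarrow{\Gamma}_1)$ and to $\to_2$ on $E_o(\overrightarrow{\Gamma}_2)$, while a crossing relation $e\to e'$ with $e\in E_o(\overrightarrow{\Gamma}_1)$, $e'\in E_o(\overrightarrow{\Gamma}_2)$ holds iff there is some $k$ with $e\to_1\overline{o_k}$ and $\overline{i_k}\to_2 e'$ (with the evident rules for the new edges $\overline{e}_k$, and never $\overline{e}_k\to\overline{e}_l$ for $k\neq l$).

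Next I would translate the two characterizations of the basic segments into a block index $\beta$: for $e\in E_o(\overrightarrow{\Gamma}_1)$ one has $e\in e_{Q_j}$ iff $o_j=o_{min}(e)$, and for $e'\in E_o(\overrightarrow{\Gamma}_2)$ one has $e'\in e'_{P_m}$ iff $i_m=i_{max}(e')$. By Lemmas $(A')$ and $(B')$ the set of outputs reachable from a $\Gamma_1$-edge and the set of inputs reaching a $\Gamma_2$-edge are intervals, and it is exactly this interval property that makes $\beta$ monotone. Condition $(P_1)$ then drops out of a short case check: if $e\to e'$ then $\beta(e)\le\beta(e')$, and in the only nontrivial (crossing) case the witnesses $e\to_1\overline{o_k}$, $\overline{i_k}\to_2 e'$ force $\beta(e)\le k\le\beta(e')$, so that $e$ precedes $e'$ in the shuffle.

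For $(P_2)$ I would argue by cases on the classes of $e_1,e_2$, freely switching between $(P_2)$, $(P_2^r)$ and $(P_2^l)$ via the proposition establishing their equivalence (Proposition \ref{three conditions}), and using the opposite-graph involution to identify the two symmetric $\Gamma_1/\Gamma_2$ cases. When $e_1,e_2$ lie in the same factor, the subtlety is that the intermediate edge $e_3$ need not lie in that factor: if it does, I inherit $(P_2)$ directly from $\prec_1$ or $\prec_2$ restricted to that factor; if instead $e_3$ lies in the other factor or is a new edge, I use that $\Gamma_2\not\to\Gamma_1$ to eliminate one alternative and the segment/interval description to manufacture the required crossing path, after locating $\beta(e_3)$ relative to $\beta(e_1),\beta(e_2)$.

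The genuinely delicate case is $e_1\in E_o(\overrightarrow{\Gamma}_1)$, $e_2\in E_o(\overrightarrow{\Gamma}_2)$ with $e_1\to e_2$ through a glued pair $\overline{o_{k_0}}=\overline{i_{k_0}}$. Here, splitting on whether $e_3$ sits in $\Gamma_1$, in $\Gamma_2$, or is a new edge $\overline{e}_l$, I would pin down $\beta(e_3)$ against $k_0$ and combine $(P_2)$ of the relevant factor with the interval property of $o_{min},o_{max}$ (resp.\ $i_{min},i_{max}$) to force $e_3\to e_2$ or $e_1\to e_3$; for instance when $e_3\in E_o(\overrightarrow{\Gamma}_1)$ one compares $e_3$ with $\overline{o_{k_0}}$ under $\prec_1$ and applies $(P_2)$ of $\prec_1$, and when $e_3=\overline{e}_l$ one checks that $o_l\in[o_{min}(e_1),o_{max}(e_1)]$ or $i_l\in[i_{min}(e_2),i_{max}(e_2)]$. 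I expect this crossing case, and in particular keeping the bookkeeping of block indices consistent with the interval-reachability lemmas, to be the main obstacle, while the remaining cases are routine verifications of the same flavor.
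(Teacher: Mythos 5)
Your proposal is correct, and it follows the same overall strategy as the paper: start from the ``shuffled'' block description of $\prec$ (so linearity is free), then verify the planarity axioms by a case analysis on which of $E_o(\overrightarrow{\Gamma}_1)$, $E_o(\overrightarrow{\Gamma}_2)$, $E_n(\overrightarrow{\Gamma})$ the edges lie in, translating each configuration back into $\overrightarrow{\Gamma}_1$ or $\overrightarrow{\Gamma}_2$ via the segment characterizations (Propositions 3.5.3/3.5.4) and the interval Lemmas $(A')$, $(B')$. The one genuine divergence is which form of the second axiom you check: the paper verifies the consecutive-path version $(\widetilde{P_2})$ (legitimized by Proposition \ref{three conditions}), so that $e_1e_2$ being a directed path rules out the configuration $e_1\in E_o(\overrightarrow{\Gamma}_1)$, $e_2\in E_o(\overrightarrow{\Gamma}_2)$ entirely --- old edges of $\Gamma_1$ end at $\Gamma_1$-vertices and old edges of $\Gamma_2$ start at $\Gamma_2$-vertices, so consecutive pairs can only be (same factor), ($\Gamma_1$, new) or (new, $\Gamma_2$); its delicate bookkeeping then sits in case $2.2$, where $e_2$ is a glued edge and $e_3\in E_o(\overrightarrow{\Gamma}_2)$, giving the index chain $p\leq\alpha\leq\beta\leq v<k\leq q$. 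You instead attack full $(P_2)$, which is why your ``genuinely delicate case'' ($e_1$ in $\Gamma_1$, $e_2$ in $\Gamma_2$, linked through a glued pair) appears at all; the interval/block-index argument you sketch does close it, at the cost of a few more configurations. In compensation, your plan explicitly covers a configuration that the paper's sketch silently skips: when $e_1,e_2$ lie in the same factor but the intermediate edge $e_3$ lies in the other factor or is a new edge (the paper's ``Case 1: obvious'' only treats all three edges in one factor). There your reachability observation that no path returns from $\Gamma_2$ to $\Gamma_1$ kills one alternative, and the monotonicity of the block index under $e_1\rightarrow e_2$ (the output interval of $e_2$ is contained in that of $e_1$, by Lemma $(B')$) forces the crossing witness $e_1\rightarrow\overline{o_c}$, $\overline{i_c}\rightarrow e_3$ for $e_3\in e'_{P_c}$, so $e_1\rightarrow e_3$ as required. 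So: same machinery, a slightly heavier case split on your side, but also a more complete one.
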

\begin{proof}[Sketch of proof]
According to above proposition, we only need to show that $\prec$ satisfies condition $(P_1)$ and $(\widetilde{P_2})$.

$\bullet$ Condition $(P_1)$ is satisfied.  Let $e_1,e_2$ be two edges of $(\overrightarrow{\Gamma},\prec)$. If $e_1\rightarrow e_2$, we want to prove $e_1\prec e_2$. We will prove this in several cases:

Case $1$: $e_1,e_2\in E_o(\overrightarrow{\Gamma}_1) $ or $e_1,e_2\in E_o(\overrightarrow{\Gamma}_2) $. Obvious.

Case $2$: $e_1\in E_o(\overrightarrow{\Gamma}_1) $, $e_2\in E_o(\overrightarrow{\Gamma}_2) $. In this case, $e_1\rightarrow e_2$ implies there must exist an edge $\overline{e}_k=\{o_k,i_k\}\in E_n(\overrightarrow{\Gamma})$ such that $e_1\rightarrow e_2=e_1\rightarrow\overline{e}_k\rightarrow e_2$, thus $e_1\prec_1 \overline{o_k}$ and $\overline{i_k}\prec_2 e_2$. By the definition  and linearity of $\prec$, we have $e_1\prec e_2$.

Case $3$: $e_1\in E_o(\overrightarrow{\Gamma}_2) $, $e_2\in E_o(\overrightarrow{\Gamma}_1) $. Similar to case $2$.

Case $4$:  $e_1\in E_n(\overrightarrow{\Gamma}) $, $e_2\in E_o(\overrightarrow{\Gamma}_1) $. Similar to case $2$.

Case $5$: $e_1\in E_o(\overrightarrow{\Gamma}_1) $, $e_2\in E_n(\overrightarrow{\Gamma}) $. Similar to case $2$.

Case $6$: $e_1\in E_o(\overrightarrow{\Gamma}_2) $, $e_2\in E_n(\overrightarrow{\Gamma}) $. Similar to case $2$.

Case $7$:  $e_1\in E_n(\overrightarrow{\Gamma}) $, $e_2\in E_o(\overrightarrow{\Gamma}_2) $. Similar to case $2$.

Case $8$:  $e_1, e_2 \in E_n(\overrightarrow{\Gamma}) $.  In this case, $e_1\rightarrow e_2$ implies there must be an edge $e\in E_o(\overrightarrow{\Gamma}_1) $ or $E_o(\overrightarrow{\Gamma}_2)$, such that $e_1\rightarrow e_2=e_1\rightarrow e\rightarrow e_2$. By case $4$ or $5$, we have $e_1\prec e\prec e_2$, the linearity of $\prec$ implies $e_1\prec e_2$.

$\bullet$  Condition $(\widetilde{P_2})$ is satisfied.

Let $e_1,e_2,e_3$ be three edges of $(\overrightarrow{\Gamma},\prec)$. If $e_1e_2$ is a directed path in  $\overrightarrow{\Gamma}$, and $e_1\prec e_3 \prec e_2$, we want to prove that $e_1\rightarrow e_3$ or $e_3\rightarrow e_2$. We also prove this in several cases:

Case $1$:  $e_1, e_2, e_3\in E_o(\overrightarrow{\Gamma}_1)$ or $e_1, e_2, e_3\in E_o(\overrightarrow{\Gamma}_2)$. Obvious.

Case $2$: $e_1\in E_o(\overrightarrow{\Gamma}_1)$ and $e_2\in E_n(\overrightarrow{\Gamma})$. We assume $e_2=\overline{e}_k$ for some $k\in\{1,...,n\}$. In this case, we discuss in the following three cases:

\begin{flushleft}
$\circ$ case $2.1$:  $e_3\in E_o(\overrightarrow{\Gamma}_1)$.
\end{flushleft}

We translate the conditions about  $e_1,e_2 , e_3$ in $\overrightarrow{\Gamma}$ into conditions in  $\overrightarrow{\Gamma}_1$ about edges $e_1,\overline{ o_k}, e_3$. In fact, we have  the fact that $e_1e_2$ is a directed path in  $\overrightarrow{\Gamma}$ if and only if $e_1\overline{ o_k}$ is a direct path in $\overrightarrow{\Gamma}_1$ which is obvious from the definition of $\overrightarrow{\Gamma}$, and the facts  $$e_1\prec e_3\Longleftrightarrow e_1\prec_1 e_3$$ and $$e_3\prec e_2\Longleftrightarrow e_3\prec_1 \overline{ o_k}$$ which can be directly checked from the definition of $\prec$. Thus we have $e_1\prec_1 e_3\prec_1\overline{o_k}$ and then due to the fact that $(\overrightarrow{\Gamma}_1,\prec_1)$ is a planar graph we get the fact that
$e_1\rightarrow e_3$ or $e_3\rightarrow \overline{o_k}$ in $\overrightarrow{\Gamma}_1$. Translating back into conditions in $\overrightarrow{\Gamma}$ we get exactly that $e_1\rightarrow e_3$ or $e_3\rightarrow e_2$.

\begin{flushleft}
$\circ$ case  $2.2$:  $e_3\in E_o(\overrightarrow{\Gamma}_2)$.
\end{flushleft}

In this case, it is easy to see that $e_2\nrightarrow e_3$, thus  to prove $(P'_2)$  we only need to show that $e_1\rightarrow e_3$ in $\overrightarrow{\Gamma}$. Taking $e_1$ as an edge in $\overrightarrow{\Gamma}_1$ and $e_3$ as an edge in  $\overrightarrow{\Gamma}_2$ , we assume $$o_{min}(e_1)=o_p,\ o_{max}(e_1)=o_q,\ i_{min}(e_3)=i_u,\ i_{max}(e_3)=i_v$$ and $$e_1\in e_{Q_\alpha},\ \  e_2\in e'_{P_\beta},$$  with $ 1\leq \alpha,\beta, p, q, u,v\leq n$.

Due to lemma $(A')$ and $(B')$ and definition of $\overrightarrow{\Gamma}$, to show $e_1\rightarrow e_3$ in $\overrightarrow{\Gamma}$ we only need  to show that $\overline{o_p}\preceq\overline{o_v}\preceq\overline{o_q}$, or equivlently, $p\leq v\leq q$.

From the definition of $\prec$ and the fact that $e_1\prec e_3$ in $\overrightarrow{\Gamma}$, we have $$\alpha\leq \beta.$$

Noticing that $e_1\rightarrow e_2=\overline{e}_k$ in $\overrightarrow{\Gamma}$, which is equivalent to $e_1\rightarrow \overline{o_k}$ in $\overrightarrow{\Gamma}_1$, due to lemma $(A')$ we have $\overline{o_p}\preceq\overline{o_k}\preceq\overline{o_q}$, or equivalently, $$p\leq k\leq q.$$

As $e_1\in e_{I_\alpha}$, we  have $e_1\prec_1 \overline{o_{\alpha}}$. So according to lemma $(B'')$, we have $\overline{o_p}\preceq_1  \overline{o_{\alpha}}$, or equivalently, $$p\leq \alpha.$$

As $e_3\in e'_{P_\beta}$, we  have $\overline{i_{\beta}}\prec_2 e_3$. So according to lemma $(B'')$, we have $\overline{i_{\beta}}\prec_2 \overline{i_v}$, or equivalently, $$\beta\leq v.$$

According to the  linearity and  $(P_1)$ condition of $\prec$ we have proved above, $e_3\prec e_2$ implies $e_2\nrightarrow e_3$ in $\overrightarrow{\Gamma}$, which implies that $\overline{i_k}\nrightarrow e_3$ in $\overrightarrow{\Gamma}_2$. So by lemma $(A')$ we have $\overline{i_k}\prec_2\overline{i_u}$ or $\overline{i_v}\prec_2\overline{i_k}$, or equivalently, $k<u$ or $v< k$. But $u>k$ is impossible.  In fact, we translate the condition $e_3\prec e_2$ in $\overrightarrow{\Gamma}$ to condition in $\overrightarrow{\Gamma}_2$ about $e_3$ and $i_k$, that is, $$e_3\prec e_2\Longleftrightarrow e_3 \prec_2 i_k$$ which can be directly checked from the definition of $\prec$. If $\overline{i_k}\prec_2\overline{i_u}$, then one the one hand, due to the fact that  $e_3 \prec_2 i_k$ we have $e_3 \prec_2 \overline{i_u}$; on the other hand, from the definition of $\overline{i_u}$, we have $\overline{i_u}\rightarrow e_3$ in $\overrightarrow{\Gamma}_2$ which implies that $\overline{i_u}\prec_2 e_3$, a contradiction! Thus we must have $$v< k.$$

To summary, we have proved that $p\leq\alpha\leq\beta\leq v<k\leq q$, thus we have proved $e_1\rightarrow e_3$ in $\overrightarrow{\Gamma}$.

\begin{flushleft}
$\circ$ case  $2.3$:  $e_3\in E_n(\overrightarrow{\Gamma})$.
\end{flushleft}
 We assume $e_2=\overline{e}_k, e_3=\overline{e}_l$ for some $k,l\in\{1,...,n\}$. Now we translate the condition about $e_1, e_2 , e_3$ in $\overrightarrow{\Gamma}$ into conditions in  $\overrightarrow{\Gamma}_1$ about edges $e_1,\overline{ o_k}, \overline{ o_l}$. In fact, just as case 2.1, we have the fact that $e_1e_2$ is a directed path in  $\overrightarrow{\Gamma}$ if and only if $e_1\overline{ o_k}$ is a direct path in $\overrightarrow{\Gamma}_1$ which is obvious from the definition of $\overrightarrow{\Gamma}$, and the facts $$e_1\prec e_3\Longleftrightarrow e_1\prec_1 \overline{ o_l}$$ and $$e_3\prec e_2\Longleftrightarrow \overline{ o_l}\prec_1 \overline{ o_k}\Longleftrightarrow l<k$$ which can be directly checked from the definition of $\prec$. Now  we have two facts  in $\overrightarrow{\Gamma}_1$ that $e_1\rightarrow\overline{ o_k}$ and $e_1\prec_1 \overline{ o_l}\prec_1 \overline{ o_k}$ and apply condition $(P_2)$, we get that $e_1\rightarrow \overline{ o_l} $ or $\overline{ o_l}\rightarrow \overline{ o_k}$. Translating back into conditions in $\overrightarrow{\Gamma}$ we get that $e_1\rightarrow e_3$ or $e_3\rightarrow e_2$.

Case $3$:  $e_1\in E_n(\overrightarrow{\Gamma})$ and $e_2\in E_o(\overrightarrow{\Gamma}_2)$. In this case, we discuss in the following three cases:

\begin{flushleft}
$\circ$ case $3.1$:  $e_3\in E_o(\overrightarrow{\Gamma}_1)$.
\end{flushleft}
The proof is similar as case 2.1.
\begin{flushleft}
$\circ$ case  $3.2$: $e_3\in E_o(\overrightarrow{\Gamma}_2)$.
\end{flushleft}
The proof is similar as case 2.2.
\begin{flushleft}
$\circ$ case  $3.3$: $e_3\in E_n(\overrightarrow{\Gamma})$.
\end{flushleft}
The proof is similar as case 2.3.

\end{proof}

This theorem shows that  composition is a natural and well-defined operation on the class of planar graphs.  We denote by $(\overrightarrow{\Gamma}_2\circ\overrightarrow{\Gamma}_1,\prec_2\circ\prec_1)=(\overrightarrow{\Gamma}_2,\prec_2)\circ(\overrightarrow{\Gamma}_1,\prec_1)$  the composition of $(\overrightarrow{\Gamma}_1,\prec_1)$ and $(\overrightarrow{\Gamma}_2,\prec_2)$ by grafting the input legs of $\overrightarrow{\Gamma}_2$ and output legs of $\overrightarrow{\Gamma}_1$. The following proposition can be directly checked from proposition 2.4.5 and the definition of composition $\prec_2\circ\prec_1$, so we leave the proof to readers.
\begin{prop}
The composition $\circ$ of planar graphs is associative, that is, for any three composable planar graphs $(\overrightarrow{\Gamma}_1,\prec_1)$, $(\overrightarrow{\Gamma}_2,\prec_2)$, $(\overrightarrow{\Gamma}_3,\prec_3)$, we have
$$(\overrightarrow{\Gamma}_3,\prec_3)\circ(\overrightarrow{\Gamma}_2\circ\overrightarrow{\Gamma}_1,\prec_2\circ\prec_1)=(\overrightarrow{\Gamma}_3\circ\overrightarrow{\Gamma}_2,\prec_3\circ\prec_2)\circ(\overrightarrow{\Gamma}_1,\prec_1).$$
\end{prop}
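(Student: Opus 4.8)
The plan is to split the claim into two independent parts: that both bracketings yield the same underlying anchored graph, and that they then yield the same planar structure on it. The first part is immediate from Proposition 2.4.5, which already states that composition of anchored graphs is associative; denote by $\overrightarrow{\Gamma}$ the resulting common anchored graph $\overrightarrow{\Gamma}_3\circ\overrightarrow{\Gamma}_2\circ\overrightarrow{\Gamma}_1$. By Theorem 3.5.5 each of the two composite orders $\prec_3\circ(\prec_2\circ\prec_1)$ and $(\prec_3\circ\prec_2)\circ\prec_1$ is a genuine planar structure on $\overrightarrow{\Gamma}$, so everything reduces to showing that these two linear orders on $E(\Gamma)$ coincide.

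For the second part, the cleanest route is to invoke the uniqueness Proposition 3.2.8: a planar structure on a progressive graph is determined by the polarization it induces at each real vertex together with the anchor orders it induces on $In(\overrightarrow{\Gamma})$ and $Out(\overrightarrow{\Gamma})$. Thus I would show the two composite orders induce the same polarization and the same anchor structure, whence Proposition 3.2.8 forces them to be equal. The anchor structure is immediate, since in either bracketing $In(\overrightarrow{\Gamma})=In(\overrightarrow{\Gamma}_1)$ carries the order from $\prec_1$ and $Out(\overrightarrow{\Gamma})=Out(\overrightarrow{\Gamma}_3)$ carries the order from $\prec_3$. For the polarization, observe that every real vertex of $\overrightarrow{\Gamma}$ lies in exactly one factor $\overrightarrow{\Gamma}_i$, and the relative order of the edges incident to it is inherited from $\prec_i$: an incident grafted leg is replaced by its new edge $\overline{e}_k$, which the shuffled-order construction inserts at the position formerly occupied by $\overline{o_k}$ (or $\overline{i_k}$). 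This description is manifestly independent of the bracketing, so both composites induce the same polarization.

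An alternative, more hands-on route is to compare the two orders directly via the shuffled-order description, using the basic-segment characterizations of Propositions 3.5.3 and 3.5.4 to split $E(\Gamma)$ into the segments coming from $\Gamma_1$, $\Gamma_2$, $\Gamma_3$ together with the two families of new edges, and then check that interleaving in either bracketing produces the identical sequence. I expect the main obstacle to sit precisely here, in the bookkeeping for the \emph{middle} factor $\overrightarrow{\Gamma}_2$: its input legs are grafted to $\overrightarrow{\Gamma}_1$ and its output legs to $\overrightarrow{\Gamma}_3$, so both families turn into new edges, and one must confirm that the insertion point of each new edge is stable under re-bracketing. The uniqueness argument is attractive exactly because it bypasses this combinatorial shuffle by appealing to the abstract rigidity of planar structures; there the only real work is the routine (but not entirely trivial) verification, for the middle factor, that the induced polarization is bracketing-independent.
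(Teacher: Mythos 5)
Your proposal is correct, but its primary route is genuinely different from the paper's. The paper does not in fact write out a proof: it records only that the proposition ``can be directly checked from proposition 2.4.5 and the definition of composition $\prec_2\circ\prec_1$'' and leaves all details to the reader --- in other words, the paper's intended argument is precisely your \emph{alternative} hands-on route: Proposition 2.4.5 identifies the two underlying anchored graphs, and then the two shuffled orders of Definition 3.5.1 are compared directly, segment by segment, using the characterizations in Propositions 3.5.3 and 3.5.4. Your preferred argument through the rigidity statement (Proposition 3.2.8) is a different mechanism and arguably a better-structured one: once Theorem 3.5.5 certifies that both bracketings produce planar structures on the common directed graph supplied by Proposition 2.4.5, equality of the two linear orders reduces to equality of the local data they induce --- the polarization at each real vertex and the anchor orders on $In(\overrightarrow{\Gamma})$ and $Out(\overrightarrow{\Gamma})$ --- which is checked vertex by vertex and never requires globally interleaving the three families of old edges with the two families of new edges. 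What the paper's direct check buys in exchange is self-containedness (nothing beyond Definition 3.5.1 and Proposition 2.4.5 is invoked), at the price of exactly the bookkeeping you flag. In both routes the only delicate point is the middle factor $\overrightarrow{\Gamma}_2$, whose input legs and output legs are both grafted away; there your observation that clause $(4)$ of Definition 3.5.1 inserts each new edge $\overline{e}_k$ at the slot formerly occupied by $\overline{o_k}$ (respectively $\overline{i_k}$), together with the fact that Proposition 2.4.5 makes the two final edge sets literally equal (so the identification of new edges is bracketing-independent), is what closes the argument; carried out, this is a finite case check and no obstruction arises.
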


\begin{prop}
For two pair of composable planar graphs $(\overrightarrow{\Gamma}_1,\prec_1),(\overrightarrow{\Gamma}_2,\prec_2)$ and $(\overrightarrow{\Gamma}_1',\prec_1'), (\overrightarrow{\Gamma}_2',\prec_2')$, the middle-four-interchange law satisfies:
$$((\overrightarrow{\Gamma}_2,\prec_2)\circ(\overrightarrow{\Gamma}_1,\prec_1))\otimes((\overrightarrow{\Gamma}_2',\prec_2')\circ(\overrightarrow{\Gamma}_1',\prec_1'))=((\overrightarrow{\Gamma}_2,\prec_2)\otimes(\overrightarrow{\Gamma}_2',\prec_2'))\circ((\overrightarrow{\Gamma}_1,\prec_1)\otimes(\overrightarrow{\Gamma}_1',\prec_1')).$$
\end{prop}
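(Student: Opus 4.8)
The plan is to show the two sides agree as planar graphs, which means agreeing both as anchored directed graphs and as linear orders on the edge set. Write $n=|Out(\overrightarrow{\Gamma}_1)|=|In(\overrightarrow{\Gamma}_2)|$ and $m=|Out(\overrightarrow{\Gamma}_1')|=|In(\overrightarrow{\Gamma}_2')|$ for the two pairs of matched legs. First I would dispose of the underlying-graph level, and then concentrate all the real work on the planar order, where the essential combinatorics lies.

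For the underlying anchored graphs, the identity $(\overrightarrow{\Gamma}_2\circ\overrightarrow{\Gamma}_1)\otimes(\overrightarrow{\Gamma}_2'\circ\overrightarrow{\Gamma}_1')=(\overrightarrow{\Gamma}_2\otimes\overrightarrow{\Gamma}_2')\circ(\overrightarrow{\Gamma}_1\otimes\overrightarrow{\Gamma}_1')$ holds because tensor product is disjoint union of the triples $(H,P,\sigma)$ while composition is grafting along matched legs, and these two operations act on disjoint portions of the data: grafting only alters the involution on the legs $o_k,i_k$ and $o_k',i_k'$, and these legs, the blocks, and the remaining involution values are the same regardless of the order in which one forms disjoint unions and grafts. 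I would also record that the induced anchor orders coincide: on both sides $In=In(\overrightarrow{\Gamma}_1)\sqcup In(\overrightarrow{\Gamma}_1')$ with $In(\overrightarrow{\Gamma}_1)$ listed first, and dually for $Out$, so that in both constructions the grafting pairs are exactly $(o_k,i_k)$ for $k\le n$ and $(o_l',i_l')$ for $l\le m$. Thus both sides carry the same anchored graph, and the only remaining task is to match the two linear orders $\prec$.

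For the orders I would use the explicit shuffled-order description given before Proposition $3.5.2$. On the left the order is the concatenation of the two shuffled orders
$$e_{Q_1}\{\overline{e}_1\}e'_{P_1}\cdots e_{Q_n}\{\overline{e}_n\}e'_{P_n}\quad\text{and}\quad e_{Q_1'}\{\overline{e}_1'\}e'_{P_1'}\cdots e_{Q_m'}\{\overline{e}_m'\}e'_{P_m'},$$
the first block preceding the second by the definition of $\otimes$. On the right one must decompose $E_o(\overrightarrow{\Gamma}_1\otimes\overrightarrow{\Gamma}_1')$ into basic segments with respect to the combined output sequence $\overline{o_1}\prec\cdots\prec\overline{o_n}\prec\overline{o_1'}\prec\cdots\prec\overline{o_m'}$, and $E_o(\overrightarrow{\Gamma}_2\otimes\overrightarrow{\Gamma}_2')$ with respect to the combined input sequence. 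The key lemma, which I would isolate, is that these combined basic segments split as concatenations: the $k$-th output segment is exactly $e_{Q_k}$ for $k\le n$ and $e_{Q_{k-n}'}$ thereafter, and similarly for the $e'_{P}$-segments. This follows from the intrinsic characterizations in Propositions $3.5.3$ and $3.5.4$ (namely $e_{Q_k}=\{e\mid e\rightarrow\overline{o_k},\ e\nrightarrow\overline{o_{k-1}}\}$, and its analogue), together with the fact that in a disjoint union $\overrightarrow{\Gamma}_1\otimes\overrightarrow{\Gamma}_1'$ no directed path crosses between the two components: any $e$ with $e\rightarrow\overline{o_k}$ lies in $E(\overrightarrow{\Gamma}_1)$ when $k\le n$, so the cut between $\overline{o_n}$ and $\overline{o_1'}$ separates the $\overrightarrow{\Gamma}_1$-segments from the $\overrightarrow{\Gamma}_1'$-segments cleanly, and the reachability conditions defining each segment reduce to their single-component versions. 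Feeding this decomposition into the shuffled-order recipe produces on the right the very same concatenation as on the left, so $\prec$ agrees and the two planar graphs are equal.

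The only genuine obstacle is this last lemma on the behaviour of basic segments under tensor product; once the non-crossing of directed paths is invoked to show that the combined segment decomposition is the concatenation of the factor decompositions, the matching of the two shuffled orders is immediate and everything else is bookkeeping. As a consistency check one may alternatively appeal to Proposition~\ref{uniqueness}: both sides being planar structures on one and the same progressive, polarized, anchored graph, it would suffice to verify that they induce the same polarization at each vertex and the same anchor order, which the component-wise analysis above also delivers.
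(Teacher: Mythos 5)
Your proposal is correct, and it is essentially the paper's own (unwritten) argument: the paper disposes of this proposition with ``Directly check by definitions,'' and your proof is precisely that check carried out carefully, using the shuffled-order description of $\prec_2\circ\prec_1$ together with the reachability characterizations of the basic segments (Propositions $3.5.3$--$3.5.4$) and the fact that no directed path crosses between the components of a disjoint union. Your closing remark that one could instead invoke the uniqueness statement of Proposition $3.2.8$ is also a valid (and slightly slicker) alternative, but both routes are the same ``check by definitions'' the paper intends.
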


\begin{proof}
Directly check by definitions.
\end{proof}

The following theorem shows that any planar graph has a finest decomposition under the composition $\circ$ with essential prime planar graphs as building blocks.
\begin{thm}\label{decomposition}
Any non-invertible planar graph $(\overrightarrow{\Gamma}, \prec)$ with $n$ real vertices can be written as a composition $(\overrightarrow{\Gamma}_n, \prec_n)\circ\cdot\cdot\cdot\circ(\overrightarrow{\Gamma}_1, \prec_1)$ with each $(\overrightarrow{\Gamma}_k, \prec_k)$ $(1\leq k\leq n)$ being an essential  prime  graph.
\end{thm}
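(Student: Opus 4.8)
The plan is to slice $(\overrightarrow{\Gamma},\prec)$ into $n$ horizontal levels, each containing exactly one real vertex, and to read off each level as an essential prime planar graph. The ordering of the levels is supplied by the linear order $\prec_V$ on the real vertices constructed in Section $3.3$: since $\prec_V$ extends the reachability order $<_V$ (so that $v_i\rightarrow v_j$ forces $v_i\prec_V v_j$), writing the real vertices as $v_1\prec_V\cdots\prec_V v_n$ gives a topological enumeration in which every edge runs from a lower-indexed endpoint to a higher-indexed one. The base case $n=1$ is immediate: a planar graph with a single real vertex has no loop and no real inner edge at that vertex, so its connected components are one corolla together with unitary graphs, i.e.\ it is already essential prime. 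Associativity of composition (Section $3.5$) guarantees that the $n$-fold composite below is unambiguous, and one may either read the construction globally or peel off the $\prec_V$-minimal vertex inductively.

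To realise the slicing I would make the cut between level $k$ and level $k+1$ precise by the sets
\[
C_k=\{\,e\in E(\Gamma)\mid \mathrm{start}(e)\le k<\mathrm{end}(e)\,\},
\]
where $\mathrm{start}(e)$ is the index of $s(e)$ (and $0$ if $e$ is an input leg or a wire) and $\mathrm{end}(e)$ the index of $t(e)$ (and $n+1$ if $e$ is an output leg or a wire). The $k$-th slice $(\overrightarrow{\Gamma}_k,\prec_k)$ is the essential prime graph whose unique real vertex is the corolla $v_k$ and which carries one unitary graph (wire) for each edge of $C_{k-1}\cap C_k$ passing level $k$; its input legs are indexed by $C_{k-1}$, its output legs by $C_k$, and $\prec_k$ is the restriction of $\prec$. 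The crucial point is that $\prec_k$ really is a planar structure: I claim $In(v_k)$ is an interval of $\prec|_{C_{k-1}}$ and $Out(v_k)$ an interval of $\prec|_{C_k}$, with no passing wire interleaved between a leg entering and a leg leaving $v_k$. This is exactly where $(P_2)$ is used: if a wire $e$ satisfied $\overline{\iota_1}\prec e\prec \overline{\iota_2}$ for inputs $\iota_1,\iota_2$ of $v_k$, then applying $(P_2)$ to $\overline{\iota_1}\rightarrow o$ for an output $o$ of $v_k$ would force $e\rightarrow o$ or $\overline{\iota_1}\rightarrow e$, and both are impossible since $\prec_V$ extends $\rightarrow$ while $e$ neither starts after nor ends before level $k$. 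Because the inputs of $v_k$ all precede its outputs, the two intervals are moreover consistently placed, so $\prec_k$ satisfies $(P_1)$ and $(P_2)$ on the essential prime graph $\overrightarrow{\Gamma}_k$.

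It then remains to verify that $(\overrightarrow{\Gamma}_n,\prec_n)\circ\cdots\circ(\overrightarrow{\Gamma}_1,\prec_1)$ reproduces $(\overrightarrow{\Gamma},\prec)$. On the level of anchored graphs the cut $C_k$ is shared between $Out(\overrightarrow{\Gamma}_k)$ and $In(\overrightarrow{\Gamma}_{k+1})$, so grafting proceeds edge by edge: two real legs belonging to consecutively connected real vertices fuse into an inner edge, while a real leg of a corolla \emph{climbs} through the intervening wire levels — at each step the real--virtual grafting rule absorbs the wire and keeps the real leg — until it meets its partner at the destination level, recovering the original edge with its correct real/virtual type; a purely virtual boundary component simply survives as a wire. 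For the planar structures I would invoke the shuffle description of $\prec_2\circ\prec_1$ in terms of the basic segments of the two factors, whose pathwise characterisations were established in Section $3.5$: splicing the basic segments across the shared cut $C_k$ returns precisely $\prec$ restricted to $E(\Gamma)$.

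The main obstacle I anticipate is this last step: the leg--type bookkeeping through an \emph{iterated} composition (tracking which wires are eaten, which real legs persist, and in which anchor positions) together with the identification of the shuffled order with $\prec$. Using associativity to treat one cut at a time reduces the problem to the two-factor statement, and the basic-segment characterisations make the order-matching essentially mechanical; nonetheless this is the part demanding the most careful case analysis, and I would organise it along the cases $(1)$--$(5)$ appearing in the definition of $\prec_2\circ\prec_1$.
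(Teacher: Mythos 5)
Your proposal is correct and takes essentially the same route as the paper: both enumerate the real vertices by the linear order on vertices and decompose $(\overrightarrow{\Gamma},\prec)$ into levels of the form $U^{\otimes a}\otimes \Gamma_{v_k}\otimes U^{\otimes b}$ (one corolla plus passing wires), the paper doing this by inductively peeling off a $<_V$-maximal vertex as the top factor $(\overrightarrow{\Gamma}_2,\prec_2)=(\overrightarrow{\Gamma_U},\prec_U)^{\otimes(k-1)}\otimes(\overrightarrow{\Gamma_v},\prec_{\Gamma_v})\otimes(\overrightarrow{\Gamma_U},\prec_U)^{\otimes(t-l)}$ with $(\overrightarrow{\Gamma}_1,\prec_1)=(\overrightarrow{\Gamma-v},\prec_{\Gamma-v})$, which is exactly your slicing read one cut at a time. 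The only real difference is one of care: your $(P_2)$-argument that $In(v_k)$ and $Out(v_k)$ sit as consistently placed intervals inside the cuts supplies a justification for the segment property that the paper asserts with an "obviously," while both proofs leave the final check that composition reassembles the original planar graph at the level of a direct (if tedious) verification.
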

\begin{proof}[Sketch of proof]
 If $v\in V(\overrightarrow{\Gamma})$ be a maximal or minimal real vertex under $<_V$, we define two sub-planar graphs $(\overrightarrow{\Gamma_v},\prec_{\Gamma_v})$ and $(\overrightarrow{\Gamma-v},\prec_{\Gamma-v})$ as follows:

$\bullet $ $H(\Gamma_v)=v$,

$\bullet $ $P(\Gamma_v)=\{v\}$,

$\bullet $ for any $h\in H(\Gamma_v)$,
$\sigma_{\Gamma_v}(h)=h$,

$\bullet $ $sgn_{\Gamma_v}=sgn_{\Gamma}|_{H(\Gamma_v)}$,

$\bullet $ $\prec_{\Gamma_v}$ is the induced planar structure from $\prec$, \

\begin{flushleft}
and
\end{flushleft}

$\bullet $ $H(\Gamma-v)=H(\Gamma)-v$,

$\bullet $ $P(\Gamma-v)=P(\Gamma)-\{v\}$,

$\bullet $ for any $h\in H(\Gamma-v)$,
 \begin{equation*}\sigma_{\Gamma-v}(h)=
\begin{cases}
\sigma_{\Gamma}(h),&  \text{if}\ \sigma_{\Gamma}(h)\notin v,\\
h,&  \text{if}\ \sigma_{\Gamma}(h)\in v,\\
\end{cases}
\end{equation*}

$\bullet $ $sgn_{\Gamma-v}=sgn_{\Gamma}|_{H(\Gamma-v)}$,

$\bullet $ $\prec_{\Gamma-v}$ is the induced planar structure from $\prec$.

Evidently, $(\overrightarrow{\Gamma_v},\prec_{\Gamma_v})$ is a planar prime graph which we call a vertex subgraph, and it can be easily checked that $(\overrightarrow{\Gamma_v},\prec_{\Gamma_v})$ and $(\overrightarrow{\Gamma-v},\prec_{\Gamma-v})$ are indeed  sub-planar graphs of $(\overrightarrow{\Gamma},\prec)$.

Now we define a planar unitary graph $(\overrightarrow{\Gamma_U},\prec_U)$ as follows:

$\bullet $ $H(\Gamma_U)=\{+,-\}$,

$\bullet $ $P(\Gamma_U)=\{\{+,-\}\}$,

$\bullet $ $\sigma_{\Gamma_U}(+)=-,\  \sigma_{\Gamma_U}(-)=+$,

$\bullet $  $sgn(+)=+,\  sgn(-)=-$,

$\bullet $  $\{-\}\prec_U \{+\}$.

We now will prove this proposition by showing that any planar graph with at least two real vertices has a decomposition under composition.
That is, if  $(\overrightarrow{\Gamma}, \prec)$ is a planar graph with $n\geq2$ real vertices, and assume $v\in V_{re}(\overrightarrow{\Gamma})$ be a maximal vertex under $<_V$, we will define two new planar graphs  $(\overrightarrow{\Gamma}_1, \prec_1)$ and  $(\overrightarrow{\Gamma}_2, \prec_2)$ and show that
$$ (\overrightarrow{\Gamma}, \prec)= (\overrightarrow{\Gamma}_2, \prec_2)\circ (\overrightarrow{\Gamma}_1, \prec_1).$$

We assume $Out(\Gamma)=\{o_1,...,o_t\}$ such that $$\overline{o_{p}}\prec \overline{o_{q}} \Longleftrightarrow  p< q.$$
Obviously, we can assume $$Out(v)=\{o_{\mu}\in Out(\Gamma)| k\leq\mu\leq l,\  1\leq k\leq l\leq t\}$$ be a segment of $Out(\Gamma)$.

Then we define $$(\overrightarrow{\Gamma}_1, \prec_1)=(\overrightarrow{\Gamma-v}, \prec_{\Gamma-v})$$ and $$(\overrightarrow{\Gamma}_2, \prec_2)=(\overrightarrow{\Gamma_U}, \prec_U)^{\otimes (k-1)}\otimes (\overrightarrow{\Gamma_{v_2}}, \prec_{\Gamma_{v_2}}) \otimes (\overrightarrow{\Gamma_U}, \prec_U)^{\otimes (t-l)},$$
and by directly checking, we have $ (\overrightarrow{\Gamma}, \prec)= (\overrightarrow{\Gamma}_2, \prec_2)\circ (\overrightarrow{\Gamma}_1, \prec_1)$.

\end{proof}

On the geometric realization, Theorem \ref{decomposition} just tell that we can cut a planar graph into two planar graphs with one being an essential prime. The following is an example, we cut the planar graph in example 3.1.3 into two parts along the dashed line.
\begin{center}
\begin{tikzpicture}[scale=0.5]

\node (v2) at (-4,3) {};
\node (v1) at (-1.5,5.5) {};
\node (v7) at (-1.5,1) {};
\node (v9) at (1.5,5.5) {};
\node (v14) at (2,1.5) {};
\node (v3) at (-3,7.5) {$2$};
\node (v4) at (-2,7.5) {$3$};
\node (v5) at (-0.5,7.5) {$4$};
\node (v6) at (-4.8,7.4) {$1$};
\node (v11) at (-4.5,-1) {$7$};
\node (v12) at (-2,-1) {$13$};
\node (v13) at (0,-1) {$14$};
\node (v15) at (2,-1) {$17$};
\node (v8) at (1,7.5) {$10$};
\node (v10) at (2.5,7.5) {$11$};
\node  at (-2.5,3.5) {$6$};
\node  at (-3,5.2) {$5$};
\node  at (-1.2,3.3) {$9$};
\node  at (0.5,3.25) {$12$};
\node  at (2.2,3.7) {$15$};
\node  at (-3,1.7) {$8$};
\draw[fill] (-4,3) circle [radius=0.11];
\draw[fill] (v1) circle [radius=0.11];
\draw[fill] (v7) circle [radius=0.11];
\draw[fill] (v9) circle [radius=0.11];
\draw[fill] (v14) circle [radius=0.11];
\draw  plot[smooth, tension=1] coordinates {(v1) (-2.5,5)  (-3.5,4) (v2)}[postaction={decorate, decoration={markings,mark=at position .5 with {\arrow[black]{stealth}}}}];
\draw  plot[smooth, tension=1] coordinates {(v1) (-2,4.5)  (-3,3.5) (v2)}[postaction={decorate, decoration={markings,mark=at position .5 with {\arrow[black]{stealth}}}}];

\draw  (v3) -- (-1.5,5.5)[postaction={decorate, decoration={markings,mark=at position .5 with {\arrow[black]{stealth}}}}];
\draw  (v4) -- (-1.5,5.5)[postaction={decorate, decoration={markings,mark=at position .5 with {\arrow[black]{stealth}}}}];
\draw  (v5) -- (-1.5,5.5)[postaction={decorate, decoration={markings,mark=at position .5 with {\arrow[black]{stealth}}}}];

\draw  (v6) -- (-4,3)[postaction={decorate, decoration={markings,mark=at position .5 with {\arrow[black]{stealth}}}}];
\draw  (v1) -- (-1.5,1)[postaction={decorate, decoration={markings,mark=at position .5 with {\arrow[black]{stealth}}}}];
\draw  (-4,3) -- (-1.5,1)[postaction={decorate, decoration={markings,mark=at position .5 with {\arrow[black]{stealth}}}}];

\draw  (v8)--(1.5,5.5)[postaction={decorate, decoration={markings,mark=at position .5 with {\arrow[black]{stealth}}}}];
\draw  (v10) -- (1.5,5.5)[postaction={decorate, decoration={markings,mark=at position .5 with {\arrow[black]{stealth}}}}];
\draw  (1.5,5.5) -- (-1.5,1)[postaction={decorate, decoration={markings,mark=at position .5 with {\arrow[black]{stealth}}}}];
\draw  (v2) -- (v11)[postaction={decorate, decoration={markings,mark=at position .5 with {\arrow[black]{stealth}}}}];
\draw  (-1.5,1) -- (v12)[postaction={decorate, decoration={markings,mark=at position .65 with {\arrow[black]{stealth}}}}];
\draw  (v13) -- (-1.5,1)[postaction={decorate, decoration={markings,mark=at position .5 with {\arrowreversed[black]{stealth}}}}];
\draw  (1.5,5.5) -- (2,1.5)[postaction={decorate, decoration={markings,mark=at position .5 with {\arrow[black]{stealth}}}}];
\draw  (2,1.5) -- (v15)[postaction={decorate, decoration={markings,mark=at position .5 with {\arrow[black]{stealth}}}}];

\node (v16) at (-6.2,0) {};
\node (v21) at (2.8,0.2) {};
\node (v17) at (-2.8,0) {};
\node (v20) at (0.2,0.2) {};
\node (v18) at (-2.8,1.6) {};
\node (v19) at (0.2,1.6) {};
\draw [dashed]   (-6.2,0)-- (-2.8,0);
\draw [dashed]  (-2.8,0) edge (-2.8,1.6);
\draw [dashed]  (-2.8,1.6) edge (0.2,1.6);
\draw [dashed]  (0.2,1.6) edge (0.2,0.2);
\draw [dashed]  (0.2,0.2) edge (2.8,0.2);

\end{tikzpicture}
\end{center}

\subsection{Canonical isomorphism between $\mathsf{\Gamma}$ and  $\mathsf{G}$  }
In this section, we want to construction a canonical (preserving tensor product and composition)  isomorphism between the set $\mathsf{\Gamma}$ of planar graphs (isomorphic classes of (combinatorial) progressive planar graphs) and  the set $\mathsf{G}$ of plane graphs (isotopy classes of Joyal and Street's boxed/leveled progressive plane graphs). That is, we want to prove the following theorem:

\begin{thm}
There is an canonical isomorphism between $\mathsf{\Gamma}$ and  $\mathsf{G}$.
\end{thm}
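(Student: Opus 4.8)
The plan is to exhibit the isomorphism as the \emph{planar geometric realization} map $R:\mathsf{\Gamma}\to\mathsf{G}$ together with its inverse, the combinatorial reading $C:\mathsf{G}\to\mathsf{\Gamma}$ of a plane graph, and then to check that both descend to isotopy/isomorphism classes, are mutually inverse, and carry $\otimes$ and $\circ$ on one side to $\otimes$ and $\circ$ on the other.

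First I would construct $R$. Given a planar graph $(\overrightarrow{\Gamma},\prec)$, I use the induced linear order $\prec_V$ on vertices (the proposition of section $3.3$ showing $\prec_V$ is a linear order) to place the real vertices at strictly increasing heights inside a strip $\mathbb{R}\times[0,1]$, producing a \emph{leveled} picture, and I use the edge order $\prec$ to fix, on each horizontal slice, the left-to-right order of the edge-points meeting that slice; edges are then drawn as height-monotone arcs joining the prescribed half-edge positions. Condition $(P_1)$ together with acyclicity guarantees that every arc is strictly progressive, and condition $(P_2)$ is exactly what forbids two arcs from crossing: if $e_1\rightarrow e_2$ and $e_3$ lies strictly between them in $\prec$, the dichotomy $e_3\rightarrow e_2$ or $e_1\rightarrow e_3$ keeps $e_3$ inside the same channel, so no transversal intersection can occur. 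I would then check that different numerical choices of heights and abscissae yield planar-isotopic pictures, and that a combinatorial isomorphism of planar graphs induces an isotopy, so that $R$ is well defined on classes.

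Next I would construct the candidate inverse $C$. From a progressive plane graph I extract its underlying combinatorial directed graph, read off the polarization and anchoring from the left-to-right ordering of half-edges around each vertex and along the boundary, and define $\prec$ as the left-to-right order of edges on a common height slice, extended globally. The conditions $(P_1)$ and $(P_2)$ must then be verified from planarity and progressiveness, and Proposition \ref{uniqueness} does the bookkeeping: the geometric data give a progressive, polarized, anchored graph, on which at most one compatible $\prec$ can exist, which pins down $C$. The delicate point here is \emph{isotopy invariance}: I must show that the combinatorially read order is unchanged under an \emph{arbitrary} planar isotopy of the plane graph, not merely the recumbent ones of Joyal--Street. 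This is precisely the step that settles Selinger's conjecture and is, in my view, the main obstacle of the whole theorem.

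Finally I would prove that $R$ and $C$ are mutually inverse and structure-preserving. The identity $C\circ R=\mathrm{id}_{\mathsf{\Gamma}}$ is immediate from the construction of $R$, since reading back the leveled realization returns the same directed graph and the same order $\prec$. For $R\circ C=\mathrm{id}_{\mathsf{G}}$ I would prove a rectification lemma: every progressive plane graph is planar-isotopic to the standard leveled realization of its combinatorial data, using a Morse-type normalization of the height function to separate the vertices into distinct levels, and then invoke Proposition \ref{uniqueness} to identify the two combinatorial descriptions. Preservation of $\otimes$ and $\circ$ can be verified either directly, since horizontal juxtaposition realizes $\otimes$ and vertical stacking realizes $\circ$, matching Joyal--Street's operations, or, more economically, by reducing to generators via the decomposition Theorem \ref{decomposition}, checking the claim on essential prime planar graphs (corollas with ordered inputs and outputs) and appealing to the associativity and interchange laws of section $3.5$ to propagate it to all planar graphs.
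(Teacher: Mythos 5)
Your proposal has the same overall skeleton as the paper's proof (construct the planar geometric realization, construct a combinatorial reading as inverse, then use the uniqueness of compatible planar structures plus preservation of $\otimes$ and $\circ$), but the two key constructions are done by a genuinely different method, and both of them have a soft spot that the paper's route is designed to avoid. For the forward map, the paper never draws the graph "by hand": it defines the realization only for prime and unitary graphs, extends to elementary graphs by Joyal--Street's $\otimes_{JS}$, and to general graphs by choosing a decomposition into essential primes (Theorem 3.5.7) and applying $\circ_{JS}$; planarity of the result is then inherited from Joyal--Street's operations, and the only thing to prove is independence of the chosen decomposition, which is done by induction on the number of real vertices with a vertex-interchange argument. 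Your direct embedding (heights from $\prec_V$, horizontal positions from $\prec$, monotone arcs) puts the entire burden on the claim that $(P_2)$ forbids crossings. That claim is true but is the hard geometric content of the theorem, and your "channel" argument is only a gesture at it: you would in particular need that the edges incident to a vertex form a contiguous block in the slice order at nearby heights (otherwise some arc must pass through the vertex), which requires the machinery of Lemmas A, A', B, B'' and the slice-consistency of $\prec$ -- essentially re-proving the content of Theorem 3.5.5 in geometric language.

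For the inverse map there is a concrete gap: defining $\prec$ as "the left-to-right order of edges on a common height slice, extended globally" is not a definition, because in a progressive plane graph two inner edges need not meet any common horizontal slice, and it is not automatic that slice-by-slice comparisons extend to a well-defined linear order. The paper avoids this by extracting only the (manifestly isotopy-invariant) progressive, polarized and anchored data, and then proving \emph{existence} of a compatible planar order by cutting the plane graph into elementary layers and composing the layer orders via the composition theorem of Section 3.5; uniqueness is Proposition 3.2.8. Note also that once you take this route, the isotopy-invariance issue you single out as "the main obstacle" dissolves: the order is determined by Proposition 3.2.8 from data that is trivially invariant under arbitrary planar isotopy, so no separate invariance argument is needed -- this is precisely how the paper (and hence Selinger's conjecture) gets settled. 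Finally, your Morse-type rectification lemma for $R\circ C=\mathrm{id}$ is extra geometric work that the paper replaces by the cheaper observation that both maps agree on primes and unitaries and both preserve $\otimes$ and $\circ$. So: viable outline, but to make it a proof you must either supply the non-crossing/contiguity argument and a genuine global definition of the slice order, or fall back on the paper's decomposition-based constructions at exactly those two points.
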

\begin{proof}[Sketch of proof]
In order not to make the paper too long, we will not give a proof in all details but show the strategies to make the theorem obvious.

$\bullet$ First we give the definition of \textbf{planar geometric realization} of a combinatorial progressive planar graph and show that it is a leveled progressive plane graph unique up to a planar isotopy, in other word, we will define a function $\parallel\cdot\parallel:\mathsf{\Gamma}\rightarrow \mathsf{G}$.

For a prime planar graph, its planar geometric realization is defined to be a planar isotopic class of the boxed/leveled progressive plane graph  as
\begin{center}
\begin{tikzpicture}[scale=.6]
\node (v1) at (0,0) {};
\draw[fill] (0,0) circle [radius=0.1];
\node (v2) at (-2,1.5) {$1$};
\node (v3) at (-0.5,1.5) {$2$};
\node (v4) at (1,1.5) {$\cdots$};
\node (v5) at (2.5,1.5) {$m$};
\node (v6) at (-2,-1.5) {$m+1$};
\node (v7) at (-0.5,-1.5) {$m+2$};
\node at (1,-1.5) {$\cdots$};
\node (v8) at (2.5,-1.5) {$m+n$};
\draw  (v2) -- (0,0)[postaction={decorate, decoration={markings,mark=at position .50 with {\arrow[black]{stealth}}}}];
\draw  (v3) -- (0,0)[postaction={decorate, decoration={markings,mark=at position .50 with {\arrow[black]{stealth}}}}];
\draw  (v5) -- (0,0)[postaction={decorate, decoration={markings,mark=at position .50 with {\arrow[black]{stealth}}}}];
\draw  (v6) -- (0,0)[postaction={decorate, decoration={markings,mark=at position .50 with {\arrowreversed[black]{stealth}}}}];
\draw  (v7) -- (0,0)[postaction={decorate, decoration={markings,mark=at position .50 with {\arrowreversed[black]{stealth}}}}];
\draw  (v8) -- (0,0)[postaction={decorate, decoration={markings,mark=at position .50 with {\arrowreversed[black]{stealth}}}}];
\end{tikzpicture}
\end{center}
with the labels $\{1,2,...,m+n\}$ denoting the planar order of edges and arrows denoting orientation of edges.
For an unitary graph, its planar geometric realization is defined to be a planar isotopic class of the boxed/leveled progressive plane graph  as

\begin{center}
\begin{tikzpicture}[scale=.5]
\node (v2) at (0,0) {};
\draw (0,0) circle [radius=0.1];
\node (v1) at (0,2) {$1$};
\node (v3) at (0,-2) {$2$};
\draw  (v1) -- (0,0.1)[postaction={decorate, decoration={markings,mark=at position .50 with {\arrow[black]{stealth}}}}];
\draw  (v3) -- (0,-0.1)[postaction={decorate, decoration={markings,mark=at position .50 with {\arrowreversed[black]{stealth}}}}];
\end{tikzpicture}
\end{center}
with the labels $\{1,2\}$ denoting the planar order of edges and arrows denoting the orientations of edges.

For an elementary graph $(\overrightarrow{\Gamma}_1,\prec_1)\otimes(\overrightarrow{\Gamma}_2,\prec_2)\otimes\cdots \otimes(\overrightarrow{\Gamma}_n,\prec_n)$ with $(\overrightarrow{\Gamma}_i,\prec_i)$ $(1\leq i\leq n)$ being a prime or unitary planar graph, we define its planar geometric realization to be $\parallel(\overrightarrow{\Gamma}_1,\prec_1)\parallel\otimes_{JS}\parallel(\overrightarrow{\Gamma}_2,\prec_2)\parallel\otimes_{JS}\cdots \otimes_{JS}\parallel(\overrightarrow{\Gamma}_n,\prec_n)\parallel$ where $\otimes_{JS}$ denotes the tensor product of planar isotopic classes of boxed/leveled progressive plane graphs in \cite{[JS91]} (see appendix 7.2), and we draw its planar geometric realization schematically as
\[\begin{matrix}&\longrightarrow \\
\downarrow&\begin{tabular}{|c|c|ccc|c|}
\hline
$\parallel(\overrightarrow{\Gamma}_1,\prec_1)\parallel$&$\parallel(\overrightarrow{\Gamma}_2,\prec_2)\parallel$&&$\cdots$&&$\parallel(\overrightarrow{\Gamma}_n,\prec_n)\parallel$\\
\hline
\end{tabular}
\end{matrix}\]

From the definition of tensor product of $\otimes$, edge labels of every $\parallel(\overrightarrow{\Gamma}_1,\prec_i)\parallel$ $(1\leq i\leq n)$ form a block, and the blocks will increase from left to right.
Thus it is easily to see that planar geometric realizations of an elementary graphs is a progressive plane graphs  of Joyal and Street and is unique up to a planar isotropy, as example 3.1.2 shows.

Recall that in Theorem \ref{decomposition}, we have proved that every planar graph $(\overrightarrow{\Gamma},\prec)$ can be decomposed as composition of several essential prime graphs $(\overrightarrow{\Gamma}_n,\prec_n)\circ(\overrightarrow{\Gamma}_{n-1},\prec_{n-1})\circ\cdots\circ(\overrightarrow{\Gamma}_1,\prec_1)$, so we  define its  planar geometric realization  to be   $\parallel(\overrightarrow{\Gamma}_n,\prec_n)\parallel\circ_{JS}\parallel(\overrightarrow{\Gamma}_{n-1},\prec_{n-1})\parallel\circ_{JS}\cdots\circ_{JS}\parallel(\overrightarrow{\Gamma}_1,\prec_1)\parallel$,  where $\circ_{JS}$ denotes the composition of geometrical planar graphs in \cite{[JS91]}(see appendix 7.2), and we schematize the planar geometric realization of $(\overrightarrow{\Gamma},\prec)$ associated with this decomposition as
\[\begin{matrix}&\rightarrow \\
\downarrow& \begin{tabular}{|c|}
\hline
$\parallel(\overrightarrow{\Gamma}_1,\prec_1)\parallel$\\\hline
\vdots\\\hline
$\parallel(\overrightarrow{\Gamma}_{n-1},\prec_{n-1})\parallel$\\\hline
$\parallel(\overrightarrow{\Gamma}_n,\prec_n)\parallel$\\\hline
\end{tabular}
\end{matrix}\]
with the global  flow from top to down, thus its planar geometrical realization should be a boxed/leveled progressive plane graph as example 3.1.3 shows.

To show that planar geometric realization of $(\overrightarrow{\Gamma},\prec)$ is well-defined, that is, its  planar geometric realizations associated with different decompositions are planar isotopic, we will  prove using induction on the number  of real vertices $n=|V_{re}(\Gamma)|$.

When $n=1$. $(\overrightarrow{\Gamma},\prec)$ is a prime or an essential prime planar graph, we have shown that its planar geometric realization $\parallel(\overrightarrow{\Gamma},\prec)\parallel$ is unique up to a planar isotopy.

When $n=k$ with $k\geq 2$. We will prove in two cases.

Case 1: If $(\overrightarrow{\Gamma},\prec)$ has an unique decomposition as composition of essential prime planar graphs, we have an unique geometric realization up to a planar isotopy.

Case 2: If $(\overrightarrow{\Gamma},\prec)$ has two decompositions $(\overrightarrow{\Gamma}_k,\prec_k)\circ\cdots\circ(\overrightarrow{\Gamma}_1,\prec_1)$ and $(\overrightarrow{\Gamma'}_k,\prec'_k)\circ\cdots\circ(\overrightarrow{\Gamma'}_1,\prec'_1)$, we want to show that $\parallel(\overrightarrow{\Gamma}_k,\prec_k)\parallel\circ_{JS}\cdots\circ_{JS}\parallel(\overrightarrow{\Gamma}_1,\prec_1)\parallel$ and $\parallel(\overrightarrow{\Gamma'}_k,\prec'_k)\parallel\circ_{JS}\cdots\circ_{JS}\parallel(\overrightarrow{\Gamma'}_1,\prec'_1)\parallel$ are planar isotopic.

Let $v_i\in V_{re}(\Gamma_i)\subseteq V_{re}(\Gamma)$ and $v'_i\in V_{re}(\Gamma'_i)\subseteq V_{re}(\Gamma)$ be only real vertices of $\Gamma_i$ and $\Gamma'_i$ for $1\leq i\leq k$.

case 2.1: If $v_k=v'_k$, then we must have $(\overrightarrow{\Gamma}_k,\prec_k)\cong(\overrightarrow{\Gamma'}_k,\prec'_k)$ and $(\overrightarrow{\Gamma}_{k-1},\prec_{k-1})\circ\cdots\circ(\overrightarrow{\Gamma}_1,\prec_1) \cong(\overrightarrow{\Gamma'}_{k-1},\prec'_{k-1})\circ\cdots\circ(\overrightarrow{\Gamma'}_1,\prec'_1)$. Thus $\parallel(\overrightarrow{\Gamma}_k,\prec_k)\parallel$ and $(\overrightarrow{\Gamma'}_k,\prec'_k)|$ are planar isotopic, and by induction hypothesis $\parallel(\overrightarrow{\Gamma}_{k-1},\prec_{k-1})\parallel\circ_{JS}\cdots\circ_{JS}\parallel(\overrightarrow{\Gamma}_1,\prec_1) \parallel$ and $\parallel(\overrightarrow{\Gamma'}_{k-1},\prec'_{k-1})\parallel\circ_{JS}\cdots\circ_{JS}\parallel(\overrightarrow{\Gamma'}_1,\prec'_1)\parallel$ are planar isotopic. Hence $\parallel(\overrightarrow{\Gamma}_k,\prec_k)\parallel\circ_{JS}\cdots\circ_{JS}\parallel(\overrightarrow{\Gamma}_1,\prec_1)\parallel$ and $\parallel(\overrightarrow{\Gamma'}_k,\prec'_k)\parallel\circ_{JS}\cdots\circ_{JS}\parallel(\overrightarrow{\Gamma'}_1,\prec'_1)\parallel$ are planar isotopic.

case 2.2: If $v_k\neq v'_k$, then there must exist $l\in \{1,\cdots, k\}$ such that $v'_l=v_k$. It is easy to see that for every $j\in \{l+1,k\}$, we have
$v'_j\nrightarrow v'_l$ and $v'_l\nrightarrow v'_j$. Now our strategy of proof is to construct a series of decomposition of $(\overrightarrow{\Gamma},\prec)$ step by step  and a series of  planar isotopies between  their successive planar geometric realizations to interchange $v'_l$ with $v'_{l+1},\cdots, v'_{k}$. So the proof is reduced to prove the following fact: for every combinatorial progressive elementary planar graph, all its planar geometric realizations associated with different decompositions are planar isotopic. We think this fact obvious and leave the proof to readers.

Plus the fact that planar geometric realizations of  isomorphic combinatorial progressive planar graphs are homeomorphic, we can see that the construction of planar geometric realization is indeed  a well-defined function  $\parallel\cdot\parallel:\mathsf{\Gamma}\rightarrow \mathsf{G}$.

From the definition of planar geometric realization, we can easily see that
$$\parallel(\overrightarrow{\Gamma}_1,\prec_1)\otimes (\overrightarrow{\Gamma}_2,\prec_2)\parallel=\parallel(\overrightarrow{\Gamma}_1,\prec_1)\parallel\otimes_{JS} \parallel(\overrightarrow{\Gamma}_2,\prec_2)\parallel,$$
and for every two composable elementary graphs $(\overrightarrow{\Gamma}_1,\prec_1)$ and $(\overrightarrow{\Gamma}_2,\prec_2)$, we have $$\parallel(\overrightarrow{\Gamma}_2,\prec_2)\circ (\overrightarrow{\Gamma}_1,\prec_1)\parallel=\parallel(\overrightarrow{\Gamma}_2,\prec_2)\parallel\circ_{JS} \parallel(\overrightarrow{\Gamma}_1,\prec_1)\parallel.$$

$\bullet$  Now we want to construct the inverse function $\parallel\cdot\parallel^{-1}:\mathsf{G}\rightarrow \mathsf{\Gamma}$ of planar geometric realization. That is, for any leveled progressive plane graph we want to  associate a combinatorial progressive planar graph. In fact, if $G$ is a leveled progressive planar graph, we can define a combinatorial directed graph $\parallel G\parallel^{-1}=(H, P, \sigma, sgn)$ as follows:

$(1)$   $H=\bigsqcup_{x\in Inn_V(G)}\{x\}\times (In(x)\sqcup Out(x))\bigsqcup_{e\in E_{unitary}(G)}\{e^+,e^-\} $;

$(2)$   $P=\bigsqcup_{x\in Inn_V(G)}\{\{x\}\times (In(x)\sqcup Out(x))\}\bigsqcup_{e\in E_{unitary}(G)}\{\{e^+,e^-\}\} $;

$(3)$   $\sigma((x,e))=(s(e),e)$, if $(x,e)\in \{x\}\times In(x)$ and $e$ is an inner edge; $\sigma((x,e))=(t(e),e)$, if $(x,e)\in \{x\}\times Out(x)$ and $e$ is an inner edge; $\sigma((x,e))=(x,e)$, if $(x,e)\in \{x\}\times In(x)$ and $e$ is an external edge; $\sigma((x,e))=(x,e)$, if $(x,e)\in \{x\}\times Out(x)$ and $e$ is an external edge;  $\sigma(e^+)=e^-$ and $\sigma(e^-)=e^+$, if $e\in E_{unitary}(G)$;

$(4)$   $sgn((x,e))=+$, if $(x,e)\in \{x\}\times In(x)$; $sgn((x,e))=-$, if $(x,e)\in \{x\}\times Out(x)$; $sgn(e^+)=+$, $sgn(e^-)=-$, for $e\in E_{unitary}(G)$, where $Inn_V(G)$ denotes the set of inner nodes of $G$ and $E_{unitary}(G)$ denotes the set of unitary edges of $G$, for definitions of inner nodes and unitary edges for (topological)  see appendix 7.2.

Evidently, $\parallel G\parallel^{-1}$ is a progressive graph equipped with a polarization and an anchor structure and it is easy to see that $\parallel G\parallel^{-1}$ with these structures is invariant under any planar isotopy of $G$.  From the definition of $\parallel\cdot\parallel^{-1}$, the following facts can be directly checked:
$(1)$ for any two boxed/leveled progressive plane graphs $G^1, G^2$, $\parallel G^1\otimes_{JS} G^2\parallel^{-1}=\parallel G^1\parallel^{-1}\otimes \parallel G^2 \parallel^{-1}$ as progressive, polarized and anchored graphs; $(2)$ for any two composable boxed/leveled progressive plane graphs $G^1, G^2$, $\parallel G^2\circ_{JS} G^1\parallel^{-1}\cong\parallel G^2\parallel^{-1}\circ \parallel G^1 \parallel^{-1}$ as progressive, polarized and anchored graphs.

By proposition 3.2.8, we know that if possible there will be an unique planar structure compatible with the progressive, polarization and anchor structure, so our only task now is to show the existence of a planar structure on $\parallel G\parallel^{-1}$ which is  compatible with these structures. In fact, the existence of such a planar structure on $\parallel G\parallel^{-1}$ is a direct consequence of the following facts:

$(1)$ for any elementary boxed/progressive progressive plane graph $G'$, there is an unique planar  order on $\parallel G'\parallel^{-1}$ compatible with its progressive, polarization and anchor structure;

$(2)$ any boxed/progressive progressive plane graph $G'$ can be represented as a composition $G'^m\circ_{JS}\cdots \circ_{JS} G'^1$ with each $G'^i$ $(1\leq i\leq m)$ being elementary;

$(3)$ for any two composable boxed/leveled progressive plane graphs $G^1, G^2$, $\parallel G^2\circ_{JS} G^1\parallel^{-1}\cong\parallel G^2\parallel^{-1}\circ \parallel G^1 \parallel^{-1}$ as progressive, polarized and anchored graphs.

Thus $\parallel\cdot\parallel^{-1}:\mathsf{G}\rightarrow \mathsf{\Gamma}$ is well-defined. The fact that $\parallel\cdot\parallel^{-1}$ is indeed an inverse function of $\parallel\cdot\parallel$ can be deduced from the facts : $(1)$ $\parallel\cdot\parallel$ and $\parallel\cdot\parallel^{-1}$ are inverse functions of each other when restricted on classes of  prime and unitary graphs; $(2)$ both $\parallel\cdot\parallel$ and $\parallel\cdot\parallel^{-1}$ preserve tensor products and compositions.

\end{proof}

\begin{rem}
As a direct consequence, we see that for any planar set, its Hasse diagram is a plane graph naturally.
\end{rem}

\subsection{Fissus planar graphs and their coarse-graingings}
Here we want to introduce the notions of a fissus planar graph and its coarse-graining.

First recall that a set $X$ with a linear order $<$ is called a \textbf{linearly ordered set} or \textbf{linear set} for short, a partition of $X$ with each of its blocks being a segment of $(X,<)$ is called a \textbf{linear partition} or \textbf{$2$-nested linear set} and is denoted by $P(X,<)$ or $P$ for short.  Each block equipped with an induced linear order is called a linear block. The number of blocks of $P(X,<)$ is called the cardinality of $P(X,<)$, and we denote it as $|P(X,<)|$. The number of elements of $X$ is called length of $P(X,<)$ which is denoted by $||P(X,<)||.$ Product of linear sets $(X,<_1)=x_1<_1...<_1x_m$ and $(Y,<_2)=y_1<_2...<_2y_n$ is the linear set $(X,<_1)\otimes (Y,<_2)=x_1<...<x_m<y_1<...<y_n$.   Each linear partition can be equipped with a linear order (called\textbf{ block order}) in a natural way and we usually write $P(X,<)$ as $P_1<...<P_s$ with each $P_i$ $(1\leq i \leq s)$ being a linear block. A linear partition with only one block is called a trivial linear partition. A linear partition is called  finest if  each of its block is a one element set.

Let $P(X,<)=P_1<...<P_s$ and $Q(Y,<)=Q_1<...<Q_t$ be two linear partitions, we define their \textbf{product} to be $P(X,<)\otimes Q(Y,<)=P_1<...<P_s<Q_1<...<Q_t$ which is a linear partition of $(X,<)\otimes (Y,<)$. If  $P$ is a linear partition of $Q$, that is, $P_i=Q_{\mu_1+\cdots+\mu_{i-1}+1}<\cdots<Q_{\mu_1+\cdots+\mu_{i}}$ $(1\leq i\leq s)$, then we define their \textbf{composition} to be $P\triangleleft Q=\widetilde{P}_1<...<\widetilde{P}_s$ which is a linear partition such that each $\widetilde{P}_i=\underset{\mu_1+\cdots+\mu_{i-1}+1\leq \nu \leq \mu_1+\cdots+\mu_{i}}\bigotimes Q_{\nu}$ being the product of the  linear sets $Q_{\nu}$ $(\mu_1+\cdots+\mu_{i-1}+1\leq \nu \leq \mu_1+\cdots+\mu_{i}).$

We say they are \textbf{equivalent} if $s=t$ and  for each $i\in\{1,...,s\}$, the cardinals of $P_{i}$ and $Q_{i}$ are equal and we denote this fact as $P(X,<)\approx Q(Y,<)$. For any linear partition we call its equivalence class  a  "type" and to any linear partition we can associate a $2$-leveled planar rooted tree to denote its "type". If either $P$ or $Q$ is trivial, then $P\triangleleft Q$ is trivial; if $Q$ is finest, then $P\triangleleft Q\approx P$; if $P$ is finest, then $P\triangleleft Q\approx Q$.

\begin{ex}
The set $\{1,2,...,10\}$ ordered by the usual less than $<$ is a linear set. The partition $$\{\{1,2\}, \{3,4,5,6\}, \{7\}, \{8,9,10\}\}$$ is a linear partition with the linear order: $\{1,2\}<\{3,4,5,6\}<\{7\}<\{8,9,10\}$.   We also denote this linear partition as $(1 2)(3 4 5 6)(7)(8 9 10)$.Its associated $2$-leveled planar rooted tree is

\begin{center}
\begin{tikzpicture}[scale=.5]
\node (v2) at (1,0) {};
\node (v6) at (1,-1.5) {};
\node (v1) at (-1,1.5) {};
\node (v3) at (0.5,1.5) {};
\node (v4) at (2,1.5) {};
\node (v5) at (3,1.5) {};
\draw  (-1,1.5) edge (1,0);
\draw (0.5,1.5) edge (1,0);
\draw  (2,1.5) edge (1,0);
\draw  (3,1.5) edge (1,0);
\draw  (v6) edge (1,0);
\node (v7) at (-2,3) {$1$};
\node (v8) at (-1.5,3) {$2$};
\node (v9) at (-0.5,3) {$3$};
\node (v10) at (0,3) {$4$};
\node (v11) at (0.5,3) {$5$};
\node (v12) at (1,3) {$6$};
\node (v13) at (2,3) {$7$};
\node (v14) at (3,3) {$8$};
\node (v15) at (3.5,3) {$9$};
\node (v16) at (4,3) {$10$};
\draw  (v7) edge  (-1,1.5);
\draw  (v8) edge  (-1,1.5);
\draw  (v9) edge (0.5,1.5);
\draw  (v10) edge (0.5,1.5);
\draw  (v11) edge (0.5,1.5);
\draw  (v12) edge (0.5,1.5);
\draw  (v13) edge (2,1.5);
\draw  (v14) edge (3,1.5);
\draw  (v15) edge (3,1.5);
\draw  (v16) edge (3,1.5);
\end{tikzpicture}.
\end{center}
\end{ex}

Using induction, for any $n\geq 2$ we can introduce the notions of a $n$-nested linear set,an equivalence of two $n$-nested linear sets.  We can also associate each $n$-nested linear set a $n$-levelled planar rooted tree as its "type".

Let $(\overrightarrow{\Gamma},\prec)$ be a $(m,n)$-planar graph with anchor structure $i_1<...<i_m$ and $o_1<...<o_n$. A \textbf{fission structure} of $(\overrightarrow{\Gamma},\prec)$ is a linear partition of its anchor structure, more precisely, a linear partition $P_{in}$ of $i_1<...<i_m$ and a linear partition $P_{out}$ of $o_1<...<o_n$. A planar graph $(\overrightarrow{\Gamma},\prec)$ equipped with a  fission structure $\mathcal{P}=(P_{in}, P_{out})$ is called a \textbf{fissus planar graph} and we denote it as $(\overrightarrow{\Gamma},\prec,P_{in}, P_{out})$ or $(\Gamma,\mathcal{P})$ for short. The input set and output set of a fissus planar graph equipped with the anchor structure and fission structure are linear partitions. If  both $P_{in}$ and $P_{out}$ are trivial linear partitions, we say $(\overrightarrow{\Gamma},\prec,P_{in}, P_{out})$ a trivial fissus planar graph; if both $P_{in}$ and $P_{out}$ are finest linear partitions, we say $(\overrightarrow{\Gamma},\prec,P_{in}, P_{out})$ a fully  fissus planar graph.

Two equivalent planar graphs equipped with equivalent fission structures are called equivalent fissus planar graphs, and we denote the set of their  equivalent classes by $\mathsf{\Gamma}_{F}.$

For any two fissus planar graphs $(\overrightarrow{\Gamma}_1,\prec_1,P_{in}, P_{out})$ and $(\overrightarrow{\Gamma}_2,\prec_2,Q_{in}, Q_{out})$, we define their tensor product  to be the fissus planar graphs $(\overrightarrow{\Gamma}_1\otimes \overrightarrow{\Gamma}_2,\prec_1\otimes \prec_2,P_{in}\otimes Q_{in}, P_{out}\otimes Q_{out} )$. If $ P_{out}\approx Q_{in}$, we can define their composition to be a fissus planar graph $(\overrightarrow{\Gamma}_2\circ \overrightarrow{\Gamma}_1,\prec_2\circ \prec_1,P'_{in}, Q'_{out} )$ with $P'_{in}\approx P_{in} , Q'_{out}\approx Q'_{out}$ being linear partitions of $In(\overrightarrow{\Gamma}_2\circ \overrightarrow{\Gamma}_1)$ and $Out(\overrightarrow{\Gamma}_2\circ \overrightarrow{\Gamma}_1)$, respectively.

\begin{defn}
The \textbf{coarse-graining} or \textbf{residue} of $(\overrightarrow{\Gamma},\prec,P_{in}, P_{out})$ is the prime $(|P_{in}|,|P_{out}|)$-planar graph with blocks of $P_{in}$ and $P_{out}$ as half-edges, where $|P_{in}|,|P_{out}|$ denote the cardinals of $|P_{in}|,|P_{out}|$, respectively.
\end{defn}
More concretely, let $(\Gamma,\prec, P_{in},P_{out})$ be a fissus planar diagram with $P_{in}=(i_1<\cdots <i_{\mu_1})<\cdots <(i_{\mu_1+\cdots+\mu_{m-1}+1}<\cdots< i_{\mu_1+\cdots+\mu_m})$ and $P_{out}=(o_1<\cdots <o_{\nu_1})<\cdots <(o_{\nu_1+\cdots+\nu_{n-1}+1}<\cdots< o_{\nu_1+\cdots+\nu_n})$, then its coarse-graining is a prime planar graph with the set of half-edges being $\{I_1,\cdots, I_m, O_1,\cdots, O_n\}$ where $I_1=\{i_1,\cdots,i_{\mu_1}\},\cdots,I_m=\{i_{\mu_1+\cdots+\mu_{m-1}+1},\cdots, i_{\mu_1+\cdots+\mu_m}\}, O_1=\{o_1,\cdots,o_{\nu_1}\},\cdots, O_n=\{o_{\nu_1+\cdots+\nu_{n-1}+1},\cdots, o_{\nu_1+\cdots+\nu_n}\}$. The planar order is $\overline{I_1}\prec\cdots\prec\overline{I_m}\prec\overline{O_1}\prec\cdots\prec\overline{O_n}$. We denote the coarse-graining of $(\overrightarrow{\Gamma},\prec,P_{in}, P_{out})$ as $\underbrace{(\overrightarrow{\Gamma},\prec,P_{in}, P_{out})}$. The coarse-graining of a prime fissus planar graph is called a \textbf{fusion}.
\begin{ex}
One possible fission structure of the planar graph in example $3.1.3$ is $P_{in}=\{\{1,2\},\{3,4,10\},\{11\}\}$ and $P_{out}=\{\{7\},\{13\},\{14\}, \{17\}\}$, here we identify half-edges with the numbers which label their corresponding edges.  The coarse-graining with respect to this fission structure is a prime $(3,4)$-planar graph.
\begin{center}
$
\begin{matrix}
\begin{matrix}
\begin{tikzpicture}[scale=0.5]

\node (v2) at (-4,3) {};
\node (v1) at (-1.5,5.5) {};
\node (v7) at (-1.5,1) {};
\node (v9) at (1.5,5.5) {};
\node (v14) at (2,1.5) {};
\node (v3) at (-3,7.5) {$2)$};
\node (v4) at (-2,7.5) {$(3$};
\node (v5) at (-0.5,7.5) {$4$};
\node (v6) at (-4.8,7.4) {$(1$};
\node (v11) at (-4.5,-1) {$(7)$};
\node (v12) at (-2,-1) {$(13)$};
\node (v13) at (0,-1) {$(14)$};
\node (v15) at (2,-1) {$(17)$};
\node (v8) at (1,7.5) {$10)$};
\node (v10) at (2.5,7.5) {$(11)$};
\node  at (-2.5,3.5) {$6$};
\node  at (-3,5.2) {$5$};
\node  at (-1.2,3.3) {$9$};
\node  at (0.5,3.25) {$12$};
\node  at (2.2,3.7) {$15$};
\node  at (-3,1.7) {$8$};
\draw[fill] (-4,3) circle [radius=0.11];
\draw[fill] (v1) circle [radius=0.11];
\draw[fill] (v7) circle [radius=0.11];
\draw[fill] (v9) circle [radius=0.11];
\draw[fill] (v14) circle [radius=0.11];
\draw  plot[smooth, tension=1] coordinates {(v1) (-2.5,5)  (-3.5,4) (v2)}[postaction={decorate, decoration={markings,mark=at position .5 with {\arrow[black]{stealth}}}}];
\draw  plot[smooth, tension=1] coordinates {(v1) (-2,4.5)  (-3,3.5) (v2)}[postaction={decorate, decoration={markings,mark=at position .5 with {\arrow[black]{stealth}}}}];

\draw  (v3) -- (-1.5,5.5)[postaction={decorate, decoration={markings,mark=at position .5 with {\arrow[black]{stealth}}}}];
\draw  (v4) -- (-1.5,5.5)[postaction={decorate, decoration={markings,mark=at position .5 with {\arrow[black]{stealth}}}}];
\draw  (v5) -- (-1.5,5.5)[postaction={decorate, decoration={markings,mark=at position .5 with {\arrow[black]{stealth}}}}];

\draw  (v6) -- (-4,3)[postaction={decorate, decoration={markings,mark=at position .5 with {\arrow[black]{stealth}}}}];
\draw  (v1) -- (-1.5,1)[postaction={decorate, decoration={markings,mark=at position .5 with {\arrow[black]{stealth}}}}];
\draw  (-4,3) -- (-1.5,1)[postaction={decorate, decoration={markings,mark=at position .5 with {\arrow[black]{stealth}}}}];

\draw  (v8)--(1.5,5.5)[postaction={decorate, decoration={markings,mark=at position .5 with {\arrow[black]{stealth}}}}];
\draw  (v10) -- (1.5,5.5)[postaction={decorate, decoration={markings,mark=at position .5 with {\arrow[black]{stealth}}}}];
\draw  (1.5,5.5) -- (-1.5,1)[postaction={decorate, decoration={markings,mark=at position .5 with {\arrow[black]{stealth}}}}];
\draw  (v2) -- (v11)[postaction={decorate, decoration={markings,mark=at position .5 with {\arrow[black]{stealth}}}}];
\draw  (-1.5,1) -- (v12)[postaction={decorate, decoration={markings,mark=at position .65 with {\arrow[black]{stealth}}}}];
\draw  (v13) -- (-1.5,1)[postaction={decorate, decoration={markings,mark=at position .5 with {\arrowreversed[black]{stealth}}}}];
\draw  (1.5,5.5) -- (2,1.5)[postaction={decorate, decoration={markings,mark=at position .5 with {\arrow[black]{stealth}}}}];
\draw  (2,1.5) -- (v15)[postaction={decorate, decoration={markings,mark=at position .5 with {\arrow[black]{stealth}}}}];

\end{tikzpicture}
\end{matrix}
&\begin{matrix}
\begin{tikzpicture}
\node (v1) at (-2,0.5) {};
\node (v2) at (1,0.5) {};
\draw [thick,->,>=stealth] (v1) -- (v2);
\node at (-0.5,0.8) {coarse-graining};
\end{tikzpicture}
\end{matrix}&
\begin{matrix}
\begin{tikzpicture}[scale=.5]
\draw[fill] (0,0.5) circle [radius=0.11];
\node (v2) at (0,0.5) {};
\node (v1) at (-1.5,2) {$1$};
\node (v3) at (0,2) {$2$};
\node (v4) at (1.5,2) {$3$};
\node (v5) at (-1.5,-1) {$4$};
\node (v6) at (-0.5,-1) {$5$};
\node (v7) at (1,-1) {$6$};
\node (v8) at (2,-1) {$7$};
\draw  (v1) --  (0,0.5)[postaction={decorate, decoration={markings,mark=at position .5 with {\arrow[black]{stealth}}}}];
\draw  (v3) --  (0,0.5)[postaction={decorate, decoration={markings,mark=at position .5 with {\arrow[black]{stealth}}}}];
\draw  (v4) --  (0,0.5)[postaction={decorate, decoration={markings,mark=at position .5 with {\arrow[black]{stealth}}}}];
\draw  (v5) --  (0,0.5)[postaction={decorate, decoration={markings,mark=at position .5 with {\arrowreversed[black]{stealth}}}}];
\draw  (v6) --  (0,0.5)[postaction={decorate, decoration={markings,mark=at position .5 with {\arrowreversed[black]{stealth}}}}];
\draw  (v7) --  (0,0.5)[postaction={decorate, decoration={markings,mark=at position .5 with {\arrowreversed[black]{stealth}}}}];
\draw  (v8) --  (0,0.5)[postaction={decorate, decoration={markings,mark=at position .5 with {\arrowreversed[black]{stealth}}}}];
\end{tikzpicture}
\end{matrix}
\end{matrix}
$
\end{center}
\end{ex}

\begin{ex}
One possible fission structure of the planar graph in example $3.1.4$ is $P_{in}=\{\{1,3,5\},\{6,7,10,12\},\{15\}\}$ and $P_{out}=\{\{2,4\},\{8,9,11\},\{13,14,16\}\}$, here we identify half-edges with the numbers which they are labelled.  The coarse-graining with respect to this fission structure is a prime $(3,3)$-planar graph.

\begin{center}
$
\begin{matrix}
\begin{matrix}
\begin{tikzpicture}

\draw (0,0) circle [radius=0.055];
\draw (0,-0.055)-- (0,-0.5)[postaction={decorate, decoration={markings,mark=at position .75 with {\arrow[black]{stealth}}}}];
\draw (0,0.055)-- (0,0.5)[postaction={decorate, decoration={markings,mark=at position .75 with {\arrowreversed[black]{stealth}}}}];
\draw (1.3,0) circle [radius=0.055];
\draw (1.3,-0.055)-- (1.3,-0.5)[postaction={decorate, decoration={markings,mark=at position .75 with {\arrow[black]{stealth}}}}];
\draw (1.3,0.055)-- (1.3,0.5)[postaction={decorate, decoration={markings,mark=at position .75 with {\arrowreversed[black]{stealth}}}}];
\draw[fill] (3,0) circle [radius=0.055];
\draw (3,0)-- (3.5,0.5)[postaction={decorate, decoration={markings,mark=at position .75 with {\arrowreversed[black]{stealth}}}}];
\draw (3,0)-- (2.5,0.5)[postaction={decorate, decoration={markings,mark=at position .75 with {\arrowreversed[black]{stealth}}}}];
\draw (3,0)-- (3,0.5)[postaction={decorate, decoration={markings,mark=at position .75 with {\arrowreversed[black]{stealth}}}}];
\draw (3,0)-- (2.6,-0.5)[postaction={decorate, decoration={markings,mark=at position .75 with {\arrow[black]{stealth}}}}];
\draw (3,0)-- (3.4,-0.5)[postaction={decorate, decoration={markings,mark=at position .75 with {\arrow[black]{stealth}}}}];
\draw (4.7,0) circle [radius=0.055];
\draw (4.7,-0.055)-- (4.7,-0.5)[postaction={decorate, decoration={markings,mark=at position .75 with {\arrow[black]{stealth}}}}];
\draw (4.7,0.055)-- (4.7,0.5)[postaction={decorate, decoration={markings,mark=at position .75 with {\arrowreversed[black]{stealth}}}}];
\draw[fill] (6,0) circle [radius=0.055];
\draw (6,0)-- (6,0.5)[postaction={decorate, decoration={markings,mark=at position .75 with {\arrowreversed[black]{stealth}}}}];
\draw (6,0)-- (6.5,-0.5)[postaction={decorate, decoration={markings,mark=at position .75 with {\arrow[black]{stealth}}}}];
\draw (6,0)-- (5.5,-0.5)[postaction={decorate, decoration={markings,mark=at position .75 with {\arrow[black]{stealth}}}}];
\draw (7.5,0) circle [radius=0.055];
\draw (7.5,-0.055)-- (7.5,-0.5)[postaction={decorate, decoration={markings,mark=at position .75 with {\arrow[black]{stealth}}}}];
\draw (7.5,0.055)-- (7.5,0.5)[postaction={decorate, decoration={markings,mark=at position .75 with {\arrowreversed[black]{stealth}}}}];

\node [below] at (0,-0.5) {$( 2$};
\node [above] at (0,0.5) {$( 1$};
\node [below] at (1.3,-0.5) {$4 )$};
\node [above] at (1.3,0.5) {$3$};
\node [above] at (3.5,0.5) {$7$};
\node [above] at (3,0.5) {$(6$};
\node [above] at (2.5,0.5) {$5 )$};
\node [below] at (2.6,-0.5) {$( 8$};
\node [below] at (3.4,-0.5) {$9$};
\node [below] at (4.7,-0.5) {$11 )$};
\node [above] at (4.7,0.5) {$10$};
\node [above] at (6,0.5) {$12 )$};
\node [below] at (6.5,-0.5) {$14$};
\node [below] at (5.5,-0.5) {$(13$};
\node [below] at (7.5,-0.5) {$16 )$};
\node [above] at (7.5,0.5) {$( 15 )$};

\end{tikzpicture}
\end{matrix}
&\begin{matrix}
\begin{tikzpicture}
\node (v1) at (-2,0.5) {};
\node (v2) at (1,0.5) {};
\draw [thick,->,>=stealth] (v1) -- (v2);
\node at (-0.5,0.8) {coarse-graining};
\end{tikzpicture}
\end{matrix}&
\begin{matrix}
\begin{tikzpicture}[scale=.5]
\draw[fill] (0,0.5) circle [radius=0.11];
\node (v2) at (0,0.5) {};
\node (v1) at (-1.5,2) {$1$};
\node (v3) at (0,2) {$2$};
\node (v4) at (1.5,2) {$3$};
\node (v5) at (-1.5,-1) {$4$};
\node (v6) at (0,-1) {$5$};

\node (v8) at (2,-1) {6};
\draw  (v1) --  (0,0.5)[postaction={decorate, decoration={markings,mark=at position .5 with {\arrow[black]{stealth}}}}];
\draw  (v3) --  (0,0.5)[postaction={decorate, decoration={markings,mark=at position .5 with {\arrow[black]{stealth}}}}];
\draw  (v4) --  (0,0.5)[postaction={decorate, decoration={markings,mark=at position .5 with {\arrow[black]{stealth}}}}];
\draw  (v5) --  (0,0.5)[postaction={decorate, decoration={markings,mark=at position .5 with {\arrowreversed[black]{stealth}}}}];
\draw  (v6) --  (0,0.5)[postaction={decorate, decoration={markings,mark=at position .5 with {\arrowreversed[black]{stealth}}}}];

\draw  (v8) --  (0,0.5)[postaction={decorate, decoration={markings,mark=at position .5 with {\arrowreversed[black]{stealth}}}}];
\end{tikzpicture}
\end{matrix}
\end{matrix}
$
\end{center}

\end{ex}

Let $(\overrightarrow{\Gamma},\prec)$ be a planar graph, then there will be a progressive structure and a planar structure $\underline{\prec}$ on the quotient graph $\Gamma/\Gamma$ naturally induced from $\prec$. Hence $\Gamma/\Gamma$ equipped these structures is a planar graph which is called  \textbf{contraction} of $(\overrightarrow{\Gamma},\prec)$ and is denoted by $(\underline{\overrightarrow{\Gamma}},\underline{\prec})$ or $\underline{(\overrightarrow{\Gamma},\prec)}$. In fact, $H(\Gamma/\Gamma)=In(\Gamma)\sqcup Out(\Gamma)$, $P(\Gamma/\Gamma)=\{H(\Gamma/\Gamma)\}$ and $\sigma=id_{H(\Gamma/\Gamma)}$. If $In(\Gamma)=i_1<...< i_m$ and $Out(\Gamma)=o_1<...< o_n$, then the planar structure is given as $\overline{i_1}\underline{\prec}...\underline{\prec} \overline{i_m}\underline{\prec} \overline{o_1}\underline{\prec}...\underline{\prec} \overline{o_n}$. From the definition, we see that contraction of an unitary graph will be a prime $(1,1)$-planar graph.

\begin{ex}
Contraction of the planar graph in example $3.1.3$ is a prime $(6,4)$-planar graph.

\begin{center}
$
\begin{matrix}
\begin{matrix}
\begin{tikzpicture}[scale=0.5]

\node (v2) at (-4,3) {};
\node (v1) at (-1.5,5.5) {};
\node (v7) at (-1.5,1) {};
\node (v9) at (1.5,5.5) {};
\node (v14) at (2,1.5) {};
\node (v3) at (-3,7.5) {$2$};
\node (v4) at (-2,7.5) {$3$};
\node (v5) at (-0.5,7.5) {$4$};
\node (v6) at (-4.8,7.4) {$1$};
\node (v11) at (-4.5,-1) {$7$};
\node (v12) at (-2,-1) {$13$};
\node (v13) at (0,-1) {$14$};
\node (v15) at (2,-1) {$17$};
\node (v8) at (1,7.5) {$10$};
\node (v10) at (2.5,7.5) {$11$};
\node  at (-2.5,3.5) {$6$};
\node  at (-3,5.2) {$5$};
\node  at (-1.2,3.3) {$9$};
\node  at (0.5,3.25) {$12$};
\node  at (2.2,3.7) {$15$};
\node  at (-3,1.7) {$8$};
\draw[fill] (-4,3) circle [radius=0.11];
\draw[fill] (v1) circle [radius=0.11];
\draw[fill] (v7) circle [radius=0.11];
\draw[fill] (v9) circle [radius=0.11];
\draw[fill] (v14) circle [radius=0.11];
\draw  plot[smooth, tension=1] coordinates {(v1) (-2.5,5)  (-3.5,4) (v2)}[postaction={decorate, decoration={markings,mark=at position .5 with {\arrow[black]{stealth}}}}];
\draw  plot[smooth, tension=1] coordinates {(v1) (-2,4.5)  (-3,3.5) (v2)}[postaction={decorate, decoration={markings,mark=at position .5 with {\arrow[black]{stealth}}}}];

\draw  (v3) -- (-1.5,5.5)[postaction={decorate, decoration={markings,mark=at position .5 with {\arrow[black]{stealth}}}}];
\draw  (v4) -- (-1.5,5.5)[postaction={decorate, decoration={markings,mark=at position .5 with {\arrow[black]{stealth}}}}];
\draw  (v5) -- (-1.5,5.5)[postaction={decorate, decoration={markings,mark=at position .5 with {\arrow[black]{stealth}}}}];

\draw  (v6) -- (-4,3)[postaction={decorate, decoration={markings,mark=at position .5 with {\arrow[black]{stealth}}}}];
\draw  (v1) -- (-1.5,1)[postaction={decorate, decoration={markings,mark=at position .5 with {\arrow[black]{stealth}}}}];
\draw  (-4,3) -- (-1.5,1)[postaction={decorate, decoration={markings,mark=at position .5 with {\arrow[black]{stealth}}}}];

\draw  (v8)--(1.5,5.5)[postaction={decorate, decoration={markings,mark=at position .5 with {\arrow[black]{stealth}}}}];
\draw  (v10) -- (1.5,5.5)[postaction={decorate, decoration={markings,mark=at position .5 with {\arrow[black]{stealth}}}}];
\draw  (1.5,5.5) -- (-1.5,1)[postaction={decorate, decoration={markings,mark=at position .5 with {\arrow[black]{stealth}}}}];
\draw  (v2) -- (v11)[postaction={decorate, decoration={markings,mark=at position .5 with {\arrow[black]{stealth}}}}];
\draw  (-1.5,1) -- (v12)[postaction={decorate, decoration={markings,mark=at position .65 with {\arrow[black]{stealth}}}}];
\draw  (v13) -- (-1.5,1)[postaction={decorate, decoration={markings,mark=at position .5 with {\arrowreversed[black]{stealth}}}}];
\draw  (1.5,5.5) -- (2,1.5)[postaction={decorate, decoration={markings,mark=at position .5 with {\arrow[black]{stealth}}}}];
\draw  (2,1.5) -- (v15)[postaction={decorate, decoration={markings,mark=at position .5 with {\arrow[black]{stealth}}}}];

\end{tikzpicture}
\end{matrix}
&\begin{matrix}
\begin{tikzpicture}
\node (v1) at (-2,0.5) {};
\node (v2) at (1,0.5) {};
\draw [thick,->,>=stealth] (v1) -- (v2);
\node at (-0.5,0.8) {contraction};
\end{tikzpicture}
\end{matrix}&
\begin{matrix}
\begin{tikzpicture}[scale=1]
\draw[fill] (0,0) circle [radius=0.055];

\node (v2) at (0,0) {};
\node (v1) at (-1,1) {$1$};
\node (v3) at (-0.5,1) {$2$};
\node (v4) at (0,1) {$3$};
\node (v5) at (0.5,1) {$4$};
\node (v6) at (1,1) {$5$};
\node (v7) at (1.5,1) {$6$};
\node (v8) at (-1,-1) {$7$};
\node (v9) at (0,-1) {$8$};
\node (v10) at (0.5,-1) {$9$};
\node (v11) at (1.5,-1) {$10$};
\draw  (v1) -- (0,0)[postaction={decorate, decoration={markings,mark=at position .5 with {\arrow[black]{stealth}}}}];
\draw  (v3) -- (0,0)[postaction={decorate, decoration={markings,mark=at position .5 with {\arrow[black]{stealth}}}}];
\draw  (v4) -- (0,0)[postaction={decorate, decoration={markings,mark=at position .5 with {\arrow[black]{stealth}}}}];
\draw  (v5) -- (0,0)[postaction={decorate, decoration={markings,mark=at position .5 with {\arrow[black]{stealth}}}}];
\draw  (v6) -- (0,0)[postaction={decorate, decoration={markings,mark=at position .5 with {\arrow[black]{stealth}}}}];
\draw  (v7) -- (0,0)[postaction={decorate, decoration={markings,mark=at position .5 with {\arrow[black]{stealth}}}}];
\draw  (v8) -- (0,0)[postaction={decorate, decoration={markings,mark=at position .5 with {\arrowreversed[black]{stealth}}}}];
\draw  (v9) -- (0,0)[postaction={decorate, decoration={markings,mark=at position .5 with {\arrowreversed[black]{stealth}}}}];
\draw  (v10)-- (0,0)[postaction={decorate, decoration={markings,mark=at position .5 with {\arrowreversed[black]{stealth}}}}];
\draw  (v11)-- (0,0)[postaction={decorate, decoration={markings,mark=at position .5 with {\arrowreversed[black]{stealth}}}}];
\end{tikzpicture}
\end{matrix}
\end{matrix}
$
\end{center}

\end{ex}

Any fission structure on a fissus planar graph can transfer to be a fission structure on its contraction. In fact, if $(\overrightarrow{\Gamma},\prec)$ is equipped with a fission structure $(P_{in}, P_{out})$, then $(P_{in}, P_{out})$ is also a fission structure of the contraction $(\underline{\overrightarrow{\Gamma}},\underline{\prec})$. With respect to this fission structure, the fusion of $(\underline{\overrightarrow{\Gamma}},\underline{\prec})$ is equivalent to $\underbrace{(\overrightarrow{\Gamma},\prec,P_{in}, P_{out})}$, thus we can see that coarse-graining can be decomposed as  composition of  fusion and contraction.

\begin{ex}
For the fissus planar graph in example $3.7.4$, we have

\begin{center}
$
\begin{matrix}
\begin{matrix}
\begin{tikzpicture}[scale=0.5]

\node (v2) at (-4,3) {};
\node (v1) at (-1.5,5.5) {};
\node (v7) at (-1.5,1) {};
\node (v9) at (1.5,5.5) {};
\node (v14) at (2,1.5) {};
\node (v3) at (-3,7.5) {$2)$};
\node (v4) at (-2,7.5) {$(3$};
\node (v5) at (-0.5,7.5) {$4$};
\node (v6) at (-4.8,7.4) {$(1$};
\node (v11) at (-4.5,-1) {$(7)$};
\node (v12) at (-2,-1) {$(13)$};
\node (v13) at (0,-1) {$(14)$};
\node (v15) at (2,-1) {$(17)$};
\node (v8) at (1,7.5) {$10)$};
\node (v10) at (2.5,7.5) {$(11)$};
\node  at (-2.5,3.5) {$6$};
\node  at (-3,5.2) {$5$};
\node  at (-1.2,3.3) {$9$};
\node  at (0.5,3.25) {$12$};
\node  at (2.2,3.7) {$15$};
\node  at (-3,1.7) {$8$};
\draw[fill] (-4,3) circle [radius=0.11];
\draw[fill] (v1) circle [radius=0.11];
\draw[fill] (v7) circle [radius=0.11];
\draw[fill] (v9) circle [radius=0.11];
\draw[fill] (v14) circle [radius=0.11];
\draw  plot[smooth, tension=1] coordinates {(v1) (-2.5,5)  (-3.5,4) (v2)}[postaction={decorate, decoration={markings,mark=at position .5 with {\arrow[black]{stealth}}}}];
\draw  plot[smooth, tension=1] coordinates {(v1) (-2,4.5)  (-3,3.5) (v2)}[postaction={decorate, decoration={markings,mark=at position .5 with {\arrow[black]{stealth}}}}];

\draw  (v3) -- (-1.5,5.5)[postaction={decorate, decoration={markings,mark=at position .5 with {\arrow[black]{stealth}}}}];
\draw  (v4) -- (-1.5,5.5)[postaction={decorate, decoration={markings,mark=at position .5 with {\arrow[black]{stealth}}}}];
\draw  (v5) -- (-1.5,5.5)[postaction={decorate, decoration={markings,mark=at position .5 with {\arrow[black]{stealth}}}}];

\draw  (v6) -- (-4,3)[postaction={decorate, decoration={markings,mark=at position .5 with {\arrow[black]{stealth}}}}];
\draw  (v1) -- (-1.5,1)[postaction={decorate, decoration={markings,mark=at position .5 with {\arrow[black]{stealth}}}}];
\draw  (-4,3) -- (-1.5,1)[postaction={decorate, decoration={markings,mark=at position .5 with {\arrow[black]{stealth}}}}];

\draw  (v8)--(1.5,5.5)[postaction={decorate, decoration={markings,mark=at position .5 with {\arrow[black]{stealth}}}}];
\draw  (v10) -- (1.5,5.5)[postaction={decorate, decoration={markings,mark=at position .5 with {\arrow[black]{stealth}}}}];
\draw  (1.5,5.5) -- (-1.5,1)[postaction={decorate, decoration={markings,mark=at position .5 with {\arrow[black]{stealth}}}}];
\draw  (v2) -- (v11)[postaction={decorate, decoration={markings,mark=at position .5 with {\arrow[black]{stealth}}}}];
\draw  (-1.5,1) -- (v12)[postaction={decorate, decoration={markings,mark=at position .65 with {\arrow[black]{stealth}}}}];
\draw  (v13) -- (-1.5,1)[postaction={decorate, decoration={markings,mark=at position .5 with {\arrowreversed[black]{stealth}}}}];
\draw  (1.5,5.5) -- (2,1.5)[postaction={decorate, decoration={markings,mark=at position .5 with {\arrow[black]{stealth}}}}];
\draw  (2,1.5) -- (v15)[postaction={decorate, decoration={markings,mark=at position .5 with {\arrow[black]{stealth}}}}];

\end{tikzpicture}
\end{matrix}
&\begin{matrix}
\begin{tikzpicture}
\node (v1) at (-1.5,0.5) {};
\node (v2) at (0.5,0.5) {};
\draw [thick,->,>=stealth] (v1) -- (v2);
\node at (-0.5,0.8) {contraction};
\end{tikzpicture}
\end{matrix}&
\begin{matrix}
\begin{tikzpicture}[scale=1]
\draw[fill] (0,0) circle [radius=0.055];

\node (v2) at (0,0) {};
\node (v1) at (-1,1) {$(1$};
\node (v3) at (-0.5,1) {$2)$};
\node (v4) at (0,1) {$(3$};
\node (v5) at (0.5,1) {$4$};
\node (v6) at (1,1) {$5)$};
\node (v7) at (1.5,1) {$(6)$};
\node (v8) at (-1,-1) {$(7)$};
\node (v9) at (0,-1) {$(8)$};
\node (v10) at (0.5,-1) {$(9)$};
\node (v11) at (1.5,-1) {$(10)$};
\draw  (v1) -- (0,0)[postaction={decorate, decoration={markings,mark=at position .5 with {\arrow[black]{stealth}}}}];
\draw  (v3) -- (0,0)[postaction={decorate, decoration={markings,mark=at position .5 with {\arrow[black]{stealth}}}}];
\draw  (v4) -- (0,0)[postaction={decorate, decoration={markings,mark=at position .5 with {\arrow[black]{stealth}}}}];
\draw  (v5) -- (0,0)[postaction={decorate, decoration={markings,mark=at position .5 with {\arrow[black]{stealth}}}}];
\draw  (v6) -- (0,0)[postaction={decorate, decoration={markings,mark=at position .5 with {\arrow[black]{stealth}}}}];
\draw  (v7) -- (0,0)[postaction={decorate, decoration={markings,mark=at position .5 with {\arrow[black]{stealth}}}}];
\draw  (v8) -- (0,0)[postaction={decorate, decoration={markings,mark=at position .5 with {\arrowreversed[black]{stealth}}}}];
\draw  (v9) -- (0,0)[postaction={decorate, decoration={markings,mark=at position .5 with {\arrowreversed[black]{stealth}}}}];
\draw  (v10)-- (0,0)[postaction={decorate, decoration={markings,mark=at position .5 with {\arrowreversed[black]{stealth}}}}];
\draw  (v11)-- (0,0)[postaction={decorate, decoration={markings,mark=at position .5 with {\arrowreversed[black]{stealth}}}}];
\end{tikzpicture}
\end{matrix}
\\
&
\begin{matrix}
\begin{tikzpicture}[rotate=-45]
\node (v1) at (-2.3,0.5) {};
\node (v2) at (1.3,0.5) {};
\draw [thick,->,>=stealth] (v1) -- (v2);
\node [rotate=-45] at (-0.5,0.8) {coarse-graining};
\end{tikzpicture}
\end{matrix}
&
\begin{matrix}
\begin{tikzpicture}[rotate=-90]
\node (v1) at (-1.7,0.5) {};
\node (v2) at (0.7,0.5) {};
\draw [thick,->,>=stealth] (v1) -- (v2);
\node [rotate=-90] at (-0.5,0.8) {fusion};

\end{tikzpicture}
\end{matrix}
\\&&
\begin{matrix}
\begin{tikzpicture}[scale=.5]
\draw[fill] (0,0.5) circle [radius=0.11];
\node (v2) at (0,0.5) {};
\node (v1) at (-1.5,2) {$1$};
\node (v3) at (0,2) {$2$};
\node (v4) at (1.5,2) {$3$};
\node (v5) at (-1.5,-1) {$4$};
\node (v6) at (-0.5,-1) {$5$};
\node (v7) at (1,-1) {$6$};
\node (v8) at (2,-1) {$7$};
\draw  (v1) --  (0,0.5)[postaction={decorate, decoration={markings,mark=at position .5 with {\arrow[black]{stealth}}}}];
\draw  (v3) --  (0,0.5)[postaction={decorate, decoration={markings,mark=at position .5 with {\arrow[black]{stealth}}}}];
\draw  (v4) --  (0,0.5)[postaction={decorate, decoration={markings,mark=at position .5 with {\arrow[black]{stealth}}}}];
\draw  (v5) --  (0,0.5)[postaction={decorate, decoration={markings,mark=at position .5 with {\arrowreversed[black]{stealth}}}}];
\draw  (v6) --  (0,0.5)[postaction={decorate, decoration={markings,mark=at position .5 with {\arrowreversed[black]{stealth}}}}];
\draw  (v7) --  (0,0.5)[postaction={decorate, decoration={markings,mark=at position .5 with {\arrowreversed[black]{stealth}}}}];
\draw  (v8) --  (0,0.5)[postaction={decorate, decoration={markings,mark=at position .5 with {\arrowreversed[black]{stealth}}}}];
\end{tikzpicture}
\end{matrix}

\end{matrix}
$
\end{center}

\end{ex}

\section{Evaluation and coarse-graining of diagrams}
In this section, we will introduce the category of tensor schemes, diagrams in a tensor scheme or a strict tensor category closely following \cite{[JS91]}. Finally, we will prove that there is a well-defined value of each diagram in a strict tensor category (Theorem 4.4.2).
\subsection{The category of tensor schemes}
In this section, we introduce the category of tensor schemes. First let us recall from \cite{[JS91]} that a \textbf{tensor scheme} $\mathcal{D}$ consists of two sets $Ob(\mathcal{D})$ and $Mor(\mathcal{D})$ together with two functions from $Mor(\mathcal{D})$ to the set of words $W(Ob(\mathcal{D}))$ in the elements of $Ob(\mathcal{D})$ $$s,t:Mor(\mathcal{D})\rightarrow W(Ob(\mathcal{D})),$$ which are called source and target maps.

A tensor scheme is a way to present the generator of a strict tensor category, and elements of $Ob(\mathcal{D})$ and $Mor(\mathcal{D})$ are called objects and morphisms, respectively.
To every two words $x_1\cdots x_m,\  y_1\cdots y_n\in W(Ob(\mathcal{D}))$, we associate a set $Mor_{\mathcal{D}}(x_1\cdots x_m,y_1\cdots y_n)$ of morphisms defined as $$\{f\in Mor(\mathcal{D})|s(f)=x_1\cdots x_m, t(f)=y_1\cdots y_n\},$$
whose elements are called \textbf{morphisms} from  $x_1\cdots x_m$ to $y_1\cdots y_n$ and usually written as $f:x_1\cdots x_m\rightarrow y_1\cdots y_n$.
We usually write a tensor scheme as $\mathcal{D}=\{Ob(\mathcal{D}),Mor(\mathcal{D}),s, t\}$ or $\xymatrix{Mor(\mathcal{D})\ar@{-->}@<1mm>[r]^{s}\ar@{-->}@<-1mm>[r]_{t}&Ob(\mathcal{D})}.$  There is no constraint in the definition, thus the construction of a tensor scheme is an easy thing.

\begin{ex}
Any quiver $(s,t:E\rightarrow V)$ is a tensor scheme.
\end{ex}

\begin{ex}
Let $\mathsf{\Gamma}$ be the set of isomorphic classes of planar graphs, $\{x\}$ be a set with one element $x$. For a $(m,n)$-planar graph $(\overrightarrow{\Gamma},\prec)$, we define $s((\overrightarrow{\Gamma},\prec))=\overset{m\ times}{\overbrace{x\cdots x}}$ and $t((\overrightarrow{\Gamma},\prec))=\overset{n\ times}{\overbrace{x\cdots x}}$. Then $\mathbf{\Gamma}=(\mathsf{\Gamma},\{x\},s,t)$ forms a tensor scheme.
\end{ex}

\begin{ex}
Replace $\mathsf{\Gamma}$ by any subset $S\subseteq \mathsf{\Gamma}$ in above example, we can get a tensor scheme. Especially, taking the set $\mathsf{Prim}$ of isomorphic classes of prime planar graphs, we get a tensor scheme $\mathbf{Prim}=(\mathsf{Prim},\{x\},s,t)$.
\end{ex}

\begin{ex}
Replace $\{x\}$ by the set $W(\{x\})$ of words of $x$, we can get two new tensor scheme $(\mathsf{\Gamma},W(\{x\}),s,t)$ and $(\mathsf{Prim},W(\{x\}),s,t)$.
\end{ex}

\begin{ex}
 For any planar graph $(\overrightarrow{\Gamma},\prec)$, we set $s'((\overrightarrow{\Gamma},\prec))=t'((\overrightarrow{\Gamma},\prec))=x$, then  $(\mathsf{\Gamma},\{x\},s',t')$, $(\mathsf{Prim},\{x\},s',t')$, $(\mathsf{\Gamma},W(\{x\}),s',t')$ and $(\mathsf{Prim},W(\{x\}),s',t')$ are four tensor schemes.
\end{ex}

\begin{ex}
Let $([\Gamma,\gamma], P_{in},P_{out})$ be a fissus planar diagram with $P_{in}=(i_1<\cdots <i_{\mu_1})<\cdots <(i_{\mu_1+\cdots+\mu_{m-1}+1}<\cdots< i_{\mu_1+\cdots+\mu_m})$ and $P_{out}=(o_1<\cdots <o_{\nu_1})<\cdots <(o_{\nu_1+\cdots+\nu_{n-1}+1}<\cdots< o_{\nu_1+\cdots+\nu_n})$, we define the domain $dom((\Gamma,\gamma), P_{in},P_{out})=(\overset{\mu_1\ times}{\overbrace{x\cdots x}})\cdots (\overset{\mu_m\ times}{\overbrace{x\cdots x}})$ and $cod((\Gamma,\gamma), P_{in},P_{out})=(\overset{\nu_1\ times}{\overbrace{x\cdots x}})\cdots (\overset{\nu_n\ times}{\overbrace{x\cdots x}})$ being words in $W(\{x\})$. Then $\mathbf{\Gamma}_F=(\mathsf{\Gamma}_F,W(\{x\}), dom, cod )$ is a tensor scheme. Take $\mathsf{Prim}_F\subset\mathsf{\Gamma}_F$ being the set of prime fissus planar graphs, then $\mathbf{Prim}_F=(\mathsf{Prim}_F,W(\{x\}), dom, cod )$ is an example of tensor scheme.
\end{ex}

A morphism $\varphi:\mathcal{D}_1\rightarrow\mathcal{D}_2$ of tensor schemes consists of two functions $\varphi_o:Ob(\mathcal{D}_1)\rightarrow Ob(\mathcal{D}_2)$ and $\varphi_m:Mor(\mathcal{D}_1)\rightarrow Mor(\mathcal{D}_2)$ such that the diagram
$$\xymatrix{W(Ob(\mathcal{D}_1))\ar[d]_{\widehat{\varphi_o}}&\ar[d]^{\varphi_m}Mor(\mathcal{D}_1)\ar[r]^{t_1}\ar[l]_{s_1}&\ar[d]^{\widehat{\varphi_o}}W(Ob(\mathcal{D}_1))\\W(Ob(\mathcal{D}_2))&\ar[l]_{s_2}\ar[r]^{t_2}Mor(\mathcal{D}_2)&W(Ob(\mathcal{D}_2))}$$
commutes, where $\widehat{\varphi_o}:W(Ob(\mathcal{D}_1))\rightarrow W(Ob(\mathcal{D}_2))$ is the natural extension of $\varphi_o$ which sends a word $x_1\cdots x_n\in W(Ob(\mathcal{D}_1))$ to $\varphi_o(x_1)\cdots \varphi_o(x_n)\in W(Ob(\mathcal{D}_2))$. Here and thereafter, the subscript $"o"$ and $"m"$ will always stand for the words $"object"$ and $"morphism"$, respectively.

\begin{ex}
Take $\varphi_m:\mathsf{Prim}\rightarrow \mathsf{\Gamma}$ being the inclusion map and $\varphi_o:\{x\}\rightarrow \{x\} $ being the identity map, then the pair $(\varphi_0,\varphi_m):\mathbf{Prim}\rightarrow \mathbf{\Gamma}$ defines a morphism  of tensor schemes.
\end{ex}

\begin{ex}
Take $\varphi_m:\mathsf{\Gamma}\rightarrow \mathsf{Prim}$ being the contraction map and $\varphi_o:\{x\}\rightarrow \{x\} $ being the identity map, then the pair $(\varphi_0,\varphi_m):\mathbf{\Gamma}\rightarrow \mathbf{Prim}$ defines a morphism  of tensor schemes.
\end{ex}

\begin{ex}
Take $\varphi_m:\mathsf{Prim}\rightarrow \mathsf{\Gamma}$ being  the inclusion map and $\varphi_o:W(\{x\})\rightarrow \{x\} $ being the constant map, then the pair $(\varphi_0,\varphi_m):(\mathsf{Prim},W\{x\},s,t)\rightarrow (\mathsf{\Gamma},\{x\},s',t')$ defines a morphism  of tensor schemes.
\end{ex}

\begin{ex}
Take $\varphi_m:\mathsf{\Gamma}_F\rightarrow \mathsf{Prim}$ being  the coarse-graining map and $\varphi_o:W(\{x\})\rightarrow \{x\} $ being the constant map, then the pair $(\varphi_0,\varphi_m):(\mathsf{\Gamma}_F,W\{x\},dom,cod)\rightarrow (\mathsf{Prim},\{x\},s,t)$ defines a morphism  of tensor schemes.
\end{ex}

It is routine to check that all tensor schemes and their morphisms form a category, we denote it as $\textbf{T.Sch}$.

\subsection{Planar diagrams in  tensor schemes }
In this section, we will introduce the notion of a planar diagram in a tensor scheme. First, let us define valuation of a  planar graph in a tensor scheme using polarized structure of the planar graph.

A \textbf{valuation} $\gamma:\Gamma\rightarrow \mathcal{D}$ of a planar graph $(\overrightarrow{\Gamma},\prec)$  in a tensor scheme $\mathcal{D}$ is a pair of functions
$$\begin{matrix}\gamma_o: H(\Gamma)\rightarrow Ob(\mathcal{D}),&&\gamma_m:V_{re}(\Gamma)\rightarrow Mor(\mathcal{D}) \end{matrix}$$
such that

$\bullet$ $\gamma_o$ is $\sigma_{\Gamma}$ invariant, that is, for every $h\in H(\Gamma)$, $$\gamma_o(h)=\gamma_o(\sigma_{\Gamma}(h));$$

$\bullet$  for every \textbf{real vertex}  $v\in V_{re}(\Gamma)$, $$\gamma_m(v)\in Mor(\gamma_o(h_1)\cdots \gamma_o(h_m),\gamma_o(h'_1)\cdots \gamma_o(h'_n))$$
where $h_1\prec_H\cdots\prec_Hh_m$ and $h'_1\prec_H\cdots\prec_Hh'_n$  are the ordered lists of elements of $In(v)$ and $Out(v)$ respectively.

\begin{defn}
A \textbf{planar diagram} in $\mathcal{D}$ is a pair $[(\overrightarrow{\Gamma},\prec),\gamma]$ where $(\overrightarrow{\Gamma},\prec)$ is a planar graph and $\gamma$ is a valuation of $(\overrightarrow{\Gamma},\prec)$ in $\mathcal{D}$.
\end{defn}
A planar diagram in $\mathcal{D}$ is also called a \textbf{tensor network state} in $\mathcal{D}$ with type $\Gamma$ and is usually denoted by $[\Gamma,\gamma]$ or even only $\gamma$ if no confusion arises.

The \textbf{domain} and \textbf{codomain} of a diagram $[\Gamma,\gamma]$  are the words in $Ob(\mathcal{D})$
$$\begin{matrix}dom([\Gamma,\gamma])=\gamma_o(i_1)\cdots \gamma_o(i_k),&&cod([\Gamma,\gamma])=\gamma_o(o_1)\cdots \gamma_o(o_l) \end{matrix}$$
where $i_1\prec_H\cdots\prec_Hi_k$ and $o_1\prec_H\cdots\prec_Ho_l$ are the ordered lists of elements of $In(\overrightarrow{\Gamma})$ and $Out(\overrightarrow{\Gamma})$.

An \textbf{isomorphism} of  plane diagrams $$\underline{\phi}:[(\overrightarrow{\Gamma}_1,\prec_1),\gamma_1]\rightarrow [(\overrightarrow{\Gamma}_2,\prec_2),\gamma_2]$$ is an isomorphism $\phi:(\overrightarrow{\Gamma}_1,\prec_1)\rightarrow (\overrightarrow{\Gamma}_2,\prec_2)$ such that $\gamma_1=\gamma_2\circ \phi$. The fact that there is an isomorphism between  $[(\overrightarrow{\Gamma}_1,\prec_1),\gamma_1]$ and $[(\overrightarrow{\Gamma}_2,\prec_2),\gamma_2]$ is written as $[(\overrightarrow{\Gamma}_1,\prec_1),\gamma_1]\cong[(\overrightarrow{\Gamma}_2,\prec_2),\gamma_2].$

All the  diagrams in $\mathcal{D}$ and their isomorphisms form a groupoid, and the set of isomorphic classes is denoted  by  $\mathsf{Diag}(\mathcal{D})$ or $\mathsf{\Gamma}(\mathcal{D})$. The domain and codomain of diagrams define two functions from $\mathsf{\Gamma}(\mathcal{D})$ to $W(Ob(\mathcal{D}))$ and the quadruple $(\mathsf{\Gamma}(\mathcal{D}), Ob(\mathcal{D}), dom, cod)$ is an example of tensor scheme, and we denote it by $\mathbf{\Gamma}(\mathcal{D})$.

Let $[(\overrightarrow{\Gamma}_1,\prec_1),\gamma_1]$ and $[(\overrightarrow{\Gamma}_2,\prec_2),\gamma_2]$ be two planar diagrams, we define their \textbf{tensor product} $[(\overrightarrow{\Gamma},\prec),\gamma]$ to be the  planar diagram with $(\overrightarrow{\Gamma},\prec)=(\overrightarrow{\Gamma}_1,\prec_1)\otimes(\overrightarrow{\Gamma}_2,\prec_2)$ and $\gamma=\gamma_1\sqcup\gamma_2$ which is defined as

$\bullet$
\begin{equation*}\gamma_o(h)=
\begin{cases}
(\gamma_1)_o(h),&  \text{if}\ h\in H(\Gamma_1),\\
(\gamma_2)_o(h),&  \text{if}\ h\in H(\Gamma_2);
\end{cases}
\end{equation*}

$\bullet$
\begin{equation*}\gamma_m(v)=
\begin{cases}
(\gamma_1)_m(v),&  \text{if}\ v\in V_{re}(\Gamma_1),\\
(\gamma_2)_m(v),&  \text{if}\ v\in V_{re}(\Gamma_2).
\end{cases}
\end{equation*}

Assume $Out(\overrightarrow{\Gamma}_1)=\{o_1\prec_{H1}\cdots\prec_{H1}o_n\}$  and $In(\overrightarrow{\Gamma}_2)=\{i_1\prec_{H2}\cdots\prec_{H2}i_n\}$,  we say they are composable if $(\gamma_1)_o(o_k)=(\gamma_2)_o(i_k)$ for every $1\leq k\leq n$. Their \textbf{composition} $[(\overrightarrow{\Gamma},\prec),\gamma]$ is defined to be the  planar diagram with $(\overrightarrow{\Gamma},\prec)=(\overrightarrow{\Gamma}_2,\prec_2)\circ(\overrightarrow{\Gamma}_1,\prec_1)$ and $\gamma=\gamma_2\vee \gamma_1$ which is defined as

$\bullet$
\begin{equation*}\gamma_o(h)=
\begin{cases}
(\gamma_1)_o(h),&  \text{if}\ h\in H(\Gamma_1)\cap H(\Gamma),\\
(\gamma_2)_o(h),&  \text{if}\ h\in H(\Gamma_2)\cap H(\Gamma);
\end{cases}
\end{equation*}

$\bullet$
\begin{equation*}\gamma_m(v)=
\begin{cases}
(\gamma_1)_m(v),&  \text{if}\ v\in V_{re}(\Gamma_1),\\
(\gamma_2)_m(v),&  \text{if}\ v\in V_{re}(\Gamma_2).
\end{cases}
\end{equation*}

If $\gamma:\Gamma\rightarrow \mathcal{D}_1$ is a valuation of $\Gamma$ in $\mathcal{D}_1$ and $\varphi:\mathcal{D}_1\rightarrow\mathcal{D}_2$ is a morphism of tensor schemes, we can naturally get a valuation $\tilde{\gamma}=\varphi_{\ast}(\gamma):\Gamma\rightarrow \mathcal{D}_2$ of $\Gamma$ in $\mathcal{D}_2$ defined by
$$\begin{matrix}\tilde{\gamma}_o=\varphi_o\circ \gamma_o,&&\tilde{\gamma}_m=\varphi_m\circ \gamma_m.\end{matrix}$$

\begin{prop}
Every morphism $\varphi:\mathcal{D}_1\rightarrow\mathcal{D}_2$ of tensor schemes induces a function (push-forward) $$\varphi_{\ast}:\mathsf{Diag}(\mathcal{D}_1)\longrightarrow\mathsf{Diag}(\mathcal{D}_2),$$ which sends $[\Gamma,\gamma]$ to $[\Gamma,\varphi_{\ast}(\gamma)].$ Moreover, $\varphi_{\ast}$ is compatible with the tensor product and composition of diagrams.
\end{prop}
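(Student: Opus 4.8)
The plan is to verify the statement in three stages: first that $\varphi_\ast(\gamma)$ is again a legitimate valuation, then that the assignment descends to isomorphism classes, and finally that it respects $\otimes$ and $\circ$. Throughout, the underlying planar graph $(\overrightarrow{\Gamma},\prec)$ is left untouched by push-forward, so only the valuation data $\tilde{\gamma}=\varphi_\ast(\gamma)$, with $\tilde{\gamma}_o=\varphi_o\circ\gamma_o$ and $\tilde{\gamma}_m=\varphi_m\circ\gamma_m$, needs to be checked.

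First I would show that $\tilde{\gamma}$ satisfies the two defining conditions of a valuation. The $\sigma_{\Gamma}$-invariance of $\tilde{\gamma}_o$ is immediate from that of $\gamma_o$, since $\tilde{\gamma}_o(\sigma_{\Gamma}(h))=\varphi_o(\gamma_o(\sigma_{\Gamma}(h)))=\varphi_o(\gamma_o(h))=\tilde{\gamma}_o(h)$. The substantive point is the compatibility condition for real vertices: for $v\in V_{re}(\Gamma)$ with ordered inputs $h_1\prec_H\cdots\prec_H h_m$ and outputs $h'_1\prec_H\cdots\prec_H h'_n$, the hypothesis $\gamma_m(v)\in Mor(\gamma_o(h_1)\cdots\gamma_o(h_m),\gamma_o(h'_1)\cdots\gamma_o(h'_n))$ says $s_1(\gamma_m(v))=\gamma_o(h_1)\cdots\gamma_o(h_m)$ and likewise for $t_1$. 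Applying $\varphi_m$ and invoking the commuting square in the definition of a morphism of tensor schemes gives $s_2(\tilde{\gamma}_m(v))=\widehat{\varphi_o}(s_1(\gamma_m(v)))$; the key is that $\widehat{\varphi_o}$ is the letterwise extension sending a word to the concatenation of the $\varphi_o$-images of its letters, so $s_2(\tilde{\gamma}_m(v))=\varphi_o(\gamma_o(h_1))\cdots\varphi_o(\gamma_o(h_m))=\tilde{\gamma}_o(h_1)\cdots\tilde{\gamma}_o(h_m)$, with the analogous identity for targets. This is exactly the required membership of $\tilde{\gamma}_m(v)$.

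Next I would check well-definedness on $\mathsf{Diag}(\mathcal{D}_1)$. If $\underline{\phi}:[\Gamma_1,\gamma_1]\to[\Gamma_2,\gamma_2]$ is an isomorphism of diagrams, so that $\phi:(\overrightarrow{\Gamma}_1,\prec_1)\to(\overrightarrow{\Gamma}_2,\prec_2)$ is an isomorphism of planar graphs with $\gamma_1=\gamma_2\circ\phi$, then $\varphi_\ast(\gamma_1)_o=\varphi_o\circ(\gamma_1)_o=\varphi_o\circ(\gamma_2)_o\circ\phi=\varphi_\ast(\gamma_2)_o\circ\phi$ and similarly $\varphi_\ast(\gamma_1)_m=\varphi_\ast(\gamma_2)_m\circ\phi$, so the same $\phi$ exhibits $[\Gamma_1,\varphi_\ast(\gamma_1)]\cong[\Gamma_2,\varphi_\ast(\gamma_2)]$. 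Hence $\varphi_\ast$ is a well-defined map on isomorphism classes.

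Finally, for compatibility with $\otimes$ and $\circ$ I would argue by the case analyses already built into the definitions of $\gamma_1\sqcup\gamma_2$ and $\gamma_2\vee\gamma_1$. Since push-forward leaves the graph and its planar, polarized and anchor structures fixed, it suffices to compare valuations half-edge by half-edge and real-vertex by real-vertex; on each piece $\varphi_\ast$ acts by post-composition with $\varphi_o$ or $\varphi_m$, and post-composition commutes with both the disjoint-union gluing $\sqcup$ and the merge gluing $\vee$. For composition one also notes that composability is preserved: $(\gamma_1)_o(o_k)=(\gamma_2)_o(i_k)$ implies $\varphi_o((\gamma_1)_o(o_k))=\varphi_o((\gamma_2)_o(i_k))$, so $\varphi_\ast(\gamma_1)$ and $\varphi_\ast(\gamma_2)$ remain composable. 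I expect the only real obstacle to be the bookkeeping in the first stage, namely invoking the commuting square correctly so that the source and target words of morphisms transform under the word-extension $\widehat{\varphi_o}$; the remaining verifications are routine and purely formal.
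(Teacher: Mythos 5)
Your proposal is correct and is exactly the argument the paper intends: the paper states this proposition without proof, treating it as a direct check from the definitions, and your three-stage verification (valuation axioms via the commuting square $s_2\circ\varphi_m=\widehat{\varphi_o}\circ s_1$, descent to isomorphism classes, and post-composition commuting with the case-wise definitions of $\sqcup$ and $\vee$) supplies precisely that check. The one substantive point you correctly isolate — that $\widehat{\varphi_o}$ is the letterwise extension, so source and target words transform compatibly — is indeed the only place where the definition of a morphism of tensor schemes is genuinely used.
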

As a corollary,  we see that
\begin{cor}
The construction of $\mathbf{\Gamma}(\mathcal{D})$ defines a functor $\mathbf{\Gamma}:\mathbf{T.Sch}\rightarrow \mathbf{T.Sch}$.
\end{cor}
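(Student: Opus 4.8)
The plan is to read the statement as a direct packaging of Proposition 4.2.3 together with the already-verified fact that $\mathbf{\Gamma}(\mathcal{D})$ is a tensor scheme. On objects I would set $\mathbf{\Gamma}$ to be $\mathcal{D}\mapsto\mathbf{\Gamma}(\mathcal{D})=(\mathsf{\Gamma}(\mathcal{D}),Ob(\mathcal{D}),dom,cod)$. On morphisms, given $\varphi:\mathcal{D}_1\rightarrow\mathcal{D}_2$, I would define $\mathbf{\Gamma}(\varphi)$ to be the pair $(\varphi_o,\varphi_{\ast})$, where $\varphi_o:Ob(\mathcal{D}_1)\rightarrow Ob(\mathcal{D}_2)$ is the object component of $\varphi$ (legitimate since $Ob(\mathbf{\Gamma}(\mathcal{D}))=Ob(\mathcal{D})$) and $\varphi_{\ast}:\mathsf{\Gamma}(\mathcal{D}_1)\rightarrow\mathsf{\Gamma}(\mathcal{D}_2)$ is the push-forward of Proposition 4.2.3.

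The first thing to verify is that $\mathbf{\Gamma}(\varphi)=(\varphi_o,\varphi_{\ast})$ is genuinely a morphism in $\mathbf{T.Sch}$, i.e. that it fits into the defining commuting square, commuting with the source and target maps $dom$ and $cod$ of $\mathbf{\Gamma}(\mathcal{D}_1)$ and $\mathbf{\Gamma}(\mathcal{D}_2)$. This reduces to a one-line computation from the definitions: for a diagram $[\Gamma,\gamma]$ with ordered inputs $i_1\prec_H\cdots\prec_H i_k$ one has $\varphi_{\ast}(\gamma)_o=\varphi_o\circ\gamma_o$, hence $dom([\Gamma,\varphi_{\ast}(\gamma)])=\varphi_o(\gamma_o(i_1))\cdots\varphi_o(\gamma_o(i_k))=\widehat{\varphi_o}(dom([\Gamma,\gamma]))$, and identically for $cod$. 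Thus the required square commutes and $\mathbf{\Gamma}(\varphi)$ is well defined; its well-definedness on isomorphism classes of diagrams is exactly the content of Proposition 4.2.3.

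Finally I would check the two functoriality axioms. Preservation of identities, $\mathbf{\Gamma}(\mathrm{id}_{\mathcal{D}})=\mathrm{id}_{\mathbf{\Gamma}(\mathcal{D})}$, is immediate because the extended valuation $\widetilde{\gamma}=(\mathrm{id})_{\ast}(\gamma)$ satisfies $\widetilde{\gamma}_o=\gamma_o$ and $\widetilde{\gamma}_m=\gamma_m$. For composition, given $\varphi:\mathcal{D}_1\rightarrow\mathcal{D}_2$ and $\psi:\mathcal{D}_2\rightarrow\mathcal{D}_3$, the formulas $\widetilde{\gamma}_o=\varphi_o\circ\gamma_o$ and $\widetilde{\gamma}_m=\varphi_m\circ\gamma_m$ give $(\psi\circ\varphi)_o\circ\gamma_o=\psi_o\circ(\varphi_o\circ\gamma_o)$ and likewise on morphism components, so $(\psi\circ\varphi)_{\ast}=\psi_{\ast}\circ\varphi_{\ast}$ and hence $\mathbf{\Gamma}(\psi\circ\varphi)=\mathbf{\Gamma}(\psi)\circ\mathbf{\Gamma}(\varphi)$. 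I do not expect any genuine obstacle: all the substantive work, namely that push-forward is well defined on isomorphism classes and respects the tensor product and composition of diagrams, has already been isolated in Proposition 4.2.3, and the corollary only repackages it into the language of a functor. The single point deserving a word of care is that the object component of $\mathbf{\Gamma}(\varphi)$ must be $\varphi_o$ rather than $\widehat{\varphi_o}$, forced by $Ob(\mathbf{\Gamma}(\mathcal{D}))=Ob(\mathcal{D})$; the word-level extension $\widehat{\varphi_o}$ enters only through the compatibility square, precisely as in the definition of a morphism of tensor schemes.
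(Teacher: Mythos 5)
Your proposal is correct and follows essentially the same route as the paper: the paper's own proof is a one-line remark that, given the push-forward proposition (which you cite, though it is numbered 4.2.2 in the paper, not 4.2.3), one only needs to check compatibility of $\varphi_{\ast}$ with the source and target maps $dom$ and $cod$, which is exactly your computation $dom([\Gamma,\varphi_{\ast}(\gamma)])=\widehat{\varphi_o}(dom([\Gamma,\gamma]))$. Your explicit verification of the identity and composition axioms, and your remark that the object component must be $\varphi_o$ while $\widehat{\varphi_o}$ enters only in the compatibility square, are details the paper leaves implicit but involve no new ideas.
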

\begin{proof}
Only to do is to check the fact that for any morphism $\varphi:\mathcal{D}_1\rightarrow\mathcal{D}_2$ of tensor schemes, $\varphi_{\ast}$ is compatible with the source and target maps of tensor schemes and this can be directly checked by definitions.
\end{proof}
A planar diagram in  a tensor scheme $\mathcal{D}$ is called a \textbf{fissus planar diagram} if the underlying  planar  graph is equipped with a fission structure.  The notion of an isomorphisms of two fissus planar diagrams is obvious and we denote the set of isomorphic classes  by  $\mathsf{Diag}_F(\mathcal{D})$ or $\mathsf{\Gamma}_F(\mathcal{D})$.  As decorated version of a planar graph, a fissus planar diagram  can be viewed as a planar diagram with its domain and codomain bracketed. More precisely, let $([\Gamma,\gamma], P_{in},P_{out})$ be a fissus planar diagram with $P_{in}=(i_1<\cdots <i_{\mu_1})<\cdots <(i_{\mu_1+\cdots+\mu_{m-1}+1}<\cdots< i_{\mu_1+\cdots+\mu_m})$, $P_{out}=(o_1<\cdots <o_{\nu_1})<\cdots <(o_{\nu_1+\cdots+\nu_{n-1}+1}<\cdots< o_{\nu_1+\cdots+\nu_n})$ and assume $\gamma_o(i_k)=x_k$ for $1\leq k\leq \mu_1+\cdots+\mu_m$, $\gamma_o(o_l)=y_l$ for $1\leq l\leq \nu_1+\cdots+\nu_n$, then we define the domain $dom((\Gamma,\gamma), P_{in},P_{out})=(x_1\cdots x_{\mu_1})\cdots (x_{\mu_1+\cdots+\mu_{m-1}+1}\cdots x_{\mu_1+\cdots+\mu_m})$ and $cod((\Gamma,\gamma), P_{in},P_{out})=(y_1\cdots y_{\nu_1})\cdots (y_{\nu_1+\cdots+\nu_{n-1}+1}\cdots y_{\nu_1+\cdots+\nu_n})$ to be words in $W(Ob(\mathcal{D}))$. Thus $\mathbf{\Gamma}_F(\mathcal{D})=(\mathsf{\Gamma}_F(\mathcal{D}),W(Ob(\mathcal{D})), dom, cod)$ defines a tensor scheme.

The following proposition is evident.
\begin{prop}
Every morphism $\varphi:\mathcal{D}_1\rightarrow\mathcal{D}_2$ of tensor schemes induces a morphism of tensor schemes
$$(\widehat{\varphi_o},\varphi_{\ast F}):(\mathsf{\Gamma}_F(\mathcal{D}_1),W(Ob(\mathcal{D}_1)), dom, cod)\longrightarrow(\mathsf{\Gamma}_F(\mathcal{D}_2),W(Ob(\mathcal{D}_2)), dom, cod),$$
where $\widehat{\varphi_o}:W(Ob(\mathcal{D}_1))\rightarrow W(Ob(\mathcal{D}_2))$ is the natural extension of $\varphi_o$ which sends a word $x_1\cdots x_n\in W(Ob(\mathcal{D}_1))$ to $\varphi_o(x_1)\cdots \varphi_o(x_n)\in W(Ob(\mathcal{D}_2))$ and $\varphi_{\ast F}$ send a fissus diagram $([\Gamma,\gamma], P_{in},P_{out})$ in $\mathcal{D}_1$ to a fissus $([\Gamma,\varphi_{\ast}(\gamma)], P_{in},P_{out})$ in  $\mathcal{D}_2$ .
\end{prop}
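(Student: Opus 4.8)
The plan is to read off the two assertions packaged in the statement --- that $\varphi_{\ast F}$ is a well-defined map on isomorphism classes of fissus diagrams, and that the pair $(\widehat{\varphi_o},\varphi_{\ast F})$ satisfies the commuting square demanded of a morphism of tensor schemes --- and to verify each by unwinding definitions. No genuinely new idea is needed; everything rests on the single identity $(\varphi_{\ast}(\gamma))_o=\varphi_o\circ\gamma_o$ together with the observation that $\varphi_{\ast}$ does not touch the underlying planar graph.

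First I would confirm well-definedness. Given a fissus planar diagram $([\Gamma,\gamma],P_{in},P_{out})$ in $\mathcal{D}_1$, the pair $\varphi_{\ast}(\gamma)=(\varphi_o\circ\gamma_o,\varphi_m\circ\gamma_m)$ is again a valuation: $\varphi_o\circ\gamma_o$ stays $\sigma_\Gamma$-invariant because $\gamma_o$ is, and for each real vertex $v$ the commutativity of the defining square of $\varphi$ (namely $s_2\circ\varphi_m=\widehat{\varphi_o}\circ s_1$ and $t_2\circ\varphi_m=\widehat{\varphi_o}\circ t_1$) forces $\varphi_m(\gamma_m(v))$ into the morphism set $Mor(\varphi_o(\gamma_o(h_1))\cdots,\ \varphi_o(\gamma_o(h'_1))\cdots)$; this is exactly the content of the push-forward of valuations recorded in the construction of $\varphi_{\ast}(\gamma)$ preceding Proposition 4.2.2. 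The fission datum $(P_{in},P_{out})$ is a pair of linear partitions of the anchor structures $In(\overrightarrow{\Gamma})$ and $Out(\overrightarrow{\Gamma})$, and $\varphi_{\ast}$ alters neither $(\overrightarrow{\Gamma},\prec)$ nor its anchor structure, so $(P_{in},P_{out})$ remains a legitimate fission structure on $[\Gamma,\varphi_{\ast}(\gamma)]$. Finally, if $\underline{\phi}$ is an isomorphism of fissus diagrams, then $\gamma=\gamma'\circ\phi$ gives $\varphi_{\ast}(\gamma)=\varphi_{\ast}(\gamma')\circ\phi$ and $\phi$ carries the unchanged fission structures across, so $\varphi_{\ast F}$ descends to a map $\mathsf{\Gamma}_F(\mathcal{D}_1)\to\mathsf{\Gamma}_F(\mathcal{D}_2)$.

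Next I would check compatibility with $dom$ and $cod$. Writing $P_{in}=(i_1<\cdots<i_{\mu_1})<\cdots<(i_{\mu_1+\cdots+\mu_{m-1}+1}<\cdots<i_{\mu_1+\cdots+\mu_m})$ and $x_k=\gamma_o(i_k)$, the domain of the original fissus diagram is the word of words $(x_1\cdots x_{\mu_1})\cdots(x_{\mu_1+\cdots+\mu_{m-1}+1}\cdots x_{\mu_1+\cdots+\mu_m})\in W(W(Ob(\mathcal{D}_1)))$. The natural extension of the object map $\widehat{\varphi_o}$ to such two-level words applies $\widehat{\varphi_o}$ inside each block, replacing every letter $x_k$ by $\varphi_o(x_k)$ while preserving the bracketing dictated by $P_{in}$. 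On the other side, $dom(\varphi_{\ast F}([\Gamma,\gamma],P_{in},P_{out}))=dom([\Gamma,\varphi_{\ast}(\gamma)],P_{in},P_{out})$ is computed from $(\varphi_{\ast}(\gamma))_o(i_k)=\varphi_o(\gamma_o(i_k))=\varphi_o(x_k)$ under the same partition $P_{in}$, giving the identical bracketed word; so the $dom$-square commutes, and the same argument with $P_{out}$ and the $o_l$ handles $cod$.

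The statement is, as the paper says, evident, so there is no real obstacle; the only point requiring care is notational, namely keeping the two-level word structure $W(W(Ob(\mathcal{D})))$ straight and noting that the block structure encoded by $(P_{in},P_{out})$ is transported verbatim by $\varphi_{\ast F}$ precisely because it depends only on the anchor structure of the underlying planar graph, which the push-forward leaves intact. Once this is observed, both commuting squares collapse to the letter-by-letter application of $(\varphi_{\ast}(\gamma))_o=\varphi_o\circ\gamma_o$.
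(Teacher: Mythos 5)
Your proposal is correct, and it matches the paper's intent: the paper offers no proof at all (it declares the proposition ``evident''), and your argument is precisely the routine definition-unwinding it has in mind — the push-forward valuation of Section 4.2, the observation that the fission structure rides along unchanged since $\varphi_{\ast}$ leaves the underlying planar graph and its anchor structure intact, and the blockwise computation showing the $dom$- and $cod$-squares commute. Nothing is missing.
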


\begin{cor}
The constructions of $\mathbf{\Gamma}_F(\mathcal{D})$ define a functor  $\mathbf{\Gamma}_F:\mathbf{T.Sch}\rightarrow \mathbf{T.Sch}$.
\end{cor}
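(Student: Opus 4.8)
The plan is to prove this exactly as Corollary 4.2.3 was proved for $\mathbf{\Gamma}$, since $\mathbf{\Gamma}_F$ is only the fissus decoration of that construction. The object part of the assignment is already in hand: for each tensor scheme $\mathcal{D}$ the quadruple $\mathbf{\Gamma}_F(\mathcal{D})=(\mathsf{\Gamma}_F(\mathcal{D}),W(Ob(\mathcal{D})),dom,cod)$ was shown to be a tensor scheme, and Proposition 4.2.5 already supplies, for every morphism $\varphi:\mathcal{D}_1\rightarrow\mathcal{D}_2$, a morphism of tensor schemes $(\widehat{\varphi_o},\varphi_{\ast F}):\mathbf{\Gamma}_F(\mathcal{D}_1)\rightarrow\mathbf{\Gamma}_F(\mathcal{D}_2)$. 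Thus, setting $\mathbf{\Gamma}_F(\varphi)=(\widehat{\varphi_o},\varphi_{\ast F})$, the only thing left is to verify the two functor axioms, namely preservation of identities and of composition.

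Both axioms split into the corresponding statements for the two components $\widehat{(-)}$ and $(-)_{\ast F}$, and I would check each directly from the definitions. The word-extension component is manifestly functorial: $\widehat{id_{Ob(\mathcal{D})}}=id_{W(Ob(\mathcal{D}))}$, and for composable $\varphi:\mathcal{D}_1\rightarrow\mathcal{D}_2$, $\psi:\mathcal{D}_2\rightarrow\mathcal{D}_3$ one has $\widehat{\psi_o\circ\varphi_o}=\widehat{\psi_o}\circ\widehat{\varphi_o}$, because extending a composite function to words letter by letter coincides with composing the two extensions. For the diagram component, recall that the push-forward of a valuation is defined by $\varphi_\ast(\gamma)_o=\varphi_o\circ\gamma_o$ and $\varphi_\ast(\gamma)_m=\varphi_m\circ\gamma_m$, and that $\varphi_{\ast F}$ sends $([\Gamma,\gamma],P_{in},P_{out})$ to $([\Gamma,\varphi_\ast(\gamma)],P_{in},P_{out})$, leaving the underlying planar graph and the fission structure $(P_{in},P_{out})$ untouched. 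Hence $(id_{\mathcal{D}})_\ast(\gamma)=\gamma$ since $id_o\circ\gamma_o=\gamma_o$ and $id_m\circ\gamma_m=\gamma_m$, so $(id_{\mathcal{D}})_{\ast F}$ is the identity; and associativity of function composition gives $(\psi\circ\varphi)_\ast(\gamma)_o=(\psi_o\circ\varphi_o)\circ\gamma_o=\psi_o\circ(\varphi_\ast(\gamma)_o)$, and likewise on the morphism part, so $(\psi\circ\varphi)_\ast=\psi_\ast\circ\varphi_\ast$ on valuations, whence $(\psi\circ\varphi)_{\ast F}=\psi_{\ast F}\circ\varphi_{\ast F}$ once the unchanged fission data are carried along.

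I do not expect a genuine obstacle here; the content is purely bookkeeping. The one point requiring a word of care is well-definedness on isomorphism classes, i.e. that $\varphi_{\ast F}$ sends isomorphic fissus diagrams to isomorphic fissus diagrams and is compatible with $dom$ and $cod$, but this is precisely what Proposition 4.2.5 (building on Proposition 4.2.1) has already secured: $\varphi_\ast$ descends to $\mathsf{\Gamma}_F$ and commutes with the source and target maps. Assembling the functoriality of $\widehat{(-)}$ and of $(-)_{\ast F}$ then yields $\mathbf{\Gamma}_F(id)=id$ and $\mathbf{\Gamma}_F(\psi\circ\varphi)=\mathbf{\Gamma}_F(\psi)\circ\mathbf{\Gamma}_F(\varphi)$, which is the claim.
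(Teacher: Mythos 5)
Your proposal is correct and follows exactly the route the paper intends: the paper states the preceding proposition (that $(\widehat{\varphi_o},\varphi_{\ast F})$ is a morphism of tensor schemes) as ``evident'' and gives no proof of the corollary at all, leaving precisely the bookkeeping you carry out -- functoriality of the word-extension $\widehat{(-)}$ and of the push-forward $(-)_{\ast F}$ with the fission data carried along unchanged, mirroring the proof of the analogous corollary for $\mathbf{\Gamma}$. Your verification of identities, composition, and well-definedness on isomorphism classes is exactly the content the paper leaves implicit (note only that your citations are off by one: the fissus push-forward is Proposition 4.2.4 and the basic push-forward is Proposition 4.2.2 in the paper's numbering).
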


\subsection{Planar diagrams in tensor categories }
Recall that a \textbf{valuation} $\gamma:\Gamma\rightarrow \mathcal{D}$ of a planar graph $(\overrightarrow{\Gamma},\prec)$  in a tensor category $\mathcal{V}$ is a pair of functions
$$\begin{matrix}\gamma_o: H(\Gamma)\rightarrow Ob(\mathcal{V}),&&\gamma_m:V_{re}(\Gamma)\rightarrow Mor(\mathcal{V}) \end{matrix}$$
such that

$\bullet$ $\gamma_o$ is $\sigma_{\Gamma}$ invariant, that is, for every $h\in H(\Gamma)$, $$\gamma_o(h)=\gamma_o(\sigma_{\Gamma}(h));$$

$\bullet$  for every \textbf{real vertex}  $v\in V_{re}(\Gamma)$, $$\gamma_m(v)\in Mor(\gamma_o(h_1)\otimes\cdots \otimes\gamma_o(h_m),\gamma_o(h'_1)\otimes\cdots \otimes \gamma_o(h'_n))$$
where $h_1\prec_H\cdots\prec_Hh_m$ and $h'_1\prec_H\cdots\prec_Hh'_n$  are the ordered lists of elements of $In(v)$ and $Out(v)$ respectively.

\begin{defn}
A \textbf{planar diagram} in $\mathcal{V}$ is a pair $[(\overrightarrow{\Gamma},\prec),\gamma]$ where $(\overrightarrow{\Gamma},\prec)$ is a planar graph and $\gamma$ is a valuation of $(\overrightarrow{\Gamma},\prec)$ in $\mathcal{V}$.
\end{defn}

A planar diagram in $\mathcal{V}$ is also called a \textbf{tensor network state} in $\mathcal{V}$ with type $\Gamma$ and is usually denoted by $[\Gamma,\gamma]$ or even only $\gamma$ if no confusion arises.

The \emph{domain} and \textbf{codomain} of a diagram $[\Gamma,\gamma]$  are the words in $Ob(\mathcal{V})$
$$\begin{matrix}dom([\Gamma,\gamma])=\gamma_o(i_1)\cdots \gamma_o(i_k),&&cod([\Gamma,\gamma])=\gamma_o(o_1)\cdots \gamma_o(o_l) \end{matrix}$$
where $i_1\prec_H\cdots\prec_Hi_k$ and $o_1\prec_H\cdots\prec_Ho_l$ are the ordered lists of elements of $In(\overrightarrow{\Gamma})$ and $Out(\overrightarrow{\Gamma})$.\\

The notions of an isomorphism, tensor product and composition of diagrams in $\mathcal{V}$ are same as those of diagrams in a tensor scheme. The set of isomorphic classes of diagrams in $\mathcal{V}$ is denoted by  $\mathsf{Diag}(\mathcal{V})$ or $\mathsf{\Gamma}(\mathcal{V})$.

If $\gamma:\Gamma\rightarrow \mathcal{V}_1$ is a valuation of $\Gamma$ in $\mathcal{V}_1$ and $K:\mathcal{V}_1\rightarrow\mathcal{V}_2$ is a strict tensor functor, we can naturally get a valuation $\tilde{\gamma}=K_{\ast}(\gamma):\Gamma\rightarrow \mathcal{V}_2$ of $\Gamma$ in $\mathcal{V}_2$ defined by
$$\begin{matrix}\tilde{\gamma}_o=K(\gamma_o),&&\tilde{\gamma}_m=K(\gamma_m).\end{matrix}$$

Similar to proposition $4.2.2$, we have
\begin{prop}
Every strict tensor functor $K:\mathcal{V}_1\rightarrow\mathcal{V}_2$ induces a function (push-forward) $$K_{\ast}:\mathsf{Diag}(\mathcal{V}_1)\longrightarrow\mathsf{Diag}(\mathcal{V}_2),$$ which sends $[\Gamma,\gamma]$ to $[\Gamma,K_{\ast}(\gamma)].$ Moreover, $K_{\ast}$ is compatible with the tensor product and composition of diagrams.
\end{prop}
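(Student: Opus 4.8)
The plan is to follow the proof of Proposition 4.2.2 almost verbatim, the one genuinely new ingredient being the strictness of $K$. First I would check that $\tilde{\gamma}=K_{\ast}(\gamma)$, given by $\tilde{\gamma}_o=K\circ\gamma_o$ and $\tilde{\gamma}_m=K\circ\gamma_m$, is indeed a valuation of $(\overrightarrow{\Gamma},\prec)$ in $\mathcal{V}_2$. The $\sigma_{\Gamma}$-invariance of $\tilde{\gamma}_o$ is immediate, since $\tilde{\gamma}_o(\sigma_{\Gamma}(h))=K(\gamma_o(\sigma_{\Gamma}(h)))=K(\gamma_o(h))=\tilde{\gamma}_o(h)$. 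The substantive point is the second valuation condition: for a real vertex $v$ with ordered input list $h_1\prec_H\cdots\prec_Hh_m$ and output list $h'_1\prec_H\cdots\prec_Hh'_n$, we have $\gamma_m(v)\in Mor(\gamma_o(h_1)\otimes\cdots\otimes\gamma_o(h_m),\gamma_o(h'_1)\otimes\cdots\otimes\gamma_o(h'_n))$, and applying $K$ yields
$$K(\gamma_m(v))\in Mor(K(\gamma_o(h_1)\otimes\cdots\otimes\gamma_o(h_m)),K(\gamma_o(h'_1)\otimes\cdots\otimes\gamma_o(h'_n))).$$
Here is exactly where strictness is needed: because $K$ is a strict tensor functor, $K(\gamma_o(h_1)\otimes\cdots\otimes\gamma_o(h_m))=K(\gamma_o(h_1))\otimes\cdots\otimes K(\gamma_o(h_m))=\tilde{\gamma}_o(h_1)\otimes\cdots\otimes\tilde{\gamma}_o(h_m)$ on the nose, and likewise for the codomain, so $\tilde{\gamma}_m(v)$ lands in the required hom-set with no intervening coherence isomorphisms. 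This is the only step where the tensor-category case departs from the tensor-scheme case of Proposition 4.2.2, and it is the main (mild) obstacle.

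Next I would verify that $K_{\ast}$ descends to isomorphism classes, so that $[\Gamma,\gamma]\mapsto[\Gamma,K_{\ast}(\gamma)]$ is well-defined on $\mathsf{Diag}(\mathcal{V}_1)$. If $\underline{\phi}:[(\overrightarrow{\Gamma}_1,\prec_1),\gamma_1]\rightarrow[(\overrightarrow{\Gamma}_2,\prec_2),\gamma_2]$ is an isomorphism of diagrams, then by definition $\phi$ is an isomorphism of planar graphs with $\gamma_1=\gamma_2\circ\phi$; post-composing with $K$ gives $K_{\ast}(\gamma_1)=K_{\ast}(\gamma_2)\circ\phi$, so the same $\phi$ is an isomorphism $[(\overrightarrow{\Gamma}_1,\prec_1),K_{\ast}(\gamma_1)]\rightarrow[(\overrightarrow{\Gamma}_2,\prec_2),K_{\ast}(\gamma_2)]$.

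Finally I would establish compatibility with the two operations. For the tensor product, the underlying planar graph of $[\Gamma_1,\gamma_1]\otimes[\Gamma_2,\gamma_2]$ is $(\overrightarrow{\Gamma}_1,\prec_1)\otimes(\overrightarrow{\Gamma}_2,\prec_2)$, which is untouched by $K_{\ast}$, and its valuation is $\gamma_1\sqcup\gamma_2$; comparing the case-by-case definition of $\sqcup$ with that of $K_{\ast}$ shows $K_{\ast}(\gamma_1\sqcup\gamma_2)=K_{\ast}(\gamma_1)\sqcup K_{\ast}(\gamma_2)$, whence $K_{\ast}$ commutes with $\otimes$. For composition, I would first note that composability is preserved: if $(\gamma_1)_o(o_k)=(\gamma_2)_o(i_k)$ for all $k$, then applying $K$ gives $K((\gamma_1)_o(o_k))=K((\gamma_2)_o(i_k))$, so $K_{\ast}(\gamma_1)$ and $K_{\ast}(\gamma_2)$ are again composable. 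The underlying graph $(\overrightarrow{\Gamma}_2,\prec_2)\circ(\overrightarrow{\Gamma}_1,\prec_1)$ is again untouched, and the same branch-by-branch comparison of the definitions of $\vee$ and $K_{\ast}$ gives $K_{\ast}(\gamma_2\vee\gamma_1)=K_{\ast}(\gamma_2)\vee K_{\ast}(\gamma_1)$. All these verifications are routine once the strictness observation above is in place.
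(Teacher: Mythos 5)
Your proposal is correct and follows exactly the route the paper intends: the paper states this proposition with no written proof, merely remarking that it is "similar to Proposition 4.2.2," and your verification (post-compose with $K$, check the valuation axioms, well-definedness on isomorphism classes, and the case-by-case compatibility with $\otimes$ and $\circ$) is precisely that routine argument spelled out. Your identification of strictness of $K$ as the one point where the tensor-category case genuinely differs from the tensor-scheme case — ensuring $K(\gamma_o(h_1)\otimes\cdots\otimes\gamma_o(h_m))$ equals $K(\gamma_o(h_1))\otimes\cdots\otimes K(\gamma_o(h_m))$ on the nose so the hom-set condition holds without coherence isomorphisms — is exactly right.
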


As in previous section, $\mathbf{\Gamma}(\mathcal{V})=(\mathsf{\Gamma}(\mathcal{V}), Ob(\mathcal{V}), dom, cod)$ forms a tensor scheme. Similarly, a fissus planar diagram in $\mathcal{V}$ is a planar diagram in $\mathcal{V}$  equipped with a fission structure which makes the domain and codomain to be words in $W(Ob(\mathcal{V}))$.  The notion of an isomorphisms of two fissus planar diagrams are obvious and we denote the set of isomorphic classes  by $\mathsf{Diag}_F(\mathcal{V})$  or $\mathsf{\Gamma}_F(\mathcal{V})$.   Evidently $\mathbf{\Gamma}_F(\mathcal{V})=(\mathsf{\Gamma}_F(\mathcal{V}),W(Ob(\mathcal{V})), dom, cod)$ also defines a tensor scheme.

An analogue of proposition $4.2.3$ is as following:
\begin{prop}
Every strict tensor functor $K:\mathcal{V}_1\rightarrow\mathcal{V}_2$ induces a morphism of tensor schemes
$$(\widehat{K_o},K_{\ast F}):(\mathsf{\Gamma}_F(\mathcal{V}_1),W(Ob(\mathcal{V}_1)), dom, cod)\longrightarrow(\mathsf{\Gamma}_F(\mathcal{V}_2),W(Ob(\mathcal{V}_2)), dom, cod),$$
where $\widehat{K_o}:W(Ob(\mathcal{V}_1))\rightarrow W(Ob(\mathcal{V}_2))$ is the natural extension of $K_o$ which sends a word $x_1\cdots x_n\in W(Ob(\mathcal{V}_1))$ to $Kx_1\cdots Kx_n\in W(Ob(\mathcal{V}_2))$ and $K_{\ast F}$ send a fissus diagram $([\Gamma,\gamma], P_{in},P_{out})$ in $\mathcal{V}_1$ to a fissus $([\Gamma,K_{\ast}(\gamma)], P_{in},P_{out})$ in  $\mathcal{V}_2$ .
\end{prop}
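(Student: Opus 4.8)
The plan is to verify, exactly as in the (omitted) proof of Proposition~4.2.3, that the pair $(\widehat{K_o},K_{\ast F})$ meets the definition of a morphism of tensor schemes spelled out in Section~4.1. Recall that such a morphism between $\mathbf{\Gamma}_F(\mathcal{V}_1)$ and $\mathbf{\Gamma}_F(\mathcal{V}_2)$ amounts to an object map $W(Ob(\mathcal{V}_1))\rightarrow W(Ob(\mathcal{V}_2))$ together with a morphism map $\mathsf{\Gamma}_F(\mathcal{V}_1)\rightarrow\mathsf{\Gamma}_F(\mathcal{V}_2)$ making the two squares built from $dom$ and $cod$ commute, with the object map extended wordwise before comparing. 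So there are just two things to establish: that $K_{\ast F}$ is well defined on isomorphism classes, and that $dom$ and $cod$ are preserved.

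First I would check well-definedness of $K_{\ast F}$. By Proposition~4.3.2 the assignment $\gamma\mapsto K_{\ast}(\gamma)=(K(\gamma_o),K(\gamma_m))$ already sends a valuation of $(\overrightarrow{\Gamma},\prec)$ in $\mathcal{V}_1$ to a valuation in $\mathcal{V}_2$; this is exactly the step where strictness of $K$ enters, guaranteeing that $K(\gamma_m(v))$ lands in the correct hom-set after $K$ is distributed over the tensor words $\gamma_o(h_1)\otimes\cdots\otimes\gamma_o(h_m)$. Since $K_{\ast F}$ leaves the underlying planar graph and the fission data $(P_{in},P_{out})$ untouched, it remains to see that it respects isomorphism of fissus diagrams: if $\phi$ is an isomorphism with $\gamma_1=\gamma_2\circ\phi$ preserving $(P_{in},P_{out})$, then $K_{\ast}(\gamma_1)=K\circ\gamma_1=K\circ\gamma_2\circ\phi=K_{\ast}(\gamma_2)\circ\phi$, so $\phi$ is again an isomorphism of the images and the same fission structure is preserved. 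Hence $K_{\ast F}$ descends to $\mathsf{\Gamma}_F$.

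Second I would check the commuting squares by unwinding the definition of $dom$ (and identically $cod$) for a fissus diagram $([\Gamma,\gamma],P_{in},P_{out})$. Writing $x_k=\gamma_o(i_k)$, the domain is the word of words $(x_1\cdots x_{\mu_1})\cdots(x_{\mu_1+\cdots+\mu_{m-1}+1}\cdots x_{\mu_1+\cdots+\mu_m})$. Applying $K_{\ast F}$ replaces each $x_k$ by $K_o(x_k)$ while keeping the bracketing dictated by $P_{in}$, so $dom(K_{\ast F}(\cdots))$ has blocks $(Kx_1\cdots Kx_{\mu_1})\cdots$. On the other hand, extending $\widehat{K_o}$ wordwise over the blocked word $dom(\cdots)$ replaces each $x_k$ inside its block by $K_o(x_k)$, yielding the same result. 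Thus the square commutes verbatim, and symmetrically for $cod$.

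Since the statement is flagged as ``evident,'' I do not expect a genuine obstacle; the only point requiring care is the two-level (word-of-words) bookkeeping, i.e. keeping straight that the object set here is $W(Ob(\mathcal{V}))$ and that $dom,cod$ take values in its words, so that $\widehat{K_o}$ must be applied one level down, inside each $P_{in}$- or $P_{out}$-block. The sole conceptual input, strictness of $K$, is already absorbed into the cited Proposition~4.3.2, after which everything is a direct verification. The same argument, applied to $\mathrm{id}_{\mathcal{V}}$ and to composites of strict tensor functors, yields in addition the functoriality statement analogous to Corollary~4.2.4, giving a functor $\mathbf{\Gamma}_F:\mathbf{Str.T}\rightarrow\mathbf{T.Sch}$.
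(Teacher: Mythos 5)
Your proposal is correct and follows exactly the route the paper intends: the paper states this proposition without proof (as the evident analogue of Proposition 4.2.3), and your verification — well-definedness of $K_{\ast F}$ on isomorphism classes via Proposition 4.3.2 together with the commutativity of the $dom$/$cod$ squares after the word-of-words bookkeeping — is precisely the routine check being left to the reader. Nothing is missing and no different idea is introduced.
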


\begin{rem}
All strict tensor categories and their strict tensor functors form a category, denoted by $\mathbf{Str.T}$.  The construction of $\mathbf{\Gamma}(\mathcal{V})$ and $\mathbf{\Gamma}_F(\mathcal{V})$ defines two functors from $\mathbf{Str.T}$ to $\mathbf{T.Sch}$ which are also denoted by $\mathbf{\Gamma}$ and $\mathbf{\Gamma}_F$, respectively.
\end{rem}
\subsection{Evaluation of  diagrams in a tensor category }
In this section we introduce an evaluation map $\varepsilon: \mathsf{Diag}(\mathcal{V})\rightarrow Mor(\mathcal{V})$ from the set of isomorphic classes of diagrams in $\mathcal{V}$ to $Mor(\mathcal{V})$. As in \cite{[JS91]}, we subdivide a diagram into simpler parts and "integrate" the result by composing and tensoring and prove that the result is independent of any special choice of subdivision and the ways or orders to integration. In our combinatorial theory, the simplest parts of a diagram are prime diagrams defined on corollas  and unitary diagrams defined on unitary graphs. We give their definitions:

\begin{defn}
A diagram $[\Gamma,\gamma]$ is called \textbf{prime} when $\Gamma$ is prime and is called \textbf{unitary} when $\Gamma$ is unitary.
A diagram $[\Gamma,\gamma]$ is called \textbf{essential prime}, \textbf{invertible}, \textbf{elementary} when $\Gamma$ is essential prime, invertible, elementary, respectively.
\end{defn}

The following theorem is equivalent to proposition $1.1$ and theorem $1.2$ in \cite{[JS91]}.
\begin{thm}
For any strict tensor category $\mathcal{V}$, there is an unique map $$\varepsilon: \mathsf{Diag}(\mathcal{V})\rightarrow Mor(\mathcal{V})$$ such that

$\bullet$ $\varepsilon(\varnothing)=Id_{1_{\mathcal{V}}}$, where $\varnothing$ is the empty diagram and $Id_{1_{\mathcal{V}}}$ is the identity morphism of the unit object $1_{\mathcal{V}}$;

$\bullet$ $\varepsilon([\Gamma,\gamma])=\gamma_m(v)$, if $\Gamma$ is a prime graph with real vertex $v$;

$\bullet$ $\varepsilon([\Gamma,\gamma])=Id_{\gamma_o(e)}$, if  $\Gamma$ is an unitary graph  with virtual edge $e$;

$\bullet$ $\varepsilon([\Gamma_2,\gamma_2]\circ[\Gamma_1,\gamma_1])=\varepsilon([\Gamma_2,\gamma_2])\circ\varepsilon([\Gamma_1,\gamma_1])$
for any two composable diagrams $[\Gamma_1,\gamma_1],\ [\Gamma_2,\gamma_2]$.

The morphism $\varepsilon([\Gamma,\gamma])$ is called the \textbf{value} of $[\Gamma,\gamma]$.
\end{thm}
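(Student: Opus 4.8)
The plan is to follow the subdivide-and-integrate strategy announced in the statement, using the composition decomposition of Theorem \ref{decomposition} as the backbone and reducing the only real difficulty, well-definedness, to the strictness axioms of $\mathcal{V}$. I would organize everything as an induction on the number $|V_{re}(\Gamma)|$ of real vertices, in parallel with the argument used for the canonical isomorphism $\mathsf{\Gamma}\cong\mathsf{G}$ (Theorem 3.6.1).

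For \emph{uniqueness}, the four conditions already pin the value on every atomic diagram: the empty, prime and unitary cases are prescribed outright, and an invertible diagram is a tensor product of unitary diagrams, hence a tensor product of identities, forced to be an identity. For a general $[\Gamma,\gamma]$ with $\Gamma$ non-invertible, Theorem \ref{decomposition} gives $\Gamma=\Gamma_n\circ\cdots\circ\Gamma_1$ with each $\Gamma_k$ essential prime, so the composition condition forces
$$\varepsilon([\Gamma,\gamma])=\varepsilon([\Gamma_n,\gamma_n])\circ\cdots\circ\varepsilon([\Gamma_1,\gamma_1]).$$
Since each essential prime graph is the tensor product of a single prime graph with finitely many unitary graphs, its value is the tensor product of the vertex morphism $\gamma_m(v_k)$ with the identity morphisms supplied by the unitary components, arranged according to the planar order; thus every value is determined and uniqueness follows, provided one also knows $\varepsilon$ is multiplicative under the tensor product of an essential prime with unitaries.

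For \emph{existence} I would promote the displayed formula to a definition: choose for each diagram a decomposition into essential primes, evaluate each essential prime as the indicated tensor product in $\mathcal{V}$ (and each invertible diagram as a tensor of identities, the empty diagram as $Id_{1_{\mathcal{V}}}$), and compose. The heart of the proof is that this is independent of the chosen decomposition. By the proof of Theorem \ref{decomposition}, any two decompositions differ only by the order in which mutually $<_V$-incomparable vertices are peeled off, that is, by transpositions of adjacent, non-composable essential primes. For such a transposition the graph-level middle-four-interchange (Proposition 3.5.7) matches the object-level interchange, and the bifunctoriality of $\otimes$ together with the interchange law in the strict tensor category $\mathcal{V}$ shows that swapping the two essential primes leaves the resulting composite morphism unchanged. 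This invariance under adjacent transpositions, combined with associativity of composition (Proposition 3.5.6) and associativity in $\mathcal{V}$, yields a well-defined $\varepsilon$; it is precisely the combinatorial counterpart of Joyal and Street's planar-isotopy invariance of evaluation, which is why Theorem 3.6.1 is the right inductive template.

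Finally I would check the four conditions for the constructed $\varepsilon$. On empty, prime and unitary diagrams the canonical one-block decomposition is the diagram itself, so the values are exactly as prescribed. Compatibility with composition follows because concatenating a decomposition of $\Gamma_1$ with one of $\Gamma_2$ is a decomposition of $\Gamma_2\circ\Gamma_1$, and the composite morphism splits accordingly by associativity in $\mathcal{V}$. The step I expect to be the main obstacle is the independence of the decomposition: one must set up the bookkeeping so that the only ambiguity in the decomposition is captured by transpositions of $<_V$-incomparable essential primes, and then verify carefully that the single graph-theoretic interchange identity corresponds, under $\gamma$, to the strict-tensor interchange identity in $\mathcal{V}$, so that the induction on $|V_{re}(\Gamma)|$ closes.
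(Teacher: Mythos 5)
Your proposal follows essentially the same route as the paper's own proof: construct $\varepsilon$ on primes, unitaries and their tensor products (essential prime diagrams), extend along the decomposition of Theorem 3.5.7 into essential primes, and establish independence of the chosen decomposition by induction on $|V_{re}(\Gamma)|$, reducing all ambiguity to transpositions of adjacent layers containing incomparable vertices, which are then absorbed by the interchange law and identity axioms of the strict tensor category $\mathcal{V}$ exactly as in the paper's cases $2.1$--$2.2$. If anything you are more candid than the paper on its one weak point: where the paper dismisses uniqueness as ``obvious'', you correctly flag that the four stated conditions determine the value of an essential prime diagram only once tensor-multiplicativity over its unitary components is also known.
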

\begin{proof}
$(1)$  We first construct a map $\varepsilon: \mathbf{Diag}(\mathcal{V})\rightarrow Mor(\mathcal{V})$ as follows:

$\bullet$ $\varepsilon(\varnothing)=Id_{1_{\mathcal{V}}}$, where $\varnothing$ is the empty diagram and $Id_{1_{\mathcal{V}}}$ is the identity morphism of the unit object $1_{\mathcal{V}}$;

$\bullet$  $\varepsilon([\Gamma,\gamma])=\gamma_m(v)$, if $\Gamma$ is a prime graph with real vertex $v$.

$\bullet$ $\varepsilon([\Gamma,\gamma])=Id_{\gamma_o(e)}$, if  $\Gamma$ is a unitary graph  with virtual edge $e$.

$\bullet$ if $[\Gamma,\gamma]$ is an essential elementary diagram, that is, $$[\Gamma,\gamma]=[\Gamma_1,\gamma_1]\otimes\cdots\otimes [\Gamma_n,\gamma_n]$$ with each $\Gamma_i$ $(1\leq i\leq n)$ either prime or unitary, we define $$\varepsilon([\Gamma,\gamma])=\varepsilon([\Gamma_1,\gamma_1])\otimes\cdots\otimes\varepsilon([\Gamma_n,\gamma_n]).$$

$\bullet$ for a general diagram $[\Gamma,\gamma]$,  we have proved in Theorem \ref{decomposition} that any planar graph has a decomposition as composition of several essential prime planar graphs, thus $[\Gamma,\gamma]$ has a decomposition as composition of several essential prime planar diagrams, that is, $$[\Gamma,\gamma]=[\Gamma_n,\gamma_n]\circ\cdots\circ[\Gamma_1,\gamma_1]$$
with $[\Gamma_k,\gamma_k]$ $(1\leq k\leq n)$ be essential prime planar diagram.  We define the value $\varepsilon([\Gamma,\gamma])$ as $$\varepsilon([\Gamma,\gamma])=\varepsilon([\Gamma_n,\gamma_n])\circ\cdots\circ\varepsilon([\Gamma_1,\gamma_1]).$$

$(2)$  Now we want to show that $\varepsilon$ is well-defined. In fact, we have:

$\bullet$ if $[\Gamma,\gamma]$ is a prime or unitary or essential prime, it has no nontrivial decomposition under tensor and composition and it is easy to see that $\varepsilon([\Gamma,\gamma])$ is unique, hence well-defined.\\

$\bullet$ if $[\Gamma,\gamma]$ is a general  diagram, we use induction on the number $n$ of its real vertices.

When $n=1$. In this case, $[\Gamma,\gamma]$ is an essential prime diagram, we have prove that $\varepsilon([\Gamma,\gamma])$ is well defined.

When $n=2$. In this case, let $v_1, v_2$ be two real vertices of $\Gamma$. We will prove in three cases:

Case $1$: $v_1\rightarrow v_2$. In this case, we have no nontrivial decomposition under tensor and only one decomposition $[\Gamma,\gamma]=[\Gamma_2,\gamma_2]\circ[\Gamma_1,\gamma_1]$ with $v_1\in V_{re}(\Gamma_1)$ and $v_2\in V_{re}(\Gamma_2)$. We have an unique value $\varepsilon([\Gamma,\gamma])=\varepsilon([\Gamma_2,\gamma_2])\circ\varepsilon([\Gamma_1,\gamma_1]).$

Case $2$: $v_2\rightarrow v_1$. In this case, we have  no nontrivial decomposition under tensor and only  one decomposition $[\Gamma,\gamma]=[\Gamma_2,\gamma_2]\circ[\Gamma_1,\gamma_1]$ with $v_1\in V_{re}(\Gamma_2)$ and $v_2\in V_{re}(\Gamma_1)$. We have an unique value $\varepsilon([\Gamma,\gamma])=\varepsilon([\Gamma_2,\gamma_2])\circ\varepsilon([\Gamma_1,\gamma_1]).$

Case $3$: $v_1\nrightarrow v_2$ and $v_2\nrightarrow v_1$. In this case, $[\Gamma,\gamma]$ is an elementary diagram, that is, $[\Gamma,\gamma]=[\Gamma_1,\gamma_1]\otimes[\Gamma_2,\gamma_2]$ with each $[\Gamma_i,\gamma_i]$ $(i=1,2)$ prime, we also have other two nontrivial decompositions $[\Gamma,\gamma]=[\widetilde{\Gamma_2},\widetilde{\gamma_2}]\circ[\widetilde{\Gamma_1},\widetilde{\gamma_1}]$ with $$[\widetilde{\Gamma_2},\widetilde{\gamma_2}]=[\Gamma_1,\gamma_1]\otimes [\Gamma_I,\gamma_{cod([\Gamma_2,\gamma_2])}],\  [\widetilde{\Gamma_1},\widetilde{\gamma_1}]=[\Gamma_I,\gamma_{dom([\Gamma_1,\gamma_1])}]\otimes[\Gamma_2,\gamma_2] $$  or $$[\widetilde{\Gamma_2},\widetilde{\gamma_2}]=[\Gamma_2,\gamma_2]\otimes [\Gamma_I,\gamma_{cod([\Gamma_1,\gamma_1])}],\  [\widetilde{\Gamma_1},\widetilde{\gamma_1}]=[\Gamma_I,\gamma_{dom([\Gamma_2,\gamma_2])}]\otimes[\Gamma_1,\gamma_1], $$ where those $\Gamma_I$s are invertible graphs such that those composition can be defined. In the first case we have
\begin{align*}
&\varepsilon([\widetilde{\Gamma_2},\widetilde{\gamma_2}])\circ\varepsilon([\widetilde{\Gamma_1},\widetilde{\gamma_1}])&\\
=&\varepsilon([\Gamma_1,\gamma_1]\otimes [\Gamma_I,\gamma_{cod([\Gamma_2,\gamma_2])}])\circ\varepsilon([\Gamma_I,\gamma_{dom([\Gamma_1,\gamma_1])}]\otimes[\Gamma_2,\gamma_2])&(1)\\
=&(\varepsilon([\Gamma_1,\gamma_1])\otimes I_{cod([\Gamma_2,\gamma_2])})\circ(I_{dom([\Gamma_1,\gamma_1])}\otimes\varepsilon([\Gamma_2,\gamma_2]))&(2)\\
=&(\varepsilon([\Gamma_1,\gamma_1])\circ I_{dom([\Gamma_1,\gamma_1])})\otimes(I_{cod([\Gamma_2,\gamma_2])}\circ \varepsilon([\Gamma_2,\gamma_2]) )&(3)\\
=&\varepsilon([\Gamma_1,\gamma_1])\otimes\varepsilon([\Gamma_2,\gamma_2]),&(4)
\end{align*}
where $(1)$= definition of $[\widetilde{\Gamma_2},\widetilde{\gamma_2}]$ and $[\widetilde{\Gamma_1},\widetilde{\gamma_1}]$, $(2)$= definition of $\varepsilon$ for essential prime diagrams, $(3)$=functorial property of $\otimes$, $(4)$=property of identity morphisms.

In a similar way we can prove in the second case that $$\varepsilon([\widetilde{\Gamma_2},\widetilde{\gamma_2}])\circ\varepsilon([\widetilde{\Gamma_1},\widetilde{\gamma_1}])=\varepsilon([\Gamma_1,\gamma_1])\otimes\varepsilon([\Gamma_2,\gamma_2]).$$ Thus we prove that $\varepsilon([\Gamma,\gamma])$ is well-defined.\\

When $[\Gamma,\gamma]$ has $k+1$ real vertices for $k\geq2$.

Case $1$:  If $[\Gamma,\gamma]$ has an unique decomposition  as composition of essential prime diagrams, we have an unique $\varepsilon([\Gamma,\gamma])$ by definition.

Case $2$:  If $[\Gamma,\gamma]$ has two decompositions: $$[\Gamma,\gamma]=[\Gamma_{k+1},\gamma_{k+1}]\circ\cdots\circ[\Gamma_1,\gamma_1]$$ and $$[\Gamma,\gamma]=[\Gamma'_{k+1},\gamma'_{k+1}]\circ\cdots\circ[\Gamma'_1,\gamma'_1],$$ we want to prove that  $$\varepsilon([\Gamma_{k+1},\gamma_{k+1}])\circ\cdots\circ\varepsilon([\Gamma_1,\gamma_1])=\varepsilon([\Gamma'_{k+1},\gamma'_{k+1}])\circ\cdots\circ \varepsilon([\Gamma'_1,\gamma'_1]).$$

Let $v_i\in V_{re}(\Gamma_{i})\subseteq V_{re}(\Gamma)$ and $v'_i\in V_{re}(\Gamma'_{i})\subseteq V_{re}(\Gamma)$ be only real vertices of $\Gamma_{i}$ and $\Gamma'_{i}$ for $1\leq i\leq k+1$.

case $2.1$:
If $v_{k+1}=v'_{k+1}$, then we must have $$[\Gamma_{k+1},\gamma_{k+1}]\cong[\Gamma'_{k+1},\gamma'_{k+1}]$$ and $$[\Gamma_{k},\gamma_{k}]\circ\cdots\circ[\Gamma_1,\gamma_1]\cong[\Gamma'_{k},\gamma'_{k}]\circ\cdots\circ[\Gamma'_1,\gamma'_1],$$ thus $$\varepsilon([\Gamma_{k+1},\gamma_{k+1}])=\varepsilon([\Gamma'_{k+1},\gamma'_{k+1}])$$ and by induction  hypothesis
\begin{align*}
&\varepsilon([\Gamma_{k},\gamma_{k}])\circ\cdots\circ\varepsilon([\Gamma_1,\gamma_1])\\
=&\varepsilon([\Gamma_{k},\gamma_{k}]\circ\cdots\circ[\Gamma_1,\gamma_1])\\
=&\varepsilon([\Gamma'_{k},\gamma'_{k}]\circ\cdots\circ[\Gamma'_1,\gamma'_1])\\
=&\varepsilon([\Gamma'_{k},\gamma'_{k}])\circ\cdots\circ \varepsilon([\Gamma'_1,\gamma'_1]).
\end{align*}
Hence $\varepsilon([\Gamma_{k+1},\gamma_{k+1}])\circ\cdots\circ\varepsilon([\Gamma_1,\gamma_1])=\varepsilon([\Gamma'_{k+1},\gamma'_{k+1}])\circ\cdots\circ \varepsilon([\Gamma'_1,\gamma'_1]).$

case $2.2$:
If $v_{k+1}\neq v'_{k+1}$, then there must exist $l\in\{1,\cdots, k \}$ such that $v'_l=v_{k+1}.$ It is easy to see that for every $l<j\leq k+1$, we have $$v'_l\nrightarrow v'_j,\  v'_j\nrightarrow v'_l.$$

Now our strategy of proof is to construct a series of decomposition of $[\Gamma,\gamma]$ by interchanging  $v'_l$ and $v'_{l+1},...,v'_{k+1}$ in the composition $[\Gamma,\gamma]=[\Gamma'_{k+1},\gamma'_{k+1}]\circ\cdots\circ[\Gamma'_1,\gamma'_1]$ step by step and to show that these values defined using these compositions are equal. The construct is similar as  the case of elementary diagrams with two real vertices. Let  $f=\varepsilon([\Gamma_{v'_{l}},\gamma_{v'_{l}}])$, $g=\varepsilon([\Gamma_{v'_{l+1}},\gamma_{v'_{l+1}}])$ and  $p,q, r,s $ be cardinalities  of $In(v'_l)$, $Out(v'_l)$, $In(v'_{l+1})$, $Out(v'_{l+1})$, respectively. The fact that $v'_l\nrightarrow v'_{l+1}$ implies that $[\Gamma'_{l+1},\gamma'_{l+1}]\circ[\Gamma'_{l},\gamma'_{l}]$ is an elementary diagram, thus we can assume that it is of the forms

$$\bigotimes_{w=1}^a[\Gamma_{Uw},\gamma^U_w]\otimes[\Gamma_{v'_{l}},\gamma_{v'_{l}}]\otimes \bigotimes_{w=a+1}^b[\Gamma_{Uw},\gamma^U_w]\otimes[\Gamma_{v'_{l+1}},\gamma_{v'_{l+1}}]\otimes\bigotimes_{w=b+1}^c[\Gamma_{Uw},\gamma^U_w]$$ or $$\bigotimes_{w=1}^a[\Gamma_{Uw},\gamma^U_w]\otimes[\Gamma_{v'_{l+1}},\gamma_{v'_{l+1}}]\otimes \bigotimes_{w=a+1}^b[\Gamma_{Uw},\gamma^U_w]\otimes[\Gamma_{v'_{l}},\gamma_{v'_{l}}]\otimes\bigotimes_{w=b+1}^c[\Gamma_{Uw},\gamma^U_w].$$
where $a\leq b \leq c$ are non-negative integers, $[\Gamma_{Uw},\gamma^U_w]$ $(1\leq w\leq c)$ are unitary subdiagrams if $c\geq 1$, and  $\bigotimes_{w=\mu+1}^\nu[\Gamma_{Uw},\gamma^U_w]$ is defined to be an empty diagram in case of $\mu=\nu\geq 0$.

In the first case, we can assume $[\Gamma'_{l},\gamma'_{l}]$ and $[\Gamma_{v'_{l+1}},\gamma_{v'_{l+1}}]$ as

$$\bigotimes_{w=1}^a[\Gamma_{Uw},\gamma^U_w]\otimes [\Gamma_{v'_{l}},\gamma_{v'_{l}}]\otimes\bigotimes_{w=a+1}^b[\Gamma_{Uw},\gamma^U_w]\otimes [\Gamma_I,\gamma_{dom (g)}]\otimes \bigotimes_{w=b+1}^c[\Gamma_{Uw},\gamma^U_w]$$
and
$$\bigotimes_{w=1}^a[\Gamma_{Uw},\gamma^U_w]\otimes[\Gamma_I,\gamma_{cod (f)}]\otimes\bigotimes_{w=a+1}^b[\Gamma_{Uw},\gamma^U_w] \otimes[\Gamma_{v'_{l+1}},\gamma_{v'_{l+1}}]\otimes  \bigotimes_{w=b+1}^c[\Gamma_{Uw},\gamma^U_w],$$ respectively.

Now we define two new essential prime diagrams $[\widehat{\Gamma},\widehat{\gamma}]$ and $[\widehat{\widehat{\Gamma}},\widehat{\widehat{\gamma}}]$ as

$$\bigotimes_{w=1}^a[\Gamma_{Uw},\gamma^U_w]\otimes[\Gamma_I,\gamma_{dom (f)}]\otimes\bigotimes_{w=a+1}^b[\Gamma_{Uw},\gamma^U_w] \otimes[\Gamma_{v'_{l+1}},\gamma_{v'_{l+1}}]\otimes  \bigotimes_{w=b+1}^c[\Gamma_{Uw},\gamma^U_w]$$
and
$$\bigotimes_{w=1}^a[\Gamma_{Uw},\gamma^U_w]\otimes [\Gamma_{v'_{l}},\gamma_{v'_{l}}]\otimes\bigotimes_{w=a+1}^b[\Gamma_{Uw},\gamma^U_w]\otimes [\Gamma_I,\gamma_{cod (g)}]\otimes \bigotimes_{w=b+1}^c[\Gamma_{Uw},\gamma^U_w],$$ respectively.

It can be directly checked  that \begin{align*}&[\widehat{\widehat{\Gamma}},\widehat{\widehat{\gamma}}]\circ[\widehat{\Gamma},\widehat{\gamma}]\\
=&[\Gamma'_{l+1},\gamma'_{l+1}]\circ[\Gamma'_{l},\gamma'_{l}]\\
=& \bigotimes_{w=1}^a[\Gamma_{Uw},\gamma^U_w]\otimes[\Gamma_{v'_{l}},\gamma_{v'_{l}}]\otimes \bigotimes_{w=a+1}^b[\Gamma_{Uw},\gamma^U_w]\otimes\\&[\Gamma_{v'_{l+1}},\gamma_{v'_{l+1}}]\otimes\bigotimes_{w=b+1}^c[\Gamma_{Uw},\gamma^U_w]
\end{align*}
and
\begin{align*}&\varepsilon([\widehat{\widehat{\Gamma}},\widehat{\widehat{\gamma}}])\circ\varepsilon([\widehat{\Gamma},\widehat{\gamma}])\\
=&\varepsilon([\Gamma'_{l+1},\gamma'_{l+1}])\circ\varepsilon([\Gamma'_{l},\gamma'_{l}])\\
=& \bigotimes_{w=1}^a\varepsilon([\Gamma_{Uw},\gamma^U_w])\otimes\varepsilon([\Gamma_{v'_{l}},\gamma_{v'_{l}}])\otimes \bigotimes_{w=a+1}^b\varepsilon([\Gamma_{Uw},\gamma^U_w])\otimes\\&\varepsilon([\Gamma_{v'_{l+1}},\gamma_{v'_{l+1}}])\otimes\bigotimes_{w=b+1}^c\varepsilon([\Gamma_{Uw},\gamma^U_w]).
\end{align*}

Thus we get a new decomposition of $[\Gamma,\gamma]$ as $$[\Gamma'_{k+1},\gamma'_{k+1}]\circ\cdots\circ[\Gamma'_{l+2},\gamma'_{l+2}]\circ[\widehat{\widehat{\Gamma}},\widehat{\widehat{\gamma}}]\circ[\widehat{\Gamma},\widehat{\gamma}]\circ[\Gamma'_{l-1},\gamma'_{l-1}]\circ\cdots\circ[\Gamma'_1,\gamma'_1],$$ and the value of this decomposition is same as that of
$$[\Gamma'_{k+1},\gamma'_{k+1}]\circ\cdots\circ[\Gamma'_{l+2},\gamma'_{l+2}]\circ[\Gamma'_{l+1},\gamma'_{l+1}]\circ[\Gamma'_{l},\gamma'_{l}]\circ[\Gamma'_{l-1},\gamma'_{l-1}]\circ\cdots\circ[\Gamma'_1,\gamma'_1].$$

In the second case, we can give  similar construction and prove similar results. Repeating the construction  $k+1-l$ times, we will get a new decomposition with $v_l$ appearing in the last level and the value equal to that of $[\Gamma'_{k+1},\gamma'_{k+1}]\circ\cdots\circ[\Gamma'_1,\gamma'_1].$ Using the result of case $2.1$ proved above, we prove that the value of $[\Gamma'_{k+1},\gamma'_{k+1}]\circ\cdots\circ[\Gamma'_1,\gamma'_1]$ and $[\Gamma_{k+1},\gamma_{k+1}]\circ\cdots\circ[\Gamma_1,\gamma_1].$ Hence we complete the proof of the fact that $\varepsilon$ is well-defined.\\

For any two composable diagrams $[\Gamma_1,\gamma_1],\ [\Gamma_2,\gamma_2]$,  the fact $\varepsilon([\Gamma_2,\gamma_2]\circ[\Gamma_1,\gamma_1])=\varepsilon([\Gamma_2,\gamma_2])\circ\varepsilon([\Gamma_1,\gamma_1])$ is a direct consequence of the definition of $\varepsilon$ and associative law of composition of morphisms in $\mathcal{V}$.\\

The uniqueness of $\varepsilon$ is obvious.
\end{proof}

The proof of  following corollaries are easy, we leave them to readers.
\begin{cor}
$\varepsilon([\Gamma_1,\gamma_1]\otimes[\Gamma_2,\gamma_2])=\varepsilon([\Gamma_1,\gamma_1])\otimes\varepsilon([\Gamma_2,\gamma_2])$ for any two diagrams $[\Gamma_1,\gamma_1],\ [\Gamma_2,\gamma_2]$.
\end{cor}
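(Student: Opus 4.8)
The plan is to reduce the statement to two structural facts about $\varepsilon$ already in hand---its compatibility with composition (Theorem 4.4.2) and the way it is defined on elementary diagrams---together with the combinatorial decomposition of planar diagrams and the bifunctoriality of $\otimes$ in $\mathcal{V}$. First I would dispose of the trivial and base cases: if either diagram is empty or invertible, or if both $[\Gamma_1,\gamma_1]$ and $[\Gamma_2,\gamma_2]$ are prime or unitary, then $[\Gamma_1,\gamma_1]\otimes[\Gamma_2,\gamma_2]$ is elementary, and its ordered list of prime/unitary components is exactly that of $[\Gamma_1,\gamma_1]$ followed by that of $[\Gamma_2,\gamma_2]$, since in a tensor product of planar graphs the edges of the first factor precede those of the second. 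Hence the identity follows directly from the defining clause of $\varepsilon$ on elementary diagrams together with the associativity of $\otimes$; this establishes $\varepsilon(A\otimes B)=\varepsilon(A)\otimes\varepsilon(B)$ for all elementary $A,B$.

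For the general case I would invoke the decomposition theorem (Theorem \ref{decomposition}) to write $[\Gamma_1,\gamma_1]=A_m\circ\cdots\circ A_1$ and $[\Gamma_2,\gamma_2]=B_n\circ\cdots\circ B_1$ with each $A_i,B_j$ essential prime, padding the shorter chain with invertible (identity) diagrams on the appropriate boundary so that both chains acquire a common length $N$. Applying the middle-four-interchange law (Proposition 3.5.7, in its evident diagrammatic form, where the valuations combine via $\sqcup$ and $\vee$) repeatedly gives
$$[\Gamma_1,\gamma_1]\otimes[\Gamma_2,\gamma_2]=(A_N\otimes B_N)\circ\cdots\circ(A_1\otimes B_1).$$
Each $A_k\otimes B_k$ is elementary, so by the base case $\varepsilon(A_k\otimes B_k)=\varepsilon(A_k)\otimes\varepsilon(B_k)$. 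Feeding this through the composition law of $\varepsilon$ (Theorem 4.4.2) yields
$$\varepsilon([\Gamma_1,\gamma_1]\otimes[\Gamma_2,\gamma_2])=\bigl(\varepsilon(A_N)\otimes\varepsilon(B_N)\bigr)\circ\cdots\circ\bigl(\varepsilon(A_1)\otimes\varepsilon(B_1)\bigr),$$
and one final application of the bifunctoriality of $\otimes$ in $\mathcal{V}$, namely $(f_2\otimes g_2)\circ(f_1\otimes g_1)=(f_2\circ f_1)\otimes(g_2\circ g_1)$, regroups this as $\bigl(\varepsilon(A_N)\circ\cdots\circ\varepsilon(A_1)\bigr)\otimes\bigl(\varepsilon(B_N)\circ\cdots\circ\varepsilon(B_1)\bigr)=\varepsilon([\Gamma_1,\gamma_1])\otimes\varepsilon([\Gamma_2,\gamma_2])$.

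The genuinely delicate points here are bookkeeping rather than conceptual. I expect the main obstacle to be organizing the padding so that the two essential-prime decompositions line up into the same number of levels with matching domains and codomains at each grafting site, and verifying that the middle-four-interchange law transports verbatim from planar graphs to planar diagrams, i.e. that $\gamma_1\sqcup\gamma_2$ and $\gamma_2\vee\gamma_1$ are compatible with the regrouping. Both checks are routine given the definitions of tensor product and composition of diagrams in Sections 4.2--4.3, so the proof is indeed short once the elementary base case and the two cited theorems are in place. An alternative, equally viable route would be a direct induction on $|V_{re}(\Gamma_2)|$ mirroring the well-definedness argument of Theorem 4.4.2, but the decomposition-plus-interchange argument is cleaner and reuses results already proved.
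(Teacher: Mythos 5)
Your proof is correct, and it is essentially the argument the paper intends: the paper gives no proof of this corollary (it is explicitly "left to readers"), but your route---the elementary base case read off from the defining clause of $\varepsilon$, essential-prime decomposition with invertible padding, the middle-four-interchange law for diagrams, and the composition law of $\varepsilon$---reuses exactly the machinery of Theorem 4.4.2, whose own well-definedness argument for two-vertex diagrams already performs the same invertible-padding maneuver. One trivial slip to fix in the write-up: if only one factor is empty or invertible the tensor product need not be elementary (e.g. $\varnothing\otimes B=B$ for general $B$), but those cases are covered either directly by the unit axioms of $\mathcal{V}$ (empty factor) or by your general padded-decomposition argument (invertible factor, treated as a length-one chain), so nothing substantive is lost.
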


\begin{cor}
If $[\Gamma_1,\gamma_1],\ [\Gamma'_1,\gamma'_1]$ and $[\Gamma_2,\gamma_2],\  [\Gamma'_2,\gamma'_2]$ are composable, then
\begin{align*}
&\varepsilon([\Gamma'_1,\gamma'_1]\otimes[\Gamma'_2,\gamma'_2])\circ \varepsilon([\Gamma_1,\gamma_1]\otimes[\Gamma_2,\gamma_2])\\=&\varepsilon([\Gamma'_1,\gamma'_1]\circ[\Gamma_1,\gamma_1])\otimes\varepsilon([\Gamma'_2,\gamma'_2]\circ[\Gamma_2,\gamma_2]).
\end{align*}
\end{cor}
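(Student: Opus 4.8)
The plan is to recognise this statement as nothing more than the image under the evaluation map $\varepsilon$ of the middle-four-interchange law, so that it follows by combining the two structural properties of $\varepsilon$ already established---compatibility with composition (Theorem 4.4.2) and with tensor product (Corollary 4.4.3)---with the interchange (bifunctoriality) law that holds in the strict tensor category $\mathcal{V}$ by definition.

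Before the main computation I would record two well-definedness checks. First, the left-hand side makes sense: the codomain of $[\Gamma_1,\gamma_1]\otimes[\Gamma_2,\gamma_2]$ is the concatenation $cod([\Gamma_1,\gamma_1])\,cod([\Gamma_2,\gamma_2])$ and the domain of $[\Gamma'_1,\gamma'_1]\otimes[\Gamma'_2,\gamma'_2]$ is $dom([\Gamma'_1,\gamma'_1])\,dom([\Gamma'_2,\gamma'_2])$; the two composability hypotheses $cod([\Gamma_i,\gamma_i])=dom([\Gamma'_i,\gamma'_i])$ for $i=1,2$ force these words to agree. Second, the same hypotheses say that $\varepsilon([\Gamma'_i,\gamma'_i])$ and $\varepsilon([\Gamma_i,\gamma_i])$ have matching (co)domains and are therefore composable morphisms of $\mathcal{V}$, so every composite written below is legitimate.

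The heart of the argument is then a three-step rewriting, starting from the left-hand side. First, I would apply Corollary 4.4.3 to each of the two tensor products to move $\otimes$ outside $\varepsilon$, turning the expression into $(\varepsilon([\Gamma'_1,\gamma'_1])\otimes\varepsilon([\Gamma'_2,\gamma'_2]))\circ(\varepsilon([\Gamma_1,\gamma_1])\otimes\varepsilon([\Gamma_2,\gamma_2]))$. Next, I would invoke the interchange law of $\mathcal{V}$ to rewrite this composite of tensor products as the tensor product of composites, $(\varepsilon([\Gamma'_1,\gamma'_1])\circ\varepsilon([\Gamma_1,\gamma_1]))\otimes(\varepsilon([\Gamma'_2,\gamma'_2])\circ\varepsilon([\Gamma_2,\gamma_2]))$. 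Finally, applying Theorem 4.4.2 to each of the two composition factors folds the composites back inside $\varepsilon$, yielding exactly $\varepsilon([\Gamma'_1,\gamma'_1]\circ[\Gamma_1,\gamma_1])\otimes\varepsilon([\Gamma'_2,\gamma'_2]\circ[\Gamma_2,\gamma_2])$, the right-hand side.

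I do not anticipate a real obstacle; the only thing requiring attention is the domain/codomain bookkeeping that makes each appeal to composition-compatibility and to the interchange law legitimate. As a conceptually cleaner alternative, I could instead observe that the middle-four-interchange law for planar graphs (Proposition 3.5.7) upgrades---once the valuations $\gamma_1\sqcup\gamma_2$ and $\gamma_2\vee\gamma_1$ are matched in the evident way---to the diagram identity $([\Gamma'_1,\gamma'_1]\otimes[\Gamma'_2,\gamma'_2])\circ([\Gamma_1,\gamma_1]\otimes[\Gamma_2,\gamma_2])=([\Gamma'_1,\gamma'_1]\circ[\Gamma_1,\gamma_1])\otimes([\Gamma'_2,\gamma'_2]\circ[\Gamma_2,\gamma_2])$; applying $\varepsilon$ to this single diagram and then using Theorem 4.4.2 on the left and Corollary 4.4.3 on the right recovers the two sides of the corollary at once, displaying it as an immediate consequence of the structure already in place.
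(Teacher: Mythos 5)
Your proposal is correct. Note that the paper itself gives no proof of this corollary---it is stated immediately after Theorem 4.4.2 and Corollary 4.4.3 with the remark that the proofs ``are easy, we leave them to readers''---and your three-step argument (pull the tensor products outside $\varepsilon$ via Corollary 4.4.3, apply the middle-four-interchange law that holds in $\mathcal{V}$ by strict bifunctoriality of $\otimes$, then fold the two composites back inside $\varepsilon$ via the fourth clause of Theorem 4.4.2) is exactly the intended argument, with the domain/codomain bookkeeping handled properly. Your alternative route through Proposition 3.5.7 at the level of diagrams is equally valid and arguably cleaner, since it derives both sides from a single diagram identity before $\varepsilon$ is ever applied.
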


Similar as contraction of a planar graph and coarse-graining of a fissus planar graph, we introduce the following notions which are helpful for analysing the combinatorial nature of tensor calculus.

Let $[\Gamma, \gamma]$ be a planar diagram in $\mathcal{V}$, we define its \textbf{contraction} to be a prime diagram in $\mathcal{V}$ with underlying planar graph being the contraction of $\Gamma$, domain and codomain equal to those of $[\Gamma, \gamma]$ and vertex-decoration being $\varepsilon_{\mathcal{V}}([\Gamma, \gamma])$. The construction of contraction defines a map $\kappa:\mathsf{Diag}\rightarrow \mathsf{Prim}$.

For a fissus planar diagram $[\Gamma, P_{in},P_{out},\gamma]$ in $\mathcal{V}$, its \textbf{coarse-graining} is defined to be a prime diagram in $\mathcal{V}$ with underlying planar graph being the coarse-graining of $(\Gamma, P_{in},P_{out})$, domain and codomain equal to fusion of $[\Gamma, \gamma]$'s and vertex-decoration being $\varepsilon_{\mathcal{V}}([\Gamma, \gamma])$. Concretely, let $[\Gamma, P_{in},P_{out},\gamma]$ be a fissus planar diagram with $P_{in}=(i_1<\cdots <i_{\mu_1})<\cdots <(i_{\mu_1+\cdots+\mu_{m-1}+1}<\cdots< i_{\mu_1+\cdots+\mu_m})$, $P_{out}=(o_1<\cdots <o_{\nu_1})<\cdots <(o_{\nu_1+\cdots+\nu_{n-1}+1}<\cdots< o_{\nu_1+\cdots+\nu_n})$, and assume $\gamma_o(i_k)=x_k$ for $1\leq k\leq \mu_1+\cdots+\mu_m$, $\gamma_o(o_l)=y_l$ for $1\leq l\leq \nu_1+\cdots+\nu_n$, then the coarse-grained valuation $\widehat{\gamma}$ is defined as following:
$\widehat{\gamma}_o(\{i_1,\cdots,i_{\mu_1}\})=x_1\otimes \cdots\otimes x_{\mu_1},\cdots, \widehat{\gamma}_o(i_{\mu_1+\cdots+\mu_{m-1}+1},\cdots, i_{\mu_1+\cdots+\mu_m})=i_{\mu_1+\cdots+\mu_{m-1}+1}\otimes\cdots\otimes i_{\mu_1+\cdots+\mu_m},\widehat{\gamma}_o(\{o_1,\cdots,o_{\nu_1}\})=y_1\otimes \cdots\otimes y_{\nu_1},\cdots, \widehat{\gamma}_o(o_{\nu_1+\cdots+\nu_{n-1}+1},\cdots, o_{\nu_1+\cdots+\nu_n})=y_{\nu_1+\cdots+\nu_{n-1}+1}\otimes\cdots\otimes y_{\nu_1+\cdots+\nu_n}$ and $\widehat{\gamma}_m(v)=\varepsilon_{\mathcal{V}}([\Gamma, \gamma])$ for $v\in V(\underbrace{(\Gamma, P_{in},P_{out})})$.
 The construction of contraction defines a map $\zeta:\mathsf{Diag}\rightarrow \mathsf{Prim}$.

The following proposition shows that the constructions of evaluation, contraction and coarse-graining are "functorial".

\begin{prop}
Let $K:\mathcal{V}_1\rightarrow \mathcal{V}_2$ be a strict tensor functor, then we have
$$
\begin{matrix}
$$\xymatrix{\mathsf{Diag}(\mathcal{V}_1)\ar[d]_{K_{\ast}}\ar[r]^{\varepsilon_{\mathcal{V}_1}}&Mor(\mathcal{V}_1)\ar[d]^{K}\\\mathsf{Diag}(\mathcal{V}_2)\ar[r]^{\varepsilon_{\mathcal{V}_2}}&Mor(\mathcal{V}_2),}$$
&
$$\xymatrix{\mathsf{Diag}(\mathcal{V}_1)\ar[d]_{K_{\ast}}\ar[r]^{\kappa_{\mathcal{V}_1}}&\mathsf{Prim}(\mathcal{V}_1)\ar[d]^{K_{\ast}}\\\mathsf{Diag}(\mathcal{V}_2)\ar[r]^{\kappa_{\mathcal{V}_2}}&\mathsf{Prim}(\mathcal{V}_2),}$$
&
$$\xymatrix{\mathsf{Diag}(\mathcal{V}_1)\ar[d]_{K_{\ast}}\ar[r]^{\zeta_{\mathcal{V}_1}}&\mathsf{Prim}(\mathcal{V}_1)\ar[d]^{K_{\ast}}\\\mathsf{Diag}(\mathcal{V}_2)\ar[r]^{\zeta_{\mathcal{V}_2}}&\mathsf{Prim}(\mathcal{V}_2).}$$
\end{matrix}
$$
\end{prop}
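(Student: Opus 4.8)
The plan is to deduce all three commutative squares from the single statement that evaluation is natural with respect to strict tensor functors, i.e. that $\varepsilon_{\mathcal{V}_2}\circ K_{\ast}=K\circ \varepsilon_{\mathcal{V}_1}$, and then read off the contraction and coarse-graining squares as formal consequences. First I would prove the evaluation square. Rather than invoking the uniqueness clause of Theorem $4.4.2$ directly (which compares two maps with the same target category, whereas here source and target differ), I would argue by induction on the number $n=|V_{re}(\Gamma)|$ of real vertices, mirroring the construction of $\varepsilon$ in the proof of Theorem $4.4.2$.

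For the base cases, note that $K_{\ast}[\Gamma,\gamma]=[\Gamma,K_{\ast}\gamma]$ with $(K_{\ast}\gamma)_o=K\circ\gamma_o$ and $(K_{\ast}\gamma)_m=K\circ\gamma_m$. When $[\Gamma,\gamma]$ is empty, both sides give the identity of the unit object, using that $K$ is strict monoidal so $K(1_{\mathcal{V}_1})=1_{\mathcal{V}_2}$; when $\Gamma$ is a prime graph with real vertex $v$, both sides equal $K(\gamma_m(v))$; when $\Gamma$ is unitary with virtual edge $e$, both sides equal $\mathrm{Id}_{K(\gamma_o(e))}$, using functoriality of $K$. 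For the inductive step I would use Theorem \ref{decomposition} to write $[\Gamma,\gamma]=[\Gamma_n,\gamma_n]\circ\cdots\circ[\Gamma_1,\gamma_1]$ as a composite of essential prime diagrams and, for each essential prime factor, as a tensor product of prime and unitary diagrams; then the compatibility of $K_{\ast}$ with composition and tensor product (Proposition $4.3.2$), the compatibility of $\varepsilon$ with composition (Theorem $4.4.2$) and with tensor product (Corollary $4.4.3$), and the fact that $K$ preserves composition and tensor product, combine to push $K$ through the evaluation. Well-definedness of $\varepsilon$ on both sides is already guaranteed by Theorem $4.4.2$, so no independence-of-decomposition argument needs to be repeated here.

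For the contraction square, recall that $\kappa_{\mathcal{V}}([\Gamma,\gamma])$ is the prime diagram whose underlying graph is the contraction of $\Gamma$ (which depends only on $\Gamma$, not on $K$), whose domain and codomain agree with those of $[\Gamma,\gamma]$, and whose single vertex is decorated by $\varepsilon_{\mathcal{V}}([\Gamma,\gamma])$. Applying $K_{\ast}$ relabels the half-edges by $K\circ\gamma_o$ and the vertex by $K(\varepsilon_{\mathcal{V}_1}([\Gamma,\gamma]))$, which by the evaluation square equals $\varepsilon_{\mathcal{V}_2}(K_{\ast}[\Gamma,\gamma])$; this is exactly the vertex decoration of $\kappa_{\mathcal{V}_2}(K_{\ast}[\Gamma,\gamma])$, so the two prime diagrams coincide. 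The coarse-graining square is identical in spirit, with the one extra point that the coarse-grained valuation $\widehat{\gamma}_o$ sends a block $\{i_1,\dots,i_{\mu_1}\}$ to the object $\gamma_o(i_1)\otimes\cdots\otimes\gamma_o(i_{\mu_1})$; commuting $K_{\ast}$ past $\zeta$ therefore requires $K(\gamma_o(i_1)\otimes\cdots\otimes\gamma_o(i_{\mu_1}))=K(\gamma_o(i_1))\otimes\cdots\otimes K(\gamma_o(i_{\mu_1}))$, which is precisely the strictness of $K$ as a monoidal functor.

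The main obstacle I anticipate is purely bookkeeping in the inductive step of the evaluation square: one must be careful that the decompositions of $[\Gamma,\gamma]$ and of $K_{\ast}[\Gamma,\gamma]$ are compatible under $K_{\ast}$, so that the same bracketing of compositions and tensor products is used on both sides before pushing $K$ through. Since $K_{\ast}$ is by definition induced by post-composition with $K$ and preserves the underlying graph-theoretic structure (Proposition $4.3.2$), a decomposition of $[\Gamma,\gamma]$ transports verbatim to one of $K_{\ast}[\Gamma,\gamma]$, which removes the difficulty; the remaining work is the routine verification that the two strict tensor structures interleave correctly, and it is the strict (as opposed to merely weak) monoidality of $K$ that makes the coarse-graining object-level identity hold on the nose.
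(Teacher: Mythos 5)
Your proposal is correct, but there is nothing in the paper to compare it against: the paper states this proposition at the end of Section~4.4 with no proof at all, treating it as an evident consequence of the definitions. Your argument supplies exactly the verification the authors omit, and it uses only the paper's own machinery: the base cases (empty, prime, unitary) follow from the defining clauses of $\varepsilon$ in Theorem~4.4.2 together with strictness and functoriality of $K$; the inductive step combines the decomposition theorem (Theorem~3.5.7), compatibility of $K_{\ast}$ with composition and tensor product (Proposition~4.3.2), and compatibility of $\varepsilon$ with composition and tensor product (Theorem~4.4.2 and Corollary~4.4.3). Your observation that the uniqueness clause of Theorem~4.4.2 cannot be invoked directly, because $K\circ\varepsilon_{\mathcal{V}_1}$ and $\varepsilon_{\mathcal{V}_2}\circ K_{\ast}$ are maps $\mathsf{Diag}(\mathcal{V}_1)\rightarrow Mor(\mathcal{V}_2)$ rather than endomaps over a single category, is a genuine subtlety that the paper silently glosses over, and your induction is the right replacement. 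Likewise, reducing the $\kappa$- and $\zeta$-squares to the evaluation square by matching vertex and edge decorations, and isolating strict (rather than weak) monoidality of $K$ as the precise ingredient needed for the block-to-tensor-product identity in the coarse-graining case, is exactly where the content lies. Two small housekeeping points: in the inductive step, diagrams with zero real vertices (invertible diagrams, i.e.\ tensor products of unitary diagrams) should be folded into the base case, which your tensor-compatibility argument already handles; and note that the paper's $\zeta$ is really defined on fissus diagrams $\mathsf{Diag}_F(\mathcal{V})$ despite being written with source $\mathsf{Diag}(\mathcal{V})$ --- a notational slip of the paper, not of your proof.
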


\subsection{Coarse-graining of a compound planar graph}
Recall that $\mathbf{\Gamma}=(\mathsf{\Gamma},\{x\},s,t)$ in example $4.1.2$ and $\mathbf{\Gamma}_F=(\mathsf{\Gamma}_F,W(\{x\}), dom, cod )$ in example $4.1.6$ are tensor schemes, their diagrams are important for analyzing the combinatorics  of tensor calculus .
\begin{defn}
A planar diagram in $\mathbf{\Gamma}$ is called a \textbf{compound planar graph}, and a planar diagram in $\mathbf{\Gamma}_F$ is called a \textbf{compound fissus planar graph}.
\end{defn}

So a compound planar graph can be equivalently described as a pair $[\Gamma,\lambda]$ with $\Gamma$ being a planar graph  and $\lambda$ being a map $\lambda:V_{re}(\Gamma)\rightarrow \mathsf{\Gamma}$, such that for any $v\in V_{re}(\Gamma)$ there is an equivalence $\phi_v:\underline{\lambda(v)}\overset{\sim}\rightarrow (\overrightarrow{\Gamma_v},\prec_{\Gamma_v})$ from the contraction of value $\lambda(v)$ to the vertex subgraph  $(\overrightarrow{\Gamma_v},\prec_{\Gamma_v})$ (for precise definition see the proof of theorem $3.5.7$). We call $\Gamma$  the \textbf{contraction} of the compound planar graph, which is also denoted as $\underline{[\Gamma,\lambda]}$ and  $\lambda(v)$ is called the component map with each $\lambda(v)$  called a \textbf{component}. Two compound planar graphs are equivalent if both their contractions and their corresponding components are equivalent, and we denote the set of equivalence classes of compound planar graphs by $\mathsf{\Gamma}_C$ or $\mathsf{\Gamma}(\mathbf{\Gamma})$.

Now we introduce two strict tensor categories $\mathbf{\Gamma}^{\otimes}=(\mathsf{\Gamma}, W(\{x\}), s,t)$ and $\mathbf{\Gamma}^{\otimes}_F=(\mathsf{\Gamma}_F, W(W(\{x\})), dom, cod)$ where their set of morphisms, objects and sources, targets are same as tensor schemes in example $4.1.2$ and $4.1.6$. Their tensor product and composition of morphisms are tensor product and composition of planar graphs and fissus planar graphs, respectively. Their tensor products of objects are given by juxtaposition of words and their unit objects are words of length zero. For an object $\overset{m\ times}{\overbrace{x\cdots x}}$ in $\mathbf{\Gamma}^{\otimes}$, its identity morphism is the $(m,m)$-invertible planar graph, for an object $(\overset{\mu_1\ times}{\overbrace{x\cdots x}})\cdots (\overset{\mu_m\ times}{\overbrace{x\cdots x}})$ in $\mathbf{\Gamma}^{\otimes}_F$, its identity morphism is the $(\mu_1+\cdots +\mu_m,\mu_1+\cdots +\mu_m)$-invertible planar graph with fissus structure $P_{in}=(i_1<\cdots <i_{\mu_1})<\cdots <(i_{\mu_1+\cdots+\mu_{m-1}+1}<\cdots< i_{\mu_1+\cdots+\mu_m})$ and $P_{out}=(o_1<\cdots <o_{\mu_1})<\cdots <(o_{\mu_1+\cdots+\mu_{m-1}+1}<\cdots< o_{\mu_1+\cdots+\mu_m})$. The identity morphisms of unit objects are empty graphs.

A compound planar graph can also be viewed as a special kind of planar diagram in the strict tensor category $\mathbf{\Gamma}^{\otimes}$ with its flag-decorations being non-decomposable objects, so as an usual diagram in a strict tensor category we can get a value in $Mor(\mathbf{\Gamma}^{\otimes})$, that is, given a compound planar graph, we can get a planar graph by tensoring and composing its components in the way its contraction provided, and we call this planar graph the \textbf{value} of this compound planar graph which is given exactly by the  evaluation map of $\mathbf{\Gamma}^{\otimes}$ denoted as $$Z:\mathsf{\Gamma}(\mathbf{\Gamma})\rightarrow \mathsf{\Gamma}.$$

When all components of a compound planar graph are reduced, another more intuitive view is to take it as a planar graph with a planar division structure with each cell being a component, and if we contract all its components, we will get a planar graph which reflects the \textbf{planar division structure} (i.e., a finite set of "admissible" planar sub-graphs). So the contraction of a compound planar graph represents its planar division structure and the value of it can be obtained by substitution all its (reduced) components into its contraction, and in this sense, contraction and substitution are reciprocal constructions. Any reduced component of a compound planar graph will become an planar sub-graph of the value, so we also call it an admissible planar sub-graph of the compound planar graph.

As diagrams in a strict tensor scheme, there are  well-defined notions of tensor products and compositions of  compound planar graphs and of their equivalence classes. The contraction  of a compound planar graphs defines a map
$$\xi:\mathsf{\Gamma}(\mathbf{\Gamma})\rightarrow \mathsf{\Gamma}$$
preserving tensor product and composition, and for any compound planar graph the map $\xi$ just records its planar division structure and forgets its components.

A compound planar graph $[\Gamma,\lambda]$ is called prime if its contraction $\Gamma$ is prime. We denote the set of classes of prime compound planar graphs by $\mathsf{Prim}_C$ or $\mathsf{Prim}(\mathbf{\Gamma})$ and the restriction of $Z:\mathsf{Prim}(\mathbf{\Gamma})\rightarrow \mathsf{\Gamma}$ gives  a natural isomorphism of sets $Z:\mathsf{\Gamma}(\mathbf{\Gamma})\overset{\sim}\rightarrow \mathsf{\Gamma}$. In fact, $Z([\Gamma_v, \lambda])=\lambda(v).$

\begin{defn}
A \textbf{fission structure} of a compound fissus planar graphs is  a fission structure of its contraction.
\end{defn}

A compound planar graph is called fissus if it equips with a fission structure and we  denote the set of equivalent classes of fissus compound planar graph by $\mathsf{\Gamma}_{FC}$ or $(\mathsf{\Gamma}_C)_F$ or $\mathsf{\Gamma}_F(\mathbf{\Gamma})$. Any fission structure of a compound planar graph will induce a fission structure  on its value, so the evaluation map induces a map $$Z_F:\mathsf{\Gamma}_F(\mathbf{\Gamma})\rightarrow \Gamma_F.$$ The construction of taking contraction also defines a map $$\xi_F:\mathsf{\Gamma}_F(\mathbf{\Gamma})\rightarrow \Gamma_F$$
which just forgets the components of a fissus compound planar graph.\\

Similarly a compound fissus planar graph can be equivalently described as a pair $[\Gamma,\lambda]$ with $\Gamma$ being a planar graph  and $\lambda$ being a map $\lambda:V_{re}(\Gamma)\rightarrow \mathsf{\Gamma}_F$, such that for any $v\in V_{re}(\Gamma)$ there is an equivalence $\phi_v:\underbrace{\lambda(v)}\overset{\sim}\rightarrow (\overrightarrow{\Gamma_v},\prec_{\Gamma_v})$ from the coarse-graining of value $\lambda(v)$ to the vertex subgraph  $(\overrightarrow{\Gamma_v},\prec_{\Gamma_v})$. We call $\Gamma$  the \textbf{coarse-graining} of the compound planar graph, which is also denoted as $\underbrace{[\Gamma,\lambda]}$ and  $\lambda(v)$ is called the component map with each $\lambda(v)$  called a \textbf{component}. Two compound fissus planar graphs are equivalent if both their coarse-grainings and the corresponding components are equivalent, and we denote the set of equivalence classes of compound fissus planar graphs by $(\mathsf{\Gamma}_F)_C$ or $\mathsf{\Gamma}(\mathbf{\Gamma}_F)$. Given a compound fissus planar graph, we can get a fissus planar graph by tensoring and composing its components in the way its coarse-graining provided, and we call this fissus planar graph the \textbf{value} of this compound fissus planar graph which is given exactly by the  evaluation map of $\mathbf{\Gamma}_F^{\otimes}$ denoted as $$\widehat{Z}:\mathsf{\Gamma}(\mathbf{\Gamma}_F)\rightarrow \mathsf{\Gamma}_F.$$

When all components of a compound fissus planar graph are reduced, another more intuitive view is to take it as a fissus planar graph with a planar division structure with each cell being a component, and if we coarse-grain all its components, we will get a planar graph which reflects the planar division structure. So the coarse-graining of a compound fissus planar graph represents its planar division structure and the value of it can be obtained by "substitution" all its (reduced) components into its coarse-graining, and we call this procedure \textbf{fine-graining}. In a sense, coarse-graining and fine-graining are reciprocal constructions.

There are  also well-defined notions of tensor products and compositions of  compound fissus planar graphs and of their equivalence classes. The coarse-graining of a compound fissus planar graph defines a map
$$\widehat{\xi}:\mathsf{\Gamma}(\mathbf{\Gamma}_F)\rightarrow \mathsf{\Gamma}$$
preserving tensor product and composition, and for any compound planar graph the map $\widehat{\xi}$ just records its planar division structure and forgets its components.

A compound fissus planar graph $[\Gamma,\lambda]$ is called prime if its coarse-graining $\Gamma$ is prime. We denote the set of classes of prime compound fissus planar graphs by $\mathsf{Prim}_{CF}$ or $\mathsf{Prim}(\mathbf{\Gamma}_F)$ and the restriction of $\widehat{Z}:\mathsf{Prim}(\mathbf{\Gamma}_F)\rightarrow \mathsf{\Gamma}_F$ gives  a natural isomorphism of sets $\widehat{Z}:\mathsf{Prim}_{CF}\overset{\sim}\rightarrow \mathsf{\Gamma}_F$. In fact, $Z([\Gamma_v, \lambda])=\lambda(v).$

\begin{defn}
A \textbf{fission structure} of a compound fissus planar graphs is  a fission structure of its coarse-graining.
\end{defn}

A compound fissus planar graph is called fissus if it equips with a fission structure and we  denote the set of equivalent classes of fissus compound fissus planar graph by  $((\mathsf{\Gamma}_F)_C)_F$ or $(\mathsf{\Gamma}_{FCF})$ or $\mathsf{\Gamma}_F(\mathbf{\Gamma}_F)$. Any fission structure of a compound fissus planar graph will induce a second layer of fission structure  on its value, so the evaluation map induces a map $$\widehat{Z}_F:\mathsf{\Gamma}_F(\mathbf{\Gamma}_F)\rightarrow (\Gamma_F)_F$$ from $\mathsf{\Gamma}_F(\mathbf{\Gamma}_F)$ to the set $(\Gamma_F)_F$ of isomorphic classes of two-layer fissus planar graphs.   The composition of linear partitions naturally induces a map $$\sigma:(\Gamma_F)_F\rightarrow \Gamma_F$$ which sends a two-layer fissus planar graph $((\Gamma, P_{in},P_{out}),Q_{in},Q_{out})$ to a fissus planar graph $(\Gamma, O_{in}\triangleleft P_{in}, O_{out}\triangleleft P_{out}).$

The construction of taking coarse-graining also defines a map
$$\widehat{\xi}_F:\mathsf{\Gamma}_F(\mathbf{\Gamma}_F)\rightarrow \Gamma_F$$
which  forgets components of a fissus compound fissus planar graph.

Here in summary, we see two parallel stories:
\begin{center}
\begin{tabular}{|l|l|}
\hline
planar graph & fissus planar graph\\\hline
$\mathsf{\Gamma}$ &$\mathsf{\Gamma}_F$\\ \hline
$\mathsf{\Gamma}(\mathbf{\Gamma})$ &$\mathsf{\Gamma}(\mathbf{\Gamma}_F)$\\ \hline
$\mathsf{\Gamma}_F(\mathbf{\Gamma})$ &$\mathsf{\Gamma}_F(\mathbf{\Gamma}_F)$\\ \hline
contraction & coarse-graining \\\hline
substitution &fine-graining\\\hline
$Z :\mathsf{\Gamma}(\mathbf{\Gamma})\rightarrow \mathsf{\Gamma}$ &$\widehat{Z} :\mathsf{\Gamma}(\mathbf{\Gamma}_F)\rightarrow \mathsf{\Gamma}_F$\\\hline
$Z_F :\mathsf{\Gamma}_F(\mathbf{\Gamma})\rightarrow \mathsf{\Gamma}_F$ &$\widehat{Z}_F :\mathsf{\Gamma}_F(\mathbf{\Gamma}_F)\rightarrow \mathsf{\Gamma}_F$\\\hline
$Z:\mathsf{Prim}(\mathbf{\Gamma})\overset{\sim}\rightarrow \mathsf{\Gamma}$ &$\widehat{Z}:\mathsf{Prim}(\mathbf{\Gamma}_F)\overset{\sim}\rightarrow \mathsf{\Gamma}_F$\\\hline
$\xi :\mathsf{\Gamma}(\mathbf{\Gamma})\rightarrow \mathsf{\Gamma}$ &$\widehat{\xi} :\mathsf{\Gamma}(\mathbf{\Gamma}_F)\rightarrow \mathsf{\Gamma}$\\\hline
$\xi_F :\mathsf{\Gamma}_F(\mathbf{\Gamma})\rightarrow \mathsf{\Gamma}_F$ &$\widehat{\xi}_F :\mathsf{\Gamma}_F(\mathbf{\Gamma}_F)\rightarrow \mathsf{\Gamma}_F$\\\hline
\end{tabular}
\end{center}

\section{Adjunction of tensor calculus}
The main task of  this section is to  introduce the adjunction of tensor calculus between the category $\mathbf{T.Sch}$ of tensor schemes and the category $\mathbf{Str.T}$ of strict small tensor categories. Namely, we will introduce two functors $F,U$ as
$$\xymatrix{\mathbf{T.Sch}\ar@<0.7mm>[rr]^{F}&&\ar@<0.7mm>[ll]^{U}\mathbf{Str.T}}$$
such that for any tensor scheme $\mathcal{D}\in \mathbf{T.Sch}$ and strict tensor category $\mathcal{V}\in\mathbf{Str.T}$, we have a family of natural bijections $$\Theta_{\mathcal{D},\mathcal{V}}: Hom_{\mathbf{Str.T}}(F(\mathcal{D}),\mathcal{V})\longrightarrow Hom_{\mathbf{T.Sch}}(\mathcal{D},U(\mathcal{V})).$$
After giving a precise description of the unit and counit of this adjunction, we will also discuss the associated monad and comonad.

\subsection{The functor $F$}
The functor $F$ (called \textbf{aggregation functor}) is given by Joyal and Street's construction of a free tensor category.
For every tensor scheme $\mathcal{D}$, the functor $F$ will produce a strict tensor category $F(\mathcal{D})$ defined as follows: the objects are words in elements of $Ob(\mathcal{D})$; the morphisms are isomorphic classes of planar diagrams in $\mathcal{D}$; the domain, codomain, tensor product and composition of morphisms are induced on isomorphic classes by the corresponding operations for the diagrams; identity morphisms are isomorphic classes of diagrams with invertible graphs; the tensor product on objects is given by juxtaposition of words. In short, we write $F(\mathcal{D})=(W(Ob(\mathcal{D})), \mathsf{Diag}(\mathcal{D}),s=dom, t=cod)$ or  $\xymatrix{\mathsf{Diag}(\mathcal{D})\ar@<1mm>[r]^{s}\ar@<-1mm>[r]_{t}&W(Ob(\mathcal{D})).}$

The unit object $1_{F(\mathcal{D})}$ is the word of length zero (or null string) denoted by $\varnothing$ and the identity morphism $Id_{1_{F(\mathcal{D})}}$ of $1_{F(\mathcal{D})}$ is the empty graph. For any word $x_1\cdots x_n\in Ob(F(\mathcal{D}))$, its identity morphism $Id_{x_1\cdots x_n}=Id_{\langle x_1\rangle}\otimes\cdots \otimes Id_{\langle x_n\rangle}$ is given by the invertible diagram
$$
\begin{matrix}
\begin{tikzpicture}[scale=.5]
\node (v2) at (0,0) {};
\draw (0,0) circle [radius=0.1];
\node (v1) at (0,1.5) {$x_1$};
\node (v3) at (0,-1.5) {$x_1$};
\node at (1,0) {$\cdots$};
\node (v5) at (2,0) {};
\draw (2,0) circle [radius=0.1];

\node (v4) at (2,1.5) {$x_n$};
\node (v6) at (2,-1.5) {$x_n.$};
\draw  (v1) -- (0,0.1)[postaction={decorate, decoration={markings,mark=at position .50 with {\arrow[black]{stealth}}}}];
\draw  (v3) -- (0,-0.1)[postaction={decorate, decoration={markings,mark=at position .50 with {\arrowreversed[black]{stealth}}}}];
\draw  (v4) -- (2,0.1)[postaction={decorate, decoration={markings,mark=at position .50 with {\arrow[black]{stealth}}}}];
\draw  (v6) -- (2,-0.1)[postaction={decorate, decoration={markings,mark=at position .50 with {\arrowreversed[black]{stealth}}}}];
\end{tikzpicture}
\end{matrix}
=
\begin{matrix}
\begin{tikzpicture}[scale=.5]
\node (v2) at (0,0) {};
\draw (0,0) circle [radius=0.1];
\node (v1) at (0,1.5) {$\langle x_1\rangle$};
\node (v3) at (0,-1.5) {$\langle x_1\rangle$};
\draw  (v1) -- (0,0.1)[postaction={decorate, decoration={markings,mark=at position .50 with {\arrow[black]{stealth}}}}];
\draw  (v3) -- (0,-0.1)[postaction={decorate, decoration={markings,mark=at position .50 with {\arrowreversed[black]{stealth}}}}];
\end{tikzpicture}
\end{matrix}
\begin{matrix}\otimes\end{matrix} \begin{matrix}\cdots\end{matrix} \begin{matrix}\otimes\end{matrix}
\begin{matrix}
\begin{tikzpicture}[scale=.5]
\node (v2) at (0,0) {};
\draw (0,0) circle [radius=0.1];
\node (v1) at (0,1.5) {$\langle x_n\rangle$};
\node (v3) at (0,-1.5) {$\langle x_n\rangle$};
\draw  (v1) -- (0,0.1)[postaction={decorate, decoration={markings,mark=at position .50 with {\arrow[black]{stealth}}}}];
\draw  (v3) -- (0,-0.1)[postaction={decorate, decoration={markings,mark=at position .50 with {\arrowreversed[black]{stealth}}}}];
\end{tikzpicture}
\end{matrix},
$$
where we write $\langle x\rangle$ to distinguish the word $\langle x\rangle$ in $W(Ob(\mathcal{D}))$ from the element $x\in Ob(\mathcal{D})$.

\begin{ex}
Take $\mathcal{D}=\mathbf{Prim}$ be the tensor scheme in example $4.1.3$, then the evaluation map $Z :\mathsf{\Gamma}(\mathbf{\Gamma})\rightarrow \mathsf{\Gamma}$ can induce an isomorphism of strict tensor categories $$\widetilde{\zeta}:F(\mathbf{Prim})\overset{\sim}\rightarrow \mathbf{\Gamma}^{\otimes}.$$

In fact, $\mathbf{Prim}=(\mathsf{Prim}, \{x\},s, t)$, then $F(\mathbf{Prim})=(\mathsf{\Gamma}(\mathbf{Prim}),W(\{x\}), s,t)$. So a morphism in $F(\mathbf{Prim})$ is a compound planar graph with each non-empty component being a prime graph, hence $Z$ can induces an isomorphism  $$\zeta:\mathsf{\Gamma}(\mathbf{Prim})\rightarrow \mathsf{\Gamma}$$
due to the fact that prime graphs are reduced graphs.  Note that $\mathbf{\Gamma}^{\otimes}=(\mathsf{\Gamma}, W(\{x\}), s,t)$ $($defined in section $4.5$$)$, and  we define $\widetilde{\zeta}_o:W(\{x\})\rightarrow W(\{x\})$ to be the identity map and $\widetilde{\zeta}_m=\zeta$ being the restriction of $Z$ on $\mathsf{\Gamma}(\mathbf{Prim})$. Then it is easy to check that $\widetilde{\zeta}=(\widetilde{\zeta}_m,\widetilde{\zeta}_o)$ is an isomorphism of strict tensor categories.
\end{ex}

Further more, if $\varphi:\mathcal{D}_1\rightarrow \mathcal{D}_2$ is a morphism of tensor schemes, the functor $F$ will produce a "strict tensor functor" $F(\varphi):F(\mathcal{D}_1)\rightarrow F(\mathcal{D}_2)$ constituted by the function $\widehat{\varphi_{o}}:W(Ob(\mathcal{D}_1))\rightarrow W(Ob(\mathcal{D}_2))$  at the level of objects and  the function $\varphi_{\ast}:\mathsf{Diag}(\mathcal{D}_1)\rightarrow\mathsf{Diag}(\mathcal{D}_2)$  at the level of  morphisms.

By proposition $4.2.2$, the following proposition can be easily checked.
\begin{prop}
For every morphism  $\varphi:\mathcal{D}_1\rightarrow \mathcal{D}_2$ of tensor schemes, $F(\varphi):F(\mathcal{D}_1)\rightarrow F(\mathcal{D}_2)$ is a strict tensor functor. Moreover, if  $\varphi_1:\mathcal{D}_1\rightarrow \mathcal{D}_2$  and $\varphi_2:\mathcal{D}_2\rightarrow \mathcal{D}_3$  are two morphisms of tensor schemes, then  $$F(\varphi_2\circ\varphi_1)=F(\varphi_2)\circ F(\varphi_1)$$ as strict tensor functors from $F(\mathcal{D}_1)$ to $F(\mathcal{D}_3)$.
\end{prop}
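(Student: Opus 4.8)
The plan is to derive both assertions directly from the construction of $F$ together with Proposition $4.2.2$, since the substantive content—namely that the push-forward $\varphi_{\ast}$ is compatible with the tensor product and composition of diagrams—has already been established there, leaving only a bookkeeping verification of the strict-tensor-functor axioms. First I would record exactly what $F(\varphi)$ is: on objects it is the word-extension $\widehat{\varphi_o}:W(Ob(\mathcal{D}_1))\rightarrow W(Ob(\mathcal{D}_2))$, and on morphisms it is the push-forward $\varphi_{\ast}:\mathsf{Diag}(\mathcal{D}_1)\rightarrow\mathsf{Diag}(\mathcal{D}_2)$ sending $[\Gamma,\gamma]$ to $[\Gamma,\varphi_{\ast}(\gamma)]$, where $\varphi_{\ast}(\gamma)_o=\varphi_o\circ\gamma_o$ and $\varphi_{\ast}(\gamma)_m=\varphi_m\circ\gamma_m$.

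Next I would check the axioms. Compatibility with domain and codomain is immediate: since $\varphi_{\ast}(\gamma)_o=\varphi_o\circ\gamma_o$, one gets $dom(\varphi_{\ast}[\Gamma,\gamma])=\widehat{\varphi_o}(dom[\Gamma,\gamma])$ and likewise for $cod$, so $F(\varphi)$ respects $s$ and $t$. Preservation of composition and of tensor product of morphisms is exactly the compatibility asserted in Proposition $4.2.2$. Strict monoidality on objects holds because $\widehat{\varphi_o}$ carries a concatenation of words to the concatenation of their images and sends the empty word to the empty word, so it is a monoid homomorphism preserving the unit object $\varnothing$; on morphisms the empty diagram $Id_{1}$ maps to the empty diagram. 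The one point needing slightly more care is preservation of identities: recall from section $5.1$ that $Id_{x_1\cdots x_n}$ is the class of the invertible diagram whose underlying graph is the $(n,n)$-invertible graph with flags decorated by $x_1,\dots,x_n$. Since $\varphi_{\ast}$ leaves the underlying graph $\Gamma$ unchanged and only relabels flag-decorations through $\varphi_o$, it sends this diagram to the invertible diagram decorated by $\varphi_o(x_1),\dots,\varphi_o(x_n)$, which is precisely $Id_{\widehat{\varphi_o}(x_1\cdots x_n)}$. Hence $F(\varphi)$ is a strict tensor functor.

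For the functoriality of $F$ I would argue at the two levels independently. On objects, $(\varphi_2\circ\varphi_1)_o=\varphi_{2,o}\circ\varphi_{1,o}$, and passing to word-extensions commutes with composition, so $\widehat{(\varphi_2\circ\varphi_1)_o}=\widehat{\varphi_{2,o}}\circ\widehat{\varphi_{1,o}}$. On morphisms, the definition of the push-forward of a valuation gives $(\varphi_2\circ\varphi_1)_{\ast}(\gamma)_o=(\varphi_2\circ\varphi_1)_o\circ\gamma_o=\varphi_{2,o}\circ\varphi_{1,o}\circ\gamma_o$, and analogously for the morphism-component with $\gamma_m$; this is exactly $\varphi_{2,\ast}(\varphi_{1,\ast}(\gamma))$, so $(\varphi_2\circ\varphi_1)_{\ast}=\varphi_{2,\ast}\circ\varphi_{1,\ast}$ as maps $\mathsf{Diag}(\mathcal{D}_1)\rightarrow\mathsf{Diag}(\mathcal{D}_3)$. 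Combining the object and morphism levels yields $F(\varphi_2\circ\varphi_1)=F(\varphi_2)\circ F(\varphi_1)$.

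I do not expect a genuine obstacle here: the proposition is essentially formal, its mathematical core having been absorbed into Proposition $4.2.2$. The only step calling for real attention is the preservation of identity morphisms (and the unit object), where one cannot simply invoke the tensor/composition compatibility but must instead use the explicit description of identities as invertible diagrams and observe that $\varphi_{\ast}$ acts only on the decorations while fixing the underlying invertible graph. Everything else is a routine transcription of Proposition $4.2.2$ into the language of strict tensor functors.
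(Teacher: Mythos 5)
Your proposal is correct and follows essentially the same route as the paper, which simply cites Proposition $4.2.2$ and declares the remaining verifications easily checked; your write-up just makes those routine checks (source/target compatibility, units, identities, and componentwise functoriality) explicit. No gaps.
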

Hence,  we can easily get the fact:
\begin{prop}
The construction $F$ is a functor from $\mathbf{T.Sch}$ to $\mathbf{Str.T}.$
\end{prop}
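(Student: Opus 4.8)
The plan is to obtain this proposition by combining the facts already recorded for $F$ on morphisms with a confirmation that the target $F(\mathcal{D})$ of each tensor scheme is a genuine object of $\mathbf{Str.T}$, together with the one remaining functor axiom, preservation of identities.

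First I would verify that $F(\mathcal{D}) = (W(Ob(\mathcal{D})),\mathsf{Diag}(\mathcal{D}), s=dom, t=cod)$ really is a strict tensor category. Associativity of composition of morphisms is inherited from the associativity of composition of planar graphs (Proposition 3.5.6), since a valuation rides compatibly along the underlying grafting; associativity of the tensor product of morphisms follows likewise from Proposition 3.4.4, and on objects it is the strict associativity of word concatenation. Bifunctoriality of $\otimes$, i.e. the middle-four interchange, is Proposition 3.5.7 promoted from planar graphs to diagrams. For the unit laws I would check that composing an arbitrary diagram with the invertible diagram $Id_{x_1\cdots x_n}$ on either side returns it up to isomorphism; this reduces to the fact that grafting a unitary planar graph along a leg does not alter that leg and that the valuation on a virtual vertex is uniquely determined, so the identity diagram acts neutrally. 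The unit object is the empty word with the empty diagram as its identity morphism, and the corresponding laws are immediate.

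Given that $F$ lands in $\mathbf{Str.T}$ on objects, Proposition 5.1.2 supplies the rest of the functor structure: every $F(\varphi)$ is a strict tensor functor, and $F(\varphi_2\circ\varphi_1) = F(\varphi_2)\circ F(\varphi_1)$. The only outstanding axiom is $F(id_{\mathcal{D}}) = id_{F(\mathcal{D})}$. Here I would unwind the definition of the push-forward: since $(id_{\mathcal{D}})_o$ and $(id_{\mathcal{D}})_m$ are identity functions, the induced $\widehat{(id_{\mathcal{D}})_o}$ is the identity on $W(Ob(\mathcal{D}))$ and $(id_{\mathcal{D}})_{\ast}$ is the identity on $\mathsf{Diag}(\mathcal{D})$ (because $\tilde{\gamma}_o = \varphi_o\circ\gamma_o$ and $\tilde{\gamma}_m = \varphi_m\circ\gamma_m$ collapse to $\gamma_o,\gamma_m$), so $F(id_{\mathcal{D}})$ is the identity strict tensor functor on the nose.

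I expect the step requiring the most care to be the verification that $F(\mathcal{D})$ obeys the strict tensor category axioms, and within that the unit laws, since this is the only place where the combinatorial operations on graphs (grafting of unitary graphs and the forced decoration of virtual vertices) must be reconciled with categorical identity morphisms. Everything else either quotes Proposition 5.1.2 or reduces to unwinding the definition of the push-forward of a valuation, and is pure bookkeeping.
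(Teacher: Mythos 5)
Your proposal is correct and follows essentially the same route as the paper, which simply derives the proposition from Proposition 5.1.2 with the remaining details ("easily checked") left implicit: that $F(\mathcal{D})$ is a strict tensor category (via the combinatorial results of Sections 3.4--3.5 underlying the Joyal--Street free construction) and that $F(id_{\mathcal{D}})=id_{F(\mathcal{D})}$ by unwinding the push-forward. You have merely written out explicitly the bookkeeping the paper omits, so there is nothing to correct.
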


\subsection{The functor $U$}
The functor $U$ (called \textbf{fission functor}) is given essentially by the construction of prime diagrams in a strict tensor category.
For every strict small tensor category $\mathcal{V}$, we define a tensor scheme $U(\mathcal{V})$ as follows: the objects are the objects of $\mathcal{V}$; the morphisms are isomorphic classes of  prime diagrams in $\mathcal{V}$ ; the source and target maps are given by the domain and codomain of diagrams.  We denote the set of isomorphic classes of  prime diagrams in $\mathcal{V}$ by $\mathsf{Prim}(\mathcal{V})$ and write $U(\mathcal{V})=(Ob(\mathcal{V}), \mathsf{Prim}(\mathcal{V}),s=dom, t=cod)$ or $\xymatrix{ \mathsf{Prim}(\mathcal{V})\ar@{-->}@<1mm>[r]^{s}\ar@{-->}@<-1mm>[r]_{t}&Ob(\mathcal{V}).}$

\begin{ex}
Take $\mathcal{V}=\mathbf{\Gamma}^{\otimes}$, then there is an natural isomorphism from the tensor scheme $U(\mathbf{\Gamma}^{\otimes})$ to the tensor scheme $\mathbf{\Gamma}_F$ in example $4.1.6$. In fact, $\mathbf{\Gamma}^{\otimes}=(\mathsf{\Gamma}, W(\{x\}), s,t)$, so  $U(\mathbf{\Gamma}^{\otimes})=(\mathsf{Prim}(\mathbf{\Gamma}^{\otimes}),W(\{x\}), dom,cod)$.

Let $[\Gamma_v, \lambda]$ be a morphism of $U(\mathbf{\Gamma}^{\otimes})$ which is a prime diagram in $\mathbf{\Gamma}^{\otimes}$. Assume $H(\Gamma_v)=\{I_1,...,I_m, O_1,...,O_n\}$ and $\lambda(I_\alpha)=\overset{\mu_\alpha}{\overbrace{x\cdots x}}$ $(1\leq\alpha\leq m)$, $\lambda(O_\beta)=\overset{\nu_\beta}{\overbrace{x\cdots x}}$ $(1\leq\beta\leq n)$, then $\lambda(v)$ is a $(\mu_1+\cdots+\mu_m, \nu_1+\cdots+\nu_n)$-planar graph with domain $\overset{\mu_1+\cdots+\mu_m}{\overbrace{x\cdots x}}=\overset{\mu_1}{\overbrace{x\cdots x}}\otimes \cdots\otimes\overset{\mu_m}{\overbrace{x\cdots x}}$ and codomain $\overset{\nu_1+\cdots+\nu_n}{\overbrace{x\cdots x}}=\overset{\nu_1}{\overbrace{x\cdots x}}\otimes \cdots\otimes\overset{\nu_n}{\overbrace{x\cdots x}}$, so we can assume $H(\lambda(v))=\{i_1,...,i_{\mu_1+\cdots+\mu_m},o_1,...,o_{\nu_1+\cdots+\nu_n}\}$.

Now we define a map $$\omega:\mathsf{Prim}(\mathbf{\Gamma}^{\otimes})\rightarrow \mathsf{\Gamma}_F$$
which sends $[\Gamma_v, \lambda]$ to a fissus planar graph $[\lambda(v),P_{in},P_{out}]$ with $P_{in}=(i_1<\cdots<i_{\mu_1})<\cdots<(i_{\mu_1+\cdots+\mu_{m-1}+1}<\cdots<i_{\mu_1+\cdots+\mu_m})$, $P_{out}=(o_1<\cdots<o_{\nu_1})<\cdots<(o_{\nu_1+\cdots+\nu_{n-1}+1}<\cdots<o_{\nu_1+\cdots+\nu_n})$, hence the coarse-graining of $\omega([\Gamma_v, \lambda])$ is isomorphic to $\Gamma_v$.

Note that  $\mathbf{\Gamma}_F=(\mathsf{\Gamma}_F,W(\{x\}), dom, cod )$ and we can define a morphism of tensor schemes $$\widetilde{\omega}:U(\mathbf{\Gamma}^{\otimes})\rightarrow\mathbf{\Gamma}_F$$
with $\widetilde{\omega}_m=\omega:\mathsf{Prim}(\mathbf{\Gamma}^{\otimes})\rightarrow \mathsf{\Gamma}_F$ and $\widetilde{\omega}_o:W(\{x\})\rightarrow W(\{x\})$ being the identity map. The fact that $\widetilde{\omega}$ is an isomorphism can be directly checked.
\end{ex}

For a strict tensor functor $K:\mathcal{V}_1\rightarrow \mathcal{V}_2$  between strict tensor categories, the construction $U$ gives a morphism $U(K):U(\mathcal{V}_1)\rightarrow U(\mathcal{V}_2)$ of tensor schemes  constituted by the function $K_o:Ob(\mathcal{V}_1)\rightarrow Ob(\mathcal{V}_2)$ at the level of objects and the function $K_{\ast}:\mathsf{Prim}(\mathcal{V}_1)\rightarrow \mathsf{Prim}(\mathcal{V}_2)$ (i.e, the restriction of $K_{\ast}$ on prime diagrams ) at the level of morphisms.

The following propositions can be easily checked.
\begin{prop}
If $K_1:\mathcal{V}_1\rightarrow \mathcal{V}_2$ and $K_2:\mathcal{V}_2\rightarrow \mathcal{V}_3$ are two strict tensor functors, then $$U(K_2\circ K_1)=U(K_2)\circ U(K_1)$$ as morphisms of tensor schemes from $U(\mathcal{V}_1)$ to $U(\mathcal{V}_3)$.
\end{prop}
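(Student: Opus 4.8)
The plan is to reduce the claimed equality of tensor–scheme morphisms to two separate componentwise checks. A morphism in $\mathbf{T.Sch}$ is by definition a pair $(\varphi_o,\varphi_m)$ of functions compatible with the source and target maps, and composition in $\mathbf{T.Sch}$ is componentwise, $(\psi_o,\psi_m)\circ(\varphi_o,\varphi_m)=(\psi_o\circ\varphi_o,\psi_m\circ\varphi_m)$. Thus it suffices to verify that the object-components and the morphism-components of $U(K_2\circ K_1)$ and of $U(K_2)\circ U(K_1)$ agree.

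First I would treat the object level. By the construction of $U$, the object-component of $U(K)$ is just the object-function $K_o$. Since the composite strict tensor functor $K_2\circ K_1$ has object-function $(K_2)_o\circ (K_1)_o$, the object-component of $U(K_2\circ K_1)$ equals $(K_2)_o\circ(K_1)_o$, which is exactly the object-component of $U(K_2)\circ U(K_1)$.

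Next, the morphism level, which is where the content lies. The morphism-component of $U(K)$ is the push-forward $K_{\ast}$ restricted to prime diagrams. Recall that for a valuation $\gamma:\Gamma\rightarrow\mathcal{V}_1$ the push-forward is defined by $K_{\ast}(\gamma)_o=K\circ\gamma_o$ and $K_{\ast}(\gamma)_m=K\circ\gamma_m$, and that $K_{\ast}$ sends $[\Gamma,\gamma]$ to $[\Gamma,K_{\ast}(\gamma)]$, leaving the underlying planar graph unchanged; in particular primeness is preserved, so $K_{\ast}$ does restrict to a map $\mathsf{Prim}(\mathcal{V}_1)\rightarrow\mathsf{Prim}(\mathcal{V}_2)$. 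For a prime diagram $[\Gamma,\gamma]$ I would compute both sides on the valuation directly: $(K_2\circ K_1)_{\ast}(\gamma)$ has object-part $(K_2\circ K_1)\circ\gamma_o=K_2\circ(K_1\circ\gamma_o)$ and morphism-part $(K_2\circ K_1)\circ\gamma_m=K_2\circ(K_1\circ\gamma_m)$, while $(K_2)_{\ast}\bigl((K_1)_{\ast}(\gamma)\bigr)$ has object-part $K_2\circ\bigl((K_1)_{\ast}(\gamma)_o\bigr)=K_2\circ(K_1\circ\gamma_o)$ and morphism-part $K_2\circ(K_1\circ\gamma_m)$. These are literally equal, using only that composition of functors composes their actions on objects and on morphisms; hence the two push-forwards agree on valuations, and therefore on the diagrams $[\Gamma,\gamma]$ they determine.

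The only point requiring a little care — and the closest thing to an obstacle — is well-definedness on isomorphism classes: $\mathsf{Prim}(\mathcal{V})$ consists of isomorphism classes of prime diagrams, so one must know that the push-forward descends to classes. This is already guaranteed by the proposition in Section 4.3 recording that $K_{\ast}:\mathsf{Diag}(\mathcal{V}_1)\rightarrow\mathsf{Diag}(\mathcal{V}_2)$ is a well-defined function compatible with tensor product and composition; since the computation above is an honest equality of valuations (not merely up to isomorphism), the induced equality on classes is immediate. Combining the object-level and morphism-level agreements gives $U(K_2\circ K_1)=U(K_2)\circ U(K_1)$ as morphisms in $\mathbf{T.Sch}$.
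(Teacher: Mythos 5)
Your proof is correct, and it is exactly the routine componentwise verification that the paper leaves to the reader (the paper offers no proof beyond ``can be easily checked''). Reducing the equality to the object-function level and to the definition $K_{\ast}(\gamma)_o=K\circ\gamma_o$, $K_{\ast}(\gamma)_m=K\circ\gamma_m$ on valuations, together with the observation that push-forward preserves the underlying graph (hence primeness) and descends to isomorphism classes, is precisely the intended argument.
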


\begin{prop}
The construction $U$ is a functor from $\mathbf{Str.T}$ to $\mathbf{T.Sch}.$
\end{prop}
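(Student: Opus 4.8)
The plan is to verify the three defining properties of a functor one at a time, reusing the machinery already set up in Section~5.2. First I would confirm that for every strict tensor functor $K:\mathcal{V}_1\rightarrow\mathcal{V}_2$ the assigned data $U(K)=(K_o, K_{\ast}|_{\mathsf{Prim}(\mathcal{V}_1)})$ is genuinely a morphism of tensor schemes $U(\mathcal{V}_1)\rightarrow U(\mathcal{V}_2)$; then that $U$ preserves identity morphisms; and finally that $U$ preserves composition, which is precisely the content of the immediately preceding proposition (that $U(K_2\circ K_1)=U(K_2)\circ U(K_1)$). Only the first of these requires any genuine checking, the others being either immediate or already established.

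For the first point, the key observation is that the push-forward $K_{\ast}$ of Proposition~4.3.2 does not alter the underlying planar graph of a diagram: it only post-composes the valuation, sending $(\gamma_o,\gamma_m)$ to $(K_o\circ\gamma_o,\, K\circ\gamma_m)$. Consequently a prime diagram (one supported on a corolla) is carried to a prime diagram, so $K_{\ast}$ indeed restricts to a map $\mathsf{Prim}(\mathcal{V}_1)\rightarrow\mathsf{Prim}(\mathcal{V}_2)$, and by Proposition~4.3.2 this map is well defined on isomorphism classes. It then remains to check the commuting square in the definition of a morphism of tensor schemes, namely $s_2\circ K_{\ast}=\widehat{K_o}\circ s_1$ and $t_2\circ K_{\ast}=\widehat{K_o}\circ t_1$. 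This is forced by the formula for the domain and codomain of a diagram in terms of its legs: if $[\Gamma,\gamma]$ has input legs $i_1\prec_H\cdots\prec_H i_k$, then $dom([\Gamma, K_{\ast}\gamma])=(K_o\gamma_o)(i_1)\cdots(K_o\gamma_o)(i_k)=\widehat{K_o}(\gamma_o(i_1)\cdots\gamma_o(i_k))=\widehat{K_o}(dom([\Gamma,\gamma]))$, and symmetrically for $cod$.

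For identity preservation I would note that $\mathrm{Id}_{\mathcal{V}}$ induces the identity push-forward on valuations, since it post-composes every valuation with identity maps; hence its object part is $\mathrm{id}_{Ob(\mathcal{V})}$ and its morphism part is $\mathrm{id}_{\mathsf{Prim}(\mathcal{V})}$, which is exactly the identity morphism of the tensor scheme $U(\mathcal{V})$. Composition preservation is the preceding proposition, already proved. Assembling these three verifications yields that $U$ is a functor $\mathbf{Str.T}\rightarrow\mathbf{T.Sch}$.

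The argument is essentially routine bookkeeping and presents no real obstacle; the one place demanding a moment's care is the compatibility of $K_{\ast}$ with the source and target maps, but as noted this is dictated by the definition of the domain and codomain of a diagram through the valuation of its input and output legs, so it resolves at once.
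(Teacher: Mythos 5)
Your proposal is correct and follows exactly the route the paper intends: the paper itself leaves Propositions 5.2.2 and 5.2.3 as routine checks ("can be easily checked"), and your verification — that $K_{\ast}$ preserves primality since it leaves the underlying graph unchanged, that it is compatible with $dom$ and $cod$ via $\widehat{K_o}$, that identities are preserved, and that composition preservation is the preceding proposition — is precisely the bookkeeping being left to the reader. Nothing is missing and no step fails.
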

\begin{rem}
Please notice that the functor $U$ is different from usual forgetful functors which just forget some structures, rather it seems like a kind of resolution..
\end{rem}

\subsection{Adjointness}

Our intension in this section is to prove the following theorem:

\begin{thm}
The pair of functors $F,U$ form an adjunction.
\end{thm}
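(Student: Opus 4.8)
The plan is to establish the adjunction by exhibiting the natural hom-set bijection $\Theta_{\mathcal{D},\mathcal{V}}$ directly, constructing an explicit inverse, and then checking the two assignments are mutually inverse and natural in both arguments. The essential input is the evaluation map $\varepsilon_{\mathcal{V}}\colon\mathsf{Diag}(\mathcal{V})\to Mor(\mathcal{V})$ of Theorem 4.4.2 together with its multiplicativity (Corollaries 4.4.3 and 4.4.4) and functoriality (Proposition 4.4.5), and the decomposition Theorem~\ref{decomposition}.

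First I would fix notation for generators. For $f\in Mor_{\mathcal{D}}(x_1\cdots x_m,y_1\cdots y_n)$, let $\langle f\rangle$ denote the prime diagram in $\mathcal{D}$ on the $(m,n)$-corolla whose half-edges carry $x_1,\dots,x_m,y_1,\dots,y_n$ and whose real vertex carries $f$; this is a morphism $\langle x_1\rangle\otimes\cdots\otimes\langle x_m\rangle\to\langle y_1\rangle\otimes\cdots\otimes\langle y_n\rangle$ of $F(\mathcal{D})$. I then define $\Theta$ as follows: given a strict tensor functor $G\colon F(\mathcal{D})\to\mathcal{V}$, put $\varphi_o(x)=G_o(\langle x\rangle)$ and let $\varphi_m(f)$ be the prime diagram in $\mathcal{V}$ on the same corolla with half-edge decorations $\varphi_o(x_i),\varphi_o(y_j)$ and vertex decoration $G_m(\langle f\rangle)$. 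Strictness of $G$ gives $G_o(s(f))=\varphi_o(x_1)\otimes\cdots\otimes\varphi_o(x_m)$, so $\varphi_m(f)$ is a well-typed prime diagram and $\varphi=(\varphi_o,\varphi_m)$ commutes with source and target, hence lies in $Hom_{\mathbf{T.Sch}}(\mathcal{D},U(\mathcal{V}))$.

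Conversely, given $\varphi\colon\mathcal{D}\to U(\mathcal{V})$ I would define $G=\Theta^{-1}(\varphi)$ on objects by $G_o(x_1\cdots x_n)=\varphi_o(x_1)\otimes\cdots\otimes\varphi_o(x_n)$ (with $\varnothing\mapsto 1_{\mathcal{V}}$), and on a diagram $[\Gamma,\gamma]\in\mathsf{Diag}(\mathcal{D})$ by $G_m([\Gamma,\gamma])=\varepsilon_{\mathcal{V}}([\Gamma,\gamma^{\varphi}])$, where $\gamma^{\varphi}$ is the valuation of $\Gamma$ in $\mathcal{V}$ with $\gamma^{\varphi}_o=\varphi_o\circ\gamma_o$ and $\gamma^{\varphi}_m(v)$ the vertex morphism of the prime diagram $\varphi_m(\gamma_m(v))$. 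Because $\varphi$ respects source and target, the domain and codomain of this vertex morphism match the decorations $\gamma^{\varphi}_o$ on the half-edges at $v$, so $\gamma^{\varphi}$ is a legitimate valuation and $G_m$ is well defined on isomorphism classes (the evaluation map being defined on classes). I would then verify that $G$ is a strict tensor functor: strictness on objects is built in; identity morphisms of $F(\mathcal{D})$ are invertible diagrams, whose value is an identity by the unitary clause of Theorem 4.4.2; and preservation of composition and tensor product is exactly the multiplicativity of $\varepsilon_{\mathcal{V}}$ in Theorem 4.4.2 and Corollary 4.4.3, applied after noting $(\gamma_2\vee\gamma_1)^{\varphi}=\gamma_2^{\varphi}\vee\gamma_1^{\varphi}$ and $(\gamma_1\sqcup\gamma_2)^{\varphi}=\gamma_1^{\varphi}\sqcup\gamma_2^{\varphi}$.

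It then remains to show $\Theta$ and $\Theta^{-1}$ are mutually inverse and natural. The equality $\Theta(\Theta^{-1}(\varphi))=\varphi$ is immediate: evaluating $\varepsilon_{\mathcal{V}}$ on a prime diagram returns its vertex morphism, so on generators one recovers $\varphi_m(f)$. Naturality in $\mathcal{V}$ (post-composition with a strict tensor functor $K$) follows from the functoriality square for $\varepsilon$ in Proposition 4.4.5, and naturality in $\mathcal{D}$ (pre-composition with a morphism of tensor schemes) from the compatibility of the push-forward with $\otimes$ and $\circ$ in Proposition 4.2.2. The main obstacle is the remaining identity $\Theta^{-1}(\Theta(G))=G$, that is, the statement that a strict tensor functor on $F(\mathcal{D})$ is completely determined by its values on the generating objects $\langle x\rangle$ and generating primes $\langle f\rangle$. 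This is precisely where the combinatorial machinery is needed: by the decomposition Theorem~\ref{decomposition} every diagram is a $\otimes$- and $\circ$-composite of essential prime diagrams, and since the evaluation map of Theorem 4.4.2 was constructed to be independent of the chosen decomposition, both $\Theta^{-1}(\Theta(G))$ and $G$ --- being strict tensor functors agreeing on generators --- must agree on every morphism of $F(\mathcal{D})$. Once this is in hand, the family $\{\Theta_{\mathcal{D},\mathcal{V}}\}$ is the desired natural isomorphism of hom-sets, and the adjunction $F\dashv U$ follows.
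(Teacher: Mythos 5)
Your proposal is correct and follows essentially the same route as the paper's own proof: both exhibit $\Theta$ and its explicit inverse (the inverse defined by pushing the decorations through $\varphi$ and then applying the evaluation map $\varepsilon_{\mathcal{V}}$), both deduce $\Theta^{-1}(\Theta(G))=G$ from the decomposition of diagrams into essential primes so that strict tensor functors agreeing on generators coincide, and both check naturality in the two variables. The only cosmetic difference is that you verify naturality by invoking the functoriality of evaluation and of the push-forward (Propositions 4.4.5 and 4.2.2), whereas the paper performs the equivalent diagram chases on generators directly; the mathematical content is the same.
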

Before proving the theorem, we introduce four natural functions which are useful to make our proof clear:

$$
\begin{matrix}
\begin{split}i_o:&Ob(\mathcal{D})\hookrightarrow Ob(F(\mathcal{D}))=W(Ob(\mathcal{D}))\\&x\mapsto i_o(x)=\langle x\rangle, \end{split}
&\begin{split}i_m:&Mor(\mathcal{D})\hookrightarrow Mor(F(\mathcal{D}))\\&(f:x_1\cdots x_m\rightarrow y_1\cdots y_n)\mapsto i_m(f)=[v,\gamma_f], \end{split}
\end{matrix}
$$
where $[v,\gamma_{f}]$ is a  prime diagram in $\mathcal{D}$ with domain $x_1\cdots x_m$ codomain $y_1\cdots y_n$ with the unique vertex $v$ decorated by $f$, that is,

$$
\begin{matrix}
\begin{matrix}
\begin{tikzpicture}
\node (v1) at (0,0.7) {$x_1\cdots x_m$};
\node (v2) at (0,-0.7) {$y_1\cdots y_n$};
\draw [->,>=stealth] (v1) -- (v2);
\node at (0.2,0) {$f$};
\end{tikzpicture}
\end{matrix}

&\begin{matrix}
\begin{tikzpicture}
\node (v1) at (-1,0.5) {};
\node (v2) at (0,0.5) {};
\draw [thick,->,>=stealth] (v1) -- (v2);
\node at (-0.5,0.8) {$i_m$};
\end{tikzpicture}
\end{matrix}&
\begin{matrix}\begin{tikzpicture}[scale=.6]
\node (v1) at (0,0) {};
\draw[fill] (0,0) circle [radius=0.08];
\node at (0.6,0){$f$};
\node (v2) at (-2,1.5) {$x_1$};
\node (v3) at (-0.5,1.5) {$x_2$};
\node (v4) at (1,1.5) {$\cdots$};
\node (v5) at (2.5,1.5) {$x_m$};
\node (v6) at (-2,-1.5) {$y_1$};
\node (v7) at (-0.5,-1.5) {$y_2$};
\node at (1,-1.5) {$\cdots$};
\node (v8) at (2.5,-1.5) {$y_n$};
\draw  (v2) -- (0,0)[postaction={decorate, decoration={markings,mark=at position .50 with {\arrow[black]{stealth}}}}];
\draw  (v3) -- (0,0)[postaction={decorate, decoration={markings,mark=at position .50 with {\arrow[black]{stealth}}}}];
\draw  (v5) -- (0,0)[postaction={decorate, decoration={markings,mark=at position .50 with {\arrow[black]{stealth}}}}];
\draw  (v6) -- (0,0)[postaction={decorate, decoration={markings,mark=at position .50 with {\arrowreversed[black]{stealth}}}}];
\draw  (v7) -- (0,0)[postaction={decorate, decoration={markings,mark=at position .50 with {\arrowreversed[black]{stealth}}}}];
\draw  (v8) -- (0,0)[postaction={decorate, decoration={markings,mark=at position .50 with {\arrowreversed[black]{stealth}}}}];
\end{tikzpicture}
\end{matrix}
\end{matrix}
$$
and
$$
\begin{matrix}
\begin{split}j_o:&Ob(\mathcal{V})\overset{\sim}\rightarrow Ob(U(\mathcal{V}))\\&v\mapsto j_o(v)=v, \end{split}&
\begin{split}j_m:&Mor(\mathcal{V})\hookrightarrow Mor(U(\mathcal{V}))=\mathsf{Prim}(\mathcal{V})\\&(g:x\rightarrow y)\mapsto j_m(g)=[v,\gamma_g], \end{split}
\end{matrix}
$$
where $[v,\gamma_g]$ is a prime diagram in $\mathcal{V}$ with domain $x$, codomain $y$ and an unique vertex $v$ decorated by $g$ if the source and target of $g$ is $x$ and $y$, that is,
$$
\begin{matrix}
\begin{matrix}
\begin{tikzpicture}
\node (v1) at (0,0.7) {$x$};
\node (v2) at (0,-0.7) {$y$};
\draw [->,>=stealth] (v1) -- (v2);
\node at (0.2,0) {$g$};
\end{tikzpicture}
\end{matrix}

&\begin{matrix}
\begin{tikzpicture}
\node (v1) at (-1,0.5) {};
\node (v2) at (0,0.5) {};
\draw [thick,->,>=stealth] (v1) -- (v2);
\node at (-0.5,0.8) {$j_m$};
\end{tikzpicture}
\end{matrix}&
\begin{matrix}
\begin{tikzpicture}[scale=.5]
\node (v1) at (0,1.5) {$j_o(x)$};
\node (v2) at (0,-1.5) {$j_o(y)$};
\draw[fill] (0,0) circle [radius=0.1];
\node at (0.4,0){$g$};
\draw  (v1) -- (0,0)[postaction={decorate, decoration={markings,mark=at position .50 with {\arrow[black]{stealth}}}}];
\draw  (0,0) -- (v2)[postaction={decorate, decoration={markings,mark=at position .70 with {\arrow[black]{stealth}}}}];
\end{tikzpicture}
\end{matrix}
&=&\begin{matrix}
\begin{tikzpicture}[scale=.5]
\node (v1) at (0,1.5) {$x$};
\node (v2) at (0,-1.5) {$y$};
\draw[fill] (0,0) circle [radius=0.1];
\node at (0.4,0){$g$};
\draw  (v1) -- (0,0)[postaction={decorate, decoration={markings,mark=at position .50 with {\arrow[black]{stealth}}}}];
\draw  (0,0) -- (v2)[postaction={decorate, decoration={markings,mark=at position .70 with {\arrow[black]{stealth}}}}];
\end{tikzpicture}
\end{matrix}.
\end{matrix}
$$

Obviously, $i_o,i_m$ and $j_m$ are injections and $j_o$ is a bijection. From their definitions, we have
$$
\begin{matrix}
i_o(s(f))=s(i_m(f)),
&i_o(t(f))=t(i_m(f))
\end{matrix}
$$
for $f\in Mor(\mathcal{D})$ and
$$
\begin{matrix}
j_o(s(g))=s(j_m(g)),
&j_o(t(g))=t(j_m(g))
\end{matrix}
$$
for $g\in Mor(\mathcal{V}).$
\begin{proof}
$(1)$  As the first step, we will define two functions $$\Theta:Hom(F(\mathcal{D}),\mathcal{V})\rightarrow Hom(\mathcal{D},U(\mathcal{V}))$$ and $$\Theta^{-1}:Hom(\mathcal{D},U(\mathcal{V}))\rightarrow Hom(F(\mathcal{D}),\mathcal{V})$$   for any tensor scheme $\mathcal{D}$ and strict tensor category $\mathcal{V}$,  and prove that they are  inverse functions of each other.

$\bullet$ Definition of $\Theta$.

 If $K:F(\mathcal{D})\rightarrow\mathcal{V}$ is a strict tensor functor, then we define $$\varphi_K=\Theta(K):\mathcal{D}\rightarrow U(\mathcal{V})$$ as follows: for an element $x\in Ob(\mathcal{D})$, we define $$(\varphi_K)_o(x)=j_o(K(i_o(x)))=K\langle x\rangle\in Ob(U(\mathcal{V}));$$

$$\xymatrix{F(\mathcal{D})\ar[r]^{K}&\ar[d]^{j_o,j_m}\mathcal{V}\\\mathcal{D}\ar[u]^{i_o,i_m}\ar[r]^{\varphi_K}&U(\mathcal{V})}$$
for a morphism
$f:x_1\cdots x_m\rightarrow y_1\cdots y_n$ in $Mor(\mathcal{D})$, we define $(\varphi_K)_m(f)$ to be a prime diagram $[v,\gamma_{K(i_m(f))}]$ with domain $$(\varphi_K)_o(x_1)\cdots(\varphi_K)_o(x_m)=K\langle x_1\rangle\cdots K\langle x_m\rangle$$ and codomain $$(\varphi_K)_o(y_1)\cdots(\varphi_K)_o(y_n)=K\langle y_1\rangle\cdots K\langle y_n\rangle$$ in  $W(Ob(U(\mathcal{V})))$ and with the unique real vertex decorated by $$K(i_m(f))\in Mor_{\mathcal{V}}(K\langle x_1\rangle\otimes\cdots\otimes K\langle x_m\rangle, K\langle y_1 \rangle\otimes\cdots\otimes K\langle y_n\rangle),$$ that is,
$$
\begin{matrix}
\begin{matrix}
\begin{tikzpicture}
\node (v1) at (0,0.7) {$x_1\cdots x_m$};
\node (v2) at (0,-0.7) {$y_1\cdots y_n$};
\draw [->,>=stealth] (v1) -- (v2);
\node at (0.2,0) {$f$};
\end{tikzpicture}
\end{matrix}

&\begin{matrix}
\begin{tikzpicture}
\node (v1) at (-1.5,0.5) {};
\node (v2) at (0.5,0.5) {};
\draw [thick,->,>=stealth] (v1) -- (v2);
\node at (-0.5,0.8) {$(\varphi_K)_m$};
\end{tikzpicture}
\end{matrix}&
\begin{matrix}
\begin{tikzpicture}[scale=.6]
\node (v1) at (0,0) {};
\draw[fill] (0,0) circle [radius=0.08];
\node at (2,0){$K(i_m(f))$};
\node (v2) at (-2,1.5) {$K\langle x_1\rangle$};
\node (v3) at (-0.5,1.5) {$K\langle x_2\rangle$};
\node (v4) at (1,1.5) {$\cdots$};
\node (v5) at (2.5,1.5) {$K\langle x_m\rangle$};
\node (v6) at (-2,-1.5) {$K\langle y_1\rangle$};
\node (v7) at (-0.5,-1.5) {$K\langle y_2\rangle$};
\node at (1,-1.5) {$\cdots$};
\node (v8) at (2.5,-1.5) {$K\langle y_n\rangle$};
\draw  (v2) -- (0,0)[postaction={decorate, decoration={markings,mark=at position .50 with {\arrow[black]{stealth}}}}];
\draw  (v3) -- (0,0)[postaction={decorate, decoration={markings,mark=at position .50 with {\arrow[black]{stealth}}}}];
\draw  (v5) -- (0,0)[postaction={decorate, decoration={markings,mark=at position .50 with {\arrow[black]{stealth}}}}];
\draw  (v6) -- (0,0)[postaction={decorate, decoration={markings,mark=at position .50 with {\arrowreversed[black]{stealth}}}}];
\draw  (v7) -- (0,0)[postaction={decorate, decoration={markings,mark=at position .50 with {\arrowreversed[black]{stealth}}}}];
\draw  (v8) -- (0,0)[postaction={decorate, decoration={markings,mark=at position .50 with {\arrowreversed[black]{stealth}}}}];
\end{tikzpicture}
\end{matrix}

\end{matrix}
$$

The fact  that  $\varphi_K$ is a morphism of tensor schemes, namely, $\varphi_K$ is compatible with the source and target maps can be easily checked from its definition.

$$\xymatrix{&&\\
Mor(\mathcal{D})\ar[d]_{\varphi}\ar@{-->}[r]^{i_m}_{\cong}&\ar[r]^{\subset}\mathsf{Prim}(\mathcal{D})&\mathsf{Diag}(\mathcal{D})\ar@{-->}[d]^{\varphi_{\sharp}}&Mor(F(\mathcal{D}))\ar@{=}[l]\ar[d]^{K}\\
\ar@{=}[r]Mor(U(\mathcal{V}))&\ar[r]^{\subset}\mathsf{Prim}(\mathcal{V})&\mathsf{Diag}(\mathcal{V})\ar@{-->}[r]^{\epsilon}&Mor(\mathcal{V})\ar@{=}[dl]\\
&&\ar[ul]^{j_m}Mor(\mathcal{V})}$$

$\bullet$ Definition of $\Theta^{-1}$.

If $\varphi:\mathcal{D}\rightarrow U(\mathcal{V})$ is a morphism of tensor schemes, we want to  define a strict tensor functor $$K_{\varphi}=\Theta^{-1}(K):F(\mathcal{D})\rightarrow \mathcal{V}.$$

Note that  any object $w \in Ob(F(D))$ can be uniquely written as $$w=i_o(x_1)\otimes_{F(\mathcal{D})}\cdots \otimes_{F(\mathcal{D})}i_o(x_m)\triangleq \langle x_1\rangle\otimes_{F(\mathcal{D})}\cdots \otimes_{F(\mathcal{D})}\langle x_m\rangle=x_1\cdots x_m,$$ where
$x_1,\cdots, x_m$ are elements of $Ob(\mathcal{D})$.
We define $K_{\varphi}$ on objects as follows:

$(1)$ If $w=1_{F(\mathcal{D})}$ be the unit object of $F(\mathcal{D})$, we define $K_{\varphi}(w)=1_{\mathcal{V}}$ to be the unit object of $\mathcal{V}$;

$(2)$ If $w=i_o(x)=\langle x\rangle$ for some $x\in Ob(\mathcal{D})$, we define
$$K_{\varphi}(w)=j_o^{-1}(\varphi(x))=\varphi(x)\in Ob(\mathcal{V});$$

$(3)$ If $w=i_o(x_1)\otimes_{F(\mathcal{D})}\cdots\otimes_{F(\mathcal{D})} i_o(x_m)$, we define
$$K_{\varphi}(w)=j_o^{-1}(\varphi(x_1))\otimes_{\mathcal{V}}\cdots\otimes_{\mathcal{V}} j_o^{-1}(\varphi(x_m))=\varphi(x_1)\otimes_{\mathcal{V}}\cdots\otimes_{\mathcal{V}} \varphi(x_m)\in Ob(\mathcal{V}).$$

$$\xymatrix{F(\mathcal{D})\ar[r]^{K_{\varphi}}&\ar[d]^{j_o,j_m}\mathcal{V}\\\mathcal{D}\ar[u]^{i_o,i_m}\ar[r]^{\varphi}&U(\mathcal{V})}$$

For any morphism $$[\Gamma,\gamma]:i_o(x_1)\otimes_{F(\mathcal{D})}\cdots\otimes_{F(\mathcal{D})} i_o(x_m)\rightarrow i_o(y_1)\otimes_{F(\mathcal{D})}\cdots \otimes_{F(\mathcal{D})}i_o(y_n)$$ in $Mor(F(\mathcal{D}))=\mathsf{Diag}(\mathcal{D})$, we define a diagram $\varphi_{\sharp}([\Gamma,\gamma]):=[\Gamma,\varphi_{\sharp}\gamma]$ in $\mathsf{Diag}(\mathcal{V})$ as follows:
$$(\varphi_{\sharp}\gamma)_m(v)=\varepsilon(\varphi_m (\gamma_m(v)))\in Mor(\mathcal{V})$$
for every real vertex  $v \in V_{re}(\Gamma)$ and $$(\varphi_{\sharp}\gamma)_o(h)=j^{-1}_o(\varphi_o (\gamma_o(h)))\in Ob(\mathcal{V})$$ for every half-edge $h\in H(\Gamma).$ In fact, from the definition, $\varphi_{\sharp}$  is an operation on diagrams in $\mathcal{D}$ by applying $\varphi_{\ast}$ on every prime sub-diagram in $\mathcal{D}$ which is naturally identified with $Mor(\mathcal{D})$. As $\varphi$ is a morphism of tensor schemes, it is easy to check that $\varphi_{\sharp}([\Gamma,\gamma])$ is indeed a diagram in $\mathcal{V}$.

From the definition of $\varphi_{\sharp},\otimes_{F(\mathcal{D})}$, $\circ_{F(\mathcal{D})}$, tensor product $\otimes_{\mathsf{Diag}(\mathcal{V})}$ and  composition $\circ_{\mathsf{Diag}(\mathcal{V})}$ in $\mathcal{V}$, it is easy to check that $$\varphi_{\sharp}([\Gamma_1,\gamma_1]\otimes_{F(\mathcal{D})}[\Gamma_2,\gamma_2])=\varphi_{\sharp}([\Gamma_1,\gamma_1])\otimes_{\mathsf{Diag}(\mathcal{V})} \varphi_{\sharp}([\Gamma_2,\gamma_2])$$ for any two morphisms in $F(\mathcal{D})$ and $$\varphi_{\sharp}([\Gamma_2,\gamma_2]\circ_{F(\mathcal{D})}[\Gamma_1,\gamma_1])=\varphi_{\sharp}([\Gamma_2,\gamma_2])\circ_{\mathsf{Diag}(\mathcal{V})} \varphi_{\sharp}([\Gamma_1,\gamma_1])$$
for any two composable morphisms in $F(\mathcal{D})$.

Now we define $K_{\varphi}([\Gamma,\gamma])$ to be $$K_{\varphi}([\Gamma,\gamma])=\varepsilon(\varphi_{\sharp}([\Gamma,\gamma]))\in Mor(\mathcal{V}),$$ that is, the value of  the diagram $[\Gamma,\varphi_{\sharp}\gamma]$.

From the definition, it is easy to check that $K_{\varphi}$ is compatible with source and target maps.

Now let us show that $K_{\varphi}$ is a strict tensor functor. For  any two morphisms $[\Gamma_1,\gamma_1],[\Gamma_2,\gamma_2]\in Mor(F(\mathcal{D}))$, we have
\begin{align*}
&K_{\varphi}([\Gamma_1,\gamma_1]\otimes_{F(\mathcal{D})}[\Gamma_2,\gamma_2])&\\
=&\varepsilon(\varphi_{\sharp}([\Gamma_1,\gamma_1]\otimes_{F(\mathcal{D})}[\Gamma_2,\gamma_2]))&(\text{definition of}\ K_{\varphi} )\\
=&\varepsilon(\varphi_{\sharp}([\Gamma_1,\gamma_1])\otimes_{\mathsf{Diag}(\mathcal{V})} \varphi_{\sharp}([\Gamma_2,\gamma_2]))&(\text{property of}\ \varphi_{\sharp}) \\
=&\varepsilon(\varphi_{\sharp}([\Gamma_1,\gamma_1]))\otimes_{\mathcal{V}}\varepsilon (\varphi_{\sharp}([\Gamma_2,\gamma_2]))&(\text{property of}\ \varepsilon)\\
=&K_{\varphi}([\Gamma_1,\gamma_1])\otimes_{\mathcal{V}}K_{\varphi}([\Gamma_2,\gamma_2])&(\text{definition of}\ K_{\varphi} )
\end{align*}

For any two composable morphisms $[\Gamma_1,\gamma_1]:x_1\cdots x_m\rightarrow y_1\cdots y_n$ and $[\Gamma_2,\gamma_2]:y_1\cdots y_n\rightarrow z_1\cdots z_l$ of $F(\mathcal{D})$, we have
\begin{align*}
&K_{\varphi}([\Gamma_1,\gamma_1]\circ_{F(\mathcal{D})}[\Gamma_2,\gamma_2])&\\
=&\varepsilon(\varphi_{\sharp}([\Gamma_1,\gamma_1])\circ_{F(\mathcal{D})}[\Gamma_2,\gamma_2]))&(\text{definition of}\ K_{\varphi} )\\
=&\varepsilon(\varphi_{\sharp}([\Gamma_1,\gamma_1])\circ_{\mathsf{Diag}(\mathcal{V})} \varphi_{\sharp}([\Gamma_2,\gamma_2]))&(\text{property of}\ \varphi_{\sharp}) \\
=&\varepsilon(\varphi_{\sharp}([\Gamma_1,\gamma_1]))\circ_{\mathcal{V}}\varepsilon (\varphi_{\sharp}([\Gamma_2,\gamma_2]))&(\text{property of}\ \varepsilon)\\
=&K_{\varphi}([\Gamma_1,\gamma_1])\circ_{\mathcal{V}}K_{\varphi}([\Gamma_2,\gamma_2])&(\text{definition of}\ K_{\varphi} )
\end{align*}
The fact that $K_{\varphi}$ preserves unit objects is followed  from the definition of $K_{\varphi}$.

$\bullet$ $\Theta$ and $\Theta^{-1}$ are inverse functions of each other.

We want to show that $\Theta(K_{\varphi})=\varphi$ and $\Theta^{-1}(\varphi_K)=K$
for every $\varphi:\mathcal{D}\rightarrow U(\mathcal{V})$ and $K:F(\mathcal{D})\rightarrow \mathcal{V}$ and this can be directly checked by the definitions of $\Theta$ and $\Theta^{-1}$.

First we prove that $\Theta(K_{\varphi})=\varphi$.
In fact, for every $x\in Ob(\mathcal{D})$,
\begin{align*}
&\Theta(K_{\varphi})(x)&\\
=&j_o(K_{\varphi}(i_o(x)))&(\text{definition of}\  \Theta(K_{\varphi}))&\\
=&j_o(j_o^{-1}(\varphi(x)))&(\text{definition of}\  K_{\varphi})\\
=&\varphi(x),
\end{align*}
thus at the level of objects $\Theta(K_{\varphi})=\varphi$.

For every morphism $f:x_1\cdots x_m\rightarrow y_1\cdots y_n\in Mor(\mathcal{D})$, $\varphi(f)$ is a morphism in $U(\mathcal{V})$ which by definition is a prime diagram in $\mathcal{V}$ with domain $\varphi(x_1)\cdots \varphi(x_m)$, codomain $\varphi(y_1)\cdots \varphi(y_n)$ and with the unique real vertex decorated by $\epsilon_{\mathcal{V}}(\varphi(f))\in Mor(\mathcal{V})$. Also by definition $\Theta(K_{\varphi})(f)$ is  a prime diagram in $\mathcal{V}$ with domain $\Theta(K_{\varphi})(x_1)\cdots \Theta(K_{\varphi})(x_m)$ and codomain $\Theta(K_{\varphi})(y_1)\cdots \Theta(K_{\varphi})(y_n)$.   Notice that we have proved that $\Theta(K_{\varphi})=\varphi$ on the level of objects, then the edge-decoration of $\varphi(f)$ and $\Theta(K_{\varphi})(f)$ are equal as prime diagrams in $\mathcal{V}$. Thus to show they are equal as prime diagrams in $\mathcal{V}$, we only need to  show their vertex-decorations are equal. In fact, we have
\begin{align*}
&\Theta(K_{\varphi})(f)&\\
=&[v,\gamma_{K_{\varphi}(i_m(f))}]&(\text{definition of }\Theta(K_{\varphi}))\\
=&[v,\gamma_{K_{\varphi}([v,\gamma_f])}]&(\text{definition of }i_m(f))\\
=&[v,\gamma_{\varepsilon(\varphi_{\sharp}([v,\gamma_f]))}]&(\text{definition of }K_{\varphi})\\
=&[v,\gamma_{\varepsilon([v,\varphi_{\ast}\gamma_f])}]&(\text{definition of }\varphi_{\sharp})\\
=&[v,\gamma_{\varepsilon(\varphi(f))}]&(\text{definition of value of prime diagram})\\
=&\varphi(f).&(\text{property of  prime diagrams in }\mathcal{V})
\end{align*}
Thus we prove that $\Theta(K_{\varphi})=\varphi$.

Now we want to  prove that $\Theta^{-1}(\varphi_K)=K$.

First, for any object $i_o(x_1)\otimes_{F(\mathcal{D})}\cdots \otimes_{F(\mathcal{D})}i_o(x_m)\in Ob(F(\mathcal{D}))$, we have
\begin{align*}
&\Theta^{-1}(\varphi_K)(i_o(x_1)\otimes_{F(\mathcal{D})}\cdots\otimes_{F(\mathcal{D})} i_o(x_m))&\\
=&j_o^{-1}\varphi_K(x_1)\otimes_{\mathcal{V}}\cdots\otimes_{\mathcal{V}}j_o^{-1}\varphi_K(x_m) &(\text{definition of}\  \Theta^{-1}(\varphi_K))&\\
=&j_o^{-1}j_oKi_o(x_1)\otimes_{\mathcal{V}}\cdots\otimes_{\mathcal{V}}j_o^{-1}j_oKi_o(x_m)& (\text{definition of}\  \varphi_K)\\
=&Ki_o(x_1)\otimes_{\mathcal{V}}\cdots\otimes_{\mathcal{V}}Ki_o(x_m)&(j_o^{-1}j_o=id)\\
=&K(i_o(x_1)\otimes_{F(\mathcal{D})}\cdots\otimes_{F(\mathcal{D})} i_o(x_m))&(K\ \text{is a strict tensor fucntor})
\end{align*}
thus $\Theta^{-1}(\varphi_K)=K$ on the level of objects. Secondly to prove they are equal on the level of morphisms, we will  prove that for any prime morphism $[v,\gamma]$ in $F(\mathcal{D})$, that is, for any prime diagram in $\mathcal{D}$, the equation    $$\Theta^{-1}(\varphi_K)([v,\gamma])=K([v,\gamma])$$ is true. In fact, if $[v,\gamma_f]:x_1\cdots x_m\rightarrow y_1\cdots y_n\in F(\mathcal{D})$ is a prime diagram with vertex-decoration $f\in Mor(D)$, we have

\begin{align*}
&\Theta^{-1}(\varphi_K)([v,\gamma_f])&\\
=&\varepsilon((\varphi_K)_{\sharp}([v,\gamma_f])&(\text{definition of}\  \Theta^{-1}(\varphi_K))\\
=&\varepsilon([v,\gamma_{\varepsilon(\varphi_K(f))}])&(\text{definition of }\ (\varphi_K)_{\sharp})\\
=&\varepsilon(\varphi_K(f))&(\text{definition of }\ \varepsilon)\\
=&\varepsilon([v,\gamma_{K(i_m(f))}])&(\text{definition of }\ \varphi_K)\\
=&\varepsilon([v,\gamma_{K([v,\gamma_f)}])&(\text{definition of }\ i_o)\\
=&K([v,\gamma_f]).&(\text{definition of }\ \varepsilon)
\end{align*}
At last, for any general morphism $[\Gamma,\gamma]$, the equation
$$\Theta^{-1}(\varphi_K)[\Gamma,\gamma]=K([\Gamma,\gamma])$$
is direct consequence of the facts  that both $\Theta^{-1}(\varphi_K)$ and $K$ are strict tensor functors and  every diagrams can be decomposed as composition and tensor product of prime diagrams and invertible diagrams.
So we prove that $\Theta$ is a bijection.\\

$(2)$ In this second step, we will show that $\Theta$ is natural both in $\mathcal{D}$s and $\mathcal{V}$s.

$\bullet$ Now we want to show $\Theta$ is natural in $\mathcal{D}$s.

If $\psi:\mathcal{D}_1\rightarrow\mathcal{D}_2$ is a morphism of tensor schemes, we need to show that the diagram $$\xymatrix{Hom(F(\mathcal{D}_1),\mathcal{V})\ar[rr]^{\Theta_{\mathcal{D}_1,\mathcal{V}}}&&Hom(\mathcal{D}_1,U(\mathcal{V}))\\
Hom(F(\mathcal{D}_2),\mathcal{V})\ar[u]^{(F\psi)^{\ast}}\ar[rr]^{\Theta_{\mathcal{D}_2,\mathcal{V}}}&&\ar[u]_{\psi^{\ast}}Hom(\mathcal{D}_2,U(\mathcal{V}))}$$
is commutative, that is,
for every $K\in Hom(F(\mathcal{D}_2),\mathcal{V})$, the equation $$\varphi_K\circ\psi=\varphi_{K\circ F(\psi)}$$ holds in $Hom(\mathcal{D}_1,U(\mathcal{V}))$.

In fact, for every object $x\in Ob(\mathcal{D}_1)$, using the following commutative diagram
$$\xymatrix{Ob(F(\mathcal{D}_1))\ar[r]^{F(\psi)}&Ob(F(\mathcal{D}_2))\\ \ar[u]^{i_o^{\mathcal{D}_1}}Ob(\mathcal{D}_1)\ar[r]^{\psi}&\ar[u]_{i_o^{\mathcal{D}_2}}Ob(\mathcal{D}_2)}$$we have
\begin{align*}
&LHS\\
=&\varphi_K\circ\psi(x)&\\
=&j_o^{\mathcal{V}}K(i_o^{\mathcal{D}_2}\psi(x))&(\text{definition of }\varphi_K)\\
=&j_o^{\mathcal{V}}K( F(\psi)(i_o^{\mathcal{D}_1}x))&( i_o^{\mathcal{D}_2}\psi=F(\psi)i_o^{\mathcal{D}_1})\\
=&j_o^{\mathcal{V}}(K\circ F(\psi))(i_o^{\mathcal{D}_1}x))&(\text{rewriting})\\
=&\varphi_{K\circ F(\psi)}(x)&(\text{definition of }\varphi_{K\circ F(\psi)})\\
=&RHS.
\end{align*}
Thus on the level of objects, $\varphi_K\circ\psi=\varphi_{K\circ F(\psi)}$ holds.

For any morphism $f:x_1\cdots x_m\rightarrow y_1\cdots y_n\in Mor(\mathcal{D}_1)$, using the following commutative diagram $$\xymatrix{Mor(F(\mathcal{D}_1))\ar[r]^{F(\psi)}&Mor(F(\mathcal{D}_2))\\ \ar[u]^{i_m^{\mathcal{D}_1}}Mor(\mathcal{D}_1)\ar[r]^{\psi}&\ar[u]_{i_m^{\mathcal{D}_2}}Mor(\mathcal{D}_2)}$$ we have
\begin{align*}
&LHS\\
=&\varphi_K\circ\psi(f)&\\
=&[v,\gamma_{K(i_m^{\mathcal{D}_2}\psi(f))}]&(\text{definition of }\varphi_K)\\
=&[v,\gamma_{K(F(\psi)i_m^{\mathcal{D}_1}(f))}]&(i_m^{\mathcal{D}_2}\psi=F(\psi)i_m^{\mathcal{D}_1})\\
=&[v,\gamma_{K\circ F(\psi)(i_m^{\mathcal{D}_1}(f))}]&(\text{rewriting })\\
=&\varphi_{K\circ F(\psi)}(f)&(\text{definition of }\varphi_{K\circ F(\psi)})\\
=&RHS.
\end{align*}
Thus on the level of morphisms, $\varphi_K\circ\psi=\varphi_{K\circ F(\psi)}$ holds.

$\bullet$ Now let us show $\Theta$ is natural in  $\mathcal{V}$s.

If $L:\mathcal{V}_1\rightarrow\mathcal{V}_2$ is a morphism of strict tensor categories, we need to show the following diagram
$$\xymatrix{Hom(F(\mathcal{D}),\mathcal{V}_1)\ar[d]_{L_{\ast}}\ar[rr]^{\Theta_{\mathcal{D},\mathcal{V}_1}}&&Hom(\mathcal{D},U(\mathcal{V}_1))\ar[d]^{U(L)_{\ast}}\\
Hom(F(\mathcal{D}),\mathcal{V}_2)\ar[rr]^{\Theta_{\mathcal{D},\mathcal{V}_2}}&&Hom(\mathcal{D},U(\mathcal{V}_2))}$$
is commutative. That is, for every $K\in Hom(F(\mathcal{D}),\mathcal{V}_1)$, the equation $$U(L)\circ\varphi_K=\varphi_{L\circ K}$$ holds in $Hom(\mathcal{D},U(\mathcal{V}_2))$.

First, for every object $x\in Ob(\mathcal{D})$, using the commutative diagram
$$\xymatrix{Ob(U(\mathcal{V}_1))\ar[r]^{U(L)}&Ob(U(\mathcal{V}_2))\\
\ar[u]^{j_o^{\mathcal{V}_1}}Ob(\mathcal{V}_1)\ar[r]^{L}&\ar[u]_{j_o^{\mathcal{V}_2}}Ob(\mathcal{V}_2)}$$
we have
\begin{align*}
&LHS\\
=&U(L)\circ\varphi_K(x)&\\
=&U(L)j_o^{\mathcal{V}_1}K(i_o^{\mathcal{D}}x)&(\text{definition of }\varphi_K)\\
=&j_o^{\mathcal{V}_2}L\circ K(i_o^{\mathcal{D}}x)&(U(L)j_o^{\mathcal{V}_1}=j_o^{\mathcal{V}_2}L)\\
=&j_o^{\mathcal{V}_2}(L\circ K)(i_o^{\mathcal{D}}x)&(\text{rewriting })\\
=&\varphi_{L\circ K}(x)&(\text{definition of }\varphi_{L\circ K})\\
=&RHS.
\end{align*}
Thus on the level of objects, $U(L)\circ\varphi_K=\varphi_{L\circ K}$ holds.

For any morphism  $f:x_1\cdots x_m\rightarrow y_1\cdots y_n\in Mor(\mathcal{D})$, we have
\begin{align*}
&LHS\\
=&U(L)\circ\varphi_K(f)&\\
=&U(L)([v,\gamma_{K(i_m^{\mathcal{D}}f)}])&(\text{definition of }\varphi_K)\\
=&[v,\gamma_{L(K(i_m^{\mathcal{D}}f))}]&(\text{definition of } U(L))\\
=&[v,\gamma_{L\circ K(i_m^{\mathcal{D}}f)}]&(\text{rewriting })\\
=&\varphi_{L\circ K}(f)&(\text{definition of }\varphi_{L\circ K})\\
=&RHS.
\end{align*}
Thus on the level of morphisms, $U(L)\circ\varphi_K=\varphi_{L\circ K}$ holds.
Hence we complete the proof of adjointness of $F,U$.
\end{proof}

\subsection{Two natural isomorphisms }

By the general theory of adjunctions \cite{[Mac71]}, the adjunction $\Theta:F\dashv U$ can be equivalently defined as the quadruple  $(F,U,\varepsilon, \eta)$ with unit $\eta:I\rightarrow UF$ and counit $\varepsilon:FU\rightarrow I$ being natural transformations, such that $\varepsilon_F\circ F(\eta)=1_F$ and $U(\varepsilon)\circ\eta_U=1_U$. We define $\mu=U\varepsilon F:UFUF\rightarrow UF$ and  $\delta=F\eta U:FU\rightarrow FUFU$, and take  $T=UF$, $G=FU$. Then $(T,\mu,\eta)$ defines a monad on $\mathbf{T.Sch}$ and  $(G,\delta,\varepsilon)$ defines a comonad on $\mathbf{Str.T}$.

Now we introduce a functor $$\mathbf{\Gamma}^{\otimes}: \mathbf{Str.T}\rightarrow \mathbf{Str.T}$$  which sends a strict tensor category $\mathcal{V}$ to a strict tensor category $\mathbf{\Gamma}^{\otimes}(\mathcal{V}) = ( \mathsf{\Gamma}(\mathcal{V}),  W(Ob(\mathcal{V})), dom, cod )$ with the set of morphisms being the set of diagrams in $\mathcal{V}$. The tensor product of morphisms is the tensor product of diagrams and composition of morphisms is the composition of diagrams. The tensor product of objects is given by juxtaposition of words.  For a strict tensor functor $K:\mathcal{V}_1\rightarrow \mathcal{V}_2$, the strict tensor functor  $\mathbf{\Gamma}^{\otimes}(K):\mathbf{\Gamma}^{\otimes}(\mathcal{V}_1)\rightarrow \mathbf{\Gamma}^{\otimes}(\mathcal{V}_2)$ is defined as $\mathbf{\Gamma}^{\otimes}(K)_o=\widehat{K_o}:W(Ob(\mathcal{V}_1))\rightarrow W(Ob(\mathcal{V}_2))$, $\mathbf{\Gamma}^{\otimes}(K)_m=K_{\ast}:\mathsf{\Gamma}(\mathcal{V}_1)\rightarrow\mathsf{\Gamma}(\mathcal{V}_2)$.

As a decorated version of example $5.1.1$, we have
\begin{prop}\label{first identification}
There is a natural isomorphism $\widetilde{\zeta}: G\rightarrow \mathbf{\Gamma}^{\otimes}$.
\end{prop}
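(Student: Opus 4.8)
The plan is to produce, for each strict tensor category $\mathcal{V}$, a strict-tensor-functor isomorphism $\widetilde{\zeta}_{\mathcal{V}}:G(\mathcal{V})=F(U(\mathcal{V}))\rightarrow \mathbf{\Gamma}^{\otimes}(\mathcal{V})$, exactly as the $\mathcal{V}$-decorated analogue of the isomorphism $\widetilde{\zeta}:F(\mathbf{Prim})\overset{\sim}{\rightarrow}\mathbf{\Gamma}^{\otimes}$ of Example 5.1.1, and then to check that these components assemble into a natural isomorphism. First I would unwind both sides: by definition $U(\mathcal{V})=(Ob(\mathcal{V}),\mathsf{Prim}(\mathcal{V}),dom,cod)$, so $F(U(\mathcal{V}))=(W(Ob(\mathcal{V})),\mathsf{Diag}(U(\mathcal{V})),dom,cod)$, whereas $\mathbf{\Gamma}^{\otimes}(\mathcal{V})=(\mathsf{\Gamma}(\mathcal{V}),W(Ob(\mathcal{V})),dom,cod)$. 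Both categories thus carry $W(Ob(\mathcal{V}))$ as their object set, and I would take $\widetilde{\zeta}_{\mathcal{V}}$ to be the identity on objects; the whole content lies in the map on morphisms $\zeta_{\mathcal{V}}:\mathsf{Diag}(U(\mathcal{V}))\rightarrow\mathsf{\Gamma}(\mathcal{V})$.

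The key observation is that a diagram $[\Gamma,\gamma]$ in the tensor scheme $U(\mathcal{V})$ is a planar graph $\Gamma$ whose half-edges are labelled by $\gamma_o$ with objects of $\mathcal{V}$ and whose real vertices are labelled by $\gamma_m$ with prime diagrams in $\mathcal{V}$. Since every prime diagram in $\mathcal{V}$ is a corolla, i.e. a reduced graph with a single real vertex decorated by one morphism of $\mathcal{V}$ together with the matching in/out object data, substituting each such corolla into its vertex leaves the underlying planar graph $\Gamma$ unchanged and merely transfers, at each real vertex $v$, the vertex-morphism of the prime diagram $\gamma_m(v)$ onto $v$. This is precisely the evaluation map $Z$ of Section 4.5 in its $\mathcal{V}$-decorated form, so I would set $\zeta_{\mathcal{V}}([\Gamma,\gamma])=[\Gamma,\tilde{\gamma}]$ with $\tilde{\gamma}_o=\gamma_o$ and $\tilde{\gamma}_m(v)$ the morphism of $\mathcal{V}$ decorating the real vertex of $\gamma_m(v)$. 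The inverse sends a diagram $[\Gamma,\tilde{\gamma}]$ in $\mathcal{V}$ back to the diagram in $U(\mathcal{V})$ obtained by replacing each vertex-decoration $\tilde{\gamma}_m(v)$ with the prime diagram $[v,\gamma_{\tilde{\gamma}_m(v)}]$ built from $v$'s local in/out object data. Because prime graphs are reduced, these two assignments respect isomorphism classes and are mutually inverse, giving the bijection $\zeta_{\mathcal{V}}$.

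Next I would verify that $\widetilde{\zeta}_{\mathcal{V}}$ is a strict tensor functor, which reduces to checking that $\zeta_{\mathcal{V}}$ respects the tensor product and composition of diagrams; this is immediate, since $\otimes$ and $\circ$ of diagrams act by disjoint union and grafting of the underlying planar graphs together with the evident transfer of decorations, and flattening the vertex-decorations commutes with both operations. For naturality in $\mathcal{V}$, given a strict tensor functor $K:\mathcal{V}_1\rightarrow\mathcal{V}_2$ I would check $\mathbf{\Gamma}^{\otimes}(K)\circ\widetilde{\zeta}_{\mathcal{V}_1}=\widetilde{\zeta}_{\mathcal{V}_2}\circ G(K)$; on objects both sides equal $\widehat{K_o}$, and on morphisms it amounts to the compatibility of the push-forward with flattening, which follows from the definitions of $(U(K))_{\ast}$ and $K_{\ast}$ (Propositions 4.2.2 and 4.3.2), as each push-forward transforms a vertex-morphism $g$ into $K(g)$ on either route. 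The tensor/composition compatibility and the object-level identifications are routine; the one place demanding care is the bookkeeping behind $\zeta_{\mathcal{V}}$, namely making precise that a prime diagram in $\mathcal{V}$ carries exactly the data of a single morphism of $\mathcal{V}$ plus the corolla's object labels, so that substitution of reduced corollas is a graph-preserving, decoration-transferring bijection. This is the main (though not deep) obstacle, and it is precisely the point at which the reducedness of prime graphs, exactly as exploited in Example 5.1.1, is indispensable.
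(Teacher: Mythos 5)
Your proposal is correct and follows essentially the same route as the paper: the component $\widetilde{\zeta}_{\mathcal{V}}$ is the identity on objects and, on morphisms, flattens each prime-diagram vertex decoration $\gamma_m(v)$ to its underlying morphism of $\mathcal{V}$ (which is exactly $\varepsilon_{\mathcal{V}}(\gamma_m(v))$, the formulation the paper uses), keeping edge decorations; functoriality and naturality are then routine checks. Your additional explicit description of the inverse and of the naturality square via push-forwards only fills in details the paper leaves as "directly checked."
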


\begin{proof}
Given any strict tensor functor $\mathcal{V}$,  there is an isomorphism $\widetilde{\zeta}_{\mathcal{V}}: G(\mathcal{V})\rightarrow \mathbf{\Gamma}^{\otimes}(\mathcal{V})$ of strict tensor categories defined as follows:

$\bullet$ for a morphism $[\Gamma,\lambda]$ in $G(\mathcal{V})$ which is a diagram in $U(\mathcal{V})$, the image $(\widetilde{\zeta}_{\mathcal{V}})_m([\Gamma,\lambda])$ is a diagram $[\Gamma,\widetilde{\lambda}]$ in $\mathcal{V}$, such that $\widetilde{\lambda}_o(h)=i_o(\lambda_o(h))$ for any $h\in H(\Gamma)$ and $\widetilde{\lambda}_m(v)=\varepsilon_{\mathcal{V}}(\lambda_m(v))$ for any $v\in V_{re}(\Gamma)$, where $i_o:Ob(U(\mathcal{V}))\rightarrow Ob(G(\mathcal{V}))$ sends an object $x$ of $U(\mathcal{V})$ to the object $\langle x\rangle$ of $G(\mathcal{V})$.

$\bullet$ for an object $x_1\cdots x_m$ of $G(\mathcal{V})$,  $(\widetilde{\zeta}_{\mathcal{V}})_o(x_1\cdots x_m)=x_1\cdots x_m$.

It is easy to see that $\widetilde{\zeta}_{\mathcal{V}}$ is a strict tensor functor and the fact that $\widetilde{\zeta}_{\mathcal{V}}$ is a natural transformation can be directly checked.
\end{proof}
According to this proposition, we will not  make a distinction between the functor $G$ and the functor $\mathbf{\Gamma}^{\otimes}$.

Recall in section $4.2$ we have introduced a functor $\mathbf{\Gamma}_F:\mathbf{T.Sch}\rightarrow \mathbf{T.Sch}$, the following proposition shows that we can identity $\mathbf{\Gamma}_F$ with the functor $T$.

\begin{prop}\label{second identification}
There is a natural isomorphism $\widetilde{\omega}: T\rightarrow \mathbf{\Gamma}_F$.
\end{prop}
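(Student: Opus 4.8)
The plan is to construct the natural isomorphism $\widetilde{\omega}:T\rightarrow\mathbf{\Gamma}_F$ componentwise, exactly mirroring the proof of Proposition \ref{first identification} but one categorical level down. Recall $T=UF$, so for a tensor scheme $\mathcal{D}$ the tensor scheme $T(\mathcal{D})=U(F(\mathcal{D}))$ has objects $W(Ob(\mathcal{D}))$ and morphisms $\mathsf{Prim}(F(\mathcal{D}))$, i.e.\ isomorphism classes of \emph{prime} diagrams in the strict tensor category $F(\mathcal{D})$, while $\mathbf{\Gamma}_F(\mathcal{D})=(\mathsf{\Gamma}_F(\mathcal{D}),W(Ob(\mathcal{D})),dom,cod)$ has as morphisms the fissus planar diagrams in $\mathcal{D}$. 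The heart of the argument is therefore an object-level identification: a prime diagram in $F(\mathcal{D})$ is a single real vertex $v$ whose half-edges are decorated by words of $Ob(\mathcal{D})$ (objects of $F(\mathcal{D})$) and whose vertex is decorated by a morphism of $F(\mathcal{D})$, namely a diagram $[\Gamma,\gamma]$ in $\mathcal{D}$. The blocking of the input and output words into the individual words attached to the half-edges of $v$ is precisely a fission structure on $[\Gamma,\gamma]$. This is the same bookkeeping that appears in Example $5.2.1$ for the special case $\mathcal{V}=\mathbf{\Gamma}^{\otimes}$, and I would reuse that computation essentially verbatim.

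First I would define, for each $\mathcal{D}$, the functor component $(\widetilde{\omega}_{\mathcal{D}})_o:W(Ob(\mathcal{D}))\rightarrow W(Ob(\mathcal{D}))$ to be the identity, and $(\widetilde{\omega}_{\mathcal{D}})_m:\mathsf{Prim}(F(\mathcal{D}))\rightarrow\mathsf{\Gamma}_F(\mathcal{D})$ to send a prime diagram $[\Gamma_v,\lambda]$ in $F(\mathcal{D})$ to the fissus planar diagram obtained by taking its sole vertex-decoration $\lambda(v)=[\Gamma,\gamma]$ (a diagram in $\mathcal{D}$) together with the fission structure $(P_{in},P_{out})$ read off from the word-lengths $\mu_\alpha=|\lambda(I_\alpha)|$ and $\nu_\beta=|\lambda(O_\beta)|$ of the half-edge decorations, exactly as in the map $\omega$ of Example $5.2.1$. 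The inverse map reverses this: a fissus planar diagram $([\Gamma,\gamma],P_{in},P_{out})$ determines a prime diagram in $F(\mathcal{D})$ whose half-edges carry the words cut out by the blocks of $P_{in},P_{out}$ and whose vertex carries the class of $[\Gamma,\gamma]$ as a morphism of $F(\mathcal{D})$. I would then verify that these two assignments are mutually inverse and compatible with $dom$, $cod$; the compatibility is immediate since both sides record the bracketed words $dom((\Gamma,\gamma),P_{in},P_{out})$ and $cod((\Gamma,\gamma),P_{in},P_{out})$ defined in Section $4.2$.

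Next I would check naturality: for a morphism $\psi:\mathcal{D}_1\rightarrow\mathcal{D}_2$ of tensor schemes, the square relating $T(\psi)=U(F(\psi))$ and $\mathbf{\Gamma}_F(\psi)=(\widehat{\psi_o},\psi_{\ast F})$ must commute. Both horizontal maps push forward valuations by applying $\psi_o,\psi_m$ to the object- and morphism-decorations while leaving the underlying planar graph and the fission/blocking data untouched, and $\widetilde{\omega}$ only reshuffles which layer of data is regarded as ``the fission structure.'' Hence the square commutes on the nose; this is a routine diagram chase using Proposition $4.2.3$ (functoriality of $\mathbf{\Gamma}_F$) and the definition of $F(\psi),U(\psi)$.

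The main obstacle, and the place deserving genuine care rather than hand-waving, is confirming that the correspondence is truly a bijection on \emph{isomorphism classes}: one must check that two prime diagrams in $F(\mathcal{D})$ are isomorphic if and only if the associated fissus planar diagrams in $\mathcal{D}$ are equivalent as fissus diagrams. The subtlety is that an isomorphism of prime diagrams in $F(\mathcal{D})$ acts on a single vertex and must respect the half-edge decorations (words), whereas an equivalence of fissus planar diagrams acts on the whole planar graph $\Gamma$ and must preserve the linear partitions $P_{in},P_{out}$; I would argue these two notions coincide because the half-edge words of the prime diagram are exactly the blocks of the fission structure, so the data preserved on each side match. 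Once this identification of equivalence relations is pinned down, the isomorphism of tensor schemes and the strict-tensor-functoriality (preservation of tensor product and composition) follow from the fact, noted at the end of Section $4.2$ and in the summary table, that tensor product and composition of fissus planar diagrams are defined precisely so as to agree with those of prime diagrams in $F(\mathcal{D})$.
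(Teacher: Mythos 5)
Your proposal is correct and follows essentially the same route as the paper: the identity map on objects, the reading-off of the fission structure $(P_{in},P_{out})$ from the word-lengths of the half-edge decorations of the single vertex (exactly as in Example $5.2.1$), and a direct naturality check; your explicit inverse and your matching of the two equivalence relations merely spell out what the paper compresses into the remark that, since $F(\mathcal{D})$ is free, $\widetilde{\omega}_{\mathcal{D}}$ is easily seen to be an isomorphism. The only superfluous item is your closing appeal to preservation of tensor products and compositions: $\widetilde{\omega}_{\mathcal{D}}$ is an isomorphism of tensor schemes, so only compatibility with $dom$ and $cod$ needs checking (that requirement belongs to the strict-tensor-category statement of Proposition $5.4.1$, not to this one).
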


\begin{proof}
For any tensor scheme $\mathcal{D}$, we can define an isomorphism $\widetilde{\omega}_{(\mathcal{D})}: T(\mathcal{D})\rightarrow \mathbf{\Gamma}_F(\mathcal{D})$ of tensor schemes as follows:

$\bullet$ similar to example $5.2.1$, let $[\Gamma_v,\lambda]$ be a morphism in $T(\mathcal{D})$ which is a prime diagram in $F(\mathcal{D})$, and assume $In(\Gamma_v)=\{I_1,...,I_m\}$, $Out(\Gamma_v)=\{O_1,...,O_n\}$, $\lambda_o(I_\alpha)=x_{\mu_{0}+\cdots+\mu_{\alpha-1}+1}\cdots x_{\mu_0+\cdots+\mu_{\alpha}}$, and $\lambda_o(O_\beta)=y_{\nu_{0}+\cdots+\nu_{\beta-1}+1}\cdots y_{\nu_0+\cdots+\nu_{\beta}}$, where  $x_{\mu}, y_{\nu}\in Ob(\mathcal{D})$,for $1\leq\mu\leq\mu_1+\cdots+\mu_m$, $1\leq\nu\leq\nu_1+\cdots+\nu_n$ and $\mu_{0}=\nu_{0}=0$, $\mu_{\alpha},\nu_{\beta}\geq 1$ for $1\leq\alpha\leq m$, $1\leq\beta\leq n$. Then the domain of $\lambda(v)$ should be  $x_1\cdots x_{\mu_1+\cdots+\mu_m}$ and the codomain of  $\lambda(v)$ should be $y_1\cdots y_{\nu_1+\cdots+\nu_n}$. Assume $In(\lambda(v))=\{i_1<\cdots<i_{\mu_1+\cdots+\mu_m}\}$ and $Out(\lambda(v))=\{o_1<\cdots<o_{\nu_1+\cdots+\nu_n}\}$, then we define the image $(\widetilde{\omega}_{\mathcal{D}})_m([\Gamma_v,\lambda])$ is the fissus diagram $[\lambda(v),P_{in}, P_{out}]$ in $\mathcal{D}$ with $P_{in}=(i_1<\cdots<i_{\mu_1})<\cdots<(i_{\mu_1+\cdots+\mu_{m-1}+1}<\cdots<i_{\mu_1+\cdots+\mu_m})$ and $P_{out}=(o_1<\cdots<o_{\nu_1})<\cdots<(o_{\nu_1+\cdots+\nu_{n-1}+1}<\cdots<o_{\nu_1+\cdots+\nu_n})$.

$\bullet$ for an object $x_1\cdots x_m$ of $T(\mathcal{D})$,  $(\widetilde{\omega}_{\mathcal{D}})_o(x_1\cdots x_m)=x_1\cdots x_m$.

Due to the fact that $F(\mathcal{D})$ is free, it is easy to see that $\widetilde{\omega}$ is an isomorphism of tensor schemes. The naturality of $\widetilde{\omega}$ can be directly checked.

\end{proof}

\subsection{The unit and counit of tensor calculus }
Here we want to give a detailed analysis of $\eta$ and $\varepsilon$. For a tensor scheme $\mathcal{D}$, the unit $\eta:I\rightarrow UF$ gives rise to a morphism $\eta_{\mathcal{D}}:\mathcal{D}\rightarrow UF(\mathcal{D})$ of tensor schemes. By definition, $\eta_{\mathcal{D}}$ is given by the equation $$\eta_{\mathcal{D}}=\Theta(Id_{F(\mathcal{D})}).$$

Notice that $Ob(UF(\mathcal{D}))=W(Ob(\mathcal{D}))$ is the set of  words in $Ob(\mathcal{D})$, $Mor(UF(\mathcal{D}))=\mathsf{Prim}(F(\mathcal{D}))$ is the set of prime diagrams in $F(\mathcal{D})$.

For any object $x\in Ob(\mathcal{D})$, by definition $$\eta_{\mathcal{D}}(x)=j^{F(\mathcal{D})}_o\circ Id_{F(\mathcal{D})}\circ i^{\mathcal{D}}_o(x)=j^{F(\mathcal{D})}_o\circ i^{\mathcal{D}}_o(x)=\langle x\rangle\in Ob(UF(\mathcal{D}))=W(Ob(\mathcal{D})).$$

For any morphism $f:x_1\cdots x_m\rightarrow y_1\cdots y_n\in Mor(\mathcal{D})$, by definition $$\eta_{\mathcal{D}}(f)=[v,\gamma_{Id_{F(\mathcal{D})}\circ i^{\mathcal{D}}_m(f)}]=[v,\gamma_{i^{\mathcal{D}}_m(f)}]=[v,\gamma_{[v,\gamma_f]}],$$ that is, a prime diagram with domain and codomain  $\langle x_1\rangle\cdots \langle x_m\rangle, \langle y_1\rangle\cdots \langle y_n\rangle\in W(W(Ob(\mathcal{D})))=W(F(\mathcal{D}))$, respectively, and with the unique vertex decorated by the prime diagram $[v,\gamma_f]=i^{\mathcal{D}}_m(f)$.

By proposition \ref{second identification} ,  we have a natural isomorphism $\widetilde{\omega}_{\mathcal{D}}:UF(\mathcal{D})\rightarrow \mathbf{\Gamma}_F(\mathcal{D})$, thus we can identify $[v,\gamma_{[v,\gamma_f]}]$  with a fissus prime diagram in $\mathcal{D}$.

\begin{center}
$
\begin{matrix}

\begin{matrix}
\begin{tikzpicture}
\node (v1) at (0,0.7) {$x_1\cdots x_m$};
\node (v2) at (0,-0.7) {$y_1\cdots y_n$};
\draw [->,>=stealth] (v1) -- (v2);
\node at (0.5,0) {$f$};
\end{tikzpicture}
\end{matrix}
&
\begin{matrix}
\begin{tikzpicture}
\node (v1) at (-1,0.5) {};
\node (v2) at (0,0.5) {};
\draw [thick,->,>=stealth] (v1) -- (v2);
\node at (-0.5,0.8) {$\eta_{\mathcal{D}}$};
\end{tikzpicture}
\end{matrix}
&
\begin{matrix}
\begin{tikzpicture}[scale=1]
\node (v1) at (0,0) {};
\draw[fill] (0,0) circle [radius=0.055];
\node at (2,0){$\begin{matrix}i^{\mathcal{D}}_m(f)=\begin{matrix}\begin{tikzpicture}[scale=.33]
\node (v1) at (0,0) {};
\draw[fill] (0,0) circle [radius=0.08];
\node [scale=.7] at (0.6,0){$f$};
\node [scale=.7] (v2) at (-2,1.5) {$x_1$};
\node [scale=.7](v3) at (-0.5,1.5) {$x_2$};
\node [scale=.7](v4) at (1,1.5) {$\cdots$};
\node [scale=.7](v5) at (2.5,1.5) {$x_m$};
\node [scale=.7](v6) at (-2,-1.5) {$y_1$};
\node [scale=.7](v7) at (-0.5,-1.5) {$y_2$};
\node [scale=.7] at (1,-1.5) {$\cdots$};
\node [scale=.7](v8) at (2.5,-1.5) {$y_n$};
\draw  (v2) -- (0,0)[postaction={decorate, decoration={markings,mark=at position .50 with {\arrow[black]{stealth}}}}];
\draw  (v3) -- (0,0)[postaction={decorate, decoration={markings,mark=at position .50 with {\arrow[black]{stealth}}}}];
\draw  (v5) -- (0,0)[postaction={decorate, decoration={markings,mark=at position .50 with {\arrow[black]{stealth}}}}];
\draw  (v6) -- (0,0)[postaction={decorate, decoration={markings,mark=at position .50 with {\arrowreversed[black]{stealth}}}}];
\draw  (v7) -- (0,0)[postaction={decorate, decoration={markings,mark=at position .50 with {\arrowreversed[black]{stealth}}}}];
\draw  (v8) -- (0,0)[postaction={decorate, decoration={markings,mark=at position .50 with {\arrowreversed[black]{stealth}}}}];
\end{tikzpicture}\end{matrix}\end{matrix}$};
\node (v2) at (-2,1.5) {$\langle x_1\rangle$};
\node (v3) at (-0.5,1.5) {$\langle x_2\rangle$};
\node (v4) at (1,1.5) {$\cdots$};
\node (v5) at (2.5,1.5) {$\langle x_m\rangle$};
\node (v6) at (-2,-1.5) {$\langle y_1\rangle$};
\node (v7) at (-0.5,-1.5) {$\langle y_2\rangle$};
\node at (1,-1.5) {$\cdots$};
\node (v8) at (2.5,-1.5) {$\langle y_n\rangle$};
\draw  (v2) -- (0,0)[postaction={decorate, decoration={markings,mark=at position .50 with {\arrow[black]{stealth}}}}];
\draw  (v3) -- (0,0)[postaction={decorate, decoration={markings,mark=at position .50 with {\arrow[black]{stealth}}}}];
\draw  (v5) -- (0,0)[postaction={decorate, decoration={markings,mark=at position .50 with {\arrow[black]{stealth}}}}];
\draw  (v6) -- (0,0)[postaction={decorate, decoration={markings,mark=at position .50 with {\arrowreversed[black]{stealth}}}}];
\draw  (v7) -- (0,0)[postaction={decorate, decoration={markings,mark=at position .50 with {\arrowreversed[black]{stealth}}}}];
\draw  (v8) -- (0,0)[postaction={decorate, decoration={markings,mark=at position .50 with {\arrowreversed[black]{stealth}}}}];
\end{tikzpicture}
\end{matrix}
&
\begin{matrix}
\begin{tikzpicture}
\node (v1) at (-1,0.5) {};
\node (v2) at (0,0.5) {};
\draw [thick,->,>=stealth] (v1) -- (v2);
\node at (-0.5,0.8) {$\widetilde{\omega}_{\mathcal{D}}$};
\end{tikzpicture}
\end{matrix}
&
\begin{matrix}
\begin{tikzpicture}[scale=.6]
\node (v1) at (0,0) {};
\draw[fill] (0,0) circle [radius=0.08];
\node at (0.6,0){$f$};
\node (v2) at (-2,1.5) {$(x_1)$};
\node (v3) at (-0.5,1.5) {$(x_2)$};
\node (v4) at (1,1.5) {$\cdots$};
\node (v5) at (2.5,1.5) {$(x_m)$};
\node (v6) at (-2,-1.5) {$(y_1)$};
\node (v7) at (-0.5,-1.5) {$(y_2)$};
\node at (1,-1.5) {$\cdots$};
\node (v8) at (2.5,-1.5) {$(y_n)$};
\draw  (v2) -- (0,0)[postaction={decorate, decoration={markings,mark=at position .50 with {\arrow[black]{stealth}}}}];
\draw  (v3) -- (0,0)[postaction={decorate, decoration={markings,mark=at position .50 with {\arrow[black]{stealth}}}}];
\draw  (v5) -- (0,0)[postaction={decorate, decoration={markings,mark=at position .50 with {\arrow[black]{stealth}}}}];
\draw  (v6) -- (0,0)[postaction={decorate, decoration={markings,mark=at position .50 with {\arrowreversed[black]{stealth}}}}];
\draw  (v7) -- (0,0)[postaction={decorate, decoration={markings,mark=at position .50 with {\arrowreversed[black]{stealth}}}}];
\draw  (v8) -- (0,0)[postaction={decorate, decoration={markings,mark=at position .50 with {\arrowreversed[black]{stealth}}}}];
\end{tikzpicture}

\end{matrix}
\end{matrix}
$
\end{center}

We will not make a distinction between the natural transformation $\eta:I_{\mathbf{T.sch}}\rightarrow UF$ and the natural transformation $\widetilde{\eta}=\eta\circ\widetilde{\omega}:I_{\mathbf{T.sch}}\rightarrow\mathbf{\Gamma}_F$, and call both of  them \textbf{fission transformations}.

$$\xymatrix{&UF\ar[dd]^{\widetilde{\omega}}\\I_{\mathbf{T.sch}}\ar[ur]^{\eta}\ar[dr]_{\widetilde{\eta}}&\\&\mathbf{\Gamma}_F}$$

\begin{ex}
Take $\mathcal{D}=U(\mathbf{\Gamma}^\otimes)$. As example $5.2.1$ shows, $Ob(U(\mathbf{\Gamma}^\otimes))=W(\{x\})$ and $Mor(U(\mathbf{\Gamma}^\otimes))\cong\Gamma_F$ under  $\widetilde{\omega}_{U(\mathbf{\Gamma}^\otimes)}$. The map $(\eta_{U(\mathbf{\Gamma}^\otimes)})_o:Ob(U(\mathbf{\Gamma}^\otimes))\rightarrow Ob(UFU(\mathbf{\Gamma}^\otimes))$ sends a word $\overset{m\ times}{\overbrace{x\cdots x}}$ to a word of word $\langle\overset{m\ times}{\overbrace{x\cdots x}}\rangle$. Let $(\Gamma,P_{in},P_{out})$ be a fissus planar graph $P_{in}=(i_1<\cdots<i_{\mu_1})<\cdots<(i_{\mu_1+\cdots+\mu_{m-1}+1}<\cdots<i_{\mu_1+\cdots+\mu_m})$ and $P_{in}=(o_1<\cdots<o_{\nu_1})<\cdots<(o_{\nu_1+\cdots+\nu_{n-1}+1}<\cdots<o_{\nu_1+\cdots+\nu_n})$, then the map $(\eta_{U(\mathbf{\Gamma}^\otimes)})_m:Mor(U(\mathbf{\Gamma}^\otimes))\rightarrow Mor(UFU(\mathbf{\Gamma}^\otimes))$ sends $(\Gamma,P_{in},P_{out})$ to a fissus compound fissus planar graph $[\Gamma_v,P_{in}',P_{out}',\lambda]\in \mathsf{\Gamma}_{F}(\mathbf{\Gamma}_F)$ such that

$\bullet$  $P_{in}'=(1)<\cdots<(m)$ and $P_{out}'=(1)<\cdots<(n)$ are the finest linear partitions,

$\bullet$  $\lambda(v)=(\Gamma,P_{in},P_{out})$, where $v$ is the unique vertex of $\Gamma_v$.

$\bullet$  $\Gamma_v$ is isomorphic to the coarse-graining $\underbrace{(\Gamma,P_{in},P_{out})}$,

$\bullet$  $\lambda(I_\alpha)=\overset{\mu_\alpha}{\overbrace{x\cdots x}}$, $(1\leq \alpha\leq m)$ and $\lambda(O_\beta)=\overset{\nu_\beta}{\overbrace{x\cdots x}}$, $(1\leq \beta\leq n)$ where $I_\alpha,O_\beta$ are inputs and outputs of $\Gamma_v$, respectively,
\end{ex}

Now let us  study the counit $\varepsilon:FU\rightarrow I$.
For a strict tensor category $\mathcal{V}$, let $\mathcal{D}=U(\mathcal{V})$ be the tensor scheme associated to $\mathcal{V}$. Then by definition $\varepsilon_{\mathcal{V}}$ is  the strict tensor functor given by the equation $$\varepsilon_{\mathcal{V}}=\Theta^{-1}(Id_{U(\mathcal{V})}):F(U(\mathcal{V}))\rightarrow \mathcal{V}.$$

We define the natural injection $\iota_o=i^{U(\mathcal{V})}_o\circ j^{\mathcal{V}}_o:Ob(\mathcal{V})\rightarrow Ob(F(U(\mathcal{V})))$ which sends $x\in Ob(\mathcal{V})$ to $\langle x\rangle\in Ob(FU(\mathcal{V}))$.
$$\xymatrix{Ob(\mathcal{V})\ar[r]^{j^{\mathcal{V}}_o}\ar[dr]^{\iota_o}&\ar[d]^{i^{U(\mathcal{V})}_o}Ob(U(\mathcal{V}))\\&Ob(F(U(\mathcal{V})))}$$

Every non-unit object $w\in Ob(F(U(\mathcal{V})))$ can be uniquely written as $w=\iota_0(x_1)\otimes\cdots \otimes\iota_0(x_m)=\langle x_1\rangle\otimes\cdots\otimes\langle x_m\rangle=x_1\cdots x_m$ for some $x_1,\cdots, x_m\in Ob(\mathcal{V})$. Then by definition
\begin{align*}
&\varepsilon_{\mathcal{V}}w\\
=&\varepsilon_{\mathcal{V}}(\iota_o(x_1)\otimes\cdots \otimes\iota_o(x_m))\\
=&(j_o^{\mathcal{V}})^{-1}\circ Id_{U(\mathcal{V})}\circ j^{\mathcal{V}}_o(x_1)\otimes_{\mathcal{V}}\cdots \otimes_{\mathcal{V}}(j_o^{\mathcal{V}})^{-1}\circ Id_{U(\mathcal{V})}\circ j^{\mathcal{V}}_o(x_m)\\
=&x_1\otimes_{\mathcal{V}}\cdots\otimes_{\mathcal{V}}x_m .
\end{align*}

For any morphism $[\Gamma,\lambda]:\iota_o(x_1)\otimes\cdots \otimes\iota_o(x_m)\rightarrow \iota_o(y_1)\otimes\cdots \otimes \iota_o(y_n)$ in $F(U(\mathcal{V})),$ by definition $$\varepsilon_{\mathcal{V}}([\Gamma,\lambda])=\varepsilon((Id_{U(\mathcal{V})})_{\sharp}([\Gamma,\lambda]).$$

By proposition \ref{first identification}, we have a natural isomorphism $\widetilde{\zeta}_{\mathcal{V}}:FU(\mathcal{V})\rightarrow \mathbf{\Gamma}^{\otimes}(\mathcal{V})$. From the definition of $(Id_{U(\mathcal{V})})_{\sharp}$, we see that $(Id_{U(\mathcal{V})})_{\sharp}([\Gamma,\lambda]) $ as a diagram in $\mathcal{V}$ is equal to $(\widetilde{\zeta}_{\mathcal{V}})_m([\Gamma,\lambda])$, thus under the identification of $FU(\mathcal{V})$ and $\mathbf{\Gamma}^{\otimes}(\mathcal{V})$  the counit $\varepsilon_{\mathcal{V}}$ can be identified with the evaluation map $\varepsilon$ of $\mathcal{V}$, which is the reason why we use the same notation.  We call both the natural transformation $\zeta:FU\rightarrow I_{\mathbf{Str.T}}$ and the natural transformation  $\widetilde{\varepsilon}=\varepsilon\circ\widetilde{\zeta}^{-1}:\mathbf{\Gamma}^{\otimes}\rightarrow I_{\mathbf{Str.T}}$ \textbf{evaluation transformations} and will not make a distinction between them.

$$\xymatrix{\mathbf{\Gamma}^{\otimes}\ar[dd]_{\widetilde{\zeta}^{-1}}\ar[dr]_{\widetilde{\varepsilon}}&\\&I_{\mathbf{Str.T}}\\FU\ar[ur]_{\varepsilon}&}$$

\subsection{The multiplication and comultiplication}

Recall that the multiplication $\mu:T\circ T\rightarrow T$ is given by $\mu=U\varepsilon F:UFUF\rightarrow UF$.
For any tensor scheme $\mathcal{D}$, let us consider the map $\varepsilon_{F(\mathcal{D})}:FU(F(\mathcal{D}))\rightarrow F(\mathcal{D})$.  By definition, for an object $(x_1\cdot\cdot\cdot x_{i_1})\cdot\cdot\cdot(x_{i_1+\cdot\cdot\cdot+i_{m-1}+1}\cdots x_{i_1+\cdot\cdot\cdot+i_{m}})$ in $Ob(FUF(\mathcal{D}))$ which is a word of words in $Ob(\mathcal{D})$,

$$(\varepsilon_{F(\mathcal{D})})_o((x_1\cdot\cdot\cdot x_{i_1})\cdot\cdot\cdot(x_{i_1+\cdot\cdot\cdot+i_{m-1}+1}\cdots x_{i_1+\cdot\cdot\cdot+i_{m}}))=x_1\cdots x_{i_1+\cdot\cdot\cdot+i_{m}},$$
which just forgets the brackets.  That is, $(\varepsilon_{F(\mathcal{D})})_o$ is given by juxtaposition of words.

By proposition \ref{first identification}, we have an natural isomorphism $\widetilde{\zeta}_{F(\mathcal{D})}:FU(F(\mathcal{D}))\overset{\sim}\rightarrow \mathbf{\Gamma}^{\otimes}(F(\mathcal{D}))$. Under this identification, a morphism $[\Gamma,\gamma]$ in $FU(F(\mathcal{D}))$ is a diagram in $F(\mathcal{D})$. Thus $(\widetilde{\varepsilon}_{F(\mathcal{D})})_m([\Gamma,\gamma])$ is given exactly by the evaluation map of diagrams in $F(\mathcal{D})$. In fact, the map $\widetilde{\varepsilon}_{F(\mathcal{D})}:\mathbf{\Gamma}^{\otimes}(F(\mathcal{D}))\rightarrow F(\mathcal{D})$ is a strict tensor functor. So by definition, $$\mu_{F(\mathcal{D})}=U(\widetilde{\varepsilon}_{F(\mathcal{D})}).$$
That is, for an object $(x_1\cdot\cdot\cdot x_{i_1})\cdot\cdot\cdot(x_{i_1+\cdot\cdot\cdot+i_{m-1}+1}\cdots x_{i_1+\cdot\cdot\cdot+i_{m}})$ in $Ob(UFUF(\mathcal{D}))$,
$$(\mu_{F(\mathcal{D})})_o((x_1\cdot\cdot\cdot x_{i_1})\cdot\cdot\cdot(x_{i_1+\cdot\cdot\cdot+i_{m-1}+1}\cdots x_{i_1+\cdot\cdot\cdot+i_{m}}))=x_1\cdots x_{i_1+\cdot\cdot\cdot+i_{m}};$$

For a morphism $[\Gamma_v,\lambda]$ in $U\mathbf{\Gamma}^{\otimes}(F(\mathcal{D}))$ which is a prime diagram in $\mathbf{\Gamma}^{\otimes}(F(\mathcal{D}))$, $(\mu_{F(\mathcal{D})})_m([\Gamma_v,\lambda])$ is a prime diagram $[\Gamma_v,\gamma_{\widetilde{\varepsilon}_{F(\mathcal{D})}(\lambda(v))}]$ in $F(\mathcal{D})$ with its unique  vertex $v$ decorated by $\widetilde{\varepsilon}_{F(\mathcal{D})}(\lambda(v))$. For each half-edge $h\in v$, $$\gamma_o(h)=(\mu_{F(\mathcal{D})})_o\circ \lambda_o(h).$$

On the other hand, under the natural identification $\widetilde{\omega}_{\mathcal{D}}:UF(\mathcal{D})\overset{\sim}\rightarrow \mathbf{\Gamma}_F(\mathcal{D})$ in proposition \ref{second identification} , we can view a morphism of $UF(\mathcal{D})$ as a fissus diagram in $\mathcal{D}$. Then on the level of morphisms, the multiplication can be presented  by $\sigma\circ \widehat{Z}_F:\Gamma_F(\mathbf{\Gamma}_F)\rightarrow \Gamma_F.$ In summary, we have the following commutative diagram:

$$\xymatrix{\mathbf{\Gamma}_F\circ\mathbf{\Gamma}_F\ar[rr]^{(\sigma \widehat{Z}_F,\mu_o)}&&\mathbf{\Gamma}_F\\ \ar[u]^{\widetilde{\omega}^2}UFUF\ar[d]_{U\widetilde{\zeta}F}\ar[rr]^{\mu}&&UF\ar@{=}[d]\ar[u]_{\widetilde{\omega}}\\U\mathbf{\Gamma}^{\otimes}F\ar[rr]^{U\widetilde{\varepsilon} F}&&UF.}$$

The comultiplication $\delta:G\rightarrow G\circ G$ is given by $\delta=F\eta U:FU\rightarrow FUFU.$  For any strict tensor category $\mathcal{V}$, let us consider the strict tensor functor $\delta_{\mathcal{V}}:FU(\mathcal{V})\rightarrow FUFU(\mathcal{V}).$  First of all, for any object $x_1\cdots x_m$ of $FU(\mathcal{V})$ with each $x_i\in Ob(\mathcal{V})$, $1\leq i\leq m$, $$(\delta_{\mathcal{V}})_o(x_1\cdots x_m)=\langle x_1\cdots x_m\rangle.$$

Secondly, let consider the morphism $\widetilde{\eta}_{U(\mathcal{V})}:U(\mathcal{V})\rightarrow \mathbf{\Gamma}_F(U(\mathcal{V}))$. For a morphism $[\Gamma_v,\gamma_f]$ in $U(\mathcal{V})$ which is a prime diagram in $\mathcal{V}$, $\widetilde{\eta}_{U(\mathcal{V})}([\Gamma_v,\gamma_f])=([\Gamma_v, \lambda],P_{in}^{full},P_{out}^{full})$ with $\lambda_m(v)=[\Gamma_v,\gamma_f]$ and $\lambda_o(h)=\gamma_o(h)$ for any $h\in v.$  So for any morphism $[\Gamma,\lambda]$ in $FU(\mathcal{V})$, $F(\widetilde{\eta}_{U(\mathcal{V})})$ maps it to be a diagram $[\Gamma,\widetilde{\lambda}]\in \mathsf{\Gamma}(\mathbf{\Gamma}_F(U({\mathcal{V}})))$ such that $\widetilde{\lambda}_m(v)=([\Gamma_v, \gamma_{\lambda_m(v)}],P_{in}^{full},P_{out}^{full})$ for any $v\in V_{re}(\Gamma)$ and $\widetilde{\lambda}_o(h)=\langle\lambda_o(h)\rangle$ for any $h\in H(\Gamma).$

On the other hand, by proposition \ref{first identification} we have a natural isomorphism $\widetilde{\zeta}_{\mathcal{V}}:FU(\mathcal{V})\rightarrow \mathbf{\Gamma}^{\otimes}(\mathcal{V})$. Now we define a functor $\chi_{\mathcal{V}}:\mathbf{\Gamma}^{\otimes}(\mathcal{V})\rightarrow \mathbf{\Gamma}^{\otimes}\circ \mathbf{\Gamma}^{\otimes}(\mathcal{V})$ as follows:

$\bullet$ for any object $x_1\cdots x_m$ of $\mathbf{\Gamma}^{\otimes}(\mathcal{V})$,  $(\chi_{\mathcal{V}})_o(x_1\cdots x_m)=\langle x_1\cdots x_m \rangle. $

$\bullet$ for any morphism $[\Gamma,\lambda]$ of $\mathbf{\Gamma}^{\otimes}(\mathcal{V})$,  $(\chi_{\mathcal{V}})_m([\Gamma,\lambda])=[\Gamma,\widetilde{\lambda}]$ with $\widetilde{\lambda}_m(v)=[\Gamma_v,\gamma_{\lambda(v)}]$ for any $v\in V_{re}(\Gamma)$ and $\widetilde{\lambda}_o(h)=\langle\lambda_o(h)\rangle$ for any $h\in H(\Gamma).$

In summary, we have the following commutative diagram which can be carefully checked.

$$\xymatrix{FU\ar@{=}[d]\ar[rr]^{F\widetilde{\eta}U}&&F\mathbf{\Gamma}_FU\\\ar[d]_{\widetilde{\zeta}}FU\ar[rr]^{\delta}&&\ar[u]_{F\widetilde{\omega}U}\ar[d]^{\widetilde{\zeta}^2}FUFU\\\mathbf{\Gamma}^{\otimes}\ar[rr]^{\chi}&&\mathbf{\Gamma}^{\otimes}\circ \mathbf{\Gamma}^{\otimes}.}$$

\section{Algebras of tensor calculus}

\subsection{Tensor manifold}
\begin{defn}
An algebra of $(T,\mu,\eta)$ is a tensor scheme $\mathcal{D}$ equipped with a morphism $\epsilon: T(\mathcal{D})\rightarrow \mathcal{D}$ of tensor schemes such that
$$
\begin{matrix}
\xymatrix{T\circ T(\mathcal{D})\ar[r]^{T(\epsilon)}\ar[d]_{\mu_{\mathcal{D}}}&T(\mathcal{D})\ar[d]^{\epsilon}\\T(\mathcal{D})\ar[r]^{\epsilon}&\mathcal{D}}
&
\xymatrix{\mathcal{D}\ar[r]^{\eta_{\mathcal{D}}}\ar@{=}[dr]&T(\mathcal{D})\ar[d]^{\epsilon}\\&\mathcal{D}}
\end{matrix}
$$
The morphism $\epsilon$ is called the structure map of algebra $(\mathcal{D},\epsilon)$. We also  call $(\mathcal{D},\epsilon)$ a \textbf{tensor manifold} and view it  as a categorical noncommutative space.
\end{defn}

Next we will give some examples of tensor manifolds.
\begin{ex}
To any tensor scheme $\mathcal{D}$, we can associate a free tensor manifold $(T(\mathcal{D}),\mu_{\mathcal{D}})$ functorially with

$\bullet$ $Ob(T(\mathcal{D}))=W(Ob(\mathcal{D}))$ being the set of words in $Ob(\mathcal{D})$;

$\bullet$ $Mor(T(\mathcal{D}))=\mathsf{\Gamma}_F(\mathcal{D})$ being the set of fissus planar diagrams in $\mathcal{D}$;

$\bullet$  $s=dom$ and $t=cod$ which send each fissus planar diagram to  words in $Ob(T(\mathcal{D}))$;

$\bullet$ the map $(\mu_{\mathcal{D}})_o: Ob(T\circ T(\mathcal{D}))\rightarrow Ob(T(\mathcal{D}))$ sends a word $(x_1\cdot\cdot\cdot x_{i_1})\cdot\cdot\cdot(x_{i_1+\cdot\cdot\cdot+i_{m-1}+1}\cdots x_{i_1+\cdot\cdot\cdot+i_{m}})$ in $W(Ob(\mathcal{D}))$ to the word $x_1\cdot\cdot\cdot x_{i_1}\cdot\cdot\cdot x_{i_1+\cdot\cdot\cdot+i_{m-1}+1}\cdots x_{i_1+\cdot\cdot\cdot+i_{m}}$ in $Ob(\mathcal{D})$;

$\bullet$ the map $(\mu_{\mathcal{D}})_m: Mor(T\circ T(\mathcal{D}))\rightarrow Mor(T(\mathcal{D}))$ is equal to the map $\sigma\circ \widehat{Z}_F:\mathsf{\Gamma}_{F}(\mathsf{\Gamma}_F(\mathcal{D})\rightarrow \mathsf{\Gamma}_F(\mathcal{\mathcal{D}})$, i.e, evaluation map of compound fissus planar diagrams.
\end{ex}

\begin{ex}
We define a tensor manifold $(\mathbf{Prim},\varepsilon^{C-G})$ as follows:

$\bullet$ Set $Ob(\textbf{Prim})=\{x\}$, $Mor(\textbf{Prim})=\mathsf{Prim}$ be the set of isomorphic classes of prime planar graphs.

$\bullet$ For a prime $(m,n)$-planar graph, the source map $s$ sends it to the string of $x$ with length $m$ and the target map $t$ sends it to the string of $x$'s with length $n$.

$\bullet$ On the level of objects, the structure map $\varepsilon_o^{C-G}$ sends any  string of $x$ to $x$. On the level of morphism, the structure map $\varepsilon_m^{C-G}$ is equal to the coarse-graining of fissus planar planar graphs.

The fact that $(\mathsf{Prim},\{x\},s, t, \varepsilon_o^{C-G},\varepsilon_m^{C-G} )$ defines a tensor manifold can be directly checked.
\end{ex}

\begin{ex}
We define a tensor manifold $(\mathbf{\Gamma},\varepsilon^{C-G})$ as follows:

$\bullet$ Set $Ob(\mathbf{\Gamma})=\{x\}$, $Mor(\mathbf{\Gamma})=\mathsf{\Gamma}$ be the set of isomorphic classes of  planar graphs.

$\bullet$ For a $(m,n)$-planar graph, the source map $s$ sends it to the string of $x$'s with length $m$ and the target map $t$ sends it to the string of $x$'s with length $n$.

$\bullet$ On the level of objects, the structure map $\varepsilon_o^{C-G}$ sends any  string of $x$'s to $x$. On the level of morphism, the structure map $\varepsilon_m^{C-G}$ is equal to the coarse-graining of fissus planar planar graphs.

The fact that $(\mathsf{\Gamma},\{x\}, s, t, \varepsilon_o^{C-G},\varepsilon_m^{C-G} )$ defines a tensor manifold can be directly checked.
\end{ex}

\begin{ex}\label{U(V)}
For any  small strict tensor category $\mathcal{V}$, we can associate a tensor manifold $(U(\mathcal{V}),\epsilon)$ in the following way.

$\bullet$ Set $Ob(U(\mathcal{V}))=Ob(\mathcal{V})$, $Mor(U(\mathcal{V}))=\mathsf{Prim}(\mathcal{V})$,  $s=dom$ and $t=cod$.

$\bullet$ For any object $x_1\cdot\cdot\cdot x_m\in Ob(T(U(\mathcal{V})))$, $$\epsilon_o(x_1\cdot\cdot\cdot x_m)=x_1\otimes\cdot\cdot\cdot\otimes x_m\in Ob(\mathcal{V}).$$

$\bullet$ For any morphism $(\overrightarrow{\Gamma},\prec,P_{in}, P_{out}, \gamma)\in Mor(T(U(\mathcal{V})))$ with bracketed domain $(x_1\cdot\cdot\cdot x_{i_1})\cdot\cdot\cdot(x_{i_1+\cdot\cdot\cdot+i_{m-1}+1}\cdots x_{i_1+\cdot\cdot\cdot+i_{m}})$ and codomain $(y_1\cdot\cdot\cdot y_{j_1})\cdot\cdot\cdot(y_{j_1+\cdot\cdot\cdot+j_{n-1}+1}\cdots y_{j_1+\cdot\cdot\cdot+j_{n}})$, $$\epsilon_m((\overrightarrow{\Gamma},\prec,P_{in}, P_{out}, \gamma))\in Mor(U(\mathcal{V}))$$ is the coase-graining of $(\overrightarrow{\Gamma},\prec,P_{in}, P_{out}, \gamma)$, that is,
 a prime $(m,n)$-planar diagram in $\mathcal{V}$ with domain $(x_1\otimes \cdot\cdot\cdot\otimes x_{i_1})\cdot\cdot\cdot(x_{i_1+\cdot\cdot\cdot+i_{m-1}+1}\otimes\cdots\otimes x_{i_1+\cdot\cdot\cdot+i_{m}}),$
codomain $(y_1\otimes\cdot\cdot\cdot \otimes y_{j_1})\cdot\cdot\cdot(y_{j_1+\cdot\cdot\cdot+j_{n-1}+1}\otimes\cdots \otimes y_{j_1+\cdot\cdot\cdot+j_{n}})$
and its unique vertex decorated by the value of $[\Gamma,\gamma]$  $$\varepsilon_{\mathcal{V}}([\Gamma,\gamma]):x_1\otimes \cdots\otimes x_{i_1+\cdot\cdot\cdot+i_{m}}\rightarrow y_1\otimes \cdots\otimes y_{j_1+\cdot\cdot\cdot+j_{n}}.$$

The axioms of a tensor manifold can be directly checked.
\end{ex}

\begin{ex}
If we only replace $\mathsf{Prim}(\mathcal{V})$ by $\mathsf{\Gamma}(\mathcal{V})$ in the definition of $(U(\mathcal{V}),\epsilon)$ above, we get a new tensor manifold $(\mathbf{\Gamma}(\mathcal{V}),\epsilon)$.  In fact, for any subset $\mathsf{Prim}(\mathcal{V})\subseteq \mathsf{S}(\mathcal{V}) \subseteq \mathsf{\Gamma}(\mathcal{V})$, we can get a tensor manifold $(\mathbf{S}(\mathcal{V}),\epsilon)$ defined in the same way as  $(U(\mathcal{V}),\epsilon)$ and $(\mathbf{\Gamma}(\mathcal{V}),\epsilon)$.
\end{ex}

\begin{defn}
Let $(\mathcal{D}_1,\epsilon_{1})$ and $(\mathcal{D}_2,\epsilon_{2})$ be two algebras over $(T,\mu,\eta)$, a morphism of them is a morphism $\varphi:\mathcal{D}_1\rightarrow \mathcal{D}_2$ of tensor schemes such that
$$
\xymatrix{T(\mathcal{D}_1)\ar[r]^{\epsilon_1}\ar[d]_{T(\varphi)}&\ar[d]^{\varphi}\mathcal{D}_1\\T(\mathcal{D}_1)\ar[r]^{\epsilon_2}&\mathcal{D}_2}
$$
\end{defn}

All $T$-algebras and their morphisms form a category denoted by $\mathbf{T.Sch}^{T}$ which is called Eilenberg-Moore category of $T$.

The following lemma shows some properties of tensor manifolds.
\begin{lem}
Let $(\mathcal{D},\epsilon)$ be an algebra of $(T,\mu,\eta)$, then

$(1)$   $(Ob(\mathcal{D}),\epsilon_o)$ is a monoid;

$(2)$   for each fissus prime planar diagram $(\overrightarrow{\Gamma},\prec,P_{in}, P_{out}, \gamma)\in Mor(T(\mathcal{D}))$ with domain $(x_1)(x_2)\cdots (x_m)$, codomain $(y_1)(y_2)\cdots (y_n)$ and its unique real vertex decorated by $f:x_1\cdots x_m\rightarrow y_1\cdots y_n$, $\epsilon_m((\overrightarrow{\Gamma},\prec,P_{in}, P_{out}, \gamma))=f;$
\begin{center}
$
\begin{matrix}
\begin{matrix}

\begin{tikzpicture}[scale=.6]
\node (v1) at (0,0) {};
\draw[fill] (0,0) circle [radius=0.08];
\node at (0.6,0){$f$};
\node (v2) at (-2,1.5) {$(x_1)$};
\node (v3) at (-0.5,1.5) {$(x_2)$};
\node (v4) at (1,1.5) {$\cdots$};
\node (v5) at (2.5,1.5) {$(x_m)$};
\node (v6) at (-2,-1.5) {$(y_1)$};
\node (v7) at (-0.5,-1.5) {$(y_2)$};
\node at (1,-1.5) {$\cdots$};
\node (v8) at (2.5,-1.5) {$(y_n)$};
\draw  (v2) -- (0,0)[postaction={decorate, decoration={markings,mark=at position .50 with {\arrow[black]{stealth}}}}];
\draw  (v3) -- (0,0)[postaction={decorate, decoration={markings,mark=at position .50 with {\arrow[black]{stealth}}}}];
\draw  (v5) -- (0,0)[postaction={decorate, decoration={markings,mark=at position .50 with {\arrow[black]{stealth}}}}];
\draw  (v6) -- (0,0)[postaction={decorate, decoration={markings,mark=at position .50 with {\arrowreversed[black]{stealth}}}}];
\draw  (v7) -- (0,0)[postaction={decorate, decoration={markings,mark=at position .50 with {\arrowreversed[black]{stealth}}}}];
\draw  (v8) -- (0,0)[postaction={decorate, decoration={markings,mark=at position .50 with {\arrowreversed[black]{stealth}}}}];
\end{tikzpicture}

\end{matrix}
&\begin{matrix}
\begin{tikzpicture}
\node (v1) at (-1,0.5) {};
\node (v2) at (0,0.5) {};
\draw [thick,->,>=stealth] (v1) -- (v2);
\node at (-0.5,0.8) {$\epsilon_m$};
\end{tikzpicture}
\end{matrix}&
\begin{matrix}
\begin{tikzpicture}
\node (v1) at (0,0.7) {$x_1\cdots x_m$};
\node (v2) at (0,-0.7) {$y_1\cdots y_n$};
\draw [->,>=stealth] (v1) -- (v2);
\node at (0.5,0) {$f$};
\end{tikzpicture}
\end{matrix}
\end{matrix}
$
\end{center}

$(3)$  for any $x\in Ob(\mathcal{D})$, the space of morphisms $Mor_{\mathcal{D}}(x,x)$ is not an empty set;

$(4)$  for $m,n \geq 1$, the set  $Mor(\mathcal{D})(m,n)=\{f\in Mor(\mathcal{D})| length(s(f))=m, length(t(f))=n \}$ is not an empty set.

\end{lem}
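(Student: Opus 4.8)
The plan is to derive all four statements directly from the two defining commutative diagrams of a $T$-algebra, namely the associativity square $\epsilon\circ\mu_{\mathcal{D}}=\epsilon\circ T(\epsilon)$ and the unit triangle $\epsilon\circ\eta_{\mathcal{D}}=id_{\mathcal{D}}$, read off separately on objects and on morphisms, using the explicit descriptions of $\mu_{\mathcal{D}}$ and $\eta_{\mathcal{D}}$ together with the identifications $\widetilde{\zeta}:G\overset{\sim}{\rightarrow}\mathbf{\Gamma}^{\otimes}$ and $\widetilde{\omega}:T\overset{\sim}{\rightarrow}\mathbf{\Gamma}_F$ from Propositions \ref{first identification} and \ref{second identification}. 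The only object-level facts I will need are: $(\mu_{\mathcal{D}})_o$ is concatenation of words (forgetting brackets), $T(\epsilon)_o=\widehat{\epsilon_o}$ applies $\epsilon_o$ letterwise to a word of words, and $(\eta_{\mathcal{D}})_o(x)=\langle x\rangle$, so that the unit triangle on objects already gives $\epsilon_o(\langle x\rangle)=x$.

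For $(1)$ I would set $x\cdot y:=\epsilon_o(xy)$ and $1:=\epsilon_o(\varnothing)$ on $Ob(\mathcal{D})$, where $xy$ and $\varnothing$ denote the evident words in $W(Ob(\mathcal{D}))$. Evaluating the associativity square on the object $(xy)(z)$ of $T\circ T(\mathcal{D})$ gives $\epsilon_o(xyz)=(x\cdot y)\cdot z$, and on $(x)(yz)$ gives $\epsilon_o(xyz)=x\cdot(y\cdot z)$, so the product is associative; evaluating it on $(\,)(x)$ and on $(x)(\,)$, together with $\epsilon_o(\langle x\rangle)=x$, yields the two unit laws. In particular $1=\epsilon_o(\varnothing)$ is a well-defined element, so $Ob(\mathcal{D})\neq\varnothing$, a fact I reuse below. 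Statement $(2)$ is then just the unit triangle on morphisms, once the given datum is recognized as $\eta_{\mathcal{D}}(f)$: under $\widetilde{\omega}_{\mathcal{D}}$, the morphism $\eta_{\mathcal{D}}(f)=[v,\gamma_{[v,\gamma_f]}]$ is precisely the fully fissus prime diagram with finest input and output partitions $(x_1)\cdots(x_m)$, $(y_1)\cdots(y_n)$ and unique vertex decorated by $f$, so $\epsilon_m$ of it equals $\epsilon_m(\eta_m(f))=f$.

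For $(3)$ I would exhibit the identity morphism of $x$. Let $U_x\in\mathsf{\Gamma}_F(\mathcal{D})$ be the unitary $(1,1)$-diagram whose two flags are decorated by $x$, equipped with the (length one, hence trivial) input and output fission blocks, so that its bracketed domain and codomain are both $(\langle x\rangle)$. Since $\epsilon$ is a morphism of tensor schemes it commutes with source and target, whence $s_{\mathcal{D}}(\epsilon_m(U_x))=\widehat{\epsilon_o}((\langle x\rangle))=\langle x\rangle$ and likewise for the target, so $\epsilon_m(U_x)\in Mor_{\mathcal{D}}(x,x)$. For $(4)$, using $Ob(\mathcal{D})\neq\varnothing$ I pick any object, say $1$, and form the invertible $(k,k)$-diagram $I_k$ (a $k$-fold tensor of unitary graphs, all flags decorated by $1$) with $k=\max(m,n)\geq 1$; this diagram has no real vertex and hence requires no pre-existing morphism to decorate. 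Equipping its inputs with a fission structure of $m$ consecutive nonempty blocks and its outputs with one of $n$ consecutive nonempty blocks (possible as $k\geq m$ and $k\geq n$) makes the bracketed domain a word of length $m$ and the bracketed codomain a word of length $n$; applying $\epsilon_m$ and the source/target compatibility then produces a morphism in $Mor(\mathcal{D})(m,n)$.

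The main obstacle is the bookkeeping in $(2)$: one must carefully unwind the chain of identifications $UF\cong\mathbf{\Gamma}_F$ and the explicit formula for $\eta_{\mathcal{D}}$ to confirm that the fission partitions produced are indeed the finest ones on both sides and that the surviving vertex decoration is $f$ itself rather than some evaluated image of it. Once this matching is pinned down the identity is immediate, and the remaining steps are formal consequences of the two algebra axioms and of $\epsilon$ being a morphism of tensor schemes, so no genuine difficulty arises in $(1)$, $(3)$, or $(4)$.
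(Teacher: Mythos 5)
Your proposal is correct and follows essentially the same route as the paper's own proof: part $(2)$ from the unit axiom $\epsilon\circ\eta_{\mathcal{D}}=id$ after recognizing the fully fissus prime diagram as $\eta_{\mathcal{D}}(f)$, part $(3)$ by evaluating $\epsilon_m$ on the unitary $(1,1)$-diagram decorated by $x$, and part $(4)$ by evaluating $\epsilon_m$ on a fissus invertible $(k,k)$-diagram with $k\geq\max(m,n)$ whose input and output partitions have $m$ and $n$ blocks. The only difference is that you spell out part $(1)$ (which the paper dismisses as evident) by reading the associativity square on the objects $(xy)(z)$ and $(x)(yz)$ and the unit triangle, and you explicitly record the nonemptiness of $Ob(\mathcal{D})$ needed in $(4)$ --- a welcome tightening, not a different method.
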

\begin{proof}
$\bullet$ The fact that $(Ob(\mathcal{D}),\epsilon_o)$ is a monoid is evident from the definition of a tensor manifold and we call the object $\epsilon_o(\varnothing)$ the \textbf{unit object} of $(\mathcal{D},\epsilon)$, where $\varnothing$ is the null string. We denote the unit object by $1_{\mathcal{D}}.$

$\bullet$  The second fact is a direct consequence of the fact  $\eta\circ \epsilon=Id$ in the definition of a $T$-algebra.

$\bullet$ To prove the third statement, notice that for every object $x\in Ob(\mathcal{D})$, the fissus planar diagram
$$[v,\gamma_{\varnothing:(x)\rightarrow (x)}]=
\begin{matrix}
\begin{tikzpicture}[scale=.5]
\node (v2) at (-0.5,0.5) {};
\node (v1) at (-0.5,2) {$(x)$};
\node (v3) at (-0.5,-1) {$(x)$};
\draw (v2) circle [radius=0.1];
\draw  (v1) -- (-0.5,0.6)[postaction={decorate, decoration={markings,mark=at position .70 with {\arrow[black]{stealth}}}}];
\draw  (v3) -- (-0.5,0.4)[postaction={decorate, decoration={markings,mark=at position .70 with {\arrowreversed[black]{stealth}}}}];
\end{tikzpicture}
\end{matrix}
$$
is a morphism in $T(\mathcal{D})$. Thus $ \epsilon_m([v,\gamma_{\varnothing:(x)\rightarrow (x)}])\in Mor_{\mathcal{D}}(x,x).$ We call it the identity morphism of $x\in Ob(\mathcal{D})$, and denote it by $Id_x.$

$\bullet$  To prove the fourth statement, notice that for any $m,n\geq 1$ and  any object $x\in Ob(\mathcal{D})$, there exist at least one fissus invertible planar diagram
$([\Gamma,\gamma],P_{in},P_{out})$ such that
$[\Gamma,\gamma]=\begin{pmatrix}
\begin{tikzpicture}[scale=.5]
\node (v2) at (-0.5,0.5) {};
\node (v1) at (-0.5,2) {$x$};
\node (v3) at (-0.5,-1) {$x$};
\draw (v2) circle [radius=0.1];
\draw  (v1) -- (-0.5,0.6)[postaction={decorate, decoration={markings,mark=at position .70 with {\arrow[black]{stealth}}}}];
\draw  (v3) -- (-0.5,0.4)[postaction={decorate, decoration={markings,mark=at position .70 with {\arrowreversed[black]{stealth}}}}];
\end{tikzpicture}
\end{pmatrix}^{\otimes k}$ $(k\geq max\{m,n\})$ and $P_{in},P_{out}$ have $m, n$ blocks, respectively. Thus $ \epsilon_m(([\Gamma,\gamma],P_{in},P_{out}))\in Mor_{\mathcal{D}}(m,n).$

\end{proof}

\subsection{Operations on a tensor manifold}
In this section, we want to define some operations on a tensor manifold, and give another characteristic   of a tensor manifold.

Let $f:x_1\cdots x_m\rightarrow y_1\cdots y_n$  and $g:u_1\cdots u_k\rightarrow v_1\cdots v_l$ be two morphisms of  $T$-algebra$(\mathcal{D},\epsilon)$,  we define their tensor product $$f\otimes_{\epsilon}g\triangleq\epsilon_m([\Gamma_f,P_{in}^{full},P_{out}^{full}]\otimes_{fissus}[\Gamma_g,Q_{in}^{full},Q_{out}^{full}])$$ with $[\Gamma_f,P_{in}^{full},P_{out}^{full}]$ and $[\Gamma_g,Q_{in}^{full},Q_{out}^{full}]$ being fully fissus prime diagrams with their vertex decorated by $f$ and $g$ respectively, and the tensor product $\otimes_{fissus}$ being the tensor product of fissus diagrams.  That is,
$$
\begin{matrix}
\begin{matrix}f\otimes_{\epsilon}g\end{matrix}&=&\epsilon_m\begin{pmatrix}\begin{tikzpicture}[scale=0.5]

\node (v2) at (-0.5,0) {};
\draw [fill] (-0.5,0) circle [radius=0.1];
\node at (0.5,0) {$f$};
\node (v1) at (-2,1.5) {$(x_1)$};
\node (v4) at (1,1.5) {$(x_m)$};
\node (v3) at (-0.5,1.5) {$\cdots$};
\node (v5) at (-2,-1.5) {$(y_1)$};
\node (v7) at (1,-1.5) {$(y_n)$};
\node (v6) at (-0.5,-1.5) {$\cdots$};

\node (v8) at (2.5,1.5) {$(u_1)$};
\node (v10) at (4,1.5) {$\cdots$};
\node (v11) at (5.5,1.5) {$(u_k)$};
\node (v9) at (4,0) {};
\draw [fill] (4,0) circle [radius=0.1];
\node at (4.8,0) {$g$};
\node (v12) at (2.5,-1.5) {$(v_1)$};
\node (v13) at (4,-1.5) {$\cdots$};
\node (v14) at (5.5,-1.5) {$(v_l)$};
\draw  (v1) -- (-0.5,0)[postaction={decorate, decoration={markings,mark=at position .5 with {\arrow[black]{stealth}}}}];

\draw  (v4) -- (-0.5,0)[postaction={decorate, decoration={markings,mark=at position .5 with {\arrow[black]{stealth}}}}];
\draw  (-0.5,0) -- (v5)[postaction={decorate, decoration={markings,mark=at position .6 with {\arrow[black]{stealth}}}}];

\draw  (-0.5,0) -- (v7)[postaction={decorate, decoration={markings,mark=at position .6 with {\arrow[black]{stealth}}}}];
\draw  (v8) -- (4,0)[postaction={decorate, decoration={markings,mark=at position .5 with {\arrow[black]{stealth}}}}];

\draw  (v11) -- (4,0)[postaction={decorate, decoration={markings,mark=at position .5 with {\arrow[black]{stealth}}}}];
\draw  (4,0) -- (v12)[postaction={decorate, decoration={markings,mark=at position .6 with {\arrow[black]{stealth}}}}];

\draw  (4,0) -- (v14)[postaction={decorate, decoration={markings,mark=at position .6 with {\arrow[black]{stealth}}}}];
\end{tikzpicture}\end{pmatrix}.
\end{matrix}
$$

\begin{prop}\label{tensor product}
The tensor product $\otimes_{\epsilon}$ is associative.
\end{prop}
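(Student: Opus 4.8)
The plan is to reduce both bracketings $(f\otimes_{\epsilon} g)\otimes_{\epsilon} h$ and $f\otimes_{\epsilon}(g\otimes_{\epsilon} h)$ to one and the same expression, namely $\epsilon_m(\langle f\rangle\otimes_{fissus}\langle g\rangle\otimes_{fissus}\langle h\rangle)$, where I write $\langle\phi\rangle$ for the fully fissus prime diagram $[\Gamma_\phi,P_{in}^{full},P_{out}^{full}]$ decorated by a morphism $\phi$ of $\mathcal{D}$, and where the threefold fissus tensor product is unambiguous because the tensor product of (fissus) planar diagrams is associative (associativity of $\otimes$ for planar graphs, Section 3.4, transported to fissus diagrams by juxtaposing the fission structures). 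Under the identification $\widetilde{\omega}_{\mathcal{D}}:UF(\mathcal{D})\overset{\sim}{\rightarrow}\mathbf{\Gamma}_F(\mathcal{D})$ of Proposition \ref{second identification}, the diagram $\langle\phi\rangle$ is exactly $\eta_{\mathcal{D}}(\phi)\in Mor(T(\mathcal{D}))$, so the unit axiom $\epsilon\circ\eta_{\mathcal{D}}=Id$ gives $\epsilon_m(\langle\phi\rangle)=\phi$; this is precisely part $(2)$ of Lemma $6.1.8$.

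The engine of the reduction is the associativity axiom $\epsilon_m\circ T(\epsilon)_m=\epsilon_m\circ(\mu_{\mathcal{D}})_m$ of a $T$-algebra. To treat $(f\otimes_{\epsilon} g)\otimes_{\epsilon} h$ I would introduce the compound fissus planar diagram $W\in Mor(T\circ T(\mathcal{D}))$ whose underlying graph is the tensor product of two prime corollas carrying the finest fission structure, with the first vertex decorated by the morphism $\langle f\rangle\otimes_{fissus}\langle g\rangle$ of $T(\mathcal{D})$ and the second by $\langle h\rangle$. Applying $T(\epsilon)$ evaluates each vertex decoration by $\epsilon_m$: the first becomes $\epsilon_m(\langle f\rangle\otimes_{fissus}\langle g\rangle)=f\otimes_{\epsilon} g$ by the definition of $\otimes_{\epsilon}$, and the second becomes $\epsilon_m(\langle h\rangle)=h$, whence $\epsilon_m(T(\epsilon)_m(W))=\epsilon_m(\langle f\otimes_{\epsilon} g\rangle\otimes_{fissus}\langle h\rangle)=(f\otimes_{\epsilon} g)\otimes_{\epsilon} h$. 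On the other hand $\mu_{\mathcal{D}}=\sigma\circ\widehat{Z}_F$ fine-grains $W$: since the two corollas sit side by side (pure tensor, no composition) and both the outer and the inner fission data are finest, substituting the components and composing the linear partitions through $\sigma$ produces exactly $\langle f\rangle\otimes_{fissus}\langle g\rangle\otimes_{fissus}\langle h\rangle$, so that $\epsilon_m(\mu_{\mathcal{D}}(W))=\epsilon_m(\langle f\rangle\otimes_{fissus}\langle g\rangle\otimes_{fissus}\langle h\rangle)$. The algebra axiom now equates the two computations.

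Running the identical argument with the groupings interchanged — using $W'$ whose first vertex is decorated by $\langle f\rangle$ and whose second is decorated by $\langle g\rangle\otimes_{fissus}\langle h\rangle$ — yields $f\otimes_{\epsilon}(g\otimes_{\epsilon} h)=\epsilon_m(\langle f\rangle\otimes_{fissus}\langle g\rangle\otimes_{fissus}\langle h\rangle)$ as well, the threefold product again being identified via associativity of $\otimes_{fissus}$. Comparing the two results gives $(f\otimes_{\epsilon} g)\otimes_{\epsilon} h=f\otimes_{\epsilon}(g\otimes_{\epsilon} h)$.

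I expect the main obstacle to be the careful verification that $\mu_{\mathcal{D}}(W)$ really is the honest threefold fissus tensor product carrying the full fission. This is where the description of $\mu_{\mathcal{D}}$ as $\sigma\circ\widehat{Z}_F$ from Section $5.6$ must be unwound: one must confirm that fine-graining (i.e. $\widehat{Z}_F$) of a pure-tensor compound diagram is the tensor product of its components, and that the composition $\sigma$ of the finest outer partition with the full inner partitions returns the full partition on the flattened diagram. Establishing this bookkeeping of nested linear partitions, rather than any categorical subtlety, is the real content; everything else is a direct appeal to the two $T$-algebra axioms and to the associativity law for tensor products of planar graphs already proved in Section $3.4$.
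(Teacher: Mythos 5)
Your proposal is correct and follows essentially the same route as the paper: the paper's proof also applies the $T$-algebra associativity square $\epsilon\circ T(\epsilon)=\epsilon\circ\mu_{\mathcal{D}}$ to two compound fissus diagrams in $T\circ T(\mathcal{D})$ (your $W$ and $W'$ are exactly its $[\Gamma_2,\lambda_2]$ and $[\Gamma_1,\lambda_1]$), whose common image under $\mu_{\mathcal{D}}$ is the flat threefold fully fissus tensor diagram and whose images under $T(\epsilon)$ evaluate to the two bracketings. The only difference is presentational: the paper records the bookkeeping you flag (that $\mu_{\mathcal{D}}$ of the pure-tensor compound diagram is the honest threefold product) by explicit pictures inside one commutative diagram rather than by unwinding $\sigma\circ\widehat{Z}_F$.
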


\begin{proof}
Let $f:x_1\cdots x_l\rightarrow u_1\cdots u_p$, $g:y_1\cdots y_m\rightarrow v_1\cdots v_q$ and $h:z_1\cdots z_n\rightarrow w_1\cdots w_r$ be three morphisms in $\mathcal{D}$. Then the following commutative diagram shows that $f\otimes_{\epsilon}(g\otimes_{\epsilon}h)=(f\otimes_{\epsilon}g)\otimes_{\epsilon} h$.
$$
\begin{matrix}

\begin{matrix}
[\Gamma_1,\lambda_1, P_{in}^{full},P_{out}^{full}]
\end{matrix}

&\begin{matrix}
\begin{tikzpicture}
\node at (0.5,0.2) {$\mu_{\mathcal{D}}$};
\draw [->] (0,0)--(1,0);
\end{tikzpicture}
\end{matrix}
&
\begin{matrix}
\begin{tikzpicture}[scale=0.5]

\node (v2) at (-0.5,0) {};
\draw [fill] (-0.5,0) circle [radius=0.1];
\node at (0,0) {$f$};
\node (v1) at (-2,1.5) {$(x_1)$};
\node (v4) at (1,1.5) {$(x_l)$};
\node (v3) at (-0.5,1.5) {$\cdots$};
\node (v5) at (-2,-1.5) {$(u_1)$};
\node (v7) at (1,-1.5) {$(u_p)$};
\node (v6) at (-0.5,-1.5) {$\cdots$};

\node (v8) at (2.5,1.5) {$(y_1)$};
\node (v10) at (4,1.5) {$\cdots$};
\node (v11) at (5.5,1.5) {$(y_m)$};
\node (v9) at (4,0) {};
\draw [fill] (4,0) circle [radius=0.1];
\node at (4.5,0) {$g$};
\node (v12) at (2.5,-1.5) {$(v_1)$};
\node (v13) at (4,-1.5) {$\cdots$};
\node (v14) at (5.5,-1.5) {$(v_q)$};
\draw  (v1) -- (-0.5,0)[postaction={decorate, decoration={markings,mark=at position .5 with {\arrow[black]{stealth}}}}];

\draw  (v4) -- (-0.5,0)[postaction={decorate, decoration={markings,mark=at position .5 with {\arrow[black]{stealth}}}}];
\draw  (-0.5,0) -- (v5)[postaction={decorate, decoration={markings,mark=at position .6 with {\arrow[black]{stealth}}}}];

\draw  (-0.5,0) -- (v7)[postaction={decorate, decoration={markings,mark=at position .6 with {\arrow[black]{stealth}}}}];
\draw  (v8) -- (4,0)[postaction={decorate, decoration={markings,mark=at position .5 with {\arrow[black]{stealth}}}}];

\draw  (v11) -- (4,0)[postaction={decorate, decoration={markings,mark=at position .5 with {\arrow[black]{stealth}}}}];
\draw  (4,0) -- (v12)[postaction={decorate, decoration={markings,mark=at position .6 with {\arrow[black]{stealth}}}}];

\draw  (4,0) -- (v14)[postaction={decorate, decoration={markings,mark=at position .6 with {\arrow[black]{stealth}}}}];
\node (v16) at (8.5,0) {};
\draw [fill] (8.5,0) circle [radius=0.1];
\node at (9,0) {$h$};
\node (v15) at (7,1.5) {$(z_1)$};
\node (v17) at (8.5,1.5) {$\cdots$};
\node (v18) at (10,1.5) {$(z_n)$};
\node (v19) at (7,-1.5) {$(w_1)$};
\node at (8.5,-1.5) {$\cdots$};
\node (v20) at (10,-1.5) {$(w_r)$};
\draw  (8.5,0) -- (v15)[postaction={decorate, decoration={markings,mark=at position .6 with {\arrowreversed[black]{stealth}}}}];
\draw  (8.5,0) -- (v18)[postaction={decorate, decoration={markings,mark=at position .6 with {\arrowreversed[black]{stealth}}}}];
\draw  (8.5,0) -- (v19)[postaction={decorate, decoration={markings,mark=at position .6 with {\arrow[black]{stealth}}}}];
\draw  (8.5,0) -- (v20)[postaction={decorate, decoration={markings,mark=at position .6 with {\arrow[black]{stealth}}}}];
\end{tikzpicture}
\end{matrix}
&
\begin{matrix}
\begin{tikzpicture}
\node at (0.5,0.2) {$\mu_{\mathcal{D}}$};
\draw [->] (1,0)--(0,0);
\end{tikzpicture}
\end{matrix}
&
\begin{matrix}
[\Gamma_2,\lambda_2, P_{in}^{full},P_{out}^{full}]
\end{matrix}\\

\begin{matrix}
\begin{tikzpicture}
\node at (0.7,-0.5) {$T(\epsilon)$};
\draw [->] (0,0)--(0,-1);
\end{tikzpicture}
\end{matrix}

&\begin{matrix}

\end{matrix}
&
\begin{matrix}
\begin{tikzpicture}
\node at (0.2,-0.5) {$\epsilon$};
\draw [->] (0,0)--(0,-1);
\end{tikzpicture}
\end{matrix}
&
\begin{matrix}

\end{matrix}
&
\begin{matrix}
\begin{tikzpicture}
\node at (0.7,-0.5) {$T(\epsilon)$};
\draw [->] (0,0)--(0,-1);
\end{tikzpicture}
\end{matrix}\\

\begin{matrix}
[\Gamma_3,\lambda_3, P_{in}^{full},P_{out}^{full}]
\end{matrix}
&
\begin{matrix}
\begin{tikzpicture}
\node at (0.5,0.2) {$\epsilon$};
\draw [->] (0,0)--(1,0);
\end{tikzpicture}
\end{matrix}
&
\begin{matrix}
f\otimes_{\epsilon}g \otimes_{\epsilon}h

\end{matrix}
&
\begin{matrix}
\begin{tikzpicture}
\node at (0.5,0.2) {$\epsilon$};
\draw [->] (1,0)--(0,0);
\end{tikzpicture}
\end{matrix}
&
\begin{matrix}
[\Gamma_4,\lambda_4, P_{in}^{full},P_{out}^{full}]
\end{matrix}

\end{matrix}
$$

where $\begin{matrix}(\Gamma_1, P_{in}^{full},P_{out}^{full})&=&\begin{matrix}\begin{tikzpicture}[scale=0.5]

\node (v2) at (-0.5,0) {};
\draw [fill] (-0.5,0) circle [radius=0.1];
\node at (0.3,0) {$p_1$};
\node (v1) at (-2,1.5) {$(1)$};
\node (v4) at (1,1.5) {$(l)$};
\node (v3) at (-0.5,1.5) {$\cdots$};
\node (v5) at (-2,-1.5) {$(1)$};
\node (v7) at (1,-1.5) {$(p)$};
\node (v6) at (-0.5,-1.5) {$\cdots$};

\node (v8) at (2.5,1.5) {$(1)$};
\node (v10) at (4,1.5) {$\cdots$};
\node (v11) at (5.5,1.5) {$(m+n)$};
\node (v9) at (4,0) {};
\draw [fill] (4,0) circle [radius=0.1];
\node  at (4.8,0) {$p_2$};
\node (v12) at (2.5,-1.5) {$(1)$};
\node (v13) at (4,-1.5) {$\cdots$};
\node (v14) at (5.5,-1.5) {$(q+r)$};
\draw  (v1) -- (-0.5,0)[postaction={decorate, decoration={markings,mark=at position .5 with {\arrow[black]{stealth}}}}];

\draw  (v4) -- (-0.5,0)[postaction={decorate, decoration={markings,mark=at position .5 with {\arrow[black]{stealth}}}}];
\draw  (-0.5,0) -- (v5)[postaction={decorate, decoration={markings,mark=at position .6 with {\arrow[black]{stealth}}}}];

\draw  (-0.5,0) -- (v7)[postaction={decorate, decoration={markings,mark=at position .6 with {\arrow[black]{stealth}}}}];
\draw  (v8) -- (4,0)[postaction={decorate, decoration={markings,mark=at position .5 with {\arrow[black]{stealth}}}}];

\draw  (v11) -- (4,0)[postaction={decorate, decoration={markings,mark=at position .5 with {\arrow[black]{stealth}}}}];
\draw  (4,0) -- (v12)[postaction={decorate, decoration={markings,mark=at position .6 with {\arrow[black]{stealth}}}}];

\draw  (4,0) -- (v14)[postaction={decorate, decoration={markings,mark=at position .6 with {\arrow[black]{stealth}}}}];
\end{tikzpicture}\end{matrix}\end{matrix}$
and $$\begin{matrix}(\lambda_1)_m(p_1)&=&\begin{matrix}\begin{tikzpicture}[scale=0.5]

\node (v2) at (-0.5,0) {};
\draw [fill] (-0.5,0) circle [radius=0.1];
\node at (0,0) {$f$};
\node (v1) at (-2,1.5) {$(x_1)$};
\node (v4) at (1,1.5) {$(x_l)$};
\node (v3) at (-0.5,1.5) {$\cdots$};
\node (v5) at (-2,-1.5) {$(u_1)$};
\node (v7) at (1,-1.5) {$(u_p),$};
\node (v6) at (-0.5,-1.5) {$\cdots$};

\draw  (v1) -- (-0.5,0)[postaction={decorate, decoration={markings,mark=at position .5 with {\arrow[black]{stealth}}}}];

\draw  (v4) -- (-0.5,0)[postaction={decorate, decoration={markings,mark=at position .5 with {\arrow[black]{stealth}}}}];
\draw  (-0.5,0) -- (v5)[postaction={decorate, decoration={markings,mark=at position .6 with {\arrow[black]{stealth}}}}];
\draw  (-0.5,0) -- (v7)[postaction={decorate, decoration={markings,mark=at position .6 with {\arrow[black]{stealth}}}}];
\end{tikzpicture}\end{matrix}\end{matrix}$$

$$\begin{matrix}(\lambda_1)_m(p_2)&=&\begin{matrix}\begin{tikzpicture}[scale=0.5]

\node (v2) at (-0.5,0) {};
\draw [fill] (-0.5,0) circle [radius=0.1];
\node at (0,0) {$g$};
\node (v1) at (-2,1.5) {$(y_1)$};
\node (v4) at (1,1.5) {$(y_m)$};
\node (v3) at (-0.5,1.5) {$\cdots$};
\node (v5) at (-2,-1.5) {$(v_1)$};
\node (v7) at (1,-1.5) {$(v_q)$};
\node (v6) at (-0.5,-1.5) {$\cdots$};

\node (v8) at (2.5,1.5) {$(z_1)$};
\node (v10) at (4,1.5) {$\cdots$};
\node (v11) at (5.5,1.5) {$(z_n)$};
\node (v9) at (4,0) {};
\draw [fill] (4,0) circle [radius=0.1];
\node at (4.5,0) {$h$};
\node (v12) at (2.5,-1.5) {$(w_1)$};
\node (v13) at (4,-1.5) {$\cdots$};
\node (v14) at (5.5,-1.5) {$(w_r);$};
\draw  (v1) -- (-0.5,0)[postaction={decorate, decoration={markings,mark=at position .5 with {\arrow[black]{stealth}}}}];

\draw  (v4) -- (-0.5,0)[postaction={decorate, decoration={markings,mark=at position .5 with {\arrow[black]{stealth}}}}];
\draw  (-0.5,0) -- (v5)[postaction={decorate, decoration={markings,mark=at position .6 with {\arrow[black]{stealth}}}}];

\draw  (-0.5,0) -- (v7)[postaction={decorate, decoration={markings,mark=at position .6 with {\arrow[black]{stealth}}}}];
\draw  (v8) -- (4,0)[postaction={decorate, decoration={markings,mark=at position .5 with {\arrow[black]{stealth}}}}];

\draw  (v11) -- (4,0)[postaction={decorate, decoration={markings,mark=at position .5 with {\arrow[black]{stealth}}}}];
\draw  (4,0) -- (v12)[postaction={decorate, decoration={markings,mark=at position .6 with {\arrow[black]{stealth}}}}];

\draw  (4,0) -- (v14)[postaction={decorate, decoration={markings,mark=at position .6 with {\arrow[black]{stealth}}}}];
\end{tikzpicture}\end{matrix}\end{matrix}$$

$\begin{matrix}(\Gamma_2, P_{in}^{full},P_{out}^{full})&=&\begin{matrix}\begin{tikzpicture}[scale=0.5]

\node (v2) at (-0.5,0) {};
\draw [fill] (-0.5,0) circle [radius=0.1];
\node at (0.3,0) {$p_3$};
\node (v1) at (-2,1.5) {$(1)$};
\node (v4) at (1,1.5) {$(l+m)$};
\node (v3) at (-0.5,1.5) {$\cdots$};
\node (v5) at (-2,-1.5) {$(1)$};
\node (v7) at (1,-1.5) {$(p+q)$};
\node (v6) at (-0.5,-1.5) {$\cdots$};

\node (v8) at (2.5,1.5) {$(1)$};
\node (v10) at (4,1.5) {$\cdots$};
\node (v11) at (5.5,1.5) {$(n)$};
\node (v9) at (4,0) {};
\draw [fill] (4,0) circle [radius=0.1];
\node  at (4.8,0) {$p_4$};
\node (v12) at (2.5,-1.5) {$(1)$};
\node (v13) at (4,-1.5) {$\cdots$};
\node (v14) at (5.5,-1.5) {$(r)$};
\draw  (v1) -- (-0.5,0)[postaction={decorate, decoration={markings,mark=at position .5 with {\arrow[black]{stealth}}}}];

\draw  (v4) -- (-0.5,0)[postaction={decorate, decoration={markings,mark=at position .5 with {\arrow[black]{stealth}}}}];
\draw  (-0.5,0) -- (v5)[postaction={decorate, decoration={markings,mark=at position .6 with {\arrow[black]{stealth}}}}];

\draw  (-0.5,0) -- (v7)[postaction={decorate, decoration={markings,mark=at position .6 with {\arrow[black]{stealth}}}}];
\draw  (v8) -- (4,0)[postaction={decorate, decoration={markings,mark=at position .5 with {\arrow[black]{stealth}}}}];

\draw  (v11) -- (4,0)[postaction={decorate, decoration={markings,mark=at position .5 with {\arrow[black]{stealth}}}}];
\draw  (4,0) -- (v12)[postaction={decorate, decoration={markings,mark=at position .6 with {\arrow[black]{stealth}}}}];

\draw  (4,0) -- (v14)[postaction={decorate, decoration={markings,mark=at position .6 with {\arrow[black]{stealth}}}}];
\end{tikzpicture}\end{matrix}\end{matrix}$

and $$\begin{matrix}(\lambda_2)_m(p_3)&=&\begin{matrix}\begin{tikzpicture}[scale=0.5]

\node (v2) at (-0.5,0) {};
\draw [fill] (-0.5,0) circle [radius=0.1];
\node at (0,0) {$f$};
\node (v1) at (-2,1.5) {$(x_1)$};
\node (v4) at (1,1.5) {$(x_l)$};
\node (v3) at (-0.5,1.5) {$\cdots$};
\node (v5) at (-2,-1.5) {$(u_1)$};
\node (v7) at (1,-1.5) {$(u_p)$};
\node (v6) at (-0.5,-1.5) {$\cdots$};

\node (v8) at (2.5,1.5) {$(y_1)$};
\node (v10) at (4,1.5) {$\cdots$};
\node (v11) at (5.5,1.5) {$(y_m)$};
\node (v9) at (4,0) {};
\draw [fill] (4,0) circle [radius=0.1];
\node at (4.5,0) {$g$};
\node (v12) at (2.5,-1.5) {$(v_1)$};
\node (v13) at (4,-1.5) {$\cdots$};
\node (v14) at (5.5,-1.5) {$(v_q),$};
\draw  (v1) -- (-0.5,0)[postaction={decorate, decoration={markings,mark=at position .5 with {\arrow[black]{stealth}}}}];

\draw  (v4) -- (-0.5,0)[postaction={decorate, decoration={markings,mark=at position .5 with {\arrow[black]{stealth}}}}];
\draw  (-0.5,0) -- (v5)[postaction={decorate, decoration={markings,mark=at position .6 with {\arrow[black]{stealth}}}}];

\draw  (-0.5,0) -- (v7)[postaction={decorate, decoration={markings,mark=at position .6 with {\arrow[black]{stealth}}}}];
\draw  (v8) -- (4,0)[postaction={decorate, decoration={markings,mark=at position .5 with {\arrow[black]{stealth}}}}];

\draw  (v11) -- (4,0)[postaction={decorate, decoration={markings,mark=at position .5 with {\arrow[black]{stealth}}}}];
\draw  (4,0) -- (v12)[postaction={decorate, decoration={markings,mark=at position .6 with {\arrow[black]{stealth}}}}];

\draw  (4,0) -- (v14)[postaction={decorate, decoration={markings,mark=at position .6 with {\arrow[black]{stealth}}}}];
\end{tikzpicture}\end{matrix}\end{matrix}$$

$$\begin{matrix}(\lambda_2)_m(p_4)&=&\begin{matrix}\begin{tikzpicture}[scale=0.5]

\node (v2) at (-0.5,0) {};
\draw [fill] (-0.5,0) circle [radius=0.1];
\node at (0,0) {$h$};
\node (v1) at (-2,1.5) {$(z_1)$};
\node (v4) at (1,1.5) {$(z_n)$};
\node (v3) at (-0.5,1.5) {$\cdots$};
\node (v5) at (-2,-1.5) {$(w_1)$};
\node (v7) at (1,-1.5) {$(w_r);$};
\node (v6) at (-0.5,-1.5) {$\cdots$};

\draw  (v1) -- (-0.5,0)[postaction={decorate, decoration={markings,mark=at position .5 with {\arrow[black]{stealth}}}}];

\draw  (v4) -- (-0.5,0)[postaction={decorate, decoration={markings,mark=at position .5 with {\arrow[black]{stealth}}}}];
\draw  (-0.5,0) -- (v5)[postaction={decorate, decoration={markings,mark=at position .6 with {\arrow[black]{stealth}}}}];
\draw  (-0.5,0) -- (v7)[postaction={decorate, decoration={markings,mark=at position .6 with {\arrow[black]{stealth}}}}];
\end{tikzpicture}\end{matrix}\end{matrix}$$

$\begin{matrix}(\Gamma_3, P_{in}^{full},P_{out}^{full})&=&\begin{matrix}\begin{tikzpicture}[scale=0.5]

\node (v2) at (-0.5,0) {};
\draw [fill] (-0.5,0) circle [radius=0.1];
\node at (0.3,0) {$f$};
\node (v1) at (-2,1.5) {$(1)$};
\node (v4) at (1,1.5) {$(l)$};
\node (v3) at (-0.5,1.5) {$\cdots$};
\node (v5) at (-2,-1.5) {$(1)$};
\node (v7) at (1,-1.5) {$(p)$};
\node (v6) at (-0.5,-1.5) {$\cdots$};

\node (v8) at (2.5,1.5) {$(1)$};
\node (v10) at (4,1.5) {$\cdots$};
\node (v11) at (5.5,1.5) {$(m+n)$};
\node (v9) at (4,0) {};
\draw [fill] (4,0) circle [radius=0.1];
\node  at (5.8,0) {$g\otimes_{\epsilon}h$};
\node (v12) at (2.5,-1.5) {$(1)$};
\node (v13) at (4,-1.5) {$\cdots$};
\node (v14) at (5.5,-1.5) {$(q+r)$};
\draw  (v1) -- (-0.5,0)[postaction={decorate, decoration={markings,mark=at position .5 with {\arrow[black]{stealth}}}}];

\draw  (v4) -- (-0.5,0)[postaction={decorate, decoration={markings,mark=at position .5 with {\arrow[black]{stealth}}}}];
\draw  (-0.5,0) -- (v5)[postaction={decorate, decoration={markings,mark=at position .6 with {\arrow[black]{stealth}}}}];

\draw  (-0.5,0) -- (v7)[postaction={decorate, decoration={markings,mark=at position .6 with {\arrow[black]{stealth}}}}];
\draw  (v8) -- (4,0)[postaction={decorate, decoration={markings,mark=at position .5 with {\arrow[black]{stealth}}}}];

\draw  (v11) -- (4,0)[postaction={decorate, decoration={markings,mark=at position .5 with {\arrow[black]{stealth}}}}];
\draw  (4,0) -- (v12)[postaction={decorate, decoration={markings,mark=at position .6 with {\arrow[black]{stealth}}}}];

\draw  (4,0) -- (v14)[postaction={decorate, decoration={markings,mark=at position .6 with {\arrow[black]{stealth}}}}];
\end{tikzpicture}\end{matrix}\end{matrix}$
and

$\begin{matrix}(\Gamma_4, P_{in}^{full},P_{out}^{full})&=&\begin{matrix}\begin{tikzpicture}[scale=0.5]

\node (v2) at (-0.5,0) {};
\draw [fill] (-0.5,0) circle [radius=0.1];
\node at (1.3,0) {$f\otimes_{\epsilon}g$};
\node (v1) at (-2,1.5) {$(x_1)$};
\node (v4) at (1,1.5) {$(y_m)$};
\node (v3) at (-0.5,1.5) {$\cdots$};
\node (v5) at (-2,-1.5) {$(u_1)$};
\node (v7) at (1,-1.5) {$(v_q)$};
\node (v6) at (-0.5,-1.5) {$\cdots$};

\node (v8) at (2.5,1.5) {$(z_1)$};
\node (v10) at (4,1.5) {$\cdots$};
\node (v11) at (5.5,1.5) {$(z_n)$};
\node (v9) at (4,0) {};
\draw [fill] (4,0) circle [radius=0.1];
\node  at (4.8,0) {$h$};
\node (v12) at (2.5,-1.5) {$(w_1)$};
\node (v13) at (4,-1.5) {$\cdots$};
\node (v14) at (5.5,-1.5) {$(w_r).$};
\draw  (v1) -- (-0.5,0)[postaction={decorate, decoration={markings,mark=at position .5 with {\arrow[black]{stealth}}}}];

\draw  (v4) -- (-0.5,0)[postaction={decorate, decoration={markings,mark=at position .5 with {\arrow[black]{stealth}}}}];
\draw  (-0.5,0) -- (v5)[postaction={decorate, decoration={markings,mark=at position .6 with {\arrow[black]{stealth}}}}];

\draw  (-0.5,0) -- (v7)[postaction={decorate, decoration={markings,mark=at position .6 with {\arrow[black]{stealth}}}}];
\draw  (v8) -- (4,0)[postaction={decorate, decoration={markings,mark=at position .5 with {\arrow[black]{stealth}}}}];

\draw  (v11) -- (4,0)[postaction={decorate, decoration={markings,mark=at position .5 with {\arrow[black]{stealth}}}}];
\draw  (4,0) -- (v12)[postaction={decorate, decoration={markings,mark=at position .6 with {\arrow[black]{stealth}}}}];

\draw  (4,0) -- (v14)[postaction={decorate, decoration={markings,mark=at position .6 with {\arrow[black]{stealth}}}}];
\end{tikzpicture}\end{matrix}\end{matrix}$

\end{proof}


Let $f:x_1\cdots x_l\rightarrow y_1\cdots y_m$  and $g:y_1\cdots y_m\rightarrow z_1\cdots z_n$ be two morphisms of  $T$-algebra$(\mathcal{D},\epsilon)$,  we define their composition $$g\circ_{\epsilon}f\triangleq\epsilon_m([\Gamma_g,Q_{in}^{full},Q_{out}^{full}]\circ_{fissus}[\Gamma_f,P_{in}^{full},P_{out}^{full}])$$ with $[\Gamma_f,P_{in}^{full},P_{out}^{full}]$ and $[\Gamma_g,Q_{in}^{full},Q_{out}^{full}]$ being fully fissus prime diagrams with their vertices decorated by $f$ and $g$ respectively, and the tensor product $\circ_{fissus}$ being the composition of fissus diagrams.  That is,
$$
\begin{matrix}
\begin{matrix}g\circ_{\epsilon}f\end{matrix}&=&\epsilon_m\begin{pmatrix}\begin{tikzpicture}

\node (v7) at (0,2) {$(x_1)$};
\node (v8) at (1,2) {$\cdots$};
\node (v9) at (2,2) {$(x_l)$};
\node (v1) at (1,1) {};
\draw [fill] (1,1) circle [radius=0.055];
\node  at (1.5,1) {$f$};
\node (v2) at (0.3,0) {};
\node (v3) at (0.3,-1) {};
\node  at (-0.2,-0.5) {$y_1$};
\node (v5) at (1.7,0) {};
\node (v6) at (1.7,-1) {};
\node  at (2.2,-0.5) {$y_m$};
\node (v4) at (1,-2) {};
\draw [fill] (1,-2) circle [radius=0.055];
\node  at (1.5,-2) {$g$};

\node (v10) at (0,-3) {$(z_1)$};
\node (v11) at (1,-3) {$\cdots$};
\node (v12) at (2,-3) {$(z_n)$};
\node at (1,-0.5) {$\cdots$};

\draw  (v7) -- (1,1)[postaction={decorate, decoration={markings,mark=at position .6 with {\arrow[black]{stealth}}}}];

\draw  (v9) -- ((1,1)[postaction={decorate, decoration={markings,mark=at position .6 with {\arrow[black]{stealth}}}}];
\draw  (1,-2) -- (v10)[postaction={decorate, decoration={markings,mark=at position .6 with {\arrow[black]{stealth}}}}];

\draw  (1,-2) -- (v12)[postaction={decorate, decoration={markings,mark=at position .6 with {\arrow[black]{stealth}}}}];
\draw  plot[smooth, tension=.4] coordinates {(v1) (v2) (v3) (v4)}[postaction={decorate, decoration={markings,mark=at position .5 with {\arrow[black]{stealth}}}}];
\draw  plot[smooth, tension=.4] coordinates {(1,1) (v5) (v6) (1,-2)}[postaction={decorate, decoration={markings,mark=at position .5 with {\arrow[black]{stealth}}}}];
\end{tikzpicture}\end{pmatrix}.
\end{matrix}
$$

\begin{prop}
The composition $\circ_{\epsilon}$ is associative.
\end{prop}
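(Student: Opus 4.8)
The plan is to mimic the proof of Proposition \ref{tensor product}, replacing the horizontal stacking (tensor product) by vertical stacking (composition) and invoking the associativity axiom of a $T$-algebra rather than doing any direct graph surgery. First I would fix three composable morphisms $f:x_1\cdots x_l\to y_1\cdots y_m$, $g:y_1\cdots y_m\to z_1\cdots z_n$ and $h:z_1\cdots z_n\to w_1\cdots w_p$ of $(\mathcal{D},\epsilon)$, and form two compound fissus planar diagrams $[\Gamma_1,\lambda_1,P_{in}^{full},P_{out}^{full}]$ and $[\Gamma_2,\lambda_2,P_{in}^{full},P_{out}^{full}]$ in $T\circ T(\mathcal{D})$. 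The diagram $\Gamma_1$ is a two-vertex vertical composite whose upper component is the composition-stack of $g$ over $f$ and whose lower component is the prime diagram decorated by $h$; this encodes the bracketing $h\circ_{\epsilon}(g\circ_{\epsilon}f)$. Dually, $\Gamma_2$ has upper component the prime diagram decorated by $f$ and lower component the stack of $h$ over $g$, encoding $(h\circ_{\epsilon}g)\circ_{\epsilon}f$. In both cases the vertices are equipped with fully fissus structures exactly as in the definition of $\circ_{\epsilon}$.

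The crux is that both compound diagrams are flattened by the multiplication $\mu_{\mathcal{D}}=\sigma\circ\widehat{Z}_F$ to one and the same morphism of $T(\mathcal{D})$, namely the single fissus planar diagram obtained by composing the three components $f,g,h$ vertically along the matching intermediate edge-labels $y_1\cdots y_m$ and $z_1\cdots z_n$. That this common value is well defined and independent of the association is precisely the associativity of the composition of (fissus) planar diagrams established in Sections 3.5 and 3.7. On the other hand, applying $T(\epsilon)$ evaluates the inner component of each: on $\Gamma_1$ it sends the upper component to $g\circ_{\epsilon}f$, producing the two-vertex fissus prime-compound diagram whose $\epsilon$-image is $h\circ_{\epsilon}(g\circ_{\epsilon}f)$; on $\Gamma_2$ it sends the lower component to $h\circ_{\epsilon}g$, producing the diagram whose $\epsilon$-image is $(h\circ_{\epsilon}g)\circ_{\epsilon}f$. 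Thus $\epsilon\circ T(\epsilon)$ reads off the two bracketings, while $\epsilon\circ\mu_{\mathcal{D}}$ reads off a single bracketing-free value in each case.

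Finally I would invoke the associativity axiom of a $T$-algebra, that is, the commuting square $\epsilon\circ\mu_{\mathcal{D}}=\epsilon\circ T(\epsilon)$ from the definition of a tensor manifold. Applied to $[\Gamma_1,\lambda_1]$ it gives $h\circ_{\epsilon}(g\circ_{\epsilon}f)=\epsilon(\mu_{\mathcal{D}}[\Gamma_1,\lambda_1])$, and applied to $[\Gamma_2,\lambda_2]$ it gives $(h\circ_{\epsilon}g)\circ_{\epsilon}f=\epsilon(\mu_{\mathcal{D}}[\Gamma_2,\lambda_2])$. Since $\mu_{\mathcal{D}}[\Gamma_1,\lambda_1]=\mu_{\mathcal{D}}[\Gamma_2,\lambda_2]$ by the previous paragraph, the two right-hand sides coincide, and associativity of $\circ_{\epsilon}$ follows.

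The genuinely routine part is the explicit description of $\Gamma_1,\Gamma_2$, their fission structures and decorations, exactly as carried out for $\otimes_{\epsilon}$ in Proposition \ref{tensor product}. The main obstacle I anticipate is the bookkeeping needed to verify the claim that $\mu_{\mathcal{D}}$ sends both $[\Gamma_1,\lambda_1]$ and $[\Gamma_2,\lambda_2]$ to the \emph{same} fissus planar diagram: one must check that the fine-graining of the nested components grafts the intermediate edges in a manner compatible with the composition $\triangleleft$ of linear partitions implemented by $\sigma$, and that the resulting fission structures on the flattened diagram agree. This is where the associativity of composition of fissus planar graphs and the compatibility of $\widehat{Z}_F$ with $\sigma$ (Section 5.6) are used in an essential way; once they are in place the algebra square closes the argument immediately.
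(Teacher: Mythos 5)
Your proposal is correct and follows essentially the same route as the paper: the paper's proof is exactly the commutative square $\epsilon\circ T(\epsilon)=\epsilon\circ\mu_{\mathcal{D}}$ applied to the two compound fissus diagrams (one with the $g$-over-$f$ stack as a component and $h$ prime, the other with $f$ prime and the $h$-over-$g$ stack), both of which flatten under $\mu_{\mathcal{D}}$ to the same three-vertex vertical composite. The only difference is presentational: the paper draws the diagrams and leaves the flattening/fission bookkeeping implicit, while you name the ingredients (associativity of composition of fissus planar graphs, compatibility of $\widehat{Z}_F$ with $\sigma$) explicitly.
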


\begin{proof}
Let $f:x_1\cdots x_k\rightarrow y_1\cdots y_l$, $g:y_1\cdots y_l\rightarrow z_1\cdots z_m$ and $h:z_1\cdots z_m\rightarrow w_1\cdots w_n$ be three morphisms in $\mathcal{D}$. Then the following commutative diagram shows that $h\circ_{\epsilon}(g\circ_{\epsilon}f)=(h\circ_{\epsilon}g)\circ_{\epsilon} f$.
$$
\begin{matrix}

\begin{matrix}
[\Gamma_1,\lambda_1, P_{in}^{full},P_{out}^{full}]
\end{matrix}

&\begin{matrix}
\begin{tikzpicture}
\node at (0.5,0.2) {$\mu_{\mathcal{D}}$};
\draw [->] (0,0)--(1,0);
\end{tikzpicture}
\end{matrix}
&
\begin{matrix}
\begin{tikzpicture}[scale=0.7]

\node (v2) at (0.5,2.5) {};
\draw [fill] (0.5,2.5) circle [radius=0.055];
\node [scale=0.5] at (1,2.5) {$f$};
\node [scale=0.5] at (0.5,1.5) {$\cdots$};
\node [scale=0.6] at (-0.4,1.5) {$y_1$};
\node [scale=0.6] at (1.4,1.5) {$y_l$};
\node (v9) at (0.5,0.5) {};
\draw [fill] (0.5,0.5) circle [radius=0.055];
\node [scale=0.5] at (1,0.5) {$g$};
\node [scale=0.5] at (0.5,-0.5) {$\cdots$};
\node [scale=0.6] at (-0.4,-0.5) {$z_1$};
\node [scale=0.6] at (1.4,-0.5) {$z_m$};
\node (v4) at (0.5,-1.5) {};
\draw [fill] (0.5,-1.5) circle [radius=0.055];
\node [scale=0.5] at (1,-1.5) {$h$};
\node [scale=0.5] at (0.5,-2.2) {$\cdots$};
\node (v7) at (0,2) {};
\node (v8) at (0,1) {};
\node (v10) at (1,2) {};
\node (v11) at (1,1) {};
\node (v12) at (0,0) {};
\node (v13) at (0,-1) {};
\node (v14) at (1,0) {};
\node (v15) at (1,-1) {};
\node [scale=0.6] at (0,-2.2) {$(w_1)$};
\node [scale=0.6] at (1,-2.2) {$(w_n)$};
\node [scale=0.6] at (0,3.2) {$(x_1)$};
\node [scale=0.6] at (1,3.2) {$(x_k)$};
\node [scale=0.5] at (0.5,3.2) {$\cdots$};
\draw  (0,3) -- (0.5,2.5)[postaction={decorate, decoration={markings,mark=at position .6 with {\arrow[black]{stealth}}}}];
\draw  (1,3) -- (0.5,2.5)[postaction={decorate, decoration={markings,mark=at position .6 with {\arrow[black]{stealth}}}}];
\draw  (0.5,-1.5) -- (0,-2)[postaction={decorate, decoration={markings,mark=at position .6 with {\arrow[black]{stealth}}}}];
\draw  (0.5,-1.5) -- (1,-2)[postaction={decorate, decoration={markings,mark=at position .6 with {\arrow[black]{stealth}}}}];
\draw  plot[smooth, tension=.5] coordinates {(v2) (v7) (v8) (v9)}[postaction={decorate, decoration={markings,mark=at position .51 with {\arrow[black]{stealth}}}}];
\draw  plot[smooth, tension=.5] coordinates {(0.5,2.5) (v10) (v11) (0.5,0.5)}[postaction={decorate, decoration={markings,mark=at position .51 with {\arrow[black]{stealth}}}}];
\draw  plot[smooth, tension=.5] coordinates {(0.5,0.5) (v12) (v13) (v4)}[postaction={decorate, decoration={markings,mark=at position .51 with {\arrow[black]{stealth}}}}];
\draw  plot[smooth, tension=.5] coordinates {(0.5,0.5) (v14) (v15) (0.5,-1.5)}[postaction={decorate, decoration={markings,mark=at position .51 with {\arrow[black]{stealth}}}}];
\end{tikzpicture}
\end{matrix}
&
\begin{matrix}
\begin{tikzpicture}
\node at (0.5,0.2) {$\mu_{\mathcal{D}}$};
\draw [->] (1,0)--(0,0);
\end{tikzpicture}
\end{matrix}
&
\begin{matrix}
[\Gamma_2,\lambda_2, P_{in}^{full},P_{out}^{full}]
\end{matrix}\\

\begin{matrix}
\begin{tikzpicture}
\node at (0.7,-0.5) {$T(\epsilon)$};
\draw [->] (0,0)--(0,-1);
\end{tikzpicture}
\end{matrix}

&\begin{matrix}

\end{matrix}
&
\begin{matrix}
\begin{tikzpicture}
\node at (0.2,-0.5) {$\epsilon$};
\draw [->] (0,0)--(0,-1);
\end{tikzpicture}
\end{matrix}
&
\begin{matrix}

\end{matrix}
&
\begin{matrix}
\begin{tikzpicture}
\node at (0.7,-0.5) {$T(\epsilon)$};
\draw [->] (0,0)--(0,-1);
\end{tikzpicture}
\end{matrix}\\

\begin{matrix}
[\Gamma_3,\lambda_3, P_{in}^{full},P_{out}^{full}]
\end{matrix}
&
\begin{matrix}
\begin{tikzpicture}
\node at (0.5,0.2) {$\epsilon$};
\draw [->] (0,0)--(1,0);
\end{tikzpicture}
\end{matrix}
&
\begin{matrix}
h\circ_{\epsilon}g \circ_{\epsilon}f

\end{matrix}
&
\begin{matrix}
\begin{tikzpicture}
\node at (0.5,0.2) {$\epsilon$};
\draw [->] (1,0)--(0,0);
\end{tikzpicture}
\end{matrix}
&
\begin{matrix}
[\Gamma_4,\lambda_4, P_{in}^{full},P_{out}^{full}]
\end{matrix}

\end{matrix}
$$

where $\begin{matrix}(\Gamma_1, P_{in}^{full},P_{out}^{full})&=&\begin{matrix}\begin{tikzpicture}[scale=0.5]

\node  at (0,2.2) {$(1)$};
\node  at (1,2.2) {$\cdots$};
\node  at (2,2.2) {$(k)$};
\node (v1) at (1,1) {};
\draw [fill] (1,1) circle [radius=0.055];
\node  at (1.5,1) {$v_1$};
\node (v2) at (0.3,0) {};
\node (v3) at (0.3,-1) {};
\node  at (-0.2,-0.5) {$1$};
\node (v5) at (1.7,0) {};
\node (v6) at (1.7,-1) {};
\node  at (2.2,-0.5) {$l$};
\node (v4) at (1,-2) {};
\draw [fill] (1,-2) circle [radius=0.055];
\node  at (1.5,-2) {$v_2$};

\node  at (0,-3.2) {$(1)$};
\node (v11) at (1,-3) {$\cdots$};
\node  at (2,-3.2) {$(n)$};
\node at (1,-0.5) {$\cdots$};

\draw  (0,2) -- (1,1)[postaction={decorate, decoration={markings,mark=at position .6 with {\arrow[black]{stealth}}}}];

\draw  (2,2) -- ((1,1)[postaction={decorate, decoration={markings,mark=at position .6 with {\arrow[black]{stealth}}}}];
\draw  (1,-2) -- (0,-3)[postaction={decorate, decoration={markings,mark=at position .6 with {\arrow[black]{stealth}}}}];

\draw  (1,-2) -- (2,-3)[postaction={decorate, decoration={markings,mark=at position .6 with {\arrow[black]{stealth}}}}];
\draw  plot[smooth, tension=.4] coordinates {(v1) (v2) (v3) (v4)}[postaction={decorate, decoration={markings,mark=at position .5 with {\arrow[black]{stealth}}}}];
\draw  plot[smooth, tension=.4] coordinates {(1,1) (v5) (v6) (1,-2)}[postaction={decorate, decoration={markings,mark=at position .5 with {\arrow[black]{stealth}}}}];
\end{tikzpicture}\end{matrix}\end{matrix}$
and $$\begin{matrix}(\lambda_1)_m(v_1)&=&\begin{matrix}\begin{tikzpicture}[scale=0.5]

\node (v2) at (-0.5,0) {};
\draw [fill] (-0.5,0) circle [radius=0.1];
\node at (0,0) {$f$};
\node (v1) at (-2,1.5) {$(x_1)$};
\node (v4) at (1,1.5) {$(x_k)$};
\node (v3) at (-0.5,1.5) {$\cdots$};
\node (v5) at (-2,-1.5) {$(y_1)$};
\node (v7) at (1,-1.5) {$(y_l),$};
\node (v6) at (-0.5,-1.5) {$\cdots$};

\draw  (v1) -- (-0.5,0)[postaction={decorate, decoration={markings,mark=at position .5 with {\arrow[black]{stealth}}}}];

\draw  (v4) -- (-0.5,0)[postaction={decorate, decoration={markings,mark=at position .5 with {\arrow[black]{stealth}}}}];
\draw  (-0.5,0) -- (v5)[postaction={decorate, decoration={markings,mark=at position .6 with {\arrow[black]{stealth}}}}];
\draw  (-0.5,0) -- (v7)[postaction={decorate, decoration={markings,mark=at position .6 with {\arrow[black]{stealth}}}}];
\end{tikzpicture}\end{matrix}\end{matrix}$$

$$\begin{matrix}(\lambda_1)_m(v_2)&=&\begin{matrix}\begin{tikzpicture}[scale=0.5]

\node  at (0,2.2) {$(y_1)$};
\node  at (1,2.2) {$\cdots$};
\node  at (2,2.2) {$(y_l)$};
\node (v1) at (1,1) {};
\draw [fill] (1,1) circle [radius=0.055];
\node  at (1.5,1) {$g$};
\node (v2) at (0.3,0) {};
\node (v3) at (0.3,-1) {};
\node  at (-0.2,-0.5) {$z_1$};
\node (v5) at (1.7,0) {};
\node (v6) at (1.7,-1) {};
\node  at (2.2,-0.5) {$z_m$};
\node (v4) at (1,-2) {};
\draw [fill] (1,-2) circle [radius=0.055];
\node  at (1.5,-2) {$h$};

\node  at (0,-3.2) {$(w_1)$};
\node (v11) at (1,-3) {$\cdots$};
\node  at (2,-3.2) {$(w_n);$};
\node at (1,-0.5) {$\cdots$};

\draw  (0,2) -- (1,1)[postaction={decorate, decoration={markings,mark=at position .6 with {\arrow[black]{stealth}}}}];

\draw  (2,2) -- ((1,1)[postaction={decorate, decoration={markings,mark=at position .6 with {\arrow[black]{stealth}}}}];
\draw  (1,-2) -- (0,-3)[postaction={decorate, decoration={markings,mark=at position .6 with {\arrow[black]{stealth}}}}];

\draw  (1,-2) -- (2,-3)[postaction={decorate, decoration={markings,mark=at position .6 with {\arrow[black]{stealth}}}}];
\draw  plot[smooth, tension=.4] coordinates {(v1) (v2) (v3) (v4)}[postaction={decorate, decoration={markings,mark=at position .5 with {\arrow[black]{stealth}}}}];
\draw  plot[smooth, tension=.4] coordinates {(1,1) (v5) (v6) (1,-2)}[postaction={decorate, decoration={markings,mark=at position .5 with {\arrow[black]{stealth}}}}];
\end{tikzpicture}\end{matrix}\end{matrix}$$

$\begin{matrix}(\Gamma_2, P_{in}^{full},P_{out}^{full})&=&\begin{matrix}\begin{tikzpicture}[scale=0.5]

\node  at (0,2.2) {$(1)$};
\node  at (1,2.2) {$\cdots$};
\node  at (2,2.2) {$(k)$};
\node (v1) at (1,1) {};
\draw [fill] (1,1) circle [radius=0.055];
\node  at (1.5,1) {$v_3$};
\node (v2) at (0.3,0) {};
\node (v3) at (0.3,-1) {};
\node  at (-0.2,-0.5) {$1$};
\node (v5) at (1.7,0) {};
\node (v6) at (1.7,-1) {};
\node  at (2.2,-0.5) {$m$};
\node (v4) at (1,-2) {};
\draw [fill] (1,-2) circle [radius=0.055];
\node  at (1.5,-2) {$v_4$};

\node  at (0,-3.2) {$(1)$};
\node (v11) at (1,-3) {$\cdots$};
\node  at (2,-3.2) {$(n)$};
\node at (1,-0.5) {$\cdots$};

\draw  (0,2) -- (1,1)[postaction={decorate, decoration={markings,mark=at position .6 with {\arrow[black]{stealth}}}}];

\draw  (2,2) -- ((1,1)[postaction={decorate, decoration={markings,mark=at position .6 with {\arrow[black]{stealth}}}}];
\draw  (1,-2) -- (0,-3)[postaction={decorate, decoration={markings,mark=at position .6 with {\arrow[black]{stealth}}}}];

\draw  (1,-2) -- (2,-3)[postaction={decorate, decoration={markings,mark=at position .6 with {\arrow[black]{stealth}}}}];
\draw  plot[smooth, tension=.4] coordinates {(v1) (v2) (v3) (v4)}[postaction={decorate, decoration={markings,mark=at position .5 with {\arrow[black]{stealth}}}}];
\draw  plot[smooth, tension=.4] coordinates {(1,1) (v5) (v6) (1,-2)}[postaction={decorate, decoration={markings,mark=at position .5 with {\arrow[black]{stealth}}}}];
\end{tikzpicture}\end{matrix}\end{matrix}$

and $$\begin{matrix}(\lambda_2)_m(v_3)&=&\begin{matrix}\begin{tikzpicture}[scale=0.5]

\node  at (0,2.2) {$(x_1)$};
\node  at (1,2.2) {$\cdots$};
\node  at (2,2.2) {$(x_k)$};
\node (v1) at (1,1) {};
\draw [fill] (1,1) circle [radius=0.055];
\node  at (1.5,1) {$f$};
\node (v2) at (0.3,0) {};
\node (v3) at (0.3,-1) {};
\node  at (-0.2,-0.5) {$y_1$};
\node (v5) at (1.7,0) {};
\node (v6) at (1.7,-1) {};
\node  at (2.2,-0.5) {$y_l$};
\node (v4) at (1,-2) {};
\draw [fill] (1,-2) circle [radius=0.055];
\node  at (1.5,-2) {$g$};

\node  at (0,-3.2) {$(z_1)$};
\node (v11) at (1,-3) {$\cdots$};
\node  at (2,-3.2) {$(z_m),$};
\node at (1,-0.5) {$\cdots$};

\draw  (0,2) -- (1,1)[postaction={decorate, decoration={markings,mark=at position .6 with {\arrow[black]{stealth}}}}];

\draw  (2,2) -- ((1,1)[postaction={decorate, decoration={markings,mark=at position .6 with {\arrow[black]{stealth}}}}];
\draw  (1,-2) -- (0,-3)[postaction={decorate, decoration={markings,mark=at position .6 with {\arrow[black]{stealth}}}}];

\draw  (1,-2) -- (2,-3)[postaction={decorate, decoration={markings,mark=at position .6 with {\arrow[black]{stealth}}}}];
\draw  plot[smooth, tension=.4] coordinates {(v1) (v2) (v3) (v4)}[postaction={decorate, decoration={markings,mark=at position .5 with {\arrow[black]{stealth}}}}];
\draw  plot[smooth, tension=.4] coordinates {(1,1) (v5) (v6) (1,-2)}[postaction={decorate, decoration={markings,mark=at position .5 with {\arrow[black]{stealth}}}}];
\end{tikzpicture}\end{matrix}\end{matrix}$$

$$\begin{matrix}(\lambda_2)_m(v_4)&=&\begin{matrix}\begin{tikzpicture}[scale=0.5]

\node (v2) at (-0.5,0) {};
\draw [fill] (-0.5,0) circle [radius=0.1];
\node at (0,0) {$h$};
\node (v1) at (-2,1.5) {$(z_1)$};
\node (v4) at (1,1.5) {$(z_m)$};
\node (v3) at (-0.5,1.5) {$\cdots$};
\node (v5) at (-2,-1.5) {$(w_1)$};
\node (v7) at (1,-1.5) {$(w_n);$};
\node (v6) at (-0.5,-1.5) {$\cdots$};

\draw  (v1) -- (-0.5,0)[postaction={decorate, decoration={markings,mark=at position .5 with {\arrow[black]{stealth}}}}];

\draw  (v4) -- (-0.5,0)[postaction={decorate, decoration={markings,mark=at position .5 with {\arrow[black]{stealth}}}}];
\draw  (-0.5,0) -- (v5)[postaction={decorate, decoration={markings,mark=at position .6 with {\arrow[black]{stealth}}}}];
\draw  (-0.5,0) -- (v7)[postaction={decorate, decoration={markings,mark=at position .6 with {\arrow[black]{stealth}}}}];
\end{tikzpicture}\end{matrix}\end{matrix}$$

$\begin{matrix}(\Gamma_3,\lambda_3 P_{in}^{full},P_{out}^{full})&=&\begin{matrix}\begin{tikzpicture}[scale=0.5]

\node  at (0,2.2) {$(x_1)$};
\node  at (1,2.2) {$\cdots$};
\node  at (2,2.2) {$(x_k)$};
\node (v1) at (1,1) {};
\draw [fill] (1,1) circle [radius=0.055];
\node  at (1.5,1) {$f$};
\node (v2) at (0.3,0) {};
\node (v3) at (0.3,-1) {};
\node  at (-0.2,-0.5) {$y_1$};
\node (v5) at (1.7,0) {};
\node (v6) at (1.7,-1) {};
\node  at (2.2,-0.5) {$y_l$};
\node (v4) at (1,-2) {};
\draw [fill] (1,-2) circle [radius=0.055];
\node  at (2.5,-2) {$h\circ_{\epsilon}f$};

\node  at (0,-3.2) {$(w_1)$};
\node (v11) at (1,-3) {$\cdots$};
\node  at (2,-3.2) {$(w_n)$};
\node at (1,-0.5) {$\cdots$};

\draw  (0,2) -- (1,1)[postaction={decorate, decoration={markings,mark=at position .6 with {\arrow[black]{stealth}}}}];

\draw  (2,2) -- ((1,1)[postaction={decorate, decoration={markings,mark=at position .6 with {\arrow[black]{stealth}}}}];
\draw  (1,-2) -- (0,-3)[postaction={decorate, decoration={markings,mark=at position .6 with {\arrow[black]{stealth}}}}];

\draw  (1,-2) -- (2,-3)[postaction={decorate, decoration={markings,mark=at position .6 with {\arrow[black]{stealth}}}}];
\draw  plot[smooth, tension=.4] coordinates {(v1) (v2) (v3) (v4)}[postaction={decorate, decoration={markings,mark=at position .5 with {\arrow[black]{stealth}}}}];
\draw  plot[smooth, tension=.4] coordinates {(1,1) (v5) (v6) (1,-2)}[postaction={decorate, decoration={markings,mark=at position .5 with {\arrow[black]{stealth}}}}];
\end{tikzpicture}\end{matrix}\end{matrix}$
and
$\begin{matrix}(\Gamma_4,\lambda_4, P_{in}^{full},P_{out}^{full})&=&\begin{matrix}\begin{tikzpicture}[scale=0.5]

\node  at (0,2.2) {$(x_1)$};
\node  at (1,2.2) {$\cdots$};
\node  at (2,2.2) {$(x_k)$};
\node (v1) at (1,1) {};
\draw [fill] (1,1) circle [radius=0.055];
\node  at (2.5,1) {$g\circ_{\epsilon}f$};
\node (v2) at (0.3,0) {};
\node (v3) at (0.3,-1) {};
\node  at (-0.2,-0.5) {$z_1$};
\node (v5) at (1.7,0) {};
\node (v6) at (1.7,-1) {};
\node  at (2.2,-0.5) {$z_m$};
\node (v4) at (1,-2) {};
\draw [fill] (1,-2) circle [radius=0.055];
\node  at (1.5,-2) {$h$};

\node  at (0,-3.2) {$(w_1)$};
\node (v11) at (1,-3) {$\cdots$};
\node  at (2,-3.2) {$(w_n).$};
\node at (1,-0.5) {$\cdots$};
\draw  (0,2) -- (1,1)[postaction={decorate, decoration={markings,mark=at position .6 with {\arrow[black]{stealth}}}}];
\draw  (2,2) -- ((1,1)[postaction={decorate, decoration={markings,mark=at position .6 with {\arrow[black]{stealth}}}}];
\draw  (1,-2) -- (0,-3)[postaction={decorate, decoration={markings,mark=at position .6 with {\arrow[black]{stealth}}}}];
\draw  (1,-2) -- (2,-3)[postaction={decorate, decoration={markings,mark=at position .6 with {\arrow[black]{stealth}}}}];
\draw  plot[smooth, tension=.4] coordinates {(v1) (v2) (v3) (v4)}[postaction={decorate, decoration={markings,mark=at position .5 with {\arrow[black]{stealth}}}}];
\draw  plot[smooth, tension=.4] coordinates {(1,1) (v5) (v6) (1,-2)}[postaction={decorate, decoration={markings,mark=at position .5 with {\arrow[black]{stealth}}}}];
\end{tikzpicture}\end{matrix}\end{matrix}$

\end{proof}


\begin{prop}
For $\otimes_{\epsilon}$ and $\circ_{\epsilon}$, the middle-four-interchange law holds.
\end{prop}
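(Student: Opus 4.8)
The statement to establish is the middle-four-interchange law for the two operations on a tensor manifold $(\mathcal{D},\epsilon)$: for morphisms $f\colon x_1\cdots x_k\to y_1\cdots y_l$, $g\colon y_1\cdots y_l\to z_1\cdots z_m$ and $f'\colon x_1'\cdots x_{k'}'\to y_1'\cdots y_{l'}'$, $g'\colon y_1'\cdots y_{l'}'\to z_1'\cdots z_{m'}'$, one has
$$(g\circ_{\epsilon} f)\otimes_{\epsilon}(g'\circ_{\epsilon} f')=(g\otimes_{\epsilon} g')\circ_{\epsilon}(f\otimes_{\epsilon} f').$$
The plan is to follow exactly the template used in the proof of Proposition \ref{tensor product} (associativity of $\otimes_\epsilon$) and in the companion proof of associativity of $\circ_\epsilon$: to realize both sides of the equation as the two ways of traversing the $T$-algebra associativity square applied to one and the same compound fissus planar diagram.

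First I would build a single morphism $\Xi$ of $T\circ T(\mathcal{D})$ --- equivalently, via the identification $\widetilde{\omega}$ of Proposition \ref{second identification}, a fissus compound fissus planar diagram in $\mathcal{D}$ --- whose underlying two-level structure is the $2\times 2$ ``grid'' carrying the four fully fissus prime diagrams $[\Gamma_f]$, $[\Gamma_g]$, $[\Gamma_{f'}]$, $[\Gamma_{g'}]$ decorated by $f,g,f',g'$. Concretely, $\Xi$ has an outer planar graph with two vertices (one per ``column''), each decorated by a planar diagram in $T(\mathcal{D})$: the left vertex carries the cell with $g$ placed above $f$, the right carries $g'$ above $f'$, all fission structures being taken finest. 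Applying $\mu_{\mathcal{D}}=\sigma\circ\widehat{Z}_F$ to $\Xi$ performs the two inner compositions and records the result as a single fissus planar diagram in $T(\mathcal{D})$; applying $T(\epsilon)$ instead evaluates each inner cell first. Since $(\mathcal{D},\epsilon)$ is a $T$-algebra, the square $\epsilon\circ\mu_{\mathcal{D}}=\epsilon\circ T(\epsilon)$ commutes, so after one further application of $\epsilon$ the two computations must agree.

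The remaining point is to check that one traversal of the square yields the left-hand side and the other the right-hand side. After applying $\epsilon_m$ this reduces to the middle-four-interchange law already available at the level of (fissus) planar diagrams (Proposition 3.5.7 together with the corresponding operations on fissus planar graphs from Section 3.7): the outer evaluation tensors the two columns while the inner evaluation composes along each column, and $\otimes_{fissus}$, $\circ_{fissus}$ satisfy precisely this interchange. Thus the purely combinatorial identity on fissus planar diagrams is transported, vertex by vertex, through $\epsilon_m$ to the morphisms $f,g,f',g'$ of $\mathcal{D}$, and the equality follows by reading off the two corners of the commuting square.

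The main obstacle I anticipate is not conceptual but bookkeeping. One must choose the fission structures on the four prime cells and on the intermediate objects so that (a) every inner composition $\circ_{fissus}$ is legal, i.e.\ the relevant output partition is equivalent to the corresponding input partition, and (b) the outer multiplication $\mu_{\mathcal{D}}$ composes the nested linear partitions correctly via the operation $\triangleleft$, so that the two routes around the square really produce identical compound diagrams before $\epsilon$ is applied. Keeping the finest fission structures on the prime cells (as in the very definitions of $\otimes_\epsilon$ and $\circ_\epsilon$) should make these matchings automatic. Once they are verified, the equality is a direct consequence of the commuting square, so I would present the argument as a single commutative diagram of compound diagrams, in the same style as the two preceding associativity proofs, rather than as a term-by-term computation.
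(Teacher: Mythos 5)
Your proposal is correct and is essentially the paper's own argument: the paper's proof constructs precisely the two compound fissus diagrams you describe (the outer tensor of two composition cells, $\Gamma_2$, and the outer composition of two tensor cells, $\Gamma_1$), observes that both have the same image under $\mu_{\mathcal{D}}$ — namely the $2\times 2$ grid, which is the combinatorial interchange of Proposition 3.5.7 — and applies the $T$-algebra identity $\epsilon\circ\mu_{\mathcal{D}}=\epsilon\circ T(\epsilon)$ to each of them. The only point to make explicit is that the square for your single $\Xi$ identifies $(g\circ_{\epsilon}f)\otimes_{\epsilon}(g'\circ_{\epsilon}f')$ with $\epsilon_m$ of the grid, so identifying the grid's value with the other side $(g\otimes_{\epsilon}g')\circ_{\epsilon}(f\otimes_{\epsilon}f')$ genuinely requires the second compound diagram and a second application of the same commuting square, which your appeal to the combinatorial interchange leaves implicit but the paper's displayed diagram carries out.
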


\begin{proof}
Let $f_1:x_1\cdots x_l\rightarrow y_1\cdots y_m$, $g_1:y_1\cdots y_m\rightarrow z_1\cdots z_n$ and $f_2:u_1\cdots u_p\rightarrow v_1\cdots v_q$, $g_2:v_1\cdots v_q\rightarrow w_1\cdots w_r$ be four morphisms in $\mathcal{D}$. We want to shows that $(g_1\otimes_{\epsilon} g_2)\circ_{\epsilon}(f_1\otimes_{\epsilon} f_2)=(g_1\circ_{\epsilon} f_1)\otimes_{\epsilon}(g_2\circ_{\epsilon} f_2)$ which is a direct consequence of the following commutative diagram.
$$
\begin{matrix}

\begin{matrix}
[\Gamma_1,\lambda_1, P_{in}^{full},P_{out}^{full}]
\end{matrix}

&\begin{matrix}
\begin{tikzpicture}
\node at (0.5,0.2) {$\mu_{\mathcal{D}}$};
\draw [->] (0,0)--(1,0);
\end{tikzpicture}
\end{matrix}
&
\begin{matrix}
\begin{tikzpicture}[scale=0.5]

\node  at (0,2.2) {$(x_1)$};
\node  at (1,2.2) {$\cdots$};
\node  at (2,2.2) {$(x_l)$};
\node (v1) at (1,1) {};
\draw [fill] (1,1) circle [radius=0.055];
\node  at (1.5,1) {$f_1$};
\node (v2) at (0.3,0) {};
\node (v3) at (0.3,-1) {};
\node  at (-0.2,-0.5) {$y_1$};
\node (v5) at (1.7,0) {};
\node (v6) at (1.7,-1) {};
\node  at (2.4,-0.5) {$y_m$};
\node (v4) at (1,-2) {};
\draw [fill] (1,-2) circle [radius=0.055];
\node  at (1.5,-2) {$g_1$};

\node  at (0,-3.2) {$(z_1)$};
\node (v11) at (1,-3) {$\cdots$};
\node  at (2,-3.2) {$(z_n)$};
\node at (1,-0.5) {$\cdots$};

\draw  (0,2) -- (1,1)[postaction={decorate, decoration={markings,mark=at position .6 with {\arrow[black]{stealth}}}}];

\draw  (2,2) -- ((1,1)[postaction={decorate, decoration={markings,mark=at position .6 with {\arrow[black]{stealth}}}}];
\draw  (1,-2) -- (0,-3)[postaction={decorate, decoration={markings,mark=at position .6 with {\arrow[black]{stealth}}}}];

\draw  (1,-2) -- (2,-3)[postaction={decorate, decoration={markings,mark=at position .6 with {\arrow[black]{stealth}}}}];
\draw  plot[smooth, tension=.4] coordinates {(v1) (v2) (v3) (v4)}[postaction={decorate, decoration={markings,mark=at position .5 with {\arrow[black]{stealth}}}}];
\draw  plot[smooth, tension=.4] coordinates {(1,1) (v5) (v6) (1,-2)}[postaction={decorate, decoration={markings,mark=at position .5 with {\arrow[black]{stealth}}}}];
\end{tikzpicture}
\begin{tikzpicture}[scale=0.5]
\node  at (0,2.2) {$(u_1)$};
\node  at (1,2.2) {$\cdots$};
\node  at (2,2.2) {$(u_p)$};
\node (v1) at (1,1) {};
\draw [fill] (1,1) circle [radius=0.055];
\node  at (1.5,1) {$f_2$};
\node (v2) at (0.3,0) {};
\node (v3) at (0.3,-1) {};
\node  at (-0.2,-0.5) {$v_1$};
\node (v5) at (1.7,0) {};
\node (v6) at (1.7,-1) {};
\node  at (2.2,-0.5) {$v_q$};
\node (v4) at (1,-2) {};
\draw [fill] (1,-2) circle [radius=0.055];
\node  at (1.5,-2) {$g_2$};

\node  at (0,-3.2) {$(w_1)$};
\node (v11) at (1,-3) {$\cdots$};
\node  at (2,-3.2) {$(w_r)$};
\node at (1,-0.5) {$\cdots$};

\draw  (0,2) -- (1,1)[postaction={decorate, decoration={markings,mark=at position .6 with {\arrow[black]{stealth}}}}];

\draw  (2,2) -- ((1,1)[postaction={decorate, decoration={markings,mark=at position .6 with {\arrow[black]{stealth}}}}];
\draw  (1,-2) -- (0,-3)[postaction={decorate, decoration={markings,mark=at position .6 with {\arrow[black]{stealth}}}}];

\draw  (1,-2) -- (2,-3)[postaction={decorate, decoration={markings,mark=at position .6 with {\arrow[black]{stealth}}}}];
\draw  plot[smooth, tension=.4] coordinates {(v1) (v2) (v3) (v4)}[postaction={decorate, decoration={markings,mark=at position .5 with {\arrow[black]{stealth}}}}];
\draw  plot[smooth, tension=.4] coordinates {(1,1) (v5) (v6) (1,-2)}[postaction={decorate, decoration={markings,mark=at position .5 with {\arrow[black]{stealth}}}}];
\end{tikzpicture}
\end{matrix}
&
\begin{matrix}
\begin{tikzpicture}
\node at (0.5,0.2) {$\mu_{\mathcal{D}}$};
\draw [->] (1,0)--(0,0);
\end{tikzpicture}
\end{matrix}
&
\begin{matrix}
[\Gamma_2,\lambda_2, P_{in}^{full},P_{out}^{full}]
\end{matrix}\\

\begin{matrix}
\begin{tikzpicture}
\node at (0.7,-0.5) {$T(\epsilon)$};
\draw [->] (0,0)--(0,-1);
\end{tikzpicture}
\end{matrix}

&\begin{matrix}

\end{matrix}
&
\begin{matrix}
\begin{tikzpicture}
\node at (0.2,-0.5) {$\epsilon$};
\draw [->] (0,0)--(0,-1);
\end{tikzpicture}
\end{matrix}
&
\begin{matrix}

\end{matrix}
&
\begin{matrix}
\begin{tikzpicture}
\node at (0.7,-0.5) {$T(\epsilon)$};
\draw [->] (0,0)--(0,-1);
\end{tikzpicture}
\end{matrix}\\

\begin{matrix}
[\Gamma_3,\lambda_3, P_{in}^{full},P_{out}^{full}]
\end{matrix}
&
\begin{matrix}
\begin{tikzpicture}
\node at (0.5,0.2) {$\epsilon$};
\draw [->] (0,0)--(1,0);
\end{tikzpicture}
\end{matrix}
&
\begin{matrix}
\epsilon(f_1,g_1;f_2,g_2)
\end{matrix}
&
\begin{matrix}
\begin{tikzpicture}
\node at (0.5,0.2) {$\epsilon$};
\draw [->] (1,0)--(0,0);
\end{tikzpicture}
\end{matrix}
&
\begin{matrix}
[\Gamma_4,\lambda_4, P_{in}^{full},P_{out}^{full}]
\end{matrix}

\end{matrix}
$$

where $\begin{matrix}(\Gamma_1, P_{in}^{full},P_{out}^{full})&=&\begin{matrix}\begin{tikzpicture}[scale=0.5]

\node [scale=0.7] at (0,2.2) {$(1)$};
\node [scale=0.7] at (1,2.2) {$\cdots$};
\node [scale=0.7] at (2,2.2) {$(l+p)$};
\node (v1) at (1,1) {};
\draw [fill] (1,1) circle [radius=0.055];
\node  at (1.5,1) {$p_1$};
\node (v2) at (0.3,0) {};
\node (v3) at (0.3,-1) {};
\node  [scale=0.7]at (-0.2,-0.5) {$1$};
\node (v5) at (1.7,0) {};
\node (v6) at (1.7,-1) {};
\node  [scale=0.7]at (2.5,-0.5) {$m+q$};
\node (v4) at (1,-2) {};
\draw [fill] (1,-2) circle [radius=0.055];
\node  at (1.5,-2) {$p_2$};

\node [scale=0.7] at (0,-3.2) {$(1)$};
\node [scale=0.7](v11) at (1,-3) {$\cdots$};
\node  [scale=0.7]at (2,-3.2) {$(n+r)$};
\node [scale=0.7]at (1,-0.5) {$\cdots$};

\draw  (0,2) -- (1,1)[postaction={decorate, decoration={markings,mark=at position .6 with {\arrow[black]{stealth}}}}];

\draw  (2,2) -- ((1,1)[postaction={decorate, decoration={markings,mark=at position .6 with {\arrow[black]{stealth}}}}];
\draw  (1,-2) -- (0,-3)[postaction={decorate, decoration={markings,mark=at position .6 with {\arrow[black]{stealth}}}}];

\draw  (1,-2) -- (2,-3)[postaction={decorate, decoration={markings,mark=at position .6 with {\arrow[black]{stealth}}}}];
\draw  plot[smooth, tension=.4] coordinates {(v1) (v2) (v3) (v4)}[postaction={decorate, decoration={markings,mark=at position .5 with {\arrow[black]{stealth}}}}];
\draw  plot[smooth, tension=.4] coordinates {(1,1) (v5) (v6) (1,-2)}[postaction={decorate, decoration={markings,mark=at position .5 with {\arrow[black]{stealth}}}}];
\end{tikzpicture}\end{matrix}\end{matrix}$
and $$\begin{matrix}(\lambda_1)_m(p_1)&=&\begin{matrix}\begin{tikzpicture}[scale=0.5]

\node (v2) at (-0.5,0) {};
\draw [fill] (-0.5,0) circle [radius=0.1];
\node at (0.5,0) {$f_1$};
\node (v1) at (-2,1.5) {$(x_1)$};
\node (v4) at (1,1.5) {$(x_l)$};
\node (v3) at (-0.5,1.5) {$\cdots$};
\node (v5) at (-2,-1.5) {$(y_1)$};
\node (v7) at (1,-1.5) {$(y_m)$};
\node (v6) at (-0.5,-1.5) {$\cdots$};

\node (v8) at (2.5,1.5) {$(u_1)$};
\node (v10) at (4,1.5) {$\cdots$};
\node (v11) at (5.5,1.5) {$(u_p)$};
\node (v9) at (4,0) {};
\draw [fill] (4,0) circle [radius=0.1];
\node at (4.8,0) {$g_1$};
\node (v12) at (2.5,-1.5) {$(v_1)$};
\node (v13) at (4,-1.5) {$\cdots$};
\node (v14) at (5.5,-1.5) {$(v_q),$};
\draw  (v1) -- (-0.5,0)[postaction={decorate, decoration={markings,mark=at position .5 with {\arrow[black]{stealth}}}}];

\draw  (v4) -- (-0.5,0)[postaction={decorate, decoration={markings,mark=at position .5 with {\arrow[black]{stealth}}}}];
\draw  (-0.5,0) -- (v5)[postaction={decorate, decoration={markings,mark=at position .6 with {\arrow[black]{stealth}}}}];

\draw  (-0.5,0) -- (v7)[postaction={decorate, decoration={markings,mark=at position .6 with {\arrow[black]{stealth}}}}];
\draw  (v8) -- (4,0)[postaction={decorate, decoration={markings,mark=at position .5 with {\arrow[black]{stealth}}}}];

\draw  (v11) -- (4,0)[postaction={decorate, decoration={markings,mark=at position .5 with {\arrow[black]{stealth}}}}];
\draw  (4,0) -- (v12)[postaction={decorate, decoration={markings,mark=at position .6 with {\arrow[black]{stealth}}}}];

\draw  (4,0) -- (v14)[postaction={decorate, decoration={markings,mark=at position .6 with {\arrow[black]{stealth}}}}];
\end{tikzpicture}\end{matrix}\end{matrix}$$

$$\begin{matrix}(\lambda_1)_m(p_2)&=&\begin{matrix}\begin{tikzpicture}[scale=0.5]

\node (v2) at (-0.5,0) {};
\draw [fill] (-0.5,0) circle [radius=0.1];
\node at (0.5,0) {$f_2$};
\node (v1) at (-2,1.5) {$(y_1)$};
\node (v4) at (1,1.5) {$(y_m)$};
\node (v3) at (-0.5,1.5) {$\cdots$};
\node (v5) at (-2,-1.5) {$(z_1)$};
\node (v7) at (1,-1.5) {$(z_n)$};
\node (v6) at (-0.5,-1.5) {$\cdots$};

\node (v8) at (2.5,1.5) {$(v_1)$};
\node (v10) at (4,1.5) {$\cdots$};
\node (v11) at (5.5,1.5) {$(v_q)$};
\node (v9) at (4,0) {};
\draw [fill] (4,0) circle [radius=0.1];
\node at (4.8,0) {$g_2$};
\node (v12) at (2.5,-1.5) {$(w_1)$};
\node (v13) at (4,-1.5) {$\cdots$};
\node (v14) at (5.5,-1.5) {$(w_r);$};
\draw  (v1) -- (-0.5,0)[postaction={decorate, decoration={markings,mark=at position .5 with {\arrow[black]{stealth}}}}];

\draw  (v4) -- (-0.5,0)[postaction={decorate, decoration={markings,mark=at position .5 with {\arrow[black]{stealth}}}}];
\draw  (-0.5,0) -- (v5)[postaction={decorate, decoration={markings,mark=at position .6 with {\arrow[black]{stealth}}}}];

\draw  (-0.5,0) -- (v7)[postaction={decorate, decoration={markings,mark=at position .6 with {\arrow[black]{stealth}}}}];
\draw  (v8) -- (4,0)[postaction={decorate, decoration={markings,mark=at position .5 with {\arrow[black]{stealth}}}}];

\draw  (v11) -- (4,0)[postaction={decorate, decoration={markings,mark=at position .5 with {\arrow[black]{stealth}}}}];
\draw  (4,0) -- (v12)[postaction={decorate, decoration={markings,mark=at position .6 with {\arrow[black]{stealth}}}}];

\draw  (4,0) -- (v14)[postaction={decorate, decoration={markings,mark=at position .6 with {\arrow[black]{stealth}}}}];
\end{tikzpicture}\end{matrix}\end{matrix}$$

$\begin{matrix}(\Gamma_2, P_{in}^{full},P_{out}^{full})&=&\begin{matrix}\begin{tikzpicture}[scale=0.5]

\node (v2) at (-0.5,0) {};
\draw [fill] (-0.5,0) circle [radius=0.1];
\node at (0.5,0) {$p_3$};
\node (v1) at (-2,1.5) {$(1)$};
\node (v4) at (1,1.5) {$(l)$};
\node (v3) at (-0.5,1.5) {$\cdots$};
\node (v5) at (-2,-1.5) {$(1)$};
\node (v7) at (1,-1.5) {$(n)$};
\node (v6) at (-0.5,-1.5) {$\cdots$};

\node (v8) at (2.5,1.5) {$(1)$};
\node (v10) at (4,1.5) {$\cdots$};
\node (v11) at (5.5,1.5) {$(p)$};
\node (v9) at (4,0) {};
\draw [fill] (4,0) circle [radius=0.1];
\node at (4.8,0) {$p_4$};
\node (v12) at (2.5,-1.5) {$(1)$};
\node (v13) at (4,-1.5) {$\cdots$};
\node (v14) at (5.5,-1.5) {$(r)$};
\draw  (v1) -- (-0.5,0)[postaction={decorate, decoration={markings,mark=at position .5 with {\arrow[black]{stealth}}}}];

\draw  (v4) -- (-0.5,0)[postaction={decorate, decoration={markings,mark=at position .5 with {\arrow[black]{stealth}}}}];
\draw  (-0.5,0) -- (v5)[postaction={decorate, decoration={markings,mark=at position .6 with {\arrow[black]{stealth}}}}];

\draw  (-0.5,0) -- (v7)[postaction={decorate, decoration={markings,mark=at position .6 with {\arrow[black]{stealth}}}}];
\draw  (v8) -- (4,0)[postaction={decorate, decoration={markings,mark=at position .5 with {\arrow[black]{stealth}}}}];

\draw  (v11) -- (4,0)[postaction={decorate, decoration={markings,mark=at position .5 with {\arrow[black]{stealth}}}}];
\draw  (4,0) -- (v12)[postaction={decorate, decoration={markings,mark=at position .6 with {\arrow[black]{stealth}}}}];

\draw  (4,0) -- (v14)[postaction={decorate, decoration={markings,mark=at position .6 with {\arrow[black]{stealth}}}}];
\end{tikzpicture}\end{matrix}\end{matrix}$

and $$\begin{matrix}(\lambda_2)_m(p_3)&=&\begin{matrix}\begin{tikzpicture}[scale=0.5]

\node  at (0,2.2) {$(x_1)$};
\node  at (1,2.2) {$\cdots$};
\node  at (2,2.2) {$(x_l)$};
\node (v1) at (1,1) {};
\draw [fill] (1,1) circle [radius=0.055];
\node  at (1.5,1) {$f_1$};
\node (v2) at (0.3,0) {};
\node (v3) at (0.3,-1) {};
\node  at (-0.2,-0.5) {$y_1$};
\node (v5) at (1.7,0) {};
\node (v6) at (1.7,-1) {};
\node  at (2.5,-0.5) {$y_m$};
\node (v4) at (1,-2) {};
\draw [fill] (1,-2) circle [radius=0.055];
\node  at (1.5,-2) {$g_1$};

\node  at (0,-3.2) {$(z_1)$};
\node (v11) at (1,-3) {$\cdots$};
\node  at (2,-3.2) {$(z_n),$};
\node at (1,-0.5) {$\cdots$};

\draw  (0,2) -- (1,1)[postaction={decorate, decoration={markings,mark=at position .6 with {\arrow[black]{stealth}}}}];

\draw  (2,2) -- ((1,1)[postaction={decorate, decoration={markings,mark=at position .6 with {\arrow[black]{stealth}}}}];
\draw  (1,-2) -- (0,-3)[postaction={decorate, decoration={markings,mark=at position .6 with {\arrow[black]{stealth}}}}];

\draw  (1,-2) -- (2,-3)[postaction={decorate, decoration={markings,mark=at position .6 with {\arrow[black]{stealth}}}}];
\draw  plot[smooth, tension=.4] coordinates {(v1) (v2) (v3) (v4)}[postaction={decorate, decoration={markings,mark=at position .5 with {\arrow[black]{stealth}}}}];
\draw  plot[smooth, tension=.4] coordinates {(1,1) (v5) (v6) (1,-2)}[postaction={decorate, decoration={markings,mark=at position .5 with {\arrow[black]{stealth}}}}];
\end{tikzpicture}\end{matrix}\end{matrix}$$

$$\begin{matrix}(\lambda_2)_m(p_4)&=&\begin{matrix}\begin{tikzpicture}[scale=0.5]

\node  at (0,2.2) {$(u_1)$};
\node  at (1,2.2) {$\cdots$};
\node  at (2,2.2) {$(u_p)$};
\node (v1) at (1,1) {};
\draw [fill] (1,1) circle [radius=0.055];
\node  at (1.5,1) {$f_2$};
\node (v2) at (0.3,0) {};
\node (v3) at (0.3,-1) {};
\node  at (-0.2,-0.5) {$v_1$};
\node (v5) at (1.7,0) {};
\node (v6) at (1.7,-1) {};
\node  at (2.5,-0.5) {$v_q$};
\node (v4) at (1,-2) {};
\draw [fill] (1,-2) circle [radius=0.055];
\node  at (1.5,-2) {$g_2$};

\node  at (0,-3.2) {$(w_1)$};
\node (v11) at (1,-3) {$\cdots$};
\node  at (2,-3.2) {$(w_r);$};
\node at (1,-0.5) {$\cdots$};

\draw  (0,2) -- (1,1)[postaction={decorate, decoration={markings,mark=at position .6 with {\arrow[black]{stealth}}}}];

\draw  (2,2) -- ((1,1)[postaction={decorate, decoration={markings,mark=at position .6 with {\arrow[black]{stealth}}}}];
\draw  (1,-2) -- (0,-3)[postaction={decorate, decoration={markings,mark=at position .6 with {\arrow[black]{stealth}}}}];

\draw  (1,-2) -- (2,-3)[postaction={decorate, decoration={markings,mark=at position .6 with {\arrow[black]{stealth}}}}];
\draw  plot[smooth, tension=.4] coordinates {(v1) (v2) (v3) (v4)}[postaction={decorate, decoration={markings,mark=at position .5 with {\arrow[black]{stealth}}}}];
\draw  plot[smooth, tension=.4] coordinates {(1,1) (v5) (v6) (1,-2)}[postaction={decorate, decoration={markings,mark=at position .5 with {\arrow[black]{stealth}}}}];
\end{tikzpicture}\end{matrix}\end{matrix}$$

$\begin{matrix}(\Gamma_3, \lambda_3,P_{in}^{full},P_{out}^{full})&=&\begin{matrix}\begin{tikzpicture}[scale=0.5]

\node [scale=0.7] at (0,2.2) {$(x_1)$};
\node [scale=0.7] at (1,2.2) {$\cdots$};
\node [scale=0.7]  at (2,2.2) {$(u_p)$};
\node (v1) at (1,1) {};
\draw [fill] (1,1) circle [radius=0.055];
\node  at (2.5,1) {$f_1\otimes_{\epsilon}f_2$};
\node (v2) at (0.3,0) {};
\node (v3) at (0.3,-1) {};
\node [scale=0.7] at (-0.2,-0.5) {$y_1$};
\node (v5) at (1.7,0) {};
\node (v6) at (1.7,-1) {};
\node [scale=0.7] at (2.2,-0.5) {$v_q$};
\node (v4) at (1,-2) {};
\draw [fill] (1,-2) circle [radius=0.055];
\node  at (2.5,-2) {$g_1\otimes_{\epsilon}g_2$};

\node [scale=0.7] at (0,-3.2) {$(z_1)$};
\node [scale=0.7] at (1,-3) {$\cdots$};
\node [scale=0.7] at (2,-3.2) {$(w_r)$};
\node [scale=0.7] at (1,-0.5) {$\cdots$};

\draw  (0,2) -- (1,1)[postaction={decorate, decoration={markings,mark=at position .6 with {\arrow[black]{stealth}}}}];

\draw  (2,2) -- ((1,1)[postaction={decorate, decoration={markings,mark=at position .6 with {\arrow[black]{stealth}}}}];
\draw  (1,-2) -- (0,-3)[postaction={decorate, decoration={markings,mark=at position .6 with {\arrow[black]{stealth}}}}];

\draw  (1,-2) -- (2,-3)[postaction={decorate, decoration={markings,mark=at position .6 with {\arrow[black]{stealth}}}}];
\draw  plot[smooth, tension=.4] coordinates {(v1) (v2) (v3) (v4)}[postaction={decorate, decoration={markings,mark=at position .5 with {\arrow[black]{stealth}}}}];
\draw  plot[smooth, tension=.4] coordinates {(1,1) (v5) (v6) (1,-2)}[postaction={decorate, decoration={markings,mark=at position .5 with {\arrow[black]{stealth}}}}];
\end{tikzpicture}\end{matrix}\end{matrix}$
and
$\begin{matrix}(\Gamma_4,\lambda_4, P_{in}^{full},P_{out}^{full})&=&\begin{matrix}\begin{tikzpicture}[scale=0.5]

\node (v2) at (-0.5,0) {};
\draw [fill] (-0.5,0) circle [radius=0.1];
\node at (1.5,0) {$g_1\circ_{\epsilon}f_1$};
\node (v1) at (-2,1.5) {$(x_1)$};
\node (v4) at (1,1.5) {$(x_l)$};
\node (v3) at (-0.5,1.5) {$\cdots$};
\node (v5) at (-2,-1.5) {$(z_1)$};
\node (v7) at (1,-1.5) {$(z_n)$};
\node (v6) at (-0.5,-1.5) {$\cdots$};

\node (v8) at (2.5,1.5) {$(u_1)$};
\node (v10) at (4,1.5) {$\cdots$};
\node (v11) at (5.5,1.5) {$(u_p)$};
\node (v9) at (4,0) {};
\draw [fill] (4,0) circle [radius=0.1];
\node at (5.8,0) {$g_2\circ_{\epsilon}f_2$};
\node (v12) at (2.5,-1.5) {$(w_1)$};
\node (v13) at (4,-1.5) {$\cdots$};
\node (v14) at (5.5,-1.5) {$(w_r).$};
\draw  (v1) -- (-0.5,0)[postaction={decorate, decoration={markings,mark=at position .5 with {\arrow[black]{stealth}}}}];

\draw  (v4) -- (-0.5,0)[postaction={decorate, decoration={markings,mark=at position .5 with {\arrow[black]{stealth}}}}];
\draw  (-0.5,0) -- (v5)[postaction={decorate, decoration={markings,mark=at position .6 with {\arrow[black]{stealth}}}}];

\draw  (-0.5,0) -- (v7)[postaction={decorate, decoration={markings,mark=at position .6 with {\arrow[black]{stealth}}}}];
\draw  (v8) -- (4,0)[postaction={decorate, decoration={markings,mark=at position .5 with {\arrow[black]{stealth}}}}];

\draw  (v11) -- (4,0)[postaction={decorate, decoration={markings,mark=at position .5 with {\arrow[black]{stealth}}}}];
\draw  (4,0) -- (v12)[postaction={decorate, decoration={markings,mark=at position .6 with {\arrow[black]{stealth}}}}];

\draw  (4,0) -- (v14)[postaction={decorate, decoration={markings,mark=at position .6 with {\arrow[black]{stealth}}}}];
\end{tikzpicture}\end{matrix}\end{matrix}$

\end{proof}

Now for any pair of linear partitions $I,O$, we define a morphism $\ast^I_O:Mor(\mathcal{D})\rightarrow Mor(\mathcal{D})$ (called \textbf{fusion}) as following:

$\bullet$ if $|dom(f)|=||I||, |cod(f)|=||O||$ then $$\ast^I_O(f)\triangleq\epsilon_m([\Gamma_f,I,O]),$$ where $\Gamma_f$ is a prime diagram with the vertex decorated by $f$ and its fissus structure is equivalent to $I$ and $O$;

$\bullet$  if $|dom(f)|\neq||I||$ or $|cod(f)|\neq||O||$, then $\ast^I_O(f)\triangleq\epsilon_m(\emptyset)$, i.e, the image of empty diagram under $\epsilon_m$.

\begin{prop}\label{fusion}
Let $I_1$ and $O_1$ are linear partitions of $I_2$ and $O_2$, respectively, then $$\ast^{I_1}_{O_1}\circ \ast^{I_2}_{O_2}=\ast^{I_1\triangleleft I_2}_{O_1\triangleleft O_2}.$$
\end{prop}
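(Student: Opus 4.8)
The plan is to derive the identity directly from the associativity axiom $\epsilon\circ T(\epsilon)=\epsilon\circ\mu_{\mathcal{D}}$ of the $T$-algebra $(\mathcal{D},\epsilon)$, by producing a single morphism of $T\circ T(\mathcal{D})$ whose images under the two legs of that square are precisely the two sides of the claimed equation. Throughout I use the natural isomorphism $\widetilde{\omega}\colon T\to\mathbf{\Gamma}_F$, under which a morphism of $T(\mathcal{D})$ is a fissus planar diagram in $\mathcal{D}$, together with the induced description of $Mor(T\circ T(\mathcal{D}))$ as fissus compound fissus planar diagrams in $\mathcal{D}$. I first assume that $f$ meets the length constraints $|dom(f)|=||I_2||$ and $|cod(f)|=||O_2||$, and treat the degenerate case at the end.

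First I would set $g=\ast^{I_2}_{O_2}(f)=\epsilon_m([\Gamma_f,I_2,O_2])$ and write $m_0$ for $[\Gamma_f,I_2,O_2]$ regarded as a morphism of $T(\mathcal{D})$, with bracketed domain $X_1\cdots X_p$ (where $p=|I_2|$) and codomain $Y_1\cdots Y_q$ (where $q=|O_2|$). Because $I_1,O_1$ are linear partitions of $I_2,O_2$, their underlying sets are the block sets of $I_2,O_2$, so $||I_1||=p$ and $||O_1||=q$; this is exactly the length condition needed for $\ast^{I_1}_{O_1}(g)$ to be nondegenerate. The key construction is the element $\Xi\in Mor(T\circ T(\mathcal{D}))$ given by the prime (single-vertex) fissus planar diagram in $T(\mathcal{D})$ whose unique vertex is decorated by $m_0$, whose input and output half-edges carry $X_1,\dots,X_p$ and $Y_1,\dots,Y_q$ as objects of $T(\mathcal{D})$, and whose outer fissus structure is $(I_1,O_1)$; this is well defined precisely because $I_1$ partitions the $p$ inputs and $O_1$ the $q$ outputs.

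Then I would compute the two legs on $\Xi$. Along $\epsilon\circ T(\epsilon)$: the functor $T(\epsilon)=UF(\epsilon)$ pushes $\epsilon$ through the decorations of $\Xi$, so the vertex decoration $m_0$ becomes $\epsilon_m(m_0)=g$, the half-edge decorations become their $\epsilon_o$-images, and the fissus structure $(I_1,O_1)$ is unchanged; hence $T(\epsilon)_m(\Xi)=[\Gamma_g,I_1,O_1]$ and $\epsilon_m(T(\epsilon)_m(\Xi))=\ast^{I_1}_{O_1}(g)=\ast^{I_1}_{O_1}(\ast^{I_2}_{O_2}(f))$, the left-hand side. Along $\epsilon\circ\mu_{\mathcal{D}}$: using the description $\mu_m=\sigma\circ\widehat{Z}_F$ from section $5.6$, the evaluation $\widehat{Z}_F(\Xi)$ of the one-vertex compound produces the two-layer fissus planar graph $((\Gamma_f,I_2,O_2),I_1,O_1)$, and $\sigma$ composes the layers to give $(\Gamma_f,\,I_1\triangleleft I_2,\,O_1\triangleleft O_2)$; hence $(\mu_{\mathcal{D}})_m(\Xi)=[\Gamma_f,I_1\triangleleft I_2,O_1\triangleleft O_2]$ and $\epsilon_m((\mu_{\mathcal{D}})_m(\Xi))=\ast^{I_1\triangleleft I_2}_{O_1\triangleleft O_2}(f)$, the right-hand side, where I use $||I_1\triangleleft I_2||=||I_2||=|dom(f)|$ to see that this fusion is nondegenerate. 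Since the two legs of the associativity square agree on $\Xi$, the proposition follows.

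I expect the main obstacle to be the second leg: verifying carefully that the monad multiplication, realized through the evaluation map $\widehat{Z}_F$ of compound fissus planar diagrams and the partition-composition map $\sigma$, sends the prime compound element $\Xi$ to the fissus diagram carrying exactly the composites $I_1\triangleleft I_2$ and $O_1\triangleleft O_2$; this amounts to unwinding the definitions of section $4.5$ in the special case of a single-vertex compound. Finally I would dispose of the degenerate case: if $f$ fails either length constraint for $\ast^{I_2}_{O_2}$, then $\ast^{I_2}_{O_2}(f)=\epsilon_m(\varnothing)$ has empty domain and codomain, so the outer fusion again returns $\epsilon_m(\varnothing)$; and since $||I_1\triangleleft I_2||=||I_2||$ and $||O_1\triangleleft O_2||=||O_2||$, the same mismatch forces $\ast^{I_1\triangleleft I_2}_{O_1\triangleleft O_2}(f)=\epsilon_m(\varnothing)$, so both sides coincide in this case as well.
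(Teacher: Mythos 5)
Your proposal is correct and takes essentially the same route as the paper: the element $\Xi$ you construct is exactly the fissus prime compound diagram $[\Gamma_1,\lambda_1,I_1,O_1]$ used there (unique vertex decorated by $[\Gamma_f,I_2,O_2]$, outer fission structure $(I_1,O_1)$), and the paper likewise evaluates the two legs of the $T$-algebra associativity square $\epsilon\circ T(\epsilon)=\epsilon\circ\mu_{\mathcal{D}}$ on this element, with $T(\epsilon)$ yielding $\ast^{I_1}_{O_1}\circ\ast^{I_2}_{O_2}(f)$ and $\mu_{\mathcal{D}}$ (via $\sigma\circ\widehat{Z}_F$) yielding $\ast^{I_1\triangleleft I_2}_{O_1\triangleleft O_2}(f)$. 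Your handling of the degenerate length-mismatch case also agrees with the paper's, which simply asserts that both sides equal $\epsilon_m(\emptyset)$.
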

\begin{proof}
Let $f:x_1\cdots x_m\rightarrow y_1\cdots y_n$ be a morphism in $\mathcal{D}$. If $||I_2||\neq m$ or $||O_2||\neq n$, then $$\ast^{I_1}_{O_1}\circ \ast^{I_2}_{O_2}(f)=\ast^{I_1\triangleleft I_2}_{O_1\triangleleft O_2}(f)=\epsilon_m(\emptyset).$$

Now we assume $||I_2||= m$ and $||O_2||= n$, then the fact $\ast^{I_1}_{O_1}\circ \ast^{I_2}_{O_2}(f)=\ast^{I_1\triangleleft I_2}_{O_1\triangleleft O_2}(f)$ can be deduced from the following commutative diagram:

$$
\begin{matrix}

\begin{matrix}
[\Gamma_1,\lambda_1, I_1,O_1]
\end{matrix}

&\begin{matrix}
\begin{tikzpicture}
\node at (0.5,0.2) {$\mu_{\mathcal{D}}$};
\draw [->] (0,0)--(1,0);
\end{tikzpicture}
\end{matrix}
&
\begin{matrix}
[\Gamma_2,\gamma_f, I_1\triangleleft I_2,O_1\triangleleft O_2]
\end{matrix}
\\

\begin{matrix}
\begin{tikzpicture}
\node at (0.7,-0.5) {$T(\epsilon)$};
\draw [->] (0,0)--(0,-1);
\end{tikzpicture}
\end{matrix}

&\begin{matrix}

\end{matrix}
&
\begin{matrix}
\begin{tikzpicture}
\node at (0.2,-0.5) {$\epsilon$};
\draw [->] (0,0)--(0,-1);
\end{tikzpicture}
\end{matrix}

\\

\begin{matrix}
[\Gamma_1,\gamma_{\epsilon_m([\Gamma_2,\gamma_f,I_2,O_2])}, I_1,O_1]
\end{matrix}
&
\begin{matrix}
\begin{tikzpicture}
\node at (0.5,0.2) {$\epsilon$};
\draw [->] (0,0)--(1,0);
\end{tikzpicture}
\end{matrix}
&
\begin{matrix}
\widetilde{f},
\end{matrix}

\end{matrix}
$$
where $[\Gamma,\gamma_f]$ is a $(m,n)$-prime diagram with vertex decorated by $f$, $\Gamma_1=\underbrace{[\Gamma_2,I_2,O_2]}$ being the coarse-graining of $[\Gamma_2,I_2,O_2]$ and with its unique vertex deocorated by the fissus prime diagram $[\Gamma_2,\gamma_f,I_2,O_2]$.

\end{proof}

\begin{prop}\label{fusion-tensor product}
The fusion operation is compatible with the tensor product. More precisely, if $|dom(f)|=||I_1||$, $|cod(f)|=||O_1||$, $|dom(g)|=||I_2||$ and $|cod(g)|=||O_2||$, then
$$\ast^{I_1\otimes I_2}_{O_1\otimes O_2 }(f\otimes_{\epsilon} g)=\ast^{I_1}_{O_1}(f)\otimes_{\epsilon}\ast^{I_2}_{O_2}(g).$$
\end{prop}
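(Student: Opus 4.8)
The plan is to prove the identity exactly as in the proof of Proposition~\ref{fusion} and the middle-four-interchange law proved above, the engine being the associativity axiom $\epsilon\circ T(\epsilon)=\epsilon\circ\mu_{\mathcal{D}}$ of the $T$-algebra $(\mathcal{D},\epsilon)$: I will produce two morphisms of $T\circ T(\mathcal{D})$ having the same image under $\mu_{\mathcal{D}}$, one of which computes the left-hand side after $\epsilon\circ T(\epsilon)$ and the other the right-hand side. First note that under the hypotheses $|dom(f)|=||I_1||$, $|cod(f)|=||O_1||$, $|dom(g)|=||I_2||$, $|cod(g)|=||O_2||$ one has $||I_1\otimes I_2||=|dom(f)|+|dom(g)|=|dom(f\otimes_{\epsilon}g)|$ and similarly for the codomains, so all three fusions in the statement lie in the non-degenerate branch of the definition of $\ast$; consequently no empty-diagram term occurs and every object below is a genuine fissus prime (compound) diagram. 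Fix fully fissus prime diagrams $[\Gamma_f,P_{in}^{full},P_{out}^{full}]$ and $[\Gamma_g,Q_{in}^{full},Q_{out}^{full}]$ whose unique vertices are decorated by $f$ and $g$.

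I would first realise the left-hand side. Let $\Gamma_1\in Mor(T\circ T(\mathcal{D}))$ be the prime compound fissus diagram with a single vertex, outer fission structure $I_1\otimes I_2$ and $O_1\otimes O_2$, that vertex being decorated by the fissus tensor product $[\Gamma_f,P_{in}^{full},P_{out}^{full}]\otimes_{fissus}[\Gamma_g,Q_{in}^{full},Q_{out}^{full}]$. Applying $T(\epsilon)$ evaluates the single component to $f\otimes_{\epsilon}g$, so $T(\epsilon)(\Gamma_1)$ is the prime fissus diagram $[\Gamma_{f\otimes_{\epsilon}g},I_1\otimes I_2,O_1\otimes O_2]$, and by the definition of $\ast$ its image under $\epsilon$ is exactly $\ast^{I_1\otimes I_2}_{O_1\otimes O_2}(f\otimes_{\epsilon}g)$, the left-hand side.

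Dually I would realise the right-hand side. Let $\Gamma_2\in Mor(T\circ T(\mathcal{D}))$ be the two-vertex compound fissus diagram obtained as a fissus tensor product, with finest outer fission, whose two components are $[\Gamma_f,I_1,O_1]$ and $[\Gamma_g,I_2,O_2]$. Applying $T(\epsilon)$ evaluates the two components to $\ast^{I_1}_{O_1}(f)$ and $\ast^{I_2}_{O_2}(g)$, so $T(\epsilon)(\Gamma_2)$ is the fully fissus tensor product $[\Gamma_{\ast^{I_1}_{O_1}(f)},P_{in}^{full},P_{out}^{full}]\otimes_{fissus}[\Gamma_{\ast^{I_2}_{O_2}(g)},Q_{in}^{full},Q_{out}^{full}]$, whose image under $\epsilon$ is, by the definition of $\otimes_{\epsilon}$, the right-hand side $\ast^{I_1}_{O_1}(f)\otimes_{\epsilon}\ast^{I_2}_{O_2}(g)$.

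The crux, which I expect to be the main obstacle, is to check that $\Gamma_1$ and $\Gamma_2$ flatten to one and the same morphism of $T(\mathcal{D})$, i.e. $\mu_{\mathcal{D}}(\Gamma_1)=\mu_{\mathcal{D}}(\Gamma_2)$; granting this, the algebra axiom applied to each yields $\epsilon(T(\epsilon)(\Gamma_1))=\epsilon(\mu_{\mathcal{D}}(\Gamma_1))=\epsilon(\mu_{\mathcal{D}}(\Gamma_2))=\epsilon(T(\epsilon)(\Gamma_2))$, which is the desired equality. Using the presentation $(\mu_{\mathcal{D}})_m=\sigma\circ\widehat{Z}_F$, the map $\widehat{Z}_F$ substitutes the prime components of either $\Gamma_i$ into its one- or two-vertex coarse-graining and in both cases returns the same underlying fissus diagram $\Gamma_f\otimes\Gamma_g$ carrying a two-layer fission: for $\Gamma_1$ the inner layer is the finest partition and the outer layer is $(I_1\otimes I_2,O_1\otimes O_2)$, while for $\Gamma_2$ the inner layer is $(I_1\otimes I_2,O_1\otimes O_2)$ and the outer layer is the finest. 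Applying $\sigma$, i.e. the composition $\triangleleft$ of linear partitions, and invoking the identities $P\triangleleft(\text{finest})\approx P$ and $(\text{finest})\triangleleft Q\approx Q$ from section~$3.7$, both two-layer structures collapse to the single fissus diagram $(\Gamma_f\otimes\Gamma_g,\,I_1\otimes I_2,\,O_1\otimes O_2)$. The delicate point is precisely this bookkeeping: matching the two nested fission structures block by block, confirming that the underlying planar diagrams together with their half-edge orders agree on both sides, and verifying that the two occurrences of $\triangleleft$ really produce equivalent fissus diagrams in the sense of $\approx$ rather than merely isomorphic ones.
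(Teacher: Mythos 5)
Your proposal is correct and takes essentially the same approach as the paper: the engine in both is the $T$-algebra axiom $\epsilon\circ T(\epsilon)=\epsilon\circ\mu_{\mathcal{D}}$ applied to compound fissus diagrams, and your $\Gamma_2$ is precisely the paper's $[\Gamma_1,\lambda_1,I^{full},O^{full}]$ with components $[\Gamma_f,I_1,O_1]$ and $[\Gamma_g,I_2,O_2]$. If anything, yours is slightly more complete: the paper draws only that one square, which identifies the right-hand side with $\epsilon_m([\Gamma_f\otimes\Gamma_g,\,I_1\otimes I_2,\,O_1\otimes O_2])$, and leaves tacit the identification of this common value with the left-hand side, which is exactly what your first square $\Gamma_1$ (one outer vertex with fission $I_1\otimes I_2,\,O_1\otimes O_2$, decorated by the fully fissus diagram $\Gamma_f\otimes\Gamma_g$) supplies.
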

\begin{proof}
Let $f:x_1\cdots x_m\rightarrow y_1\cdots y_n$, $g:z_1\cdots z_k\rightarrow w_1\cdots w_l$ be two morphisms in $\mathcal{D}$. We assume $|I_1|=\mu_1$, $|O_1|=\nu_1$, $|I_2|=\mu_2$, $|O_2|=\nu_2$. Then the fact that $\ast^{I_1\otimes I_2}_{O_1\otimes O_2 }(f\otimes_{\epsilon} g)=\ast^{I_1}_{O_1}(f)\otimes_{\epsilon}\ast^{I_2}_{O_2}(g)$ is the direct consequence of the following commutative diagram:

$$
\begin{matrix}

\begin{matrix}
[\Gamma_1,\lambda_1, I^{full},O^{full}]
\end{matrix}

&\begin{matrix}
\begin{tikzpicture}
\node at (0.5,0.2) {$\mu_{\mathcal{D}}$};
\draw [->] (0,0)--(1,0);
\end{tikzpicture}
\end{matrix}
&
\begin{matrix}
[\Gamma_2,\lambda_2, I_1\otimes I_2,O_1\otimes O_2]
\end{matrix}
\\

\begin{matrix}
\begin{tikzpicture}
\node at (0.7,-0.5) {$T(\epsilon)$};
\draw [->] (0,0)--(0,-1);
\end{tikzpicture}
\end{matrix}

&\begin{matrix}

\end{matrix}
&
\begin{matrix}
\begin{tikzpicture}
\node at (0.2,-0.5) {$\epsilon$};
\draw [->] (0,0)--(0,-1);
\end{tikzpicture}
\end{matrix}

\\

\begin{matrix}
[\Gamma_3,\lambda_3, I^{full},O^{full}]
\end{matrix}
&
\begin{matrix}
\begin{tikzpicture}
\node at (0.5,0.2) {$\epsilon$};
\draw [->] (0,0)--(1,0);
\end{tikzpicture}
\end{matrix}
&
\begin{matrix}
h,
\end{matrix}

\end{matrix}
$$

where $\begin{matrix}\Gamma_1&=&\begin{matrix}\begin{tikzpicture}[scale=0.5]

\node (v2) at (-0.5,0) {};
\draw [fill] (-0.5,0) circle [radius=0.1];
\node at (0.3,0) {$p_1$};
\node (v1) at (-2,1.5) {$1$};
\node (v4) at (1,1.5) {$\mu_1$};
\node (v3) at (-0.5,1.5) {$\cdots$};
\node (v5) at (-2,-1.5) {$1$};
\node (v7) at (1,-1.5) {$\nu_1$};
\node (v6) at (-0.5,-1.5) {$\cdots$};

\node (v8) at (2.5,1.5) {$1$};
\node (v10) at (4,1.5) {$\cdots$};
\node (v11) at (5.5,1.5) {$\mu_2$};
\node (v9) at (4,0) {};
\draw [fill] (4,0) circle [radius=0.1];
\node  at (4.8,0) {$p_2$};
\node (v12) at (2.5,-1.5) {$1$};
\node (v13) at (4,-1.5) {$\cdots$};
\node (v14) at (5.5,-1.5) {$\nu_2$};
\draw  (v1) -- (-0.5,0)[postaction={decorate, decoration={markings,mark=at position .5 with {\arrow[black]{stealth}}}}];

\draw  (v4) -- (-0.5,0)[postaction={decorate, decoration={markings,mark=at position .5 with {\arrow[black]{stealth}}}}];
\draw  (-0.5,0) -- (v5)[postaction={decorate, decoration={markings,mark=at position .6 with {\arrow[black]{stealth}}}}];

\draw  (-0.5,0) -- (v7)[postaction={decorate, decoration={markings,mark=at position .6 with {\arrow[black]{stealth}}}}];
\draw  (v8) -- (4,0)[postaction={decorate, decoration={markings,mark=at position .5 with {\arrow[black]{stealth}}}}];

\draw  (v11) -- (4,0)[postaction={decorate, decoration={markings,mark=at position .5 with {\arrow[black]{stealth}}}}];
\draw  (4,0) -- (v12)[postaction={decorate, decoration={markings,mark=at position .6 with {\arrow[black]{stealth}}}}];

\draw  (4,0) -- (v14)[postaction={decorate, decoration={markings,mark=at position .6 with {\arrow[black]{stealth}}}}];
\end{tikzpicture}\end{matrix}\end{matrix}$
and $$\begin{matrix}(\lambda_1)_m(p_1)&=&[\begin{matrix}\begin{tikzpicture}[scale=0.5]

\node (v2) at (-0.5,0) {};
\draw [fill] (-0.5,0) circle [radius=0.1];
\node at (0,0) {$f$};
\node (v1) at (-2,1.5) {$x_1$};
\node (v4) at (1,1.5) {$x_m$};
\node (v3) at (-0.5,1.5) {$\cdots$};
\node (v5) at (-2,-1.5) {$y_1$};
\node (v7) at (1,-1.5) {$y_n,$};
\node (v6) at (-0.5,-1.5) {$\cdots$};

\draw  (v1) -- (-0.5,0)[postaction={decorate, decoration={markings,mark=at position .5 with {\arrow[black]{stealth}}}}];

\draw  (v4) -- (-0.5,0)[postaction={decorate, decoration={markings,mark=at position .5 with {\arrow[black]{stealth}}}}];
\draw  (-0.5,0) -- (v5)[postaction={decorate, decoration={markings,mark=at position .6 with {\arrow[black]{stealth}}}}];
\draw  (-0.5,0) -- (v7)[postaction={decorate, decoration={markings,mark=at position .6 with {\arrow[black]{stealth}}}}];
\end{tikzpicture}\end{matrix}, I_1,O_1],\end{matrix}$$

$$\begin{matrix}(\lambda_1)_m(p_2)&=&[\begin{matrix}\begin{tikzpicture}[scale=0.5]

\node (v2) at (-0.5,0) {};
\draw [fill] (-0.5,0) circle [radius=0.1];
\node at (0,0) {$g$};
\node (v1) at (-2,1.5) {$z_1$};
\node (v4) at (1,1.5) {$z_k$};
\node (v3) at (-0.5,1.5) {$\cdots$};
\node (v5) at (-2,-1.5) {$w_1$};
\node (v7) at (1,-1.5) {$w_l,$};
\node (v6) at (-0.5,-1.5) {$\cdots$};

\draw  (v1) -- (-0.5,0)[postaction={decorate, decoration={markings,mark=at position .5 with {\arrow[black]{stealth}}}}];

\draw  (v4) -- (-0.5,0)[postaction={decorate, decoration={markings,mark=at position .5 with {\arrow[black]{stealth}}}}];
\draw  (-0.5,0) -- (v5)[postaction={decorate, decoration={markings,mark=at position .6 with {\arrow[black]{stealth}}}}];
\draw  (-0.5,0) -- (v7)[postaction={decorate, decoration={markings,mark=at position .6 with {\arrow[black]{stealth}}}}];
\end{tikzpicture}\end{matrix}, I_2,O_2];\end{matrix}$$

$\begin{matrix}[\Gamma_2,\lambda_2]&=&\begin{matrix}\begin{tikzpicture}[scale=0.5]

\node (v2) at (-0.5,0) {};
\draw [fill] (-0.5,0) circle [radius=0.1];
\node at (0.3,0) {$f$};
\node (v1) at (-2,1.5) {$x_1$};
\node (v4) at (1,1.5) {$x_m$};
\node (v3) at (-0.5,1.5) {$\cdots$};
\node (v5) at (-2,-1.5) {$y_1$};
\node (v7) at (1,-1.5) {$y_n$};
\node (v6) at (-0.5,-1.5) {$\cdots$};

\node (v8) at (2.5,1.5) {$z_1$};
\node (v10) at (4,1.5) {$\cdots$};
\node (v11) at (5.5,1.5) {$z_k$};
\node (v9) at (4,0) {};
\draw [fill] (4,0) circle [radius=0.1];
\node  at (4.8,0) {$g$};
\node (v12) at (2.5,-1.5) {$w_1$};
\node (v13) at (4,-1.5) {$\cdots$};
\node (v14) at (5.5,-1.5) {$w_l,$};
\draw  (v1) -- (-0.5,0)[postaction={decorate, decoration={markings,mark=at position .5 with {\arrow[black]{stealth}}}}];

\draw  (v4) -- (-0.5,0)[postaction={decorate, decoration={markings,mark=at position .5 with {\arrow[black]{stealth}}}}];
\draw  (-0.5,0) -- (v5)[postaction={decorate, decoration={markings,mark=at position .6 with {\arrow[black]{stealth}}}}];

\draw  (-0.5,0) -- (v7)[postaction={decorate, decoration={markings,mark=at position .6 with {\arrow[black]{stealth}}}}];
\draw  (v8) -- (4,0)[postaction={decorate, decoration={markings,mark=at position .5 with {\arrow[black]{stealth}}}}];

\draw  (v11) -- (4,0)[postaction={decorate, decoration={markings,mark=at position .5 with {\arrow[black]{stealth}}}}];
\draw  (4,0) -- (v12)[postaction={decorate, decoration={markings,mark=at position .6 with {\arrow[black]{stealth}}}}];

\draw  (4,0) -- (v14)[postaction={decorate, decoration={markings,mark=at position .6 with {\arrow[black]{stealth}}}}];
\end{tikzpicture}\end{matrix}\end{matrix}$

$\begin{matrix}\Gamma_3&=&\begin{matrix}\begin{tikzpicture}[scale=0.5]

\node (v2) at (-0.5,0) {};
\draw [fill] (-0.5,0) circle [radius=0.1];
\node at (0.3,0) {$p_3$};
\node (v1) at (-2,1.5) {$1$};
\node (v4) at (1,1.5) {$\mu_1$};
\node (v3) at (-0.5,1.5) {$\cdots$};
\node (v5) at (-2,-1.5) {$1$};
\node (v7) at (1,-1.5) {$\nu_1$};
\node (v6) at (-0.5,-1.5) {$\cdots$};

\node (v8) at (2.5,1.5) {$1$};
\node (v10) at (4,1.5) {$\cdots$};
\node (v11) at (5.5,1.5) {$\mu_2$};
\node (v9) at (4,0) {};
\draw [fill] (4,0) circle [radius=0.1];
\node  at (4.8,0) {$p_4$};
\node (v12) at (2.5,-1.5) {$1$};
\node (v13) at (4,-1.5) {$\cdots$};
\node (v14) at (5.5,-1.5) {$\nu_2$};
\draw  (v1) -- (-0.5,0)[postaction={decorate, decoration={markings,mark=at position .5 with {\arrow[black]{stealth}}}}];

\draw  (v4) -- (-0.5,0)[postaction={decorate, decoration={markings,mark=at position .5 with {\arrow[black]{stealth}}}}];
\draw  (-0.5,0) -- (v5)[postaction={decorate, decoration={markings,mark=at position .6 with {\arrow[black]{stealth}}}}];

\draw  (-0.5,0) -- (v7)[postaction={decorate, decoration={markings,mark=at position .6 with {\arrow[black]{stealth}}}}];
\draw  (v8) -- (4,0)[postaction={decorate, decoration={markings,mark=at position .5 with {\arrow[black]{stealth}}}}];

\draw  (v11) -- (4,0)[postaction={decorate, decoration={markings,mark=at position .5 with {\arrow[black]{stealth}}}}];
\draw  (4,0) -- (v12)[postaction={decorate, decoration={markings,mark=at position .6 with {\arrow[black]{stealth}}}}];

\draw  (4,0) -- (v14)[postaction={decorate, decoration={markings,mark=at position .6 with {\arrow[black]{stealth}}}}];
\end{tikzpicture}\end{matrix}\end{matrix}$
and $$(\lambda_3)_m(p_3)=\ast^{I_1}_{O_1}(f),$$

$$(\lambda_3)_m(p_4)=\ast^{I_2}_{O_2}(g).$$

\end{proof}

\begin{prop}\label{fusion-composition}
The fusion operation is compatible with the composition. More precisely, if

$\bullet$ $cod(f)=dom(g)$;

$\bullet$ $|dom(f)|=|I_1|$, $|cod(f)|=|O_1|$, $|dom(g)|=|I_2|$, $|cod(g)|=|O_2|$;

$\bullet$  $O_1\approx I_2$,

then
$$\ast^{I_1}_{ O_2}(g\circ_{\epsilon} f)=\ast^{I_2}_{O_2}(f)\circ_{\epsilon}\ast^{I_1}_{O_1}(f).$$
\end{prop}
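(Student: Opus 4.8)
The plan is to reproduce the template already used for Propositions \ref{tensor product}, \ref{fusion} and \ref{fusion-tensor product}: realize both sides of the claimed identity as the two legs of the associativity square of the $T$-algebra structure, $\epsilon_m\circ(\mu_{\mathcal{D}})_m=\epsilon_m\circ T(\epsilon)_m$, evaluated at a single carefully built ``two-layer'' morphism of $Mor(T\circ T(\mathcal{D}))$. As usual the combinatorics of $T$, $\mu_{\mathcal{D}}$ and $T(\epsilon)$ does all the work; the algebra axiom supplies the equality. I first dispose of the degenerate cases: if $\|I_1\|\neq|dom(f)|$, $\|O_1\|\neq|cod(f)|$, $\|I_2\|\neq|dom(g)|$ or $\|O_2\|\neq|cod(g)|$, both sides reduce to $\epsilon_m(\emptyset)$ by the definition of $\ast$, so I may assume all the length conditions hold.

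First I would construct the compound fissus planar diagram $[\Gamma_1,\lambda_1,P_{in}^{full},P_{out}^{full}]$ whose outer graph $\Gamma_1$ is the two-vertex composition graph $v_1\to v_2$ (with $|I_1|$ input legs, $|O_2|$ output legs, and $|O_1|=|I_2|$ connecting thick edges), carrying the finest outer fission structure, and whose vertices are decorated by the fissus prime diagrams $\lambda_1(v_1)=[\Gamma_f,I_1,O_1]$ and $\lambda_1(v_2)=[\Gamma_g,I_2,O_2]$. I must check that this is well-formed: the hypothesis $O_1\approx I_2$ gives matching block cardinalities, so the output blocks of $v_1$ graft block-by-block onto the input blocks of $v_2$, while $cod(f)=dom(g)$ makes the glued object labels agree. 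Chasing this diagram along $T(\epsilon)$ evaluates each inner decoration, giving $\ast^{I_1}_{O_1}(f)$ at $v_1$ and $\ast^{I_2}_{O_2}(g)$ at $v_2$; these are $\circ_{\epsilon}$-composable because their middle word coincides (again from $cod(f)=dom(g)$ together with $O_1\approx I_2$), and applying $\epsilon$ to the resulting finest-fissus composition diagram is exactly the definition of $\circ_{\epsilon}$, yielding the right-hand side $\ast^{I_2}_{O_2}(g)\circ_{\epsilon}\ast^{I_1}_{O_1}(f)$.

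Chasing the same diagram along $\mu_{\mathcal{D}}$ flattens (fine-grains) the two layers: the outer connecting blocks expand consistently precisely because $O_1\approx I_2$, producing the honest composite diagram $\Gamma_g\circ\Gamma_f$, and the two fission layers combine via $\sigma$ and $\triangleleft$. Since the outer structure is finest, the composite carries fission structure $(P^{full}\triangleleft I_1,\,P^{full}\triangleleft O_2)\approx(I_1,O_2)$, so $\mu_{\mathcal{D}}([\Gamma_1,\lambda_1,P_{in}^{full},P_{out}^{full}])=[\Gamma_g\circ\Gamma_f,\gamma,I_1,O_2]$. It then remains to identify $\epsilon_m([\Gamma_g\circ\Gamma_f,\gamma,I_1,O_2])$ with the left-hand side $\ast^{I_1}_{O_2}(g\circ_{\epsilon}f)$. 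This is a second, entirely analogous application of the same axiom, carried out on the single-vertex-outer compound whose unique vertex is decorated by the finest-fissus composition diagram of $f$ and $g$: along $T(\epsilon)$ this gives the prime diagram decorated by $g\circ_{\epsilon}f$, whose $\epsilon$-image is $\ast^{I_1}_{O_2}(g\circ_{\epsilon}f)$, while along $\mu_{\mathcal{D}}$ it gives the very same $[\Gamma_g\circ\Gamma_f,\gamma,I_1,O_2]$. Commutativity of this second square closes the proof.

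The main obstacle I anticipate lies entirely in the $\mu_{\mathcal{D}}$-leg bookkeeping: verifying that fine-graining the outer composition graph with the two fissus-prime decorations really returns $\Gamma_g\circ\Gamma_f$ equipped with the fission structure $(I_1,O_2)$, and in particular tracking that it is the \emph{equivalence} $O_1\approx I_2$ (not an equality) that simultaneously makes the block-wise grafting well-defined and forces the induced two-layer fission structures to compose correctly under $\sigma$ and $\triangleleft$. Once the commutative-diagram presentation is laid out with the four corners labeled as above, the remaining verifications are the routine, case-by-case checks of the kind already carried out in the proofs of Propositions \ref{tensor product} and \ref{fusion-tensor product}, and I would simply display the diagram and leave those to the reader.
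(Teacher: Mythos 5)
Your proposal is correct and follows essentially the same route as the paper: the paper's proof is precisely your first square, namely the algebra axiom $\epsilon_m\circ(\mu_{\mathcal{D}})_m=\epsilon_m\circ T(\epsilon)_m$ chased on the two-vertex compound fissus diagram with finest outer fission and vertices decorated by $[\Gamma_f,I_1,O_1]$ and $[\Gamma_g,I_2,O_2]$, whose $T(\epsilon)$-leg yields $\ast^{I_2}_{O_2}(g)\circ_{\epsilon}\ast^{I_1}_{O_1}(f)$ and whose $\mu_{\mathcal{D}}$-leg yields the composite fissus diagram $[\Gamma_g\circ\Gamma_f,\gamma,I_1,O_2]$. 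Your second square, which identifies $\epsilon_m([\Gamma_g\circ\Gamma_f,\gamma,I_1,O_2])$ with $\ast^{I_1}_{O_2}(g\circ_{\epsilon}f)$ via the single-outer-vertex compound decorated by the finest-fissus composite, makes explicit a step the paper leaves implicit (its displayed diagram simply terminates in an unnamed common value $h$), so your write-up is, if anything, slightly more complete than the paper's.
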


\begin{proof}
Let $f:x_1\cdots x_l\rightarrow y_1\cdots y_m$, $g:y_1\cdots y_m\rightarrow z_1\cdots z_n$ be two morphisms in $\mathcal{D}$. We assume $|I_1|=p$, $|O_1|=q$, $|I_2|=q$, $|O_2|=r$. Then the fact that $\ast^{I_1}_{ O_2}(g\circ_{\epsilon} f)=\ast^{I_2}_{O_2}(f)\circ_{\epsilon}\ast^{I_1}_{O_1}(f)$ is the direct consequence of the following commutative diagram:

$$
)=\epsilon_m(\varnothing),$ i.e., the image of empty diagram/graph.

Now we can define a strict tensor category $\widetilde{\mathcal{D}}$ with $Mor(\widetilde{\mathcal{D}})=Mor(\mathcal{D}),$ $Ob(\widetilde{\mathcal{D}})=W(Ob(\mathcal{D}))$. The source and target are same as those of $\mathcal{D}$. The tensor product and composition of morphisms are given by $\otimes_{\epsilon}$ and $\circ_{\epsilon}$, respectively. The tensor product of objects are given by the juxtaposition of words. The null string $\varnothing$ is the unit object, and for any word $x_1\cdots x_m$ its identity morphism is $Id_{x_1\cdots x_m}.$  So $\widetilde{\mathcal{D}}$ is free on the level of objects.

\begin{thm}
Let $\mathcal{D}$ be a tensor scheme equipped with a family of identity morphisms one for each words in $Ob(\mathcal{D})$ and a family of operations $\otimes$, $\circ$ , $\ast_O^I$ for any pair of linear partitions $(I,O)$ such that their properties in proposition \ref{tensor product}---\ref{unit} are satisfied, then there exists an unique $T$-algebra structure inducing the same family of operations.
\end{thm}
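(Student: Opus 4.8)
The plan is to run the construction backwards: given the operations I want to synthesize a structure map $\epsilon\colon T(\mathcal{D})\to\mathcal{D}$, and the organizing observation is that the hypotheses are exactly the axioms that turn $\mathcal{D}$ into the strict tensor category $\widetilde{\mathcal{D}}$ described immediately before the statement (objects $W(Ob(\mathcal{D}))$, morphisms $Mor(\mathcal{D})$, tensor $\otimes$, composition $\circ$, units the given $Id_w$). First I would fix the object part. Using the hypothesised unit laws (the form of Proposition \ref{unit}) and the hypothesised identity $Id_{x_1\cdots x_m}=Id_{x_1}\otimes\cdots\otimes Id_{x_m}$, the assignment $\epsilon_o(w):=dom(Id_w)=cod(Id_w)\in Ob(\mathcal{D})$ is well defined and sends each one-letter word $\langle x\rangle$ to $x$; associativity of $\otimes$ (Proposition \ref{tensor product}) applied to identities then forces $(Ob(\mathcal{D}),\epsilon_o)$ to be a monoid with unit $\epsilon_o(\varnothing)$. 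This $\epsilon_o$ is the candidate object part of the structure map.

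For the morphism part I would use the decomposition theorem. Identifying $Mor(T(\mathcal{D}))$ with the set $\mathsf{\Gamma}_F(\mathcal{D})$ of fissus planar diagrams via $\widetilde{\omega}$ (Proposition \ref{second identification}), every morphism of $T(\mathcal{D})$ is a fissus planar diagram $([\Gamma,\gamma],P_{in},P_{out})$. By Theorem \ref{decomposition} the underlying diagram $[\Gamma,\gamma]$ factors as a composition of essential prime diagrams, each a tensor product of prime and unitary diagrams; evaluating each prime vertex to its label $f\in Mor(\mathcal{D})$ and each unitary to the corresponding identity, and replacing $\otimes_{fissus},\circ_{fissus}$ by the given $\otimes,\circ$, produces a morphism $E([\Gamma,\gamma])\in Mor(\mathcal{D})$. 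I would then set $\epsilon_m([\Gamma,\gamma],P_{in},P_{out}):=\ast^{P_{in}}_{P_{out}}\bigl(E([\Gamma,\gamma])\bigr)$, which is precisely the expression of coarse-graining as contraction followed by fusion and matches Example \ref{U(V)}. The only genuine point here is that $E$ is independent of the chosen decomposition; this is established exactly as in the proof of Theorem 4.4.2, by induction on the number of real vertices, the inductive step interchanging two $<_V$-incomparable vertices by means of associativity of $\otimes$ and $\circ$ together with the hypothesised middle-four-interchange law. In effect $E$ is the evaluation map $\varepsilon_{\widetilde{\mathcal{D}}}$ of diagrams in $\widetilde{\mathcal{D}}$, so Theorem \ref{decomposition} and Theorem 4.4.2 carry the construction.

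It remains to verify that $\epsilon=(\epsilon_o,\epsilon_m)$ is a morphism of tensor schemes and a $T$-algebra, that it induces the prescribed operations, and that it is unique. Compatibility with source and target follows from the way $\epsilon_o$ and fusion treat the boundary words. The unit triangle $\epsilon\circ\eta_{\mathcal{D}}=\mathrm{id}$ is forced, since $\eta_{\mathcal{D}}$ sends $f$ to the fully fissus prime diagram it decorates and $\ast^{full}_{full}(f)=f$. That the induced $\otimes_\epsilon,\circ_\epsilon,\ast^I_O$ coincide with the given operations is a direct unwinding of their definitions, using the property that a fully fissus prime diagram decorated by $f$ evaluates to $f$. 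Finally, uniqueness holds because any $T$-algebra inducing these operations must agree with $\epsilon$ on objects (read off from its identity morphisms) and on prime and unitary fissus diagrams (forced by evaluation of prime and unitary diagrams), hence on all fissus diagrams by Theorem \ref{decomposition} and compatibility with $\otimes,\circ,\ast$.

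The hard part will be the associativity square $\epsilon\circ T(\epsilon)=\epsilon\circ\mu_{\mathcal{D}}$ of the monad action. Here one must evaluate a compound fissus planar graph — a fissus diagram whose vertices are themselves decorated by fissus diagrams — in two ways: by first applying $\mu_{\mathcal{D}}=\sigma\circ\widehat{Z}_F$ (flattening one level of brackets and composing the linear partitions via $\triangleleft$) and then $\epsilon$, versus evaluating inside each vertex first and then again. Reducing this to the identities already in hand is the real obstacle: it requires the fusion-composition law $\ast^{I_1}_{O_1}\circ\ast^{I_2}_{O_2}=\ast^{I_1\triangleleft I_2}_{O_1\triangleleft O_2}$ (Proposition \ref{fusion}) to match the $\triangleleft$-composition of partitions built into $\mu_{\mathcal{D}}$, together with the fusion-tensor and fusion-composition compatibilities (Propositions \ref{fusion-tensor product} and \ref{fusion-composition}) to commute fusion past the $\otimes$- and $\circ$-structure of the decomposition. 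The bookkeeping of two nested fission structures and their interaction with the essential-prime decomposition is exactly the diagram-chase content of Propositions \ref{fusion}–\ref{fusion-composition}, now reassembled into this single coherence statement.
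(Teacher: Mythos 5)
Your overall route coincides with the paper's: build the strict tensor category $\widetilde{\mathcal{D}}$ from the given operations, take its evaluation map $E=\varepsilon_{\widetilde{\mathcal{D}}}$ on (non-fissus) diagrams, define the morphism part of the structure map as fusion after evaluation, $\epsilon_m([\Gamma,\gamma],P_{in},P_{out})=\ast^{P_{in}}_{P_{out}}(E([\Gamma,\gamma]))$, read the object part off the identity morphisms, and get uniqueness from the decomposition theorem. Your formula for $\epsilon_m$ is exactly the paper's, your appeal to Theorem 4.4.2 for well-definedness of $E$ is exactly how the paper grounds the first step, and your singling out of the monad-associativity square as the place where Propositions \ref{fusion}, \ref{fusion-tensor product} and \ref{fusion-composition} must be deployed is a more explicit account of what the paper dismisses as a routine check.

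There is, however, one concrete error: your object-part definition fails as written. You set $\epsilon_o(w):=dom(Id_w)$ and assert this lies in $Ob(\mathcal{D})$. It does not. The hypothesised identities are endomorphisms of words, $Id_{x_1\cdots x_m}\colon x_1\cdots x_m\to x_1\cdots x_m$ (this is forced by the unit law of Proposition \ref{unit}, since $\circ$ is only defined when codomain and domain agree as words), so $dom(Id_w)=w\in W(Ob(\mathcal{D}))$, which is an element of $Ob(\mathcal{D})$ only when $w$ has length one. With this $\epsilon_o$ you do not obtain a map $W(Ob(\mathcal{D}))\to Ob(\mathcal{D})$ at all, hence no morphism of tensor schemes $T(\mathcal{D})\to\mathcal{D}$ and no monoid structure on objects. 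The repair is the device you already use on morphisms: fuse first. The paper sets $\epsilon_o(x_1\cdots x_m)=dom\bigl(\ast^{I^{trivial}}_{O^{trivial}}(Id_{x_1\cdots x_m})\bigr)$, where $I^{trivial},O^{trivial}$ are the one-block linear partitions of length $m$; the trivial fusion collapses the $m$-letter boundary to a single object. Note also that, once corrected, the monoid axioms for $(Ob(\mathcal{D}),\epsilon_o)$ do not follow from ``associativity of $\otimes$ applied to identities'' alone, but from the composition law for fusions $\ast^{I_1}_{O_1}\circ\ast^{I_2}_{O_2}=\ast^{I_1\triangleleft I_2}_{O_1\triangleleft O_2}$ of Proposition \ref{fusion} combined with Proposition \ref{fusion-tensor product}: it is the fusions, not $\otimes$ itself, that carry the object-level multiplication. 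The remainder of your argument (unit triangle, coincidence of the induced operations with the given ones, uniqueness via Theorem \ref{decomposition}) is sound.
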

\begin{proof}[Sketch of proof]
First of all, we want to define an evaluation map $\varepsilon:\mathsf{\Gamma}(\mathcal{D})\rightarrow Mor(\mathcal{D})$ which is exactly the evaluation map of the strict tensor category $\widetilde{\mathcal{D}}$.  Secondly, for any fissus diagram $([\Gamma, \gamma],I,O)\in \mathsf{\Gamma}_F$, we define $$\epsilon_m(([\Gamma, \gamma],I,O))=\ast_O^I\circ \varepsilon([\Gamma,\gamma]).$$

Thirdly, for any word $x_1\cdots x_m$, we define $\epsilon_o(x_1\cdots x_m)=dom(\ast^{I^{trivial}}_{O^{trivial}}(Id_{x_1\cdots x_m})),$ where $I^{trivial}, O^{trivial}$ are trivial linear partitions with $||I||=||O||=m.$

Then it is routine to check that the pair $\epsilon=(\epsilon_m,\epsilon_o): T(\mathcal{D})\rightarrow \mathcal{D}$ defines a $T$-algebra structure on $\mathcal{D}.$

The uniqueness of $\epsilon$ is evident.

\end{proof}

As a direct consequence, we have
\begin{prop}\label{morphism}
Let $(\mathcal{D}_1,\epsilon_1)$ and $(\mathcal{D}_2,\epsilon_2)$ be two $T$-algebras, then a morphism $\varphi:\mathcal{D}_1\rightarrow \mathcal{D}_2$ of tensor schemes is a morphism of $T$-algebras if and only if $\varphi$ preserves their identity morphisms and their operations of tensor products, compositions and fusions.
\end{prop}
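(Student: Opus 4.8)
The plan is to deduce the proposition directly from the previous theorem, which establishes that a $T$-algebra structure on a tensor scheme is the same datum as a coherent family of identity morphisms together with the operations $\otimes_\epsilon$, $\circ_\epsilon$ and $\ast^I_O$. The statement then reduces to checking that the algebra-morphism square commutes if and only if $\varphi$ intertwines these reconstructed operations. I would prove the two implications separately, using throughout the identification $T\cong\mathbf{\Gamma}_F$ (Proposition \ref{second identification}), under which $T(\varphi)$ becomes the pushforward $\varphi_{\ast F}$ of fissus planar diagrams (proposition $4.2.3$), together with the explicit formulas $\epsilon_m(([\Gamma,\gamma],I,O))=\ast^I_O\circ \varepsilon([\Gamma,\gamma])$ and $\epsilon_o(x_1\cdots x_m)=dom(\ast^{I^{triv}}_{O^{triv}}(Id_{x_1\cdots x_m}))$ extracted from the proof of the previous theorem.

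For the forward implication I assume $\varphi\circ\epsilon_1=\epsilon_2\circ T(\varphi)$ and read off preservation of each operation from the way it is defined. For instance, $f\otimes_{\epsilon_1}g$ is by definition $\epsilon_{1,m}$ applied to the tensor product of the fully fissus prime diagrams decorated by $f$ and $g$; since $\varphi_{\ast F}$ sends this fissus diagram to the tensor product of the fully fissus prime diagrams decorated by $\varphi(f)$ and $\varphi(g)$, merely relabelling flag- and vertex-decorations by $\varphi_o$ and $\varphi_m$, commuting $\varphi$ past $\epsilon$ via the algebra square yields $\varphi(f\otimes_{\epsilon_1}g)=\varphi(f)\otimes_{\epsilon_2}\varphi(g)$. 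The identical argument, reading $\circ_\epsilon$, $\ast^I_O$ and the identity morphisms as $\epsilon_m$ (respectively $\epsilon_o$) of the corresponding universal fissus diagrams, gives preservation of composition, fusion and identities.

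For the backward implication I assume $\varphi$ preserves identities, $\otimes$, $\circ$ and all fusions, and must verify the square on objects and on morphisms. On objects, $\epsilon_o$ is built from the identity morphisms and the domain map, so $\varphi_o\circ\epsilon_{1,o}=\epsilon_{2,o}\circ T(\varphi)_o$ follows immediately. On morphisms, a morphism of $T(\mathcal{D}_1)$ is a fissus planar diagram $([\Gamma,\gamma],I,O)$, and by the reconstruction formula its image under $\epsilon_{1,m}$ is $\ast^I_O(\varepsilon_1([\Gamma,\gamma]))$, where $\varepsilon_1$ is the evaluation map of the strict tensor category $\widetilde{\mathcal{D}_1}$ whose tensor product, composition and identities are $\otimes_{\epsilon_1}$, $\circ_{\epsilon_1}$ and the given identity morphisms. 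Because $\varphi$ preserves exactly these data, it intertwines the two evaluation maps, that is $\varphi(\varepsilon_1([\Gamma,\gamma]))=\varepsilon_2([\Gamma,\varphi_\ast\gamma])$; combined with preservation of $\ast^I_O$ and with $T(\varphi)=\varphi_{\ast F}$, this gives $\varphi\circ\epsilon_{1,m}=\epsilon_{2,m}\circ T(\varphi)_m$.

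The main obstacle I expect is justifying the intertwining of evaluation maps $\varphi\circ\varepsilon_1=\varepsilon_2\circ\varphi_\ast$ from preservation of the three operations alone. This is where Theorem $3.5.7$ (every planar graph decomposes under $\circ$ into essential prime graphs, themselves tensor products of prime and unitary pieces) and the uniqueness clause of Theorem $4.4.2$ enter: I would argue by induction on the number of real vertices that any diagram is a $\otimes_\epsilon$/$\circ_\epsilon$-expression in prime and invertible subdiagrams, on which $\varphi$ acts by hypothesis, and that the chosen decomposition is irrelevant precisely because both $\varepsilon_1$ and $\varepsilon_2$ are the unique evaluation maps of $\widetilde{\mathcal{D}_1}$ and $\widetilde{\mathcal{D}_2}$. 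A secondary piece of bookkeeping is the degenerate case of fusion, where the cardinalities fail to match and $\ast^I_O$ returns $\epsilon_m(\varnothing)$; here preservation forces $\varphi$ to send $\epsilon_{1,m}(\varnothing)$ to $\epsilon_{2,m}(\varnothing)$, which is again the preserved identity of the unit object.
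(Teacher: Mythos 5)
Your proposal is correct and follows essentially the same route as the paper: the paper states this proposition as a direct consequence of the preceding theorem (the unique reconstruction of a $T$-algebra structure from identities, $\otimes_{\epsilon}$, $\circ_{\epsilon}$ and the fusions $\ast^{I}_{O}$, via $\epsilon_m = \ast^{I}_{O}\circ\varepsilon$), which is precisely the deduction you carry out. You merely make explicit the routine verifications the paper leaves implicit, namely the identification $T(\varphi)=\varphi_{\ast F}$ and the intertwining of evaluation maps obtained from the prime/invertible decomposition and the uniqueness of evaluation.
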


The following corollary is  a direct consequence of proposition \ref{fusion} and \ref{fusion-tensor product}.
\begin{cor}
If $|dom(f)|=||I_1||$, $|cod(f)|=||O_1||$, $|dom(g)|=||I_2||$, $|cod(g)|=||O_2||$,  $I$ is a linear partition of  $I_1\otimes I_2$ and $O$ is a linear partition of $O_1\otimes O_2$.  Then $$\ast^{I\triangleleft(I_1\otimes I_2)}_{O\triangleleft(O_1\otimes O_2)}(f\otimes_{\epsilon}g)=\ast^I_O(\ast^{I_1}_{O_1}(f)\otimes_{\epsilon}\ast^{I_2}_{O_2}(g)).$$
\end{cor}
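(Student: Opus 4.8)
The plan is to derive the identity purely formally by composing Proposition \ref{fusion-tensor product} with Proposition \ref{fusion}, read in the appropriate directions. I would begin from the right-hand side $\ast^I_O(\ast^{I_1}_{O_1}(f)\otimes_{\epsilon}\ast^{I_2}_{O_2}(g))$ and first rewrite the inner tensor product. Since the hypotheses $|dom(f)|=||I_1||$, $|cod(f)|=||O_1||$, $|dom(g)|=||I_2||$ and $|cod(g)|=||O_2||$ are exactly those demanded by Proposition \ref{fusion-tensor product}, that proposition gives $\ast^{I_1}_{O_1}(f)\otimes_{\epsilon}\ast^{I_2}_{O_2}(g)=\ast^{I_1\otimes I_2}_{O_1\otimes O_2}(f\otimes_{\epsilon}g)$. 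Substituting this in, the right-hand side becomes $\ast^I_O\bigl(\ast^{I_1\otimes I_2}_{O_1\otimes O_2}(f\otimes_{\epsilon}g)\bigr)$, that is, the composite fusion $\ast^I_O\circ\ast^{I_1\otimes I_2}_{O_1\otimes O_2}$ evaluated at $f\otimes_{\epsilon}g$.

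Next I would apply Proposition \ref{fusion} to collapse this composite. Matching notation, I take the outer data $(I,O)$ to play the role of the pair $(I_1,O_1)$ appearing in that proposition, and the inner data $(I_1\otimes I_2,\,O_1\otimes O_2)$ to play the role of $(I_2,O_2)$. The hypothesis of Proposition \ref{fusion}, namely that the outer partitions refine into the inner ones, is precisely the standing assumption of the corollary that $I$ is a linear partition of $I_1\otimes I_2$ and $O$ is a linear partition of $O_1\otimes O_2$. Hence Proposition \ref{fusion} yields $\ast^I_O\circ\ast^{I_1\otimes I_2}_{O_1\otimes O_2}=\ast^{I\triangleleft(I_1\otimes I_2)}_{O\triangleleft(O_1\otimes O_2)}$, and evaluating both sides at $f\otimes_{\epsilon}g$ recovers exactly the left-hand side, finishing the argument.

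The only point that needs care — which I would treat as the single genuine checkpoint rather than a real obstacle — is the bookkeeping of cardinalities, ensuring that each fusion in the chain lands in its nontrivial case, so that Propositions \ref{fusion} and \ref{fusion-tensor product} genuinely apply rather than forcing everything down to $\epsilon_m(\varnothing)$. Concretely I would record that $f\otimes_{\epsilon}g$ has domain of length $||I_1||+||I_2||=||I_1\otimes I_2||$ and codomain of length $||O_1||+||O_2||=||O_1\otimes O_2||$, so that $\ast^{I_1\otimes I_2}_{O_1\otimes O_2}$ is nontrivial; and that the resulting morphism has domain length $|I_1\otimes I_2|$ and codomain length $|O_1\otimes O_2|$, which equal $||I||$ and $||O||$ respectively because $I$ and $O$ are linear partitions of $I_1\otimes I_2$ and $O_1\otimes O_2$. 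Once these length identities are in place, the corollary is the immediate two-line consequence indicated by the remark preceding it.
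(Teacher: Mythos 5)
Your proof is correct and is exactly the paper's intended argument: the paper dispatches this corollary as "a direct consequence of Proposition \ref{fusion} and Proposition \ref{fusion-tensor product}," and your two steps (rewriting the inner tensor product via Proposition \ref{fusion-tensor product}, then collapsing the composite fusion $\ast^I_O\circ\ast^{I_1\otimes I_2}_{O_1\otimes O_2}$ via Proposition \ref{fusion}) unpack precisely that, with the role-matching of $(I,O)$ and $(I_1\otimes I_2,O_1\otimes O_2)$ done correctly. The added cardinality bookkeeping, using $||I||=|I_1\otimes I_2|=|I_1|+|I_2|$, is a harmless and sensible verification that everything stays in the nontrivial case.
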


The following corollary is  a direct consequence of proposition \ref{fusion} and \ref{fusion-composition}.
\begin{cor}
If $cod(f)=dom(g)$, $|dom(f)|=||I_1||$, $|cod(f)=||O_2||$, $|dom(g)|=||I_2||$, $|cod(g)|=||O_2||$, $O_1\approx I_2$,  $I$ is a linear partition of  $I_1$ and $O$ is a linear partition of $O_2$.  Then $$\ast^{I\triangleleft I_1}_{O\triangleleft O_2}(f\circ_{\epsilon}g)=\ast^I_O(\ast^{I_1}_{O_1}(f)\circ_{\epsilon}\ast^{I_2}_{O_2}(g)).$$
\end{cor}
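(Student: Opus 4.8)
The plan is to obtain this corollary as a purely formal consequence of chaining the two immediately preceding propositions, in exactly the same manner that Corollary 6.2.15 was deduced from Propositions \ref{fusion} and \ref{fusion-tensor product}. The statement contributes no new combinatorial content: all of the geometric work (evaluating compound fissus diagrams via $\mu_{\mathcal{D}}$ and the structure map $\epsilon$) has already been discharged in the commutative-diagram proofs of Propositions \ref{fusion} and \ref{fusion-composition}, so here I only need to compose those two identities and check that the partition indices line up. Thus the proof should be a short two-step reduction with no appeal to the definition of $\epsilon_m$ directly.

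First I would split the outer compound fusion operator. Because $I$ is a linear partition of $I_1$ and $O$ is a linear partition of $O_2$, Proposition \ref{fusion} applies with $I,I_1$ (respectively $O,O_2$) playing the roles of its ``$I_1,I_2$'' (respectively ``$O_1,O_2$''), giving the operator identity
$$\ast^{I\triangleleft I_1}_{O\triangleleft O_2}=\ast^I_O\circ\ast^{I_1}_{O_2}.$$
Applying both sides to the composite $g\circ_{\epsilon}f$ rewrites the left-hand side of the corollary as $\ast^I_O\bigl(\ast^{I_1}_{O_2}(g\circ_{\epsilon}f)\bigr)$. Next I would treat the inner fusion $\ast^{I_1}_{O_2}(g\circ_{\epsilon}f)$ with Proposition \ref{fusion-composition}: the corollary's hypotheses $cod(f)=dom(g)$, the cardinality matchings $|dom(f)|=\|I_1\|$, $|cod(f)|=\|O_1\|$, $|dom(g)|=\|I_2\|$, $|cod(g)|=\|O_2\|$, and the equivalence $O_1\approx I_2$ are precisely what that proposition requires, so it yields $\ast^{I_1}_{O_2}(g\circ_{\epsilon}f)=\ast^{I_2}_{O_2}(g)\circ_{\epsilon}\ast^{I_1}_{O_1}(f)$. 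Substituting this back produces
$$\ast^{I\triangleleft I_1}_{O\triangleleft O_2}(g\circ_{\epsilon}f)=\ast^I_O\bigl(\ast^{I_1}_{O_1}(f)\circ_{\epsilon}\ast^{I_2}_{O_2}(g)\bigr),$$
which is the assertion.

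The only delicate point, and the closest thing to an obstacle, is the bookkeeping of partitions so that the intermediate object matches: the pair $(I_1,O_2)$ must simultaneously be the ``inner'' index fed to the first invocation (Proposition \ref{fusion}) and the index produced by the second (Proposition \ref{fusion-composition}), and one must confirm that $I_1$ is indeed a linear partition of the domain of $g\circ_{\epsilon}f$ while $O_2$ is one of its codomain. I would also verify consistency in the degenerate cases, where a cardinality fails to match a partition length: by definition each $\ast^{(\cdot)}_{(\cdot)}$ then collapses to $\epsilon_m(\emptyset)$, so both sides agree trivially, and the stated hypotheses exclude any mismatch for the operators that actually occur. Once these index alignments are checked, the corollary follows with no further computation.
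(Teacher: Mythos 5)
Your proof is correct and is exactly the argument the paper intends: the paper offers no details beyond declaring the corollary ``a direct consequence of proposition \ref{fusion} and \ref{fusion-composition},'' and your chaining of $\ast^{I\triangleleft I_1}_{O\triangleleft O_2}=\ast^I_O\circ\ast^{I_1}_{O_2}$ (from Proposition \ref{fusion}) with the splitting of $\ast^{I_1}_{O_2}(g\circ_{\epsilon}f)$ (from Proposition \ref{fusion-composition}) is precisely that deduction, with the index bookkeeping and degenerate cases handled correctly.
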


\subsection{Left inverse of the comparison and non-monadic property }

Now we want to investigate the relation between $\mathbf{Str.T}$ and $\mathbf{T.Sch}^{T}$. Given the monad $(T,\mu,\eta)$ in $\mathbf{T.Sch}$, there is an adjunction $$(F^T,U^T, \varepsilon^T, \eta^T):\mathbf{T.Sch}\rightharpoonup \mathbf{T.Sch}^T$$
where the functors $U^T$ and $F^T$ are given by the respective assignments

$$
\begin{matrix}
U^T\begin{pmatrix}
\xymatrix{(\mathcal{D}_1,\epsilon_1)\ar[d]^{\varphi}\\ (\mathcal{D}_2,\epsilon_2)}
\end{pmatrix}
=
\begin{matrix}
\xymatrix{\mathcal{D}_1\ar[d]^{\varphi}\\ \mathcal{D}_2,}
\end{matrix}
&
F^T\begin{pmatrix}
\xymatrix{\mathcal{D}_1\ar[d]^{\varphi}\\ \mathcal{D}_2}
\end{pmatrix}
=
\begin{matrix}
\xymatrix{(T\mathcal{D}_1,\mu_{\mathcal{D}_1})\ar[d]^{T\varphi}\\ (T\mathcal{D}_2,\mu_{\mathcal{D}_2}),}
\end{matrix}
\end{matrix}
$$
while $\eta^T=\eta$ and $\varepsilon^T_{(\mathcal{D},\epsilon)}=\epsilon$ for each  $T$-algebra $(\mathcal{D},\epsilon)$.

By the general theory there is an unique comparison functor $$\Phi:\mathbf{Str.T}\rightarrow \mathbf{T.Sch}^{T},$$ such that $F^T=\Phi\circ F$ and $U=U^T\circ \Phi$, that is, in the following diagram both the $F$-square and the $U$-square commute
$$\xymatrix{\mathbf{Str.T}\ar@{-->}[rr]^{\Phi}\ar@<0.8mm>[d]^{U}&&\ar@<0.8mm>[d]^{U^T}\mathbf{T.Sch}^{T}\\\mathbf{T.Sch}\ar@<0.8mm>[u]^{F}\ar@{=}[rr]&&\ar@<0.8mm>[u]^{F^T}\mathbf{T.Sch}.}$$
The functor $\Phi$ sends every strict tensor category $\mathcal{V}$ to a $T$-algebra $(U(\mathcal{V}),\epsilon)$ with structure map $\epsilon=U\varepsilon_{\mathcal{V}}:T(U(\mathcal{V}))\rightarrow U(\mathcal{V}).$ In example \ref{U(V)}, we have given a precise description of $(U(\mathcal{V}),\epsilon)$.

\begin{thm}
$\Phi$ is an embedding, that is,  fully  faithful on the level of morphisms and faithful on the level of objects.
\end{thm}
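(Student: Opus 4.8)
The plan is to dispatch injectivity on objects and faithfulness quickly, and to concentrate on fullness, which carries all the content. For faithfulness, note that a strict tensor functor $K$ is completely determined by $K_o$ together with its action on \emph{all} morphisms $g\colon x\to y$ of $\mathcal V$, and that under $\Phi=U$ every such $g$ is recorded by the single-input single-output prime diagram $j_m(g)$ (the map $j_m$ of Section $5.3$). Since $\Phi(K)=U(K)$ sends $j_m(g)$ to $j_m(K(g))$ and acts on objects by $K_o$, the equality $U(K_1)=U(K_2)$ forces $K_1=K_2$. For injectivity on objects one recovers $\mathcal V$ from $\Phi(\mathcal V)=(U(\mathcal V),\epsilon)$ directly: the objects are $Ob(U(\mathcal V))$, the morphisms between objects are the single-object primes $j_m(g)$, and the tensor product, composition and identities of $\mathcal V$ are read off from $\epsilon$ via the operations $\otimes_\epsilon,\circ_\epsilon,\ast^I_O$ (this reconstruction is exactly the left inverse $\Psi$).

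For fullness, given a morphism of $T$-algebras $\varphi\colon\Phi(\mathcal V_1)\to\Phi(\mathcal V_2)$, i.e. a morphism of tensor schemes commuting with the structure maps described in Example \ref{U(V)}, I would reconstruct a strict tensor functor $K\colon\mathcal V_1\to\mathcal V_2$ with $U(K)=\varphi$. On objects I set $K_o=\varphi_o$; reading the algebra axiom on objects gives $\varphi_o\circ(\epsilon_1)_o=(\epsilon_2)_o\circ\widehat{\varphi_o}$, and since $(\epsilon_i)_o$ is the monoid multiplication $x_1\cdots x_m\mapsto x_1\otimes\cdots\otimes x_m$ (the structural lemma asserting $(Ob(\mathcal D),\epsilon_o)$ is a monoid), this says precisely that $\varphi_o$ preserves $\otimes$ and the unit object. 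On morphisms, for $g\colon x\to y$ in $\mathcal V_1$ the element $j_m(g)$ is single-input single-output; source/target compatibility of $\varphi$ forces $\varphi_m(j_m(g))$ to be again single-input single-output, hence $j_m(h)$ for a unique $h\colon\varphi_o(x)\to\varphi_o(y)$, and I set $K(g)=h$.

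Next I would check that $K$ is a strict tensor functor. By Proposition \ref{morphism}, $\varphi$ preserves identities and the operations $\otimes_\epsilon$, $\circ_\epsilon$ and all fusions $\ast^I_O$, and on the tensor manifold $U(\mathcal V)$ these implement, through coarse-graining, the operations of $\mathcal V$ on single-object primes. Composition and identities transfer directly, since $j_m(g_2)\circ_\epsilon j_m(g_1)=j_m(g_2\circ g_1)$ and the identity of $x$ is $j_m(\mathrm{id}_x)$. The tensor product requires one extra step: $j_m(g_1)\otimes_\epsilon j_m(g_2)$ is the \emph{two}-input prime decorated by $g_1\otimes g_2$, and only after the fusion merging the two inputs and the two outputs does one get the single-object prime, so that
\[
j_m(g_1\otimes g_2)=\ast^{(12)}_{(12)}\bigl(j_m(g_1)\otimes_\epsilon j_m(g_2)\bigr).
\]
Applying $\varphi_m$ and using its compatibility with $\otimes_\epsilon$ and with $\ast^{(12)}_{(12)}$ then yields $K(g_1\otimes g_2)=K(g_1)\otimes K(g_2)$. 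Finally, to verify $U(K)=\varphi$ it remains to compare $U(K)(p)=K_{\ast}(p)$ and $\varphi_m(p)$ on a general prime $p$ with source word $x_1\cdots x_m$ and decoration $\widetilde p$; both have source word $K(x_1)\cdots K(x_m)$, and applying the trivial fusion (which leaves the decoration unchanged while collapsing the brackets) together with fusion-compatibility of $\varphi_m$ gives
\[
\ast^{I^{\mathrm{triv}}}_{O^{\mathrm{triv}}}\bigl(\varphi_m(p)\bigr)=\varphi_m\bigl(\ast^{I^{\mathrm{triv}}}_{O^{\mathrm{triv}}}(p)\bigr)=\varphi_m\bigl(j_m(\widetilde p)\bigr)=j_m(K(\widetilde p)),
\]
so the decoration of $\varphi_m(p)$ is $K(\widetilde p)$, matching $K_{\ast}(p)$; hence $\varphi_m(p)=U(K)(p)$.

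The main obstacle is precisely this last bookkeeping: matching the intrinsic operations $\otimes_\epsilon,\circ_\epsilon,\ast^I_O$ of the tensor manifold $U(\mathcal V)$ with the tensor-category operations of $\mathcal V$, where the fusion operation plays the decisive role of relating a single $\mathcal V$-morphism recorded with different source and target decompositions. It is exactly the fusion-compatibility of a $T$-algebra morphism (Proposition \ref{morphism}) that allows one to pin down $\varphi$ on \emph{all} primes from its values on the single-object primes, and thereby to recover the full strict tensor functor $K$ witnessing fullness.
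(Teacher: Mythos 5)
Your proposal is correct, but it takes a genuinely different route from the paper's. The paper proves the theorem by constructing the explicit functor $\Psi:\mathbf{T.Sch}^{T}\rightarrow\mathbf{Str.T}$ — building a strict tensor category out of a tensor manifold $(\mathcal{D},\epsilon)$, with tensor product, composition and identities defined by applying $\epsilon_m$ to the appropriate fissus diagrams — then verifying the strict tensor axioms (associativity, unit laws, middle-four-interchange) by diagram chases against the algebra axioms, and finally invoking $\Psi\circ\Phi\cong I_{\mathbf{Str.T}}$. You instead verify the embedding property by hand: faithfulness because $U(K)$ records every morphism $g$ of $\mathcal{V}$ as the single-object prime $j_m(g)$, and fullness by reconstructing a strict tensor functor $K$ from a $T$-algebra morphism $\varphi$ out of its values on single-object primes, using Proposition \ref{morphism} to transfer identities, $\otimes_\epsilon$, $\circ_\epsilon$ and fusions, with the fusion $\ast^{(12)}_{(12)}$ correctly handling the mismatch between $j_m(g_1\otimes g_2)$ and the two-input prime $j_m(g_1)\otimes_\epsilon j_m(g_2)$, and the trivial fusion pinning down $\varphi$ on arbitrary primes. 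The comparison is instructive as to what each approach buys. The paper's construction of $\Psi$ is infrastructure it needs anyway (it is the left adjoint of Theorem $6.4.6$), so the embedding statement comes as a by-product; however, a left inverse by itself formally yields only faithfulness and injectivity on objects — a retract need not be full (the inclusion of the trivial one-object category into the one-object category with morphism monoid $\mathbb{Z}/2$ admits a retraction but is not full) — so the fullness half of the theorem is exactly the point the paper's sketch leaves implicit (it is recovered later through $\epsilon_{\ast}$ and the adjunction of Section $6.4$). Your fusion-based argument, showing that a $T$-algebra morphism $\Phi(\mathcal{V}_1)\to\Phi(\mathcal{V}_2)$ is determined by and realized from its restriction to single-object primes, supplies precisely that missing step, so on this point your write-up is more complete than the paper's own proof.
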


\begin{proof}[Sketch of proof ] We prove this theorem by constructing a functor $$\Psi:\mathbf{T.Sch}^{T}\rightarrow \mathbf{Str.T}$$ and showing that it is a right inverse of $\Phi$, that is, $$\Psi\circ \Phi\cong I_{\mathbf{Str.T}}.$$

Let $(\mathcal{D},\epsilon)$ be an $T$-algebra,  we define a strict tensor category $$\Psi((\mathcal{D},\epsilon))=(\mathcal{V},\otimes_{\mathcal{V}},\circ_{\mathcal{V}}, 1_{\mathcal{V}})$$  as follows:

$\bullet$ $Ob(\mathcal{V})=Ob(\mathcal{D});$

$\bullet$ $Mor_{\mathcal{V}}(x,y)=Mor_{\mathcal{D}}(x,y)$ for $x,y\in Ob(\mathcal{V});$

$\bullet$ the unit object $1_{\mathcal{V}}$ is defined to be $\epsilon_o(\varnothing)$ with $\varnothing\in Ob(T(\mathcal{D}))$ being the null string;

$\bullet$ the source and target maps of $\mathcal{V}$
$$\xymatrix{Mor(\mathcal{V})\ar@<1mm>[r]^{s}\ar@<-1mm>[r]_{t}&Ob(\mathcal{V})}$$
are induced from those of $\mathcal{D}$;

$\bullet$ for every $x,y\in Ob(\mathcal{V})$, $x\otimes_{\mathcal{V}} y=\epsilon_o(xy)$ with $xy\in Ob(T(\mathcal{D}))$;

$\bullet$ for $f:x\rightarrow y,\  g:z\rightarrow w\in Mor(\mathcal{V})$,

$$f\otimes_{\mathcal{V}} g=\epsilon_m(\begin{matrix}\begin{tikzpicture}[scale=.5]
\node (v2) at (-0.5,0.5) {};
\node (v1) at (-0.5,2) {$(x$};
\node (v3) at (-0.5,-1) {$(y$};
\node (v4) at (1.5,2) {$z)$};
\node (v6) at (1.5,0.5) {};
\node (v5) at (1.5,-1) {$w)$};
\draw[fill] (v2) circle [radius=0.1];
\draw[fill] (v6) circle [radius=0.1];
\draw  (v1) -- (v3)[postaction={decorate, decoration={markings,mark=at position .30 with {\arrow[black]{stealth}}}}][postaction={decorate, decoration={markings,mark=at position .85 with {\arrow[black]{stealth}}}}];
\draw  (v4) -- (v5)[postaction={decorate, decoration={markings,mark=at position .30 with {\arrow[black]{stealth}}}}][postaction={decorate, decoration={markings,mark=at position .85 with {\arrow[black]{stealth}}}}];

\node  at (0,0.5) {$f$};
\node  at (2,0.5) {$g$};
\end{tikzpicture}\end{matrix});$$

$\bullet$  for $f:x\rightarrow y,\  g:y\rightarrow z\in Mor(\mathcal{V})$,

$$g\circ_{\mathcal{V}} f=\epsilon_m(\begin{matrix}
\begin{tikzpicture}[scale=.5]
\node (v1) at (0,1.5) {$(x)$};
\node (v2) at (0,-3.5) {$(z)$};
\draw  (v1)-- (v2)[postaction={decorate, decoration={markings,mark=at position .93 with {\arrow[black]{stealth}}}}][postaction={decorate, decoration={markings,mark=at position .15 with {\arrow[black]{stealth}}}}][postaction={decorate, decoration={markings,mark=at position .55 with {\arrow[black]{stealth}}}}];
\draw[fill] (0,0) circle [radius=0.1];
\draw[fill] (0,-2) circle [radius=0.1];
\node at (0.5,-1) {$y$};
\node at (0.5,0) {$f$};
\node at (0.5,-2) {$g$};
\end{tikzpicture}
\end{matrix});$$

$\bullet$ for every object $x\in Ob(\mathcal{V})$, the identity morphism $Id_x$ is given by $\epsilon_m(\begin{matrix}
\begin{tikzpicture}[scale=.5]
\node (v2) at (-0.5,0.5) {};
\node (v1) at (-0.5,2) {$(x)$};
\node (v3) at (-0.5,-1) {$(x)$};
\draw (v2) circle [radius=0.1];
\draw  (v1) -- (-0.5,0.6)[postaction={decorate, decoration={markings,mark=at position .70 with {\arrow[black]{stealth}}}}];
\draw  (v3) -- (-0.5,0.4)[postaction={decorate, decoration={markings,mark=at position .70 with {\arrowreversed[black]{stealth}}}}];
\end{tikzpicture}
\end{matrix})$.

Using the fact that $(\mathcal{D},\epsilon)$ be a $T$-algebra, we can show that $\mathcal{V}$ is a well-defined  strict tensor category.  In fact,

$\bullet$ the  associative law of $\otimes$ is implied by the following commutative diagram:

$$
\begin{matrix}

\begin{matrix}
\begin{tikzpicture}[scale=.7]
\node (v2) at (0,0.5) {};
\node (v1) at (0,2) {$(\langle x\rangle)$};
\node (v3) at (0,-1) {$(\langle u\rangle$};
\node (v5) at (2.5,0.5) {};
\node (v4) at (2.5,2) {$yz)$};
\node (v6) at (2.5,-1) {$vw)$};

\node  at (0.5,0.5) {$\begin{matrix}\begin{tikzpicture}[scale=.4]

\node [scale=.7] at (-0.5,2) {$(x)$};
\node [scale=.7] at (-0.5,-1) {$(u)$};
\node [scale=.7] at (0,0.5) {$f$};
\draw [fill](-0.5,0.5) circle [radius=0.1];
\draw  (-0.5,2) -- (-0.5,0.6)[postaction={decorate, decoration={markings,mark=at position .70 with {\arrow[black]{stealth}}}}];
\draw  (-0.5,-1) -- (-0.5,0.4)[postaction={decorate, decoration={markings,mark=at position .70 with {\arrowreversed[black]{stealth}}}}];
\end{tikzpicture}\end{matrix}$};

\node  at (4,0.5) {$\begin{matrix}
\begin{tikzpicture}[scale=.4]
\node  at (0,0.5) {};
\node [scale=.7] at (0,2) {$(y$};
\node [scale=.7] at (0,-1) {$(v$};
\node at (1.5,0.5) {};
\node [scale=.7] at (1.5,2) {$z)$};
\node [scale=.7] at (1.5,-1) {$w)$};

\node [scale=.7] at (0.5,0.5) {$g$};
\node  [scale=.7] at (2,0.5) {$h$};
\draw [fill] (0,0.5) circle [radius=0.08];
\draw [fill] (1.5,0.5) circle [radius=0.08];
\draw (0,2) -- (0,0.5)[postaction={decorate, decoration={markings,mark=at position .50 with {\arrow[black]{stealth}}}}];
\draw  (0,0.5) -- (0,-1)[postaction={decorate, decoration={markings,mark=at position .65 with {\arrow[black]{stealth}}}}];
\draw  (1.5,2) -- (1.5,0.5)[postaction={decorate, decoration={markings,mark=at position .50 with {\arrow[black]{stealth}}}}];
\draw  (1.5,0.5) -- (1.5,-1)[postaction={decorate, decoration={markings,mark=at position .65 with {\arrow[black]{stealth}}}}];

\end{tikzpicture}\end{matrix}$};

\draw [fill] (0,0.5) circle [radius=0.08];
\draw [fill] (2.5,0.5) circle [radius=0.08];
\draw  (v1) -- (0,0.5)[postaction={decorate, decoration={markings,mark=at position .50 with {\arrow[black]{stealth}}}}];
\draw  (0,0.5) -- (v3)[postaction={decorate, decoration={markings,mark=at position .65 with {\arrow[black]{stealth}}}}];
\draw  (v4) -- (2.5,0.5)[postaction={decorate, decoration={markings,mark=at position .50 with {\arrow[black]{stealth}}}}];
\draw  (2.5,0.5) -- (v6)[postaction={decorate, decoration={markings,mark=at position .65 with {\arrow[black]{stealth}}}}];
\end{tikzpicture}
\end{matrix}

&\begin{matrix}
\begin{tikzpicture}
\node at (0.5,0.2) {$\mu_{\mathcal{D}}$};
\draw [->] (0,0)--(1,0);
\end{tikzpicture}
\end{matrix}
&
\begin{matrix}
\begin{tikzpicture}[scale=.7]
\node (v2) at (0,0.5) {};
\node (v1) at (0,2) {$(x$};
\node (v3) at (0,-1) {$(u$};
\node (v5) at (1.5,0.5) {};
\node (v4) at (1.5,2) {$y$};
\node (v6) at (1.5,-1) {$v$};
\node (v7) at (3,2) {$z)$};
\node (v8) at (3,0.5) {};
\node (v9) at (3,-1) {$w)$};
\node  at (0.5,0.5) {$f$};
\node  at (2,0.5) {$g$};
\node  at (3.5,0.5) {$h$};

\draw [fill] (0,0.5) circle [radius=0.08];
\draw [fill] (1.5,0.5) circle [radius=0.08];
\draw [fill] (3,0.5) circle [radius=0.08];
\draw  (v1) -- (0,0.5)[postaction={decorate, decoration={markings,mark=at position .50 with {\arrow[black]{stealth}}}}];
\draw  (0,0.5) -- (v3)[postaction={decorate, decoration={markings,mark=at position .65 with {\arrow[black]{stealth}}}}];
\draw  (v4) -- (1.5,0.5)[postaction={decorate, decoration={markings,mark=at position .50 with {\arrow[black]{stealth}}}}];
\draw  (1.5,0.5) -- (v6)[postaction={decorate, decoration={markings,mark=at position .65 with {\arrow[black]{stealth}}}}];
\draw  (v7) -- (3,0.5)[postaction={decorate, decoration={markings,mark=at position .50 with {\arrow[black]{stealth}}}}];
\draw  (3,0.5) -- (v9)[postaction={decorate, decoration={markings,mark=at position .65 with {\arrow[black]{stealth}}}}];
\end{tikzpicture}
\end{matrix}
&
\begin{matrix}
\begin{tikzpicture}
\node at (0.5,0.2) {$\mu_{\mathcal{D}}$};
\draw [->] (1,0)--(0,0);
\end{tikzpicture}
\end{matrix}
&
\begin{matrix}
\begin{tikzpicture}[scale=.7]
\node (v2) at (0,0.5) {};
\node (v1) at (0,2) {$(xy$};
\node (v3) at (0,-1) {$(uv$};
\node (v5) at (2.5,0.5) {};
\node (v4) at (2.5,2) {$\langle z\rangle)$};
\node (v6) at (2.5,-1) {$\langle w\rangle)$};

\node  at (1,0.5) {$\begin{matrix}
\begin{tikzpicture}[scale=.4]
\node  at (0,0.5) {};
\node [scale=.7] at (0,2) {$(x$};
\node [scale=.7] at (0,-1) {$(u$};
\node at (1.5,0.5) {};
\node [scale=.7] at (1.5,2) {$y)$};
\node [scale=.7] at (1.5,-1) {$v)$};

\node [scale=.7] at (0.5,0.5) {$f$};
\node [scale=.7] at (2,0.5) {$g$};
\draw [fill] (0,0.5) circle [radius=0.08];
\draw [fill] (1.5,0.5) circle [radius=0.08];
\draw (0,2) -- (0,0.5)[postaction={decorate, decoration={markings,mark=at position .50 with {\arrow[black]{stealth}}}}];
\draw  (0,0.5) -- (0,-1)[postaction={decorate, decoration={markings,mark=at position .65 with {\arrow[black]{stealth}}}}];
\draw  (1.5,2) -- (1.5,0.5)[postaction={decorate, decoration={markings,mark=at position .50 with {\arrow[black]{stealth}}}}];
\draw  (1.5,0.5) -- (1.5,-1)[postaction={decorate, decoration={markings,mark=at position .65 with {\arrow[black]{stealth}}}}];

\end{tikzpicture}\end{matrix}$};

\node  at (3,0.5) {$\begin{matrix}\begin{tikzpicture}[scale=.4]

\node [scale=.7] at (-0.5,2) {$(z)$};
\node [scale=.7] at (-0.5,-1) {$(w)$};
\node [scale=.7]  at (0,0.5) {$h$};
\draw [fill](-0.5,0.5) circle [radius=0.1];
\draw  (-0.5,2) -- (-0.5,0.6)[postaction={decorate, decoration={markings,mark=at position .70 with {\arrow[black]{stealth}}}}];
\draw  (-0.5,-1) -- (-0.5,0.4)[postaction={decorate, decoration={markings,mark=at position .70 with {\arrowreversed[black]{stealth}}}}];
\end{tikzpicture}\end{matrix}$};

\draw [fill] (0,0.5) circle [radius=0.08];
\draw [fill] (2.5,0.5) circle [radius=0.08];
\draw  (v1) -- (0,0.5)[postaction={decorate, decoration={markings,mark=at position .50 with {\arrow[black]{stealth}}}}];
\draw  (0,0.5) -- (v3)[postaction={decorate, decoration={markings,mark=at position .65 with {\arrow[black]{stealth}}}}];
\draw  (v4) -- (2.5,0.5)[postaction={decorate, decoration={markings,mark=at position .50 with {\arrow[black]{stealth}}}}];
\draw  (2.5,0.5) -- (v6)[postaction={decorate, decoration={markings,mark=at position .65 with {\arrow[black]{stealth}}}}];
\end{tikzpicture}
\end{matrix}\\

$};

\draw [fill] (0,0.5) circle [radius=0.05];
\draw [fill] (1.5,0.5) circle [radius=0.05];
\draw  (v1) -- (0,0.5)[postaction={decorate, decoration={markings,mark=at position .50 with {\arrow[black]{stealth}}}}];
\draw  (0,0.5) -- (v3)[postaction={decorate, decoration={markings,mark=at position .65 with {\arrow[black]{stealth}}}}];
\draw  (v4) -- (1.5,0.5)[postaction={decorate, decoration={markings,mark=at position .50 with {\arrow[black]{stealth}}}}];
\draw  (1.5,0.5) -- (v6)[postaction={decorate, decoration={markings,mark=at position .65 with {\arrow[black]{stealth}}}}];
\end{tikzpicture}

\end{matrix}\\

\begin{matrix}
\begin{tikzpicture}
\node at (0.7,-0.5) {$T(\epsilon)$};
\draw [->] (0,0)--(0,-1);
\end{tikzpicture}
\end{matrix}

&\begin{matrix}

\end{matrix}
&
\begin{matrix}
\begin{tikzpicture}
\node at (0.2,-0.5) {$\epsilon$};
\draw [->] (0,0)--(0,-1);
\end{tikzpicture}
\end{matrix}
&
\begin{matrix}

\end{matrix}
&
\begin{matrix}
\begin{tikzpicture}
\node at (0.7,-0.5) {$T(\epsilon)$};
\draw [->] (0,0)--(0,-1);
\end{tikzpicture}
\end{matrix}\\

\begin{matrix}
\begin{tikzpicture}[scale=.5]
\node (v1) at (0,1.5) {$(x_1\otimes x_2)$};
\node (v2) at (0,-3.5) {$(z_1\otimes z_2)$};
\draw  (v1)-- (v2)[postaction={decorate, decoration={markings,mark=at position .93 with {\arrow[black]{stealth}}}}][postaction={decorate, decoration={markings,mark=at position .15 with {\arrow[black]{stealth}}}}][postaction={decorate, decoration={markings,mark=at position .55 with {\arrow[black]{stealth}}}}];
\draw[fill] (0,0) circle [radius=0.1];
\draw[fill] (0,-2) circle [radius=0.1];
\node at (1.5,-1) {$y_1\otimes y_2$};
\node at (1.5,0) {$f_1\otimes f_2$};
\node at (1.5,-2) {$g_1\otimes g_2$};
\end{tikzpicture}
\end{matrix}
&
\begin{matrix}
\begin{tikzpicture}
\node at (0.5,0.2) {$\epsilon$};
\draw [->] (0,0)--(1,0);
\end{tikzpicture}
\end{matrix}
&
\begin{matrix}
\epsilon(f_1,f_2: g_1,g_2)
\end{matrix}
&
\begin{matrix}
\begin{tikzpicture}
\node at (0.5,0.2) {$\epsilon$};
\draw [->] (1,0)--(0,0);
\end{tikzpicture}
\end{matrix}
&
\begin{matrix}
\begin{tikzpicture}[scale=.7]
\node (v2) at (0,0.5) {};
\node (v1) at (0,2) {$(x_1$};
\node (v3) at (0,-1) {$(z_1$};
\node (v5) at (2.5,0.5) {};
\node (v4) at (2.5,2) {$x_2)$};
\node (v6) at (2.5,-1) {$z_2)$};

\node  at (1,0.5) {$ g_1\circ f_1$};

\node  at (3.5,0.5) {$g_2\circ f_2$};

\draw [fill] (0,0.5) circle [radius=0.08];
\draw [fill] (2.5,0.5) circle [radius=0.08];
\draw  (v1) -- (0,0.5)[postaction={decorate, decoration={markings,mark=at position .50 with {\arrow[black]{stealth}}}}];
\draw  (0,0.5) -- (v3)[postaction={decorate, decoration={markings,mark=at position .65 with {\arrow[black]{stealth}}}}];
\draw  (v4) -- (2.5,0.5)[postaction={decorate, decoration={markings,mark=at position .50 with {\arrow[black]{stealth}}}}];
\draw  (2.5,0.5) -- (v6)[postaction={decorate, decoration={markings,mark=at position .65 with {\arrow[black]{stealth}}}}];
\end{tikzpicture}
\end{matrix}

\end{matrix}
$$

If  $\varphi:(\mathcal{D}_1,\epsilon_1)\rightarrow (\mathcal{D}_2,\epsilon_2)$ is a  morphism of $T$-algebras, it can naturally induces a strict tensor functor   $\Psi(\varphi):\Psi((\mathcal{D}_1,\epsilon_1))\rightarrow\Psi((\mathcal{D}_2,\epsilon_2))$ which can be directly checked using the fact that $\varphi$ is a morphism of $T$-algebras. In fact,  for any $f\in Mor_{\mathcal{D}_1}(x,y)$, $g\in Mor_{\mathcal{D}_1}(z,w)$ we have the following commutative diagram

$$
\begin{matrix}
\begin{matrix}
\begin{tikzpicture}[scale=.5]
\node (v2) at (-0.5,0.5) {};
\node (v1) at (-0.5,2) {$(x$};
\node (v3) at (-0.5,-1) {$(y$};
\node (v4) at (1.5,2) {$z)$};
\node (v6) at (1.5,0.5) {};
\node (v5) at (1.5,-1) {$w)$};
\draw[fill] (v2) circle [radius=0.1];
\draw[fill] (v6) circle [radius=0.1];
\draw  (v1) -- (v3)[postaction={decorate, decoration={markings,mark=at position .30 with {\arrow[black]{stealth}}}}][postaction={decorate, decoration={markings,mark=at position .85 with {\arrow[black]{stealth}}}}];
\draw  (v4) -- (v5)[postaction={decorate, decoration={markings,mark=at position .30 with {\arrow[black]{stealth}}}}][postaction={decorate, decoration={markings,mark=at position .85 with {\arrow[black]{stealth}}}}];

\node  at (0,0.5) {$f$};
\node  at (2,0.5) {$g$};
\end{tikzpicture}
\end{matrix}
&
\begin{matrix}
\begin{tikzpicture}
\node at (0.5,0.2) {$T(\varphi)$};
\draw [->] (0,0)--(1,0);
\end{tikzpicture}
\end{matrix}&
\begin{matrix}
\begin{tikzpicture}[scale=.5]
\node (v2) at (-0.5,0.5) {};
\node (v1) at (-0.5,2) {$(\varphi(x)$};
\node (v3) at (-0.5,-1) {$(\varphi(y)$};
\node (v4) at (1.5,2) {$\varphi(z))$};
\node (v6) at (1.5,0.5) {};
\node (v5) at (1.5,-1) {$\varphi(w))$};
\draw[fill] (v2) circle [radius=0.1];
\draw[fill] (v6) circle [radius=0.1];
\draw  (v1) -- (v3)[postaction={decorate, decoration={markings,mark=at position .30 with {\arrow[black]{stealth}}}}][postaction={decorate, decoration={markings,mark=at position .85 with {\arrow[black]{stealth}}}}];
\draw  (v4) -- (v5)[postaction={decorate, decoration={markings,mark=at position .30 with {\arrow[black]{stealth}}}}][postaction={decorate, decoration={markings,mark=at position .85 with {\arrow[black]{stealth}}}}];

\node  at (0.4,0.5) {$\varphi(f)$};
\node  at (2.4,0.5) {$\varphi(g)$};
\end{tikzpicture}
\end{matrix}\\

\begin{matrix}
\begin{tikzpicture}
\node at (0.2,-0.5) {$\epsilon_1$};
\draw [->] (0,0)--(0,-1);
\end{tikzpicture}
\end{matrix}&&
\begin{matrix}
\begin{tikzpicture}
\node at (0.2,-0.5) {$\epsilon_2$};
\draw [->] (0,0)--(0,-1);
\end{tikzpicture}
\end{matrix}
\\
\begin{matrix}
\xymatrix{x\otimes z\ar[d]^{f\otimes g}\\y\otimes w}
\end{matrix}&
\begin{matrix}
\begin{tikzpicture}
\node at (0.5,0.2) {$\varphi$};
\draw [->] (0,0)--(1,0);
\end{tikzpicture}
\end{matrix}&
\begin{matrix}
\xymatrix{\varphi(x)\otimes\varphi(z)\ar[d]^{\varphi(f)\otimes \varphi(g)}\\\varphi(y)\otimes \varphi(w)}
\end{matrix},
\end{matrix}
$$
which implies $\varphi(f\otimes g)=\varphi(f)\otimes \varphi(g)$; for any $f\in Mor_{\mathcal{D}_1}(x,y)$, $g\in Mor_{\mathcal{D}_1}(y,z)$ we have the following commutative diagram

$$
\begin{matrix}
\begin{matrix}
\begin{tikzpicture}[scale=.5]
\node (v1) at (0,1.5) {$(x)$};
\node (v2) at (0,-3.5) {$(z)$};
\draw  (v1)-- (v2)[postaction={decorate, decoration={markings,mark=at position .93 with {\arrow[black]{stealth}}}}][postaction={decorate, decoration={markings,mark=at position .15 with {\arrow[black]{stealth}}}}][postaction={decorate, decoration={markings,mark=at position .55 with {\arrow[black]{stealth}}}}];
\draw[fill] (0,0) circle [radius=0.1];
\draw[fill] (0,-2) circle [radius=0.1];
\node at (0.5,-1) {$y$};
\node at (0.5,0) {$f$};
\node at (0.5,-2) {$g$};
\end{tikzpicture}\ \ \
\end{matrix}
&
\begin{matrix}
\begin{tikzpicture}
\node at (0.5,0.2) {$T(\varphi)$};
\draw [->] (0,0)--(1,0);
\end{tikzpicture}
\end{matrix}&
\begin{matrix}
\begin{tikzpicture}[scale=.5]
\node (v1) at (0,1.5) {$(\varphi(x))$};
\node (v2) at (0,-3.5) {$(\varphi(z))$};
\draw  (v1)-- (v2)[postaction={decorate, decoration={markings,mark=at position .93 with {\arrow[black]{stealth}}}}][postaction={decorate, decoration={markings,mark=at position .15 with {\arrow[black]{stealth}}}}][postaction={decorate, decoration={markings,mark=at position .55 with {\arrow[black]{stealth}}}}];
\draw[fill] (0,0) circle [radius=0.1];
\draw[fill] (0,-2) circle [radius=0.1];
\node at (0.8,-1) {$\varphi(y)$};
\node at (0.8,0) {$\varphi(f)$};
\node at (0.8,-2) {$\varphi(g)$};
\end{tikzpicture}\ \ \
\end{matrix}\\

\begin{matrix}
\begin{tikzpicture}
\node at (0.2,-0.5) {$\epsilon_1$};
\draw [->] (0,0)--(0,-1);
\end{tikzpicture}
\end{matrix}&&
\begin{matrix}
\begin{tikzpicture}
\node at (0.2,-0.5) {$\epsilon_2$};
\draw [->] (0,0)--(0,-1);
\end{tikzpicture}
\end{matrix}
\\
\begin{matrix}
\xymatrix{x\ar[d]^{g\circ f}\\z}
\end{matrix}&
\begin{matrix}
\begin{tikzpicture}
\node at (0.5,0.2) {$\varphi$};
\draw [->] (0,0)--(1,0);
\end{tikzpicture}
\end{matrix}&
\begin{matrix}
\xymatrix{\varphi(x)\ar[d]^{ \varphi(g)\circ\varphi(f)}\\ \varphi(z)}
\end{matrix},
\end{matrix}
$$
which implies $\varphi(g\circ f)=\varphi(g)\circ \varphi(f)$.

Thus it is routine to check that $\Psi$ is a functor.  The last thing $\Psi\circ \Phi\cong I_{\mathbf{Str.T}}$ is evident from their definitions.
\end{proof}

\begin{prop}
The adjunction $\Theta:F\dashv U$ is not monadic.
\end{prop}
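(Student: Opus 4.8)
The plan is to use the standard characterization of monadicity: the adjunction $\Theta:F\dashv U$ is monadic if and only if the comparison functor $\Phi:\mathbf{Str.T}\rightarrow\mathbf{T.Sch}^{T}$ is an equivalence of categories. The preceding theorem already shows that $\Phi$ is an embedding, hence fully faithful; consequently $\Phi$ is an equivalence precisely when it is essentially surjective. Therefore it suffices to exhibit a single tensor manifold (a $T$-algebra) that is not isomorphic, in $\mathbf{T.Sch}^{T}$, to $\Phi(\mathcal{V})$ for any strict tensor category $\mathcal{V}$. The strategy is to isolate an isomorphism invariant of $T$-algebras which is automatically satisfied by every $\Phi(\mathcal{V})$ but is violated by an explicit example.

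The invariant I would use is the behaviour of the trivial fusion operation on hom-sets between single objects. First I would verify that for every $\mathcal{V}$ the algebra $\Phi(\mathcal{V})=(U(\mathcal{V}),\epsilon)$ of Example \ref{U(V)} is \emph{critical} in the following sense: if $I,O$ are the trivial (one-block) linear partitions, then $\ast^{I}_{O}$ restricted to $Mor_{U(\mathcal{V})}(x,y)$ is the identity, and more generally $\ast^{I}_{O}\colon Mor_{U(\mathcal{V})}(\vec{x},\vec{y})\to Mor_{U(\mathcal{V})}(\epsilon_o\vec{x},\epsilon_o\vec{y})$ is a bijection. This follows because a morphism of $U(\mathcal{V})$ is a prime diagram in $\mathcal{V}$, so that $Mor_{U(\mathcal{V})}(\vec{x},\vec{y})=Mor_{\mathcal{V}}(\otimes\vec{x},\otimes\vec{y})$, and the structure map $\epsilon=U\varepsilon_{\mathcal{V}}$ evaluates a prime diagram to its vertex decoration; fusing along the trivial partition merely rebrackets the domain and codomain while leaving the decorating morphism unchanged. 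Then I would take as counterexample the tensor manifold $(\mathbf{\Gamma},\varepsilon^{C\text{-}G})$ (the example with $Ob=\{x\}$, $Mor=\mathsf{\Gamma}$, and coarse-graining as structure map). Here $\ast^{I}_{O}(f)$ is the coarse-graining $\underbrace{(f,I,O)}$ of the planar graph $f$, which by Definition \ref{decomposition}'s companion (the coarse-graining definition, Def.~3.7.2) is a prime $(|I|,|O|)$-planar graph. For trivial $I,O$ this is the \emph{unique} prime $(1,1)$-planar graph regardless of $f$, so $\ast^{I}_{O}$ collapses the infinite set $Mor(x,x)$ (which contains, e.g., the unitary graph, the single prime vertex, and composites of two prime vertices) onto one element. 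Thus trivial fusion on $(\mathbf{\Gamma},\varepsilon^{C\text{-}G})$ is constant, not injective.

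To finish, I would invoke Proposition \ref{morphism}: any morphism of $T$-algebras preserves identity morphisms and the operations of tensor product, composition, and fusion; in particular an isomorphism of $T$-algebras conjugates fusion to fusion, so injectivity of trivial fusion on single-object hom-sets is an isomorphism invariant. Since $(\mathbf{\Gamma},\varepsilon^{C\text{-}G})$ fails this property while every $\Phi(\mathcal{V})$ satisfies it, $(\mathbf{\Gamma},\varepsilon^{C\text{-}G})$ lies outside the essential image of $\Phi$, so $\Phi$ is not essentially surjective, not an equivalence, and the adjunction is not monadic. (An equivalent route, if preferred, is to compute $\mathcal{W}=\Psi(\mathbf{\Gamma},\varepsilon^{C\text{-}G})$, note it is a one-object strict tensor category whose collapse forces $Mor_{\Phi(\mathcal{W})}(x^{m},x^{n})$ to be independent of $m,n$, and observe this contradicts $Mor(x^{2},x)\neq Mor(x,x)$ in $(\mathbf{\Gamma},\varepsilon^{C\text{-}G})$, so $\Phi\Psi\not\cong I$.) I expect the main obstacle to be the first step: carefully tracing the definitions of $U$, of $\varepsilon_{\mathcal{V}}$, and of the fusion operation $\ast^{I}_{O}$ through the identification $T\cong\mathbf{\Gamma}_{F}$ of Proposition \ref{second identification}, so as to prove rigorously that every $\Phi(\mathcal{V})$ is critical; once that bijectivity is pinned down, the counterexample computation and the invariance argument are routine.
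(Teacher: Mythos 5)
Your global strategy matches the paper's in spirit: reduce non-monadicity to non-essential-surjectivity of the fully faithful comparison functor $\Phi$, and then exclude one of the coarse-graining algebras on planar graphs from the essential image. Your Claim A is also correct: for $\Phi(\mathcal{V})=(U(\mathcal{V}),\epsilon)$ the trivial fusion $\ast^{I}_{O}\colon Mor_{U(\mathcal{V})}(\vec{x},\vec{y})\to Mor_{U(\mathcal{V})}(\epsilon_o\vec{x},\epsilon_o\vec{y})$ is a bijection. The gap is in the counterexample computation, and it is fatal as written: you test the invariant on $Mor(x,x)$, but when domain and codomain are one-letter words the trivial (one-block) partition \emph{is} the finest partition, so the fissus prime diagram $[\Gamma_f,I,O]$ is exactly the image of $f$ under the unit $\eta_{\mathcal{D}}$ (under the identification $T\cong\mathbf{\Gamma}_F$), and the unit axiom of a $T$-algebra (equivalently part $(2)$ of Lemma $6.1.8$) forces $\ast^{I}_{O}(f)=\epsilon_m([\Gamma_f,I,O])=f$. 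Hence in \emph{every} $T$-algebra trivial fusion on single-object hom-sets is the identity; it can never be the constant map you describe. Concretely, one of two things happens: either the structure map of $(\mathbf{\Gamma},\varepsilon^{C-G})$ really does send the fully fissus corolla decorated by a non-prime $(1,1)$-graph to the prime $(1,1)$-graph, in which case the unit axiom fails and $(\mathbf{\Gamma},\varepsilon^{C-G})$ is not a $T$-algebra at all (so it witnesses nothing); or it is a $T$-algebra, in which case your computation of $\ast^{I}_{O}$ on $Mor(x,x)$ is wrong and your invariant does not fail there. Your parenthetical alternative also does not close: $Mor(x^{2},x)\neq Mor(x,x)$ \emph{as sets} is not an isomorphism invariant (both are countably infinite), so by itself it contradicts nothing.

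The collapse you want to exploit is real, but it is only visible at words of length at least two, where trivial and finest partitions differ: for instance trivial fusion $Mor_{\mathbf{\Gamma}}(xx,x)\to Mor_{\mathbf{\Gamma}}(x,x)$ sends every $(2,1)$-planar graph to the prime $(1,1)$-graph, hence is neither injective nor surjective, while by your Claim A the corresponding map in any $\Phi(\mathcal{V})$ is a bijection. Relocating the test in this way is essentially what the paper does: it takes $\mathcal{V}=\mathbf{1}$, identifies $\Phi(\mathbf{1})\simeq\mathbf{Prim}(\mathbf{1})$, and chooses a sub-$T$-algebra $\mathbf{Prim}(\mathbf{1})\subsetneq\mathbf{S}(\mathbf{1})\subseteq\mathbf{\Gamma}(\mathbf{1})$ agreeing with $\mathbf{Prim}(\mathbf{1})$ on $Mor(1,1)$ but with strictly more morphisms between longer words. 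Then $\Psi(\mathbf{S}(\mathbf{1}))=\mathbf{1}$, so if $\mathbf{S}(\mathbf{1})\cong\Phi(\mathcal{W})$ for some $\mathcal{W}$, the relation $\Psi\Phi\cong I_{\mathbf{Str.T}}$ gives $\mathcal{W}\cong\mathbf{1}$ and hence $\mathbf{S}(\mathbf{1})\cong\Phi(\mathbf{1})=\mathbf{Prim}(\mathbf{1})$, which is impossible: any isomorphism is the identity on the one-element object set, hence preserves word lengths and must restrict to bijections on each $Mor(1^{m},1^{n})$, contradicting the fact that $\mathbf{Prim}(\mathbf{1})$ has a single morphism in each such hom-set while $\mathbf{S}(\mathbf{1})$ has more somewhere. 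If you repair your argument along these lines (apply your invariant at $(xx,x)$, or reproduce the $\mathbf{S}(\mathbf{1})$ comparison), you still owe a verification that the chosen algebra satisfies the $T$-algebra axioms: the unit-axiom tension above is precisely the delicate point for coarse-graining structure maps, and it is why $\mathbf{Prim}(\mathbf{1})=\Phi(\mathbf{1})$, all of whose morphisms are prime, is unproblematic, whereas $\mathbf{\Gamma}$ and $\mathbf{S}(\mathbf{1})$ require a careful definition of $\epsilon_m$ on fully fissus prime diagrams.
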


\begin{proof}
To see this, we take $\mathcal{V}=\textbf{1}$ be the strict tensor category with only one object $1$ and only one morphism $Id_1:1\rightarrow 1$, and notice that $\Phi(\textbf{1})=(U(\textbf{1}),\epsilon)\simeq (\textbf{Prim}(\textbf{1}),\varepsilon^{C-G})$ in example $6.1.6$. Take $\textbf{Prim}(\textbf{1})\subset\mathbf{S}(\textbf{1})\subset \mathbf{\Gamma}(\textbf{1})$ be a sub-T-algebra of $\mathbf{\Gamma}(\textbf{1})$ such that $Mor_{\mathbf{Prim}(\textbf{1})}(1,1)=Mor_{\mathbf{S}(\textbf{1})}(1,1)$ and $Mor_{\mathbf{Prim}(\textbf{1})}\neq Mor_{\mathbf{Prim}(\textbf{1})}.$ We see that $\mathbf{Prim}(\textbf{1})$ is not isomorphic to $\mathbf{S}(\textbf{1})$ and $\Psi(\mathbf{S}(\textbf{1}))=\textbf{1}$,
which implies that the "fiber" $\Psi^{-1}(\textbf{1})$ is not a groupoid, hence $\Theta:F\dashv U$ is not monadic.
\end{proof}

\subsection{Adjunction of comparison}

Now we define a map $\theta:Mor(D)\rightarrow Mor(\Psi((\mathcal{D},\epsilon)))$ which sends a morphism $f:x_1\cdots x_m\rightarrow y_1\cdots y_n$ in $\mathcal{D}$ to $\ast^{I^{trivial}}_{O^{trivial}}(f)$, that is, $\theta(f)=\epsilon_m\begin{pmatrix}\begin{tikzpicture}[scale=.3]
\node (v1) at (0,0) {};
\draw[fill] (0,0) circle [radius=0.08];
\node [scale=.7] at (0.8,0){$f$};
\node [scale=.7] (v2) at (-2,1.5) {$(x_1$};
\node [scale=.7](v3) at (-0.5,1.5) {$x_2$};
\node [scale=.7](v4) at (1,1.5) {$\cdots$};
\node [scale=.7](v5) at (2.5,1.5) {$x_m)$};
\node [scale=.7](v6) at (-2,-1.5) {$(y_1$};
\node [scale=.7](v7) at (-0.5,-1.5) {$y_2$};
\node [scale=.7] at (1,-1.5) {$\cdots$};
\node [scale=.7](v8) at (2.5,-1.5) {$y_n)$};
\draw  (v2) -- (0,0)[postaction={decorate, decoration={markings,mark=at position .50 with {\arrow[black]{stealth}}}}];
\draw  (v3) -- (0,0)[postaction={decorate, decoration={markings,mark=at position .50 with {\arrow[black]{stealth}}}}];
\draw  (v5) -- (0,0)[postaction={decorate, decoration={markings,mark=at position .50 with {\arrow[black]{stealth}}}}];
\draw  (v6) -- (0,0)[postaction={decorate, decoration={markings,mark=at position .50 with {\arrowreversed[black]{stealth}}}}];
\draw  (v7) -- (0,0)[postaction={decorate, decoration={markings,mark=at position .50 with {\arrowreversed[black]{stealth}}}}];
\draw  (v8) -- (0,0)[postaction={decorate, decoration={markings,mark=at position .50 with {\arrowreversed[black]{stealth}}}}];
\end{tikzpicture}\end{pmatrix}.$

\begin{prop}\label{preserve unit}
$\theta$ preserves identity morphisms, that is, for any object $x\in Ob(\mathcal{D})$, $$\theta(Id_x)=Id_x\in Mor(\Psi((\mathcal{D},\epsilon))).$$
\end{prop}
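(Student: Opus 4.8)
The plan is to unfold both sides of the claimed equality to values of $\epsilon_m$ on explicit fissus prime $(1,1)$-diagrams and then invoke part $(2)$ of Lemma 6.1.8, which evaluates exactly such diagrams. First I would recall that, by the definition of $\theta$, one has $\theta(Id_x)=\ast^{I^{trivial}}_{O^{trivial}}(Id_x)$, where $Id_x\colon x\to x$ is the identity morphism of $x$ in the tensor manifold $(\mathcal{D},\epsilon)$ supplied by Lemma 6.1.8$(3)$. Since $|dom(Id_x)|=|cod(Id_x)|=1$, the trivial linear partitions $I^{trivial},O^{trivial}$ on one-element anchor sets are the unique partitions available; hence, by the definition of the fusion operation $\ast^I_O$, the morphism $\ast^{I^{trivial}}_{O^{trivial}}(Id_x)$ is precisely $\epsilon_m([\Gamma_{Id_x},P_{in},P_{out}])$, the value under $\epsilon_m$ of the prime $(1,1)$-diagram whose unique real vertex is decorated by $Id_x$, with domain $(x)$ and codomain $(x)$.

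Next I would observe that this diagram has exactly the shape to which part $(2)$ of Lemma 6.1.8 applies: it is a fissus prime planar diagram with domain $(x)$, codomain $(x)$, and unique real vertex decorated by the morphism $Id_x\colon x\to x$. That lemma then yields $\epsilon_m([\Gamma_{Id_x},P_{in},P_{out}])=Id_x$, so that $\theta(Id_x)=Id_x$ as an element of $Mor_{\mathcal{D}}(x,x)$.

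Finally I would reconcile the two occurrences of the symbol $Id_x$. On the domain side, $Id_x$ denotes the identity of $x$ in the tensor manifold $(\mathcal{D},\epsilon)$; on the codomain side it denotes the identity of $x$ in the strict tensor category $\Psi((\mathcal{D},\epsilon))$. But both are defined as the image under $\epsilon_m$ of the same unitary $(x)\to(x)$ diagram (the former in Lemma 6.1.8$(3)$, the latter in the construction of $\Psi$ inside the proof of Theorem 6.3.1), so they coincide as morphisms, completing the identification $\theta(Id_x)=Id_x$ in $Mor(\Psi((\mathcal{D},\epsilon)))$.

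The argument involves no genuine computation; the only thing requiring care, and hence the main obstacle, is the bookkeeping around the degenerate length-one case. Concretely, I expect the delicate points to be checking that the trivial, finest, and fully-fissus linear partitions all collapse to one and the same partition on a one-element anchor set, so that the fusion $\ast^{I^{trivial}}_{O^{trivial}}$ really produces the fissus prime diagram covered by Lemma 6.1.8$(2)$, and confirming that the two identity-morphism conventions agree on the nose rather than merely up to isomorphism.
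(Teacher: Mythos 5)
Your proof is correct, but it takes a genuinely different route from the paper's. The paper's proof never invokes the unit axiom of the $T$-algebra: it unfolds $Id_x$ (the decoration of the vertex) into $\epsilon_m$ of the unitary diagram $(x)\to(x)$, recognizes the resulting picture as $\epsilon_m\circ T(\epsilon_m)$ applied to a compound fissus diagram, uses the \emph{associativity} square $\epsilon_m\circ T(\epsilon_m)=\epsilon_m\circ\mu_{\mathcal{D}}$, computes that $\mu_{\mathcal{D}}$ collapses the compound diagram to the bare unitary diagram, and concludes since $\epsilon_m$ of the unitary diagram is $Id_x$ by definition. You instead invoke the \emph{unit} axiom, packaged as Lemma 6.1.8$(2)$: since on a one-element anchor set the trivial and finest linear partitions coincide, the fissus prime $(1,1)$-diagram defining $\theta(Id_x)=\ast^{I^{trivial}}_{O^{trivial}}(Id_x)$ is literally $\eta_{\mathcal{D}}(Id_x)$ (under the identification $\widetilde{\omega}$), so $\epsilon_m$ of it is $Id_x$ by $\epsilon\circ\eta_{\mathcal{D}}=Id$. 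Your route is shorter and more elementary, at the cost of resting entirely on the degenerate coincidence of partitions in length one, which you correctly flag and verify; it also makes explicit the reconciliation of the two conventions for $Id_x$ (the tensor-manifold identity of Lemma 6.1.8$(3)$ versus the categorical identity in $\Psi((\mathcal{D},\epsilon))$ from Theorem 6.3.1), a point the paper passes over silently even though both are defined as $\epsilon_m$ of the same unitary diagram, so the identification is on the nose. The paper's longer computation has the advantage of being the same template it reuses verbatim for the tensor-product, composition, and fusion statements that follow, whereas your argument is specific to the identity case.
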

\begin{proof}
$$
\begin{matrix}
\theta(Id_x)&=&\epsilon_m\begin{pmatrix}
\begin{tikzpicture}[scale=.5]
\node (v2) at (-0.5,0.5) {};
\node (v1) at (-0.5,2) {$(x)$};
\node (v3) at (-0.5,-1) {$(x)$};
\draw [fill](-0.5,0.5) circle [radius=0.1];
\node at (0.5, 0.5) {$Id_x$};
\draw  (v1) -- (-0.5,0.6)[postaction={decorate, decoration={markings,mark=at position .70 with {\arrow[black]{stealth}}}}];
\draw  (v3) -- (-0.5,0.4)[postaction={decorate, decoration={markings,mark=at position .70 with {\arrowreversed[black]{stealth}}}}];
\end{tikzpicture}
 \end{pmatrix}\\
&=&\epsilon_m\circ T(\epsilon_m)\begin{pmatrix}
\begin{tikzpicture}[scale=.5]
\node (v2) at (-0.5,0.5) {};
\node (v1) at (-0.5,2) {$(x)$};
\node (v3) at (-0.5,-1) {$(x)$};
\draw [fill](-0.5,0.5) circle [radius=0.1];
\node [scale=.5]at (0.5, 0.5) {$\begin{matrix}
\begin{tikzpicture}[scale=.5]
\node (v2) at (-0.5,0.5) {};
\node (v1) at (-0.5,2) {$(x)$};
\node (v3) at (-0.5,-1) {$(x)$};
\draw (v2) circle [radius=0.1];
\draw  (v1) -- (-0.5,0.6)[postaction={decorate, decoration={markings,mark=at position .70 with {\arrow[black]{stealth}}}}];
\draw  (v3) -- (-0.5,0.4)[postaction={decorate, decoration={markings,mark=at position .70 with {\arrowreversed[black]{stealth}}}}];
\end{tikzpicture}
\end{matrix}$};
\draw  (v1) -- (-0.5,0.6)[postaction={decorate, decoration={markings,mark=at position .70 with {\arrow[black]{stealth}}}}];
\draw  (v3) -- (-0.5,0.4)[postaction={decorate, decoration={markings,mark=at position .70 with {\arrowreversed[black]{stealth}}}}];
\end{tikzpicture}
 \end{pmatrix}\\
&=&\epsilon_m\circ \mu_{\mathcal{D}}\begin{pmatrix}
\begin{tikzpicture}[scale=.5]
\node (v2) at (-0.5,0.5) {};
\node (v1) at (-0.5,2) {$(x)$};
\node (v3) at (-0.5,-1) {$(x)$};
\draw [fill](-0.5,0.5) circle [radius=0.1];
\node [scale=.5]at (0.5, 0.5) {$\begin{matrix}
\begin{tikzpicture}[scale=.5]
\node (v2) at (-0.5,0.5) {};
\node (v1) at (-0.5,2) {$(x)$};
\node (v3) at (-0.5,-1) {$(x)$};
\draw (v2) circle [radius=0.1];
\draw  (v1) -- (-0.5,0.6)[postaction={decorate, decoration={markings,mark=at position .70 with {\arrow[black]{stealth}}}}];
\draw  (v3) -- (-0.5,0.4)[postaction={decorate, decoration={markings,mark=at position .70 with {\arrowreversed[black]{stealth}}}}];
\end{tikzpicture}
\end{matrix}$};
\draw  (v1) -- (-0.5,0.6)[postaction={decorate, decoration={markings,mark=at position .70 with {\arrow[black]{stealth}}}}];
\draw  (v3) -- (-0.5,0.4)[postaction={decorate, decoration={markings,mark=at position .70 with {\arrowreversed[black]{stealth}}}}];
\end{tikzpicture}
 \end{pmatrix}\\
&=&\epsilon_m\begin{pmatrix}
\begin{tikzpicture}[scale=.5]
\node (v2) at (-0.5,0.5) {};
\node (v1) at (-0.5,2) {$(x)$};
\node (v3) at (-0.5,-1) {$(x)$};
\draw (v2) circle [radius=0.1];
\draw  (v1) -- (-0.5,0.6)[postaction={decorate, decoration={markings,mark=at position .70 with {\arrow[black]{stealth}}}}];
\draw  (v3) -- (-0.5,0.4)[postaction={decorate, decoration={markings,mark=at position .70 with {\arrowreversed[black]{stealth}}}}];
\end{tikzpicture}
\end{pmatrix}\\
&=&Id_x.

\end{matrix}
$$
\end{proof}

\begin{prop}\label{preserve tensor and composition}
$\theta$ preserves tensor products, compositions. More precisely, for any $f, g\in Mor(\mathcal{D})$, we have $$\theta(f\otimes_{\epsilon} g)=\theta(f)\otimes_{\mathcal{V}}\theta(g),$$  and for any $f, g\in Mor(\mathcal{D})$ with $cod(f)=dom(g)$, we have $$\theta(g\circ_{\epsilon} f)=\theta(g)\circ_{\mathcal{V}}\theta(f).$$
\end{prop}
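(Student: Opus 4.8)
The plan is to reduce both identities to the fusion--compatibility propositions already established, namely Propositions \ref{fusion}, \ref{fusion-tensor product} and \ref{fusion-composition}, by specializing every linear partition appearing there to a \emph{trivial} one-block partition. Throughout I would write $X=\epsilon_o(x_1\cdots x_m)$, $Y=\epsilon_o(y_1\cdots y_n)$, etc., and record two preliminary observations: first, by definition $\theta(f)=\ast^{I^{triv}}_{O^{triv}}(f)$ is a morphism $X\to Y$ between \emph{single} objects of $\Psi((\mathcal{D},\epsilon))$; second, when $f$ is already a single-object morphism the basic properties of a tensor manifold proved in Section 6.1 give $\theta(f)=f$, so $\theta$ restricts to the identity on morphisms of $\Psi((\mathcal{D},\epsilon))$ that already have single objects as domain and codomain.

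\textbf{Composition.} Here I would invoke Proposition \ref{fusion-composition} directly. Taking $I_1,O_1,I_2,O_2$ to be the trivial one-block partitions of lengths $m,n,n,p$, the compatibility hypothesis $O_1\approx I_2$ holds automatically (both are a single block of $n$ letters), so the proposition gives $\ast^{I_1}_{O_2}(g\circ_\epsilon f)=\ast^{I_2}_{O_2}(g)\circ_\epsilon\ast^{I_1}_{O_1}(f)$, that is, $\theta(g\circ_\epsilon f)=\theta(g)\circ_\epsilon\theta(f)$. It then remains only to identify $\circ_\epsilon$ with $\circ_\mathcal{V}$ on the single-object morphisms $\theta(f)$ and $\theta(g)$, and this is purely definitional: for a $(1,1)$-morphism the fully-fissus structure coincides with the trivial one, so the diagram $[\Gamma_{\theta(g)}]\circ_{fissus}[\Gamma_{\theta(f)}]$ computing $\theta(g)\circ_\epsilon\theta(f)$ is literally the vertical two-vertex diagram used in the construction of $\Psi$ to define $\theta(g)\circ_\mathcal{V}\theta(f)$. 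Chaining the two equalities yields $\theta(g\circ_\epsilon f)=\theta(g)\circ_\mathcal{V}\theta(f)$.

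\textbf{Tensor product.} This is slightly more delicate, because $f\otimes_\epsilon g$ of two single-object morphisms is a $(2,2)$-morphism whereas $\otimes_\mathcal{V}$ produces a $(1,1)$-morphism, so a genuine fusion is needed to bridge the two arities. First, Proposition \ref{fusion-tensor product} with all four partitions trivial gives $\ast^{I_1\otimes I_2}_{O_1\otimes O_2}(f\otimes_\epsilon g)=\theta(f)\otimes_\epsilon\theta(g)$, where $I_1\otimes I_2$ is now the two-block partition of the $m+k$ inputs. Next, Proposition \ref{fusion} (fusion transitivity) lets me collapse those two blocks into one: choosing $I_1'$ to be the one-block partition of the two blocks of $I_1\otimes I_2$ one has $I_1'\triangleleft(I_1\otimes I_2)=I^{triv}$, whence $\ast^{I_1'}_{O_1'}(\theta(f)\otimes_\epsilon\theta(g))=\ast^{I^{triv}}_{O^{triv}}(f\otimes_\epsilon g)=\theta(f\otimes_\epsilon g)$. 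Finally I would apply the bridging identity $\theta(f)\otimes_\mathcal{V}\theta(g)=\ast^{I_1'}_{O_1'}(\theta(f)\otimes_\epsilon\theta(g))$, which identifies the $\Psi$-tensor of two single-object morphisms with the trivial fusion of their tensor-manifold tensor; combining these gives $\theta(f\otimes_\epsilon g)=\theta(f)\otimes_\mathcal{V}\theta(g)$.

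The hard part will be this last bridging identity, which, unlike its counterpart in the composition case, is not purely definitional precisely because the arities genuinely differ. I would prove it by exhibiting a single compound fissus diagram $\Xi$ in $T(T(\mathcal{D}))$ whose unique vertex is decorated by the fully-fissus two-vertex diagram of $f\otimes_\epsilon g$ and whose outer bracketing is the single block $(XZ)/(YW)$, and then applying the $T$-algebra associativity axiom $\epsilon_m\circ\mu_{\mathcal{D}}=\epsilon_m\circ T(\epsilon_m)$: evaluating the inner component first ($T(\epsilon_m)$ then $\epsilon_m$) produces $\ast^{I_1'}_{O_1'}(\theta(f)\otimes_\epsilon\theta(g))$, while flattening first ($\mu_{\mathcal{D}}$ then $\epsilon_m$) produces the two-vertex diagram defining $\theta(f)\otimes_\mathcal{V}\theta(g)$. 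This is exactly the square-chasing device already used in Propositions \ref{tensor product}--\ref{unit}, so once $\Xi$ is set up the verification is routine; the only real care needed is the bookkeeping of the partition equivalences $\approx$ and compositions $\triangleleft$, so that every hypothesis of Propositions \ref{fusion}--\ref{fusion-composition} is met on the nose.
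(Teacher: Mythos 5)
Your proof is correct, and it takes a genuinely different route from the paper's. The paper proves both identities from scratch by a symmetric pair of square-chases: each side of each identity is rewritten with the $T$-algebra axiom $\epsilon_m\circ T(\epsilon_m)=\epsilon_m\circ\mu_{\mathcal{D}}$ until both become $\epsilon_m$ of one and the same flattened two-vertex fissus diagram (vertices $f$ and $g$, one-block outer brackets); Propositions \ref{fusion}, \ref{fusion-tensor product} and \ref{fusion-composition} are never invoked there. You instead let those propositions, specialized to trivial partitions, do all the work at the level of $\otimes_\epsilon$ and $\circ_\epsilon$, and then bridge to $\otimes_{\mathcal{V}}$ and $\circ_{\mathcal{V}}$: for composition the bridge is indeed purely definitional, since for $(1,1)$-morphisms the fully fissus and trivial structures coincide and the defining diagrams of $\circ_\epsilon$ and $\circ_{\mathcal{V}}$ are literally the same, while for the tensor product, where the arities genuinely differ, the bridge $\theta(f)\otimes_{\mathcal{V}}\theta(g)=\ast^{I_1'}_{O_1'}\bigl(\theta(f)\otimes_\epsilon\theta(g)\bigr)$ costs exactly one square-chase of the paper's kind. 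Your organization is more modular, and it makes precise the paper's closing remark that the proposition ``is not a direct consequence of corollary $6.2.11$ and $6.2.12$'': everything at the $\otimes_\epsilon$-level is already known, and the genuinely new content is the single bridging identity; the paper's version, in exchange, is self-contained and treats the two cases uniformly. One slip of wording: your compound diagram $\Xi$ must have its vertex decorated by the two-vertex diagram whose vertices are $\theta(f)$ and $\theta(g)$ (finest inner brackets, flags carrying the fused objects), not by ``the fully-fissus two-vertex diagram of $f\otimes_\epsilon g$'' with vertices $f$ and $g$ --- with $f,g$ inside, the chase reproduces the paper's computation of $\theta(f\otimes_\epsilon g)$ rather than the bridge. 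Since the outputs you state for the two evaluations ($\ast^{I_1'}_{O_1'}(\theta(f)\otimes_\epsilon\theta(g))$ on one side, the defining diagram of $\otimes_{\mathcal{V}}$ on the other) already force the correct choice, this is a misstatement, not a gap.
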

\begin{proof}
$\bullet$   Let $f: x_1\cdots x_m\rightarrow y_1\cdots y_n$ and $g:u_1\cdots u_p\rightarrow v_1\cdots v_q$ be two morphism of $\mathcal{D}$. Take $I_1$, $I_2$, $O_1$, $O_2$, be trivial linear partitions with $||I_1||=m$, $||O_1||=n$, $||I_2||=p$, $||O_2||=q$, and $I$, $O$ be finest linear partitions with $||I||=m+p$, $||O||=n+q$.  We assume $\widetilde{x}=\epsilon_o(x_1\cdots x_m)$, $\widetilde{y}=\epsilon_o(y_1\cdots y_n)$, $\widetilde{u}=\epsilon_o(u_1\cdots u_p)$ and $\widetilde{v}=\epsilon_o(v_1\cdots v_q).$ On one hand, we have

$$
\begin{matrix}
\theta(f\otimes_{\epsilon}g)&=&\epsilon_m\begin{pmatrix}\begin{tikzpicture}[scale=0.5]

\node (v2) at (-0.5,0) {};
\draw [fill] (-0.5,0) circle [radius=0.1];
\node at (1,0) {$f\otimes_{\epsilon}g$};
\node (v1) at (-2,1.5) {$(x_1$};
\node (v4) at (1,1.5) {$u_p)$};
\node (v3) at (-0.5,1.5) {$\cdots$};
\node (v5) at (-2,-1.5) {$(y_1$};
\node (v7) at (1,-1.5) {$v_q)$};
\node (v6) at (-0.5,-1.5) {$\cdots$};

\draw  (v1) -- (-0.5,0)[postaction={decorate, decoration={markings,mark=at position .5 with {\arrow[black]{stealth}}}}];

\draw  (v4) -- (-0.5,0)[postaction={decorate, decoration={markings,mark=at position .5 with {\arrow[black]{stealth}}}}];
\draw  (-0.5,0) -- (v5)[postaction={decorate, decoration={markings,mark=at position .6 with {\arrow[black]{stealth}}}}];
\draw  (-0.5,0) -- (v7)[postaction={decorate, decoration={markings,mark=at position .6 with {\arrow[black]{stealth}}}}];
\end{tikzpicture}\end{pmatrix}\\
&=&\epsilon_m\begin{pmatrix}\begin{tikzpicture}[scale=0.5]

\node (v2) at (-0.5,0) {};
\draw [fill] (-0.5,0) circle [radius=0.1];
\node at (3,0) {$\epsilon_m\begin{pmatrix}
[\Gamma_f\otimes\Gamma_g, I,O]
\end{pmatrix}$};
\node (v1) at (-2,1.5) {$(x_1$};
\node (v4) at (1,1.5) {$u_p)$};
\node (v3) at (-0.5,1.5) {$\cdots$};
\node (v5) at (-2,-1.5) {$(y_1$};
\node (v7) at (1,-1.5) {$v_q)$};
\node (v6) at (-0.5,-1.5) {$\cdots$};

\draw  (v1) -- (-0.5,0)[postaction={decorate, decoration={markings,mark=at position .5 with {\arrow[black]{stealth}}}}];

\draw  (v4) -- (-0.5,0)[postaction={decorate, decoration={markings,mark=at position .5 with {\arrow[black]{stealth}}}}];
\draw  (-0.5,0) -- (v5)[postaction={decorate, decoration={markings,mark=at position .6 with {\arrow[black]{stealth}}}}];
\draw  (-0.5,0) -- (v7)[postaction={decorate, decoration={markings,mark=at position .6 with {\arrow[black]{stealth}}}}];
\end{tikzpicture}\end{pmatrix}\\
&=&\epsilon_m\circ T(\epsilon_m)\begin{pmatrix}\begin{tikzpicture}[scale=0.5]

\node (v2) at (-0.5,0) {};
\draw [fill] (-0.5,0) circle [radius=0.1];
\node at (3,0) {$[\Gamma_f\otimes\Gamma_g, I,O]$};
\node (v1) at (-2,1.5) {$(x_1$};
\node (v4) at (1,1.5) {$u_p)$};
\node (v3) at (-0.5,1.5) {$\cdots$};
\node (v5) at (-2,-1.5) {$(y_1$};
\node (v7) at (1,-1.5) {$v_q)$};
\node (v6) at (-0.5,-1.5) {$\cdots$};

\draw  (v1) -- (-0.5,0)[postaction={decorate, decoration={markings,mark=at position .5 with {\arrow[black]{stealth}}}}];

\draw  (v4) -- (-0.5,0)[postaction={decorate, decoration={markings,mark=at position .5 with {\arrow[black]{stealth}}}}];
\draw  (-0.5,0) -- (v5)[postaction={decorate, decoration={markings,mark=at position .6 with {\arrow[black]{stealth}}}}];
\draw  (-0.5,0) -- (v7)[postaction={decorate, decoration={markings,mark=at position .6 with {\arrow[black]{stealth}}}}];
\end{tikzpicture}\end{pmatrix}\\
&=&\epsilon_m\circ \mu_{\mathcal{D}}\begin{pmatrix}\begin{tikzpicture}[scale=0.5]

\node (v2) at (-0.5,0) {};
\draw [fill] (-0.5,0) circle [radius=0.1];
\node at (3,0) {$[\Gamma_f\otimes\Gamma_g, I,O]$};
\node (v1) at (-2,1.5) {$(x_1$};
\node (v4) at (1,1.5) {$u_p)$};
\node (v3) at (-0.5,1.5) {$\cdots$};
\node (v5) at (-2,-1.5) {$(y_1$};
\node (v7) at (1,-1.5) {$v_q)$};
\node (v6) at (-0.5,-1.5) {$\cdots$};

\draw  (v1) -- (-0.5,0)[postaction={decorate, decoration={markings,mark=at position .5 with {\arrow[black]{stealth}}}}];

\draw  (v4) -- (-0.5,0)[postaction={decorate, decoration={markings,mark=at position .5 with {\arrow[black]{stealth}}}}];
\draw  (-0.5,0) -- (v5)[postaction={decorate, decoration={markings,mark=at position .6 with {\arrow[black]{stealth}}}}];
\draw  (-0.5,0) -- (v7)[postaction={decorate, decoration={markings,mark=at position .6 with {\arrow[black]{stealth}}}}];
\end{tikzpicture}\end{pmatrix}\\
&=&\epsilon_m\begin{pmatrix} \begin{tikzpicture}[scale=0.5]

\node (v2) at (-0.5,0) {};
\draw [fill] (-0.5,0) circle [radius=0.1];
\node at (0,0) {$f$};
\node (v1) at (-2,1.5) {$(x_1$};
\node (v4) at (1,1.5) {$x_m$};
\node (v3) at (-0.5,1.5) {$\cdots$};
\node (v5) at (-2,-1.5) {$(y_1$};
\node (v7) at (1,-1.5) {$y_n$};
\node (v6) at (-0.5,-1.5) {$\cdots$};

\node (v8) at (2.5,1.5) {$u_1$};
\node (v10) at (4,1.5) {$\cdots$};
\node (v11) at (5.5,1.5) {$u_p)$};
\node (v9) at (4,0) {};
\draw [fill] (4,0) circle [radius=0.1];
\node at (4.5,0) {$g$};
\node (v12) at (2.5,-1.5) {$v_1$};
\node (v13) at (4,-1.5) {$\cdots$};
\node (v14) at (5.5,-1.5) {$v_q)$};
\draw  (v1) -- (-0.5,0)[postaction={decorate, decoration={markings,mark=at position .5 with {\arrow[black]{stealth}}}}];

\draw  (v4) -- (-0.5,0)[postaction={decorate, decoration={markings,mark=at position .5 with {\arrow[black]{stealth}}}}];
\draw  (-0.5,0) -- (v5)[postaction={decorate, decoration={markings,mark=at position .6 with {\arrow[black]{stealth}}}}];

\draw  (-0.5,0) -- (v7)[postaction={decorate, decoration={markings,mark=at position .6 with {\arrow[black]{stealth}}}}];
\draw  (v8) -- (4,0)[postaction={decorate, decoration={markings,mark=at position .5 with {\arrow[black]{stealth}}}}];

\draw  (v11) -- (4,0)[postaction={decorate, decoration={markings,mark=at position .5 with {\arrow[black]{stealth}}}}];
\draw  (4,0) -- (v12)[postaction={decorate, decoration={markings,mark=at position .6 with {\arrow[black]{stealth}}}}];

\draw  (4,0) -- (v14)[postaction={decorate, decoration={markings,mark=at position .6 with {\arrow[black]{stealth}}}}];
\end{tikzpicture}\end{pmatrix}

\end{matrix}
$$

where $[\Gamma_f\otimes\Gamma_g, I,O]=\begin{matrix}\begin{tikzpicture}[scale=0.5]

\node (v2) at (-0.5,0) {};
\draw [fill] (-0.5,0) circle [radius=0.1];
\node at (0,0) {$f$};
\node (v1) at (-2,1.5) {$(x_1)$};
\node (v4) at (1,1.5) {$(x_m)$};
\node (v3) at (-0.5,1.5) {$\cdots$};
\node (v5) at (-2,-1.5) {$(y_1)$};
\node (v7) at (1,-1.5) {$(y_n)$};
\node (v6) at (-0.5,-1.5) {$\cdots$};

\node (v8) at (2.5,1.5) {$(u_1)$};
\node (v10) at (4,1.5) {$\cdots$};
\node (v11) at (5.5,1.5) {$(u_p)$};
\node (v9) at (4,0) {};
\draw [fill] (4,0) circle [radius=0.1];
\node at (4.5,0) {$g$};
\node (v12) at (2.5,-1.5) {$(v_1)$};
\node (v13) at (4,-1.5) {$\cdots$};
\node (v14) at (5.5,-1.5) {$(v_q);$};
\draw  (v1) -- (-0.5,0)[postaction={decorate, decoration={markings,mark=at position .5 with {\arrow[black]{stealth}}}}];

\draw  (v4) -- (-0.5,0)[postaction={decorate, decoration={markings,mark=at position .5 with {\arrow[black]{stealth}}}}];
\draw  (-0.5,0) -- (v5)[postaction={decorate, decoration={markings,mark=at position .6 with {\arrow[black]{stealth}}}}];

\draw  (-0.5,0) -- (v7)[postaction={decorate, decoration={markings,mark=at position .6 with {\arrow[black]{stealth}}}}];
\draw  (v8) -- (4,0)[postaction={decorate, decoration={markings,mark=at position .5 with {\arrow[black]{stealth}}}}];

\draw  (v11) -- (4,0)[postaction={decorate, decoration={markings,mark=at position .5 with {\arrow[black]{stealth}}}}];
\draw  (4,0) -- (v12)[postaction={decorate, decoration={markings,mark=at position .6 with {\arrow[black]{stealth}}}}];

\draw  (4,0) -- (v14)[postaction={decorate, decoration={markings,mark=at position .6 with {\arrow[black]{stealth}}}}];
\end{tikzpicture}\end{matrix}$

on the other hand, we have
$$
\begin{matrix}
\theta(f)\otimes_{\mathcal{V}}\theta(g)&=&\epsilon_m\begin{pmatrix}\begin{tikzpicture}[scale=.5]
\node (v2) at (-0.5,0.5) {};
\node (v1) at (-0.5,2) {$(x$};
\node (v3) at (-0.5,-1) {$(y$};
\node (v4) at (7.5,2) {$z)$};
\node (v6) at (7.5,0.5) {};
\node (v5) at (7.5,-1) {$w)$};
\draw[fill] (v2) circle [radius=0.1];
\draw[fill] (v6) circle [radius=0.1];
\draw  (v1) -- (v3)[postaction={decorate, decoration={markings,mark=at position .30 with {\arrow[black]{stealth}}}}][postaction={decorate, decoration={markings,mark=at position .85 with {\arrow[black]{stealth}}}}];
\draw  (v4) -- (v5)[postaction={decorate, decoration={markings,mark=at position .30 with {\arrow[black]{stealth}}}}][postaction={decorate, decoration={markings,mark=at position .85 with {\arrow[black]{stealth}}}}];

\node  at (0.5,0.5) {$\theta(f)$};
\node  at (8.5,0.5) {$\theta(g)$};
\end{tikzpicture}\end{pmatrix}\\
&=&\epsilon_m\begin{pmatrix}\begin{tikzpicture}[scale=.5]
\node (v2) at (-0.5,0.5) {};
\node (v1) at (-0.5,2) {$(\widetilde{x}$};
\node (v3) at (-0.5,-1) {$(\widetilde{y}$};
\node (v4) at (7.5,2) {$\widetilde{u})$};
\node (v6) at (7.5,0.5) {};
\node (v5) at (7.5,-1) {$\widetilde{v})$};
\draw[fill] (v2) circle [radius=0.1];
\draw[fill] (v6) circle [radius=0.1];
\draw  (v1) -- (v3)[postaction={decorate, decoration={markings,mark=at position .30 with {\arrow[black]{stealth}}}}][postaction={decorate, decoration={markings,mark=at position .85 with {\arrow[black]{stealth}}}}];
\draw  (v4) -- (v5)[postaction={decorate, decoration={markings,mark=at position .30 with {\arrow[black]{stealth}}}}][postaction={decorate, decoration={markings,mark=at position .85 with {\arrow[black]{stealth}}}}];

\node  at (2.5,0.5) {$\epsilon_m([\Gamma_f, I_1,O_1])$};
\node  at (10.5,0.5) {$\epsilon_m([\Gamma_g, I_2,O_2])$};
\end{tikzpicture}\end{pmatrix}\\
&=&\epsilon_m\circ T(\epsilon_m)\begin{pmatrix}\begin{tikzpicture}[scale=.5]
\node (v2) at (-0.5,0.5) {};
\node (v1) at (-0.5,2) {$(x_1\cdots x_m$};
\node (v3) at (-0.5,-1) {$(y_1\cdots y_n$};
\node (v4) at (7.5,2) {$u_1\cdots u_p)$};
\node (v6) at (7.5,0.5) {};
\node (v5) at (7.5,-1) {$v_1\cdots v_q)$};
\draw[fill] (v2) circle [radius=0.1];
\draw[fill] (v6) circle [radius=0.1];
\draw  (v1) -- (v3)[postaction={decorate, decoration={markings,mark=at position .30 with {\arrow[black]{stealth}}}}][postaction={decorate, decoration={markings,mark=at position .85 with {\arrow[black]{stealth}}}}];
\draw  (v4) -- (v5)[postaction={decorate, decoration={markings,mark=at position .30 with {\arrow[black]{stealth}}}}][postaction={decorate, decoration={markings,mark=at position .85 with {\arrow[black]{stealth}}}}];

\node  at (2.5,0.5) {$[\Gamma_f, I_1,O_1]$};
\node  at (10.5,0.5) {$[\Gamma_g, I_2,O_2]$};
\end{tikzpicture}\end{pmatrix}\\
&=&\epsilon_m\circ \mu_{\mathcal{D}}\begin{pmatrix}\begin{tikzpicture}[scale=.5]
\node (v2) at (-0.5,0.5) {};
\node (v1) at (-0.5,2) {$(x_1\cdots x_m$};
\node (v3) at (-0.5,-1) {$(y_1\cdots y_n$};
\node (v4) at (7.5,2) {$u_1\cdots u_p)$};
\node (v6) at (7.5,0.5) {};
\node (v5) at (7.5,-1) {$v_1\cdots v_q)$};
\draw[fill] (v2) circle [radius=0.1];
\draw[fill] (v6) circle [radius=0.1];
\draw  (v1) -- (v3)[postaction={decorate, decoration={markings,mark=at position .30 with {\arrow[black]{stealth}}}}][postaction={decorate, decoration={markings,mark=at position .85 with {\arrow[black]{stealth}}}}];
\draw  (v4) -- (v5)[postaction={decorate, decoration={markings,mark=at position .30 with {\arrow[black]{stealth}}}}][postaction={decorate, decoration={markings,mark=at position .85 with {\arrow[black]{stealth}}}}];

\node  at (2.5,0.5) {$[\Gamma_f, I_1,O_1]$};
\node  at (10.5,0.5) {$[\Gamma_g, I_2,O_2]$};
\end{tikzpicture}\end{pmatrix}\\
&=&\epsilon_m\begin{pmatrix} \begin{tikzpicture}[scale=0.5]

\node (v2) at (-0.5,0) {};
\draw [fill] (-0.5,0) circle [radius=0.1];
\node at (0,0) {$f$};
\node (v1) at (-2,1.5) {$(x_1$};
\node (v4) at (1,1.5) {$x_m$};
\node (v3) at (-0.5,1.5) {$\cdots$};
\node (v5) at (-2,-1.5) {$(y_1$};
\node (v7) at (1,-1.5) {$y_n$};
\node (v6) at (-0.5,-1.5) {$\cdots$};

\node (v8) at (2.5,1.5) {$u_1$};
\node (v10) at (4,1.5) {$\cdots$};
\node (v11) at (5.5,1.5) {$u_p)$};
\node (v9) at (4,0) {};
\draw [fill] (4,0) circle [radius=0.1];
\node at (4.5,0) {$g$};
\node (v12) at (2.5,-1.5) {$v_1$};
\node (v13) at (4,-1.5) {$\cdots$};
\node (v14) at (5.5,-1.5) {$v_q)$};
\draw  (v1) -- (-0.5,0)[postaction={decorate, decoration={markings,mark=at position .5 with {\arrow[black]{stealth}}}}];

\draw  (v4) -- (-0.5,0)[postaction={decorate, decoration={markings,mark=at position .5 with {\arrow[black]{stealth}}}}];
\draw  (-0.5,0) -- (v5)[postaction={decorate, decoration={markings,mark=at position .6 with {\arrow[black]{stealth}}}}];

\draw  (-0.5,0) -- (v7)[postaction={decorate, decoration={markings,mark=at position .6 with {\arrow[black]{stealth}}}}];
\draw  (v8) -- (4,0)[postaction={decorate, decoration={markings,mark=at position .5 with {\arrow[black]{stealth}}}}];

\draw  (v11) -- (4,0)[postaction={decorate, decoration={markings,mark=at position .5 with {\arrow[black]{stealth}}}}];
\draw  (4,0) -- (v12)[postaction={decorate, decoration={markings,mark=at position .6 with {\arrow[black]{stealth}}}}];

\draw  (4,0) -- (v14)[postaction={decorate, decoration={markings,mark=at position .6 with {\arrow[black]{stealth}}}}];
\end{tikzpicture}\end{pmatrix}

\end{matrix}
$$
where  $[\Gamma_f, I_1,O_1]=\begin{matrix}\begin{tikzpicture}[scale=0.5]

\node (v2) at (-0.5,0) {};
\draw [fill] (-0.5,0) circle [radius=0.1];
\node at (0,0) {$f$};
\node (v1) at (-2,1.5) {$(x_1$};
\node (v4) at (1,1.5) {$x_m)$};
\node (v3) at (-0.5,1.5) {$\cdots$};
\node (v5) at (-2,-1.5) {$(y_1$};
\node (v7) at (1,-1.5) {$y_n)$};
\node (v6) at (-0.5,-1.5) {$\cdots$};

\draw  (v1) -- (-0.5,0)[postaction={decorate, decoration={markings,mark=at position .5 with {\arrow[black]{stealth}}}}];

\draw  (v4) -- (-0.5,0)[postaction={decorate, decoration={markings,mark=at position .5 with {\arrow[black]{stealth}}}}];
\draw  (-0.5,0) -- (v5)[postaction={decorate, decoration={markings,mark=at position .6 with {\arrow[black]{stealth}}}}];
\draw  (-0.5,0) -- (v7)[postaction={decorate, decoration={markings,mark=at position .6 with {\arrow[black]{stealth}}}}];
\end{tikzpicture}\end{matrix}$ and $[\Gamma_g, I_2,O_2]=\begin{matrix}\begin{tikzpicture}[scale=0.5]

\node (v2) at (-0.5,0) {};
\draw [fill] (-0.5,0) circle [radius=0.1];
\node at (0,0) {$g$};
\node (v1) at (-2,1.5) {$(u_1$};
\node (v4) at (1,1.5) {$u_p)$};
\node (v3) at (-0.5,1.5) {$\cdots$};
\node (v5) at (-2,-1.5) {$(v_1$};
\node (v7) at (1,-1.5) {$v_q).$};
\node (v6) at (-0.5,-1.5) {$\cdots$};

\draw  (v1) -- (-0.5,0)[postaction={decorate, decoration={markings,mark=at position .5 with {\arrow[black]{stealth}}}}];

\draw  (v4) -- (-0.5,0)[postaction={decorate, decoration={markings,mark=at position .5 with {\arrow[black]{stealth}}}}];
\draw  (-0.5,0) -- (v5)[postaction={decorate, decoration={markings,mark=at position .6 with {\arrow[black]{stealth}}}}];
\draw  (-0.5,0) -- (v7)[postaction={decorate, decoration={markings,mark=at position .6 with {\arrow[black]{stealth}}}}];
\end{tikzpicture}\end{matrix}$

Thus we get that $$\theta(f\otimes_{\epsilon} g)=\theta(f)\otimes_{\mathcal{V}}\theta(g).$$

$\bullet$   Let $f: x_1\cdots x_l\rightarrow y_1\cdots y_m$ and $g:y_1\cdots y_m\rightarrow z_1\cdots z_n$ be two morphism of $\mathcal{D}$. Take $I_1$, $I_2$, $O_1$, $O_2$, be trivial linear partitions with $||I_1||=l$, $||O_1||=m$, $||I_2||=m$, $||O_2||=n$,  and $I$, $O$ be finest linear partitions with $||I||=l$, $||O||=n$.  We assume $\widetilde{x}=\epsilon_o(x_1\cdots x_l)$, $\widetilde{y}=\epsilon_o(y_1\cdots y_m)$, $\widetilde{z}=\epsilon_o(z_1\cdots z_n).$ On one hand, we have

$$
\begin{matrix}
\theta(g\circ_{\epsilon}f)&=&\epsilon_m\begin{pmatrix}\begin{tikzpicture}[scale=0.5]

\node (v2) at (-0.5,0) {};
\draw [fill] (-0.5,0) circle [radius=0.1];
\node at (1,0) {$g\circ_{\epsilon}f$};
\node (v1) at (-2,1.5) {$(x_1$};
\node (v4) at (1,1.5) {$x_l)$};
\node (v3) at (-0.5,1.5) {$\cdots$};
\node (v5) at (-2,-1.5) {$(z_1$};
\node (v7) at (1,-1.5) {$z_n)$};
\node (v6) at (-0.5,-1.5) {$\cdots$};

\draw  (v1) -- (-0.5,0)[postaction={decorate, decoration={markings,mark=at position .5 with {\arrow[black]{stealth}}}}];

\draw  (v4) -- (-0.5,0)[postaction={decorate, decoration={markings,mark=at position .5 with {\arrow[black]{stealth}}}}];
\draw  (-0.5,0) -- (v5)[postaction={decorate, decoration={markings,mark=at position .6 with {\arrow[black]{stealth}}}}];
\draw  (-0.5,0) -- (v7)[postaction={decorate, decoration={markings,mark=at position .6 with {\arrow[black]{stealth}}}}];
\end{tikzpicture}\end{pmatrix}\\
&=&\epsilon_m\begin{pmatrix}\begin{tikzpicture}[scale=0.5]

\node (v2) at (-0.5,0) {};
\draw [fill] (-0.5,0) circle [radius=0.1];
\node at (3,0) {$\epsilon_m\begin{pmatrix}
[\Gamma_g\circ\Gamma_f, I,O]
\end{pmatrix}$};
\node (v1) at (-2,1.5) {$(x_1$};
\node (v4) at (1,1.5) {$x_l)$};
\node (v3) at (-0.5,1.5) {$\cdots$};
\node (v5) at (-2,-1.5) {$(z_1$};
\node (v7) at (1,-1.5) {$z_n)$};
\node (v6) at (-0.5,-1.5) {$\cdots$};

\draw  (v1) -- (-0.5,0)[postaction={decorate, decoration={markings,mark=at position .5 with {\arrow[black]{stealth}}}}];

\draw  (v4) -- (-0.5,0)[postaction={decorate, decoration={markings,mark=at position .5 with {\arrow[black]{stealth}}}}];
\draw  (-0.5,0) -- (v5)[postaction={decorate, decoration={markings,mark=at position .6 with {\arrow[black]{stealth}}}}];
\draw  (-0.5,0) -- (v7)[postaction={decorate, decoration={markings,mark=at position .6 with {\arrow[black]{stealth}}}}];
\end{tikzpicture}\end{pmatrix}\\
&=&\epsilon_m\circ T(\epsilon_m)\begin{pmatrix}\begin{tikzpicture}[scale=0.5]

\node (v2) at (-0.5,0) {};
\draw [fill] (-0.5,0) circle [radius=0.1];
\node at (3,0) {$[\Gamma_g\circ\Gamma_f, I,O]$};
\node (v1) at (-2,1.5) {$(x_1$};
\node (v4) at (1,1.5) {$x_l)$};
\node (v3) at (-0.5,1.5) {$\cdots$};
\node (v5) at (-2,-1.5) {$(z_1$};
\node (v7) at (1,-1.5) {$z_n)$};
\node (v6) at (-0.5,-1.5) {$\cdots$};

\draw  (v1) -- (-0.5,0)[postaction={decorate, decoration={markings,mark=at position .5 with {\arrow[black]{stealth}}}}];

\draw  (v4) -- (-0.5,0)[postaction={decorate, decoration={markings,mark=at position .5 with {\arrow[black]{stealth}}}}];
\draw  (-0.5,0) -- (v5)[postaction={decorate, decoration={markings,mark=at position .6 with {\arrow[black]{stealth}}}}];
\draw  (-0.5,0) -- (v7)[postaction={decorate, decoration={markings,mark=at position .6 with {\arrow[black]{stealth}}}}];
\end{tikzpicture}\end{pmatrix}\\
&=&\epsilon_m\circ \mu_{\mathcal{D}}\begin{pmatrix}\begin{tikzpicture}[scale=0.5]

\node (v2) at (-0.5,0) {};
\draw [fill] (-0.5,0) circle [radius=0.1];
\node at (3,0) {$[\Gamma_g\circ\Gamma_f, I,O]$};
\node (v1) at (-2,1.5) {$(x_1$};
\node (v4) at (1,1.5) {$x_l)$};
\node (v3) at (-0.5,1.5) {$\cdots$};
\node (v5) at (-2,-1.5) {$(z_1$};
\node (v7) at (1,-1.5) {$z_n)$};
\node (v6) at (-0.5,-1.5) {$\cdots$};

\draw  (v1) -- (-0.5,0)[postaction={decorate, decoration={markings,mark=at position .5 with {\arrow[black]{stealth}}}}];

\draw  (v4) -- (-0.5,0)[postaction={decorate, decoration={markings,mark=at position .5 with {\arrow[black]{stealth}}}}];
\draw  (-0.5,0) -- (v5)[postaction={decorate, decoration={markings,mark=at position .6 with {\arrow[black]{stealth}}}}];
\draw  (-0.5,0) -- (v7)[postaction={decorate, decoration={markings,mark=at position .6 with {\arrow[black]{stealth}}}}];
\end{tikzpicture}\end{pmatrix}\\
&=&\epsilon_m\begin{pmatrix} \begin{tikzpicture}[scale=0.5]

\node  at (0,2.2) {$(x_1$};
\node  at (1,2.2) {$\cdots$};
\node  at (2,2.2) {$x_l)$};
\node (v1) at (1,1) {};
\draw [fill] (1,1) circle [radius=0.055];
\node  at (1.5,1) {$f$};
\node (v2) at (0.3,0) {};
\node (v3) at (0.3,-1) {};
\node  at (-0.2,-0.5) {$y_1$};
\node (v5) at (1.7,0) {};
\node (v6) at (1.7,-1) {};
\node  at (2.5,-0.5) {$y_m$};
\node (v4) at (1,-2) {};
\draw [fill] (1,-2) circle [radius=0.055];
\node  at (1.5,-2) {$g$};

\node  at (0,-3.2) {$(z_1$};
\node (v11) at (1,-3) {$\cdots$};
\node  at (2,-3.2) {$z_n)$};
\node at (1,-0.5) {$\cdots$};

\draw  (0,2) -- (1,1)[postaction={decorate, decoration={markings,mark=at position .6 with {\arrow[black]{stealth}}}}];

\draw  (2,2) -- ((1,1)[postaction={decorate, decoration={markings,mark=at position .6 with {\arrow[black]{stealth}}}}];
\draw  (1,-2) -- (0,-3)[postaction={decorate, decoration={markings,mark=at position .6 with {\arrow[black]{stealth}}}}];

\draw  (1,-2) -- (2,-3)[postaction={decorate, decoration={markings,mark=at position .6 with {\arrow[black]{stealth}}}}];
\draw  plot[smooth, tension=.4] coordinates {(v1) (v2) (v3) (v4)}[postaction={decorate, decoration={markings,mark=at position .5 with {\arrow[black]{stealth}}}}];
\draw  plot[smooth, tension=.4] coordinates {(1,1) (v5) (v6) (1,-2)}[postaction={decorate, decoration={markings,mark=at position .5 with {\arrow[black]{stealth}}}}];
\end{tikzpicture}\end{pmatrix}

\end{matrix}
$$

where $[\Gamma_g\circ\Gamma_f, I,O]=\begin{matrix}\begin{tikzpicture}[scale=0.5]

\node  at (0,2.2) {$(x_1)$};
\node  at (1,2.2) {$\cdots$};
\node  at (2,2.2) {$(x_l)$};
\node (v1) at (1,1) {};
\draw [fill] (1,1) circle [radius=0.055];
\node  at (1.5,1) {$f$};
\node (v2) at (0.3,0) {};
\node (v3) at (0.3,-1) {};
\node  at (-0.2,-0.5) {$y_1$};
\node (v5) at (1.7,0) {};
\node (v6) at (1.7,-1) {};
\node  at (2.5,-0.5) {$y_m$};
\node (v4) at (1,-2) {};
\draw [fill] (1,-2) circle [radius=0.055];
\node  at (1.5,-2) {$g$};

\node  at (0,-3.2) {$(z_1)$};
\node (v11) at (1,-3) {$\cdots$};
\node  at (2,-3.2) {$(z_n);$};
\node at (1,-0.5) {$\cdots$};

\draw  (0,2) -- (1,1)[postaction={decorate, decoration={markings,mark=at position .6 with {\arrow[black]{stealth}}}}];

\draw  (2,2) -- ((1,1)[postaction={decorate, decoration={markings,mark=at position .6 with {\arrow[black]{stealth}}}}];
\draw  (1,-2) -- (0,-3)[postaction={decorate, decoration={markings,mark=at position .6 with {\arrow[black]{stealth}}}}];

\draw  (1,-2) -- (2,-3)[postaction={decorate, decoration={markings,mark=at position .6 with {\arrow[black]{stealth}}}}];
\draw  plot[smooth, tension=.4] coordinates {(v1) (v2) (v3) (v4)}[postaction={decorate, decoration={markings,mark=at position .5 with {\arrow[black]{stealth}}}}];
\draw  plot[smooth, tension=.4] coordinates {(1,1) (v5) (v6) (1,-2)}[postaction={decorate, decoration={markings,mark=at position .5 with {\arrow[black]{stealth}}}}];
\end{tikzpicture}\end{matrix}$

on the other hand, we have
$$
\begin{matrix}
\theta(g)\circ_{\mathcal{V}}\theta(f)&=&\epsilon_m\begin{pmatrix}\begin{tikzpicture}[scale=.5]
\node (v1) at (0,1.5) {$(\widetilde{x})$};
\node (v2) at (0,-3.5) {$(\widetilde{z})$};
\draw  (v1)-- (v2)[postaction={decorate, decoration={markings,mark=at position .93 with {\arrow[black]{stealth}}}}][postaction={decorate, decoration={markings,mark=at position .15 with {\arrow[black]{stealth}}}}][postaction={decorate, decoration={markings,mark=at position .55 with {\arrow[black]{stealth}}}}];
\draw[fill] (0,0) circle [radius=0.1];
\draw[fill] (0,-2) circle [radius=0.1];
\node at (0.5,-1) {$\widetilde{y}$};
\node at (1,0) {$\theta(f)$};
\node at (1,-2) {$\theta(g)$};
\end{tikzpicture}\end{pmatrix}\\
&=&\epsilon_m\begin{pmatrix}\begin{tikzpicture}[scale=.5]
\node (v1) at (0,1.5) {$(\widetilde{x})$};
\node (v2) at (0,-3.5) {$(\widetilde{z})$};
\draw  (v1)-- (v2)[postaction={decorate, decoration={markings,mark=at position .93 with {\arrow[black]{stealth}}}}][postaction={decorate, decoration={markings,mark=at position .15 with {\arrow[black]{stealth}}}}][postaction={decorate, decoration={markings,mark=at position .55 with {\arrow[black]{stealth}}}}];
\draw[fill] (0,0) circle [radius=0.1];
\draw[fill] (0,-2) circle [radius=0.1];
\node at (0.5,-1) {$\widetilde{y}$};
\node at (3,0) {$\epsilon_m([\Gamma_f, I_1,O_1])$};
\node at (3,-2) {$\epsilon_m([\Gamma_g, I_2,O_2])$};
\end{tikzpicture}\end{pmatrix}\\
&=&\epsilon_m\circ T(\epsilon_m)\begin{pmatrix}\begin{tikzpicture}[scale=.5]
\node (v1) at (0,1.5) {$(x_1\cdots x_l)$};
\node (v2) at (0,-3.5) {$(z_1\cdots z_n)$};
\draw  (v1)-- (v2)[postaction={decorate, decoration={markings,mark=at position .93 with {\arrow[black]{stealth}}}}][postaction={decorate, decoration={markings,mark=at position .15 with {\arrow[black]{stealth}}}}][postaction={decorate, decoration={markings,mark=at position .55 with {\arrow[black]{stealth}}}}];
\draw[fill] (0,0) circle [radius=0.1];
\draw[fill] (0,-2) circle [radius=0.1];
\node at (1.5,-1) {$y_1\cdots y_m$};
\node at (3,0) {$[\Gamma_f, I_1,O_1]$};
\node at (3,-2) {$[\Gamma_g, I_2,O_2]$};
\end{tikzpicture}\end{pmatrix}\\
&=&\epsilon_m\circ \mu_{\mathcal{D}}\begin{pmatrix}\begin{tikzpicture}[scale=.5]
\node (v1) at (0,1.5) {$(x_1\cdots x_l)$};
\node (v2) at (0,-3.5) {$(z_1\cdots z_n)$};
\draw  (v1)-- (v2)[postaction={decorate, decoration={markings,mark=at position .93 with {\arrow[black]{stealth}}}}][postaction={decorate, decoration={markings,mark=at position .15 with {\arrow[black]{stealth}}}}][postaction={decorate, decoration={markings,mark=at position .55 with {\arrow[black]{stealth}}}}];
\draw[fill] (0,0) circle [radius=0.1];
\draw[fill] (0,-2) circle [radius=0.1];
\node at (1.5,-1) {$y_1\cdots y_m$};
\node at (3,0) {$[\Gamma_f, I_1,O_1]$};
\node at (3,-2) {$[\Gamma_g, I_2,O_2]$};
\end{tikzpicture}\end{pmatrix}\\
&=&\epsilon_m\begin{pmatrix}\begin{tikzpicture}[scale=0.5]

\node  at (0,2.2) {$(x_1$};
\node  at (1,2.2) {$\cdots$};
\node  at (2,2.2) {$x_l)$};
\node (v1) at (1,1) {};
\draw [fill] (1,1) circle [radius=0.055];
\node  at (1.5,1) {$f$};
\node (v2) at (0.3,0) {};
\node (v3) at (0.3,-1) {};
\node  at (-0.2,-0.5) {$y_1$};
\node (v5) at (1.7,0) {};
\node (v6) at (1.7,-1) {};
\node  at (2.5,-0.5) {$y_m$};
\node (v4) at (1,-2) {};
\draw [fill] (1,-2) circle [radius=0.055];
\node  at (1.5,-2) {$g$};

\node  at (0,-3.2) {$(z_1$};
\node (v11) at (1,-3) {$\cdots$};
\node  at (2,-3.2) {$z_n)$};
\node at (1,-0.5) {$\cdots$};

\draw  (0,2) -- (1,1)[postaction={decorate, decoration={markings,mark=at position .6 with {\arrow[black]{stealth}}}}];

\draw  (2,2) -- ((1,1)[postaction={decorate, decoration={markings,mark=at position .6 with {\arrow[black]{stealth}}}}];
\draw  (1,-2) -- (0,-3)[postaction={decorate, decoration={markings,mark=at position .6 with {\arrow[black]{stealth}}}}];

\draw  (1,-2) -- (2,-3)[postaction={decorate, decoration={markings,mark=at position .6 with {\arrow[black]{stealth}}}}];
\draw  plot[smooth, tension=.4] coordinates {(v1) (v2) (v3) (v4)}[postaction={decorate, decoration={markings,mark=at position .5 with {\arrow[black]{stealth}}}}];
\draw  plot[smooth, tension=.4] coordinates {(1,1) (v5) (v6) (1,-2)}[postaction={decorate, decoration={markings,mark=at position .5 with {\arrow[black]{stealth}}}}];
\end{tikzpicture} \end{pmatrix}

\end{matrix}
$$
where  $[\Gamma_f, I_1,O_1]=\begin{matrix}\begin{tikzpicture}[scale=0.5]

\node (v2) at (-0.5,0) {};
\draw [fill] (-0.5,0) circle [radius=0.1];
\node at (0,0) {$f$};
\node (v1) at (-2,1.5) {$(x_1$};
\node (v4) at (1,1.5) {$x_l)$};
\node (v3) at (-0.5,1.5) {$\cdots$};
\node (v5) at (-2,-1.5) {$(y_1$};
\node (v7) at (1,-1.5) {$y_m)$};
\node (v6) at (-0.5,-1.5) {$\cdots$};

\draw  (v1) -- (-0.5,0)[postaction={decorate, decoration={markings,mark=at position .5 with {\arrow[black]{stealth}}}}];

\draw  (v4) -- (-0.5,0)[postaction={decorate, decoration={markings,mark=at position .5 with {\arrow[black]{stealth}}}}];
\draw  (-0.5,0) -- (v5)[postaction={decorate, decoration={markings,mark=at position .6 with {\arrow[black]{stealth}}}}];
\draw  (-0.5,0) -- (v7)[postaction={decorate, decoration={markings,mark=at position .6 with {\arrow[black]{stealth}}}}];
\end{tikzpicture}\end{matrix}$ and $[\Gamma_g, I_2,O_2]=\begin{matrix}\begin{tikzpicture}[scale=0.5]

\node (v2) at (-0.5,0) {};
\draw [fill] (-0.5,0) circle [radius=0.1];
\node at (0,0) {$g$};
\node (v1) at (-2,1.5) {$(y_1$};
\node (v4) at (1,1.5) {$y_m)$};
\node (v3) at (-0.5,1.5) {$\cdots$};
\node (v5) at (-2,-1.5) {$(z_1$};
\node (v7) at (1,-1.5) {$z_n).$};
\node (v6) at (-0.5,-1.5) {$\cdots$};

\draw  (v1) -- (-0.5,0)[postaction={decorate, decoration={markings,mark=at position .5 with {\arrow[black]{stealth}}}}];

\draw  (v4) -- (-0.5,0)[postaction={decorate, decoration={markings,mark=at position .5 with {\arrow[black]{stealth}}}}];
\draw  (-0.5,0) -- (v5)[postaction={decorate, decoration={markings,mark=at position .6 with {\arrow[black]{stealth}}}}];
\draw  (-0.5,0) -- (v7)[postaction={decorate, decoration={markings,mark=at position .6 with {\arrow[black]{stealth}}}}];
\end{tikzpicture}\end{matrix}$

Thus we get that $$\theta(g\circ_{\epsilon} f)=\theta(g)\circ_{\mathcal{V}}\theta(f).$$

\end{proof}
Note that the above proposition is not a direct consequence of corollary $6.2.11$ and $6.2.12$.

\begin{prop}\label{preserve fusions}
$\theta$ preserves fusions, that is, for any pair of linear partitions $I$ and $O$, $$\theta=\theta\circ \ast^I_O.$$
\end{prop}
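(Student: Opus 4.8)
The plan is to reduce the claim entirely to the fusion-composition law of Proposition \ref{fusion}, since by definition $\theta$ is itself a fusion along trivial linear partitions. Recall that $\theta(f)=\ast^{I^{trivial}}_{O^{trivial}}(f)$, where $I^{trivial}$ and $O^{trivial}$ denote the trivial (single-block) linear partitions whose lengths equal $|dom(f)|$ and $|cod(f)|$ respectively. Thus the identity $\theta=\theta\circ\ast^I_O$ is nothing but a statement about iterating two fusion operations, and I expect no geometric or diagrammatic input beyond what is already packaged in Proposition \ref{fusion}.

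First I would fix $f:x_1\cdots x_m\rightarrow y_1\cdots y_n$ and treat the non-degenerate case $||I||=m$, $||O||=n$ (when the lengths do not match, $\ast^I_O$ outputs $\epsilon_m(\emptyset)$, and the statement is to be read as the conditional identity that is valid wherever $\ast^I_O$ is genuinely defined). Writing $p=|I|$ and $q=|O|$ for the numbers of blocks, the morphism $\ast^I_O(f)$ has domain of length $p$ and codomain of length $q$, so applying $\theta$ to it means applying $\ast^{J^{trivial}}_{K^{trivial}}$, where $J^{trivial}$ and $K^{trivial}$ are the trivial partitions of lengths $p$ and $q$. Hence, evaluated at $f$, we have $\theta\circ\ast^I_O=\ast^{J^{trivial}}_{K^{trivial}}\circ\ast^I_O$.

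The key step is then to observe that $J^{trivial}$ is a linear partition of $I$ --- indeed the trivial one, gathering all $p$ blocks of $I$ into a single super-block --- and likewise that $K^{trivial}$ is a linear partition of $O$. This is exactly the hypothesis of Proposition \ref{fusion}, which yields $\ast^{J^{trivial}}_{K^{trivial}}\circ\ast^I_O=\ast^{J^{trivial}\triangleleft I}_{K^{trivial}\triangleleft O}$. By the composition rule for linear partitions in Section $3.7$ (if either factor is trivial, the composite is trivial), $J^{trivial}\triangleleft I$ is the trivial partition of length $||I||=m$ and $K^{trivial}\triangleleft O$ is the trivial partition of length $||O||=n$. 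Therefore $\ast^{J^{trivial}\triangleleft I}_{K^{trivial}\triangleleft O}(f)=\ast^{I^{trivial}}_{O^{trivial}}(f)=\theta(f)$, which is the desired equality.

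The main obstacle, such as it is, is purely bookkeeping: one must keep careful track of the two distinct ``sizes'' attached to a linear partition --- its cardinality $|I|$ (number of blocks) and its total length $||I||$ (number of elements) --- so that the trivial partitions are taken at the correct sizes and the relation ``$J^{trivial}$ is a linear partition of $I$'' is asserted at the level of the $p$ blocks of $I$ rather than of its $m$ underlying elements. Once these sizes are matched, the proof is a one-line invocation of Proposition \ref{fusion} together with the triviality rule for $\triangleleft$; I do not anticipate needing any of the diagrammatic commutative-square arguments used for the tensor, composition, and identity cases in Propositions \ref{preserve tensor and composition} and \ref{preserve unit}.
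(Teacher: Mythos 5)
Your proposal is correct and is essentially identical to the paper's own argument: the paper's proof reads, in its entirety, that $\theta=\ast^{I^{trivial}}_{O^{trivial}}$ and hence the proposition is a special case of Proposition \ref{fusion}. Your write-up simply makes explicit the bookkeeping the paper leaves implicit, namely that the trivial partition on the $|I|$ (resp.\ $|O|$) blocks is a linear partition of $I$ (resp.\ $O$), that $\triangleleft$ with a trivial factor yields a trivial partition, and that the degenerate length-mismatch case is read off from the definition of $\ast^I_O$.
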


\begin{proof}
Note that $\theta=\ast^{I^{trivial}}_{O^{trivial}},$ then the proposition is a spacial case of proposition \ref{fusion}.
\end{proof}

Let $(\mathcal{D},\epsilon)$ be a tensor manifold, then $\Phi\Psi((\mathcal{D},\epsilon))$ is a tensor manifold and we denote it by $(\widehat{\mathcal{D}},\widehat{\epsilon})$. Now we define a morphism $\epsilon_{\ast}:\mathcal{D}\rightarrow \widehat{\mathcal{D}}$  of tensor schemes in the following way:

$\bullet$  $(\epsilon_{\ast})_o$ is the identity map from $Ob(\mathcal{D})$ to $Ob(\widehat{\mathcal{D}})$;

$\bullet$ for a morphism $f:x_1\cdots x_m\rightarrow y_1\cdots y_n$ in $\mathcal{D}$, $(\epsilon_{\ast})_m(f):x_1\cdots x_m\rightarrow y_1\cdots y_n$ is a prime diagram in $\Psi((\mathcal{D},\epsilon))$ with domain $x_1\cdots x_m$, codomain $ y_1\cdots y_n$ and the unique vertex decorated by the morphism $\theta(f)$ in $\Psi((\mathcal{D},\epsilon))$ which is also a morphism in $\mathcal{D}$.
$$
&(\text{proposition \ref{preserve fusions}})\\
&=&(\epsilon_{\ast})_m(\ast^I_O(f)).&(\text{definition of $(\epsilon_{\ast})_m$})

\end{matrix}
$$

\end{proof}

The following proposition is not difficult to prove.

\begin{prop}
The construction of $\epsilon_{\ast}$ produces a natural transformation $\widehat{\eta}:I_{\mathbf{T.Sch}^T}\rightarrow\Phi\Psi$.
\end{prop}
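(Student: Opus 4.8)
The plan is to verify that the assignment $(\mathcal{D},\epsilon)\mapsto \epsilon_{\ast}$ defines a natural transformation $\widehat{\eta}\colon I_{\mathbf{T.Sch}^T}\rightarrow \Phi\Psi$. Since we have already shown in the previous proposition that each $\epsilon_{\ast}\colon (\mathcal{D},\epsilon)\rightarrow (\widehat{\mathcal{D}},\widehat{\epsilon})=\Phi\Psi((\mathcal{D},\epsilon))$ is a morphism of tensor manifolds, the only remaining task is naturality: for any morphism $\varphi\colon (\mathcal{D}_1,\epsilon_1)\rightarrow (\mathcal{D}_2,\epsilon_2)$ of tensor manifolds, I must establish that the square
$$
\xymatrix{(\mathcal{D}_1,\epsilon_1)\ar[r]^{(\epsilon_1)_{\ast}}\ar[d]_{\varphi}&(\widehat{\mathcal{D}}_1,\widehat{\epsilon}_1)\ar[d]^{\Phi\Psi(\varphi)}\\(\mathcal{D}_2,\epsilon_2)\ar[r]^{(\epsilon_2)_{\ast}}&(\widehat{\mathcal{D}}_2,\widehat{\epsilon}_2)}
$$
commutes in $\mathbf{T.Sch}^T$. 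Because morphisms of tensor schemes are determined by their action on objects and morphisms separately, I would check the two components independently.

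First I would handle the object level. Both $(\epsilon_i)_{\ast}$ act as the identity on objects by construction, and $\Phi\Psi(\varphi)$ on objects is just $\varphi_o$ (since $\Psi$ keeps $Ob(\mathcal{V})=Ob(\mathcal{D})$ and $\Phi=U(\,\cdot\,)$ keeps objects). Thus on objects both composites equal $\varphi_o$, and commutativity is immediate. Next, on the morphism level, I would take an arbitrary $f\colon x_1\cdots x_m\rightarrow y_1\cdots y_n$ in $\mathcal{D}_1$ and trace it around both paths. Going right-then-down gives the prime diagram with vertex decorated by $\theta_1(f)$, whose image under $\Phi\Psi(\varphi)=U(\Psi(\varphi))$ is the prime diagram with vertex decorated by $\Psi(\varphi)(\theta_1(f))=\varphi_m(\theta_1(f))$, using that $\Psi(\varphi)$ agrees with $\varphi$ on hom-sets. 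Going down-then-right gives the prime diagram with vertex decorated by $\theta_2(\varphi_m(f))$. So the naturality square reduces to the single equation
$$
\varphi_m(\theta_1(f))=\theta_2(\varphi_m(f)).
$$

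The heart of the argument, and the step I expect to require the most care, is exactly this intertwining identity $\varphi\circ\theta_1=\theta_2\circ\varphi$. Here I would unwind $\theta_i=\ast^{I^{trivial}}_{O^{trivial}}$ and invoke Proposition \ref{morphism}: since $\varphi$ is a morphism of $T$-algebras, it preserves the fusion operations, so $\varphi(\ast^{I}_{O}(f))=\ast^{I}_{O}(\varphi(f))$ for every pair of linear partitions, and in particular for the trivial ones. This gives the desired equality directly. The main obstacle is not conceptual but bookkeeping: I must confirm that the trivial linear partitions used to define $\theta_1$ on $\mathcal{D}_1$ and $\theta_2$ on $\mathcal{D}_2$ genuinely correspond under $\varphi$ (which holds because $\varphi_o$ preserves lengths of words, hence preserves the cardinalities $\|I\|,\|O\|$ that index the trivial partitions), and that the coarse-graining data packaged into $\widehat{\epsilon}_i$ transports compatibly. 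Once the identity $\varphi\circ\theta_1=\theta_2\circ\varphi$ is in hand, both the object-level and morphism-level commutativity follow, and I would conclude that $\widehat{\eta}$ is a well-defined natural transformation.
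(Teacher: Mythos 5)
Your proof is correct. The paper in fact gives no argument for this proposition at all—it merely remarks that it "is not difficult to prove"—so your write-up supplies exactly the missing verification rather than paralleling or diverging from an existing one. Your reduction of the naturality square to the single intertwining identity $\varphi_m(\theta_1(f))=\theta_2(\varphi_m(f))$, followed by deducing that identity from the fact that a morphism of $T$-algebras preserves fusions (Proposition \ref{morphism}) applied to the trivial linear partitions—whose lengths match on both sides because $\widehat{\varphi_o}$ preserves word length—is sound and is precisely the argument the authors evidently had in mind.
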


For any tensor manifold $(\mathcal{D},\epsilon)$, we define the \textbf{core} of $(\mathcal{D},\epsilon)$ to be the image of $\epsilon_{\ast}:(\mathcal{D},\epsilon)\rightarrow (\widehat{\mathcal{D}},\widehat{\epsilon})$ which  is a sub-$T$-algebra of $(\widehat{\mathcal{D}},\widehat{\epsilon})$,  and we denote it by  $(\underline{\mathcal{D}},\underline{\epsilon})$. So for any tensor manifold $(\mathcal{D},\epsilon)$, there is a natural morphism of tensor manifolds $$\widetilde{\epsilon}:(\mathcal{D},\epsilon)\rightarrow (\underline{\mathcal{D}},\underline{\epsilon}).$$

\begin{thm}
$\Phi$ is right adjoint to $\Psi$, that is, for any strict tensor category $\mathcal{V}$ and tensor manifold $(\mathcal{D},\epsilon)$ we have a natural isomorphism $$\Xi:Hom_{\mathbf{Str.T}}(\Psi((\mathcal{D},\epsilon)),\mathcal{V})\overset{\sim}\rightarrow Hom_{\mathbf{T.Sch}^{T}}((\mathcal{D},\epsilon),\Phi(\mathcal{V})).$$
\end{thm}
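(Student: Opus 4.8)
The goal is to establish the adjunction $\Psi\dashv\Phi$ by exhibiting a natural bijection $\Xi$ between $Hom_{\mathbf{Str.T}}(\Psi((\mathcal{D},\epsilon)),\mathcal{V})$ and $Hom_{\mathbf{T.Sch}^{T}}((\mathcal{D},\epsilon),\Phi(\mathcal{V}))$. The key structural tool already in place is the natural transformation $\widehat{\eta}:I_{\mathbf{T.Sch}^T}\rightarrow\Phi\Psi$ produced by the morphisms $\epsilon_{\ast}:(\mathcal{D},\epsilon)\rightarrow(\widehat{\mathcal{D}},\widehat{\epsilon})=\Phi\Psi((\mathcal{D},\epsilon))$, together with the fact that $\Psi\circ\Phi\cong I_{\mathbf{Str.T}}$ from Theorem $6.3.1$. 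The plan is to use $\widehat{\eta}$ as the unit of the adjunction and the isomorphism $\Psi\Phi\cong I$ (giving the counit) and to verify the triangle identities, so that the bijection $\Xi$ follows by the standard characterization of adjunctions via unit/counit.

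First I would define $\Xi$ explicitly. Given a strict tensor functor $K:\Psi((\mathcal{D},\epsilon))\rightarrow\mathcal{V}$, its image $\Xi(K)$ should be the composite $\Phi(K)\circ\widehat{\eta}_{(\mathcal{D},\epsilon)}:(\mathcal{D},\epsilon)\rightarrow\Phi\Psi((\mathcal{D},\epsilon))\rightarrow\Phi(\mathcal{V})$, which is a morphism of tensor manifolds since both $\widehat{\eta}_{(\mathcal{D},\epsilon)}$ and $\Phi(K)$ are. Conversely, given a morphism of $T$-algebras $\varphi:(\mathcal{D},\epsilon)\rightarrow\Phi(\mathcal{V})=(U(\mathcal{V}),\epsilon_{U(\mathcal{V})})$, I would apply $\Psi$ to get $\Psi(\varphi):\Psi((\mathcal{D},\epsilon))\rightarrow\Psi\Phi(\mathcal{V})$ and then post-compose with the canonical isomorphism $\Psi\Phi(\mathcal{V})\overset{\sim}\rightarrow\mathcal{V}$ coming from $\Psi\circ\Phi\cong I_{\mathbf{Str.T}}$, yielding $\Xi^{-1}(\varphi):\Psi((\mathcal{D},\epsilon))\rightarrow\mathcal{V}$. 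The bulk of the work is to check these two assignments are mutually inverse, which reduces to the two triangle identities: $\Psi(\widehat{\eta}_{(\mathcal{D},\epsilon)})$ followed by the counit isomorphism is the identity on $\Psi((\mathcal{D},\epsilon))$, and $\Phi$ applied to the counit, composed with $\widehat{\eta}_{\Phi(\mathcal{V})}$, is the identity on $\Phi(\mathcal{V})$.

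The concrete content of the triangle identities is best verified on generators. On objects everything is transparent because $\Psi$ and $\Phi$ act on objects essentially by identity maps (recall $Ob(\Psi((\mathcal{D},\epsilon)))=Ob(\mathcal{D})$ with tensor product $\epsilon_o$, and $Ob(\Phi(\mathcal{V}))=Ob(\mathcal{V})$), so the real check is at the level of morphisms. Here I would unwind the definitions: a morphism of $\Psi((\mathcal{D},\epsilon))$ is a morphism $f$ of $\mathcal{D}$ viewed in the strict tensor category structure, and $\widehat{\eta}=\epsilon_{\ast}$ sends $f$ to the prime diagram decorated by $\theta(f)=\ast^{I^{trivial}}_{O^{trivial}}(f)$. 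The fact that $\theta$ preserves identities, tensor products, compositions and fusions (Propositions $6.4.1$, $6.4.2$, $6.4.3$) guarantees that $\epsilon_{\ast}$ is a well-defined morphism of tensor manifolds and that the triangle identities collapse to the already-established relations $\theta=\theta\circ\ast^I_O$ and $\Psi\Phi\cong I$.

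\textbf{Main obstacle.} The hard part will be verifying the first triangle identity, namely that the counit isomorphism $\Psi\Phi(\mathcal{V})\cong\mathcal{V}$ is genuinely compatible with $\Psi(\widehat{\eta})$, because this forces one to track how the structure map of the core $(\underline{\mathcal{D}},\underline{\epsilon})$ interacts with the evaluation-and-coarse-graining description of $\Phi\Psi$. In particular the subtlety is that $\widehat{\eta}_{(\mathcal{D},\epsilon)}$ need not be an isomorphism (its image is only the core, a sub-$T$-algebra), so the adjunction is genuinely non-trivial and one cannot simply invoke an equivalence; instead one must show that applying $\Psi$ collapses the distinction between $(\mathcal{D},\epsilon)$ and its core at the level of the underlying strict tensor category, which is exactly the statement that $\Psi$ identifies a tensor manifold with its core. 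I would isolate this as a lemma: $\Psi(\widetilde{\epsilon}):\Psi((\mathcal{D},\epsilon))\rightarrow\Psi((\underline{\mathcal{D}},\underline{\epsilon}))$ is an isomorphism of strict tensor categories, proved by observing that $\theta$ is surjective onto the morphisms of $\Psi((\mathcal{D},\epsilon))$ and that $\Psi$ only remembers the operations $\otimes_{\mathcal{V}}$, $\circ_{\mathcal{V}}$ and identity morphisms, all of which are preserved and reflected by $\theta$. Once this lemma is in hand, naturality of $\Xi$ in both $\mathcal{V}$ and $(\mathcal{D},\epsilon)$ follows routinely from the naturality of $\widehat{\eta}$ and of the counit, and the theorem is complete.
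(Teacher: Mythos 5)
Your proposal is correct, and the underlying data coincide with the paper's: your $\Xi(K)=\Phi(K)\circ\widehat{\eta}_{(\mathcal{D},\epsilon)}$ is exactly the paper's $\varphi_K$ (the paper itself remarks that $\varphi_K=\Phi(K)\circ\widehat{\eta}_{(\mathcal{D},\epsilon)}$), and your $\Xi^{-1}(\varphi)=\widehat{\varepsilon}_{\mathcal{V}}\circ\Psi(\varphi)$ unwinds to the paper's $K_\varphi$ with $(K_\varphi)_o=\varphi_o$ and $(K_\varphi)_m=\varepsilon_{\mathcal{V}}\circ\varphi_m$, since on a prime diagram between length-one words the counit is precisely "take the vertex decoration". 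Where you genuinely diverge is in the verification strategy: the paper checks directly that $\Xi$ and $\Xi^{-1}$ are mutually inverse and then checks naturality (both dismissed as routine), whereas you invoke the unit/counit characterization of an adjunction and reduce everything to the two triangle identities. Your route buys automatic naturality of $\Xi$, but it costs you two things the paper's route does not require explicitly: first, you must upgrade Theorem $6.3.1$'s statement $\Psi\circ\Phi\cong I_{\mathbf{Str.T}}$ to an explicitly constructed \emph{natural} transformation $\widehat{\varepsilon}:\Psi\Phi\rightarrow I$ (the paper only asserts the isomorphism is "evident from the definitions"); second, you must identify the computational core of the first triangle identity correctly. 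On that point your proposal is slightly off: the identity $\widehat{\varepsilon}_{\Psi}\circ\Psi(\widehat{\eta})=1_{\Psi}$ reduces not to the fusion relation $\theta=\theta\circ\ast^I_O$ of Proposition $6.4.3$, but to the fact that $\theta(f)=f$ whenever $f$ is a morphism between length-one words, which is exactly the unit axiom of the $T$-algebra (statement $(2)$ of Lemma $6.1.8$, i.e. $\epsilon_m\circ(\eta_{\mathcal{D}})_m=\mathrm{id}$ applied to the fully fissus prime diagram on a singleton partition). Once this is in hand, your worry about the core is a red herring: the first triangle identity is a two-line computation ($f\mapsto\theta(f)=f$) and does not need the lemma that $\Psi(\widetilde{\epsilon})$ is an isomorphism, although that lemma is true and is an equivalent repackaging of the same fact. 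With these two adjustments your argument closes, and it is arguably the cleaner of the two organizations.
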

\begin{proof}
$(1)$  For  any strict tensor functor  $K:\Psi((\mathcal{D},\epsilon))\rightarrow\mathcal{V}$,  we define a morphism $\varphi_K=\Xi(K):(\mathcal{D},\epsilon)\rightarrow \Phi(\mathcal{V})$ as following:

$\bullet$  notice that  $Ob(\Phi(\mathcal{V}))=Ob(\mathcal{V})$, $Ob(\mathcal{D})=Ob(\Psi((\mathcal{D},\epsilon)))$ as monoids, so we define $(\varphi_K)_o=K_o$ which is a morphism of monoids;

$\bullet$ for any morphism $f:x_1\cdots x_m\rightarrow y_1\cdots y_n$ in $(\mathcal{D},\epsilon)$, we define

$$(\varphi_K)_m(f)=\begin{matrix}
\begin{tikzpicture}[scale=.7]
\node (v1) at (0,0) {};
\draw[fill] (0,0) circle [radius=0.055];
\node at (1.5,0){$K_m\circ \theta(f),$};
\node (v2) at (-2,1.5) {$K_o x_1$};
\node (v3) at (-0.5,1.5) {$K_o x_2$};
\node (v4) at (1,1.5) {$\cdots$};
\node (v5) at (2.5,1.5) {$K_o x_m$};
\node (v6) at (-2,-1.5) {$K_o y_1$};
\node (v7) at (-0.5,-1.5) {$K_oy_2$};
\node at (1,-1.5) {$\cdots$};
\node (v8) at (2.5,-1.5) {$K_oy_n$};
\draw  (v2) -- (0,0)[postaction={decorate, decoration={markings,mark=at position .50 with {\arrow[black]{stealth}}}}];
\draw  (v3) -- (0,0)[postaction={decorate, decoration={markings,mark=at position .50 with {\arrow[black]{stealth}}}}];
\draw  (v5) -- (0,0)[postaction={decorate, decoration={markings,mark=at position .50 with {\arrow[black]{stealth}}}}];
\draw  (v6) -- (0,0)[postaction={decorate, decoration={markings,mark=at position .50 with {\arrowreversed[black]{stealth}}}}];
\draw  (v7) -- (0,0)[postaction={decorate, decoration={markings,mark=at position .50 with {\arrowreversed[black]{stealth}}}}];
\draw  (v8) -- (0,0)[postaction={decorate, decoration={markings,mark=at position .50 with {\arrowreversed[black]{stealth}}}}];
\end{tikzpicture}
\end{matrix}$$

In fact, it is not difficult to see that $\varphi_K=\Phi(K)\circ \widehat{\eta}_{(\mathcal{D},\epsilon)}$, hence it is a morphism of tensor manifolds.\\

$(2)$ For any morphism of $T$-algebras  $\varphi:(\mathcal{D},\epsilon)\rightarrow \Psi(\mathcal{V})$,  we define a strict tensor functor $K_{\varphi}=\Xi^{-1}(\varphi):\Phi((\mathcal{D},\epsilon))\rightarrow \mathcal{V}$ as following:

$\bullet$ on the level of objects, $(K_{\varphi})_o=\varphi_o;$

$\bullet$ on the level of morphisms, $(K_{\varphi})_m=\varepsilon_{\mathcal{V}}\circ\varphi_m$ which sends a morphism $f:x\rightarrow y$ in $\mathcal{D}$ to $\epsilon_{\mathcal{V}}(\begin{matrix}
\begin{tikzpicture}[scale=.5]
\node (v2) at (-0.5,0.5) {};
\node (v1) at (-0.5,2) {$\varphi_o (x)$};
\node (v3) at (-0.5,-1) {$\varphi_o (y)$};
\node  at (0.7,0.5) {$\varphi_m (f)$};
\draw [fill](v2) circle [radius=0.1];
\draw  (v1) -- (-0.5,0.6)[postaction={decorate, decoration={markings,mark=at position .70 with {\arrow[black]{stealth}}}}];
\draw  (v3) -- (-0.5,0.4)[postaction={decorate, decoration={markings,mark=at position .70 with {\arrowreversed[black]{stealth}}}}];
\end{tikzpicture}
\end{matrix})$.

In fact, we can identify $K_{\varphi}$ with the restriction of $\varphi$ under the pair of maps $j_o^{\mathcal{V}}$, $j_m^{\mathcal{V}}$.
The fact that $\Xi^{-1}(\varphi)$ is a well-defined strict tensor functor can be directly checked from the fact that $\varphi$ is a morphism of $T$-algebras.

$(3)$ By  a routine check, we can see that $\Xi$ and $\Xi^{-1}$ are indeed inverse functions of each other.

$(4)$ The naturality of  $\Xi$ is not difficult to check.

\end{proof}

A $T$-algebra $(\mathcal{D},\epsilon)$ is called \textbf{critical}, if $(\mathcal{D},\epsilon)$ is isomorphic to $\Phi(\mathcal{V})$ for some strict tensor category $\mathcal{V}$.

\begin{prop}
$(\mathcal{D},\epsilon)$ is critical if and only if the natural morphism $\epsilon_{\ast}:\mathcal{D}\rightarrow \widehat{\mathcal{D}}$ is an isomorphism.
\end{prop}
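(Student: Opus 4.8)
The plan is to recognize this as the standard characterization of the essential image of a fully faithful right adjoint, applied to the adjunction $\Psi\dashv\Phi$ of Theorem $6.4.6$, whose unit is precisely the natural transformation $\widehat{\eta}:I_{\mathbf{T.Sch}^{T}}\to\Phi\Psi$ built from $\epsilon_{\ast}$. Two facts should be extracted from the preceding material before anything else: (i) $\widehat{\eta}_{(\mathcal{D},\epsilon)}=\epsilon_{\ast}$ is genuinely the unit of this adjunction, which is visible from the formula $\varphi_K=\Phi(K)\circ\widehat{\eta}_{(\mathcal{D},\epsilon)}$ appearing in the proof of Theorem $6.4.6$; and (ii) the counit $\widehat{\varepsilon}:\Psi\Phi\to I_{\mathbf{Str.T}}$ is a natural isomorphism, which is exactly the content of $\Psi\circ\Phi\cong I_{\mathbf{Str.T}}$ from the embedding theorem of Section $6.3$, since $\widehat{\varepsilon}_{\mathcal{V}}=\Xi^{-1}(\mathrm{id}_{\Phi(\mathcal{V})})$ is forced to coincide with that isomorphism. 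Granting (i) and (ii), the proof is purely formal.

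For the ($\Leftarrow$) direction I would argue: if $\epsilon_{\ast}=\widehat{\eta}_{(\mathcal{D},\epsilon)}$ is an isomorphism, then $(\mathcal{D},\epsilon)\cong\Phi\Psi((\mathcal{D},\epsilon))=\Phi(\mathcal{V})$ with $\mathcal{V}:=\Psi((\mathcal{D},\epsilon))$, so $(\mathcal{D},\epsilon)$ is critical by definition. For the ($\Rightarrow$) direction, suppose $(\mathcal{D},\epsilon)\cong\Phi(\mathcal{V})$ via an isomorphism of $T$-algebras $\psi$. Naturality of $\widehat{\eta}$ gives a commuting square relating $\widehat{\eta}_{(\mathcal{D},\epsilon)}$ and $\widehat{\eta}_{\Phi(\mathcal{V})}$ whose vertical maps $\psi$ and $\Phi\Psi(\psi)$ are isomorphisms (functors preserve isomorphisms); hence $\widehat{\eta}_{(\mathcal{D},\epsilon)}$ is invertible if and only if $\widehat{\eta}_{\Phi(\mathcal{V})}$ is. The triangle identity of the adjunction reads $\Phi(\widehat{\varepsilon}_{\mathcal{V}})\circ\widehat{\eta}_{\Phi(\mathcal{V})}=\mathrm{id}_{\Phi(\mathcal{V})}$; since $\widehat{\varepsilon}_{\mathcal{V}}$ is an isomorphism by (ii), so is $\Phi(\widehat{\varepsilon}_{\mathcal{V}})$, and therefore $\widehat{\eta}_{\Phi(\mathcal{V})}=\Phi(\widehat{\varepsilon}_{\mathcal{V}})^{-1}$ is an isomorphism. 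This closes both implications.

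I expect the main obstacle to lie not in the formal argument but in the bookkeeping behind (i) and (ii): confirming that the $\widehat{\eta}$ produced from $\epsilon_{\ast}$ really serves as the unit of $\Psi\dashv\Phi$, and that the counit $\widehat{\varepsilon}=\Xi^{-1}(\mathrm{id})$ matches the known isomorphism $\Psi\Phi\cong I_{\mathbf{Str.T}}$, so that the triangle identity may legitimately be invoked. If a self-contained argument avoiding the abstract triangle identity were preferred, the alternative is to unwind $\epsilon_{\ast}$ directly on $\Phi(\mathcal{V})=(U(\mathcal{V}),\epsilon)$: there $\theta$ sends a prime diagram to the value of its trivial fusion, every morphism of $U(\mathcal{V})$ is already a prime diagram decorated by an evaluated morphism, so the core $(\underline{\mathcal{D}},\underline{\epsilon})$ is all of $U(\mathcal{V})$, and one checks that $\epsilon_{\ast}$ is the identity on objects and bijective on morphisms. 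I would use the abstract route as the primary proof and keep this hands-on computation only as a verification of (i) and (ii).
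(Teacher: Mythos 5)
Your proof is correct, but note that the paper itself offers no argument for this proposition at all: it is stated bare, immediately after the definition of a critical algebra, evidently regarded as a consequence of the surrounding material. So there is nothing to match against; what you have done is supply the missing argument, and your two supporting facts are indeed available in the paper. Fact (i) is exactly the displayed identity $\varphi_K=\Phi(K)\circ\widehat{\eta}_{(\mathcal{D},\epsilon)}$ in the proof of the adjunction theorem (take $K=\mathrm{id}_{\Psi((\mathcal{D},\epsilon))}$ to see $\widehat{\eta}=\Xi(\mathrm{id})$ is the unit), and fact (ii) can be read off from the explicit formula for $\Xi^{-1}$ given there: $\Xi^{-1}(\mathrm{id}_{\Phi(\mathcal{V})})$ is the identity on objects and, on morphisms, sends a prime diagram with one-letter domain and codomain to the morphism of $\mathcal{V}$ decorating its unique vertex, which is a bijection. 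Granting these, your use of naturality of $\widehat{\eta}$ plus the triangle identity $\Phi(\widehat{\varepsilon}_{\mathcal{V}})\circ\widehat{\eta}_{\Phi(\mathcal{V})}=\mathrm{id}_{\Phi(\mathcal{V})}$ is the standard and cleanest route, and the ($\Leftarrow$) direction is immediate (the set-theoretic inverse of a bijective morphism of $T$-algebras is again a morphism of $T$-algebras, so no issue there).

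The one phrase I would tighten is that the counit is \emph{forced} to coincide with the isomorphism $\Psi\circ\Phi\cong I_{\mathbf{Str.T}}$ of the embedding theorem; nothing forces this a priori, since that theorem only exhibits \emph{some} natural isomorphism. Either verify the coincidence by the explicit computation of $\Xi^{-1}(\mathrm{id})$ just described, or sidestep it entirely by citing full faithfulness of $\Phi$ (asserted in Theorem $6.3.1$) together with the standard lemma that a right adjoint is fully faithful if and only if the counit of the adjunction is an isomorphism. Either repair is one line, and with it your argument is complete; the hands-on unwinding of $\epsilon_{\ast}$ on $\Phi(\mathcal{V})$ that you keep in reserve is almost certainly the argument the authors had in mind, so it is a reasonable cross-check but not needed.
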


\begin{rem}\

$\bullet$ The pattern we get here

\begin{center}
\begin{tikzpicture}[scale=.7]

\node (v1) at (-3.6,-1) {};
\node (v4) at (1.2,-0.8) {};
\node (v5) at (-5,2) {};
\node (v7) at (-1.3,3.26) {};
\node (v9) at (2,2.5) {};
\node (v10) at (-3.53,4.7) {};
\node (v11) at (-2.83,1.9) {};
\node (v12) at (0.78,4.6) {};
\node (v13) at (0.295,1.7) {};
\draw  (v10) edge (v11);
\draw  (v12) edge (v13);
\node (v18) at (-3.1,3) {};
\node  at (-4,3.3) {$\Phi(\mathcal{V}_1)$};
\node (v19) at (0.5093,3.018) {};
\node  at (-0.2,3.5) {$\Phi(\mathcal{V}_2)$};
\node at (2.5,3) {$\mathbf{T.Sch}^{T}$};
\node at (1.7,-0.3) {$\mathbf{Str.T}$};
\node (v14) at (-1.5,2) {};
\node (v15) at (-1.4625,0.4996) {};
\draw [thick,->,>=stealth] (v14) -- (v15);
\node at (-1,1) {};
\node at (-1.1,1.2) {$\Psi$};
\node (v16) at (-2.2,-0.4) {};
\node  at (-2.7,-0.1) {$\mathcal{V}_1$};

\node (v17) at (0,-0.4) {};
\node  at (0.5,0) {$\mathcal{V}_2$};

\draw  plot[smooth, tension=0.7] coordinates {(v5) (v18) (v7) (v19) (v9)};
\draw  plot[smooth, tension=.8] coordinates {(v1) (v16) (v17) (v4)};

\draw[fill] (v18) circle [radius=0.05];
\draw[fill] (v19) circle [radius=0.05];
\draw[fill] (v16) circle [radius=0.05];
\draw[fill] (v17) circle [radius=0.05];
\end{tikzpicture}
\end{center}
is in some sense  similar to that of wave function renormalization \cite{[LW04],[LW05],[CGW10]} $($or entanglement renormalization \cite{[Vi06],[KRV08]}$)$ in the theory of  topological orders.

\begin{center}
\begin{tikzpicture}[scale=.8]

\node (v1) at (-3.5,4) {};
\node (v2) at (-3.5,-1) {};
\node (v4) at (2.5,4) {};
\node (v3) at (2.5,-1) {};

\draw  (-3.5,4) edge (-3.5,-1);
\draw  (-3.5,-1) edge (2.5,-1);
\draw  (-3.5,4) edge (2.5,4);
\draw  (2.5,-1) edge (2.5,4);

\draw  plot[smooth, tension=.7] coordinates {(-3.5,2.5) (-2.7,2.4) (-2.1,1.8) (-1.8,1.3) (-1.6,0.6) (-1.7,-0.1) (-2,-1)};
\draw  plot[smooth, tension=.7] coordinates {(-1.6,0.6) (-0.9,1.4) (0,1.8) (0.5,1.6) (1.2,0.9) (1.4,0.2) (1.5,-1) };
\draw  plot[smooth, tension=.7] coordinates {(0.5,1.6) (0.6,2.5) (0.9,3) (1.5,3.3) (2.2,3.4) (2.5,3.3) };
\node (v6) at (-1.3,2.9) {$\mathcal{V}_1$};
\node (v13) at (-2.8,0.8) {$\mathcal{V}_2$};
\node (v20) at (-0.1,0.3) {$\mathcal{V}_3$};
\node (v27) at (1.7,1.5) {$\mathcal{V}_4$};
\node (v5) at (-3.2,3.6) {};
\node (v7) at (-2.5,2.7) {};
\node (v11) at (-1.4,1.5) {};
\node (v10) at (0.1,2.2) {};
\node (v9) at (1.3,3.6) {};
\node (v8) at (-0.9,3.7) {};
\node (v28) at (1.2,2.9) {};
\node (v29) at (2.2,3) {};
\node (v26) at (0.8,1.9) {};
\node (v30) at (1.8,-0.6) {};
\node (v31) at (2.3,-0.5) {};
\node (v19) at (-1.6,-0.8) {};
\node (v21) at (-1.3,0.4) {};
\node (v22) at (-0.4,1.4) {};
\node (v23) at (0.8,0.8) {};
\node (v24) at (1.3,-0.7) {};
\node (v25) at (0.1,-0.7) {};
\node (v17) at (-2.1,-0.8) {};
\node (v18) at (-3.2,-0.7) {};
\node (v12) at (-3.4,2.1) {};
\node (v14) at (-2.8,2.1) {};
\node (v15) at (-1.9,1) {};
\node (v16) at (-1.9,-0.1) {};
\draw   [->,>=stealth] (v5)-- (v6);
\draw   [->,>=stealth]  (v7) edge (v6);
\draw   [->,>=stealth] (v8) edge (v6);
\draw   [->,>=stealth] (v9) edge (v6);
\draw   [->,>=stealth] (v10) edge (v6);

\draw   [->,>=stealth](v11) edge (v6);
\draw   [->,>=stealth] (v12) edge (v13);
\draw   [->,>=stealth](v14) edge (v13);
\draw  [->,>=stealth] (v15) edge (v13);
\draw  [->,>=stealth] (v16) edge (v13);
\draw   [->,>=stealth] (v17) edge (v13);
\draw   [->,>=stealth] (v18) edge (v13);
\draw  [->,>=stealth] (v19) edge (v20);
\draw  [->,>=stealth] (v21) edge (v20);
\draw  [->,>=stealth] (v22) edge (v20);
\draw  [->,>=stealth] (v23) edge (v20);
\draw   [->,>=stealth] (v24) edge (v20);
\draw   [->,>=stealth](v25) edge (v20);
\draw  [->,>=stealth] (v26) edge (v27);
\draw   [->,>=stealth] (v28) edge (v27);
\draw   [->,>=stealth](v29) edge (v27);
\draw   [->,>=stealth](v30) edge (v27);
\draw  [->,>=stealth] (v31) edge (v27);
\end{tikzpicture}
\end{center}

Our results may suggest that there is a monad behind the theory of entanglement renormalization.\\

$$
\begin{tabular}{|l|c|}
\hline
entanglement renormalization & tensor calculus\\ \hline
isometries & relations of generators/rewriting rules\\\hline
disentangler &coarse-graining\\\hline
scaling invariant & critical \\\hline
\end{tabular}
$$

$\bullet$ The two adjunction $(F,U, \varepsilon, \eta)$ and $(F^T,U^T, \varepsilon^T, \eta^T)$ produce two comonads: one is  $$G=FU:\mathbf{Str.T}\rightarrow \mathbf{Str.T}$$ and the other is  $$D=F^TU^T:\mathbf{T.Sch}^{T}\rightarrow\mathbf{T.Sch}^{T} .$$  The following commutative diagram
$$\xymatrix{\mathbf{Str.T}\ar[r]^{U}\ar[d]^{\Phi}&\ar[r]^{F}\mathbf{T.Sch}\ar@{=}[d]&\ar[d]^{\Phi}\mathbf{Str.T}\\\mathbf{T.Sch}^{T}\ar[r]^{U^T}&\ar[r]^{F^T}\mathbf{T.Sch}&\mathbf{T.Sch}^{T}}$$ shows that $\Phi$ can  induce a morphism of the two resolutions:
$$\xymatrix{\mathcal{V}\ar@{-->}[d]^{\Phi}&&\ar[ll]\ar@{-->}[d]^{\Phi^{\ast}}G^{\ast}(\mathcal{V})\\ \Phi(\mathcal{V})&&\ar[ll](D)^{\ast}(\Phi(\mathcal{V})).}$$
\end{rem}

\section{Appendix}

\subsection{Strict tensor category}
Here we give a brief review of strict tensor category, strict tensor functor and  strict tensor natural transformations.

Roughly speaking, a strict tensor category $\mathcal{C}$  is a category equipped with a bifunctor (called tensor product) $$\otimes:\mathcal{C}\times \mathcal{C}\rightarrow\mathcal{C}$$ and a distinguished object $1_{\mathcal{C}}\in ob(\mathcal{C})$ (called two-sided unit object) satisfying the associativity and unit axioms.

For objects $x,y\in Ob(\mathcal{C})$,
we write $\otimes$ on objects as $$(x,y)\longmapsto x\otimes y$$ and for $f\in Mor_{\mathcal{C}}(x,y)$ and $g\in Mor_{\mathcal{C}}(x',y')$ we write $\otimes$ on morphisms as
$$
\begin{pmatrix}
\xymatrix{x\ar[d]^{f}\\y}\xymatrix{\\,}\xymatrix{x'\ar[d]^{g}\\y'}
\end{pmatrix}\mapsto \begin{matrix} \xymatrix{x\otimes x'\ar[d]^{f\otimes g}\\y\otimes y'}\end{matrix}
$$
or briefly $$(f,g)\mapsto f\otimes g.$$
The functoriality of $\otimes$ tell us  that for arbitrary  $f\in Mor_{\mathcal{C}}(x,y)$, $g\in Mor_{\mathcal{C}}(y,z)$, $f'\in Mor_{\mathcal{C}}(x',y')$ and $g'\in Mor_{\mathcal{C}}(y',z')$, we have
$$
\begin{pmatrix}
\xymatrix{x\ar[d]^{f}\\y\ar[d]^{g}\\z}
\end{pmatrix}
\otimes
\begin{pmatrix}\xymatrix{x'\ar[d]^{f'}\\y'\ar[d]^{g'}\\z'}
\end{pmatrix}= \begin{pmatrix} \xymatrix{x\otimes x'\ar[d]^{f\otimes f'}\\y\otimes y'\ar[d]^{g\otimes g'}\\z\otimes z'}\end{pmatrix}
$$
or for short
$$(g\circ f)\otimes(g'\circ f')=(g\otimes g')\circ(f\otimes f'),$$ which is  called the middle-four-interchange law.
The equations $Id_x\otimes Id_y=Id_{x\otimes y}$ are hold for all $x,y\in Ob(\mathcal{C})$, which are also required by the functoriality of $\otimes$.

The associativity axiom means the following commutative diagram:
$$\xymatrix{\mathcal{C}\times\mathcal{C}\times\mathcal{C}\ar[r]^{I_{\mathcal{C}}\times\otimes}\ar[d]_{\otimes\times I_{\mathcal{C}}}&\mathcal{C}\times\mathcal{C}\ar[d]^{\otimes}\\ \mathcal{C}\times\mathcal{C}\ar[r]^{\otimes}&\mathcal{C}}$$

The unit axiom means the following diagram commutes:
$$\xymatrix{\textbf{1}\times \mathcal{C}\ar[dr]\ar[r]^{\iota\times I_{\mathcal{C}}}&\mathcal{C}\times\mathcal{C}\ar[d]^{\otimes}&\ar[l]_{I_{\mathcal{C}}\times \iota}\mathcal{C}\times \textbf{1}\ar[dl]\\&\mathcal{C}& }$$
where $\textbf{1}$ is the category with one object $1$ and one (identity) arrow $Id_{1}$, $\iota:\textbf{1}\rightarrow \mathcal{C}$ is the obvious functor determined by $1_{\mathcal{C}}\in Ob(\mathcal{C})$, that is, $\iota(1)=1_{\mathcal{C}}$ and $\iota(Id_1)=Id_{1_{\mathcal{C}}}.$

\begin{rem}
For every strict tensor category $(\mathcal{C},\otimes, \circ, 1_{\mathcal{C}})$, we have three associated strict tensor categories: its opposite tensor category $(\mathcal{C},\otimes^{op}, \circ, 1_{\mathcal{C}})$, tensor opposite category $(\mathcal{C},\otimes, \circ^{op}, 1_{\mathcal{C}})$ and opposite tensor opposite category $(\mathcal{C},\otimes^{op}, \circ^{op}, 1_{\mathcal{C}})$, where $X\otimes^{op}Y:=X\otimes Y$ for $X,Y\in Ob(\mathcal{C}^{op})$ and $f\otimes^{op}g:= g\otimes f$ for $f,g\in Mor(\mathcal{C})$ and $f\circ^{op}g=g\circ f$ for composable $f,g\in Mor(\mathcal{C})$.
\end{rem}

\begin{defn}
A strict tensor functor between two strict tensor categories $(\mathcal{C},\otimes_{\mathcal{C}},I_{\mathcal{C}})$ and  $(\mathcal{D},\otimes_{\mathcal{D}},I_{\mathcal{D}})$ is a functor $F:\mathcal{C}\rightarrow \mathcal{D}$ preserving strict tensor structures, that is, the diagrams
$$
\begin{matrix}\xymatrix{\mathcal{C}\times \mathcal{C}\ar[d]_{F\times F}\ar[r]^{\otimes_{\mathcal{C}}}&\ar[d]^{F}\mathcal{C}\\ \mathcal{D}\times \mathcal{D}\ar[r]^{\otimes_{D}}&\mathcal{D}}
&
\xymatrix{\mathcal{C}\ar[rr]^{F}&&\mathcal{D}\\&\textbf{1}\ar[lu]^{\iota_{\mathcal{C}}}\ar[ru]_{\iota_{\mathcal{D}}}&}
\end{matrix}
$$
should commute.
\end{defn}
More concretely, we have $F(1_{\mathcal{C}})=1_{\mathcal{D}}$ and for any $A,B\in Ob(\mathcal{C})$, we have $F(A\otimes_{\mathcal{C}}B)=F(A)\otimes_{\mathcal{D}}F(B).$ For any morphisms $f,g\in Mor(\mathcal{C})$, we have $F(f\otimes_{\mathcal{C}}g)=F(f)\otimes_{\mathcal{D}}F(g).$

\begin{defn}
A natural transformation  $\tau:F\Longrightarrow G$ of two strict tensor functors $F,G:\mathcal{C}\rightarrow\mathcal{D}$ is called a strict tensor natural transformation  if and only if $$\tau_{\otimes_{\mathcal{C}}}=\otimes_{\mathcal{D}}(\tau\times\tau).$$
\end{defn}

That is, $\tau:F\Longrightarrow G$ is a strict tensor natural transformation if and only if  for any $A,B\in Ob(\mathcal{C})$, we have $$\tau_{A\otimes_{\mathcal{C}}B}=\tau_{A}\otimes_{\mathcal{D}}\tau_{B}.$$

\subsection{Progressive plane graph}
Here we briefly review the theory of progressive plane graphs closely following \cite{[JS91]} but with some modification.

\begin{defn}
A \textbf{generalized (topological) graph} $G=G_1\coprod G_0$ is a Hausdorff space $G$  with a decomposition $G_0,G_1$ such that

$(1)$ $G_0 \subset G$ is a discrete closed subset and  not including isolated points of $G$,

$(2)$  the complement of  $G_1=G-G_0$ is a 1-dimensional manifold homeomorphism to an open interval. That is, $G_1$ is the topological sum of open intervals.

\end{defn}

An element $x\in G_0$ is called a \textbf{vertex} or \textbf{node}. The \textbf{degree} of a node $x$ is the number $deg(x)$ of connected components of $V-\{x\}$, where $V$ is a sufficiently small connected neighbourhood of $x$. A connected component $e\subset G_1$  is called an open edge. Each open edge $e$ can be compactified to a \textbf{closed edge} $\hat{e}$ by adjoining two end-points. An edge $e$ is called \textbf{pinned} when the inclusion $e\rightarrow G$ can be extended to a continuous map $\hat{e}\rightarrow G$ (called the \textbf{structure map}) and the image is not homeomorphic to a circle. When the inclusion $e\rightarrow G$ extends only to $\hat{e}$ minus one end-point, we call $e$ \textbf{half-loose} or \textbf{half-pinned}. An edge is $loose$ when it is neither pinned nor half-loose.

\begin{defn}
A \textbf{(topological) graph} is a generalized graph in which all the edges are pinned and every node is contained in the image of some closed edges $\hat{e}$  under the structure map. A \textbf{plane graph} $G$ is a graph with  $G\subset\mathbb{R}^2$.
\end{defn}
\begin{rem}
Our notion of generalized (topological) graph and graph are restricted version  of  Joyal and Street's. The only difference is that we exclude the existence of circles and isolated points in the definition of generalized (topological) graph. Our notion of graph is almost the same as their notion of ordinary graph, except we don't allow the existence of loops in their sense. In fact, our notion of (topological) graph can be identified with those which are geometric realization of combinational pre-graphs (see definition 2.1.1).
\end{rem}
A generalized (topological) graph is called \textbf{finite} if $G_0$ and the set $\pi_0(G_1)$ of connected components of $G_1$ are finite.
Obviously a finite generalized graph is a graph if and only if it is compact.

\begin{defn}
A graph with boundary $G=(G,\partial G)$ is a compact graph $G$ together with a distinguished subset of nodes $\partial G\subset G_0$ such that each $x\in \partial G$ is of degree one.
\end{defn}
The elements of $\partial G$ are called the \textbf{outer nodes} of $(G,\partial G)$ and the elements of $G_0-\partial G$ are called  \textbf{inner nodes} of $(G,\partial G)$. Obviously, the set of inner nodes of $(G,\partial G)$ is same as the set of nodes of $G-\partial G$. When an inner node $x$ is in the image of the structure map $\widehat{e}\rightarrow G$ of  an edge $e$, we call $e$ is incident to $x$ and denote this fact as $e\rightarrow x$. An \textbf{isomorphism} or \textbf{equivalence} $f:(G^1,\partial G^1)\rightarrow (G^2,\partial G^2)$ of graphs with boundary  is a homeomorphism $f:G^1\rightarrow G^2$ inducing bijections on the inner nodes and on the outer nodes.

An \textbf{oriented edge} of $G$ is an edge $e$ equipped with an orientation; or equivalently, with a linear order on $\partial \hat{e}$. The \textbf{source} $e(0)$ of an oriented edge $e$ is the image of the first element of $\partial \hat{e}$ under the structure map $\hat{e}\rightarrow \Gamma$; the \textbf{target} $e(1)$ is the image of the last element. An \textbf{oriented graph} is a graph together with a choice of orientation for each of its edges. For an oriented graph $G$, the \textbf{input} $In(x)$ of an inner node $x\in G_0$ is defined to be the set of oriented edges with target $x$; the \textbf{output}  $Out(x)$ of $x$ is the set of those with source $x$.

A \textbf{polarized graph} is an oriented graph together with a choice of linear order on each $In(x)$ and $Out(x)$.
A \textbf{progressive  graph} is an oriented  graph  with no circuits. The  \textbf{domain} $dom(G)$ of a progressive graph $G$ consists of the edges which have outer nodes as sources; the \textbf{codomain} $cod(G)$ consists of the edges which have outer nodes as targets. An \textbf{anchored graph} is an oriented graph with a choice of linear orders on its domain and codomain.

\begin{defn}
Let $a<b$ be real numbers. A leveled  progressive plane graph between the levels $a$ and $b$ is a graph $G=(G_1,\partial G\subset G_0)$ embedded in  $\mathbb{R}\times [a,b]$ such that

$(1)$ $\partial G=G\cap \mathbb{R}\times \{a,b\}$, and

$(2)$ the second projection $pr_2:\mathbb{R}\times[a,b]\rightarrow [a,b]$ is injective on each connected component of $G_1$.

A progressive plane graph $G$ between levels $a$ and $b$ is written  as $G[a,b]$.
\end{defn}

Each \textbf{leveled progressive plane graph} $G$ is both progressive and polarized in the above subsection. In fact, each edge $e\subset\Gamma_1$ is given the orientation with $pr_2e(0)<pr_2e(1)$. Condition $(2)$ excludes circuits. Also, $In(x)$ and $Out(x)$ can be linearly ordered as follows. Choose $u\in [a,b]$ smaller than but close enough to $pr_2(x)$. Then each edge $e\in In(x)$ intersects the line $\mathbb{R}\times \{u\}$ in one point which is different for different edges. This defines a bijection between $In(x)$ and a subset of $\mathbb{R}\times \{u\}(\cong \mathbb{R})$, and so induces a linear order on $In(x)$. The order on $Out(x)$ is defined similarly by intersecting with $\mathbb{R}\times \{u\}$ for $u$ larger than but close to $pr_2(x)$. The $dom(G)$ and $cod(G)$ are naturally linearly ordered as subsets of  $\mathbb{R}\times \{a\}$ and $\mathbb{R}\times\{b\}$, respectively.

\begin{defn}
For a leveled progressive plane graph $G[a,b]$, a number $u\in[a,b]$ is called a \textbf{regular level} for $G$ if the line $\mathbb{R}\times \{u\}$ contains no inner nodes of $G$, and in this case we write $G[a,b]=G[u,b]\circ G[a,u]$.
\end{defn}
If $c<d$ are regular levels of $G$, the graph $G \cap (\mathbb{R}\times [c,d])$ is written as $G[c,d]$, whose set of inner nodes is $(G_0-\partial \Gamma)\cap(\mathbb{R}\times[c,d])$ and whose set of outer nodes is $G\cap(\mathbb{R}\times\{c,d\})$. The graph $G[c,d]$ is a progressive plane graph between the levels $c$ and $d$; it is called a \textbf{layer} of $G[a,b]$.

When $\Gamma[a,b]$ is the disjoint union of two subgraphs $G^1[a,b]$ and $G^2[a,b]$, we say that the pair $(G^1[a,b],G^2[a,b])$ is a \textbf{tensor decomposition} of $G[a,b]$. Moreover, if there exists a number $\xi$ such that $G^1\subseteq(-\infty,\xi)\times [a,b]$ and $G^2\subseteq(\xi,\infty)\times [a,b]$, we write $G[a,b]=G^1[a,b]\otimes G^2[a,b]$. This notion extends in the obvious way to $n$-fold tensor decompositions $$G=G^1\otimes\cdots\otimes G^n.$$

\begin{prop}
For any progressive plane diagram $G$ between levels $a$ and $b$, there exist regular levels  $a=u_0<u_1<\cdots< u_n=b$ such that each layer $G[u_{i-1}, u_i]$ is elementary for $1\leq i\leq n$.
\end{prop}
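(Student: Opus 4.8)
The plan is to slice $G[a,b]$ into finitely many horizontal layers, each containing the inner nodes of a single level, and then to show that each such layer is elementary by separating its nodes with vertical lines. First I would record the finiteness we need: since a leveled progressive plane graph $G[a,b]$ is a compact (topological) graph, its set $G_0-\partial G$ of inner nodes is finite, and by the boundary condition $\partial G = G\cap \mathbb{R}\times\{a,b\}$ every inner node lies in the open strip $\mathbb{R}\times(a,b)$. Let $c_1<\cdots<c_k$ be the finitely many distinct heights $pr_2(x)$ occurring among inner nodes $x$; call these the \emph{critical levels}. By definition any $u\in[a,b]$ with $u\notin\{c_1,\dots,c_k\}$ is a regular level, so in particular $a$ and $b$ are regular.

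Next I would choose the subdivision. For each critical level $c_\ell$ pick regular levels $u_\ell^-<c_\ell<u_\ell^+$ with $c_{\ell-1}<u_\ell^-$ and $u_\ell^+<c_{\ell+1}$ (setting $c_0=a$, $c_{k+1}=b$), small enough in the sense made precise below. Listing them in order, $a=u_0<u_1^-<u_1^+<\cdots<u_k^-<u_k^+<b$, produces regular levels and hence, by the definition of composition at a regular level, writes $G[a,b]$ as an iterated composition of two kinds of layers: \emph{node-layers} $G[u_\ell^-,u_\ell^+]$ whose only inner nodes sit on the line $\mathbb{R}\times\{c_\ell\}$, and \emph{gap-layers} $G[u_\ell^+,u_{\ell+1}^-]$ containing no inner node. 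A gap-layer is a disjoint union of arcs, each of which by injectivity of $pr_2$ on components of $G_1$ crosses the whole sub-strip monotonically; it is therefore an invertible graph, and invertible graphs are elementary. So all the content concentrates in the node-layers.

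The main obstacle is to show each node-layer $L=G[u_\ell^-,u_\ell^+]$ is elementary. Its inner nodes $x_1,\dots,x_r$ all lie at height $c_\ell$; since an edge with both endpoints at height $c_\ell$ would violate injectivity of $pr_2$, no two of these nodes are joined by an edge, and they are horizontally distinct, say $pr_1(x_1)<\cdots<pr_1(x_r)$. I would then seek abscissae $\xi_0<pr_1(x_1)<\xi_1<\cdots<\xi_{r-1}<pr_1(x_r)<\xi_r$ for which the vertical segments $\{\xi_i\}\times[u_\ell^-,u_\ell^+]$ are disjoint from $L$; these cuts exhibit $L$ as a tensor decomposition into pieces, each a prime graph (one corolla around a single $x_j$) or a unitary strand, i.e. as an elementary graph. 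The existence of such $\xi_i$ is where I expect the real work: I would use compactness of the finitely many edges together with continuity of the structure maps to choose $u_\ell^\pm$ close enough to $c_\ell$ that every edge of $L$ stays in a narrow vertical band — edges incident to $x_j$ remaining near $x_j$, and pass-through edges remaining near their intersection with a fixed regular line — so that these finitely many bands occupy disjoint horizontal intervals and leave room between consecutive $pr_1(x_j)$. The plane embedding together with the progressive (acyclic) orientation guarantees that the bands are linearly ordered from left to right, which yields the separating $\xi_i$ avoiding $G$.

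Finally, composing the elementary node-layers and elementary gap-layers in vertical order gives the desired decomposition $G[a,b]=G[u_{n-1},u_n]\circ\cdots\circ G[u_0,u_1]$ with $u_0=a$ and $u_n=b$, completing the argument. The only delicate point is the uniform choice of layer thickness guaranteeing vertical separability of the nodes; everything else is bookkeeping with regular levels and the definitions of composition and tensor decomposition.
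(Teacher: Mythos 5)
A preliminary remark: the paper itself offers no proof of this statement — it appears in the review appendix (Section 7.2) as a quoted result of Joyal and Street, and the paper never even defines ``elementary'' for plane graphs (only for combinatorial graphs, in Section 2.3) — so your proposal has to be judged on its own terms, and which notion of ``elementary'' you adopt matters. Your treatment of the node-layers is the right, standard argument: finitely many inner nodes give finitely many critical heights, and a uniform-continuity/compactness argument shows that for a sufficiently thin layer around a critical height the stars of the nodes and the pass-through strands lie in disjoint vertical bands, so separating abscissae $\xi_i$ exist and the layer is a genuine $\otimes$-product of primes and unitary strands.

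The genuine gap is in the gap-layers. You assert that a layer with no inner nodes is ``invertible, and invertible graphs are elementary'', but your gap-layers span the entire interval between two consecutive critical heights, they are not thin, and for such layers this step fails under the strict notion of tensor decomposition (existence of a separating vertical line) that you yourself insist on for the node-layers. Concretely, take a gap-layer $\mathbb{R}\times[c,d]$ containing exactly two edges, parametrized by height as $e_1(t)$ and $e_2(t)=e_1(t)+5$, where $e_1$ runs monotonically from abscissa $0$ at height $c$ to abscissa $10$ at height $d$. The strands are disjoint, so the layer has no inner nodes, but their horizontal spans are $[0,10]$ and $[5,15]$: there is no $\xi$ with one strand in $(-\infty,\xi)\times[c,d]$ and the other in $(\xi,\infty)\times[c,d]$, so the layer is not a tensor product $G^1\otimes G^2$ in the sense of Section 7.2, hence not elementary in the strict sense. (It is a ``tensor decomposition'' in the paper's weak disjoint-union sense; but if you adopt that weak reading consistently, your vertical-separation work for node-layers becomes unnecessary, since each connected component of a node-layer automatically contains at most one inner node — so as written your proof mixes the two notions.) The repair is exactly the argument you already developed for node-layers, applied everywhere: slice the gap-layers thinly too. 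A gap-layer contains finitely many strands, each a continuous function of the height; by compactness of $[c,d]$ the minimal horizontal distance between distinct strands at equal heights is some $\delta_0>0$, and uniform continuity gives a thickness $\eta$ such that each strand moves horizontally by less than $\delta_0/3$ within any layer of thickness at most $\eta$; any subdivision of $[c,d]$ into layers of thickness less than $\eta$ then consists of strictly $\otimes$-decomposable, hence elementary, layers. With this modification (and the same uniform-thinness bookkeeping for the node-layers, which you sketched), the proof is complete.
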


\begin{defn}
Let $(G,\partial G)$ denote a graph with boundary. A deformation of leveled progressive plane graphs (between levels a and b) is a continuous function function $$h:G\times [0,1]\rightarrow \mathbb{R}^2$$ such that, for all $t\in[a,b]$, the function $$h(-,t):G\rightarrow \mathbb{R}\times [a(t),b(t)]$$ is an embedding whose image is a leveled progressive plane graph $(G(t),\partial G(t))$ between the levels $a(t)$ and $b(t)$.
\end{defn}
Any deformation of leveled progressive plane graphs gives rise to a unique isomorphism of leveled progressive plane graphs $G(t_1)$ and $G(t_2)$, for $t_1,t_2\in [0,1]$.

\begin{defn}
An isomorphism $f:G[a_1,b_1]\rightarrow G[a_2,b_2]$ of leveled progressive plane graphs is an isomorphism of graphs with boundary such that it can be lifted to a  deformation of leveled progressive plane graphs.
\end{defn}
It is easy to see that an isomorphism  $f$ preserves the orientation of progressive plane graphs, the linear order of every nodes and the linear order of domains and codomains.
We denote the set of isomorphic classes of leveled progressive plane graph by $\mathsf{G}$, and denote the isomorphic class of $G$ by $[G].$

A plane graph $G$ is called  boxed if it is between levels $-1$ and $+1$, and  is contained in $(-1,1)\times[-1,1]$. Write $G:m\rightarrow n$ when $m$, $n$ are the cardinalities of $dom(G)$, $cod(G)$, respectively.
In defining operations on boxed graphs, we use the functions $\gamma,\tau:\mathbb{R}^2\rightarrow \mathbb{R}^2$ defined by  $$\gamma(x,t)=(x,\frac{1}{3}t),\ \ \ \tau(x,t)=(\frac{1}{2}x,t)$$
and the points $e_1=(1,0),\ e_2=(0,1)\in \mathbb{R}^2.$ Notation such as $\gamma(S+e_2),$ for $S\subset \mathbb{R}^2,$ denotes the set $$\{(x,\frac{1}{3}(t+1))\in \mathbb{R}^2|(x,t)\in S\}.$$

The \textbf{tensor product} $G^1\underline{\otimes} G^2$ of two boxed plane graphs $G^1$, $G^2$ is the space $$\tau((G^1-e_1)\sqcup(G^2+e_1))$$ with  $\tau((G^1_0-e_1)\sqcup(G^2_0+e_1))$ as the set of nodes. Ignoring translations, we depict this as following:

$$\begin{matrix}\begin{matrix}\begin{tikzpicture}
\node (v1) at (-1,1) {};
\node (v4) at (-1,-1) {};
\node (v2) at (1,1) {};
\node (v3) at (1,-1) {};
\draw  (-1,1)--(1,1);
\draw  (1,1) -- (1,-1);
\draw  (-1,1) -- (-1,-1);
\draw  (-1,-1) -- (1,-1);
\node at (0,0) {$G^1$};
\end{tikzpicture}\end{matrix}&\underline{\otimes}&\begin{matrix}\begin{tikzpicture}
\node (v1) at (-1,1) {};
\node (v4) at (-1,-1) {};
\node (v2) at (1,1) {};
\node (v3) at (1,-1) {};
\draw  (-1,1)--(1,1);
\draw  (1,1) -- (1,-1);
\draw  (-1,1) -- (-1,-1);
\draw  (-1,-1) -- (1,-1);
\node at (0,0) {$G^2$};
\end{tikzpicture}\end{matrix}&=&\begin{matrix}\begin{tikzpicture}
\node (v1) at (-1,1) {};
\node (v4) at (-1,-1) {};
\node (v2) at (1,1) {};
\node (v3) at (1,-1) {};
\draw  (-1,1)--(1,1);
\draw  (1,1) -- (1,-1);
\draw  (-1,1) -- (-1,-1);
\draw  (-1,-1) -- (1,-1);

\node (v5) at (0,1) {};
\node (v6) at (0,-1) {};
\draw  (0,1) edge (0,-1);
\node at (-0.5,0) {$\tau G^1$};
\node at (0.5,0) {$\tau G^2$};
\end{tikzpicture}\end{matrix} \end{matrix}$$

Suppose $G^1:l\rightarrow m$, $G^2:m\rightarrow n$ are boxed plane graphs. Let $a_1<\cdots<a_m$ be the elements of the codomain of $G^1$, and let $I$ be the set of inner nodes of the graph $\gamma(G^1-2e_2).$ Let $b_1<\cdots<b_m$ be the elements of the domain, and let $J$ be the set of inner nodes of the graph $\gamma(G^2+2e_2).$ The \textbf{composition} $G^2\underline{\circ} G^1:l\rightarrow n$ is the plane graph consisting of the space $$G^2\underline{\circ }G^1=\gamma((G^1-2e_2)\sqcup[a_1,b_1]\sqcup\cdots\sqcup[a_m,b_m]\sqcup(G^2+2e_2))$$
with $I\sqcup J$ as the set of inner nodes, where $[a,b]\subset\mathbb{R}^2$ is the segment between the point $a$ and $b$. We depict this as following:

$$\begin{matrix}\begin{matrix}\begin{tikzpicture}
\node (v1) at (-1,1) {};
\node (v4) at (-1,-1) {};
\node (v2) at (1,1) {};
\node (v3) at (1,-1) {};
\draw  (-1,1)--(1,1);
\draw  (1,1) -- (1,-1);
\draw  (-1,1) -- (-1,-1);
\draw  (-1,-1) -- (1,-1);
\node at (0,0) {$G^2$};
\end{tikzpicture}\end{matrix}&\underline{\circ}&\begin{matrix}\begin{tikzpicture}
\node (v1) at (-1,1) {};
\node (v4) at (-1,-1) {};
\node (v2) at (1,1) {};
\node (v3) at (1,-1) {};
\draw  (-1,1)--(1,1);
\draw  (1,1) -- (1,-1);
\draw  (-1,1) -- (-1,-1);
\draw  (-1,-1) -- (1,-1);
\node at (0,0) {$G^1$};
\end{tikzpicture}\end{matrix}&=&\begin{matrix}\begin{tikzpicture}
\node (v1) at (-1,1) {};
\node (v4) at (-1,-1) {};
\node (v2) at (1,1) {};
\node (v3) at (1,-1) {};
\draw  (-1,1)--(1,1);
\draw  (1,1) -- (1,-1);
\draw  (-1,1) -- (-1,-1);
\draw  (-1,-1) -- (1,-1);

\node (v5) at (-1,0.5) {};
\node (v6) at (1,0.5) {};
\node (v7) at (-1,-0.5) {};
\node (v8) at (1,-0.5) {};
\draw  (-1,0.5) --(1,0.5);
\draw  (-1,-0.5) --(1,-0.5);
\node at (0,0.75) {$\gamma G^1$};
\node at (0,-0.75) {$\gamma G^2$};
\node (v9) at (-0.5,0.5) {};
\node (v10) at (-0.5,-0.5) {};
\node (v11) at (0.5,0.5) {};
\node (v12) at (0,-0.5) {};
\node [scale=0.7] at (-0.5,0.3) {$\gamma a_1$};
\node [scale=0.7]at (-0.4,-0.3) {$\gamma b_1$};
\node [scale=0.7]at (0.25,0.3) {$\gamma a_m$};
\node [scale=0.7]at (0.45,-0.3) {$\gamma b_m$};
\draw  (-0.7,0.5) -- (-0.6,-0.5);
\draw  (0.5,0.5) -- (0.7,-0.5);
\node at (0,0) {$\cdots$};
\end{tikzpicture}\end{matrix} \end{matrix}$$
Note that the layer $(G^2\underline{\circ} G^1)[-\frac{1}{3},\frac{1}{3}]$ has no inner nodes.

Now we introduce two operations $\otimes_{JS}$ and $\circ_{JS}$ on the set $\mathsf{G}$. Any element in $\mathsf{G}$ can be realized as a boxed progressive plane graph. For two elements $[G^1], [G^2]$ in $G$ with $G^1$, $G^2$ being two boxed progressive plane graphs, we define their tensor product $$[G^1]\otimes_{JS}[G^2]\triangleq [G^1\underline{\otimes} G^2].$$
For two elements $[G^1], [G^2]$ in $G$ with $G^1:l\rightarrow m$, $G^2:m\rightarrow n$ being two boxed progressive plane graphs, we define their composition $$[G^2]\circ_{JS}[G^1]\triangleq [G^2\underline{\circ} G^1].$$

For simplicity, we will not make a distinction between a boxed progressive plane graph $G$ and its isomorphic class $[G].$

\subsection{Adjunction, (co)monad and their (co)algebras}

\begin{defn}
For two categories $\mathcal{C}$ and $\mathcal{D}$, an adjunction from $\mathcal{C}$ to $\mathcal{D}$ is a triple $(F, G,\Phi):\mathcal{C}\rightharpoonup\mathcal{D}$, where $F, G$ are functors
$$F:\mathcal{C}\rightleftharpoons\mathcal{D}: G,$$ and $\Phi$ is a natural isomorphism of bifunctors $$\Psi_{-,-}:Mor_{\mathcal{D}}(F(-),-)\rightarrow Mor_{\mathcal{C}}(-, G(-)).$$
\end{defn}

There are also other useful equivalent definitions of an adjunction. Given an adjunction above, we can define two natural transformations $$\begin{matrix}\varepsilon:F\circ G\rightarrow  I_{\mathcal{D}},&&\eta: I_{\mathcal{C}}\rightarrow G\circ F\end{matrix}.$$

Substituting $ G$ into the first variable of $\Psi_{-,-}$, we get
$$\Psi_{ G(-),-}:Mor_{\mathcal{D}}(F\circ G(-),-)\rightarrow Mor_{\mathcal{C}}( G(-), G(-)),$$
then the value of $\varepsilon$ at an object $D\in Ob(\mathcal{D})$ is defined to be $\varepsilon_D=\Psi^{-1}_{ G(D),D}(Id_{ G(D)}).$
Substituting $F$ into the second variable of $\Psi_{-,-}$, we get
$$\Psi_{-,F(-)}:Mor_{\mathcal{D}}(F(-),F(-))\rightarrow Mor_{\mathcal{C}}(-,G\circ F(-)),$$ then the value of $\eta$ at an object $C\in Ob(\mathcal{C})$ is defined to be $\eta_C=\Psi_{C,F(C)}(Id_{F(C)}).$ The naturality of $\varepsilon,\eta$ comes from the naturality of $\Psi$, and the condition that $\Psi$ to be isomorphism implies the following commutative diagrams of natural transformations
$$
\begin{matrix}
\xymatrix{F\circ I_{\mathcal{C}}\ar[r]^{F(\eta)}\ar[d]_{\sim}&\ar[d]^{\varepsilon_{F}}F\circ G\circ F\\F\ar[r]^{\sim}& I_{\mathcal{D}}\circ F}
&&
\xymatrix{ I_{\mathcal{C}}\circ G\ar[r]^{\eta_{ G}}\ar[d]_{\sim}&\ar[d]^{ G(\varepsilon)} G\circ F\circ G\\ G\ar[r]^{\sim}& G\circ I_{\mathcal{D}}}
\end{matrix}
$$

We give another equivalent definition of an adjunction.
\begin{defn}
For two categories $\mathcal{C}$ and $\mathcal{D}$, an adjunction from $\mathcal{C}$ to $\mathcal{D}$ is a quadruple $(F, G,\varepsilon,\eta):\mathcal{C}\rightharpoonup\mathcal{D}$, where $F, G$ are functors
$$F:\mathcal{C}\rightleftharpoons\mathcal{D}: G,$$ and $\varepsilon,\eta$ are natural transformations (called co-unit and unit repectively)
$$\begin{matrix}\varepsilon:F\circ G\rightarrow  I_{\mathcal{D}},&&\eta: I_{\mathcal{C}}\rightarrow G\circ F\end{matrix}$$ satisfying $\varepsilon_{F}\circ F(\eta)=Id_{F}$ and $ G(\varepsilon)\circ\eta_{ G}=Id_{ G},$ where $Id_{F}:F\rightarrow F$ and $Id_{G}:G\rightarrow G$ are the identity natural transformation of $F$ and $G$ respectively.
\end{defn}

\begin{defn}
A monad $(T,\mu,\eta)$ in a category $\mathcal{C}$ consists of a functor $T:\mathcal{C}\rightarrow \mathcal{C}$ and two natural transformations $\mu:T\circ T\rightarrow T$ and $\eta: I_{\mathcal{C}}\rightarrow T,$ which make the following diagrams commute
$$
\begin{matrix}
\xymatrix{\ar[d]^{\mu_{T}}T\circ T\circ T\ar[r]^{T(\mu)}&T\circ T\ar[d]_{\mu}\\\ar[r]^{\mu}T\circ T&T}
&
\xymatrix{ I_{\mathcal{C}}\circ T\ar[r]^{\eta_{T}}&\ar[d]^{\mu}T\circ T&\ar[l]_{T(\eta)}T\circ  I_{\mathcal{C}}\\&\ar@{=}[ur]\ar@{=}[ul]T&}
\end{matrix}
$$
where $I_{\mathcal{C}}:\mathcal{C}\rightarrow \mathcal{C}$ is the identity functor of $\mathcal{C}.$
\end{defn}

\begin{defn}
A comonad $(D,\Delta,\varepsilon)$ in a category $\mathcal{D}$ consists of a functor $D:\mathcal{D}\rightarrow \mathcal{D}$ and two natural transformations $\Delta:D\rightarrow D\circ D$ and $\varepsilon: D\rightarrow I_{\mathcal{D}},$ which make the following diagrams commute
$$
\begin{matrix}
\xymatrix{\ar[d]^{\Delta}D\ar[r]^{\Delta}&D\circ D\ar[d]_{\Delta_D}\\\ar[r]^{D(\Delta)}D\circ D&D\circ D\circ D}
&
\xymatrix{&\ar[d]^{\Delta}D\ar@{=}[dl]\ar@{=}[dr]&\\D&\ar[l]^{D(\Delta)}D\circ D\ar[r]_{\Delta_D}&D}
\end{matrix}
$$
where $I_{\mathcal{D}}:\mathcal{D}\rightarrow \mathcal{D}$ is the identity functor of $\mathcal{D}.$
\end{defn}

Every adjunction $(F, G,\varepsilon,\eta):\mathcal{C}\rightharpoonup\mathcal{D}$ gives rises to a monad $(T,\mu,\eta)$ in $\mathcal{C}$ and a comonad $(D,\Delta,\varepsilon)$ in $\mathcal{D}$. Specifically, the monad is defined as
$$
\left\{
\begin{array}{rcl}
T= G\circ F&:&\mathcal{C}\rightarrow\mathcal{C}\\
\mu= G(\varepsilon_{F})&:& G\circ F\circ G\circ F\rightarrow G\circ F\\
\eta&:& I_{\mathcal{C}}\rightarrow G\circ F
\end{array}
\right.
$$
and the comonad is defined as
$$
\left\{
\begin{array}{rcl}
D=F\circ G&:&\mathcal{D}\rightarrow\mathcal{D}\\
\Delta=F(\eta_{ G})&:&F\circ G\rightarrow F\circ G\circ F\circ G\\
\varepsilon&:&F\circ G\rightarrow  I_{\mathcal{D}}
\end{array}
\right.
$$

The associativity of $\mu$ becomes the commutativity of the diagram $$\xymatrix{ G\circ F\circ G\circ F\circ G\circ  F\ar[rr]^{G\circ F\circ G(\varepsilon_{F})}\ar[d]_{ G(\varepsilon_{F\circ G\circ F})}
&& G\circ F\circ G\circ F\ar[d]^{ G(\varepsilon_{F})}
\\\ar[rr]^{ G(\varepsilon_{F})} G\circ F\circ G\circ F&& G\circ F}$$
which can be deduced from the commutativity of the diagram
$$\xymatrix{F\circ G\circ F\circ G\ar[rr]^{F\circ G(\varepsilon)}\ar[d]_{\varepsilon_{F\circ G}}
&&F\circ G\ar[d]^{\varepsilon}
\\\ar[rr]^{\varepsilon}F\circ G&& I_{\mathcal{D}}}$$
The commutativity of above diagram is just the consequence of naturality of $\varepsilon$ and functoriality of $F\circ G$.
The left and right unitary law of $\eta$  can be deduced from the equations  $\varepsilon_{F}\circ F(\eta)=Id_{F}$ and $ G(\varepsilon)\circ\eta_{ G}=Id_{ G}$ respectively. That is, we have the following commutative diagram:

$$\xymatrix{G\circ F\ar[r]^(0.4){\eta_{G\circ F}}\ar[dr]&\ar[d]^{G(\varepsilon_{F})}G\circ F\circ G\circ F&\ar[l]_(0.3){G\circ F(\eta)}\ar[dl]G\circ F\\&G\circ F&}$$

The co-associativity of $\Delta$ becomes the commutativity of the diagram $$\xymatrix{G\circ F \ar[rr]^{G(\eta_F)}\ar[d]_{ G(\eta_F)}
&& G\circ F\circ G\circ F\ar[d]^{ G\circ F\circ G(\varepsilon_{F})}
\\\ar[rr]^(0.4){ G(\varepsilon_{F\circ G\circ F})} G\circ F\circ G\circ F&& G\circ F\circ G\circ F\circ G\circ  F}$$
which can be deduced from the commutativity of the diagram
$$\xymatrix{ I_{\mathcal{C}} \ar[rr]^{\eta}\ar[d]_{\eta}
&&G\circ F\ar[d]^{G\circ F(\eta)}
\\\ar[rr]^{\eta_{G\circ F}}G\circ F&&G\circ F\circ G\circ F}$$
The commutativity of above diagram is just the consequence of naturality of $\eta$ and functoriality of $G\circ F$.
The left and right unitary law of $\eta$  can be deduced from the equations  $\varepsilon_{F}\circ F(\eta)=Id_{F}$ and $ G(\varepsilon)\circ\eta_{ G}=Id_{ G}$ respectively. That is, we have the following commutative diagram:

$$\xymatrix{&\ar[d]^{F(\eta_G)}F\circ G\ar@{=}[dl]\ar@{=}[dr]&\\F\circ G&\ar[l]^(0.6){F\circ G(\varepsilon)}F\circ G\circ F\circ G\ar[r]_(0.6){\varepsilon_{F\circ G}}&F\circ G}$$

\begin{defn}
An algebra over the monad $(T,\mu,\eta)$ is an object $A$ of $\mathcal{C}$ equipped with a morphism $\gamma:T(A)\rightarrow A$ such that
$$
\begin{matrix}
\xymatrix{T\circ T(A)\ar[r]^{T(\gamma)}\ar[d]_{\mu_A}&T(A)\ar[d]^{\gamma}\\T(A)\ar[r]^{\gamma}&A}
&
\xymatrix{ I_{\mathcal{C}}(A)\ar[r]^{\eta_A}\ar@{=}[dr]&T(A)\ar[d]^{\gamma}\\&A}
\end{matrix}
$$
\end{defn}

\begin{defn}
Let $(A,\gamma_A)$ and $(B,\gamma_B)$ be two algebras over $(T,\mu,\eta)$, a morphism of them is a morphism $f:A\rightarrow B$ in $\mathcal{C}$ such that
$$
\begin{matrix}
\xymatrix{T(A)\ar[r]^{\gamma_A}\ar[d]_{T(f)}&\ar[d]^{f}A\\T(B)\ar[r]^{\gamma_B}&B}
\end{matrix}
$$
\end{defn}

\begin{defn}
A coalgebra over the comonad $(D,\Delta,\varepsilon)$ is an object $C$ of $\mathcal{D}$ equipped with a morphism $\lambda:C\rightarrow D(C)$ such that
$$
\begin{matrix}
\xymatrix{C\ar[r]^{\lambda}\ar[d]^{\lambda}&D(C)\ar[d]^{\Delta_C}\\D(C)\ar[r]^{D(\lambda)}&D\circ D(C)}
&
\xymatrix{C\ar@{=}[dr]\ar[r]^{\lambda}&D(C)\ar[d]^{\varepsilon_D}\\&C}
\end{matrix}
$$
\end{defn}

\begin{defn}
Let $(C_1,\lambda_1)$ and $(C_2,\lambda_2)$ be two coalgebras over $(D,\Delta,\varepsilon)$, a morphism of them is a morphism $g:C_1\rightarrow C_2$ in $\mathcal{D}$ such that
$$
\begin{matrix}
\xymatrix{C_1\ar[d]^{g}\ar[r]^{\lambda_1}&\ar[d]^{D(g)}D(C_1)\\C_2\ar[r]^{\lambda_2}&D(C_2)}

\end{matrix}
$$
\end{defn}


\section{Acknowledgments}

Xuexing Lu want to thank Dr.Liang Kong for valuable advices in preparing for this paper  and  he also would like to thank Professor Ke Wu for encouragement. Yu Ye is supported by NSFC 11431010.





\section*{References}

\end{document}